\documentclass[11pt,reqno]{amsart}
\usepackage{geometry} 

\usepackage{amsmath,amssymb,amsthm,mathrsfs,appendix}
\usepackage{xcolor}
\usepackage{graphicx}
\usepackage{tikz}
\usepackage{tikz-3dplot}
\usetikzlibrary{positioning}
\usepackage{pgfplots}
\usepackage{subcaption}
\usepackage{adjustbox}

\usetikzlibrary{shapes.geometric, arrows.meta, positioning, calc}

\usepackage{marvosym}
\usepackage{cancel}

\usepackage{latexsym}
\usepackage[nobysame]{amsrefs}[11pts]
\usepackage{bm}
\usepackage{enumerate}
\usepackage{indentfirst}
\usepackage[colorlinks,
            linkcolor=black,
            anchorcolor=black,
            citecolor=black
            ]{hyperref}

\newcommand{\into}{\hookrightarrow}

\newcommand{\ds}{\mathrm{d}s}
\newcommand{\dt}{\mathrm{d}t}

\DeclareMathOperator{\support}{supp}
\DeclareMathOperator{\dive}{div}

\newcommand{\II}{\mathbb{I}}

\newcommand{\NN}{\mathbb{N}}

\newcommand{\RR}{\mathbb{R}}

\newcommand{\TT}{\mathbb{T}}

\newcommand{\ZZ}{\mathbb{Z}}

\newcommand{\cA}{\mathcal{A}}
\newcommand{\cB}{\mathcal{B}}
\newcommand{\cC}{\mathcal{C}}
\newcommand{\cD}{\mathcal{D}}
\newcommand{\cE}{\mathcal{E}}
\newcommand{\cF}{\mathcal{F}}
\newcommand{\cG}{\mathcal{G}}
\newcommand{\cH}{\mathcal{H}}

\newcommand{\cJ}{\mathcal{J}}
\newcommand{\cK}{\mathcal{K}}

\newcommand{\cU}{\mathcal{U}}
\newcommand{\cV}{\mathcal{V}}

\newcommand{\sD}{\mathscr{D}}

\newtheorem{Definition}{Definition}
\newtheorem{Theorem}{Theorem}[section]
\newtheorem{Lemma}{Lemma}[section]
\newtheorem{Proposition}{Proposition}
\newtheorem{Remark}{Remark}

\numberwithin{Remark}{section}
\numberwithin{Proposition}{section}
\numberwithin{Definition}{section}
\numberwithin{Lemma}{section}
\numberwithin{equation}{section}
\numberwithin{Theorem}{section}

\allowdisplaybreaks[4]
\pgfplotsset{compat=1.18}

\begin{document}
\title[Viscous Gaseous Stars]{Global Dynamics of Viscous Gaseous Stars in a Physical Vacuum 
}
\date{\today}

\author{Demin Wang}
\address[Demin Wang]{School of Mathematical Sciences, Shanghai Jiao Tong University, Shanghai 200240, P. R. China} \email{\tt de-min-wang@sjtu.edu.cn}

\author{Jiawen Zhang}
\address[Jiawen Zhang]{School of Mathematical Sciences, Shanghai Jiao Tong University, Shanghai 200240, P. R. China} \email{\tt zhangjiawen317@sjtu.edu.cn}

\author{Shengguo Zhu }
\address[Shengguo Zhu]{School of Mathematical Sciences, CMA-Shanghai, and MOE-LSC, Shanghai Jiao Tong University, Shanghai 200240, P. R. China.} \email{\tt  zhushengguo@sjtu.edu.cn}

\begin{abstract}

The study of vacuum is important in understanding compressible flows. In particular, physical vacuum, in which the boundary moves with a nontrivial finite normal acceleration, naturally arises in the study of the motion of gaseous stars. In this paper, we analyze the free boundary problem for the three-dimensional compressible Navier--Stokes--Poisson equations with degenerate viscosities for self-gravitating viscous gaseous stars. For the spherically symmetric and barotropic motion, we establish the global well-posedness of classical solutions without any restriction on the size of the initial data. Our solutions obtained here are smooth all the way up to the moving boundary and capture the physical vacuum boundary behavior of the Lane--Emden star configuration. 
\end{abstract}

\date{\today}
\subjclass[2020]{35A01, 35A09, 35R35, 35B65, 35Q30, 76N10.}
\keywords{Viscous gaseous star,
Free boundary problem, Physical Vacuum, Classical solutions, Spherically symmetric motion, Large data, Global-in-time  well-posedness}

\maketitle


\section{Introduction}\label{section1}
The motion of a self-gravitating viscous gaseous star in the universe can be described
 by the following 
 vacuum free boundary problem (\textbf{VFBP}) of the three-dimensional (3-D)  compressible Navier--Stokes--Poisson equations (\textbf{CNSP}):
\begin{equation}\label{eq:1.1-vfbp}
\begin{cases}
\rho_t+\dive(\rho \boldsymbol{u})=0 &\text{in }\Omega(t),\\[1pt]
(\rho \boldsymbol{u})_t+\dive(\rho \boldsymbol{u}\otimes \boldsymbol{u})+\nabla P+\rho\nabla\Psi =\dive \TT&\text{in }\Omega(t),\\[1pt]
\rho>0&\text{in }\Omega(t),\\[1pt]
\rho=0&\text{on }\partial\Omega(t),\\[1pt]
\cV(\partial\Omega(t))=\boldsymbol{u}\cdot\boldsymbol{n}(t)&\text{on }\partial\Omega(t),\\[1pt]
(\rho,\boldsymbol{u})|_{t=0}=(\rho_0,\boldsymbol{u}_0) &\text{in }\Omega:=\Omega(0).
\end{cases}
\end{equation}
Here, $t\geq 0$ is the time, $\boldsymbol{x}=(x_1,x_2,x_3)^{\top}\in \mathbb{R}^3$  is the Eulerian spatial coordinate, the open and bounded subset  $\Omega(t)\subset \mathbb{R}^3$ denotes the changing volume occupied by the fluid and $\Omega(0)=B_1=\{\boldsymbol{x}: \,|\boldsymbol{x}| <1\}$,
$\partial \Omega(t)$ denotes the moving vacuum boundary,  $\cV(\partial \Omega(t))$ denotes the normal velocity of $\partial \Omega(t)$, and $ \boldsymbol{n}(t) $ denotes the exterior unit normal vector to $\partial \Omega(t)$. 
Moreover, $\rho\geq 0$ denotes the mass density of the fluid, $\boldsymbol{u}=(u_1,u_2, u_3)^\top$ $\in \mathbb{R}^3$   the Eulerian velocity field, and $P$  the pressure. 
For polytropic fluids, the constitutive relation is given by  
\begin{equation}\label{pressureform}
P=A\rho^{\gamma},
\end{equation}
where $A>0$ is  the entropy  constant and  $\gamma>1$ is the adiabatic exponent.
$\Psi$ denotes the gravitational potential as
\begin{equation}\label{gravitational potential explicit expression}
    \Psi(t,\boldsymbol{x})=
\displaystyle -G\int_{\Omega(t)}\frac{\rho(t,\boldsymbol{\hat{x}})}{|\boldsymbol{x}-\boldsymbol{\hat{x}}|}\,\mathrm{d}\boldsymbol{\hat{x}}
\end{equation}
with the gravitational constant $G>0$. Since
$\rho> 0$ in $\Omega(t)$ and 
$\rho\equiv 0$ in $\mathbb{R}^3 \setminus \Omega(t)$, $\Psi$ satisfies 
\begin{equation}\label{gravitational potential}
\Delta \Psi=4\pi G\rho \quad \text{in $\mathbb{R}^3$},\qquad\quad \lim_{|\boldsymbol{x}|\rightarrow \infty} \Psi(t,\boldsymbol{x})=0.
\end{equation}
Particularly, $\Psi(t,\boldsymbol{x})$ is defined in the whole space $\mathbb{R}^3$ for each $t>0$.
$\mathbb{T}$ denotes the viscous stress tensor as: 
\begin{equation}\label{eq:1.1t}
\mathbb{T}=2\mu(\rho)D(\boldsymbol{u})+\lambda(\rho)\dive\boldsymbol{u}\,\mathbb{I}_3,
\end{equation} 
where $D(\boldsymbol{u})=\frac{1}{2}(\nabla \boldsymbol{u}+(\nabla \boldsymbol{u})^\top)$ is the deformation tensor, $\mathbb{I}_3$ is the $3\times 3$ identity matrix,
\begin{equation}\label{fandan}
\mu(\rho)=a_1 \rho^\delta,\qquad \lambda(\rho)=a_2\rho^\delta,
\end{equation}
for some  constant $\delta> 0$, $\mu(\rho)$ is the shear viscosity coefficient, $\lambda(\rho)+\frac{2}{3}\mu(\rho)$ is the bulk viscosity coefficient,  $a_1$ and $a_2$ are both constants satisfying
\begin{equation}\label{kelaoxiusi}
a_1>0,\qquad\quad 2a_1+3 a_2\geq 0.
\end{equation}

Equation $\eqref{eq:1.1-vfbp}_3$ asserts that there is no vacuum inside the fluid, $\eqref{eq:1.1-vfbp}_4$ is the vacuum boundary condition stating that $\rho$ vanishes along the moving boundary $\partial \Omega(t)$, $\eqref{eq:1.1-vfbp}_5$ is the kinetic boundary condition that requires that the boundary movement is tangential to the fluid particles, and $\eqref{eq:1.1-vfbp}_6$ provides the initial conditions for the density, velocity, and domain. We refer the readers to \cites{Chan, cox} for more details of the related background on \textbf{CNSP}.

With the local sound speed given by $c=\sqrt{P'(\rho)}$, satisfaction of the condition 
\begin{equation}\label{PVcondition}
-\infty<\frac{\partial c^2_0}{\partial\boldsymbol{n}}<0 \qquad \text{on $\partial \Omega$}
\end{equation}
defines a physical vacuum boundary (see \cites{LiuTP,taiping2,  coutand3,Jang-M2} and the references therein), where $c_0=c|_{t=0}$. Such kind of vacuum state has been extensively studied in the context of \textbf{VFBP} for compressible flows, which naturally arises in the study of the motion of gaseous stars or shallow water. In fact,  this notion of physical vacuum can be realized by the stationary solutions of the compressible Euler--Poisson equations or \textbf{CNSP} for gaseous stars, \textit{i.e.}, the Lane--Emden solution (see \cites{Chan, Lin,Makino}). For barotropic polytropic fluids,  \eqref{PVcondition} is equivalent to:
\begin{equation}\label{PVdistance}
-\infty<\frac{\partial \rho^{\gamma-1}_0}{\partial\boldsymbol{n}}<0 \quad \text{on} \   \partial \Omega,
\end{equation}
a condition necessary for the gas particles on the moving boundary to accelerate.

In this paper, for the physically important case that $(\mu,\lambda)$ depend on $\rho$  in a power law  ({\it i.e.},  $\delta \in (\frac{13}{18},1)$ in \eqref{fandan}), when $\gamma\in (\frac{4}{3},6\delta-3)$, we establish the global well-posedness of classical solutions of  \eqref{eq:1.1-vfbp} with large data for the spherically symmetric motion in a physical vacuum. Moreover, the solutions obtained here are smooth all the way up to the moving boundary. 

\subsection{Background of the problem}
The study on the dynamics of vacuum is crucial for understanding compressible flows.
Actually, the presence of vacuum leads to a degeneracy in the time evolution of hydrodynamic equations, which poses substantial challenges to the analysis of well-posedness theory; see \cites{fu3,lions, MPI, taiping1} and the references therein. It is noteworthy that, from a physics perspective, the concept of fluid velocity loses meaning in the absence of fluid. Moreover, the underlying assumption in the derivation of hydrodynamic equations from physical principles is that the fluid is nondilute and can be described as a continuum, which breaks down in vacuum regions, rendering hydrodynamic equations inapplicable for describing the time evolution of thermodynamic states there. Based on this consideration, the free boundary problem has garnered considerable attention for tracking vacuum problems of compressible flows. As  in  \eqref{eq:1.1-vfbp}, such  formulation of  vacuum problems  requires that the hydrodynamic equations  hold only on the domain
\begin{equation*}
\{(t,\boldsymbol{x}):\rho(t,\boldsymbol{x})>0\},
\end{equation*}
together with an evolution equation for the boundary $\partial \Omega(t)$ which separates the fluids and vacuum. A key feature of \textbf{VFBP} is that $\partial \Omega(t)$ propagates with finite speed when $\rho_0$ is of compact support. For the physical relevance of this phenomenon, we refer the reader to the survey by Nishida \cite{nishida} and the references therein.

Due to the high degeneracy in the time evolution of hydrodynamic equations caused by the behavior \eqref{PVcondition}  near the moving boundary,  the well-posedness of classical solutions to the corresponding free boundary problem becomes extremely challenging for both the inviscid and viscous compressible flow. For \textbf{VFBP} of the barotropic compressible Euler equations, a series of important progress has been made on the well-posedness of smooth solutions satisfying \eqref{PVcondition}. The local well-posedness theory was developed by Coutand--Shkoller \cite{coutand3} and Jang--Masmoudi \cite{Jang-M2}, respectively. Had\v{z}i\'{c}--Jang \cite{Jang-Hadzic} constructed global unique 3-D solutions for adiabatic exponents $\gamma \in (1,\frac{5}{3}]$, provided that the initial data are sufficiently close to the expanding compactly supported affine motions constructed by Sideris \cite{sideris} and satisfy \eqref{PVcondition}. Later,  by a different approach, Shkoller-Sideris \cite{sideris2} proved the stability of affine motions and global existence of classical solutions for all $\gamma>1$. On the other hand, for the \textbf{VFBP} of 3-D barotropic compressible Euler--Poisson equations, the local well-posedness theory of classical solutions satisfying \eqref{PVcondition}   was established by Gu--Lei \cite{GL1}, and the global well-posedness of classical solutions was obtained by Had\v{z}i\'{c}--Jang \cite{Jang-Hadzic2}, which stay close to Sideris affine solutions of the Euler equations. Some other important developments can be found in \cites{Rei,LiuTP,taiping2,coutand2, LinZeng, Oli1,Yan,Jang-Hadzic3,Strauss} and the references therein.

For the corresponding \textbf{VFBP} of viscous compressible flows in a physical vacuum (as defined in \eqref{PVcondition}), under the assumption that the viscosity coefficients are all constants, \textit{i.e.}, $\delta=0$ in \eqref{fandan}, Jang \cite{Jang} established the local well-posedness of strong solutions to  \textbf{CNSP} in the spherically symmetric and barotropic motion.
When the viscosity coefficient obeys a power law of $\rho$ ({\it i.e.}, $\rho^\delta$ with some exponent $\delta\in (0,1)$ in \eqref{fandan}), for \textbf{CNSP} in a  physical vacuum, Wang--Zhang--Zhu \cite{jiawenlocal} proved the local well-posedness of spherically symmetric  classical solutions, and 
Luo--Xin--Zeng \cite{LXZ3} established the global existence of spherically symmetric strong solutions under some smallness assumptions. On the other hand, recently, in Chen--Zhang--Zhu \cite{CZZ2}, the global well-posedness of classical solutions in a physical vacuum to  {\rm\textbf{VFBP}} of the viscous Saint--Venant system has been established in the spherically symmetric motion. Some other important developments can be found in \cites{OZ,Makino2,LWX2} and the references therein.

Despite these key progresses on the well-posedness theory of \textbf{VFBP} for compressible fluids, for the physically important case that $\delta\in [0,1)$ in \eqref{fandan}, no matter for the compressible Navier--Stokes equations or \textbf{CNSP}, the corresponding global well-posedness of classical solutions with large data and physical vacuum in multi-dimensional space remains open. Nevertheless, to gain deeper insight into the dynamics of viscous gaseous stars near their moving boundary or the center, \textit{e.g.}, whether the moving boundary of the star always expands with finite speed in positive time, or whether a collapse will occur at the center of the star in finite time, it is highly desirable to establish the global regularity of solutions with general smooth data for the corresponding \textbf{VFBP} of \textbf{CNSP}.

\subsection{Lagrangian reformulation of \textbf{VFBP} in spherical coordinates}\label{sec-1.2} 
In the current paper,  when the viscosity coefficients $(\mu(\rho), \lambda(\rho))$ given in \eqref{fandan} satisfy 
\begin{equation}\label{bdrelation}
\mu(\rho)=a_1\rho^\delta,\quad \lambda(\rho)=2a_1 (\delta-1)\rho^\delta \qquad \text{with} \ \ 
\delta\in (\frac{13}{18},1),
\end{equation}
we establish the  global existence of  spherically symmetric classical solutions taking the form 
\begin{equation}\label{ss-ass}
(\rho,\boldsymbol{u})(t,\boldsymbol{x}) = (\rho(t,|\boldsymbol{x}|), u(t,|\boldsymbol{x}|)\frac{\boldsymbol{x}}{|\boldsymbol{x}|}),
\end{equation}
of \textbf{VFBP} \eqref{eq:1.1-vfbp} with the initial data:
\begin{equation}\label{eq:IC}
(\rho,\boldsymbol{u})(0,\boldsymbol{x}) =(\rho_0,\boldsymbol{u}_0)(\boldsymbol{x})= (\rho_0(|\boldsymbol{x}|), u_0(|\boldsymbol{x}|)\frac{\boldsymbol{x}}{|\boldsymbol{x}|}).
\end{equation}
Our results hold for physical adiabatic exponents 
\begin{equation*}
\gamma\in (\frac{4}{3},6\delta-3)
\end{equation*}
in three spatial dimensions,
without any restriction on the size of the initial data. The  initial density  $\rho_0$ we considered  satisfies the following condition:
\begin{equation}\label{distanceeuler}
\rho_0^{\gamma-1}(\boldsymbol{x})\in H^3(\Omega), \quad \cK_1(1-|\boldsymbol{x}|) \leq \rho_0^{\gamma-1}(\boldsymbol{x})\leq \cK_2(1-|\boldsymbol{x}|)  \qquad \text{for all $\boldsymbol{x}\in \overline\Omega$},
\end{equation}
and some constants $\cK_2>\cK_1>0$. It is worth noting that  \eqref{distanceeuler} implies that $\rho_0$  satisfies the  physical vacuum boundary condition for spherically symmetric flows, {\it i.e.},
\begin{equation}\label{PVconditionr}
\rho^{\gamma-1}_0\sim 1-|\boldsymbol{x}| \qquad \text{as $|\boldsymbol{x}|$ close to the vacuum boundary $|\boldsymbol{x}|=1$}.
\end{equation}

For  the  spherically symmetric flow, we first reformulate problem \eqref{eq:1.1-vfbp} into the following form in $I(t)=[0,R(t))$ as the radial projection of  $\Omega(t)$ 
with $R(0)=1$ and $I=[0,1)$:
\begin{equation}\label{shallow-SSR-euler}
\begin{cases}
\displaystyle 
\rho_t+u\rho_x+ \rho \big(u_x+\frac{2 u}{x}\big)=0&\text{in } I(t),\\[1pt]
\displaystyle
\rho u_t+\rho u u_x+P_x+\rho \Psi_x =2a_1\delta\Big(\rho^\delta\big(u_x+\frac{2 u}{x}\big)\Big)_x-4 a_1 \frac{(\rho^\delta)_x u}{x}&\text{in } I(t),\\[2pt]
\rho>0&\text{in } I(t),\\[2pt]
\rho=0&\text{on } \{x=R(t)\},\\[2pt]
R'(t)=u(t,R(t))&\text{on } \{x=R(t)\},\\[2pt]
(\rho,u)|_{t=0}=(\rho_0,u_0)&\text{in } I(0):=I, 
\end{cases}
\end{equation}
where $x=|\boldsymbol{x}|$ and $\Psi_x$ defined in the whole space is given by
\begin{equation}\label{psi_x express int}
 \Psi_x(t,x)=
\begin{cases}
\displaystyle\frac{4\pi G }{x^2}\int_0^x\rho(t,\hat{x})\hat{x}^2\,\mathrm{d}\hat{x} \qquad &\text{for } x\in \bar{I}(t),\\[8pt]
\displaystyle
\frac{4\pi G}{x^2}\int_0^{R(t)}\rho(t,\hat{x})\hat{x}^2\,\mathrm{d}\hat{x} \qquad &\text{for } x\in [0,\infty)\backslash\bar{I}(t).
\end{cases}
\end{equation}
Moreover, $u|_{x=0}=0$, which can be derived from the continuity of $\boldsymbol{x}\mapsto \boldsymbol{u}(t,\boldsymbol{x})$ at $\boldsymbol{x}=\boldsymbol{0}$.

Second, problem \eqref{shallow-SSR-euler}, formulated in Eulerian coordinates on the moving interval $I(t)$, can be transformed to a problem on the fixed interval $I$ by introducing the Lagrangian coordinates. To this end, denote by $x=\eta(t,r)$ the position of the fluid particle $x\in I(t)$ at $t\geq 0$ so that
\begin{equation}\label{flowmap-r-la}
\eta_t(t,r)=u(t,\eta(t,r)),\qquad \eta(0,r)=r,
\end{equation}
and $(t,r)$ are the Lagrangian coordinates. Moreover,  $\eta(t,0)=0$ for $t\in [0,T]$ due to  $u(t,0)=0$.
Then, by introducing the Lagrangian density, velocity, and gravitational potential,
\begin{equation}\label{varrho-U}
\varrho(t,r)= \rho(t, \eta(t,r)),\qquad U(t,r)= u(t, \eta(t,r)),\qquad \Phi(t,r)=\Psi(t,\eta(t,r)),
\end{equation}
problem \eqref{shallow-SSR-euler} can be written in the following initial-boundary values problem ({\rm\bf IBVP}) in the fixed domain $I$ in Lagrangian coordinates $(t,r)$:
\begin{equation}\label{eq:VFBP-La}
\begin{cases}
\displaystyle \varrho_t + \varrho\big(\frac{U_r}{\eta_r}+\frac{2U}{\eta}\big)=0 & \text{in } (0, T] \times I,\\[5pt]
\displaystyle \eta_r \varrho U_t +A  (\varrho^{\gamma})_r+\varrho\Phi_r=2a_1\delta\Big(\varrho^\delta\big(\frac{U_r}{\eta_r}+ \frac{2U}{\eta}\big)\Big)_r - 4a_1 \frac{(\varrho^\delta)_r U}{\eta}& \text{in }(0, T] \times I,\\[5pt]
\eta_t = U & \text{in } (0, T] \times I,\\[5pt]
\varrho>0 & \text{in } (0, T] \times I,\\[5pt]
\varrho|_{r=1}=0 & \text{on } (0, T],\\[5pt] 
(\varrho, U, \eta)(0,r)= (\rho_0(r), u_0(r),r) & \text{for $r\in I$},
\end{cases} 
\end{equation}
where $\delta\in (\frac{13}{18},1)$, $\gamma\in (\frac{4}{3},6\delta-3)$, and $\Phi_r$ is given by
\begin{equation}\label{phi_r express int 1}
\Phi_r=
\begin{cases}
\displaystyle\frac{4\pi G \eta_r}{\eta^2}\int_0^r\varrho(t,\hat{r})\eta^2\eta_r\,\mathrm{d}\hat{r} \qquad &\text{for } r\in \bar{I},\\[4pt]
\displaystyle
\frac{4\pi G \eta_r}{\eta^2}\int_0^1\varrho(t,\hat{r})\eta^2\eta_r\,\mathrm{d}\hat{r} \qquad &\text{for } r\in [0,\infty)\backslash\bar{I}.
\end{cases}
\end{equation}
It follows from the facts that  $u|_{x=0}=0$ and $\eta|_{r=0}=0$ that $U(t,0)= u(t, \eta(t,0))=0$.

Moreover, it follows from Lemma \ref{lemma-initial} in Appendix \ref{appb} that condition \eqref{distanceeuler}, which is initially satisfied by $\rho_0(\boldsymbol{x})$ in 3-D Eulerian coordinates, can be rewritten in spherical coordinates as a condition satisfied by $\rho_0(r)$: for some  constants $\cK_2>\cK_1>0$ and $\beta=\gamma-1$,
\begin{equation}\label{distance-la}
\begin{aligned}
&r\big(\rho_0^\beta,(\rho_0^\beta)_r,(\rho_0^\beta)_{rr},\frac{(\rho_0^\beta)_r}{r},(\rho_0^\beta)_{rrr},(\frac{(\rho_0^\beta)_r}{r})_r\big)\in L^2(I),\\
&\cK_1(1-r)^\frac{1}{\beta}\leq \rho_0(r)\leq \cK_2(1-r)^\frac{1}{\beta} \qquad \text{for all $r\in I$}.
\end{aligned}
\end{equation}

In fact, the corresponding study of \eqref{eq:VFBP-La} is extremely difficult, since the structure of momentum equation $\eqref{eq:VFBP-La}_2$ is full of nonlinearity, degeneracy, and singularity,   \textit{i.e.},
\begin{equation}\label{cosingu}
\underbrace{\eta_r\varrho U_t}_{\circledast} +A (\varrho^{\gamma})_r+\varrho \Phi_r=\underbrace{2a_1 \big(\varrho^\delta \frac{U_r}{\eta_r}\big)_r }_{\Diamond} + \underbrace{4a_1(\delta-1)(\varrho^\delta)_r\frac{ U}{\eta}+ 4a_1\delta \varrho^\delta \big(\frac{ U}{\eta}\big)_r }_{\blacktriangle},
\end{equation}
where $\blacktriangle$ denotes the coordinate singularity, $\circledast$  the degenerate time evolution, and $\Diamond$  the degenerate spatial dissipation. 
For establishing the global well-posedness of classical solutions with general data, our analysis encounters the following  major obstacles:
\begin{itemize}
\item \textit{Possible cavitation or implosion within the fluids};
\vspace{4pt}
\item \textit{Possible degeneracy of the coordinate transformation between the Eulerian and Lagrangian coordinates.} 
\end{itemize}
Dealing with these two obstacles is \ challenging due to the combined difficulties of 
\begin{itemize}
\item [\rm (i)] Coordinate singularity $\blacktriangle$ at the origin, manifested by the singular factor $\tfrac{1}{\eta}$ in \eqref{cosingu};
\vspace{3pt}
\item [\rm (ii)] Strong degeneracy in both time  evolution $\circledast$ and spatial dissipation $\Diamond$ near the vacuum;
\item [\rm (iii)] Lack of uniform positive lower and upper bounds of $(\eta_r,\frac{\eta}{r})$;
\vspace{3pt}
\item [\rm (iv)]  Strong coupling between the Navier--Stokes equations  and  Poisson equation;
\vspace{3pt}
\item [\rm (v)]  Strong nonlinear dependence of the viscosity coefficients on $\rho$ (\textit{i.e.}, $\delta\in (\frac{13}{18},1)$ in \eqref{fandan}).
\end{itemize}

Thus, new ideas are required on the large data problems 
of \textbf{VFBP} \eqref{eq:1.1-vfbp} with physical vacuum. 
Fortunately, by exploiting the intrinsic degenerate-singular structure of \textbf{CNSP} in  Lagrangian coordinates $(t,r)$,  and assuming that
\eqref{distance-la} holds, we can prove the global well-posedness of classical solutions 
of \textbf{VFBP} \eqref{eq:1.1-vfbp} with physical vacuum and general data in the spherically symmetric motion.

\subsection{Main Results}

To clearly describe the main theorem,  for any function space 
$X$, integer $k\geq 1$ and functions $(\varphi,g_1,\cdots\!,g_k)$,
we introduce the following conventions: 
\begin{equation*}
\|\varphi(g_1,\cdots\!,g_k)\|_{X}:=\sum_{i=1}^k\|\varphi g_i\|_X,\qquad|\varphi(g_1,\cdots\!,g_k)|:=\sum_{i=1}^k|\varphi g_i|,
\end{equation*}
and   the definition of the Eulerian derivative $D_\eta$ for spherically symmetric functions $f=f(r)$:
\begin{equation}\label{Deta}
D_\eta f=\frac{f_r}{\eta_r},\qquad  D^k_\eta f=D_\eta ( D^{k-1}_\eta f).\end{equation}

It follows from  $\eqref{eq:VFBP-La}_1$ and $\eqref{eq:VFBP-La}_3$  that
\begin{equation}\label{eq:eta}
\varrho(t,r) = \frac{r^2\rho_0(r)}{\eta^2\eta_r}.
\end{equation}
Then problem \eqref{eq:VFBP-La}, combined with \eqref{Deta}, can be written as the following  \textbf{IBVP} for $(U, \eta)$ in $[0,T]\times \bar I$:
\begin{equation}\label{eq:VFBP-La-eta}
\begin{cases}
\displaystyle \varrho U_t +A D_\eta(\varrho^{\gamma})+\varrho D_\eta\Phi=2a_1\delta D_\eta\Big(\varrho^\delta\big(D_\eta U+ \frac{2U}{\eta}\big)\Big) - 4a_1  \frac{ U D_\eta(\varrho^\delta) }{\eta},\\[4pt]
\eta_t = U,\\[4pt]
(U, \eta)(0,r)= (u_0(r), r) \qquad \text{for $r\in I$},
\end{cases}
\end{equation}
where density $\varrho$ is given  by \eqref{eq:eta} and $D_\eta\Phi$ is given by 
\begin{equation}\label{D Phi expression}
D_\eta\Phi=
\begin{cases}
\displaystyle\frac{4\pi G}{\eta^2}\int_0^r\hat r^2\rho_0\,\mathrm{d}\hat{r} \qquad &\text{for } r\in \bar{I},\\
\displaystyle \frac{4\pi G }{\eta^2}\int_0^1\hat r^2\rho_0\,\mathrm{d}\hat{r} \qquad &\text{for } r\in [0,\infty)\backslash\bar{I}.
\end{cases}
\end{equation}

For simplicity,   we   define classical solutions of  problem \eqref{eq:VFBP-La-eta} as follows:
\begin{Definition}\label{definition-lag}
Let $T>0$. A vector function $(U,\eta)(t,r)$ is called 
a classical solution of {\rm\bf IBVP} \eqref{eq:VFBP-La-eta} 
in $[0,T]\times \bar I$ if the following properties hold{\rm:}
\begin{enumerate}
\item[{\rm (i)}] $(U,\eta)(t,r)$ satisfies equations 
$\eqref{eq:VFBP-La-eta}_1${\rm--}$\eqref{eq:VFBP-La-eta}_2$ pointwise in $(0,T]\times \bar I$, and  takes the initial data $\eqref{eq:VFBP-La-eta}_3$ continuously{\rm ;}
\item[{\rm (ii)}] $\eta_r(t,r)$ and $\frac{\eta}{r}(t,r)$ are strictly positive in $[0,T]\times \bar I${\rm:}
\begin{equation*}
\inf_{[0,T]\times \bar I} \ \eta_r >0, \qquad \inf_{[0,T]\times \bar I} \ \frac{\eta}{r}>0;
\end{equation*}
\item[{\rm (iii)}] $(U,\eta)(t,r)$ satisfies the following regularity properties{\rm:}
\begin{equation*}
\begin{aligned}
&\big(U,U_r,\frac{U}{r}\big)\in C([0,T];C(\bar I)),\qquad \big(U_{rr},(\frac{U}{r})_r,U_t\big)\in C((0,T];C(\bar I)),\\
&\big(\eta,\eta_r,\frac{\eta}{r}\big)\in C^1([0,T];C(\bar I)),\qquad \,\big(\eta_{rr},(\frac{\eta}{r})_r\big)\in
C^1((0,T];C(\bar I)).
\end{aligned}
\end{equation*}
\end{enumerate}
\end{Definition}

Next, to clearly state our main results, we need to define the following nonlinear weighted energy functional and related parameters:
\begin{itemize}
\item A universal parameter $\varepsilon_0>0$ throughout the paper, which is small enough and satisfies
\begin{equation}\label{varepsilon0}
0<\varepsilon_0 < \min\Big\{\frac{3\gamma-4}{2(\gamma-1)},\frac{1-\delta}{\gamma-1},\frac{1}{100}\Big\}.
\end{equation}

\item The total energy:
\begin{equation}\label{E-1}
\cE(t,f)=\cE_{\mathrm{in}}(t,f)+\cE_{\mathrm{ex}}(t,f),
\end{equation}
where
\begin{equation}\label{E-2}
\begin{aligned}
\qquad \, \mathcal{E}_{\mathrm{in}}(t,f)&:=\|\zeta r(f,\mathscr{D}_\eta f,f_t,\mathscr{D}_\eta f_t)(t)\|_{L^2(I)}^2 +\|\zeta r(\mathscr{D}_\eta^2 f,\mathscr{D}_\eta^3 f)(t)\|_{L^2(I)}^2,\\
\qquad  \, \mathcal{E}_{\mathrm{ex}}(t,f)&:=\big\|\rho_0^\frac{1}{2}(f,f_t)(t)\big\|_{L^2(\frac{1}{2},1)}^2+\big\|\rho_0^\frac{\delta}{2}(D_\eta f,D_\eta f_t)(t)\big\|_{L^2(\frac{1}{2},1)}^2\\
&\quad \ +\big\|\rho_0^{(\frac{3}{2}-\varepsilon_0)(\gamma-1)}(D_\eta^2 f,D_\eta^3 f)(t)\big\|_{L^2(\frac{1}{2},1)}^2,
\end{aligned}
\end{equation}
the operator $\mathscr{D}_\eta$ is defined as follows:
\begin{equation}\label{huad1}
\qquad \begin{aligned}
\mathscr{D}_\eta f&:=(D_\eta f,\frac{f}{\eta}),\qquad \mathscr{D}_\eta^2f:=(D_\eta^2 f,D_\eta(\frac{f}{\eta})),\\
\mathscr{D}_\eta^3f&:=(D_\eta^3 f,\frac{D_\eta^2 f}{\eta},D_\eta^2(\frac{f}{\eta}),\frac{1}{\eta}D_\eta(\frac{f}{\eta})),
\end{aligned}
\end{equation}
and $\zeta=\zeta(r)\in C^\infty[0,1]$ denotes a decreasing cut-off function satisfying
\begin{equation}\label{zeta}
\zeta\in [0,1],\qquad \zeta(r)=1 \ \ \text{for  $r\in \big[0,\frac{1}{2}\big]$},\qquad \zeta(r)=0 \ \ \text{for $r\in \big[\frac{5}{8},1\big]$}.
\end{equation}

\item The total dissipation:
\begin{equation}\label{D-1}
\cD(t,f)=\cD_{\mathrm{in}}(t,f)+\cD_{\mathrm{ex}}(t,f),
\end{equation}
where 
\begin{equation}\label{D-2}
\begin{aligned}
\qquad\cD_{\mathrm{in}}(t,f)&:=\|\zeta r(f_{tt},\mathscr{D}_\eta^2 f_{t},\mathscr{D}_\eta^4 f)(t)\|_{L^2(I)}^2,\\
\qquad\cD_{\mathrm{ex}}(t,f)&:=\big\|\rho_0^\frac{1}{2}f_{tt}(t)\big\|_{L^2(\frac{1}{2},1)}^2+\big\|\rho_0^{(\frac{3}{2}-\varepsilon_0)(\gamma-1)}(D_\eta^2 f_t,D_\eta^4 f)(t)\big\|_{L^2(\frac{1}{2},1)}^2,\\
\mathscr{D}_\eta^4f&:=(D_\eta^4 f,D_\eta(\frac{D_\eta^2 f}{\eta}),D_\eta^3(\frac{f}{\eta}),D_\eta(\frac{1}{\eta}D_\eta(\frac{f}{\eta}))).
\end{aligned}
\end{equation}
\end{itemize}

Now we are ready to  state our main result on the global well-posedness of classical solutions with general data  of \textbf{CNSP}  in Lagrangian coordinates.  
\begin{Theorem}\label{Theorem1.1} 
Let $A>0$, $a_1>0$, $G>0$, and   $(\delta,\gamma)$ satisfy
\begin{equation}\label{gamma-hold}
\delta\in (\frac{13}{18},1),\quad \gamma\in (\frac{4}{3},6\delta-3). 
\end{equation}
If $\rho_0(r)$ satisfies \eqref{distance-la} and $u_0(r)$ satisfies
\begin{equation}\label{a2}
\cE(0,U)<\infty,
\end{equation}
then, for any $T>0$, problem  \eqref{eq:VFBP-La-eta} admits a unique classical solution $(U,\eta)(t,r)$ in $[0,T]\times \bar I$ such that 
\begin{equation}\label{b1}
\begin{aligned}
&\sup_{t\in[0,T]}\big(\cE(t,U)+t\cD(t,U)\big)+\int_0^T\cD (t,U)\,\mathrm{d}t\leq C(T),\\
&(\eta_r,\frac{\eta}{r})(t,r)\in [C^{-1}(T),C(T)] \qquad \text{for all $(t,r)\in [0,T]\times \bar I$},
\end{aligned}
\end{equation}
where $C(T)>1$ is a constant  depending  only on  $(a_1,A,\delta,\gamma,G,\varepsilon_0,\rho_0,u_0,\cK_1, \cK_2,T)$. Moreover, such a classical solution admits the following boundary condition{\rm:}
\begin{equation}\label{N111}
\Big(D_\eta U-\frac{2(1-\delta)}{\delta}\frac{U}{\eta}\Big)\Big|_{r=1}=0\qquad \text{on $(0,T]$}.
\end{equation}
\end{Theorem}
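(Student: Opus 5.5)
The proof will be a continuation argument: local well-posedness plus global a priori estimates. For local existence I will invoke the result of Wang--Zhang--Zhu \cite{jiawenlocal}, adapted to the energy $\cE$ and dissipation $\cD$ defined above. It yields a unique classical solution on some $[0,T_*]$ with $\eta_r,\frac{\eta}{r}$ bounded above and below, and with $\sup_t(\cE+t\cD)+\int\cD\,\mathrm{d}t$ finite. The boundary identity \eqref{N111} will come out at the end. We multiply $\eqref{eq:VFBP-La-eta}_1$ by $\varrho^{1-\delta}$ and let $r\to1$. Since $\varrho\sim(1-r)^{1/\beta}$ and the solution has the regularity provided by $\cE,\cD$, every term except $\varrho^{1-\delta}D_\eta(\varrho^\delta)\big(2a_1\delta(D_\eta U+\frac{2U}{\eta})-4a_1\frac{U}{\eta}\big)$ vanishes. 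This leaves $\delta D_\eta U+2(\delta-1)\frac{U}{\eta}=0$ at $r=1$.

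The heart of the proof is a $T$-dependent a priori bound, with constants independent of $T_*$. I will proceed in the following order.

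\textbf{Step 1: basic energy.} Multiply by $\eta^2\eta_r U$ and integrate. The relation $\lambda=2a_1(\delta-1)\rho^\delta$ makes the viscous terms a nonnegative dissipation $\int\varrho^\delta\big((D_\eta U-\frac{U}{\eta})^2+\dots\big)\eta^2\eta_r$. The gravity term is handled by $\gamma>\frac43$: the gravitational energy $\sim\int\frac{m(r)}{\eta}r^2\rho_0$ is controlled by the internal energy plus mass (the classical Lane--Emden bound). This gives the energy bound and $\int_0^T\!\!\int\varrho^\delta|\mathscr D_\eta U|^2\,\mathrm{d}t<C$.

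\textbf{Step 2: BD-type entropy.} The special relation \eqref{bdrelation} between $\mu$ and $\lambda$ is exactly the Bresch--Desjardins structure. I introduce the effective velocity $V=U+2a_1 D_\eta(\varrho^{\delta-1})\cdot c$. Then $V$ satisfies a damped transport-type equation, $\varrho V_t+AD_\eta\varrho^\gamma+\varrho D_\eta\Phi=0$ up to lower-order terms. This yields $\sup_t\int\varrho|D_\eta\varrho^{\delta-1}|^2\eta^2\eta_r+\int_0^T\!\!\int\varrho^{\gamma+\delta-2}|D_\eta\varrho|^2<C(T)$.

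\textbf{Step 3: pointwise bounds on $\varrho$, equivalently on $\eta_r$ and $\frac{\eta}{r}$.} Using $\varrho=\frac{r^2\rho_0}{\eta^2\eta_r}$, I will combine the BD bound with $\frac{\eta^3}{r^3}$ being an average of $\frac{\rho_0}{\varrho}$, together with weighted Sobolev/Hardy inequalities near $r=1$ (weights $\rho_0\sim(1-r)^{1/\beta}$) and near $r=0$ (radial weights $r^2$). This gives upper bounds on $\varrho/\rho_0$, i.e. no concentration. For lower bounds (no cavitation) I plan to use the $V$-equation along particle paths, integrating $\partial_t\log\varrho$ via $D_\eta U+\frac{2U}{\eta}$ expressed through $V$.

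\textbf{Step 4: higher-order weighted estimates.} With $\eta_r,\frac{\eta}{r}\in[C^{-1}(T),C(T)]$ in hand, I will do energy estimates for $U_t$ by differentiating in time. Then I recover $\mathscr D_\eta^2U,\mathscr D_\eta^3U,\mathscr D_\eta^4U$ elliptically, viewing the momentum equation as a degenerate ODE in $r$ for $D_\eta U+\frac{2U}{\eta}$ with weight $\varrho^\delta$. Interior and exterior pieces are separated by $\zeta$. The exterior weight $\rho_0^{(3/2-\varepsilon_0)(\gamma-1)}$ is dictated by the Hardy inequalities. The constraints $\gamma<6\delta-3$ and $\delta>\frac{13}{18}$ are exactly what make the weight exponents compatible, so that $\varrho^{\delta-1}D_\eta\varrho$-type coefficients and $D_\eta^2\varrho^{\beta}$ stay in the right spaces. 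The time-weighted $t\cD$ bound follows by multiplying the $U_t$-level estimates by $t$. Uniqueness will follow from an $L^2$ energy estimate on the difference of two solutions in Lagrangian coordinates.

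\textbf{Main obstacle.} I expect the hardest part to be Step 3: the uniform-in-$T_*$ two-sided bounds on $\eta_r$ and $\frac{\eta}{r}$ up to both the center $r=0$ and the vacuum boundary $r=1$ for large data. Near the center, the coordinate singularity $\frac1\eta$ must be handled. I will try to exploit that $\frac{\eta}{r}$ is an average of $\eta_r$-type quantities, together with the BD quantity $D_\eta\varrho^{\delta-1}$, which controls oscillation of $\varrho$ in $L^\infty$ through a one-dimensional Sobolev argument in the variable $\eta^3$. Near the boundary, one must avoid losing the physical-vacuum rate. There I will use the algebraic relation $D_\eta\varrho^{\beta}\approx$ bounded, obtained from the BD estimate plus the $\varepsilon_0$-weighted Hardy inequality.
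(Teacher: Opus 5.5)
Steps 1 and 4 and the continuation argument match the paper in outline. Step 3, which you rightly call the heart of the proof, does not go through with the tools you list, and this is a genuine gap.

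\textbf{The center.} Near the origin, the basic energy and your BD bound cannot bound $\varrho$ pointwise. Locally the BD quantity is $\|\nabla\rho^{\delta-\frac12}\|_{L^2}$ in $\mathbb{R}^3$. A profile with $\rho^{\delta-\frac12}\sim|\boldsymbol{x}|^{-\alpha}$ near the origin, $0<\alpha\ll1$, has finite mass, finite $\int\rho^\gamma$ and finite $\int|\nabla\rho^{\delta-\frac12}|^2$, yet is unbounded.

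Concretely, your one-dimensional Sobolev argument in $s=\eta^3$ fails. Since $D_\eta=3s^{2/3}\partial_s$, the BD quantity equals $3\int\varrho\,s^{4/3}|\partial_s\varrho^{\delta-1}|^2\,\mathrm{d}s$, and Cauchy--Schwarz leaves the divergent factor $\int_0 s^{-4/3}\varrho^{-1}\,\mathrm{d}s$. Hardy inequalities with $r^2$ weights cannot repair this; such arguments only yield a bound like $\varrho\lesssim\eta^{-1/(2\delta-1)}$.

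The missing ingredient is the $L^p$-energy estimates of Lemma \ref{lp-uv} for $(r^2\rho_0)^{1/p}U$ and $(\zeta r^2\rho_0)^{1/p}V$.
\begin{itemize}
\item They hold only for $p<p_*(\delta)$, beyond which the quadratic form $(p-1)\delta X^2-2p(1-\delta)XY+(4\delta-2)Y^2$ stops being positive.
\item They are closed using flow-map-weighted bounds $|\zeta_a\eta^{l}\varrho|_\infty\le C$ for $l\in[\frac{1}{2\delta-1},\frac{2}{\delta}]$, together with an iteration in the exponents.
\item With them one gets $|\zeta\varrho^\delta|_\infty\lesssim1+|\zeta\eta^{-\frac{2}{p-1}}\eta_r\varrho|_1^{\frac{p-1}{p}}|(\zeta r^2\rho_0)^{\frac1p}(U,V)|_p$, which is useful only for $p>3$.
\end{itemize}
It is in this window $3<p<p_*(\delta)$, and the exponent bookkeeping around it, that $\delta>\frac{13}{18}$ and $\gamma<6\delta-3$ are used. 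They are not used in the higher-order estimates, where you place them.

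Your BD bound also cannot be global in $r$. At $t=0$, $\rho_0|(\rho_0^{\delta-1})_r|^2\sim(1-r)^{\frac{2\delta-1}{\beta}-2}$ is not integrable when $\gamma\ge2\delta$, which the hypotheses allow once $\delta>\frac34$. The paper localizes the BD estimate with $\zeta_a$ and pays for the commutator with an exterior lower bound on $\eta_r$.

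\textbf{The boundary.} Near $r=1$ the paper gets the lower bound on $\eta_r$ from two facts. The first is $|\eta^2\varrho^\delta|_\infty\le C(T)$, obtained by integrating the momentum equation times $\eta^2\eta_r$ over $[\tilde r,1]$. The second is \eqref{N111}, which integrates in time to $(\eta^{2(1-\frac1\delta)}\eta_r)(t,1)=1$. So \eqref{N111} is an input to the a priori estimates, not a by-product. Your substitute, a bound on $D_\eta(\varrho^\beta)$ from BD plus Hardy, is not available at that stage. Indeed $D_\eta(\varrho^\beta)=\frac{\beta}{2a_1\delta}\varrho^{\gamma-\delta}(V-U)$, which needs pointwise control of $\rho_0^{\gamma-\delta}V$ and of $U$ near $r=1$.

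\textbf{No cavitation.} Integrating $\partial_t\log\varrho=-(D_\eta U+\frac{2U}{\eta})$ along particle paths requires $\int_0^T|D_\eta U|_\infty\,\mathrm{d}t$, which you do not have. Rewriting through $V$ only trades it for $D_\eta^2(\varrho^{\delta-1})$. The paper proceeds as follows.
\begin{itemize}
\item It proves $L^\infty$ bounds on $V$ from \eqref{V-solution} and $\int_0^t|\zeta\varrho^{\gamma-\delta}U|_\infty\,\mathrm{d}s\le C+\varepsilon\sup_s|\zeta V|_\infty$, again via the $L^p$ bounds.
\item It then proves unweighted $\sqrt{\eta_r}$-energy estimates for $U$ and $\varrho^{\gamma-\delta}V$.
\item This gives $L^2$-control of $\chi_a\sqrt{\eta_r}D_\eta(\varrho^{\delta-1})=\frac{\delta-1}{2a_1\delta}\chi_a\sqrt{\eta_r}(V-U)$.
\item A one-dimensional Sobolev inequality and Gr\"onwall then give $|\chi_a\varrho^{\delta-1}|_\infty\le C(a,T)$.
\end{itemize}

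\textbf{A smaller slip.} To derive \eqref{N111} you must multiply by $\varrho^{\gamma-1-\delta}$, not $\varrho^{1-\delta}$. With your multiplier the surviving coefficient is $\delta D_\eta\varrho=\frac{\delta}{\beta}\varrho^{1-\beta}D_\eta(\varrho^\beta)$, which tends to $0$ when $\gamma<2$. The limit $r\to1$ then gives only $0=0$.
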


We make some remarks on the results obtained in Theorem \ref{Theorem1.1}.

\begin{Remark}\label{moregamma}
Notice that stationary solutions $(\varrho,U)=(\rho_0,0)$ of problem \eqref{eq:VFBP-La-eta} {\rm(}steady gaseous spheres{\rm)} satisfy the following{\rm:}
\begin{equation*}
A(\rho_0^{\gamma})_r+\frac{4 \pi G \rho_0}{r^2} \int_0^r  \hat r^2\rho_0 \mathrm{d}\hat r=0,
\end{equation*}
which can be transformed into the famous Lane--Emden equation. These solutions are also stationary solutions of the Euler--Poisson system.  
It is interesting to note that when $\delta\in (\frac{13}{18},\frac{5}{6})$, our results hold for all $\gamma\in (\frac{4}{3},2)$, which coincides with the regime where Lane--Emden stars are linearly stable
{\rm(}see \cite{Lin}{\rm)}. Since the initial profile $\rho_0$ we considered in \eqref{distance-la} is bounded independently of $\gamma$, the global well-posedness shown in {\rm Theorem \ref{Theorem1.1}}  captures the physical vacuum boundary behavior of the Lane--Emden star configurations for all $\gamma\in (\frac{4}{3},3)$. 
\end{Remark}

\begin{Remark}\label{initialexample}
We give an example of the initial data required in {\rm Theorem \ref{Theorem1.1}}. More precisely, we consider the initial data $(\rho_0,u_0)$ satisfying
\begin{equation*} 
\rho_0(r)=(1-r^2)^\frac{1}{\gamma-1}, \qquad  u_0(r)\in C_\mathrm{c}^\infty(0,1).
\end{equation*}
Then we can check that $\rho_0$ satisfies assumption \eqref{distance-la}. For $u_0$, it suffices to prove that the initial values $(U_t,U_{tr})(0,r)$ satisfy \eqref{a2}. Indeed, we first see from $\eqref{eq:VFBP-La-eta}_1$ and a direct calculation that $(U_t,U_{tr})(0,r)$ satisfy the following compatibility conditions{\rm:}
\begin{equation*} 
\begin{aligned}
U_t(0,r)&=\frac{2a_1\delta}{\rho_0}\Big(\rho_0^{\delta}\big((u_0)_r+\frac{2(\delta-1)}{\delta}\frac{u_0}{r}\big)\Big)_r+4a_1\rho_0^{\delta-1} (\frac{u_0}{r})_r\\
&\quad -\frac{A\gamma}{\gamma-1}(\rho_0^{\gamma-1})_r-\frac{4\pi G}{r^2}\int_0^r \hat r^2\rho_0\,\mathrm{d}\hat r,\\
U_{tr}(0,r)&=2a_1\delta\Big(\frac{1}{\rho_0} \big(\rho_0^{\delta} (u_0)_r\big)_r+\frac{2(\delta-1)}{\delta}\frac{1}{\rho_0}\big(\rho_0^{\delta}\frac{u_0}{r}\big)_r \Big)_r+4a_1\Big(\rho_0^{\delta-1} (\frac{u_0}{r})_r\Big)_r\\
&\quad -\frac{A\gamma}{\gamma-1}(\rho_0^{\gamma-1})_{rr}- 4\pi G \rho_0+ \frac{8\pi G}{r^3}\int_0^r \hat r^2\rho_0\,\mathrm{d}\hat r . 
\end{aligned}
\end{equation*}
Then it is direct to verify that 
\begin{equation*}
\zeta r U_t(0,r), \ \zeta r \sD_r U_{t}(0,r)\in L^2(I),\qquad  \rho_0^\frac{1}{2}U_t(0,r), \ \rho_0^\frac{\delta}{2}U_{tr}(0,r)\in L^2(\frac{1}{2},1),
\end{equation*}    
due to the regularities of $\rho_0$ and the fact that $u_0$ is compactly supported in $(0,1)$.
\end{Remark}

\begin{Remark}\label{rmk1.4}
We briefly explain how the boundary condition{\rm :} 
$(D_\eta U-\frac{2(1-\delta)}{\delta}\frac{U}{\eta})|_{r=1}=0$ in \eqref{N111} can be derived. First, it follows from \eqref{flowmap-r-la}, \eqref{distance-la}, $\eqref{b1}_2$, and {\rm Definition \ref{definition-lag}} that 
\begin{equation}\label{eq117}
\begin{gathered}
(\varrho^{\gamma-1}, \, D_\eta(\varrho^{\gamma-1}),\, U,\ D_{\eta}U, \, D_{\eta}^2U, \, D_{\eta}\big(\frac{U}{\eta}\big), \, U_t)\in  C((0,T]\times [\frac{1}{2},1]).
\end{gathered}
\end{equation}
Next, we reformulate $\eqref{eq:VFBP-La-eta}_1$ by multiplying $\varrho^{\gamma-1-\delta}${\rm:}
\begin{equation}\label{reform boundary condition}
\begin{aligned}
        &\varrho^{\gamma-\delta}U_t+\frac{A\gamma}{\beta}\varrho^{\gamma-\delta}D_{\eta}(\varrho^{\gamma-1})+\varrho^{\gamma-\delta}D_\eta\Phi\\
        &=2a_1\delta\varrho^{\gamma-1} D_\eta^2U+4a_1\delta\varrho^{\gamma-1} D_\eta(\frac{U}{\eta})+\frac{2a_1\delta^2}{\gamma-1} D_\eta(\varrho^{\gamma-1})\Big(D_\eta U-\frac{2(1-\delta)}{\delta}\frac{U}{\eta}\Big).
\end{aligned}
\end{equation}
Then, taking the limit $r\to 1$ in \eqref{reform boundary condition} and using \eqref{D Phi expression}, \eqref{eq117}, and the established lower bounds of  $(\eta,\eta_r)$ near the boundary, we obtain 
\begin{equation*}
D_\eta(\varrho^{\gamma-1})\Big(D_\eta U-\frac{2(1-\delta)}{\delta}\frac{U}{\eta}\Big)\Big|_{r=1}=0.
\end{equation*}
Since $D_\eta(\varrho^{\gamma-1})|_{r=1}\neq 0$, owing to $\rho_0^{\gamma-1} \sim 1-r$ and \eqref{eq:eta}, 
it follows immediately that \eqref{N111} holds. We emphasize that the boundary condition \eqref{N111} plays a crucial role in establishing the uniform lower and upper bounds of $(\eta_r,\frac{\eta}{r})$ in {\rm \S \ref{Section-etarlower}}--{\rm \S \ref{Section-etarupper}}. 
\end{Remark}

\begin{Remark}
For  {\rm\textbf{VFBP}} \eqref{eq:1.1-vfbp}, it follows from \eqref{distanceeuler} and {\rm Theorem \ref{Theorem1.1}}  that  the usual
stress-free boundary condition holds automatically{\rm:} for $t\in (0,T]$ and $\boldsymbol{x}\in \partial\Omega(t)$,
\begin{equation*}
(\mathbb{T}-P\II_3)\cdot \boldsymbol{n} = \big(2a_1\rho^\delta D(\boldsymbol{u})+a_2\rho^\delta \dive\boldsymbol{u}\,\mathbb{I}_3-A\rho^\gamma\II_3\big)\cdot \boldsymbol{n}=\boldsymbol{0}.
\end{equation*}
\end{Remark}

\begin{Remark}\label{generalcase}We give some comments on assumption \eqref{bdrelation} for viscosity coefficients. To handle the nonlinearity of viscosity coefficients at the center of the star, in {\rm\S \ref{Section-etarlower}}, we need to give some flow-map weighted estimates on the density in a  neighborhood of the origin, \text{i.e.}, 
\begin{equation*}
\|(\zeta\eta^2\eta_r)^\frac{1}{2}  D_\eta\varrho^{\delta-\frac{1}{2}} (t)\|_{L^\infty([0,T];L^2(I))}<\infty\qquad \text{for any $T>0$},
\end{equation*}
and assumption \eqref{bdrelation} is required in the derivation of this estimate. Such an estimate was motivated by the well-known BD entropy, which was first proposed by Bresch--Desjardins \cite{bd6} for degenerate compressible viscous flows in multi-dimensional space.
\end{Remark}

The rest of this  paper is organized as follows: 
In \S \ref{Section-notation}, we outline the main strategy for proving Theorem \ref{Theorem1.1}.
\S\ref{Section-etarlower}--\S\ref{Section-globalestimates}
are devoted to deriving the desired global uniform estimates for solutions in the purpose-built function spaces.
With these estimates in hand, we obtain the desired global well-posedness of classical solutions in \S\ref{Section-global} by continuation arguments. At last,  some basic lemmas, useful coordinate transformations for spherically symmetric functions, and remarks on the initial conditions are collected in Appendices \ref{appendix A}--\ref{appb}.

\section{Notations and Main Strategies}\label{Section-notation}

In this section, we first present some notations in \S\ref{section-notaions}, which will be  used throughout this paper.  In \S\ref{subsection-strategy}, we show the main strategies of  our analysis.

\subsection{Notations}\label{section-notaions}
The following notations will be frequently used in this paper.
\subsubsection{Notations on coordinates and operators}\label{211}
\begin{itemize}
\item $\boldsymbol{x}\in \RR^3$ denotes the 3-D Eulerian spatial coordinates 
and  $r\in [0,1)$  the Lagrangian  radial coordinate.

\item For any function $f$ defined on a measurable subset of $\mathbb{R}^l$ ($l\geq 1$), if the independent variables of $f$ are $\boldsymbol{z}=(z_1,\cdots\!,z_l)^\top$, then 
\begin{equation*}
\begin{aligned}
&\qquad \partial_{\boldsymbol{z}}^{\boldsymbol{\varsigma}} f=\partial_{z_1}^{\varsigma_1}\cdots\partial_{z_l}^{\varsigma_l} f=f_{\underbrace{\text{\tiny$z_1\cdots z_1$}}_{\text{$\varsigma_1$-times}}\cdots\underbrace{\text{\tiny$z_l\cdots z_l$}}_{\text{$\varsigma_l$-times}}}= \frac{\partial^{\varsigma_1+\cdots+ \varsigma_l}}{\partial z_1^{\varsigma_1}\cdots \partial z_l^{\varsigma_l}}f \qquad \text{for }{\boldsymbol{\varsigma}}=(\varsigma_1,\cdots\!,\varsigma_l)\in \mathbb{N}^l.
\end{aligned}
\end{equation*}
In particular, $\nabla_{\boldsymbol{z}} f=(\partial_{z_1} f,\cdots\!,\partial_{z_l} f)^\top$, and for the derivatives with respect to the
variable $\boldsymbol{x}=(x_1,x_2,x_3)^\top\in \mathbb{R}^3$, 
we use the notation: 
$(\partial_i^{\varsigma_i},\partial^{\boldsymbol{\varsigma}},\nabla)=(\partial_{x_i}^{\varsigma_i}, \partial_{\boldsymbol{x}}^{\boldsymbol{\varsigma}},\nabla_{\boldsymbol{x}})$.

\smallskip
\item For any function $f=f(t,r)$, define the operators $D^k_\eta$ in \eqref{Deta} for any  integer $k\geq 1$, 
and   $\mathscr{D}^k_\eta$ ($k=1,2,3,4$) in \eqref{huad1} and $\eqref{D-2}_2$.
Moreover, one has 
\begin{equation*}
\begin{aligned}
&D_\eta(\mathscr{D}_\eta f)=\mathscr{D}_\eta^2 f,\qquad D_\eta(\mathscr{D}_\eta^3 f) =\mathscr{D}_\eta^4 f,\\
&|D_\eta f|^2+\Big|\frac{f}{\eta}\Big|^2=|\mathscr{D}_\eta f|^2,\qquad |D_\eta^2 f|^2+\Big|D_\eta(\frac{f}{\eta})\Big|^2=|\mathscr{D}_\eta^2 f|^2,\\
&|D_\eta(\mathscr{D}_\eta^2 f)|^2+\Big|\frac{\mathscr{D}_\eta^2 f}{\eta}\Big|^2=|\mathscr{D}_\eta^3 f|^2,\qquad |D_\eta^2(\mathscr{D}_\eta^2 f)|^2+\Big|D_\eta\big(\frac{\mathscr{D}_\eta^2 f}{\eta}\big)\Big|^2=|\mathscr{D}_\eta^4 f|^2.
\end{aligned}
\end{equation*} 
\end{itemize}

\subsubsection{Notations on function spaces}
\begin{itemize}

\item For any open set $\mathrm{Q}\subset \mathbb{R}^q$ ($q\in \NN^*$), $C^\ell(\overline{\mathrm{Q}})$ $(C(\overline{\mathrm{Q}})=C^0(\overline{\mathrm{Q}}))$ denotes the space of all functions $f(\boldsymbol{z})\in C^\ell(\mathrm{Q})$ such that $\nabla_{\boldsymbol{z}}^j f$ $(0\leq j\leq \ell)$ admits a unique continuous extension to $\overline{\mathrm{Q}}$, which is equipped with the norm:
\begin{equation*}
\|f\|_{C^\ell(\overline{\mathrm{Q}})}:= \max_{0\leq j\leq \ell}\|\nabla_{\boldsymbol{z}}^j f\|_{L^\infty(\mathrm{Q})}.
\end{equation*} 
If $\mathrm{Q}$ is an open interval $(a,b)$, then we simply write $C^\ell[a,b]=C^\ell ([a,b])$.
\smallskip
\item For any function space $X(I)$, unless otherwise specified, we denote:
\begin{equation*}
\begin{aligned}
&X=X(I),\quad |f|_p=\|f\|_{L^p},\quad \|f\|_{k,p}=\|f\|_{W^{k,p}},\quad \|f\|_k=\|f\|_{H^k},\\[3pt]
&L^p_{\mathrm{loc}}:=\big\{f:\, f\in L^p(K) \,\, \text{for any open interval $K$ such that $\bar K\subset I\backslash \{0\}$}\big\},\\[3pt]
&H^k_{\mathrm{loc}}:=\big\{f: \, \partial_r^jf\in L^1_{\mathrm{loc}} \,\, \text{for any $0\leq j\leq k$}\big\},\qquad \|f\|_{X_t(Y)}=\|f\|_{X([0,T];Y(I))}.
\end{aligned}
\end{equation*}

\item Let $J\subset I$ and $0\leq \mathrm{w}=\mathrm{w}(r)$ be defined on $J$. Define the weighted function space as 
\begin{equation*}
H^{k}_\mathrm{w}(J):=\big\{f:\, \sqrt{\mathrm{w}}\partial_r^j f\in L^2(J)\big\},\qquad H^{k}_\mathrm{w}=H^{k}_\mathrm{w}(I)\quad \text{if $J=I$}.
\end{equation*}

\end{itemize}

\subsubsection{Other notations}\label{othernotation}
\begin{itemize}

\item $|C_1-C_2|\ll 1$ denotes that $C_1$ is sufficiently close to $C_2$.
\smallskip
\item $E\sim F$ denotes $C^{-1}_*E\leq F\leq C_*E$ for some constant $C_*\geq 1$, where the form of $C_*$ may be different at each occurrence.

\smallskip
\item $\zeta_{a}=\zeta_{a}(r)\in C^\infty[0,1]$ ($a\in (0,1)$) denotes a cut-off function satisfying
\begin{equation*}
\zeta_{a}\in [0,1],\qquad (\zeta_{a})_r\leq 0, \qquad  \zeta_{a}=1 \ \ \text{on $[0,a]$},\qquad \zeta_{a}=0 \ \ \text{on $\big[\frac{1+3a}{4},1\big]$},
\end{equation*}
and $\zeta_{a}^\sharp=\zeta_{a}^\sharp(r):=1-\zeta_{a}(r)$. Then $\zeta_{a}\leq \zeta_{\tilde a}$ and $\zeta_{a}^\sharp\geq \zeta^\sharp_{\tilde a}$ for $0<a<\tilde a<1$, and
\begin{equation*}
\begin{aligned}
&\support\, (\zeta_{a})_r\cup \support\, (\zeta_{a}^\sharp)_r\subset \big[a,\frac{1+3a}{4}\big],\quad \big|\big((\zeta_{a}^\frac{1}{2})_r,(\zeta_{a})_r,((\zeta_{a}^\sharp)^\frac{1}{2})_r,(\zeta_{a}^\sharp)_r\big)\big|\leq C(a),
\end{aligned}
\end{equation*}
where $C(a)>0$ is a constant depending only on $a$. In particular, if $a=\frac{1}{2}$, define \begin{equation*}
\zeta=\zeta(r):=\zeta_{\frac{1}{2}}(r),\qquad \zeta^\sharp=\zeta^\sharp(r):=1-\zeta(r).
\end{equation*}
\item $\chi_{a}=\chi_{a}(r)$ denotes the characteristic function on $[0,a]$ $(a\in (0,1))$, {\it i.e.}, $\chi_{a}=1$ on $[0,a]$ and $\chi_{a}=0$ on $(a,1]$, and $\chi_{a}^\sharp=1-\chi_{a}$. 
Then
\begin{equation*}
\begin{aligned}
&\chi_{a}\leq \zeta_{a} \quad \text{for $a\in (0,1)$},\qquad \chi_{a} \geq  \zeta_{\frac{4a-1}{3}} \quad \text{for $a\in \big(\frac{1}{4},1\big)$};\\
&\chi_{a}^\sharp\geq  \zeta_{a}^\sharp \quad \text{for $a\in (0,1)$},\qquad \chi_{a}^\sharp\leq \zeta_{\frac{4a-1}{3}}^\sharp \quad \text{for $a\in \big(\frac{1}{4},1\big)$}.
\end{aligned}
\end{equation*}
In particular, if $a=\frac{1}{2}$, define 
\begin{equation*}
\chi=\chi(r):=\chi_{\frac{1}{2}}(r),\qquad \chi^\sharp=\chi^\sharp(r):=1-\chi(r).
\end{equation*}

\end{itemize}

\subsection{Main strategies}\label{subsection-strategy} 
Now we sketch the main strategy of our analysis. 
\subsubsection{Selection of the suitable energy functionals} \label{subsec-2.0}

In the study on the spherically symmetric motion in a physical vacuum of viscous gaseous stars, we need to deal with the combined difficulties of the physical vacuum singularity on the moving boundary, the coordinate singularity at the center, and the nonlinearity of the physical viscosity coefficients, which makes it intricate to provide an effective propagation mechanism for the regularity of the fluid velocity. Then the first key point in our analysis is to introduce a proper weighted energy functional. In other words, what kind of function space should we construct the solution in? Here, we select the energy functionals in the neighborhoods of the origin and the moving boundary separately,  and then combine them appropriately to obtain uniform estimates for the velocity.

First, the interior energy functionals $(\cE_{\mathrm{in}},\cD_{\mathrm{in}})$ are chosen based on the  3-D Lagrangian spatial coordinates $(t,\boldsymbol{y})$. If we let  $\boldsymbol{U}(t,\boldsymbol{y}):=U(t,r)\frac{\boldsymbol{y}}{r}$, then $\cE_{\mathrm{in}}(t,U)+t\cD_{\mathrm{in}}(t,U)\in L^\infty(0,T)$ can be read as the following $H^k$-norms of $\boldsymbol{U}$ due to  {\rm Lemma \ref{lemma-initial}} in {\rm Appendix \ref{appb}}: 
\begin{equation}\label{regularity-La-M}
\sum_{l=0}^1 \|\zeta\partial_t^{1-l}\boldsymbol{U} (t)\|_{H^{2l+1}(\Omega)} +t\sum_{l=0}^2 \|\zeta\partial_t^{2-l}\boldsymbol{U}(t)\|_{H^{2l}(\Omega)}  \in L^\infty(0,T).
\end{equation}

Second,  for the exterior energy functionals $(\cE_{\mathrm{ex}},\cD_{\mathrm{ex}})$, on one hand, the weight functions $(\rho_0^\frac{1}{2},\rho_0^\frac{\delta}{2})$ come naturally from the tangential estimates and the original structure of equation $\eqref{eq:VFBP-La-eta}_1$. On the other hand, to ensure that the solution is classical in the region $I^\sharp:=[\frac{1}{2},1)$ and smooth up to the vacuum boundary, the weight function $\rho_0^{(\frac{3}{2}-\varepsilon_0)(\gamma-1)}$ is chosen motivated by the following embedding relation, due to the Hardy and Sobolev inequalities{\rm:} 
\begin{equation*}
H^4_{\rho_0^{2\alpha}}(I^\sharp)\hookrightarrow W^{3,1}(I^\sharp)\hookrightarrow C^2(\bar I^\sharp) \qquad\text{for some $\alpha<\frac{3}{2}(\gamma-1)$ and $\Big|\alpha-\frac{3}{2}(\gamma-1)\Big|\ll 1$}.
\end{equation*}
Therefore, the form of exterior energy functionals $(\cE_{\mathrm{ex}},\cD_{\mathrm{ex}})$ is  determined.

\subsubsection{Global uniform upper and lower bounds of $(\eta_r,\frac{\eta}{r})$}
To deal with the moving boundary, 
we transformed the  \textbf{VFBP} \eqref{eq:1.1-vfbp} into the \textbf{IBVP} \eqref{eq:VFBP-La} in a fixed domain by introducing the Lagrangian coordinates.
To determine the mathematical structure of the momentum equations in Lagrangian coordinates, we must ensure that the coordinate transformation between the Eulerian and Lagrangian coordinates is nondegenerate. 
Thus, the major task before establishing the uniform estimates is to obtain the uniform upper and lower bounds of $(\eta_r,\frac{\eta}{r})$. The procedure of the proof can be roughly summarized in Figure \ref{fig:1}.

The main ingredient of our analysis consists of different types of weighted estimates adapted to the exterior region (near the moving vacuum boundary) and the interior region (near the origin).  On one hand, in the exterior, we establish the well-designed $(\rho_0,\eta_r)$-estimates for velocity $U$ to overcome the physical vacuum singularity on the moving boundary; these play an important role in obtaining the exterior upper bound for $(\eta_r,\frac{\eta}{r})$. On the other hand, in the interior, we handle the coordinate singularity at the symmetry center by employing the $(\eta,\eta_r)$-weighted estimates for $\varrho$, which yield the interior lower bound of $(\eta_r,\frac{\eta}{r})$, as well as $\eta_r$-weighted estimates for both $(\varrho,U)$, which yield the interior upper bound of $(\eta_r,\frac{\eta}{r})$. The two types of estimates are treated separately using different cut-off functions and are then carefully combined to obtain the desired global estimates.

\begin{figure}[htbp]
\centering
\tikzset{
    box/.style = {rectangle, draw, text width=5.7cm, align=center, minimum height=0.8cm, rounded corners, thick, font=\small},
    arrow/.style = {thick, -{Stealth[length=2.5mm, width=2mm]}},
}

\begin{tikzpicture}[scale=0.95, transform shape, node distance=1.2cm]

\node[box] (A1) {\S\ref{subsec-3.1} Upper bound of $\eta^2\varrho^\delta$};
\node[box, right=3.2cm of A1] (A2) {\S\ref{subsec-3.4} Interior upper bound of $\varrho$};

\node[box, below=of A1] (B1) {\S\ref{subsec-3.1} Exterior lower bound of $(\eta_r,\frac{\eta}{r})$};
\node[box, below=of A2] (B2) {\S\ref{subsec-3.4} Interior lower bound of $(\eta_r,\frac{\eta}{r})$};

\draw[arrow] (A1) -- (B1);
\draw[arrow] (A2) -- (B2);

\node[box, below=1cm of B1.south, anchor=north] (C) 
    at ($(B1.south)!0.5!(B2.south)$) {\S\ref{subsec-3.4} Global lower bound of $(\eta_r,\frac{\eta}{r})$};

\draw[arrow] (B1.south) -- (C.north west);
\draw[arrow] (B2.south) -- (C.north east);

\node[box, below right=1.2cm and -1.2cm of C] (E2) {\S\ref{Section-effectivevelocity} Interior estimates of the effective velocity};
\node[box, below left=1.2cm and -1.2cm of C] (E1) {\S\ref{Section-effectivevelocity} Exterior estimates of the effective velocity};

\node[box, below=of E2] (F2) {\S\ref{subsec-4.3} Interior upper bound of $(\eta_r,\frac{\eta}{r})$};
\node[box, below=of E1] (F1) {\S\ref{subsec-4.2} Exterior upper bound of $(\eta_r,\frac{\eta}{r})$};

\draw[arrow] (C.south) -- ++(0, -0.3) -| (E1.north);
\draw[arrow] (C.south) -- ++(0, -0.3) -| (E2.north);

\draw[arrow] (E1) -- (F1);
\draw[arrow] (E2) -- (F2);

\node[box, below=1cm of F1.south, anchor=north] (G) 
    at ($(F1.south)!0.5!(F2.south)$) {\S\ref{subsec-4.3} Global upper bound of $(\eta_r,\frac{\eta}{r})$};

\draw[arrow] (F1.south) -- (G.north west);
\draw[arrow] (F2.south) -- (G.north east);

\end{tikzpicture}
\caption{Procedure of the proof for global upper and lower bounds of $(\eta_r,\frac{\eta}{r})$.}
\label{fig:1}
\end{figure}

\section{Global Uniform Lower Bounds of \texorpdfstring{$(\eta_r,\frac{\eta}{r})$}{}}\label{Section-etarlower}

The purpose of this section is to establish the global uniform lower bound of  $(\eta_r,\frac{\eta}{r})$. First,  the desired local well-posedness of the solutions  can be stated as follows:
\begin{Theorem}[\cite{jiawenlocal}]\label{local-Theorem1.1} 
Assume that $A>0$, $a_1>0$, $G>0$, \eqref{distance-la} and \eqref{gamma-hold}--\eqref{a2} hold. 
For general  $\eta(0,r)=\eta_0(r)$, if  
\begin{equation}\label{generaleta}
((\eta_0)_r,\frac{\eta_0}{r})(r)\in [\sigma_*,\sigma^*] \ \text{for $r\in \bar I$} ,\quad (\zeta r  \sD_r^k \eta_0,\rho_0^\frac{1}{2} \eta_0,\rho_0^\frac{\delta}{2}(\eta_0)_r,\chi^\sharp\rho_0^{(\gamma-1)(\frac{3}{2}-\varepsilon_0)} \partial_{r}^l \eta_0) \in L^2,
\end{equation}
for integers $k\in [0,4]$, $l\in [2,4]$,
and some constants  $\sigma^*>\sigma*>0$, then there exists $T_*>0$, which depends only on $(\sigma_*,\sigma^*,a_1,A,\gamma,\delta,\varepsilon_0,\rho_0,u_0,\cK_1,\cK_2,G)$, such that {\rm\textbf{IBVP}} \eqref{eq:VFBP-La-eta} admits a unique classical solution $(U,\eta)(t,r)$ in $[0,T_*]\times \bar I $ satisfying    \eqref{N111} for any $t
\in (0,T_*]$, and 
\begin{equation}\label{b1-lo}
\begin{aligned}
&\cE(t,U)+t \cD(t,U)\in L^\infty(0,T_*),\qquad  \cD (t,U)\in L^1(0,T_*),\\
& (\eta_r,\frac{\eta}{r})(t,r)\in \big[\frac{\sigma_*}{2},\frac{3\sigma^*}{2}\big] \qquad \quad \ \ \,\text{for $(t,r)\in [0,T_*]\times \bar I$}.
\end{aligned}
\end{equation}
\end{Theorem}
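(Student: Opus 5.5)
The plan is to deduce Theorem \ref{local-Theorem1.1} from the local theory of \cite{jiawenlocal}, which treats exactly this \textbf{IBVP} with physical vacuum and degenerate viscosities $\rho^\delta$. The only new point is that the initial flow map is a general $\eta_0$ rather than the identity. First I would reduce to the case $\eta_0(r)=r$ by a relabeling. Set $\tilde\eta(t,r)=\eta(t,r)$ and regard the state at $t=0$ as the Eulerian configuration $\rho(0,x)=r^2\rho_0(r)/(\eta_0^2(\eta_0)_r)$ at $x=\eta_0(r)$. Because $((\eta_0)_r,\eta_0/r)\in[\sigma_*,\sigma^*]$, the map $r\mapsto\eta_0(r)$ is a bi-Lipschitz diffeomorphism of $\bar I$ onto $[0,\eta_0(1)]$. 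Using the weighted integrability assumed on $\eta_0$ in \eqref{generaleta}, together with Lemma \ref{lemma-initial} in Appendix \ref{appb}, the transported density still satisfies the analogue of \eqref{distance-la}, namely $\tilde\rho_0^{\beta}\sim(\eta_0(1)-x)$. The transported velocity $u_0\circ\eta_0^{-1}$ still has finite energy $\cE$, with constants depending on $(\sigma_*,\sigma^*)$. After rescaling space by $\eta_0(1)$, which is harmless since $\Phi$ and the pressure only change constants, we are back in the setting of \cite{jiawenlocal}.

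Alternatively, and more robustly, I would rerun the construction of \cite{jiawenlocal} directly with $\eta_0$ in place of $r$. The scheme has four steps.
\begin{enumerate}
\item Linearize \eqref{eq:VFBP-La-eta}, freezing $(\eta,\varrho)$ in the coefficients. Use \eqref{eq:eta} in the form $\varrho=\eta_0^2(\eta_0)_r\rho_0/(\eta^2\eta_r)$, so that $\varrho\sim\rho_0$.
\item Solve the degenerate parabolic linear problem by Galerkin or a vanishing-regularization, treating the coordinate singularity through the 3-D Lagrangian reformulation $\boldsymbol U=U\boldsymbol y/r$ in the interior.
\item Derive a priori bounds for $\cE(t,U)+t\cD(t,U)$. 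Combine interior elliptic estimates for the operator $D_\eta(\varrho^\delta(D_\eta U+2U/\eta))$ with exterior tangential ($\partial_t$) estimates weighted by $\rho_0^{1/2}$ and $\rho_0^{\delta/2}$. Then recover the normal derivatives in the weighted norms $\rho_0^{(3/2-\varepsilon_0)(\gamma-1)}$ via Hardy inequalities and the degenerate ODE structure in $r$ near $r=1$.
\item Close a contraction in a low norm on a short interval.
\end{enumerate}

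The bound on $(\eta_r,\eta/r)$ in \eqref{b1-lo} follows from $\eta_r(t)=(\eta_0)_r+\int_0^tU_r$ and $\eta/r=\eta_0/r+\int_0^tU/r$. Since $\|(U_r,U/r)\|_{L^\infty}\le C\,\cD^{1/2}$ by the embeddings in \S\ref{subsec-2.0}, we get
\[
\Big|(\eta_r,\frac{\eta}{r})(t)-\big((\eta_0)_r,\frac{\eta_0}{r}\big)\Big|\le C\,t^{1/2}\Big(\int_0^t\cD\,\mathrm{d}s\Big)^{1/2}.
\]
This is at most $\sigma_*/2$ for $T_*$ small depending on $(\sigma_*,\sigma^*)$ and the data. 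The boundary identity \eqref{N111} then follows exactly as in Remark \ref{rmk1.4}, since the continuity properties \eqref{eq117} hold for the local solution.

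The main obstacle will be the exterior weighted higher-order estimate in step 3. There one must control $\rho_0^{(3/2-\varepsilon_0)(\gamma-1)}D_\eta^4U$ from the equation. The weight $\varrho^{\gamma-1}$ degenerates linearly, and the term $D_\eta(\varrho^{\gamma-1})(D_\eta U-\frac{2(1-\delta)}{\delta}\frac U\eta)$ is singular. I would first try the reformulation \eqref{reform boundary condition}, which makes the leading operator a Bessel-type degenerate ODE $\varrho^{\gamma-1}\partial_r^2+c\,\partial_r(\varrho^{\gamma-1})\partial_r$. I would then invert it with weighted Hardy inequalities, using $\varepsilon_0<\min\{\frac{3\gamma-4}{2(\gamma-1)},\frac{1-\delta}{\gamma-1}\}$ to keep the Hardy exponents admissible.
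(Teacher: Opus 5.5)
Your main step, reducing to $\eta_0(r)=r$ by relabeling, has a genuine gap. Hypothesis \eqref{distance-la} is not only the pointwise bound $\rho_0^\beta\sim 1-r$ that you verify. It also demands $r(\rho_0^\beta)_{rrr}\in L^2(I)$, the radial form of $\rho_0^{\gamma-1}\in H^3(\Omega)$, and this is an \emph{unweighted} condition at $r=1$.

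The relabeled density is $\tilde\rho_0^\beta=\big(r^2/(\eta_0^2(\eta_0)_r)\big)^\beta\rho_0^\beta$, read at $x=\eta_0(r)$. Its third derivative contains $\rho_0^\beta\,\partial_r^4\eta_0$ and $(\rho_0^\beta)_r\,\partial_r^3\eta_0$. But \eqref{generaleta} only gives $\chi^\sharp\rho_0^{(\frac32-\varepsilon_0)\beta}\partial_r^l\eta_0\in L^2$, and $(\frac32-\varepsilon_0)\beta>\beta$, so these terms need not be square integrable. For instance, take $(\eta_0)_r=1+c(1-r)^{7/5}$ near $r=1$ with $0<|c|\ll 1$. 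This satisfies \eqref{generaleta}, yet $\partial_x^3(\tilde\rho_0^\beta)$ behaves like a nonzero multiple of $c(1-r)^{-3/5}$, which is not in $L^2$.

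This is not a technicality. It is exactly why the local theorem is stated for general $\eta_0$, with $\rho_0$ kept as the fixed reference density in \eqref{eq:eta}. In \S\ref{Section-global} the theorem is applied at $t=\overline T_*$ with $\eta_0=\eta(\overline T_*)$. That flow map only has the regularity inherited from $\eta_t=U$ and $\cD_{\mathrm{ex}}$, i.e.\ \eqref{generaleta}. The actual density there has in general lost the $H^3$ regularity required by \eqref{distance-la}; compare \eqref{d2-lambda}, where $D_\eta^2\Lambda$ is only controlled with the weight $\rho_0^{(\frac12-\varepsilon_0)\beta}$. If relabeling worked, the general-$\eta_0$ formulation would be unnecessary.

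Your fallback route carries the same misreading. You write $\varrho=\eta_0^2(\eta_0)_r\rho_0/(\eta^2\eta_r)$, which makes $\rho_0$ the actual initial density. In \eqref{eq:VFBP-La-eta}, however, $\varrho$ is given by \eqref{eq:eta}, so the initial density is $r^2\rho_0/(\eta_0^2(\eta_0)_r)$, and only $\rho_0$ enjoys \eqref{distance-la}. What must be shown is that the scheme of \cite{jiawenlocal} closes when the degenerate weights are built on the regular $\rho_0$ and all lack of regularity sits in the coefficient $\eta$, controlled only as in \eqref{generaleta}. That is what the cited theorem supplies; the paper simply invokes it, noting that \eqref{generaleta} is trivial for $\eta_0=r$. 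Your four-step outline does not address this point.

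Separately, the bound $\|(U_r,U/r)\|_{L^\infty}\le C\cD^{1/2}$ is not what the embeddings give, since $\cD$ contains only top-order quantities. The lower-order terms in $\cE$ are needed. The correct estimate is $|(\eta_r,\frac{\eta}{r})(t)-((\eta_0)_r,\frac{\eta_0}{r})|\le Ct\sup_{s\le t}\cE(s,U)^{1/2}$, which still suffices for the second line of \eqref{b1-lo}.
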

Such a local well-posedness  has been proven   in  \cite{jiawenlocal}. Since in the current  paper, as defined in \eqref{flowmap-r-la}, $\eta_0(r)=r$, then 
\eqref{generaleta} naturally holds, which means that 
 the initial data $(\eta_0, \rho_0,u_0)$ considered in Theorem \ref{Theorem1.1} satisfy all the conditions required  in Theorem \ref{local-Theorem1.1}.
Thus our main task  is to establish the global uniform estimates of $(U,\eta)$ in  function spaces $(\cE,\cD)(t,U)$, and then we can extend the local solutions to be global ones  by standard continuation arguments. Following  the same proof as that of {\rm Theorem \ref{local-Theorem1.1}}, presented in \cite{jiawenlocal}*{Section 3}, we can show that  $(U,\eta)$ is a classical solution of  \eqref{eq:VFBP-La-eta} in $(0,T]\times \bar I$ for any time $T>0$.

Throughout \S\ref{Section-etarlower}--\S\ref{Section-globalestimates}, we always assume the following:
\begin{itemize}
\item $(\beta,\delta,\gamma)$ satisfy $\beta=\gamma-1$ and \eqref{gamma-hold};
\item $(U,\eta)(t,r)$ is a   classical solution  of \textbf{IBVP} \eqref{eq:VFBP-La-eta} in $[0,T]\times \bar I $ for some $T>0$, which is   constructed in Theorem \ref{local-Theorem1.1};
\item $C_0\in (1,\infty)$ is a generic constant depending only on $(a_1,A,\gamma,\delta,\varepsilon_0,\rho_0,u_0,\cK_1,\cK_2)$ and $C(l_1,\cdots\!,l_k)\in (1,\infty)$ is a generic constant depending on $C_0$, $G$, and parameters $(l_1,\cdots\!,l_k)$, 
which may be different at each occurrence, \textit{i.e.},
\begin{equation*}
C_0=C_0(a_1,A,\gamma,\delta,\varepsilon_0,\rho_0,u_0,\cK_1,\cK_2),\qquad C(l_1,\cdots\!,l_k)=C(l_1,\cdots\!,l_k,C_0,G).
\end{equation*}
In particular, $C(l_1,\cdots\!,l_k)$ is continuous and strictly increasingly with respect to  $G$.
\end{itemize}
 Now we are ready to introduce the so-called effective velocity.
\begin{Definition}\label{def-v}
We say that $V$ is the effective velocity if 
\begin{equation}\label{v-expression}
V=U+2a_{1}\delta\varrho^{\delta-2}D_\eta\varrho=U+\frac{2a_{1}\delta}{\delta-1}D_\eta(\varrho^{\delta-1}).
\end{equation}
Besides, we define the initial value of $V$ as $v_0=V|_{t=0}=u_0+\frac{2a_{1}\delta}{\delta-1}(\rho_0^{\delta-1})_r.$
\end{Definition}

Then we have
\begin{Proposition}\label{lemma-V expression}
The effective velocity $V$ satisfies the equation{\rm :}
\begin{equation}\label{eq:v}
\begin{aligned}
&V_t+ A\gamma \varrho^{\gamma-2} D_\eta\varrho+D_\eta \Phi=V_t+\frac{A\gamma}{2a_{1}\delta}\varrho^{\gamma-\delta}(V-U)+D_\eta \Phi=0,
\end{aligned}
\end{equation}
and takes the form{\rm:}
\begin{align}
V(t,r)&=v_0(r)\exp\Big(-\frac{A\gamma}{2a_{1}\delta}\int_0^t \varrho^{\gamma-\delta}(s,r)\,\mathrm{d}s\Big) \nonumber\\
&\quad+\frac{A\gamma}{2a_{1}\delta}\int_0^t (\varrho^{\gamma-\delta} U)(\tau,r)\exp\Big(-\frac{A\gamma}{2a_{1}\delta}\int_\tau^t \varrho^{\gamma-\delta}(s,r)\,\mathrm{d}s\Big) \,\mathrm{d}\tau\label{V-solution}\\
&\quad- \int_0^t D_\eta\Phi(\tau,r)  \exp\Big(-\frac{A\gamma}{2a_{1}\delta}\int_\tau^t \varrho^{\gamma-\delta}(s,r)\,\mathrm{d}s\Big)\,\mathrm{d}\tau.\nonumber
\end{align}
\end{Proposition}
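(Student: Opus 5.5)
The plan is to compute $V_t$ directly from Definition \ref{def-v}, using the mass equation to handle the time derivative of $D_\eta(\varrho^{\delta-1})$, and then to substitute the momentum equation $\eqref{eq:VFBP-La-eta}_1$.

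First, from $\eqref{eq:VFBP-La}_1$ we have $\varrho_t=-\varrho(D_\eta U+\frac{2U}{\eta})$. Hence
\[
(\varrho^{\delta-1})_t=-(\delta-1)\varrho^{\delta-1}\Big(D_\eta U+\frac{2U}{\eta}\Big).
\]
Since $\eta_t=U$, the operator $D_\eta$ satisfies the commutator relation
\[
\partial_t D_\eta f=D_\eta f_t-D_\eta U\,D_\eta f .
\]
Applying this with $f=\varrho^{\delta-1}$ gives
\[
\partial_t D_\eta(\varrho^{\delta-1})=-(\delta-1)D_\eta\Big(\varrho^{\delta-1}\big(D_\eta U+\tfrac{2U}{\eta}\big)\Big)-D_\eta U\,D_\eta(\varrho^{\delta-1}).
\]
Multiplying by $\frac{2a_1\delta}{\delta-1}$ yields
\[
\Big(\tfrac{2a_1\delta}{\delta-1}D_\eta(\varrho^{\delta-1})\Big)_t=-2a_1\delta\, D_\eta\Big(\varrho^{\delta-1}\big(D_\eta U+\tfrac{2U}{\eta}\big)\Big)-2a_1\delta\varrho^{\delta-2}D_\eta\varrho\,D_\eta U .
\]

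Second, divide $\eqref{eq:VFBP-La-eta}_1$ by $\varrho$ and expand the viscous term:
\[
\frac{2a_1\delta}{\varrho}D_\eta\big(\varrho^\delta W\big)=2a_1\delta D_\eta(\varrho^{\delta-1}W)+2a_1\delta\varrho^{\delta-2}D_\eta\varrho\, W,\qquad W:=D_\eta U+\tfrac{2U}{\eta}.
\]
The last term of the momentum equation gives $-4a_1\frac{U}{\eta}\frac{D_\eta(\varrho^\delta)}{\varrho}=-4a_1\delta\varrho^{\delta-2}D_\eta\varrho\frac{U}{\eta}$. Adding these, the $\frac{U}{\eta}$ contributions cancel exactly; this is where the BD relation \eqref{bdrelation} enters, since $\lambda=2a_1(\delta-1)\rho^\delta$ was already built into the form of the equation. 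Thus
\[
U_t+A\gamma\varrho^{\gamma-2}D_\eta\varrho+D_\eta\Phi=2a_1\delta D_\eta(\varrho^{\delta-1}W)+2a_1\delta\varrho^{\delta-2}D_\eta\varrho\,D_\eta U.
\]
Adding this to the previous identity, the right-hand sides cancel and we obtain the first form of \eqref{eq:v}.

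The second form follows from $\varrho^{\gamma-2}D_\eta\varrho=\varrho^{\gamma-\delta}\varrho^{\delta-2}D_\eta\varrho=\frac{\varrho^{\gamma-\delta}}{2a_1\delta}(V-U)$. Finally, \eqref{eq:v} is, for each fixed $r$, a linear ODE in $t$ with coefficient $\frac{A\gamma}{2a_1\delta}\varrho^{\gamma-\delta}$ and source term $\frac{A\gamma}{2a_1\delta}\varrho^{\gamma-\delta}U-D_\eta\Phi$. Duhamel's formula with the integrating factor $\exp(\frac{A\gamma}{2a_1\delta}\int_0^t\varrho^{\gamma-\delta}\,\mathrm{d}s)$ then gives \eqref{V-solution}.

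The only delicate point is justifying these pointwise manipulations for $r\in I$, including near $r=0$ and $r=1$. The classical regularity from Definition \ref{definition-lag} and Theorem \ref{local-Theorem1.1}, together with $\varrho>0$ in $I$, suffices for this. At $r=0$, $D_\eta\varrho$ and $D_\eta\Phi$ are still well defined because $\frac{1}{\eta^2}\int_0^r\hat r^2\rho_0\,\mathrm{d}\hat r=O(r)$. The main bookkeeping obstacle is verifying the exact cancellation of the $\frac{U}{\eta}$ terms.
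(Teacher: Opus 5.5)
Your proposal is correct and follows essentially the same route as the paper: you differentiate $V$ in time using the mass equation and the commutator $\partial_t D_\eta f = D_\eta f_t - D_\eta U\,D_\eta f$, substitute the momentum equation divided by $\varrho$ so that the viscous terms (including the $\frac{U}{\eta}$ contributions) cancel, and then solve the resulting linear ODE by Duhamel's formula. The only difference is presentational: the paper carries out this computation as a single chain of equalities, whereas you split it into two identities and add them.
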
  
\begin{proof}
To derive \eqref{eq:v}, it follows from \eqref{eq:VFBP-La} and Definition \ref{def-v} that
\begin{equation*}
\begin{aligned}
V_t&=U_t+\frac{2a_{1}\delta}{\delta-1}(D_\eta \varrho^{\delta-1})_t
=U_t-2a_{1}\delta D_\eta\Big(\varrho^{\delta-1} \big(D_\eta U+\frac{2U}{\eta}\big)\Big)-\frac{2a_{1}\delta}{\delta-1} D_\eta U D_\eta \varrho^{\delta-1} \\
&=U_t-\frac{2a_{1}\delta}{\varrho} D_\eta\Big(\varrho^\delta\big(D_\eta U+\frac{2U}{\eta}\big)\Big)+4a_{1}\delta\frac{ \varrho^{\delta-2}D_\eta\varrho U }{\eta}\\
&=-A\gamma \varrho^{\gamma-2}D_\eta\varrho- D_\eta \Phi=-\frac{A\gamma}{2a_{1}\delta}\varrho^{\gamma-\delta}(V-U)-D_\eta\Phi.
\end{aligned}
\end{equation*}
Then \eqref{V-solution} can be directly obtained by solving ODE \eqref{eq:v}. 
\end{proof}

\subsection{Some basic estimates}\label{subsec-3.1}
First, we have the fundamental energy estimate:
\begin{Lemma}\label{lemma-basic energy}
For any $t\in [0,T]$,  
\begin{equation*}
\begin{aligned}
&\,\big|(r^2\rho_0)^\frac{1}{2} U(t)\big|_2^2+\big|\eta^2\eta_r\varrho^\gamma(t)\big|_1+\int_0^t \big|(\eta^2\eta_r\varrho^{\delta})^\frac{1}{2}\mathscr{D}_\eta U\big|_2^2\,\ds\leq C_0.
\end{aligned}
\end{equation*}
\end{Lemma}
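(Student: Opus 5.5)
The plan is to derive a closed energy identity by testing the momentum equation $\eqref{eq:VFBP-La}_2$ against $\eta^2U$ and integrating over $I$. The gravitational term does not have a sign, so it will be absorbed into the internal energy; this is where $\gamma>\frac{4}{3}$ enters. Throughout I use the mass identity $r^2\rho_0=\eta^2\eta_r\varrho$ from \eqref{eq:eta}, the relation $(\eta^2\eta_r)_t=(\eta^2U)_r$, and the vanishing of boundary terms because $\varrho|_{r=1}=0$ and $\eta|_{r=0}=U|_{r=0}=0$.

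\emph{Step 1 (kinetic and pressure terms).} The term $\eta^2\eta_r\varrho U_t\,U=r^2\rho_0 U U_t$ gives $\frac12\frac{\mathrm d}{\mathrm dt}|(r^2\rho_0)^{1/2}U|_2^2$. For the pressure, integration by parts gives $-A\int\varrho^\gamma(\eta^2U)_r$. Since $\varrho^\gamma\eta^2\eta_r=(r^2\rho_0)^\gamma(\eta^2\eta_r)^{1-\gamma}$, this equals $\frac{A}{\gamma-1}\frac{\mathrm d}{\mathrm dt}|\eta^2\eta_r\varrho^\gamma|_1$.

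\emph{Step 2 (viscous terms).} Write $a=D_\eta U$ and $b=U/\eta$, so that $(\eta^2U)_r=\eta^2\eta_r(a+2b)$ and $(\eta U^2)_r=\eta^2\eta_r(2ab+b^2)$. After integrating by parts, the viscous contribution is
\[
a_1\int\eta^2\eta_r\varrho^\delta\big(2\delta a^2+8(\delta-1)ab+(8\delta-4)b^2\big)\,\mathrm dr .
\]
This quadratic form has discriminant $64-96\delta<0$ because $\delta>\frac23$. It is therefore bounded below by $c_0\,\eta^2\eta_r\varrho^\delta|\mathscr D_\eta U|^2$, which produces the dissipation integral in the lemma.

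\emph{Step 3 (gravity).} Let $m(r)=\int_0^r\hat r^2\rho_0\,\mathrm d\hat r$. By \eqref{D Phi expression}, the gravity term equals $4\pi G\int r^2\rho_0 U m/\eta^2\,\mathrm dr$. Since $\partial_t(-m/\eta)=mU/\eta^2$, it equals $-\frac{\mathrm d}{\mathrm dt}\mathcal G(t)$ with $\mathcal G(t):=4\pi G\int_I r^2\rho_0\,m/\eta\,\mathrm dr\ge0$. Combining Steps 1--3 yields
\[
\frac12|(r^2\rho_0)^{1/2}U|_2^2+\frac{A}{\gamma-1}|\eta^2\eta_r\varrho^\gamma|_1-\mathcal G(t)+c_0\int_0^t|(\eta^2\eta_r\varrho^\delta)^{1/2}\mathscr D_\eta U|_2^2\,\mathrm ds=E(0)\le C_0 .
\]

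\emph{Step 4 (main obstacle: bounding $\mathcal G$).} I expect the key difficulty to be showing that $\mathcal G$ is dominated by the internal energy. Changing to Eulerian variables ($\eta$ is monotone because $\eta_r>0$), $\mathcal G=4\pi G\int_0^{R(t)}\rho\,M(x)\,x\,\mathrm dx$, where $M(x)=\int_0^x\rho\hat x^2\,\mathrm d\hat x$ is bounded by the conserved total mass $M_0$. By H\"older, $M(x)\le C\|\rho\|_{L^\gamma}x^{3(\gamma-1)/\gamma}$. Splitting the $x$-integral at a radius $\lambda$ (or directly applying Hardy--Littlewood--Sobolev with interpolation between $L^1$ and $L^\gamma$) should give
\[
\mathcal G\le C(G,M_0)\Big(\int\rho^\gamma\,\mathrm d\boldsymbol x\Big)^{\frac{1}{3(\gamma-1)}}.
\]
The exponent $\frac{1}{3(\gamma-1)}$ is less than $1$ exactly when $\gamma>\frac43$. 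Young's inequality then gives $\mathcal G\le\frac{A}{2(\gamma-1)}|\eta^2\eta_r\varrho^\gamma|_1+C$, since $\int\rho^\gamma\,\mathrm d\boldsymbol x=4\pi|\eta^2\eta_r\varrho^\gamma|_1$. Absorbing this into the left side closes the estimate.

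The constant obtained this way depends on $G$. The lemma is stated with $C_0$, so $G$ should either be absorbed into the bound or the constant read as $C$; I would check which is intended.
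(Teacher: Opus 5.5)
Your proof is correct, and its skeleton is the paper's. You use the same multiplier: the paper multiplies the $\eta$-form of the momentum equation by $2\eta^2\eta_r U$, which is your identity times $2$. You get the same viscous quadratic form, whose discriminant is negative exactly when $\delta>\frac23$. And you get the same absorption exponent $\frac{1}{3(\gamma-1)}<1$, coming from $\gamma>\frac43$.

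The genuine difference is in the gravitational term.

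\emph{The paper's route.} It introduces $I_1$ with $-\frac{\mathrm d}{\mathrm dt}\int_0^1 I_1\,\mathrm dr=2\int_0^1 r^2\rho_0 U D_\eta\Phi\,\mathrm dr$. It then identifies $\int_0^1 I_1\,\mathrm dr$ with the field energy $\frac{1}{4\pi G}\int_0^\infty x^2\Psi_x^2\,\mathrm dx$ of the potential extended to all of $\mathbb R^3$. This requires cancelling the boundary term $x^3\Psi_x^2$ at $x=R(t)$ against the exterior field. The paper bounds this energy using the Poisson equation, the embedding $\|\Psi\|_{L^6(\mathbb R^3)}\le C\|\nabla\Psi\|_{L^2(\mathbb R^3)}$, and interpolation of $\|\rho\|_{L^{6/5}}$ between $L^1$ and $L^\gamma$.

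\emph{Your route.} You write the same quantity in mass coordinates; indeed $\int_0^1 I_1\,\mathrm dr=2\mathcal G$. Its time derivative then follows at once from $\eta_t=U$ and the time-independence of $m$. You bound $\mathcal G$ by an elementary radial splitting:
\begin{itemize}
\item on $\{x\le\lambda\}$, H\"older applied to $M(x)$ and once more in $x$ gives $C\|\rho\|_{L^\gamma}^2\lambda^{(5\gamma-6)/\gamma}$; the integral converges since $\gamma>\frac65$;
\item on $\{x\ge\lambda\}$ one has the bound $M_0^2\lambda^{-1}$.
\end{itemize}
Optimizing in $\lambda$ gives $\mathcal G\le C\,G\,M_0^{\frac{5\gamma-6}{3\gamma-3}}\big(\int\rho^\gamma\,\mathrm d\boldsymbol x\big)^{\frac{1}{3(\gamma-1)}}$, which is exactly the paper's bound.

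Your route is more elementary and stays inside the radial Lagrangian framework. The paper's uses standard three-dimensional tools that do not need the symmetry at this step. One trivial correction: your combined identity should read ``$\le E(0)$'' once the quadratic form is replaced by its lower bound.

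Your remark about $G$ is also right. Since $\Psi$ is linear in $G$, the paper's own chain $\frac1G\|\nabla\Psi\|_{L^2}^2\le C_0\|\rho\|_{L^{6/5}}\|\nabla\Psi\|_{L^2}$ actually gives a bound proportional to $G$, and the paper silently drops this factor. After Young's inequality the constant in the lemma grows like $G^{\frac{3(\gamma-1)}{3\gamma-4}}$. So the constant should be read as depending on $G$ as well. This is harmless, because every later constant $C(\cdots)$ in the paper is allowed to depend on $G$.
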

\begin{proof}
We divide the proof into four steps.

\smallskip
\textbf{1.} Multiplying $\eqref{eq:VFBP-La-eta}_1$ by $2\eta^2\eta_rU$, together with \eqref{gravitational potential},
$\eqref{eq:VFBP-La}_1$, and \eqref{eq:eta}, gives
\begin{align}
&\Big(r^2\rho_0U^2+\frac{2A}{\gamma-1}\eta^2\eta_r\varrho^\gamma\Big)_t
+4a_{1} \eta^2\eta_r\varrho^\delta\underline{\Big(\delta|D_\eta U|^2+4(\delta-1)D_\eta U\frac{U}{\eta}+ 2(2\delta -1)\frac{U^2}{\eta^2}\Big)}_{:=I_2}\nonumber\\
&=\Big(\underline{\frac{1}{4\pi G}\Big(\frac{\eta^2}{\eta_r}\Phi_r^2+\big(\frac{\eta^3\Phi^2_r}{\eta_r^2}\big)_r\Big)}_{:=I_1}\Big)_t\label{lem4.1 equation}\\
&\quad +\Big(4a_{1}\delta \eta^2\eta_r\varrho^\delta\frac{U D_\eta U}{\eta_r}-2A\varrho^{\gamma}\eta^2 U+8a_{1}(\delta-1)\eta U^2\varrho^\delta\Big)_r.\nonumber
\end{align}
Here $I_1$ satisfies
\begin{equation}\label{lemma4.4 I_7 fuzhu}
    -(I_1)_t=2\eta^2\eta_r\varrho UD_\eta\Phi=2r^2\rho_0UD_\eta\Phi.
\end{equation}

\smallskip
\textbf{2.} We first give the $L^1$-estimate of $I_1$ for $t\in [0,T]$. Notice from the fact that $\rho(x)$ is supported in $[0,R(t)]$, \eqref{psi_x express int}, and Lemma \ref{lemma-initial}, we have, for any $t\in [0,T]$,
\begin{equation}\label{est-I1}
\begin{aligned}
\int_0^1 I_1\,\mathrm{d}r&=\frac{1}{4\pi G}\int_0^1 |D_\eta\Phi|^2\eta^2\eta_r\,\mathrm{d}r+\frac{1}{4\pi G}\big(\frac{\eta^3\Phi^2_r}{\eta_r^2}\big)(t,1)\\
&=\frac{1}{4\pi G}\int_{0}^{R(t)}x^2\Psi_x^2\,\mathrm{d}x+\frac{1}{4\pi G}(x^3\Psi_x^2)(t,R(t))\\
&=\frac{1}{4\pi G}\int_{0}^{R(t)}x^2\Psi_x^2\,\mathrm{d}x+\frac{1}{4\pi G}\int_{R(t)}^{\infty}\big(x^2\Psi_x^2-2x\Psi_x(x^2\Psi_x)_x\big)\,\mathrm{d}x\\
&=\frac{1}{4\pi G}\int_{0}^{\infty}\!x^2\Psi_x^2\,\mathrm{d}x-2\int_{R(t)}^{\infty}\!x^3\rho\Psi_x\,\mathrm{d}x=\frac{1}{4\pi G}\int_{0}^{\infty}\!x^2\Psi_x^2\,\mathrm{d}x
= \frac{1}{G}\|\nabla\Psi\|_{L^2(\RR^3)}^2.
\end{aligned}
\end{equation}
Here, $\Psi$ is well-defined in the whole space $\RR^3$. To estimate $\Psi$ in Euler coordinates, we multiply \eqref{gravitational potential} by $\Psi$ and integrate the resulting equality over $\RR^3$. This, together with the fact that $\rho(\boldsymbol{x})$ is supported in $\Omega(t)$, the H\"older inequality and Lemma \ref{sobolev-embedding}, gives that, for any $t\in [0,T]$,
\begin{equation*}
\begin{aligned}
\frac{1}{G}\|\nabla\Psi\|_{L^2(\RR^3)}^2&=-4\pi \int_{\Omega(t)}\rho\Psi\,\mathrm{d}\boldsymbol{x}\leq C_0\|\rho\|_{L^\frac{6}{5}(\Omega(t))}\|\Psi\|_{L^6(\mathbb{R}^3)}\\
&\leq C_0\|\rho\|_{L^\frac{6}{5}(\Omega(t))}\|\nabla\Psi\|_{L^2(\mathbb{R}^3)}\leq C_0\|\rho\|_{L^1(\Omega(t))}^{\frac{5\gamma-6}{6\gamma-6}}\|\rho\|^{\frac{\gamma}{6\gamma-6}}_{L^\gamma(\Omega(t))}\|\nabla\Psi\|_{L^2(\mathbb{R}^3)},
\end{aligned}
\end{equation*}
which thus leads to
\begin{equation*}
\frac{1}{G}\|\nabla\Psi\|_{L^2(\RR^3)}^2\leq C_0\|\rho\|_{L^1(\Omega(t))}^{\frac{5\gamma-6}{3\gamma-3}}\|\rho\|^{\frac{\gamma}{3\gamma-3}}_{L^\gamma(\Omega(t))}.
\end{equation*}
Substituting the above into \eqref{est-I1}, we then obtain from \eqref{eq:eta}, Lemma \ref{lemma-initial}, the fact that $\gamma>\frac{4}{3}$, and the Young inequality that, for any $t\in [0,T]$,
\begin{equation}\label{Poisson_Pressure}
\begin{aligned}
\int_0^1I_1(t)\,\mathrm{d}r&\leq C_0|r^2\rho_0|_{1}^{\frac{5\gamma-6}{3\gamma-3}}|\eta^2\eta_r\varrho^\gamma|^{\frac{1}{3\gamma-3}}_{1}\leq \frac{A}{\gamma-1}|\eta^2\eta_r\varrho^\gamma(t)|_1+C_0.
\end{aligned}
\end{equation}

\smallskip
\textbf{3.} For $I_2$, let 
$(X,Y)=(D_\eta U,\frac{U}{\eta})=\mathscr{D}_\eta U$  in $I_2$,  
then $I_2$ would become a binary form:
\begin{equation*}
\begin{aligned}
I_2=\delta X^2+4(\delta-1)XY+ 2(2\delta -1)Y^2,
\end{aligned}
\end{equation*}
and the discriminant $\mathfrak{D}$ of $I_{2}$ satisfies
\begin{equation*}
\mathfrak{D}=16(\delta-1)^2-8\delta (2\delta -1)<0  \qquad \text{whenever} \ \ \delta>\frac{2}{3}.
\end{equation*}
Hence, there exists a constant $c^*_\delta>0$, depending only on $\delta$, such that 
\begin{equation}\label{positive Viscousity quadratic form}
I_{2}\geq c^*_\delta(X^2+Y^2)=c^*_\delta |\mathscr{D}_\eta U|^2.
\end{equation}

\smallskip
\textbf{4.} Now, integrating \eqref{lem4.1 equation} over $[0,T]\times I$, we obtain from \eqref{positive Viscousity quadratic form} that, for all $t\in [0,T]$,
\begin{equation*}
\begin{aligned}
&\,\big|(r^2\rho_0)^\frac{1}{2} U(t)\big|_2^2+\frac{2A}{\gamma-1}\big|\eta^2\eta_r\varrho^\gamma(t)\big|_1+4a_{1} c_\delta^*\int_0^t\big|(\eta^2\eta_r\varrho^{\delta})^\frac{1}{2}\mathscr{D}_\eta U\big|_2^2\,\mathrm{d}s\\
&\leq \big|(r^2\rho_0)^\frac{1}{2} u_0\big|_2^2+\frac{2A}{\gamma-1}|r^2\rho_0^\gamma|_1+\int_0^1I_1(t)\,\mathrm{d}r-\int_0^1I_1(0)\,\mathrm{d}r,
\end{aligned}
\end{equation*}
which, along with \eqref{Poisson_Pressure}, leads to the desired result.
\end{proof}

Next, we have the following $\eta$-weighted estimates for $\varrho$\,:
\begin{Lemma}\label{lemma-far depth}
There exists a constant $C(T)>0$ such that, for any $t\in [0,T]$,
\begin{equation*} 
|\eta\eta_r\varrho^\delta(t)|_1+|(\eta^2 \varrho^\delta) (t)|_\infty+\int_0^t \big(|\eta\eta_r\varrho^\gamma|_1+|\eta^2 \varrho^\gamma|_\infty+\big|(\eta^2\eta_r)^\frac{2}{3\gamma}\varrho\big|_{\frac{3\gamma}{2}}^\gamma\big)\,\mathrm{d}s\leq C(T).
\end{equation*}
\end{Lemma}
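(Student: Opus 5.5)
The plan is to obtain a BD-entropy-type identity by integrating the effective velocity equation \eqref{eq:v} against the mass weight $r^2\rho_0=\eta^2\eta_r\varrho$ over $(r,1)$, for each fixed $r\in I$. Everything rests on the structure of $V$ in Definition \ref{def-v}. Since $\varrho^{\delta-2}D_\eta\varrho=D_\eta(\varrho^\delta)/(\delta\varrho)$, we have $\varrho(V-U)=2a_1D_\eta(\varrho^\delta)$, and hence
\begin{equation*}
r^2\rho_0(V-U)=2a_1\eta^2(\varrho^\delta)_r,\qquad r^2\rho_0\,\varrho^{\gamma-\delta}(V-U)=\frac{2a_1\delta}{\gamma}\eta^2(\varrho^\gamma)_r .
\end{equation*}
Thus both the viscous part of $V$ and the pressure part of \eqref{eq:v} become exact $r$-derivatives once multiplied by $\eta^2$.

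\textbf{Step 1: the identity.} Multiply \eqref{eq:v} by $r^2\rho_0$ and integrate over $(r,1)$, using that $\rho_0$ is time independent. Write $\int_r^1 r^2\rho_0V=\int_r^1 r^2\rho_0U+2a_1\int_r^1\eta^2(\varrho^\delta)_r$. Integrate by parts in both the $\delta$- and $\gamma$-terms, using $\varrho(t,1)=0$. This should give, for every $r\in I$,
\begin{equation*}
\frac{\mathrm{d}}{\mathrm{d}t}\Big(2a_1\eta^2\varrho^\delta(r)+4a_1\int_r^1\eta\eta_r\varrho^\delta\Big)+A\Big(\eta^2\varrho^\gamma(r)+2\int_r^1\eta\eta_r\varrho^\gamma\Big)=\frac{\mathrm{d}}{\mathrm{d}t}\int_r^1\hat r^2\rho_0U+\int_r^1\hat r^2\rho_0D_\eta\Phi .
\end{equation*}
Every term on the left is nonnegative. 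This positivity is the whole point; no lower bound on $\eta$ or $\eta_r$ is needed.

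\textbf{Step 2: the right-hand side.} After integrating in time, the first right-hand term is bounded by $2|r^2\rho_0U|_1\le C_0$, using Lemma \ref{lemma-basic energy} and Cauchy--Schwarz with $|r^2\rho_0|_1\le C_0$. The gravity term is the expected obstacle, since $D_\eta\Phi\sim m(r)/\eta^2$ is singular if $\eta$ degenerates. The plan is to avoid pointwise bounds and use Cauchy--Schwarz:
\begin{equation*}
\int_0^1\hat r^2\rho_0|D_\eta\Phi|=\int_0^1\eta^2\eta_r\varrho|D_\eta\Phi|\le|r^2\rho_0|_1^{\frac12}\Big(\int_0^1\eta^2\eta_r|D_\eta\Phi|^2\Big)^{\frac12}.
\end{equation*}
The last factor is controlled by $\|\nabla\Psi\|_{L^2(\RR^3)}$ through \eqref{est-I1}. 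That norm is in turn bounded by $C_0(1+|\eta^2\eta_r\varrho^\gamma|_1)\le C_0$, as in \eqref{Poisson_Pressure} together with Lemma \ref{lemma-basic energy}. Integrating in time therefore gives a bound $C_0(1+T)$.

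\textbf{Step 3: conclusion.} Integrate the identity over $[0,t]$ and bound the initial values $\eta^2\varrho^\delta(0,r)=r^2\rho_0^\delta$ and $\int r\rho_0^\delta$ by $C_0$. Taking $r=0$ yields the two $L^1$ terms, namely $|\eta\eta_r\varrho^\delta(t)|_1$ and $\int_0^t|\eta\eta_r\varrho^\gamma|_1$. Taking the supremum over $r$ yields $|\eta^2\varrho^\delta(t)|_\infty$ and $\int_0^t|\eta^2\varrho^\gamma|_\infty$. For the remaining term, write
\begin{equation*}
\int_0^1\eta^2\eta_r\varrho^{\frac{3\gamma}{2}}=\int_0^1(\eta^2\varrho^\gamma)^{\frac12}\eta\eta_r\varrho^\gamma\le|\eta^2\varrho^\gamma|_\infty^{\frac12}|\eta\eta_r\varrho^\gamma|_1 .
\end{equation*}
Raising to the power $\frac23$ and applying Young's inequality gives
\begin{equation*}
\big|(\eta^2\eta_r)^{\frac{2}{3\gamma}}\varrho\big|_{\frac{3\gamma}{2}}^\gamma\le|\eta^2\varrho^\gamma|_\infty+|\eta\eta_r\varrho^\gamma|_1,
\end{equation*}
which is time integrable by the previous two bounds. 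The only delicate points are checking that the boundary terms vanish at $r=1$, which follows from the regularity in Definition \ref{definition-lag}, and the treatment of the gravity term described in Step 2.
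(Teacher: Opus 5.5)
Your Step 1 arrives at exactly the paper's identity. The paper multiplies $\eqref{eq:VFBP-La-eta}_1$ by $\eta^2\eta_r$ and integrates over $[r,1]$; going through \eqref{eq:v} with the weight $r^2\rho_0=\eta^2\eta_r\varrho$ is equivalent. Your Step 3 interpolation is also the paper's. The gap is the gravity term in Step 2.

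The Cauchy--Schwarz inequality you wrote loses a factor of $\varrho$. Splitting as $(\eta^2\eta_r\varrho)^{\frac12}\cdot(\eta^2\eta_r\varrho)^{\frac12}|D_\eta\Phi|$ gives $|r^2\rho_0|_1^{\frac12}\big(\int_0^1\eta^2\eta_r\varrho|D_\eta\Phi|^2\,\mathrm{d}r\big)^{\frac12}$, which \eqref{est-I1} does not control. Splitting as $(\eta^2\eta_r)^{\frac12}\varrho\cdot(\eta^2\eta_r)^{\frac12}|D_\eta\Phi|$ gives $C\|\rho\|_{L^2(\Omega(t))}\|\nabla\Psi\|_{L^2(\RR^3)}$, and Lemma \ref{lemma-basic energy} bounds $\|\rho\|_{L^2}$ only if $\gamma\geq 2$. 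Indeed, under the mass-preserving scaling $\rho\mapsto\lambda^3\rho(\lambda\,\cdot)$, the left side of your inequality grows like $\lambda^2$ and the right side like $\lambda^{1/2}$.

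More fundamentally, for $\frac43<\gamma<\frac32$ no pointwise-in-time bound of $I_4=\int_0^1 r^2\rho_0D_\eta\Phi\,\mathrm{d}r$ by the mass and $|\eta^2\eta_r\varrho^\gamma|_1$ alone is possible. Since $D_\eta\Phi=4\pi Gm(r)\eta^{-2}$ with $m(r)=\int_0^r\hat r^2\rho_0\,\mathrm{d}\hat r$, compress $[0,r_0]$ into a ball of radius $\epsilon$ with $r_0^{3\gamma}\epsilon^{3-3\gamma}$ held fixed. The internal energy stays bounded, yet $I_4\gtrsim r_0^6\epsilon^{-2}\sim r_0^{(4\gamma-6)/(\gamma-1)}\to\infty$ as $r_0\to 0$. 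So the bound $C_0(1+T)$ is not available this way. The missing idea is that the gravity term must be absorbed by the dissipative terms on the left-hand side of your identity, not estimated independently of them.

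This is what the paper does. By the Hardy--Littlewood--Sobolev inequality (Lemma \ref{HLS}), $I_4\leq C_0G\|\rho\|_{L^{3/2}(\Omega(t))}^2$. For $\gamma\geq\frac32$ this is bounded by interpolating between $L^1$ and $L^\gamma$. For $\frac43<\gamma<\frac32$ one interpolates between $L^\gamma$ and $L^{3\gamma/2}$ and uses your own Step 3 inequality to get $I_4\leq C+\frac{A}{100}\big(|\eta\eta_r\varrho^\gamma|_1+|\eta^2\varrho^\gamma|_\infty\big)$, which is absorbed after integrating in time.

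Your Step 3 shares one further point with the paper's proof. Integrating the fixed-$r$ identity in time controls $\sup_r\int_0^t\eta^2\varrho^\gamma\,\mathrm{d}s$. Passing from this to $\int_0^t|\eta^2\varrho^\gamma|_\infty\,\mathrm{d}s$ interchanges $\sup_r$ and $\int_0^t$, which needs a separate justification.

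You can make the absorption independent of that interchange by using only the $L^1$ term, which is legitimately obtained at $r=0$. H\"older's inequality applied to $m(r)=\int_0^r\eta^2\eta_r\varrho\,\mathrm{d}\hat r$ gives $m(r)\leq C\big(\int_0^r\eta\eta_r\varrho^\gamma\,\mathrm{d}\hat r\big)^{\frac1\gamma}\eta(t,r)^{\frac{3\gamma-2}{\gamma}}$. Since $\int_0^1 r^2\rho_0\,m^{\frac{\gamma-2}{3\gamma-2}}\,\mathrm{d}r=\int_0^{m(1)}\mu^{\frac{\gamma-2}{3\gamma-2}}\,\mathrm{d}\mu<\infty$, this yields $I_4\leq C_0G\,|\eta\eta_r\varrho^\gamma|_1^{\frac{2}{3\gamma-2}}$. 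The exponent $\frac{2}{3\gamma-2}$ is less than $1$ precisely because $\gamma>\frac43$.
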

\begin{proof}
Multiplying $\eqref{eq:VFBP-La-eta}_1$ by $\eta^2\eta_r$ and
integrating the resulting equality with respect to $r$ over $[\tilde{r},1]$ for some $\tilde{r}\in [0,1)$, together with $\varrho|_{r=1}=0$, $\eqref{eq:VFBP-La}_1$, and \eqref{eq:eta}, gives
\begin{equation*}
\begin{aligned}
&\,\frac{\mathrm{d}}{\mathrm{d}t}\Big(2a_{1}(\eta^2 \varrho^\delta)(t,\tilde{r})+4a_{1} \int_{\tilde{r}}^1\eta\eta_r\varrho^\delta\,\mathrm{d}r\Big)+2A \int_{\tilde{r}}^1 \eta\eta_r \varrho^\gamma \,\mathrm{d}r+A(\eta^2 \varrho^\gamma)(t,\tilde{r})\\
&=\frac{\mathrm{d}}{\mathrm{d}t}\int_{\tilde{r}}^1 r^2\rho_0 U \,\mathrm{d}r+\int_{\tilde{r}}^1\eta^2\eta_r\varrho D_\eta\Phi\,\mathrm{d}r.
\end{aligned}    
\end{equation*} 
Integrating the above over $[0,t]$ gives
\begin{equation*}
\begin{aligned}
&\,\sup_{t\in[0,T]}\Big(2a_{1}(\eta^2 \varrho^\delta)(t,\tilde{r})+4a_{1} \int_{\tilde{r}}^1\eta\eta_r\varrho^\delta\,\mathrm{d}r\Big)\\
&\quad +2A \int_0^T\int_{\tilde{r}}^1 \eta\eta_r \varrho^\gamma \,\mathrm{d}r\,\mathrm{d}t+A\int_0^T(\eta^2 \varrho^\gamma)(t,\tilde{r})\,\mathrm{d}t\\
&\leq \Big(2a_{1}(r^2\rho_0^\delta)(\tilde{r})+4a_{1} \int_{\tilde{r}}^1 r\rho_0^\delta\,\mathrm{d}r+\sup_{t\in[0,T]}\int_{\tilde{r}}^1 r^2\rho_0 U \,\mathrm{d}r\Big)+\int_0^T\int_{\tilde{r}}^1\eta^2\eta_r\varrho D_\eta\Phi\,\mathrm{d}r\,\mathrm{d}t.
\end{aligned}    
\end{equation*} 
Since $\tilde{r}\in [0,1)$ is arbitrary, the above leads to
\begin{equation}\label{lem4.2_diffequation2}
\begin{aligned}
&\,\sup_{t\in[0,T]}\big(2a_{1}|\eta^2 \varrho^\delta|_\infty+4a_{1} |\eta\eta_r\varrho^\delta|_1\big)+2A \int_0^T|\eta\eta_r \varrho^\gamma|_1\,\mathrm{d}t+A\int_0^T|\eta^2 \varrho^\gamma|_\infty\,\mathrm{d}t\\
&\leq \Big(2a_{1}|r^2\rho_0^\delta|_\infty+4a_{1} |r\rho_0^\delta|_1+\underline{\sup_{t\in[0,T]}|r^2\rho_0 U|_1}_{:=I_3}\Big)+\int_0^T\underline{|\eta^2\eta_r\varrho D_\eta\Phi|_1}_{:=I_4}\,\mathrm{d}t.
\end{aligned}    
\end{equation} 

The estimate for $I_3$ follows from Lemma \ref{lemma-basic energy} and the H\"older inequality:
\begin{equation}\label{I3}
I_3\leq |r^2\rho_0|_1^\frac{1}{2}\big(\sup_{t\in[0,T]}|(r^2\rho_0)^\frac{1}{2} U|_2\big)\leq C_0.
\end{equation}

To estimate $I_4$, we see from Lemma \ref{lemma-initial} that, for any $t\in [0,T]$
\begin{equation*}
I_4=\frac{1}{4\pi}\|\rho\nabla\Psi \|_{L^1(\Omega(t))}\leq\frac{1}{4\pi}\|\rho\nabla\Psi \|_{L^1(\mathbb{R}^3)}.
\end{equation*}
Then, by \eqref{gravitational potential explicit expression}, we have
\begin{equation}\label{nabla Psi inte-expression}
\nabla \Psi(t,\boldsymbol{x})=G\int_{\Omega(t)}\rho(t,\hat{\boldsymbol{x}})\frac{\boldsymbol{x}-\hat{\boldsymbol{x}}}{|\boldsymbol{x}-\hat{\boldsymbol{x}}|^3}\,\mathrm{d}\hat{\boldsymbol{x}},
\end{equation}
and thus we can use the Hardy--Littlewood--Sobolev inequality (see Lemma \ref{HLS}) to obtain
\begin{equation*}
\begin{aligned}
I_4&\leq G \int_{\mathbb{R}^3}\int_{\mathbb{R}^3} \frac{\rho(t,\boldsymbol{x})\rho(t,\hat{\boldsymbol{x}})}{|\boldsymbol{x}-\hat{\boldsymbol{x}}|^2}\,\mathrm{d}\boldsymbol{x}\mathrm{d}\hat{\boldsymbol{x}}\leq C_0G\|\rho(t)\|_{L^\frac{3}{2}(\mathbb{R}^3)}^2=C_0G\|\rho(t)\|_{L^\frac{3}{2}(\Omega(t))}^2.
\end{aligned}
\end{equation*}
If $\gamma\geq\frac{3}{2}$, it follows from the H\"older inequality and Lemmas \ref{lemma-basic energy} and \ref{lemma-initial} that
\begin{equation}\label{I3-1}
\begin{aligned}
I_4&\leq C_0G\big|(\eta^2\eta_r)^\frac{2}{3}\varrho\big|_\frac{3}{2}^2\leq C_0G |\eta^2\eta_r\varrho|_{1}^\frac{4\gamma-6}{3(\gamma-1)} \big|(\eta^2\eta_r)^\frac{1}{\gamma}\varrho\big|_{\gamma}^\frac{2\gamma}{3(\gamma-1)}\\
&\leq C_0G|r^2\rho_0|_{1}^\frac{4\gamma-6}{3(\gamma-1)} \big|\eta^2\eta_r\varrho^\gamma\big|_{1}^\frac{2}{3(\gamma-1)}\leq C_0G;
\end{aligned}
\end{equation}
while if   $\frac{4}{3}<\gamma<\frac{3}{2}$, it follows similarly  from Lemmas \ref{lemma-basic energy} and \ref{lemma-initial} that
\begin{equation}\label{I3-2}
\begin{aligned}
I_4&\leq C_0G\big|(\eta^2\eta_r)^\frac{2}{3}\varrho\big|_\frac{3}{2}^2\leq C_0G \big|(\eta^2\eta_r)^\frac{1}{\gamma}\varrho\big|_{\gamma}^{4\gamma-4}\big|(\eta^2\eta_r)^\frac{2}{3\gamma}\varrho\big|_{\frac{3\gamma}{2}}^{6-4\gamma}\\
&\leq C_0G \big|(\eta^2\eta_r)^\frac{2}{3\gamma}\varrho\big|_{\frac{3\gamma}{2}}^{6-4\gamma}\leq C_0G|\eta\eta_r\varrho^\gamma|_1^\frac{12-8\gamma}{3\gamma}  |\eta^2\varrho^\gamma|_\infty^\frac{6-4\gamma}{3\gamma}\\
&\leq C_0G^\frac{\gamma}{5\gamma-6}+\frac{A}{100}\big(|\eta\eta_r\varrho^\gamma|_1+|\eta^2\varrho^\gamma|_\infty\big).
\end{aligned}
\end{equation}

Substituting \eqref{I3} and \eqref{I3-1}--\eqref{I3-2} into \eqref{lem4.2_diffequation2}, along  with $\eta_r>0$ and Lemma \ref{lemma-basic energy}, yields
\begin{equation}\label{lem4.2_result}
\sup_{t\in[0,T]}\big(2a_{1}|\eta^2 \varrho^\delta|_\infty+4a_{1} |\eta\eta_r\varrho^\delta|_1\big)+\frac{A}{2}\int_0^T\big(|\eta\eta_r \varrho^\gamma|_1+|\eta^2 \varrho^\gamma|_\infty\big)\,\mathrm{d}t\leq C(T).
\end{equation} 

Finally, it follows from \eqref{lem4.2_result} and the H\"older and Young inequalities that
\begin{equation*}
\begin{aligned}
        \int_0^T\big|(\eta^2\eta_r)^\frac{2}{3\gamma}\varrho\big|_{\frac{3\gamma}{2}}^\gamma\,\mathrm{d}t&\leq  \int_0^T|\eta\eta_r\varrho^\gamma|_1^\frac{2}{3}  |\eta^2\varrho^\gamma|_\infty^\frac{1}{3}  \,\mathrm{d}t
        \leq C_0\int_0^T (|\eta\eta_r\varrho^\gamma|_1+|\eta^2\varrho^\gamma|_\infty)\,\mathrm{d}t\leq C(T).
\end{aligned}
\end{equation*}

This completes the proof.
\end{proof}

Now, we can derive the uniform lower bounds of $(\eta_r,\frac{\eta}{r})$ away from the origin.
\begin{Lemma}\label{lemma-low jacobi 1}
For any $a\in (0,1)$, there exists a constant $C(a,T)>1$ such that, for all $(t,r)\in [0,T]\times [a,1]$, 
\begin{equation*}
\frac{\eta(t,r)}{r}\geq C(a,T)^{-1},\qquad \eta_r(t,r)\geq C(a,T)^{-1}.
\end{equation*}
\end{Lemma}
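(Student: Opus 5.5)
Our starting point is the bound $|\eta^2\varrho^\delta(t)|_\infty\le C(T)$ from Lemma \ref{lemma-far depth}, combined with the mass relation \eqref{eq:eta}, $\varrho=\frac{r^2\rho_0}{\eta^2\eta_r}$. Substituting, we obtain for all $(t,r)\in[0,T]\times I$
\begin{equation*}
\eta^2\Big(\frac{r^2\rho_0}{\eta^2\eta_r}\Big)^\delta\le C(T),\qquad\text{i.e.}\qquad \eta_r^{\delta}\ge C(T)^{-1}\,\eta^{2-2\delta}\,(r^2\rho_0)^{\delta}.
\end{equation*}
So a lower bound on $\eta_r$ away from the origin will follow from a lower bound on $\eta$ there and a lower bound on $\rho_0$. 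We therefore first bound $\eta(t,r)$ from below for $r\ge a$, and then return to the inequality above.

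For $\eta$ itself we use monotonicity: $\eta_r>0$ and $\eta(t,0)=0$, so $\eta(t,r)\ge\eta(t,a)$ for $r\ge a$. To bound $\eta(t,a)$ from below, we apply the same inequality on $[a/2,a]$, where $\rho_0\ge c(a)>0$ by \eqref{distance-la}. Since $2-2\delta>0$, we rewrite it as
\begin{equation*}
\eta_r\,\eta^{-(2-2\delta)/\delta}\ge C(a,T)^{-1}\quad\text{on }[a/2,a].
\end{equation*}
Integrating gives a lower bound on $\eta(t,a)$. The exponent $q=(2-2\delta)/\delta\in(0,1)$ because $\delta>\frac{13}{18}>\frac23$, so $\eta^{1-q}$ is increasing and
\begin{equation*}
\eta^{1-q}(t,a)\ge \eta^{1-q}(t,a/2)+(1-q)\frac{a}{2}C(a,T)^{-1}\ge (1-q)\frac{a}{2}C(a,T)^{-1}.
\end{equation*}
This yields $\eta(t,r)\ge\eta(t,a)\ge C(a,T)^{-1}$, and since $r\le1$, also $\frac{\eta}{r}\ge C(a,T)^{-1}$ on $[a,1]$. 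A lower bound on $\eta$ alone would not suffice, since with $\eta$ possibly large the $\eta_r$ inequality would degenerate; this is handled by the positive power $\eta^{2-2\delta}$ in the bound.

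Returning to $\eta_r^\delta\ge C^{-1}\eta^{2-2\delta}(r^2\rho_0)^\delta$, the lower bound on $\eta$ handles every factor except $\rho_0$. The main obstacle is the vacuum boundary $r\to1$, where $\rho_0\sim(1-r)^{1/\beta}\to0$. There the inequality only gives $\eta_r\gtrsim(1-r)^{1/\beta}$, which degenerates. On $[a,b]$ with $b<1$ the bound is immediate, so a separate argument is needed near $r=1$.

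Near the boundary we would use the equation for $\varrho$ along particle paths. By $\eqref{eq:VFBP-La}_1$, $(\log\varrho)_t=-(D_\eta U+\frac{2U}{\eta})$, so $\eta_r$ is controlled once $\int_0^t|D_\eta U|\,\mathrm{d}s$ is bounded near $r=1$. The boundary condition \eqref{N111} suggests rewriting $D_\eta U$ there in terms of $U/\eta$, which is bounded via the lower bound on $\eta$. An alternative route is to compare the ratio $\varrho^{\delta}/\rho_0^\delta$ through a refinement of Lemma \ref{lemma-far depth} with weights adapted to the boundary. I expect this boundary step to be the delicate part. On the interior piece $[a,b]$ the argument is just the two displayed inequalities.
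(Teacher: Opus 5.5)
Your lower bound for $\eta/r$ on $[a,1]$ is correct, and it is slightly more economical than the paper's. Integrating $(\eta^{1-q})_r\ge(1-q)C(a,T)^{-1}$ over $[a/2,a]$ and using monotonicity of $\eta$ needs no information at the vacuum boundary. The paper instead integrates a nondegenerate bound over $[0,r]$, and that bound must first be established up to $r=1$. Your lower bound for $\eta_r$ on $[a,b]$ with $b<1$ is also fine.

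The genuine gap is the lower bound for $\eta_r$ near $r=1$, which is the real content of the lemma, and you do not supply it.

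Your route (a) cannot work at this stage. The only control of $\sD_\eta U$ available here (Lemma \ref{lemma-basic energy}) carries the weight $\eta^2\eta_r\varrho^\delta$, which vanishes at $r=1$. The later pointwise bounds on $\sD_\eta U$ (Lemma \ref{lemma-u-D3}) are built on this very lemma, so controlling $\int_0^t|D_\eta U|\,\mathrm{d}s$ near $r=1$ is circular. Moreover, \eqref{N111} holds only at the single point $r=1$, and bounding $U/\eta$ there would need an $L^\infty$ bound on $U$ that you do not have.

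Your route (b) is more promising but incomplete. Integrating the momentum equation times $\eta^2\eta_r$ over $[\tilde r,1]$, as in Lemma \ref{lemma-far depth}, gives $\eta^2\varrho^\delta(t,\tilde r)\lesssim\rho_0^\delta(\tilde r)+(1-\tilde r)^{\frac{\gamma}{2\beta}}$. This uses your lower bound on $\eta$ for the gravity term and Lemma \ref{lemma-basic energy} for the velocity term. It yields $\eta^2\varrho^\delta\lesssim\rho_0^\delta$, hence $\eta_r\gtrsim\eta^{\frac{2-2\delta}{\delta}}$, only when $\gamma\ge2\delta$. That condition fails in part of the admissible range, e.g. $\delta=\frac45$, $\gamma=\frac75$.

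The missing idea is to read \eqref{N111} as an exact conservation law rather than as an estimate. Since $D_\eta U=(\log\eta_r)_t$ and $\frac{U}{\eta}=(\log\eta)_t$, \eqref{N111} says $\partial_t\log\Theta(t,1)=0$, where $\Theta:=\eta^{2(1-\frac{1}{\delta})}\eta_r$. Hence $\Theta(t,1)=\Theta(0,1)=1$ for all $t$, and no bound on $U$ is needed. Your first display is precisely the degenerate bound $\Theta\ge C(T)^{-1}r^2\rho_0$.

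The paper combines the two by contradiction. If $r_k^{-2}\Theta(t_k,r_k)\to0$, the degenerate bound forces $\rho_0(r_k)\to0$, hence $r_k\to1$. Extracting $t_k\to t_0$ and using continuity of $\Theta$ on $[0,T]\times\bar I$ (the solution is classical) gives $\Theta(t_0,1)=0$, contradicting $\Theta(t_0,1)=1$. This yields $\Theta\ge C(T)^{-1}r^2$ on all of $\bar I$. Then $\eta_r=\Theta\,\eta^{\frac{2-2\delta}{\delta}}\ge C(T)^{-1}r^2\eta^{\frac{2-2\delta}{\delta}}$, which with your lower bound on $\eta$ gives $\eta_r\ge C(a,T)^{-1}$ on $[a,1]$.

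Be aware that this last step is a compactness argument for the given solution, so the constant is not explicit in the data. Your route (b), where it applies, would give a quantitative constant.
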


\begin{proof}
First, it follows from the relation 
$\eta^2\varrho^{\delta}=\eta^{2(1-\delta)}\eta_r^{-\delta} r^{2\delta}\rho_0^{\delta}$ and 
Lemma \ref{lemma-far depth} that
\begin{equation}\label{1''}
\eta^{2(1-\frac{1}{\delta})}\eta_r\geq C(T)^{-1} r^{2}\rho_0 \qquad \text{for all $(t,r)\in [0,T]\times \bar I$}.
\end{equation}

We now claim that, for any $T>0$,
\begin{equation}\label{claim-1}
\eta^{2(1-\frac{1}{\delta})}\eta_r \geq C(T)^{-1}r^{2} \qquad \text{for all $(t,r)\in [0,T]\times \bar I$}.
\end{equation}
Otherwise, there exist both $T>0$ and a sequence of 
$\{(t_k,r_k)\}_{k=1}^\infty\subset[0,T]\times \bar I$ such that 
\begin{equation}\label{406}
\lim_{k\to \infty}r_k^{-2}(\eta^{2(1-\frac{1}{\delta})}\eta_r)(t_k,r_k)=0.
\end{equation}
This, together with \eqref{1''}, yields
\begin{equation*}
\lim_{k\to \infty} \rho_0(r_k) \leq C(T)\lim_{k\to \infty}r_k^{-2}(\eta^{2(1-\frac{1}{\delta})}\eta_r)(t_k,r_k)= 0,
\end{equation*}
which implies that $r_k\to 1$ due to $\rho_0^\beta\sim 1-r$. Moreover, since $\{t_k\}_{k=1}^\infty\subset [0,T]$, we can extract a subsequence $\{t_{k_\ell}\}_{\ell=1}^\infty\subset[0,T]$ such that $t_{k_\ell}\to t_0$ for some $t_0\in [0,T]$. Then
\begin{equation*}
(t_{k_\ell},r_{k_\ell})\to(t_0,1) \qquad \text{as} \ \ \ell\to\infty.   
\end{equation*}
This, along with \eqref{406}, leads to
\begin{equation}\label{lem4.3 lim eta}
(\eta^{2(1-\frac{1}{\delta})}\eta_r)  (t_0,1)=\lim_{\ell\to\infty}r_{k_\ell}^{-2} (\eta^{2(1-\frac{1}{\delta})}\eta_r)(t_{k_\ell},r_{k_\ell}) =0.
\end{equation}
Moreover, according to the boundary condition \eqref{N111}, we have  
\begin{equation*}
\frac{(\eta_r)_t}{\eta_r}(t,1)-2(\frac{1}{\delta}-1)\frac{\eta_t}{\eta}(t,1)=0\implies (\eta^{2(1-\frac{1}{\delta})}\eta_r )(t,1)=1.
\end{equation*}
However, \eqref{lem4.3 lim eta} contradicts to the above, which thus yields claim \eqref{claim-1}.

Integrating \eqref{claim-1} over $[0,r]$, together with the facts that $\eta|_{r=0}=0$ and  $\delta>\frac{2}{3}$, yields
\begin{equation*}
\frac{\eta}{r}\geq C(T)^{-1}r^{\frac{2}{3\delta-2}}\geq C(a,T)^{-1}  \qquad\,\, \text{for all $(t,r)\in [0,T]\times [a,1]$}.
\end{equation*}

Finally, it follows from the above and \eqref{claim-1} that
\begin{equation*}
\eta_r\geq C(T)^{-1}\eta^\frac{2-2\delta}{\delta}r^2 \geq  C(a,T)^{-1} \qquad\,\, \text{for all $(t,r)\in [0,T]\times [a,1]$}.
\end{equation*}

\end{proof}

\subsection{Flow-map-weighted estimates for the density}
\begin{Lemma}\label{near-BD}
For any $a\in (0,1)$, there exists a constant $C(a,T)>0$ such that, for all $t\in [0,T]$,
\begin{equation*}
\big|(\zeta_{a}r^2\rho_0)^\frac{1}{2}V(t)\big|_2+\big|(\zeta_{a}\eta^2\eta_r)^\frac{1}{2}  D_\eta(\varrho^{\delta-\frac{1}{2}}) (t)\big|_2+\int_0^t\big|(\zeta_{a}\eta^2\eta_r)^\frac{1}{3}\varrho^{\gamma+\delta-1}\big|_3\,\mathrm{d}s\leq C(a,T),
\end{equation*}
where the cut-off function $\zeta_a$ is defined in {\rm\S \ref{othernotation}}.
\end{Lemma}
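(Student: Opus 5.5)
This is a localized BD-entropy estimate. We test the effective-velocity equation \eqref{eq:v} against $2\zeta_a r^2\rho_0 V=2\zeta_a\eta^2\eta_r\varrho V$. Since $\rho_0$ is time independent, the time derivative gives $\frac{\mathrm d}{\mathrm dt}|(\zeta_a r^2\rho_0)^{\frac12}V|_2^2$.

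\textbf{Expanding the energy.} We write
\begin{equation*}
V=U+\frac{2a_1\delta}{\delta-1}D_\eta(\varrho^{\delta-1})=U+\frac{4a_1\delta}{2\delta-1}\,\varrho^{-\frac12}D_\eta(\varrho^{\delta-\frac12}).
\end{equation*}
Then
\begin{equation*}
\zeta_a r^2\rho_0V^2=\zeta_a r^2\rho_0U^2+c\,\zeta_a\eta^2\eta_r|D_\eta\varrho^{\delta-\frac12}|^2+\text{cross term}.
\end{equation*}
The first term is controlled by Lemma \ref{lemma-basic energy}. So a bound on $|(\zeta_ar^2\rho_0)^{\frac12}V|_2$ and a bound on the BD quantity $|(\zeta_a\eta^2\eta_r)^{\frac12}D_\eta(\varrho^{\delta-\frac12})|_2$ are equivalent up to $C_0$.

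\textbf{The pressure term.} It is
\begin{equation*}
2A\gamma\,\zeta_a\eta^2\eta_r\varrho^{\gamma-1}D_\eta\varrho\,V .
\end{equation*}
Its $U$-part is $2A\gamma\zeta_a\eta^2\varrho^{\gamma-1}\varrho_rU$. Integrating by parts in $r$ and using the mass equation, this becomes the time derivative of the internal energy, plus a commutator $\propto(\zeta_a)_r\eta^2\varrho^\gamma U$ supported in $[a,\frac{1+3a}{4}]$. There Lemma \ref{lemma-low jacobi 1} gives $\eta,\eta_r\ge C(a,T)^{-1}$, and the commutator is bounded by Lemma \ref{lemma-basic energy} and Lemma \ref{lemma-far depth} (the $\int_0^t|\eta\eta_r\varrho^\gamma|_1$ control together with $\int|\eta^2\varrho^\delta\mathscr D_\eta U|^2$).

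Its $D_\eta\varrho^{\delta-1}$-part is the good dissipation
\begin{equation*}
c\,\zeta_a\eta^2\eta_r\varrho^{\gamma+\delta-3}|D_\eta\varrho|^2\ge 0 .
\end{equation*}

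\textbf{The dissipative space-time term.} We aim for $\int_0^t|(\zeta_a\eta^2\eta_r)^{\frac13}\varrho^{\gamma+\delta-1}|_3\,\mathrm ds$. This should come from the above dissipation by a one-dimensional weighted Sobolev/Hardy argument. Set $w=\varrho^{(\gamma+\delta-1)/2}$; then the dissipation is comparable to $\zeta_a\eta^2\eta_r|D_\eta w|^2$. Changing to Eulerian variables, this is a 3-D $\dot H^1$ norm of $w$ on a ball. Sobolev $\dot H^1\hookrightarrow L^6$ then controls $|(\zeta_a\eta^2\eta_r)^{\frac16}w|_6^2=|(\zeta_a\eta^2\eta_r)^{\frac13}\varrho^{\gamma+\delta-1}|_3$, up to cutoff-derivative errors that are again supported where $\eta,\eta_r\sim1$ and bounded via Lemma \ref{lemma-far depth}.

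\textbf{The gravity term.} It is $2\zeta_a r^2\rho_0VD_\eta\Phi$. We have $|D_\eta\Phi|\le C\eta^{-2}\int_0^r\hat r^2\rho_0\,\mathrm d\hat r$, and since $\eta$ and $r$ may be unrelated near the origin, this must be estimated in Eulerian form. Take $|D_\eta\Phi|\le C\eta^{-2}m(r)$ and estimate
\begin{equation*}
\Big|\int\zeta_ar^2\rho_0VD_\eta\Phi\Big|\le |(\zeta_ar^2\rho_0)^{\frac12}V|_2\,|(r^2\rho_0)^{\frac12}D_\eta\Phi|_2 .
\end{equation*}
The second factor equals the Eulerian $\|\rho^{\frac12}\nabla\Psi\|_{L^2}$. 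By Hardy--Littlewood--Sobolev, as in Lemma \ref{lemma-far depth}, it is bounded by $\|\rho\|_{L^{3/2}}$-type norms interpolated against $\|\rho\|_{L^{\gamma+\delta-1}}$-type quantities. We then absorb with Young's inequality into the new dissipation, with the remainder integrable in time by Lemma \ref{lemma-far depth}. The hypothesis $\gamma<6\delta-3$ (equivalently $\gamma+\delta-1$ large relative to $\frac32$) should be exactly what makes this interpolation close.

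\textbf{The viscous remainder.} This is the $V$-equation cross term with $U$, namely $\frac{d}{dt}$ of the cross term $2\zeta_ar^2\rho_0U\cdot\frac{2a_1\delta}{\delta-1}D_\eta\varrho^{\delta-1}$. It is handled by the standard BD identity: the $U$-equation tested against $D_\eta\varrho^{\delta-1}$, using that the relation \eqref{bdrelation} makes the viscous structure match, which is precisely where that assumption enters (cf.\ Remark \ref{generalcase}). It produces only terms with $(\zeta_a)_r$, controlled as above.

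\textbf{Closing.} Gronwall then closes the estimate. I expect the main obstacles to be the gravity term near the center, where $\eta$ could collapse, and verifying that cutoff commutators involve only the region $r\ge a$.
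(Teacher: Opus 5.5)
Your architecture is the paper's. You test \eqref{eq:v} against $\zeta_a r^2\rho_0V$ and turn the $U$-part of the pressure term into $\frac{\mathrm{d}}{\mathrm{d}t}$ of the localized internal energy plus a $(\zeta_a)_r$-commutator. You keep the $D_\eta(\varrho^{\delta-1})$-part as the dissipation $\zeta_a\eta^2\eta_r|D_\eta\varrho^{\frac{\gamma+\delta-1}{2}}|^2$ and convert it into the $L^3$ bound by the Eulerian embedding $\dot H^1\hookrightarrow L^6$ (run it with $\zeta_{\frac{1+3a}{4}}$, since $\zeta_a\le\zeta_{\frac{1+3a}{4}}^3$). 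Finally you read off the BD quantity from $V-U$. The cutoff commutators need only the lower bounds of Lemma \ref{lemma-low jacobi 1}, not $\eta,\eta_r\sim1$, which is not available yet. After eliminating $\varrho$ by \eqref{eq:eta}, only negative powers of $\eta,\eta_r$ appear, because $\gamma>1$ and $\gamma+\delta>2$. So they are bounded by $C(a,T)(1+|(r^2\rho_0)^{\frac12}U|_2)$, with no viscous dissipation needed.

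The real difference is the gravity term. The paper splits $V$ there. The $U$-part is the exact derivative $\frac12\frac{\mathrm{d}}{\mathrm{d}t}\int_0^1\zeta_aI_1\,\mathrm{d}r$, controlled by \eqref{Poisson_Pressure}. The $D_\eta\varrho^{\delta}$-part is integrated by parts with $D_\eta(\eta^2D_\eta\Phi)=4\pi G\eta^2\varrho$ and the signs $D_\eta\Phi\ge0$, $(\zeta_a)_r\le0$. This leaves $8\pi Ga_1\int_0^1\zeta_a\eta^2\eta_r\varrho^{1+\delta}\,\mathrm{d}r$, which is interpolated between the mass and the local $L^3$ quantity with power $3\vartheta=\frac{3\delta}{3\gamma+3\delta-4}<1$ and then absorbed; this is where $\gamma>\frac43$ is used. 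Your Cauchy--Schwarz/HLS route bypasses all of this and in fact needs no dissipation, but as written you close it for the wrong reason.

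First, $\gamma<6\delta-3$ is an upper bound on $\gamma$ and plays no role in this lemma; it enters only later, in Lemma \ref{lp-uv}. Second, absorbing $|(\zeta_ar^2\rho_0)^{\frac12}V|_2\,F$, with $F:=|(r^2\rho_0)^{\frac12}D_\eta\Phi|_2$, into the $\zeta_a$-localized dissipation does not close. Interpolating $L^{9/5}$ between $L^\gamma$ and the $L^{3(\gamma+\delta-1)}$ control from the dissipation gives a power of the dissipation above $\frac12$ near $\gamma=\frac43$, $\delta=\frac{13}{18}$, hence a superlinear Gronwall inequality.

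What works is time integrability of $F$. By H\"older and Hardy--Littlewood--Sobolev, $F^2=\frac{1}{4\pi}\|\rho^{\frac12}\nabla\Psi\|_{L^2}^2\le CG^2\|\rho\|_{L^{9/5}}^3$. For $\gamma\ge\frac95$ this is bounded by Lemma \ref{lemma-basic energy}. For $\frac43<\gamma<\frac95$, interpolation between $L^\gamma$ and $L^{3\gamma/2}$ gives $F\le C\|\rho\|_{L^{3\gamma/2}}^{\frac92-\frac{5\gamma}{2}}\le C(1+\|\rho\|_{L^{3\gamma/2}}^{\gamma})$; the exponent is below $\gamma$ since $\gamma>\frac97$. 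This lies in $L^1(0,T)$ by the last bound of Lemma \ref{lemma-far depth}. With $Y:=|(\zeta_ar^2\rho_0)^{\frac12}V|_2^2+\frac{2A}{\gamma-1}|\zeta_a\eta^2\eta_r\varrho^\gamma|_1$ you get $Y'\le C(a,T)+2F\sqrt{Y}$, and Gronwall closes.

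Finally, drop the ``viscous remainder'' paragraph. Equation \eqref{eq:v} has no viscous term: the BD cancellation, where \eqref{bdrelation} is used, is exactly Proposition \ref{lemma-V expression}. Testing it against $2\zeta_ar^2\rho_0V$ therefore produces only the time derivative, the pressure term and the gravity term.
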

\begin{proof}
We divide the proof into four steps.

\smallskip
\textbf{1.} Multiplying \eqref{eq:v} by $\zeta_{a}r^2 \rho_0V$ and integrating over $I$, along with \eqref{v-expression}, gives that
\begin{equation}\label{dt-G1}
\begin{aligned}
&\,\frac{1}{2}\frac{\mathrm{d}}{\dt}\int_0^1 \zeta_{a} r^2\rho_0V^2\,\mathrm{d}r+\frac{8Aa_{1} \gamma\delta}{(\gamma+\delta-1)^2} \int_0^1 \zeta_{a}\eta^{2}\eta_r|D_\eta\varrho^{\frac{\gamma+\delta-1}{2}}|^2 \,\mathrm{d}r\\
&=-A \int_0^1 \zeta_{a}\eta^2(\varrho^\gamma)_r U\,\mathrm{d}r-\int_0^1\zeta_ar^2\rho_0VD_\eta \Phi\,\mathrm{d}r\\
&=A \int_0^1 \zeta_{a} \varrho^\gamma \eta_r D_\eta(\eta^2 U)\,\mathrm{d}r+A\int_0^1 (\zeta_{a})_r  \eta^2 \varrho^\gamma U \,\mathrm{d}r -\int_0^1\zeta_ar^2\rho_0VD_\eta \Phi\,\mathrm{d}r:=\sum_{i=5}^7 I_{i}. 
\end{aligned}
\end{equation}

\smallskip
\textbf{2. Estimates for $I_5$--$I_6$.} For $I_{5}$, it follows from \eqref{eq:eta} that
\begin{equation}\label{G1,1}
I_{5}=A \int_0^1 \zeta_{a}\frac{(r^{2}\rho_0)^{\gamma}}{(\eta^{2}\eta_r)^\gamma} (\eta^{2} \eta_r)_t\,\mathrm{d}r
=-\frac{A}{\gamma-1} \frac{\mathrm{d}}{\mathrm{d}t}\int_0^1\zeta_{a}\eta^{2} \eta_r \varrho^\gamma \,\mathrm{d}r.
\end{equation}
 
To estimate $I_{6}$, we obtain from \eqref{eq:eta}, the fact that $\rho_0^\beta\sim 1-r$, Lemmas \ref{lemma-basic energy} and \ref{lemma-low jacobi 1}, and the H\"older inequality that
\begin{equation}\label{G1,2}
\begin{aligned}
I_{6}&= A\int_{0}^{1}  (\zeta_{a})_r\frac{(r^{2}\rho_0)^{\gamma}U}{\eta^{2(\gamma-1)}\eta_r^\gamma} \,\mathrm{d}r \leq C(a,T) \int_{a}^{\frac{1+3a}{4}} (r^{2}\rho_0)^{\gamma}|U|\,\mathrm{d}r\\
& \leq C(a,T) \int_{a}^{\frac{1+3a}{4}} (r^2\rho_0)^\frac{1}{2} |U| \,\mathrm{d}r\leq C(a,T) \Big(\int_0^1 r^{2} \rho_0 U^2 \,\mathrm{d}r\Big)^\frac{1}{2}\leq C(a,T).  
\end{aligned}
\end{equation}

\smallskip
\textbf{3. Estimate for $I_7$.} We first show the following auxiliary inequality:
\begin{equation}\label{G1,3,fuzhu}
    \Big(\int_0^1 (\zeta_{a})^{3}\eta^2\eta_r\varrho^{3\gamma+3\delta-3}\,\mathrm{d}r\Big)^{\frac{1}{3}}
    \leq C(a,T) +C_0\int_0^1\zeta_a\eta^2\eta_r\big|D_\eta \varrho^{\frac{\gamma+\delta-1}{2}}\big|^2\,\mathrm{d}r.
\end{equation}
Indeed, a direct calculation, combined with the Young inequality, gives
\begin{equation}\label{G1,3,fuzhu,1}
\begin{aligned}
        &\int_0^1\zeta_a\eta^2\eta_r\big|D_\eta \varrho^{\frac{\gamma+\delta-1}{2}}\big|^2\,\mathrm{d}r=\int_0^1 \eta^2\eta_r\big|\sqrt{\zeta_a}   D_\eta \varrho^{\frac{\gamma+\delta-1}{2}}\big|^2\,\mathrm{d}r\\
        &=\int_0^1\eta^2\eta_r\big|D_\eta(\sqrt{\zeta_a}\varrho^{\frac{\gamma+\delta-1}{2}})-  (D_\eta \sqrt{\zeta_a})\varrho^{\frac{\gamma+\delta-1}{2}} \big|^2 \,\mathrm{d}r\\
        &\geq \frac{1}{2}\int_0^1\eta^2\eta_r\big|D_\eta(\sqrt{\zeta_a}\varrho^{\frac{\gamma+\delta-1}{2}}) \big|^2 \,\mathrm{d}r-C_0\int_0^1\eta^2\eta_r|D_\eta\sqrt{\zeta_a}|^2\varrho^{\gamma+\delta-1}\,\mathrm{d}r:=\sum_{i=1}^2I_{7,i}.
\end{aligned}
\end{equation}

For $I_{7,1}$, define $\tilde{\zeta}_a(t,x):=\zeta_a(\eta_*(t,r))$ and  $\tilde{\zeta}_a(t,\boldsymbol{x}):=\tilde{\zeta}_a(t,|\boldsymbol{x}|)$, where $\eta_*(t,\cdot): I\to I(t)$ denotes the inverse flow map\footnote{Since $\eta$ is a classical solution of problem \eqref{eq:VFBP-La-eta} in $[0,T]\times \bar I$ satisfying $\eqref{b1-lo}_2$, $\eta$ is bijective in $[0,T]\times \bar I$, and hence its inverse $\eta_*$ is well-defined.} of $\eta$. Then we see from the coordinate transformations and Lemmas \ref{sobolev-embedding} and \ref{lemma-initial} that
\begin{equation}\label{G1,3,fuzhu,2}
\begin{aligned}
I_{7,1}&=\frac{1}{8\pi}\int_{\Omega(t)}\big|\nabla((\tilde{\zeta}_a)^\frac{1}{2}\rho^{\frac{\gamma+\delta-1}{2}})\big|^2\,\mathrm{d}\boldsymbol{x}=\frac{1}{8\pi}\int_{\mathbb{R}^3}\big|\nabla((\tilde{\zeta}_a)^\frac{1}{2}\rho^{\frac{\gamma+\delta-1}{2}})\big|^2\,\mathrm{d}\boldsymbol{x}\\
&\geq \frac{1}{C_0}\Big(\int_{\mathbb{R}^3}\tilde{\zeta}_a^3\rho^{3\gamma+3\delta-3}\,\mathrm{d}\boldsymbol{x} \Big)^{\frac{1}{3}}
=\frac{1}{C_0}\Big(\int_{0}^{1}(\zeta_a)^3\eta^2\eta_r\varrho^{3\gamma+3\delta-3}\,\mathrm{d}r \Big)^{\frac{1}{3}}.
\end{aligned}
\end{equation}

For $I_{7,2}$, it follows from the fact that $4-2\gamma-2\delta<0$, \eqref{eq:eta}, and Lemma \ref{lemma-low jacobi 1} that
\begin{equation}\label{G1,3,fuzhu,3}
\begin{aligned}
I_{7,2}&=-C_0\int_a^{\frac{1+3a}{4}} |(\sqrt{\zeta_a})_r|^2\eta^{4-2\gamma-2\delta}\eta_r^{-\gamma-\delta}(r^2\rho_0)^{\gamma+\delta-1} \,\mathrm{d}r \geq -C(a,T).
\end{aligned}
\end{equation}
Therefore, \eqref{G1,3,fuzhu,1}, combined with \eqref{G1,3,fuzhu,2}--\eqref{G1,3,fuzhu,3}, leads to \eqref{G1,3,fuzhu}.

Now, we estimate $I_7$. It follows from \eqref{v-expression} and \eqref{lemma4.4 I_7 fuzhu} that  
\begin{equation}\label{G1,3,1}
\begin{aligned}
I_7&=-\int_0^1\zeta_ar^2\rho_0UD_\eta \Phi \,\mathrm{d}r-\frac{2a_{1}\delta}{\delta-1}\int_0^1 \zeta_ar^2\rho_0 D_\eta(\varrho^{\delta-1})  D_\eta \Phi\,\mathrm{d}r\\
&= \frac{1}{2}\frac{\mathrm{d}}{\mathrm{d}t}\int_0^1\zeta_aI_1 \,\mathrm{d}r \ \underline{-2a_{1}\int_0^1 \zeta_a \eta^2\eta_rD_\eta\Phi(D_\eta\varrho^\delta)\,\mathrm{d}r}_{:=I_{7,3}}.
\end{aligned}
\end{equation}

For $I_{7,3}$, let
$
\vartheta=\frac{\delta}{3\gamma+3\delta-4}\in (0,\frac{1}{3})$.
Then, based on facts that $(\zeta_a)_r\leq 0$, $0\leq \zeta_a^{1-3\vartheta}\leq 1$, and
\begin{equation*}
D_\eta \Phi\geq0,\qquad D_\eta(\eta^2 D_\eta\Phi)= 4\pi G \eta^2\varrho,  
\end{equation*}
due to \eqref{D Phi expression}, we obtain from \eqref{eq:eta}, \eqref{G1,3,fuzhu}, and the H\"older and Young inequalities that      
\begin{align}
        I_{7,3}
        &=2a_{1}\int_0^1\zeta_a \eta_r\varrho^\delta D_\eta(\eta^2 D_\eta \Phi)\,\mathrm{d}r+\underline{2a_{1}\int_0^1(\zeta_a)_r\eta^2\varrho^\delta D_\eta \Phi\,\mathrm{d}r}_{ \ \leq 0}\nonumber\\
        &\leq  8\pi Ga_{1}\int_0^1 \zeta_a \eta^2 \eta_r\varrho^{\delta+1}\,\mathrm{d}r =8\pi Ga_{1} \int_0^1 \zeta_a^{1-3\vartheta}(r^2\rho_0)^{1-\vartheta}  (\zeta_a^3\eta^2\eta_r\varrho^{3\gamma+3\delta-3})^\vartheta\,\mathrm{d}r \label{G1,3,3}\\
        &\leq  C(a)\Big(\int_0^1\zeta_a^{3}\eta^2\eta_r\varrho^{3\gamma+3\delta-3}\,\mathrm{d}r\Big)^{\vartheta} \leq  C(a,T)\Big(1+ \int_0^1\zeta_a\eta^2\eta_r\big|D_\eta \varrho^{\frac{\gamma+\delta-1}{2}}\big|^2\,\mathrm{d}r\Big)^{3\vartheta}\nonumber\\
        &\leq C(a,T)+\frac{4Aa_{1} \gamma\delta}{(\gamma+\delta-1)^2}\int_0^1\zeta_a\eta^2\eta_r\big|D_\eta \varrho^{\frac{\gamma+\delta-1}{2}}\big|^2\,\mathrm{d}r.\nonumber
\end{align}

\smallskip
\textbf{4.} Substituting \eqref{G1,1}--\eqref{G1,2} and \eqref{G1,3,1}-\eqref{G1,3,3} into \eqref{dt-G1} and integrating the resulting inequality over $[0,t]$, along with \eqref{Poisson_Pressure} and Lemma \ref{lemma-basic energy}, yields
\begin{equation}\label{L2-V}
\begin{aligned}
&\,\big|(\zeta_{a} r^2\rho_0)^\frac{1}{2}V(t)\big|_2^2+\big|\zeta_{a}\eta^{2}\eta_r \varrho^\gamma(t)\big|_1+\int_0^t \big|(\zeta_{a}\eta^2\eta_r)^\frac{1}{2}  D_\eta (\varrho^\frac{\gamma+\delta-1}{2})\big|_2^2\,\mathrm{d}s\\
&\leq C_0\Big(|(\zeta_{a} r^2\rho_0)^\frac{1}{2}v_0 |_2^2+\big|\zeta_{a}r^2 \rho_0^\gamma\big|_1 +\sup_{s\in[0,t]}\int_0^1\zeta_aI_1(s)\,\mathrm{d}r\Big)+C(a,T) \leq   C(a,T).
\end{aligned}
\end{equation}
Here, for the boundedness of the initial data, it suffices to note that
$\rho_0^\beta\sim 1-r$ and
\begin{equation*}
|(\zeta_{a} r^2\rho_0)^\frac{1}{2}(\rho_0^{\delta-1})_r|_2\leq C(a)|r(\rho_0^\beta)_r|_2\leq C(a).
\end{equation*}
Since \eqref{G1,3,fuzhu} and \eqref{L2-V} hold for any cut-off functions $\zeta_a$ with $a \in (0,1)$, we also have
\begin{equation*}
\begin{aligned}
&\int_0^t\big|(\zeta_{a}\eta^2\eta_r)^\frac{1}{3}\varrho^{\gamma+\delta-1}\big|_3\,\mathrm{d}s\leq \int_0^t\big|(\zeta_{\frac{1+3a}{4}}^3\eta^2\eta_r)^\frac{1}{3}\varrho^{\gamma+\delta-1}\big|_3\,\mathrm{d}s\\
&\leq C(a,T) +C_0\int_0^t\int_0^1\zeta_{\frac{1+3a}{4}}\eta^2\eta_r\big|D_\eta \varrho^{\frac{\gamma+\delta-1}{2}}\big|^2\,\mathrm{d}r\mathrm{d}s \leq C(a,T).
\end{aligned}
\end{equation*}

Finally, it follows from \eqref{v-expression}, \eqref{L2-V}, and Lemma \ref{lemma-basic energy} that, for all $t\in [0,T]$,
\begin{equation*}
\big|(\zeta_{a}\eta^2\eta_r)^\frac{1}{2}  D_\eta(\varrho^{\delta-\frac{1}{2}} )\big|_2 \leq C(a,T),
\end{equation*}
which leads to the desired estimates.
\end{proof}

\begin{Lemma}\label{lemma-near depth}
Let parameters $(l_1,l_2)$ satisfy
\begin{equation}\label{lemma4.5 index}
l_1\in [2\delta-1,1],   \qquad   l_2\in \big[\frac{1}{2\delta-1},\frac{2}{\delta}\big].
\end{equation}
Then, for any $a\in (0,1)$ and $(l_1,l_2)$ defined above, there exist positive constants $C(l_1,a,T)$ and $C(l_2,a,T)$ such that, for all $t\in [0,T]$,
\begin{equation}\label{fff}
|\zeta_{a} \eta^{\frac{l_1-(2\delta-1)}{1-\delta}}\eta_r \varrho^{l_1}(t)|_1\leq C(l_1,a,T),\qquad 
|\zeta_{a}\eta^{l_2} \varrho(t)|_\infty\leq C(l_2,a,T).
\end{equation}
\end{Lemma}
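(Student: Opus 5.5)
The plan is to prove only the two endpoint cases of \eqref{lemma4.5 index} for each estimate in \eqref{fff}, and then get intermediate exponents by pointwise interpolation. The main new input will be the flow-map-weighted BD-type estimate of Lemma \ref{near-BD} for $f:=\varrho^{\delta-\frac12}$, whose square is $f^2=\varrho^{2\delta-1}$.

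\textbf{Reduction to the endpoints.} For $l_1$, the endpoint $l_1=1$ gives $|\zeta_a\eta^2\eta_r\varrho|_1=|\zeta_a r^2\rho_0|_1\le C_0$ by \eqref{eq:eta}. The endpoint $l_1=2\delta-1$ requires $|\zeta_a\eta_r\varrho^{2\delta-1}|_1\le C(a,T)$. Write $l_1=\theta(2\delta-1)+(1-\theta)$ with $\theta\in[0,1]$. Then $\frac{l_1-(2\delta-1)}{1-\delta}=2(1-\theta)$, so pointwise
$$\eta^{\frac{l_1-(2\delta-1)}{1-\delta}}\varrho^{l_1}=(\varrho^{2\delta-1})^{\theta}(\eta^2\varrho)^{1-\theta}.$$
H\"older's inequality in $L^1(\zeta_a\eta_r\,\mathrm{d}r)$ then gives the general $l_1$.

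For $l_2$, the endpoint $l_2=\frac2\delta$ is immediate: $|\eta^{2/\delta}\varrho|_\infty=|\eta^2\varrho^\delta|_\infty^{1/\delta}\le C(T)$ by Lemma \ref{lemma-far depth}. The endpoint $l_2=\frac1{2\delta-1}$ amounts to $|\zeta_a\eta\varrho^{2\delta-1}|_\infty\le C(a,T)$, after taking the $\frac1{2\delta-1}$ power; the cut-off can be handled by replacing $\zeta_a$ with $\zeta_a^{2\delta-1}\ge\zeta_a$. Intermediate $l_2$ follow from $\eta^{l_2}\varrho=(\eta^{2/\delta}\varrho)^{\theta}(\eta^{1/(2\delta-1)}\varrho)^{1-\theta}$.

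\textbf{$L^1$ endpoint.} Since $\eta_r f^2=(\eta f^2)_r-2\eta f f_r$, I will integrate $\zeta_a\eta_r f^2$ over $I$ and move the derivative onto $\zeta_a$. This uses $\eta(t,0)=0$ and that $\zeta_a$ vanishes near $r=1$. The result is
$$\int_0^1\zeta_a\eta_r f^2\,\mathrm{d}r=-2\int_0^1\zeta_a\eta f D_\eta f\,\eta_r\,\mathrm{d}r-\int_0^1(\zeta_a)_r\,\eta f^2\,\mathrm{d}r.$$
By Cauchy--Schwarz, the first term is bounded by $2\big(\int\zeta_a\eta_r f^2\big)^{1/2}\,|(\zeta_a\eta^2\eta_r)^{\frac12}D_\eta f|_2$. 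The second factor is bounded by $C(a,T)$ by Lemma \ref{near-BD}, and the first factor is absorbed into the left-hand side.

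For the second term, $(\zeta_a)_r$ is supported in $[a,\frac{1+3a}{4}]$. There, Lemmas \ref{lemma-far depth} and \ref{lemma-low jacobi 1} give $\eta f^2=\eta\varrho^{2\delta-1}\le\eta\,(C(T)\eta^{-2})^{\frac{2\delta-1}{\delta}}\le C(a,T)$, since $\eta$ is bounded below there and also bounded above. The upper bound for $\eta$ comes from $\eta^2\varrho^\delta\le C(T)$ together with $\rho_0\ge c(a)>0$ on this compact subinterval and $\eta_r$ being bounded below there. This yields $|\zeta_a\eta_r\varrho^{2\delta-1}|_1\le C(a,T)$.

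\textbf{$L^\infty$ endpoint.} For $r\in I$, integrate $(\zeta_a\eta f^2)_r$ from $0$ to $r$:
$$\zeta_a\eta f^2(r)\le\int_0^1\zeta_a\eta_rf^2+2\int_0^1\zeta_a\eta|f||D_\eta f|\eta_r+\int_0^1|(\zeta_a)_r|\eta f^2.$$
All three terms are already controlled: the first by the $L^1$ endpoint, the second by Cauchy--Schwarz combined with that endpoint and Lemma \ref{near-BD}, and the third as above.

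\textbf{Main obstacle.} I expect the delicate point to be justifying the boundary term at the origin, namely $(\eta f^2)(t,0)=0$. For the classical solution on $[0,T]$ this holds because $\eta(t,0)=0$ while $\varrho(t,0)$ is finite and positive, by Definition \ref{definition-lag} and $\eqref{b1-lo}$. The argument must still be arranged so that the final constants depend only on $(a,T)$ and not on the a priori bounds $\sigma_*,\sigma^*$ of the local solution. Qualitative finiteness is used only to kill boundary terms and to absorb the $L^1$ quantity into the left-hand side, so the resulting bounds remain uniform.
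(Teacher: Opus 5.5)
Your proposal is correct and is essentially the paper's proof: integrate $\zeta_a\eta_r\varrho^{2\delta-1}$ by parts using $\eta(t,0)=0$, absorb the cross term via Cauchy--Schwarz and the BD-type bound of Lemma \ref{near-BD}, get the $L^\infty$ endpoint from the $W^{1,1}\hookrightarrow L^\infty$ embedding, and interpolate (H\"older) with the trivial endpoints $|r^2\rho_0|_1$ and $|\eta^2\varrho^\delta|_\infty$.

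Two small corrections concern the cut-off term and the $l_2=\frac{1}{2\delta-1}$ endpoint.
\begin{itemize}
\item On $[a,\frac{1+3a}{4}]$ you need no upper bound on $\eta$. The one you sketch does not follow: $\eta^{2-2\delta}\eta_r^{-\delta}(r^2\rho_0)^\delta\le C(T)$ bounds $\eta$ only in terms of an upper bound on $\eta_r$, not a lower one. It is unnecessary because $\eta\varrho^{2\delta-1}\le C(T)\eta^{\frac{2-3\delta}{\delta}}$ has a negative exponent for $\delta>\frac23$. The lower bound of Lemma \ref{lemma-low jacobi 1} therefore suffices, exactly as in the paper.
\item For the cut-off at the $l_2=\frac{1}{2\delta-1}$ endpoint, it is cleaner to do as the paper does: apply the $L^\infty$ bound with $\zeta_{\frac{1+3a}{4}}$, which is $\equiv 1$ on $\support\,\zeta_a$. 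Rerunning the argument with $\zeta_a^{2\delta-1}$ is awkward, since its derivative carries the negative power $\zeta_a^{2\delta-2}$.
\end{itemize}
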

\begin{proof} 
First, we can obtain from integration by parts, the fact that $\delta>\frac{2}{3}$, Lemmas \ref{lemma-far depth}--\ref{near-BD}, and the H\"older inequality that, for all $t\in [0,T]$ and $a\in (0,1)$,
\begin{equation*}
\begin{aligned}
|\zeta_{a}\eta_r \varrho^{2\delta-1}|_1 &=\int_0^1 \zeta_{a} \varrho^{2\delta-1}\,\mathrm{d}\eta=-\int_0^1 (\zeta_{a})_r\eta \varrho^{2\delta-1}\,\mathrm{d}r-2\int_0^1\zeta_{a}\eta \varrho^{\delta-\frac{1}{2}}(\varrho^{\delta-\frac{1}{2}})_r\,\mathrm{d}r\\
&\leq C(a)\int_a^\frac{1+3a}{4} \eta^{\frac{2-3\delta}{\delta}}(\eta^2 \varrho^\delta)^\frac{2\delta-1}{\delta}\,\mathrm{d}r+2\big|(\zeta_{a}\eta^2\eta_r)^\frac{1}{2} D_\eta \varrho^{\delta-\frac{1}{2}}\big|_2|\zeta_{a}\eta_r \varrho^{2\delta-1}|_1^\frac{1}{2}\\
&\leq C(a,T)\big(|\eta^2 \varrho^\delta|_\infty^\frac{2\delta-1}{\delta}+ |\zeta_{a}\eta_r\varrho^{2\delta-1}|_1^\frac{1}{2}\big) 
\leq C(a,T)\big(1+ |\zeta_{a}\eta_r \varrho^{2\delta-1}|_1^\frac{1}{2}\big),
\end{aligned}
\end{equation*}
which, along with the Young inequality, yields
\begin{equation*} 
|\zeta_{a}\eta_r \varrho^{2\delta-1}|_1\leq C(a,T)\qquad \text{for all $t\in[0,T]$}.
\end{equation*}

This, together with \eqref{eq:eta} and the H\"older inequality, yields that, for all $l_1\in[2\delta-1,1]$ and $k_1:=\frac{1-l_1}{2(1-\delta)}\in [0,1]$, 
\begin{equation*}
\begin{aligned}
    |\zeta_{a} \eta^{\frac{l_1-(2\delta-1)}{1-\delta}}\eta_r \varrho^{l_1}(t)|_1
&\leq|\zeta_{a}\eta_r \varrho^{2\delta-1}|_1^{k_1}|\zeta_{a}\eta^2\eta_r \varrho|_1^{1-k_1}\leq |\zeta_{a}\eta_r \varrho^{2\delta-1}|_1^{k_1}|r^2\rho_0|_1^{1-k_1}\leq C(l_1,a,T), 
\end{aligned}
\end{equation*}
which implies $\eqref{fff}_1$.

Next, it follows from $\eqref{fff}_1$, Lemmas \ref{lemma-far depth}--\ref{near-BD}, and \ref{sobolev-embedding} that
\begin{equation}\label{lemma4.5 infty}
\begin{aligned}
|\zeta_{a}\eta \varrho^{2\delta-1}|_\infty &\leq C_0|(\zeta_{a}\eta \varrho^{2\delta-1})_r|_1 \leq C_0\big|((\zeta_{a})_r\eta \varrho^{2\delta-1},\,\zeta_{a} \eta_r\varrho^{2\delta-1},\,\zeta_{a}\eta\varrho^{\delta-\frac{1}{2}}(\varrho^{\delta-\frac{1}{2}})_r)\big|_1\\
&\leq  C(a)\int_a^\frac{1+3a}{4}  \eta^{\frac{2-3\delta}{\delta}}(\eta^2 \varrho^\delta)^\frac{2\delta-1}{\delta}\,\mathrm{d}r+C_0|\zeta_{a} \eta_r \varrho^{2\delta-1}|_1\\
&\quad +C_0\big|(\zeta_{a}\eta^2\eta_r)^\frac{1}{2} D_\eta(\varrho^{\delta-\frac{1}{2}})\big|_2 | \zeta_{a}\eta_r \varrho^{2\delta-1}|_1^\frac{1}{2} \\
&\leq C(a,T) \big(|\eta^2\varrho^\delta|_\infty^{\frac{2\delta-1}{\delta}}+| \zeta_{a}\eta_r \varrho^{2\delta-1}|_1+| \zeta_{a}\eta_r \varrho^{2\delta-1}|_1^\frac{1}{2}\big)\leq C(a,T).
\end{aligned}
\end{equation}
Since Lemma \ref{lemma-far depth} and \eqref{lemma4.5 infty} hold for any cut-off function $\zeta_a$ with $a\in(0,1)$, we have
\begin{equation*}
|\zeta_{a}\eta^\frac{1}{2\delta-1} \varrho|_\infty \leq C(a,T),\qquad  |\zeta_a\eta^\frac{2}{\delta} \varrho|_\infty \leq C(a,T),
\end{equation*}
which, along with the H\"older inequality, lead to $\eqref{fff}_2$. 
\end{proof}

\subsection{\texorpdfstring{$L^p$}{}-estimates for \texorpdfstring{$(r^2\rho_0)^\frac{1}{p}(U,V)$}{}} 
Our goal of this subsection is to establish the $L^p$-energy estimates  for $(U,V)$, which can be stated as follows:
\begin{Lemma}\label{lp-uv}  
Let  $\gamma\in (\frac{4}{3},6\delta-3)$ and $\delta\in (\frac{13}{18},1)$, and define
\begin{equation}\label{p+}
p_{*}=p_{*}(\delta):=\frac{\delta(2\delta-1)+\sqrt{\delta(2\delta-1)(3\delta-2)}}{(1-\delta)^2}.
\end{equation}
Then, for any $p\in [2,p_{*})$, there exists a constant $C(p,T)>0$ such that, for all $t\in[0,T]$,
\begin{equation}\label{zong-jiehe}
\big|(r^2\rho_0)^\frac{1}{p}U(t)\big|_p+\big|(\zeta r^2\rho_0)^\frac{1}{p}V(t)\big|_p+\int_0^t \big|(\eta^2\eta_r\varrho^\delta)^\frac{1}{2}|U|^\frac{p-2}{2}\mathscr{D}_\eta U\big|_2^2\,\mathrm{d}s\leq C(p,T),
\end{equation}
where the cut-off function $\zeta$ is defined in {\rm\S \ref{othernotation}}.
\end{Lemma}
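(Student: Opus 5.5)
We run an $L^p$ energy estimate for $U$ directly from the momentum equation, and a separate $L^p$ estimate for $V$ from the transport-type equation \eqref{eq:v}; the two are then coupled through the relation $V-U=\frac{2a_1\delta}{\delta-1}D_\eta(\varrho^{\delta-1})$.

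\textbf{Estimate for $U$.} Multiply $\eqref{eq:VFBP-La-eta}_1$ by $\eta^2\eta_r|U|^{p-2}U$ and integrate over $I$. The time-derivative term gives $\frac1p\frac{\mathrm d}{\mathrm dt}|(r^2\rho_0)^{1/p}U|_p^p$. After integrating by parts, the boundary terms vanish because $\varrho|_{r=1}=0$ and $U|_{r=0}=0$. The viscous terms then produce $a_1\eta^2\eta_r\varrho^\delta|U|^{p-2}Q_p(X,Y)$, where $X=D_\eta U$, $Y=U/\eta$, and $Q_p$ is a quadratic form whose coefficients depend on $(p,\delta)$. Concretely, the term $2a_1\delta(p-1)X^2$ arises from differentiating $|U|^{p-2}U$, and the cross and $Y^2$ terms come from $(\eta^2|U|^{p-2}U)_r$ and from $-4a_1UD_\eta\varrho^\delta/\eta$.

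The key algebraic step is to require that the discriminant of $Q_p$ be negative. This should reduce to a quadratic inequality in $p$ of the form $(1-\delta)^2p^2-2\delta(2\delta-1)p-(\dots)<0$, whose positive root is exactly $p_*$ in \eqref{p+}. For $p\in[2,p_*)$ we then obtain $Q_p\ge c_p|\mathscr D_\eta U|^2$.

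\textbf{Pressure and gravity terms for $U$.} The pressure term becomes $A(p-1)\int\varrho^\gamma(\eta^2|U|^{p-2})\,\cdots$, which is of the form $\int\eta^2\eta_r\varrho^\gamma|U|^{p-2}\mathscr D_\eta U$. We bound it by Young's inequality as
\[
\epsilon\int\eta^2\eta_r\varrho^\delta|U|^{p-2}|\mathscr D_\eta U|^2+C_\epsilon\int\eta^2\eta_r\varrho^{2\gamma-\delta}|U|^{p-2}.
\]
Then Hölder's inequality against $r^2\rho_0|U|^p$ leaves $\int\eta^2\eta_r\varrho^{1+p(\gamma-\delta)}$-type integrals, i.e. powers of $\varrho$ weighted by the Jacobian.
- Near the boundary, these are controlled by Lemma \ref{lemma-low jacobi 1} together with Lemma \ref{lemma-far depth}.
- Near the origin, they are controlled by Lemma \ref{lemma-near depth} and the integrated $L^3$ bound on $\varrho^{\gamma+\delta-1}$ from Lemma \ref{near-BD}.

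The gravity term $\int r^2\rho_0|U|^{p-1}D_\eta\Phi$ is handled with $|\eta^2D_\eta\Phi|\le C$ and Young's inequality. This step is the main obstacle: we must check that the restriction $\gamma<6\delta-3$ is exactly what makes these exponents fit, that is, that the required $\varrho$-power stays within the ranges $l_1,l_2$ of Lemma \ref{lemma-near depth}, modulo $\eta$-weights that can be absorbed. If the direct Hölder argument fails, we would first try to trade the pressure term for $V$ via $\varrho^{\gamma-2}D_\eta\varrho\sim\varrho^{\gamma-\delta}(V-U)$.

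\textbf{Estimate for $V$.} Multiply \eqref{eq:v} by $\zeta r^2\rho_0|V|^{p-2}V$. The damping term $\frac{A\gamma}{2a_1\delta}\varrho^{\gamma-\delta}|V|^p$ has a good sign, and the coupling term $\varrho^{\gamma-\delta}U|V|^{p-1}$ is controlled by Young's inequality in terms of $\varrho^{\gamma-\delta}|U|^p$. The latter is bounded by $|\varrho|_\infty$-type interior bounds (Lemma \ref{lemma-near depth}) times $|(r^2\rho_0)^{1/p}U|_p^p$. The gravity term is handled as before. Summing the $U$- and $V$-inequalities and applying Grönwall's inequality yields \eqref{zong-jiehe}.
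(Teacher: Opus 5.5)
Your outline matches the paper's skeleton: the $L^p$ energy identity for $U$, whose discriminant condition produces $p_*$, an $L^p$ estimate for $V$ from \eqref{eq:v}, and Gr\"onwall. However, the near-center control of the lower-order terms does not close as you propose. At this stage neither $|\zeta\varrho|_\infty$ nor a lower bound for $\eta/r$ near $r=0$ is known. These are Lemmas \ref{lemma-bound depth} and \ref{lemma-lower bound jacobi}, which are proved afterwards \emph{from} this lemma. Lemma \ref{lemma-near depth} only gives $\eta$-weighted bounds, such as $|\zeta\eta^{l_2}\varrho|_\infty$ with $l_2\ge\frac{1}{2\delta-1}>1$. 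Consequently, three of your steps fail.
\begin{itemize}
\item In the $V$-estimate, your bound $\int\zeta\eta^2\eta_r\varrho^{\gamma-\delta+1}|U|^p\le|\zeta\varrho^{\gamma-\delta}|_\infty|(r^2\rho_0)^{1/p}U|_p^p$ for the coupling term is circular.
\item In the pressure term, H\"older against $r^2\rho_0|U|^p$ leaves $\int\zeta\eta^2\eta_r\varrho^{\upsilon_0}$ with $\upsilon_0=1+p(\gamma-\frac{1+\delta}{2})$. For $p$ close to $p_*$ one has $\upsilon_0>3(\gamma+\delta-1)\ge 6\delta-3$. This is beyond what Lemma \ref{lemma-near depth} controls (at most $\varrho^{6\delta-3}$ against $\eta^2\eta_r$), and beyond Lemma \ref{near-BD}, which gives only $\varrho^{3(\gamma+\delta-1)}$ and only in $L^{1/3}(0,T)$.
\item In the gravity term, using $|\eta^2D_\eta\Phi|\le C$ leaves $C\int\eta_r\varrho|U|^{p-1}$. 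Whatever Young split you then use, the remainder carries a non-positive power of $\eta$ at the center, for instance $\int\zeta\eta_r\varrho^{p(1-\delta)+\delta}$ after absorbing into the dissipation. No available bound controls this.
\end{itemize}

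The missing idea is to exploit the $U/\eta$ component of the dissipation. The identity $\int\eta^2\eta_r\varrho^\delta|U|^{p-2}|U/\eta|^2=\int\eta_r\varrho^\delta|U|^p$ produces a quantity with \emph{no} factor $\eta^2$. Interpolating between $r^2\rho_0|U|^p$ and $\eta_r\varrho^\delta|U|^p$ therefore moves surplus powers of $\eta$ onto $|\zeta\eta^{l_2}\varrho|_\infty$.
\begin{itemize}
\item In the pressure term this yields $\int\zeta\eta^{2+\vartheta(p-2)}\eta_r\varrho^{\upsilon}$, with $\vartheta$ tuned to Lemma \ref{lemma-near depth}. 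This is feasible for $p$ near $p_*$ when $\gamma<3\delta-1-\frac{1-\delta}{p_*}$; smaller $p$ then follow by interpolation with Lemma \ref{lemma-basic energy}.
\item In the $V$-estimate, after a H\"older split, one gets $\int\zeta\eta^2\eta_r\varrho^{s}|U|^p\le|\zeta_{\frac58}\eta^{\frac{2}{s-\delta}}\varrho|_\infty^{s-\delta}\int\eta_r\varrho^\delta|U|^p$, with $s$ chosen so that $\frac{2}{s-\delta}\in[\frac{1}{2\delta-1},\frac{2}{\delta}]$. The last factor is time-integrable by the already closed $U$-estimate.
\item Gravity requires Hardy--Littlewood--Sobolev: $|(\eta^2\eta_r\varrho)^{1/p}D_\eta\Phi|_p\le C\|\rho\|_{L^1}^{1/p}\|\rho\|_{L^3}$, with $\|\rho\|_{L^3}\le C+C|(\zeta\eta^2\eta_r)^{1/3}\varrho^{\gamma+\delta-1}|_3\in L^1(0,T)$ by Lemma \ref{near-BD}. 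This enters as a Gr\"onwall coefficient.
\end{itemize}

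For $\gamma\in[3\delta-1-\frac{1-\delta}{p_*},6\delta-3)$ even this interpolation fails. There the paper iterates integration by parts on $\int\zeta\eta^{\mathfrak{a}_j}\eta_r\varrho^{\mathfrak{b}_j}$. Each step converts a power of $\varrho$ into $D_\eta(\varrho^{\frac{\gamma-(p-1)(1-\delta)}{p}})$, that is, into $\varrho^{\cdots}(V-U)$, and raises the $\eta$-exponent. The iteration stops once $\frac{\mathfrak{a}_j}{\mathfrak{b}_j-2\delta+1}\in[\frac{1}{2\delta-1},\frac{2}{\delta}]$, and this is where $\gamma<6\delta-3$ is used. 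The $U$-inequality then contains $\varepsilon$ times the $V$-damping $|(\zeta\eta^2\eta_r\varrho^{\gamma-\delta+1})^{1/p}V|_p^p$. The $V$-inequality in turn contains $\varepsilon\int\eta_r\varrho^\delta|U|^p$. The two must therefore be added with a small weight on the $V$ part. Your fallback of trading the pressure for $V$ points in this direction, but it supplies neither the iteration nor the cross-absorption.
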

\begin{Remark}\label{remark-p+}
A direct calculation shows that $p_{*}(\delta)$ is strictly increasing with respect to $\delta$,
\begin{equation}
\lim_{\delta\to \frac{2}{3}^+}p_{*}(\delta)=2,\qquad \lim_{\delta\to 1^-}p_{*}(\delta)=\infty,\qquad \text{and} \ \ \ \ p_{*}(\frac{13}{18})>7.
\end{equation}
\end{Remark}

The proof of Lemma \ref{lp-uv} will be divided into the following two cases:
\begin{align}
\text{Case (I):}& \ \  \gamma\in \big(\frac{4}{3},\frac{3p_{*} +1}{p_{*}}\delta-\frac{p_{*} +1}{p_{*}}\big), \qquad\qquad\qquad\quad\quad\,\delta\in \big(\frac{7p_{*} +3}{9p_{*} +3},1\big);\label{dede-gaga1}\\
\text{Case (II):}& \ \  \gamma\in \big(\frac{4}{3},6\delta-3\big)\cap \big[\frac{3p_{*} +1}{p_{*}}\delta-\frac{p_{*} +1}{p_{*}},\infty\big), \qquad \delta\in (\frac{13}{18},1).\label{dede-gaga2} 
\end{align}

\subsubsection{Proof for Case {\rm(I)}}\label{subsub-2}

We first give the $L^p$-energy estimates for $U$.
\begin{Lemma}\label{lemma-u Lp-n3}
Let $(\gamma,\delta)$ satisfy \eqref{dede-gaga1}. Then for any $p\in[2,p_{*})$, there exists a constant $C(p,T)>0$ such that, for all $t\in [0,T]$,
\begin{equation*}
\big|(r^2\rho_0)^\frac{1}{p}U(t)\big|_p+\int_0^t\big|(\eta^2\eta_r\varrho^\delta)^\frac{1}{2}|U|^\frac{p-2}{2}\mathscr{D}_\eta U\big|_2^2\,\ds\leq C(p,T).
\end{equation*}
\end{Lemma}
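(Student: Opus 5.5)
We will multiply the momentum equation $\eqref{eq:VFBP-La-eta}_1$ by $\eta^2\eta_r|U|^{p-2}U$ and integrate over $I$, in the same way as in the proof of Lemma \ref{lemma-basic energy}. Using \eqref{eq:eta}, the time-derivative term becomes $\frac1p\frac{\mathrm d}{\mathrm dt}|(r^2\rho_0)^{1/p}U|_p^p$. Integrating the viscous terms by parts leaves the boundary term at $r=1$, which vanishes since $\varrho|_{r=1}=0$ and $U(t,0)=0$. The viscous term then produces $a_1\int\eta^2\eta_r\varrho^\delta|U|^{p-2}Q_p(X,Y)\,\mathrm dr$, where $(X,Y)=(D_\eta U,U/\eta)$ and
\begin{equation*}
Q_p(X,Y)=2\delta(p-1)X^2+c_1(p,\delta)XY+c_2(p,\delta)Y^2 .
\end{equation*}
The coefficients $c_1,c_2$ are explicit. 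They come from $4(\delta-1)(\varrho^\delta)_rU^2|U|^{p-2}/\eta$ after one more integration by parts, which moves the derivative off $\varrho^\delta$, and from the cross terms $D_\eta(|U|^{p-2}U)=(p-1)|U|^{p-2}D_\eta U$.

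The first key step is to check that $Q_p$ is positive definite exactly when $p<p_*$. The root $p_*$ in \eqref{p+} is where the discriminant $c_1^2-8\delta(p-1)c_2$ changes sign. This yields
\begin{equation*}
Q_p\ge c_{p,\delta}|\mathscr D_\eta U|^2 ,
\end{equation*}
which is the dissipation in \eqref{zong-jiehe}. We expect to redo the bookkeeping of the $U^2/\eta^2$ coefficient carefully here, since the precise value of $p_*$ depends on it.

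Next we handle the pressure and gravity terms. The pressure term $-A\int\eta^2(\varrho^\gamma)_r|U|^{p-2}U$ is integrated by parts into
\begin{equation*}
A\int\varrho^\gamma\eta_r\Big((p-1)|U|^{p-2}D_\eta U\,\eta^2+2\eta|U|^{p-2}U\Big).
\end{equation*}
By Young's inequality this is bounded by $\epsilon\int\eta^2\eta_r\varrho^\delta|U|^{p-2}|\mathscr D_\eta U|^2$ plus $C\int\eta^2\eta_r\varrho^{2\gamma-\delta}|U|^{p-2}$. The latter is bounded by Hölder:
\begin{equation*}
\int\eta^2\eta_r\varrho^{2\gamma-\delta}|U|^{p-2}\le\Big(\int r^2\rho_0|U|^p\Big)^{\frac{p-2}{p}}\Big(\int\eta^2\eta_r\varrho^{1+\frac p2(2\gamma-\delta-1)}\Big)^{\frac2p}.
\end{equation*}
The density factor must be bounded in $L^1_t$. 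Near the boundary we use Lemma \ref{lemma-low jacobi 1} together with $\varrho\lesssim\rho_0$, which follows from the upper bound of $\eta^2\varrho^\delta$ in Lemma \ref{lemma-far depth} and the lower bounds on $\eta$. Near the origin we interpolate Lemma \ref{near-BD}, which gives $\int_0^t|(\zeta\eta^2\eta_r)^{1/3}\varrho^{\gamma+\delta-1}|_3\,\mathrm ds\le C$, with the weighted bounds of Lemma \ref{lemma-near depth}, namely the $L^\infty$ bound of $\eta^{l_2}\varrho$ and the $L^1$ bound of $\eta^{(\cdot)}\eta_r\varrho^{l_1}$.

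The restriction of Case (I), $\gamma<\frac{3p_*+1}{p_*}\delta-\frac{p_*+1}{p_*}$, is exactly what should make the needed power $1+\frac p2(2\gamma-\delta-1)$ at most $3(\gamma+\delta-1)$, modulo the $\eta$-weights, for all $p<p_*$. We verify this index computation and absorb any leftover negative powers of $\eta$ using $l_2\le 2/\delta$.

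The gravity term $\int r^2\rho_0 D_\eta\Phi\,|U|^{p-2}U$ is controlled using
\begin{equation*}
D_\eta\Phi\le 4\pi G\Big(\int_0^r\hat r^2\rho_0\Big)\eta^{-2}\le C\frac{r^3}{\eta^2}.
\end{equation*}
Combining this with $\frac{\eta}{r}\ge C^{-1}r^{2/(3\delta-2)}$ from Lemma \ref{lemma-low jacobi 1} (more precisely its claim \eqref{claim-1}) and with the $\varrho$-weights, we bound it by $C+C\int r^2\rho_0|U|^p$.

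Gronwall's inequality then closes the estimate. We expect the main obstacle to be the interior density integral in the pressure term. The exponents must match, and the singular weight $\eta^{-1}$ near the origin has to be absorbed using only Lemmas \ref{near-BD}--\ref{lemma-near depth}.
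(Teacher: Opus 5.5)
Your energy identity, the positivity of the quadratic form for $p<p_*$, and the reduction of the pressure term to $C\int\eta^2\eta_r\varrho^{2\gamma-\delta}|U|^{p-2}\,\mathrm{d}r$ (the paper's $I_{10}$) all match the paper. The gap is in how you then treat the interior part of $I_{10}$.

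By putting all of $|U|^{p-2}$ on $|(r^2\rho_0)^{1/p}U|_p$, you are left with $\int_0^{1/2}\eta^2\eta_r\varrho^{\upsilon}\,\mathrm{d}r$, where $\upsilon=1+\frac p2(2\gamma-\delta-1)>1+\frac p3$. This power grows linearly in $p$, whereas $p_*>7$ on the whole admissible range and $p_*\to\infty$ as $\delta\to1$.
\begin{itemize}
\item Lemma \ref{lemma-near depth} can absorb this term only through $\eta^2\eta_r\varrho^{\upsilon}\le(\eta^{l_2}\varrho)^{\upsilon-2\delta+1}\eta_r\varrho^{2\delta-1}$ with $l_2=\frac{2}{\upsilon-2\delta+1}\ge\frac{1}{2\delta-1}$. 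That requires $\upsilon\le 6\delta-3<3$, which forces $p<6$.
\item Lemma \ref{near-BD} only puts $\varrho$ in $L^{3(\gamma+\delta-1)}$, with $3(\gamma+\delta-1)<6$, and only the $(\gamma+\delta-1)$-th power of that norm is integrable in time. Interpolation therefore cannot reach $\varrho^{\upsilon}$ for large $p$ either.
\item Your index claim is false: Case (I) does not give $\upsilon\le 3(\gamma+\delta-1)$. For example, $\delta=0.9$, $\gamma=1.5$, $p=100<p_*\approx 143$ gives $\upsilon=56$, against $3(\gamma+\delta-1)=4.2$.
\end{itemize}

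The missing idea is to spend part of the dissipation. Its $U/\eta$ component controls $|(\eta_r\varrho^\delta)^{1/p}U|_p^p=\int\eta^2\eta_r\varrho^\delta|U|^{p-2}(U/\eta)^2\,\mathrm{d}r$, which is a Hardy-type gain at the origin.
\begin{itemize}
\item The paper splits $|U|^{p-2}$ by H\"older between this quantity and $|(r^2\rho_0)^{1/p}U|_p$, with a parameter $\vartheta\in[0,1]$. This lowers the density power to $\upsilon_\vartheta=p\gamma+\frac{(p-2)(1-\delta)\vartheta-p\delta-p+2}{2}$ and adds the weight $\eta^{\vartheta(p-2)}$.
\item Then $\int_0^{1/2}\eta^{2+\vartheta(p-2)}\eta_r\varrho^{\upsilon_\vartheta}\,\mathrm{d}r\le|\zeta\eta^{l_2}\varrho|_\infty^{\upsilon_\vartheta-2\delta+1}|\zeta\eta_r\varrho^{2\delta-1}|_1$ with $l_2=\frac{2+\vartheta(p-2)}{\upsilon_\vartheta-2\delta+1}$. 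This is bounded uniformly in time by Lemma \ref{lemma-near depth} once $l_2\in[\frac{1}{2\delta-1},\frac{2}{\delta}]$.
\item Taking $\vartheta$ at the lower endpoint $\underline{f}(p;\gamma,\delta)$ of the admissible range works for $p\ge p_0$, with $p_0\in[6,p_*)$ close to $p_*$. This is possible exactly under the Case (I) restriction on $\gamma$, which is where that restriction really enters.
\item Young's inequality then gives $I_{10}\le C_0|(r^2\rho_0)^{1/p}U|_p^p+\varepsilon|(\eta^2\eta_r\varrho^\delta)^{1/2}|U|^{\frac{p-2}{2}}\mathscr{D}_\eta U|_2^2+C$. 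The range $p\in[2,p_0)$ follows by interpolation with Lemma \ref{lemma-basic energy}.
\end{itemize}

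Your gravity estimate also does not close. The bound $D_\eta\Phi\lesssim r^3/\eta^2$ together with $\eta/r\ge C^{-1}r^{2/(3\delta-2)}$ only gives $D_\eta\Phi\lesssim r^{1-4/(3\delta-2)}$. Since $3\delta-2<1$, the weight $r^2\rho_0|D_\eta\Phi|^p$ is then not integrable at $r=0$ for any $p\ge 2$.

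Instead, use $D_\eta\Phi=4\pi G M(r)/\eta^2\le C|(\eta^2\eta_r)^{1/3}\varrho|_3$. This follows from H\"older on the Eulerian ball of radius $\eta$, or from HLS as in the paper. Control $|(\eta^2\eta_r)^{1/3}\varrho|_3$ by interpolating between $|r^2\rho_0|_1$ and the quantity in Lemma \ref{near-BD}. The Gr\"onwall coefficient is then $1+|(\zeta\eta^2\eta_r)^{1/3}\varrho^{\gamma+\delta-1}|_3\in L^1(0,T)$, not a constant.
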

\begin{proof}
We divide the proof into four steps.

\smallskip
\textbf{1.} Let $p\in  [2,p_{*})$ with $p_{*}$ being defined in \eqref{p+}. Multiplying $\eqref{eq:VFBP-La-eta}_1$ by $\eta^2\eta_r |U|^{p-2}U$ gives
\begin{equation}\label{lemma4.7 eq}
\begin{aligned}
&\,\frac{1}{p}(r^2\rho_0|U|^p)_t+(2a_{1}\eta^2\eta_r\varrho^\delta)\cdot I_8\\
&=\Big(2a_{1}\delta\eta^2\varrho^\delta|U|^{p-2}U\big(D_\eta U+2\frac{U}{\eta}\big)   -4a_{1}\eta\varrho^\delta|U|^{p}-\frac{Ar^{2\gamma}\rho_0^\gamma}{\eta^{2(\gamma-1)}\eta_r^\gamma} |U|^{p-2}U \Big)_r\\
&\quad+\frac{A(r^{2}\rho_0)^\gamma}{(\eta^{2}\eta_r)^{\gamma-1}}|U|^{p-2}\big((p-1)D_\eta U+\frac{2U}{\eta}\big)-r^2\rho_0 D_\eta\Phi|U|^{p-2}U.
\end{aligned}
\end{equation}
Here, $I_8$ is a binary form:
\begin{equation*}
I_8:=(p-1)\delta X^2-2p(1-\delta)XY+(4\delta-2)Y^2 \qquad \text{with $(X,Y)=|U|^{\frac{p-2}{2}}\mathscr{D}_\eta U$}.
\end{equation*}
The discriminant $\mathfrak{D}=\mathfrak{D}(p)$ of $I_8$ satisfies $\mathfrak{D}(p_{*})=0$ and
\begin{equation*}
\mathfrak{D}=4((1-\delta)^2p^2-2\delta(2\delta-1)p+4\delta^2-2\delta)<0 \qquad \text{whenever $p\in [2,p_{*})$}, 
\end{equation*}
which implies that there exists a constant $c_{\delta,p}^{*}>0$, depending only on $\delta$ and $p$, such that
\begin{equation*} 
I_8\geq  c_{\delta,p}^{*}(X^2+Y^2)= c_{\delta,p}^{*}|U|^{p-2}|\mathscr{D}_\eta U|^2.
\end{equation*}

Hence, integrating \eqref{lemma4.7 eq} over $I$, along with the H\"older and Young inequalities, yields
\begin{equation}\label{dt-G2,n3}
\begin{aligned}
&\,\frac{1}{p}\frac{\mathrm{d}}{\mathrm{d}t}\big|(r^2\rho_0)^\frac{1}{p}U\big|_p^p+2a_{1} c_{\delta,p}^{*}\big|(\eta^2\eta_r\varrho^\delta)^\frac{1}{2}|U|^\frac{p-2}{2}\mathscr{D}_\eta U\big|_2^2\\
&\leq \underline{\int_0^1 r^2\rho_0 D_\eta\Phi|U|^{p-1}\,\mathrm{d}r}_{:=I_9}+\int_0^1 \frac{A(r^{2}\rho_0)^\gamma}{(\eta^{2}\eta_r)^{\gamma-1}}|U|^{p-2}\big((p-1)D_\eta U+\frac{2U}{\eta}\big)\,\mathrm{d}r\\
&\leq I_9+C(p)\underline{\Big|\frac{(r^2\rho_0)^{\gamma-\frac{\delta}{2}}}{(\eta^2\eta_r)^{\gamma-\frac{\delta+1}{2}}}|U|^\frac{p-2}{2}\Big|_2^2}_{:=I_{10}}  +\frac{a_{1}}{2} c_{\delta,p}^{*}\big|(\eta^2\eta_r\varrho^\delta)^\frac{1}{2}|U|^\frac{p-2}{2}\mathscr{D}_\eta U\big|_2^2.
\end{aligned}   
\end{equation}

\smallskip
\textbf{2. Estimate for $I_9$.} We first show the following auxiliary inequality:
\begin{equation}\label{lemma4.7 fuzhu}
|(\eta^2\eta_r)^\frac{1}{3}\varrho|_3 \leq C(T)+ C_0\big|(\zeta\eta^2\eta_r)^\frac{1}{3}\varrho^{\gamma+\delta-1}\big|_3.
\end{equation}
Indeed, set $\vartheta=\frac{2}{3\gamma+3\delta-4}\in (0,1)$. Then \eqref{lemma4.7 fuzhu} follows from \eqref{eq:eta}, Lemma \ref{lemma-low jacobi 1}, and the H\"older and Young inequalities: 
\begin{align*}
|(\eta^2\eta_r)^\frac{1}{3}\varrho|_3&\leq |\chi(\eta^2\eta_r)^\frac{1}{3}\varrho|_3+|\chi^\sharp(\eta^2\eta_r)^\frac{1}{3}\varrho|_3\\
&\leq \big|(\zeta\eta^2\eta_r)^\frac{1}{3}\varrho^{\gamma+\delta-1}\big|_3^{\vartheta}|\eta^2\eta_r\varrho|_1^\frac{(1-\vartheta)}{3}+|\chi^\sharp(\eta^2\eta_r)^{-\frac{2}{3}}r^2\rho_0|_3\\
&\leq C_0\big|(\zeta\eta^2\eta_r)^\frac{1}{3}\varrho^{\gamma+\delta-1}\big|_3+C(T).
\end{align*}

Now, we estimate $I_9$. It follows from the H\"older and Young inequalities that 
\begin{equation}\label{lemma4.7 I9 fuzhu}
I_9\leq \underline{\big|(\eta^2\eta_r\varrho)^\frac{1}{p} D_\eta\Phi\big|_p}_{:=I_{9,1}}|(r^2\rho_0)^{\frac{1}{p}}U|_p^{p-1}\leq C(p)I_{9,1}\cdot\big(1+|(r^2\rho_0)^{\frac{1}{p}}U|_p^{p}\big).
\end{equation}
For $I_{9,1}$, by \eqref{eq:eta}, \eqref{nabla Psi inte-expression}, \eqref{lemma4.7 fuzhu}, and Lemmas \ref{HLS} and \ref{lemma-initial}, we have, for any $t\in [0,T]$,
\begin{equation}\label{lemma4.7 I9,1}
\begin{aligned}
I_{9,1}&=\frac{1}{4\pi}\|\rho |\nabla\Psi|^p \|_{L^1(\Omega(t))}^{\frac{1}{p}}\leq C(p)\Big(\int_{\RR^3}\Big(\int_{\RR^3} \frac{\rho^\frac{1}{p}(t,\boldsymbol{x})\rho(t,\hat{\boldsymbol{x}})}{|\boldsymbol{x}-\hat{\boldsymbol{x}}|^2}\,\mathrm{d}\hat{\boldsymbol{x}}\Big)^p\,\mathrm{d}\boldsymbol{x}\Big)^\frac{1}{p}\\
&\leq  C(p)\|\rho^\frac{1}{p}\|_{L^p(\Omega(t))}\|\rho\|_{L^3(\Omega(t))}=C(p)\|\rho\|_{L^1(\Omega(t))}^\frac{1}{p}\|\rho\|_{L^3(\Omega(t))}\\
&\leq C(p)|r^2\rho_0|_1^\frac{1}{p}|(\eta^2\eta_r)^\frac{1}{3}\varrho|_3\leq C(p,T)\big(1+\big|(\zeta\eta^2\eta_r)^\frac{1}{3}\varrho^{\gamma+\delta-1}\big|_3\big).
\end{aligned}
\end{equation}
Substituting \eqref{lemma4.7 I9,1} into \eqref{lemma4.7 I9 fuzhu} leads to, for all $p\in[2,p_{*})$,
\begin{equation}\label{lemma4.7 I9}
I_9\leq  C(p,T)\big(1+ \big|(\zeta\eta^2\eta_r)^\frac{1}{3}\varrho^{\gamma+\delta-1}\big|_3\big)\big(1+|(r^2\rho_0)^{\frac{1}{p}}U|_p^{p}\big).
\end{equation}

\smallskip
\textbf{3. Estimate for $I_{10}$.} We first obtain from \eqref{eq:eta}, the fact that $\rho_0^\beta\sim 1-r$, Lemma \ref{lemma-low jacobi 1}, and the H\"older and Young inequalities that 
\begin{equation}\label{G2:n=3}
\begin{aligned}
I_{10} &\leq \underline{\int_0^\frac{1}{2} \frac{(r^2\rho_0)^{2\gamma-\delta}}{(\eta^2\eta_r)^{2\gamma-\delta-1}}|U|^{p-2}\,\mathrm{d}r}_{:=I_{10,1}}\!\!+\Big(\int_\frac{1}{2}^1\frac{(r^2\rho_0)^{\frac{p}{2}(2\gamma-\delta-1)+1}}{(\eta^2\eta_r)^{\frac{p}{2}(2\gamma-\delta-1)}}\,\mathrm{d}r\Big)^\frac{2}{p}\big|(r^2\rho_0)^\frac{1}{p}U\big|_p^{p-2}\\
&\leq I_{10,1}+C(p,T)\Big(\int_\frac{1}{2}^1(r^2\rho_0)^{\frac{p}{2}(2\gamma-\delta-1)+1}\,\mathrm{d}r\Big)^\frac{2}{p}\big|(r^2\rho_0)^\frac{1}{p}U\big|_p^{p-2}\\
&\leq I_{10,1}+\big|(r^2\rho_0)^\frac{1}{p}U\big|_p^{p} +C(p,T).
\end{aligned}
\end{equation}

For $I_{10,1}$, it follows from \eqref{eq:eta}, Lemma \ref{lemma-near depth}, and the H\"older and Young inequalities that, for any $\vartheta\in [0,1]$,
\begin{equation}\label{g2,1-xiao}
\begin{aligned}
I_{10,1}&=\int_0^\frac{1}{2} \frac{(r^2\rho_0)^{\frac{2\upsilon}{p}}}{\eta^{\frac{4\upsilon-4-2\vartheta(p-2)}{p}}\eta_r^{\frac{2\upsilon-2}{p}}}\Big((\eta_r\varrho^\delta)^{\frac{p-2}{p}}|U|^{p-2}\Big)^{\vartheta}\Big((r^2\rho_0)^{\frac{p-2}{p}}|U|^{p-2}\Big)^{1-\vartheta}\,\mathrm{d}r\\
&\leq \Big(\int_0^\frac{1}{2} \eta^{2+\vartheta(p-2)}\eta_r\varrho^{\upsilon}\,\mathrm{d}r\Big)^\frac{2}{p}\big|(\eta_r\varrho^\delta)^\frac{1}{p}U\big|_p^{\vartheta(p-2)}\big|(r^2\rho_0)^\frac{1}{p}U\big|_p^{(1-\vartheta)(p-2)}\\
&\leq \underline{\big|\zeta \eta^\frac{2+\vartheta(p-2)}{\upsilon-2\delta+1} \varrho \big|_\infty^{\frac{2(\upsilon-2\delta+1)}{p}}}_{:=\tilde{I}_{10,1}} \big|\zeta \eta_r\varrho^{2\delta-1}\big|_1^\frac{2}{p} \big|(\eta_r\varrho^\delta)^\frac{1}{p}U\big|_p^{\vartheta(p-2)}\big|(r^2\rho_0)^\frac{1}{p}U\big|_p^{(1-\vartheta)(p-2)},
\end{aligned}
\end{equation}
where $\upsilon$ is defined by
\begin{equation*}
\upsilon:=p\gamma+\frac{(p-2)(1-\delta)\vartheta-p\delta-p+2}{2}.
\end{equation*}

In order to apply Lemma \ref{lemma-near depth} to $\tilde{I}_{10,1}$ in \eqref{g2,1-xiao}, we need to choose suitable $\vartheta\in [0,1]$ and $p_0\in[2,p_{*})$ such that, for any $p\in [p_0,p_{*})$ and $(\gamma,\delta)$ satisfies \eqref{dede-gaga1}, 
\begin{equation}\label{dengjia0}
\frac{2+\vartheta(p-2)}{\upsilon-2\delta+1}\in \big[\frac{1}{2\delta-1},\frac{2}{\delta}\big],
\end{equation}
which is equivalent to
\begin{equation*}
\underline{f}(p;\gamma,\delta):=\frac{(2\gamma-1)p+8-(p+12)\delta}{(5\delta-3)(p-2)}\leq \vartheta\leq \frac{(2\gamma-1)p+4-(p+6)\delta}{(2\delta-1)(p-2)}.
\end{equation*}

Consequently, it suffices to choose suitable $p_0\in [2,p_{*})$ such that
\begin{equation}\label{dengjia01}
\underline{f}(p_0;\delta,\gamma)\in [0,1] \qquad\text{for any $(\delta,\gamma)$ satisfying \eqref{dede-gaga1}}.
\end{equation}
Then we can set $\vartheta=\underline{f}(p_0;\delta,\gamma)$ to fulfill \eqref{dengjia0}. 

To obtain \eqref{dengjia01}, we see from $\gamma>\frac{4}{3}$ and $\delta<1$ that
\begin{equation}\label{dengjia02}
\begin{aligned}
&\underline{f}(p_0;\delta,\gamma)\geq 0\iff (2\gamma-1)p_0+8\geq (p_0+12)\delta \impliedby p_0\geq 6;\\
&\underline{f}(p_0;\delta,\gamma)\leq 1\iff \frac{4}{3}<\gamma\leq \frac{3p_0+1}{p_0}\delta-\frac{p_0+1}{p_0},\quad \frac{7p_0+3}{9p_0+3}<\delta<1.
\end{aligned}
\end{equation}
Notice from Remark \ref{remark-p+} and \eqref{dede-gaga1} that
\begin{equation*}
\begin{aligned}
&3\delta-1-\frac{1-\delta}{p_0}<3\delta-1-\frac{1-\delta}{p_{*}},
\quad  \frac{7p_0+3}{9p_0+3}>\frac{7p_{*}+3}{9p_{*}+3},\quad p_{*}(\frac{7p_{*}+3}{9p_{*}+3})>p_{*}(\frac{13}{18})>7>6,
\end{aligned}
\end{equation*}
there indeed exists a $p_0\in [6,p_{*})\subset[2,p_{*})$ sufficiently closing to $p_{*}$ such that  the right-hand sides of $\eqref{dengjia02}_1$--$\eqref{dengjia02}_2$ hold. This completes the proof of \eqref{dengjia01}. 
Then, following the monotonicity of $p_0$ in  $\eqref{dengjia02}_2$, we obtain that \eqref{dengjia01} holds for any $p\in [p_0,p_*)$.

Now, based on the above discussion, \eqref{g2,1-xiao}, Lemma \ref{lemma-near depth}, and the Young inequality, we obtain that, for any $\varepsilon\in (0,1)$, $(\gamma,\delta)$ satisfying \eqref{dede-gaga1}, and $p\in[p_0,p_{*})$, 
\begin{equation}\label{g2,1-xiao-de}
I_{10,1}
\leq C_0\big|(r^2\rho_0)^\frac{1}{p}U\big|_p^{p}+\varepsilon\big|(\eta_r\varrho^\delta)^\frac{1}{p}U  \big|_p^{p}+C(\varepsilon,p,T).
\end{equation}

Substituting \eqref{g2,1-xiao-de} into \eqref{G2:n=3} gives that, for all $\varepsilon\in (0,1)$ and $p\in[p_0,p_{*})$,
\begin{equation}\label{g3above}
I_{10} \leq C_0\big|(r^2\rho_0)^\frac{1}{p}U\big|_p^{p}+\varepsilon\big|(\eta^2\eta_r\varrho^\delta)^\frac{1}{2}|U|^\frac{p-2}{2}\mathscr{D}_\eta U\big|_2^{2}+C(\varepsilon,p,T). 
\end{equation}

\smallskip
\textbf{4.} Combining \eqref{dt-G2,n3} with \eqref{lemma4.7 I9} and \eqref{g3above}, and then choosing a suitable small $\varepsilon\in(0,1)$, we arrive at
\begin{equation*} 
\begin{aligned}
&\frac{\mathrm{d}}{\mathrm{d}t}\big|(r^2\rho_0)^\frac{1}{p}U\big|_p^p+ a_{1} c_{\delta,p}^{*}\big|(\eta^2\eta_r\varrho^\delta)^\frac{1}{2}|U|^\frac{p-2}{2}\mathscr{D}_\eta U\big|_2^2 \\
&\leq C(p,T)\big(1+ |(\zeta\eta^2\eta_r)^\frac{1}{3}\varrho^{\gamma+\delta-1}|_3\big)\big(1+\big|(r^2\rho_0)^{\frac{1}{p}}U\big|_p^{p}\big).
\end{aligned}
\end{equation*}
This, combined with the Gr\"onwall inequality and Lemma \ref{near-BD}, gives that, for any $p\in [p_0,p_{*})$,
\begin{equation*}
\big|(r^2\rho_0)^\frac{1}{p}U(t)\big|_p+\int_0^t\big|(\eta^2\eta_r\varrho^\delta)^\frac{1}{2}|U|^\frac{p-2}{2}\mathscr{D}_\eta U\big|_2^2\,\mathrm{d}s\leq C(p,T).
\end{equation*}
Finally, it follows from Lemma \ref{lemma-basic energy}  that the above inequality holds for all $p\in [2,p_{*})$.
\end{proof}

Based on Lemma \ref{lemma-u Lp-n3}, we obtain the following interior $L^p$-energy estimates for $V$.
\begin{Lemma}\label{lemma-v-n=3}
Let $(\gamma,\delta)$ satisfy \eqref{dede-gaga1}. Then for any $p\in [2,p_{*})$, there exists a constant $C(p,T)>0$ such that
\begin{equation*}
\big|(\zeta r^2\rho_0)^\frac{1}{p}V(t)\big|_p\leq C(p,T)\qquad \text{for all } \ t\in[0,T] .
\end{equation*}
\end{Lemma}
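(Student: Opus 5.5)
The plan is to run an $L^p$-energy estimate directly on the transport-type ODE \eqref{eq:v} satisfied by the effective velocity $V$. The damping term there has a good sign and no spatial derivatives appear, so only the two source terms need control, using Lemma \ref{lemma-u Lp-n3} (already established for the same $(\gamma,\delta)$ and $p$) together with the gravity bounds from the proof of that lemma.

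\textbf{Step 1 (energy identity).} Multiply \eqref{eq:v}, written as
\[
V_t+\tfrac{A\gamma}{2a_1\delta}\varrho^{\gamma-\delta}V=\tfrac{A\gamma}{2a_1\delta}\varrho^{\gamma-\delta}U-D_\eta\Phi ,
\]
by $\zeta r^2\rho_0|V|^{p-2}V$ and integrate over $I$. Since $\zeta r^2\rho_0$ is independent of $t$, this gives
\[
\frac1p\frac{\mathrm d}{\mathrm dt}\big|(\zeta r^2\rho_0)^{\frac1p}V\big|_p^p+\tfrac{A\gamma}{2a_1\delta}\int_0^1\zeta r^2\rho_0\varrho^{\gamma-\delta}|V|^p\,\mathrm dr
\le \tfrac{A\gamma}{2a_1\delta}\int_0^1\zeta r^2\rho_0\varrho^{\gamma-\delta}|U||V|^{p-1}\,\mathrm dr+\int_0^1\zeta r^2\rho_0 D_\eta\Phi|V|^{p-1}\,\mathrm dr .
\]

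\textbf{Step 2 (gravity term).} By H\"older, the last integral is bounded by $I_{9,1}\,|(\zeta r^2\rho_0)^{\frac1p}V|_p^{p-1}$, where $I_{9,1}=|(\eta^2\eta_r\varrho)^{\frac1p}D_\eta\Phi|_p$ is the quantity estimated in \eqref{lemma4.7 I9,1}. That estimate gives
\[
I_{9,1}\le C(p,T)\big(1+|(\zeta\eta^2\eta_r)^{\frac13}\varrho^{\gamma+\delta-1}|_3\big),
\]
whose time integral is finite by Lemma \ref{near-BD}. Applying Young then yields a bound of the form $C(p,T)\,g(t)\,\big(1+|(\zeta r^2\rho_0)^{\frac1p}V|_p^p\big)$ with $g\in L^1(0,T)$, which is ready for Gr\"onwall.

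\textbf{Step 3 (coupling term; main obstacle).} By Young, the coupling term is at most
\[
\tfrac12\int_0^1\zeta r^2\rho_0\varrho^{\gamma-\delta}|V|^p\,\mathrm dr+C(p)\int_0^1\zeta r^2\rho_0\varrho^{\gamma-\delta}|U|^p\,\mathrm dr .
\]
The first piece is absorbed by the damping. It remains to show that
\[
J(t):=\int_0^1\zeta r^2\rho_0\varrho^{\gamma-\delta}|U|^p\,\mathrm dr\in L^1(0,T).
\]
I expect this to be the hard step, because $\varrho$ may be large near the origin. By \eqref{eq:eta}, $r^2\rho_0\varrho^{\gamma-\delta}=\eta_r\varrho^\delta\cdot\eta^2\varrho^{1+\gamma-2\delta}$. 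Moreover, $\eta^2\eta_r\varrho^\delta|U|^{p-2}|U/\eta|^2=\eta_r\varrho^\delta|U|^p$ is controlled in $L^1_tL^1_r$ by the dissipation term in Lemma \ref{lemma-u Lp-n3}. So it suffices to bound $\zeta\eta^2\varrho^{1+\gamma-2\delta}$.

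I would do this exactly as in \eqref{g2,1-xiao}: split the weight by H\"older with an exponent $\vartheta\in[0,1]$. Part of $|U|^p$ is paired with $\eta_r\varrho^\delta$, which is controlled by the dissipation. The rest is paired with $r^2\rho_0$, which is controlled by $\sup_t|(r^2\rho_0)^{\frac1p}U|_p$. The leftover $\eta$–$\varrho$ weight is then reduced to $|\zeta\eta^{l_2}\varrho|_\infty$ and $|\zeta\eta_r\varrho^{2\delta-1}|_1$ via Lemma \ref{lemma-near depth}. The admissibility condition on the exponent, $l_2\in[\frac1{2\delta-1},\frac2\delta]$, is where the Case (I) restriction \eqref{dede-gaga1} should enter, in the same way as \eqref{dengjia0}–\eqref{dengjia02}. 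If that exponent bookkeeping fails, the fallback is to use the pointwise representation \eqref{V-solution}. Its middle term is a sub-probability average of $U$ in time, so Jensen's inequality bounds its $p$-th power by an average of $|U|^p$, and the first term is bounded by $|v_0|$.

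\textbf{Step 4 (conclusion).} Collecting Steps 1–3 gives
\[
\frac{\mathrm d}{\mathrm dt}Y\le C(p,T)\big(g(t)+h(t)\big)(1+Y),\qquad Y(t)=|(\zeta r^2\rho_0)^{\frac1p}V(t)|_p^p,
\]
with $g,h\in L^1(0,T)$. Gr\"onwall's inequality then gives the claim, provided the initial term $|(\zeta r^2\rho_0)^{\frac1p}v_0|_p$ is finite. This holds because, on the support of $\zeta$, $\rho_0$ is bounded below and $(\rho_0^\beta)_r\in L^2$ with $r$-weight, so $v_0=u_0+\frac{2a_1\delta}{\delta-1}(\rho_0^{\delta-1})_r$ lies in the weighted $L^p$ space there, by Sobolev embedding in the 3-D variables (Lemma \ref{lemma-initial}).
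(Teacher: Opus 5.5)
Your argument is correct, and Steps 1, 2 and 4 are the paper's. The difference is in Step 3, the coupling term $\frac{A\gamma}{2a_1\delta}\int_0^1\zeta r^2\rho_0\varrho^{\gamma-\delta}|U||V|^{p-1}\,\mathrm{d}r$.

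\textbf{The paper's route.} It does not absorb all of $V$ into the damping. Instead it splits $|V|^{p-1}$ by H\"older between the Gr\"onwall quantity $|(\zeta r^2\rho_0)^{1/p}V|_p$ (power $\vartheta(p-1)$) and the damping norm (power $(1-\vartheta)(p-1)$). This raises the $\varrho$-power on $U$ to $\varsigma=(\gamma-\delta)(\vartheta p+1-\vartheta)+1$. The full dissipation quantity $|\eta_r\varrho^\delta|U|^p|_1$ is then paired with $|\zeta_{5/8}\eta^{2/(\varsigma-\delta)}\varrho|_\infty^{\varsigma-\delta}$. Admissibility in Lemma~\ref{lemma-near depth} forces $\vartheta\ge\frac{3\delta-1-\gamma}{(\gamma-\delta)(p-1)}$, which is positive precisely because $\gamma<3\delta-1$ in Case (I). To keep this bound below $1$ uniformly, the paper treats $p\in[3,p_*)$ first and recovers $p\in[2,3)$ by H\"older and Lemma~\ref{near-BD}.

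\textbf{Your route.} You absorb $V$ completely and split $J(t)$ between $\eta_r\varrho^\delta|U|^p$ (in $L^1_t$ by Lemma~\ref{lemma-u Lp-n3}) and $r^2\rho_0|U|^p$ (in $L^\infty_t$). The exponent bookkeeping you left open does close. Pointwise, the leftover weight is $\eta^{2\vartheta}\varrho^{\gamma-\delta+\vartheta(1-\delta)}$. The choice $\vartheta=\frac{\gamma-\delta}{5\delta-3}\in(0,1)$ makes it a power of $\eta^{1/(2\delta-1)}\varrho$, so Lemma~\ref{lemma-near depth} gives $J\le C(T)\,|\eta_r\varrho^\delta|U|^p|_1^{\vartheta}\,|r^2\rho_0|U|^p|_1^{1-\vartheta}\in L^{1/\vartheta}(0,T)$. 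This is exactly the paper's own treatment of $I_{14}$ in Lemma~\ref{lemma-v-lp} (Case (II)). It covers all $p\in[2,p_*)$ at once and needs only $\gamma<6\delta-3$. So, contrary to your expectation, the Case (I) restriction enters only through Lemma~\ref{lemma-u Lp-n3}, not through the exponent condition.

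\textbf{Two small points.}
\begin{enumerate}
\item Your intermediate reduction to bounding $\zeta\eta^2\varrho^{1+\gamma-2\delta}$ in $L^\infty$ is not covered by Lemma~\ref{lemma-near depth} in Case (I), since $\frac{2}{1+\gamma-2\delta}>\frac{2}{\delta}$ there; hence the need for $\vartheta<1$. It could still be bounded through $|\eta^2\varrho^\delta|_\infty$ from Lemma~\ref{lemma-far depth}, once you add the elementary bound $\zeta\eta\le C(T)$, which follows from Lemma~\ref{lemma-basic energy} and $\eta_r>0$.
\item Your fallback via \eqref{V-solution} and Jensen is not independent. The kernel there is bounded by $\frac{A\gamma}{2a_1\delta}\varrho^{\gamma-\delta}$, so it leads straight back to $\int_0^T J\,\mathrm{d}t$.
\end{enumerate}
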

\begin{proof}
We divide the proof into four steps.

\smallskip
\textbf{1.}  We first let $p\in [3,p_{*})$. Multiplying \eqref{eq:v} by $\zeta r^2\rho_0 |V|^{p-2}V$
and integrating  over $I$, together with the Young inequality, yields
\begin{equation}\label{dt-v}
\begin{aligned}
&\,\frac{1}{p}\frac{\mathrm{d}}{\mathrm{d}t}\big|(\zeta r^2\rho_0)^\frac{1}{p}V\big|_p^p+\frac{A\gamma}{2a_{1}\delta}\big|(\zeta \eta^2\eta_r\varrho^{\gamma-\delta+1})^\frac{1}{p}  V\big|_p^p\\
&\leq   |\zeta\eta^2\eta_r\varrho D_\eta\Phi|V|^{p-1}|_1 +\frac{A\gamma}{2a_{1}\delta} \int_0^1\zeta\eta^2\eta_r\varrho^{\gamma-\delta+1}|U||V|^{p-1}\,\mathrm{d}r :=\sum_{i=11}^{12}I_i.
\end{aligned}
\end{equation}

\smallskip
\textbf{2.} We estimate $I_{11}$. Similarly to the estimate of $I_{9,1}$ in Lemma \ref{lemma-u Lp-n3}, it follows from \eqref{lemma4.7 fuzhu}, \eqref{lemma4.7 I9,1}, and the H\"older and Young inequalities that 
\begin{equation}\label{lemma4.8 I11}
\begin{aligned}
I_{11}&\leq |(\zeta\eta^2\eta_r\varrho)^\frac{1}{p}D_\eta \Phi|_p|(\zeta r^2\rho_0)^\frac{1}{p}V|_p^{p-1}\\
&\leq  C(p,T)\big(1+\big|(\zeta\eta^2\eta_r)^\frac{1}{3}\varrho^{\gamma+\delta-1}\big|_3\big)\big(1+\big|(\zeta r^2\rho_0)^\frac{1}{p}V\big|_p^{p}\big).
\end{aligned}
\end{equation}

\smallskip
\textbf{3.} For $I_{12}$, we obtain from the H\"older and Young inequalities that, for all $\vartheta\in (0,1)$, 
\begin{align}
I_{12}
&\leq \frac{A\gamma}{2a_{1}\delta}|(\zeta\eta^2\eta_r\varrho^\varsigma)^\frac{1}{p}U|_p
|(\zeta r^2\rho_0)^\frac{1}{p}V|_p^{\vartheta(p-1)}
\big|(\zeta\eta^2\eta_r\varrho^{\gamma-\delta+1})^\frac{1}{p}V\big|_p^{(1-\vartheta)(p-1)}\nonumber\\
&\leq C(p) |(\zeta\eta^2\eta_r\varrho^\varsigma)^\frac{1}{p}U|_p^p +C(p)|(\zeta r^2\rho_0)^\frac{1}{p}V|_p^p+\frac{A\gamma}{4a_{1}\delta}\big|(\zeta\eta^2\eta_r\varrho^{\gamma-\delta+1})^\frac{1}{p}V\big|_p^p\label{lemma4.8 I12}\\
&\leq C(p)\Big(\underline{\big|\zeta_{\frac{5}{8}}\eta^\frac{2}{\varsigma-\delta}\varrho\big|^{\varsigma-\delta}_\infty}_{:=I_{12,1}}\big|\eta_r\varrho^\delta|U|^p\big|_1 +|(\zeta r^2\rho_0)^\frac{1}{p}V|_p^p\Big)+\frac{A\gamma}{4a_{1}\delta}\big|(\zeta\eta^2\eta_r\varrho^{\gamma-\delta+1})^\frac{1}{p}V\big|_p^p,\nonumber
\end{align}
where $\varsigma=(\gamma-\delta)(\vartheta p+1-\vartheta)+1$. 

Next, to apply Lemma \ref{lemma-near depth} to $I_{12,1}$, we claim that,  for each $(\gamma,\delta)$ satisfying \eqref{dede-gaga1} and $p\in [3,p_{*})$, there exists a $\vartheta=\vartheta_0$, depending only on $(\gamma,\delta,p)$, such that
\begin{equation}\label{equiv001}
\frac{2}{\varsigma-\delta}\in\big[\frac{1}{2\delta-1}, \frac{2}{\delta}\big],
\end{equation}
or, equivalently,
\begin{equation}\label{equiv01}
\underline{g}(p;\gamma,\delta):=\frac{(3\delta-1)-\gamma}{(\gamma-\delta)(p-1)}\leq \vartheta\leq \frac{(6\delta-3)-\gamma}{(\gamma-\delta)(p-1)}.
\end{equation}
Indeed, it suffices to show 
\begin{equation}\label{equiv02}
\underline{g}(p;\gamma,\delta)\in (0,1) \qquad \text{for any $(\gamma,\delta)$ satisfying \eqref{dede-gaga1} and $p\in [3,p_*)$}.
\end{equation}
Then we can set $\vartheta_0=\underline{g}(p;\gamma,\delta)$ to fulfill \eqref{equiv01}.

Via a direct calculation, we have
\begin{equation*}
\eqref{equiv02}\iff \frac{(p+2)\delta-1}{p}<\gamma<3\delta-1\impliedby  \frac{(p+2)\delta-1}{p}\leq \frac{4}{3}\impliedby  \frac{p+1}{p}\leq \frac{4}{3}.  
\end{equation*}
Clearly, the right-hand side of the above holds for any $p\in [3,p_{*})$.

Therefore, for any $(\gamma,\delta)$ satisfying \eqref{dede-gaga1} and $p\in [3,p_{*})$, we can fix $\vartheta=\vartheta_0=\vartheta_0(\gamma,\delta,p)\in (0,1)$ such that \eqref{equiv001} holds, and then we can apply Lemma \ref{lemma-near depth} to $I_{12,1}$ to obtain
\begin{equation}\label{g4}
I_{12,1}\leq C(p,T) \qquad \text{for all $t\in[0,T]$}.
\end{equation}

\smallskip
\textbf{4.} It follows from  \eqref{dt-v}--\eqref{lemma4.8 I12} and \eqref{g4} that 
\begin{equation*}
\begin{aligned}
&\,\frac{1}{p}\frac{\mathrm{d}}{\mathrm{d}t}\big|(\zeta r^2\rho_0)^\frac{1}{p}V\big|_p^p+\frac{A\gamma}{4a_{1}\delta}\big|(\zeta \eta^2\eta_r\varrho^{\gamma-\delta+1})^\frac{1}{p}  V\big|_p^p\\
&\leq  C(p,T)\big(1+\big|(\eta^2\eta_r\varrho^\delta)^\frac{1}{2}|U|^\frac{p-2}{2}\mathscr{D}_\eta U\big|_2^2+\big|(\zeta\eta^2\eta_r)^\frac{1}{3}\varrho^{\gamma+\delta-1}\big|_3\big)\big(1+\big|(\zeta r^2\rho_0)^\frac{1}{p}V\big|_p^{p}\big),
\end{aligned}
\end{equation*}
which, along with Lemmas \ref{near-BD} and \ref{lemma-u Lp-n3} yields that, for all $t\in[0,T]$ and $p\in [3,p_{*})$, \begin{equation}\label{vabove3}
\big|(\zeta r^2\rho_0)^\frac{1}{p}V(t)\big|_p\leq C(p,T)\big(\big|(\zeta r^2\rho_0)^\frac{1}{p}v_0\big|_p+1\big)\leq C(p,T).
\end{equation}
Here, for the boundedness of the initial data, it suffices to note that $\rho_0^\beta\sim 1-r$ and
\begin{equation*}
    \big|(\zeta r^2\rho_0)^\frac{1}{p}(\rho_0^{\delta-1})_r\big|_p\leq C_0|r^\frac{2}{p}(\rho_0^\beta)_r|_p\leq C(p).
\end{equation*}

Finally, it follows from Lemma \ref{near-BD} and the H\"older inequality that \eqref{vabove3} holds for all $p\in[2,p_{*})$, which leads to the desired estimates of this lemma.
\end{proof}

Combining Lemmas \ref{lemma-u Lp-n3}--\ref{lemma-v-n=3}, we get the desired estimates of Lemma \ref{lp-uv} under the Case (I).

\subsubsection{Proof for Case {\rm (II)}}\label{subsub-3}
We first consider the $L^p$-energy estimates for $U$. 
\begin{Lemma}\label{lemma-u Lp}
Let $(\gamma,\delta)$ satisfy \eqref{dede-gaga2}. Then, for any $p\in[2,p_{*})$ and $\varepsilon\in (0,1)$, there exists a constant $C(\varepsilon,p,T)>0$ such that, for all $t\in [0,T]$,
\begin{equation}\label{eq,u-lp}
\begin{aligned}
&\frac{\mathrm{d}}{\mathrm{d}t}\big|(r^2\rho_0)^\frac{1}{p}U\big|_p^p+2a_{1}c_{\delta,p}^{*} \big|(\eta^2\eta_r\varrho^\delta)^\frac{1}{2}|U|^\frac{p-2}{2}\mathscr{D}_\eta U\big|_2^2\\
&\leq C(\varepsilon, p,T)\big(1+ \big|(\zeta\eta^2\eta_r)^\frac{1}{3}\varrho^{\gamma+\delta-1}\big|_3\big)\big(1+\big|(r^2\rho_0)^{\frac{1}{p}}U\big|_p^{p}\big) +\varepsilon\big|(\zeta\eta^2\eta_r\varrho^{\gamma-\delta+1})^\frac{1}{p}V\big|_p^p.
\end{aligned}
\end{equation}
\end{Lemma}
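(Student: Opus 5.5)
The argument will follow Steps 1--3 of the proof of Lemma \ref{lemma-u Lp-n3} verbatim. Only the treatment of the interior pressure term $I_{10,1}$ changes. In Case (II), $\gamma$ is too large relative to $\delta$ for the choice $\vartheta=\underline{f}(p_0;\gamma,\delta)\in[0,1]$, so we will absorb part of the bad factor into the $V$-damping term instead of $U$.

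First, multiply $\eqref{eq:VFBP-La-eta}_1$ by $\eta^2\eta_r|U|^{p-2}U$ and use the coercivity $I_8\ge c^*_{\delta,p}|U|^{p-2}|\mathscr{D}_\eta U|^2$ for $p\in[2,p_*)$. This gives \eqref{dt-G2,n3}. The gravity term $I_9$ is bounded exactly as in \eqref{lemma4.7 I9}, via \eqref{lemma4.7 fuzhu} and the Hardy--Littlewood--Sobolev inequality. The exterior part of $I_{10}$ on $[\frac12,1]$ is handled as in \eqref{G2:n=3}, using Lemma \ref{lemma-low jacobi 1}.

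The remaining term is
\[
I_{10,1}=\int_0^{\frac12}\frac{(r^2\rho_0)^{2\gamma-\delta}}{(\eta^2\eta_r)^{2\gamma-\delta-1}}|U|^{p-2}\,\mathrm{d}r=\int_0^{\frac12}\eta^2\eta_r\varrho^{2\gamma-\delta}|U|^{p-2}\,\mathrm{d}r .
\]
By Definition \ref{def-v}, $\varrho^{\gamma-\delta}(V-U)=\frac{2a_1\delta}{A\gamma}\varrho^{\gamma-2}\cdot$(derivative of $\varrho$). We will not use that relation directly, however. The plan is to bound $|U|^{p-2}\le C(|U|^{p-2}\wedge\cdot)$ by splitting the weight $\varrho^{2\gamma-\delta}$ into three pieces:
\begin{itemize}
\item a power $\varrho^{(\gamma-\delta+1)\theta_1}$ paired with $|V|^{\cdot}$ in the spirit of the damping norm $|(\zeta\eta^2\eta_r\varrho^{\gamma-\delta+1})^{1/p}V|_p^p$;
\item a power paired with $(\eta_r\varrho^\delta)^{1/p}U$, which is controlled by the dissipation;
\item a remaining $L^\infty$ factor of the form $|\zeta\eta^{l_2}\varrho|_\infty$ from Lemma \ref{lemma-near depth}.
\end{itemize}
Concretely, we write $|U|\le |V|+\frac{2a_1\delta}{1-\delta}|D_\eta\varrho^{\delta-1}|$. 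The second term is controlled in $\zeta\eta^2\eta_r$-weighted $L^2$ by Lemma \ref{near-BD}, since $D_\eta\varrho^{\delta-1}=C\varrho^{-1/2}D_\eta\varrho^{\delta-1/2}$. We then apply H\"older with the exponents chosen so that the $V$-part has exactly the weight $\varrho^{\gamma-\delta+1}$ to power $p$. Young's inequality then produces $\varepsilon|(\zeta\eta^2\eta_r\varrho^{\gamma-\delta+1})^{1/p}V|_p^p$, a dissipation term with small coefficient, and $C(\varepsilon,p,T)(1+|(r^2\rho_0)^{1/p}U|_p^p)$.

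The main obstacle is the exponent bookkeeping: we must check that the leftover $\eta$-power divided by the leftover $\varrho$-power lands in $[\frac1{2\delta-1},\frac2\delta]$, so that Lemma \ref{lemma-near depth} applies. This is where $\gamma<6\delta-3$ (equivalently $\frac{2}{\gamma-3\delta+2}\ge\frac1{2\delta-1}$-type conditions) and the Case (II) lower bound $\gamma\ge\frac{3p_*+1}{p_*}\delta-\frac{p_*+1}{p_*}$ should enter. I would first try taking the $V$-factor with full power $p-2$ and weight $\varrho^{(\gamma-\delta+1)(p-2)/p}$. The residual weight is then $\eta^{2-2(p-2)/p}\eta_r^{\cdot}\varrho^{2\gamma-\delta-(\gamma-\delta+1)(p-2)/p}$, and we would verify the admissible range of $l_2$ via an affine inequality in $(\gamma,\delta)$ analogous to \eqref{equiv02}. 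If this fails near $p_*$, we would interpolate, as in \eqref{g2,1-xiao}, with an extra parameter $\vartheta$ between the $V$-weight and $(r^2\rho_0)^{1/p}U$.

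Collecting these bounds with $\varepsilon$ small absorbs the dissipation and yields \eqref{eq,u-lp}. The term $\varepsilon|\cdots V|_p^p$ is left on the right-hand side, to be closed later jointly with the $V$-equation.
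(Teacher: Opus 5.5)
Apart from $I_{10,1}$ your outline matches the paper. The step you defer to ``exponent bookkeeping'', however, cannot close in Case (II), and the obstruction is structural, not a matter of choosing exponents.

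First, the splitting $|U|\le|V|+\frac{2a_1\delta}{1-\delta}|D_\eta\varrho^{\delta-1}|$ is circular. By \eqref{v-expression}, $D_\eta(\varrho^{\delta-1})=\frac{\delta-1}{2a_1\delta}(V-U)$, so the splitting is just $U=V-(V-U)$. Moreover, Lemma \ref{near-BD} controls $D_\eta\varrho^{\delta-\frac12}$ only in weighted $L^2$, so it cannot absorb the $(p-2)$-th power of the gradient term once $p>4$.

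Second, no H\"older/Young split works. Split $\int_0^{1/2}\eta^2\eta_r\varrho^{2\gamma-\delta}|U|^{p-2}\,\mathrm{d}r$ in any way among $|(\zeta\eta^2\eta_r\varrho^{\gamma-\delta+1})^{1/p}V|_p$, $|(r^2\rho_0)^{1/p}U|_p$ and $|(\eta_r\varrho^\delta)^{1/p}U|_p$. This always leaves a residual $\int_0^1\zeta\eta^{\mathfrak a}\eta_r\varrho^{\mathfrak b}\,\mathrm{d}r$. Lemma \ref{lemma-near depth}, via $|\zeta\eta^{l}\varrho|_\infty$ and $|\zeta\eta_r\varrho^{2\delta-1}|_1$, bounds this residual only when $\mathfrak b-(2\delta-1)\le(2\delta-1)\mathfrak a$. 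Each unit of the exponent $p-2$ lowers $\mathfrak b-(2\delta-1)-(2\delta-1)\mathfrak a$ by the following amount:
\begin{itemize}
\item by $\frac{\gamma-\delta+1}{2}$ if placed on the $V$-norm;
\item by $\frac12$ if placed on $(r^2\rho_0)^{1/p}U$;
\item by $\frac{5\delta-2}{2}$ if placed on $(\eta_r\varrho^\delta)^{1/p}U$.
\end{itemize}
The last is the largest, since $\gamma<6\delta-3$ and $\delta>\frac35$. So the best split is $\vartheta=1$ in \eqref{g2,1-xiao}, which leaves $\int_0^1\zeta\eta^{p}\eta_r\varrho^{p\gamma-p\delta+\delta}\,\mathrm{d}r$. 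This residual is admissible iff $\gamma\le3\delta-1-\frac{1-\delta}{p}$. In Case (II) one has $\gamma\ge3\delta-1-\frac{1-\delta}{p_*}$, so admissibility fails for every $p<p_*$. Case (II) is exactly the regime where no such split works. Your first try, for instance, leaves $\int\zeta\eta^2\eta_r\varrho^{\frac p2(\gamma-1)+\gamma-\delta+1}$ and needs $\frac p2(\gamma-1)+\gamma\le7\delta-4$, which fails in Case (II) for every $p\ge2$. The fallback interpolation cannot do better.

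The missing idea is to \emph{lower the density moment by integrating by parts in $\eta$}; this is what actually produces the damping-weighted $V$ term. You set aside the relation between $V-U$ and the density gradient (correctly, $V-U=2a_1\delta\varrho^{\delta-2}D_\eta\varrho$). That relation is the key, used in the direction density gradient $\Rightarrow V-U$.

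With $\mathcal I_{(\mathfrak a,\mathfrak b)}:=\int_0^1\zeta\eta^{\mathfrak a}\eta_r\varrho^{\mathfrak b}\,\mathrm{d}r$, the paper writes $\eta^{\mathfrak a}\eta_r=(\eta^{\mathfrak a+1})_r/(\mathfrak a+1)$ and integrates by parts. The $\zeta_r$-term lives on $[\frac12,\frac58]$ and is bounded by Lemmas \ref{lemma-far depth} and \ref{lemma-low jacobi 1}. The main term contains $D_\eta(\varrho^{s})$ with $s=\frac{\gamma-(p-1)(1-\delta)}{p}$, and by \eqref{v-expression}
\[
|D_\eta(\varrho^{s})|^p=\Big|\frac{s}{2a_1\delta}\Big|^p\varrho^{\gamma-\delta+1}|V-U|^p ,
\]
which carries exactly the damping weight. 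H\"older and Young then give $\varepsilon_j|(\zeta\eta^2\eta_r\varrho^{\gamma-\delta+1})^{1/p}(V-U)|_p^p+C\varepsilon_j^{-\frac{1}{p-1}}\mathcal I_{(\mathfrak a_{j+1},\mathfrak b_{j+1})}$, with exponents following \eqref{ditui}.

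One iterates until $\frac{\mathfrak a_j}{\mathfrak b_j-(2\delta-1)}\in[\frac{1}{2\delta-1},\frac2\delta]$ and then applies Lemma \ref{lemma-near depth}. The ratio tends to $\frac{2}{\gamma-2\delta+1}$, which is where the ceiling $6\delta-3$ comes from; see \eqref{baohan}. Finally, the $U$-part $|(\zeta\eta^2\eta_r\varrho^{\gamma-\delta+1})^{1/p}U|_p^p$ is interpolated between $(\eta_r\varrho^\delta)^{1/p}U$ and $(r^2\rho_0)^{1/p}U$ with $\vartheta=\frac{\gamma-\delta}{5\delta-3}<1$, which again uses $\gamma<6\delta-3$. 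Without this integration-by-parts mechanism your plan has no way to reach Case (II).
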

\begin{proof}
We divide the proof into three steps.

\smallskip
\textbf{1.}  First, repeating the same calculations as in Steps 1--2 of the proof for Lemma \ref{lemma-u Lp-n3} gives
\begin{equation}\label{dt-G2}
\begin{aligned}
&\,\frac{\mathrm{d}}{\mathrm{d}t}\big|(r^2\rho_0)^\frac{1}{p}U\big|_p^p+\frac{3p}{2}a_{1} c_{\delta,p}^{*}\big|(\eta^2\eta_r\varrho^\delta)^\frac{1}{2}|U|^\frac{p-2}{2}\mathscr{D}_\eta U\big|_2^2\\
&\leq C(p,T)\big(1+ \big|(\zeta\eta^2\eta_r)^\frac{1}{3}\varrho^{\gamma+\delta-1}\big|_3\big)\big(1+\big|(r^2\rho_0)^{\frac{1}{p}}U\big|_p^{p}\big) +C(p)I_{10},
\end{aligned}
\end{equation}
where $I_{10}$ is defined in \eqref{dt-G2,n3} of the proof for Lemma \ref{lemma-u Lp-n3}.

\smallskip
\textbf{2.} Now, we estimate $I_{10}$. Repeating  calculations \eqref{G2:n=3} and $\eqref{g2,1-xiao}_1$--$\eqref{g2,1-xiao}_2$(with $\vartheta=1$) in Step 3 of the proof for Lemma \ref{lemma-u Lp-n3}, we have, for any $\varepsilon\in(0,1)$,
\begin{equation}\label{g2,1-da}
\begin{aligned}
I_{10}&\leq  \Big(\int_0^\frac{1}{2} \eta^{p}\eta_r\varrho^{p\gamma-p\delta+\delta}\,\mathrm{d}r\Big)^\frac{2}{p}\big|(\eta_r\varrho^\delta)^\frac{1}{p}U\big|_p^{p-2}+\big|(r^2\rho_0)^\frac{1}{p}U\big|_p^p+C(p,T)\\
&\leq C(\varepsilon,p)\underline{\int_0^1 \zeta\eta^{p}\eta_r\varrho^{p\gamma-p\delta+\delta}\,\mathrm{d}r}_{:=\mathcal{I}_{(\mathfrak{a}_0,\mathfrak{b}_0)}}+  
\varepsilon\big|(\eta_r\varrho^\delta)^\frac{1}{p}U\big|_p^{p}+\big|(r^2\rho_0)^\frac{1}{p}U\big|_p^p+C(p,T).   
\end{aligned}
\end{equation}
Then we choose 
\begin{equation*}
\begin{aligned}
(\mathfrak{a}_0,\mathfrak{b}_0)&=(p,p\gamma-p\delta+\delta),\ \
(\mathfrak{a}_1,\mathfrak{b}_1)=\big(\frac{p}{p-1}\mathfrak{a}_0+\frac{p-2}{p-1},\frac{p}{p-1}\mathfrak{b}_0-\frac{\gamma}{p-1}+1-\delta\big), 
\end{aligned}
\end{equation*}
and denote
\begin{equation*}
\mathcal{I}_{(k,\ell)}:=\int_0^1\zeta \eta^{k}\eta_r\varrho^{\ell}\,\mathrm{d}r.
\end{equation*}
Since $2\mathfrak{b}_0\geq \delta (\mathfrak{a}_0+1)$, it follows from \eqref{eq:eta}, the fact that $\rho_0^\beta\sim 1-r$, Lemmas \ref{lemma-far depth} and \ref{lemma-low jacobi 1}, integration by parts, and the H\"older and Young inequalities that, for all $\varepsilon_0\in(0,1)$,
\begin{align}
\mathcal{I}_{(\mathfrak{a}_0,\mathfrak{b}_0)}&= -\frac{1}{\mathfrak{a}_0+1}\int_0^1 \zeta_r \eta^{\mathfrak{a}_0+1} \varrho^{\mathfrak{b}_0}\,\mathrm{d}r\nonumber\\
&\quad-\frac{\mathfrak{b}_0p}{(\gamma-(p-1)(1-\delta))(\mathfrak{a}_0+1)}\int_0^1 \zeta\eta^{\mathfrak{a}_0+1} \varrho^{\mathfrak{b}_0-\frac{\gamma-(p-1)(1-\delta)}{p}} (\varrho^\frac{\gamma-(p-1)(1-\delta)}{p})_r\,\mathrm{d}r\nonumber\\
&\leq C(\mathfrak{a}_0)\int_\frac{1}{2}^\frac{5}{8}\eta^{\mathfrak{a}_0+1-\frac{2}{\delta}\mathfrak{b}_0} \,\mathrm{d}r\big|\eta^2\varrho^\delta\big|_\infty^{\frac{\mathfrak{b}_0}{\delta}}    +\varepsilon_0 \big|(\zeta\eta^2\eta_r)^\frac{1}{p}D_\eta(\varrho^\frac{\gamma-(p-1)(1-\delta)}{p})\big|_p^p\label{a0}\\
&\quad+C(p,\mathfrak{a}_0,\mathfrak{b}_0) \varepsilon_0^{-\frac{1}{p-1}}\mathcal{I}_{(\mathfrak{a}_1,\mathfrak{b}_1)}\nonumber\\
&\leq  C(\mathfrak{a}_0,\mathfrak{b}_0,T)+\varepsilon_0\big|(\zeta\eta^2\eta_r)^\frac{1}{p}D_\eta(\varrho^\frac{\gamma-(p-1)(1-\delta)}{p})\big|_p^p+ C(p,\mathfrak{a}_0,\mathfrak{b}_0) \varepsilon_0^{-\frac{1}{p-1}}\mathcal{I}_{(\mathfrak{a}_1,\mathfrak{b}_1)}.\nonumber
\end{align}

Next, define two sequences $(\{\mathfrak{a}_j\}_{j\in\NN},\{\mathfrak{b}_j\}_{j\in\NN})$ as follows:
\begin{equation}\label{ditui}
\begin{aligned}
\mathfrak{a}_{j+1}&=\frac{p}{p-1}\mathfrak{a}_j+\frac{p-2}{p-1} \qquad \text{with } \mathfrak{a}_0=p,\\
\mathfrak{b}_{j+1}&=\frac{p}{p-1}\mathfrak{b}_j-\frac{\gamma}{p-1}+1-\delta \qquad  \text{with } \mathfrak{b}_0=p\gamma-p\delta+\delta.
\end{aligned}
\end{equation}
Clearly, we can solve for $(\mathfrak{a}_j,\mathfrak{b}_j)$ from \eqref{ditui} that, for $j\in\mathbb{N}$,
\begin{equation}
\mathfrak{a}_{j}=2(p-1)\big(\frac{p}{p-1}\big)^j-(p-2),\quad \mathfrak{b}_{j}=(\gamma-2\delta+1)(p-1)\big(\frac{p}{p-1}\big)^j+\gamma-(p-1)(1-\delta),
\end{equation}
and check that 
\begin{equation*}
2\mathfrak{b}_{j+1}\geq \delta(\mathfrak{a}_{j+1}+1) \ \ \text{ for } \left\{
    \begin{aligned}
        &\ j=0,\ \quad \gamma\in\big[3\delta-1+\frac{\delta-1}{p_*} ,3\delta-1\big], \text{ and } p\in[2,p_{*}), \\
        &\ j\in \NN,\quad \ \gamma\in\big(3\delta-1,6\delta-3\big),\  \text{ and } p\in[2,p_{*}). 
    \end{aligned}
    \right .
\end{equation*}
We then estimate $I_{10}$ separately. 

For $\gamma\in\big[3\delta-1+\frac{\delta-1}{p_*},3\delta-1\big]$, it follows from  \eqref{a0} that, for all $\varepsilon\in (0,1)$,
\begin{equation}\label{lemma4.9 middle gamma 1}
\begin{aligned}
\mathcal{I}_{(\mathfrak{a}_0,\mathfrak{b}_0)} &\leq  C(\mathfrak{a}_0,\mathfrak{b}_0,T)+\varepsilon_0\big|(\zeta\eta^2\eta_r)^\frac{1}{p}D_\eta(\varrho^\frac{\gamma-(p-1)(1-\delta)}{p})\big|_p^p+ C(p,\mathfrak{a}_0,\mathfrak{b}_0) \varepsilon_0^{-\frac{1}{p-1}}\mathcal{I}_{(\mathfrak{a}_1,\mathfrak{b}_1)},\\
    \mathcal{I}_{(\mathfrak{a}_1,\mathfrak{b}_1)}
    &\leq |\zeta_{\frac{5}{8}}\eta^{\frac{p+2}{(\gamma-\delta)(p+1)+2-2\delta}}\varrho|_{\infty}^{(\gamma-\delta)(p+1)+2-2\delta}|\zeta\eta_r\varrho^{2\delta-1}|_1  \leq C(p,T).
\end{aligned}
\end{equation}
Here,  a direct calculation shows that for each $\frac{4}{3}<\gamma\in\big[3\delta-1+\frac{\delta-1}{p_*},3\delta-1\big]$ and $p\in [2,p_{*})$,
\begin{equation*}
 \frac{1}{2\delta-1}\leq \frac{p+2}{\gamma(p+1)-\delta p+2-3\delta} \leq \frac{2}{\delta},
\end{equation*}
which, along with Lemma \ref{lemma-near depth} and  \eqref{lemma4.9 middle gamma 1}, yields that 
\begin{equation}\label{lemma4.9 middle gamma 3}
\begin{aligned}
&\mathcal{I}_{(\mathfrak{a}_0,\mathfrak{b}_0)} \leq  C(\mathfrak{a}_0,\mathfrak{b}_0,p,T)+\varepsilon_0\big|(\zeta\eta^2\eta_r)^\frac{1}{p}D_\eta(\varrho^\frac{\gamma-(p-1)(1-\delta)}{p})\big|_p^p.
\end{aligned}    
\end{equation}

Then, for $\gamma\in\big(3\delta-1,6\delta-3\big)$, following the same argument as in \eqref{a0} thus implies that, for all $\varepsilon_j\in (0,1)$ and $j\in\NN$,
\begin{equation*}
\mathcal{I}_{(\mathfrak{a}_j,\mathfrak{b}_j)} \leq  C(\mathfrak{a}_j,\mathfrak{b}_j,T)+\varepsilon_j\big|(\zeta\eta^2\eta_r)^\frac{1}{p}D_\eta(\varrho^\frac{\gamma-(p-1)(1-\delta)}{p})\big|_p^p+ C(p,\mathfrak{a}_j,\mathfrak{b}_j) \varepsilon_j^{-\frac{1}{p-1}}\mathcal{I}_{(\mathfrak{a}_{j+1},\mathfrak{b}_{j+1})},
\end{equation*}
which, along with \eqref{a0}, yields, for all $j\in \mathbb{N}_+$,   
\begin{align}
\mathcal{I}_{(\mathfrak{a}_0,\mathfrak{b}_0)}&\leq C(\mathfrak{a}_0,\mathfrak{b}_0,T)+\sum_{k=1}^j C(\mathfrak{a}_k,\mathfrak{b}_k,T)\prod_{\ell=0}^{k-1}C(p,\mathfrak{a}_\ell,\mathfrak{b}_\ell) \varepsilon_\ell^{-\frac{1}{p-1}} \nonumber\\
&\quad +\Big(\varepsilon_0+\sum_{k=1}^j \varepsilon_k\prod_{\ell=0}^{k-1}C(p,\mathfrak{a}_\ell,\mathfrak{b}_\ell) \varepsilon_\ell^{-\frac{1}{p-1}}\Big)\big|(\zeta\eta^2\eta_r)^\frac{1}{p}D_\eta(\varrho^\frac{\gamma-(p-1)(1-\delta)}{p})\big|_p^p  \nonumber\\
&\quad+\Big(\prod_{\ell=0}^{j}C(p,\mathfrak{a}_\ell,\mathfrak{b}_\ell) \varepsilon_\ell^{-\frac{1}{p-1}}\Big)\mathcal{I}_{(\mathfrak{a}_{j+1},\mathfrak{b}_{j+1})} \label{aj}\\
&\leq  C(\mathfrak{a}_0,\mathfrak{b}_0,T)+\sum_{k=1}^j C(\mathfrak{a}_k,\mathfrak{b}_k,T)\prod_{\ell=0}^{k-1}C(p,\mathfrak{a}_\ell,\mathfrak{b}_\ell) \varepsilon_\ell^{-\frac{1}{p-1}}\nonumber\\
&\quad +\Big(\varepsilon_0+\sum_{k=1}^j \varepsilon_k\prod_{\ell=0}^{k-1}C(p,\mathfrak{a}_\ell,\mathfrak{b}_\ell) \varepsilon_\ell^{-\frac{1}{p-1}}\Big)\big|(\zeta\eta^2\eta_r)^\frac{1}{p}D_\eta(\varrho^\frac{\gamma-(p-1)(1-\delta)}{p})\big|_p^p \nonumber\\
&\quad +\Big(\prod_{\ell=0}^{j}C(p,\mathfrak{a}_\ell,\mathfrak{b}_\ell) \varepsilon_\ell^{-\frac{1}{p-1}}\Big)\big|\zeta_{\frac{5}{8}}\eta^{\frac{a_{j+1}}{b_{j+1}-(2\delta-1)}}\varrho\big|_\infty^{b_{j+1}-(2\delta-1)}\big|\zeta_{\frac{5}{8}} \eta_r \varrho^{2\delta-1}\big|_1.\nonumber
\end{align}
Here, we need to check that Lemma \ref{lemma-near depth} is applicable to the last term of the right-hand side of the above. To this end, for each $\gamma\in (3\delta-1,6\delta-3)$, $\delta\in(\frac{13}{18},1)$, and $p\in[2,p_{*})$, we need to show that
\begin{equation}\label{lemma4.9 fanwei}
\exists\, j\in \mathbb{N}, \text{ which depends only on $(p,\gamma,\delta)$, such that } \ \
\frac{a_{j+1}}{b_{j+1}-(2\delta-1)}\in [\frac{1}{2\delta-1},\frac{2}{\delta}].
\end{equation}
After a direct calculation, this is equivalent to showing that, for each $\gamma\in (3\delta-1,6\delta-3)$, $\delta\in(\frac{13}{18},1)$, and $p\in[2,p_{*})$,
\begin{equation}\label{baohan}
\gamma\in (3\delta-1,6\delta-3)\subset \bigcup_{j\in \mathbb{N}}\big[f_*(j;p,\delta),f^*(j;p,\delta)\big],
\end{equation}
where
\begin{equation*}
\begin{aligned}
f_*(j;p,\delta)&:=3\delta-1-\frac{(\frac{3\delta}{2}-1)p+1-\delta}{(p-1)(\frac{p}{p-1})^{j+1}+1},\\
f^*(j;p,\delta)&:=6\delta-3-\frac{p(3\delta-2)+1-\delta}{(p-1)(\frac{p}{p-1})^{j+1}+1}.
\end{aligned}    
\end{equation*}
Clearly, both $f_*(j;p,\delta)$ and $f^*(j;p,\delta)$ are increasing with respect to $j$ as $j\to\infty$, and 
\begin{equation*}
f_*(0;p,\delta)<3\delta-1,\ f_*(j;p,\delta)<f^*(j;p,\delta),\  \lim_{j\to\infty}f_*(j;p,\delta)=3\delta-1,\ \lim_{j\to\infty}f^*(j;p,\delta)=6\delta-3.
\end{equation*}
Consequently, to obtain \eqref{baohan}, it suffices to show that, for any $j\in \mathbb{N}$,
\begin{equation*}
[f_*(j;p,\delta),f^*(j;p,\delta)]\cap [f_*(j+1;p,\delta),f^*(j+1;p,\delta)]\neq\varnothing,
\end{equation*}
or, equivalently, to show that $f_*(j+1;p)\leq f^*(j;p)$ for any $j\in\mathbb{N}$, {\it i.e.},
\begin{equation}\label{baohan2}
3\delta-1-\frac{(\frac{3\delta}{2}-1)p+1-\delta}{(p-1)(\frac{p}{p-1})^{j+2}+1}\leq
6\delta-3-\frac{p(3\delta-2)+1-\delta}{(p-1)(\frac{p}{p-1})^{j+1}+1}
\qquad \text{for any $j\in\mathbb{N}$}.   
\end{equation}
Let 
\begin{equation*}
    X=(\frac{p}{p-1})^{j+1}>1, \quad\text{ then } (p-1)(\frac{p}{p-1})^{j+1}=(p-1)X,\quad (p-1)(\frac{p}{p-1})^{j+2}=pX,
\end{equation*}
then, \eqref{baohan2} holds if and only if
\begin{equation*}
\begin{aligned}
&(3\delta-2)p(p-1)X^2-\Big(\frac{(3\delta-2)(p-1)(p-2)}{2} +1-\delta\Big)X-(3\delta-2)\big(\frac{p-2}{2}\big):=F(X) \geq 0.  
\end{aligned}
\end{equation*}
Indeed, for $\gamma\in (3\delta-1,6\delta-3)$, $\delta\in(\frac{13}{18},1)$, and $p\in[2,p_{*})$, a direct calculation gives 
\begin{equation*}
\begin{aligned}
F(X)\geq F(1)=  (3\delta-2)\frac{p^2}{2}-1+\delta\geq 7\delta-5>0,
\end{aligned}
\end{equation*}
which thus implies claim \eqref{baohan2}. Therefore, for each $\gamma\in (3\delta-1,6\delta-3)$,  $\delta\in(\frac{13}{18},1)$, and  $p\in[2,p_{*})$,  we can set $j=j_0$ depending only on $(p,\gamma,\delta)$ in \eqref{aj} such that  \eqref{lemma4.9 fanwei} holds, and   
\begin{equation}\label{aj'}
\begin{aligned}
\mathcal{I}_{(\mathfrak{a}_0,\mathfrak{b}_0)}&\leq \Big(C(\mathfrak{a}_0,\mathfrak{b}_0,T)+\sum_{k=1}^{j_0} C(\mathfrak{a}_k,\mathfrak{b}_k,T)\prod_{\ell=0}^{k-1}C(p,\mathfrak{a}_\ell,\mathfrak{b}_\ell) \varepsilon_\ell^{-\frac{1}{p-1}} \Big)\\
&\quad +\Big(\varepsilon_0+\sum_{k=1}^{j_0} \varepsilon_k\prod_{\ell=0}^{k-1}C(p,\mathfrak{a}_\ell,\mathfrak{b}_\ell) \varepsilon_\ell^{-\frac{1}{p-1}}\Big)\big|(\zeta\eta^2\eta_r)^\frac{1}{p}D_\eta(\varrho^\frac{\gamma-(p-1)(1-\delta)}{p})\big|_p^p\\
&\quad + \Big(\prod_{\ell=0}^{j_0} C(p,\mathfrak{a}_\ell,\mathfrak{b}_\ell) \varepsilon_\ell^{-\frac{1}{p-1}}\Big)C(\mathfrak{a}_{j_0+1},\mathfrak{b}_{j_0+1},T),
\end{aligned}    
\end{equation}
where one has used the estimates obtained in  Lemma \ref{lemma-near depth}.
Now, let $\tilde\varepsilon\in (0,1)$, and set 
\begin{equation*}
\varepsilon_0=\tilde\varepsilon,\qquad\,\,\,\, \varepsilon_k=\frac{\tilde\varepsilon}{j_0}\prod_{\ell=0}^{k-1}\frac{\varepsilon_\ell^\frac{1}{p-1}}{C(p,\mathfrak{a}_\ell,\mathfrak{b}_\ell)} \quad\, \text{for } 1\leq k\leq j_0.
\end{equation*}
Then we obtain from \eqref{eq:eta}, \eqref{v-expression}, \eqref{lemma4.9 middle gamma 3}, \eqref{aj'}, Lemma \ref{lemma-near depth}, and the H\"older and Young inequalities that, for $\gamma\in\big[3\delta-1+\frac{\delta-1}{p_*},6\delta-3\big)$,
\begin{align}
\mathcal{I}_{(\mathfrak{a}_0,\mathfrak{b}_0)}&\leq C(\tilde\varepsilon,p,T)  +\big(\tilde\varepsilon+\sum_{k=1}^{j_0}\frac{\tilde\varepsilon}{j_0}\big)\big|(\zeta\eta^2\eta_r)^\frac{1}{p}D_\eta(\varrho^\frac{\gamma-(p-1)(1-\delta)}{p})\big|_p^p \nonumber\\[-2pt]
&\leq C(\tilde\varepsilon,p,T)  +C(p)\tilde\varepsilon \big|(\zeta\eta^2\eta_r)^\frac{1}{p}\varrho^\frac{\gamma-\delta+1}{p}(U,V)\big|_p^p \nonumber\\
&\leq C(\tilde\varepsilon,p,T)+C(p)\tilde\varepsilon \big|(\zeta\eta^2\eta_r\varrho^{\gamma-\delta+1})^\frac{1}{p}V\big|_p^p \label{aj''}\\
&\quad+C(p)\tilde\varepsilon  \big|\zeta_{\frac{5}{8}}\eta^\frac{2\vartheta}{\gamma-\delta+(1-\delta)\vartheta}\varrho\big|_\infty^{\gamma-\delta+(1-\delta)\vartheta} \big|(\eta_r\varrho^\delta)^\frac{1}{p}U\big|_p^{p\vartheta}
\big|(\eta^2\eta_r\varrho)^\frac{1}{p}U\big|_p^{p(1-\vartheta)} \nonumber\\
&\leq C(\tilde\varepsilon,p,T)  +C(p,T)\tilde\varepsilon \big|(\eta_r\varrho^{\delta})^\frac{1}{p}U\big|_p^p \nonumber\\
&\quad+C(p)\tilde\varepsilon\big|(\zeta\eta^2\eta_r\varrho^{\gamma-\delta+1})^\frac{1}{p}V\big|_p^p+C(p)\big|(r^2\rho_0)^\frac{1}{p}U\big|_p^{p}.\nonumber    
\end{align}
Here $0<\vartheta=\frac{\gamma-\delta}{5\delta-3}< 1$ for all $\gamma<6\delta-3$. Then, for any $\varepsilon\in (0,1)$, setting $\tilde\varepsilon$ such that
$0<\tilde\varepsilon<\min\big\{\varepsilon,\frac{\varepsilon}{C(\varepsilon,p)},\frac{\varepsilon}{C(\varepsilon,p,T)}\big\}<1$,
we thus get from \eqref{aj''} and \eqref{g2,1-da} that
\begin{equation}\label{g2,1-da'}
I_{10} \leq 2\varepsilon\big|(\eta_r\varrho^\delta)^\frac{1}{p}U\big|_p^{p}+\varepsilon \big|(\zeta\eta^2\eta_r\varrho^{\gamma-\delta+1})^\frac{1}{p}V\big|_p^p+C(p)\big|(r^2\rho_0)^\frac{1}{p}U\big|_p^{p}+C(\varepsilon,p,T).
\end{equation}

\smallskip
\textbf{3.} Collecting \eqref{dt-G2} 
and \eqref{g2,1-da'}, and then setting $\varepsilon$ sufficiently small, yields \eqref{eq,u-lp}.
\end{proof}

Based on Lemma \ref{lemma-u Lp}, we obtain the following interior $L^p$-energy estimates for $V$.

\begin{Lemma}\label{lemma-v-lp}
Let $\gamma\in [3\delta-1+\frac{\delta-1}{p_*},6\delta-3)$. Then, for any $p\in[2,p_{*})$ and $\varepsilon\in(0,1)$, there exists a constant $C(p,T)>0$ such that, for all $t\in[0,T]$,
\begin{align}
       &\,\frac{\mathrm{d}}{\mathrm{d}t}\big|(\zeta r^2\rho_0)^\frac{1}{p}V\big|_p^p+\frac{pA\gamma}{4a_{1}\delta}\big|(\zeta \eta^2\eta_r\varrho^{\gamma-\delta+1})^\frac{1}{p}  V\big|_p^p \label{eq,v-lp}\\
       &\leq C(p,T)\big(\big(1+ \big|(\zeta\eta^2\eta_r)^\frac{1}{3}\varrho^{\gamma+\delta-1}\big|_3\big)\big(1+\big|(\zeta r^2\rho_0)^\frac{1}{p}V\big|_p^{p}\big)+\big|(r^2\rho_0)^\frac{1}{p}U \big|_p^{p}\big)+\varepsilon\big|(\eta_r\varrho^\delta)^\frac{1}{p}U\big|_p^p.\nonumber
\end{align}

\end{Lemma}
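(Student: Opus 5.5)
We start exactly as in Step~1 of the proof of Lemma~\ref{lemma-v-n=3}. Multiply \eqref{eq:v} by $\zeta r^2\rho_0|V|^{p-2}V$, integrate over $I$, and use $r^2\rho_0=\eta^2\eta_r\varrho$. This gives
\begin{equation*}
\frac{1}{p}\frac{\mathrm{d}}{\mathrm{d}t}\big|(\zeta r^2\rho_0)^\frac{1}{p}V\big|_p^p+\frac{A\gamma}{2a_1\delta}\big|(\zeta\eta^2\eta_r\varrho^{\gamma-\delta+1})^\frac{1}{p}V\big|_p^p\leq I_{11}+I_{12},
\end{equation*}
with $I_{11},I_{12}$ as in \eqref{dt-v}. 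The gravity term $I_{11}$ is handled verbatim by \eqref{lemma4.8 I11}. That estimate relies only on \eqref{lemma4.7 fuzhu}, \eqref{lemma4.7 I9,1} and the Hardy--Littlewood--Sobolev bound, none of which uses the restriction \eqref{dede-gaga1}. It yields $C(p,T)(1+|(\zeta\eta^2\eta_r)^{\frac13}\varrho^{\gamma+\delta-1}|_3)(1+|(\zeta r^2\rho_0)^{\frac1p}V|_p^p)$. Multiplying by $p$ gives the first group of terms in \eqref{eq,v-lp}.

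The whole issue is $I_{12}$, the coupling term $\int_0^1\zeta\eta^2\eta_r\varrho^{\gamma-\delta+1}|U||V|^{p-1}\,\mathrm{d}r$. By Young's inequality with weight $\varrho^{\gamma-\delta+1}$ on both factors,
\begin{equation*}
I_{12}\leq \frac{A\gamma}{4a_1\delta}\big|(\zeta\eta^2\eta_r\varrho^{\gamma-\delta+1})^\frac{1}{p}V\big|_p^p+C(p)\big|(\zeta\eta^2\eta_r\varrho^{\gamma-\delta+1})^\frac{1}{p}U\big|_p^p .
\end{equation*}
The first part is absorbed, which leaves the coefficient $\frac{pA\gamma}{4a_1\delta}$ claimed in \eqref{eq,v-lp}. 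In contrast with Case (I), we cannot use the interpolation with $\vartheta_0=\underline{g}$, since \eqref{equiv02} requires $\gamma<3\delta-1$. Instead we treat the $U$-term directly, using the splitting already used in \eqref{aj''}. Write $\varrho^{\gamma-\delta+1}\eta^2=(\eta^{\frac{2\vartheta}{\gamma-\delta+(1-\delta)\vartheta}}\varrho)^{\gamma-\delta+(1-\delta)\vartheta}\,\varrho^{\delta\vartheta}(\eta^2\varrho)^{1-\vartheta}$ with $\vartheta=\frac{\gamma-\delta}{5\delta-3}\in(0,1)$; this works because $\gamma<6\delta-3$. Hölder then gives
\begin{equation*}
\big|(\zeta\eta^2\eta_r\varrho^{\gamma-\delta+1})^\frac1pU\big|_p^p\leq\big|\zeta_{\frac58}\eta^{\frac{2\vartheta}{\gamma-\delta+(1-\delta)\vartheta}}\varrho\big|_\infty^{\gamma-\delta+(1-\delta)\vartheta}\big|(\eta_r\varrho^\delta)^\frac1pU\big|_p^{p\vartheta}\big|(r^2\rho_0)^\frac1pU\big|_p^{p(1-\vartheta)} .
\end{equation*}

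The main check is that the exponent $\frac{2\vartheta}{\gamma-\delta+(1-\delta)\vartheta}$ lies in $[\frac{1}{2\delta-1},\frac{2}{\delta}]$, so that Lemma~\ref{lemma-near depth} bounds the $L^\infty$ factor by $C(p,T)$. With this $\vartheta$ a direct computation gives $\gamma-\delta+(1-\delta)\vartheta=\vartheta(4\delta-2)$, so the exponent equals $\frac{1}{2\delta-1}$, the endpoint. This is presumably why $\vartheta$ was chosen this way in \eqref{aj''}. If the computation failed, I would instead choose $\vartheta$ in the admissible interval, which is nonempty because $\gamma\geq3\delta-1+\frac{\delta-1}{p_*}>\frac43$. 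Finally, Young's inequality with a small parameter splits the product into $\varepsilon|(\eta_r\varrho^\delta)^{\frac1p}U|_p^p+C(\varepsilon,p,T)|(r^2\rho_0)^{\frac1p}U|_p^p$. After multiplying through by $p$ and collecting terms, this gives \eqref{eq,v-lp}. The weight $\zeta$ restricts everything to $[0,\frac58]$, so Lemma~\ref{lemma-low jacobi 1} is needed only through Lemma~\ref{lemma-near depth}.
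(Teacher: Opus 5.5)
Your proposal is correct and is essentially the paper's proof. It uses the same multiplier $\zeta r^2\rho_0|V|^{p-2}V$ and handles the gravity term through \eqref{lemma4.7 I9,1}. It also uses the same H\"older splitting, whose $L^\infty$ factor carries the endpoint exponent $\frac{1}{2\delta-1}$ so that Lemma \ref{lemma-near depth} applies; your $\vartheta=\frac{\gamma-\delta}{5\delta-3}$ is simply $1-\vartheta$ for the paper's $\vartheta=\frac{6\delta-3-\gamma}{5\delta-3}$.

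Two small remarks. As you wrote, the constant in front of $|(r^2\rho_0)^{\frac1p}U|_p^p$ also depends on $\varepsilon$; this is harmless because $\varepsilon$ is fixed when the lemma is applied in the proof of Lemma \ref{lp-uv}. Your fallback hedge is unnecessary: the lower bound $\gamma\geq 3\delta-1+\frac{\delta-1}{p_*}$ is never used, and only $\gamma<6\delta-3$ is needed.
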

\begin{proof}
Let $\gamma$ satisfy \eqref{dede-gaga2} and $p\in[2,p_{*})$. Multiplying \eqref{eq:v} by $\zeta r^2\rho_0 |V|^{p-2}V$  
and integrating over $I$, we obtain from Lemma \ref{lemma-near depth} and the H\"older and Young inequalities that, for $\vartheta\in[0,1]$,
\begin{align}
&\,\frac{1}{p}\frac{\mathrm{d}}{\mathrm{d}t}\big|(\zeta r^2\rho_0)^\frac{1}{p}V\big|_p^p+\frac{A\gamma}{2a_{1}\delta}\big|(\zeta \eta^2\eta_r\varrho^{\gamma-\delta+1})^\frac{1}{p}  V\big|_p^p \nonumber\\
&\leq \underline{|\zeta\eta^2\eta_r\varrho D_\eta\Phi V^{p-1}|_1}_{:=I_{13}} +\frac{A\gamma}{2a_{1}\delta} \int_0^1 \zeta \eta^2 \eta_r\varrho^{\gamma-\delta+1}|V|^{p-1}|U|\,\mathrm{d}r\nonumber\\
&\leq I_{13}+C(p)\big|(\zeta \eta^2\eta_r\varrho^{\gamma-\delta+1})^\frac{1}{p}  U\big|_p^p+\frac{A\gamma}{4a_{1}\delta}\big|(\zeta \eta^2\eta_r\varrho^{\gamma-\delta+1})^\frac{1}{p}  V\big|_p^p \label{lemma4.10 1}\\
&\leq I_{13}+\frac{A\gamma}{4a_{1}\delta}\big|(\zeta \eta^2\eta_r\varrho^{\gamma-\delta+1})^\frac{1}{p}  V\big|_p^p \nonumber\\
&\quad +C(p)\underline{\big|\zeta_{\frac{5}{8}}\eta^\frac{2-2\vartheta}{\gamma-2\delta+1-\vartheta+\delta\vartheta}\varrho\big|_\infty^{\gamma-2\delta+1-\vartheta+\delta\vartheta}}_{:=I_{14}} 
\big|(\eta^2\eta_r\varrho)^\frac{1}{p}U \big|_p^{p\vartheta}
\big|\big(\eta_r\varrho^\delta\big)^\frac{1}{p}U\big|_p^{p(1-\vartheta)} \nonumber\\
&\leq I_{13}+C(p,T)I_{14}^{\frac{1}{\vartheta}}\ \big|(\eta^2\eta_r\varrho)^\frac{1}{p}U \big|_p^{p}+\varepsilon\big|\big(\eta_r\varrho^\delta\big)^\frac{1}{p}U\big|_p^p+\frac{A\gamma}{4a_{1}\delta}\big|(\zeta \eta^2\eta_r\varrho^{\gamma-\delta+1})^\frac{1}{p}  V\big|_p^p.\nonumber
\end{align}
For $I_{13}$, similarly to the estimates of $I_{9,1}$ in Lemma \ref{lemma-u Lp-n3}, it follows from \eqref{lemma4.7 I9,1} and the H\"older and Young inequalities that 
\begin{equation}\label{lemma4.10 I13}
\begin{aligned}
        I_{13}&\leq |(r^2\rho_0)^{\frac{1}{p}}D_\eta\Phi|_p  |(\zeta r^2\rho_0)^\frac{1}{p}V|_p^{p-1}\\
        &\leq C(p,T)\big(1+ \big|(\zeta\eta^2\eta_r)^\frac{1}{3}\varrho^{\gamma+\delta-1}\big|_3\big)\big(1+\big|(\zeta r^2\rho_0)^\frac{1}{p}V\big|_p^{p}\big).
\end{aligned}
\end{equation}
For $I_{14}$, 
set $\vartheta=\frac{6\delta-3-\gamma}{5\delta-3}\in[0,1]$, then 
$\frac{2-2\vartheta}{\gamma-2\delta+1-\vartheta+\delta\vartheta}=\frac{1}{2\delta-1}$.
 By   Lemma \ref{lemma-near depth}, one has 
\begin{equation}\label{lemma4.10 I14}
    I_{14}\leq C(T).
\end{equation}
Finally, substituting \eqref{lemma4.10 I13} and \eqref{lemma4.10 I14} into \eqref{lemma4.10 1} leads to \eqref{eq,v-lp}.

\end{proof}

Now we can finish the proof of  Lemma \ref{lp-uv}.
\begin{proof}[Proof of Lemma \ref{lp-uv}]
Let $\gamma\in \big(\frac{4}{3},6\delta-3\big)$ and $p\in[2,p_{*})$. Multiplying \eqref{eq,v-lp} in Lemma \ref{lemma-v-lp} by $\frac{8a_{1}\delta \varepsilon}{pA\gamma}$, combined with \eqref{eq,u-lp} in Lemma \ref{lemma-u Lp}, yields that, for all $\varepsilon\in (0,1)$,
\begin{align}
&\frac{\mathrm{d}}{\mathrm{d}t}\Big(\big|(r^2\rho_0)^\frac{1}{p}U\big|_p^p+\frac{8a_{1}\delta \varepsilon}{pA\gamma}\big|(\zeta r^2\rho_0)^\frac{1}{p}V\big|_p^p\Big)+2c_{\delta,p}^{*} \big|(r^2\rho_0)^\frac{1}{2}|U|^\frac{p-2}{2}\mathscr{D}_\eta U\big|_2^2 \nonumber\\
&\leq C(p,T)\big(1+ \big|(\zeta\eta^2\eta_r)^\frac{1}{3}\varrho^{\gamma+\delta-1}\big|_3\big)\big(1+\big|(r^2\rho_0)^{\frac{1}{p}}U\big|_p^{p}+\big|(\zeta r^2\rho_0)^\frac{1}{p}V\big|_p^{p}\big)\label{jiehe}\\
&\quad+C(p,T)\big|(r^2\rho_0)^\frac{1}{p}U \big|_p^{p}+\varepsilon C(p)\big|\big(\eta_r\varrho^\delta\big)^\frac{1}{p}U\big|_p^p+C(\varepsilon,p,T).\nonumber
\end{align}

Thus, we can choose $\varepsilon$ in \eqref{jiehe} sufficiently small such that 
$0<\varepsilon<\min\big\{1,\frac{c_{\delta,p}^{*}}{2C(p)}\big\}$,
and obtain from Lemma \ref{near-BD} that \eqref{zong-jiehe} in Lemma \ref{lp-uv} holds.

\end{proof}

\subsection{Global Uniform Lower Bounds of \texorpdfstring{$(\eta_r,\frac{\eta}{r})$}{}} \label{subsec-3.4}

With the help of Lemmas \ref{lemma-near depth}--\ref{lp-uv}, we now ready to establish the global uniform upper bound for $\varrho$ in $[0,T]\times \bar I$. 
\begin{Lemma}\label{lemma-bound depth}
Let $\gamma\in (\frac{4}{3},6\delta-3)$ and $\delta\in (\frac{13}{18},1)$. Then there exists a constant $C(T)>0$ such that
\begin{equation*}
|\varrho(t)|_\infty \leq C(T) \qquad  \text{for all } t\in [0,T].
\end{equation*}
\end{Lemma}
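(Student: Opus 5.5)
We split the domain into the exterior $[\frac12,1]$ and the interior $[0,\frac12]$. In the exterior the bound is immediate. By \eqref{eq:eta} we have $\varrho=r^2\rho_0/(\eta^2\eta_r)$, and Lemma \ref{lemma-low jacobi 1} with $a=\frac12$ gives lower bounds for $\eta$ and $\eta_r$ on $[\frac12,1]$. Hence $|\chi^\sharp\varrho|_\infty\le C(T)|\rho_0|_\infty\le C(T)$. The real task is the interior, near the center, where $\eta$ may degenerate and the weighted bounds of Lemma \ref{lemma-near depth} only control $\eta^{l_2}\varrho$.

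In the interior we would use the effective velocity $V$ and the transport structure \eqref{eq:v}. Since $D_\eta(\varrho^{\delta-1})=\frac{\delta-1}{2a_1\delta}(V-U)$, we have $(\varrho^{\delta-1})_r=\frac{\delta-1}{2a_1\delta}\eta_r(V-U)$. Integrating from a point $r\in[0,\frac12]$ to some $r_1\in[\frac12,\frac58]$, where $\varrho$ is already bounded above (exterior case) and below (Lemma \ref{lemma-low jacobi 1} together with the upper bound on $\eta^2\eta_r$ from Lemma \ref{lemma-far depth}), gives
\begin{equation*}
\varrho^{\delta-1}(t,r)\ \ge\ \varrho^{\delta-1}(t,r_1)-C_0\int_{r}^{r_1}\eta_r(|U|+|V|)\,\mathrm{d}\hat r .
\end{equation*}
This is not quite what we need: an upper bound on $\varrho$ is a \emph{lower} bound on $\varrho^{\delta-1}$, because $\delta<1$. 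So we must show that $\int_r^{r_1}\eta_r(|U|+|V|)\,\mathrm{d}\hat r$ cannot exceed $\varrho^{\delta-1}(t,r_1)$. The sign structure helps here, since only the part of $V-U$ that pushes $\varrho^{\delta-1}$ toward zero matters.

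Rather than pursue that, we would first try a cleaner route: a Hölder/Sobolev estimate in the radial 3-D Eulerian variable. Write
\begin{equation*}
\varrho^{\delta-1}(t,r)\;\ge\; c-\int_r^{r_1}|\eta^{\frac2p}\eta_r^{\frac1p}\varrho^{\frac1p}(V-U)|\,\eta^{-\frac2p}\eta_r^{1-\frac1p}\varrho^{-\frac1p}\,\mathrm{d}\hat r .
\end{equation*}
Lemma \ref{lp-uv} controls $|(r^2\rho_0)^{1/p}(U,\zeta^{1/p}V)|_p$ with $r^2\rho_0=\eta^2\eta_r\varrho$, for $p$ close to $p_*>7$. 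By Hölder, the remaining factor is
\begin{equation*}
\Big(\int\eta^{-\frac{2}{p-1}}\eta_r\,\varrho^{-\frac1{p-1}}\,\mathrm{d}\hat r\Big)^{\frac{p-1}{p}},
\end{equation*}
that is, an integral in $\mathrm{d}\eta$ of $\eta^{-2/(p-1)}\varrho^{-1/(p-1)}$. The factor $\eta^{-2/(p-1)}$ is integrable in $\eta$ since $p>3$. The obstruction is the factor $\varrho^{-1/(p-1)}$, which is unbounded where the density is small. To handle it, we apply the estimate to $\varrho^{\delta-1+\kappa}$ for a suitable small power $\kappa$ rather than directly to $\varrho^{\delta-1}$. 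The target is the pointwise bound
\begin{equation*}
\big|\varrho^{\delta-1}(t,r)-\varrho^{\delta-1}(t,r_1)\big|\le C(T)\big(1+\sup\varrho\big)^{\theta}\quad\text{with }\theta<1-\delta .
\end{equation*}
The left side is at least $\varrho^{\delta-1}(t,r_1)-(\sup\varrho)^{\delta-1}$, and since $\theta<1-\delta$ this absorbs and bounds $\sup_{[0,\frac12]}\varrho$. Lemma \ref{lemma-near depth} can supply the needed exponents, $|\zeta_a\eta^{l_2}\varrho|_\infty\le C$ with $l_2\in[\frac1{2\delta-1},\frac2\delta]$, by trading powers of $\eta$ for powers of $\varrho$.

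The main obstacle is this exponent bookkeeping near the origin: the weights $\eta^{-2/(p-1)}$ coming from $U,V\in L^p(r^2\rho_0)$ must be matched against Lemma \ref{lemma-near depth}. This is exactly where $p_*>7$ and $\delta>\frac{13}{18}$ should enter. We would first try the simplest choice, $p$ close to $p_*$ with $\frac{2}{p-1}<\frac13$, and check that the resulting inequality closes with an exponent $\theta<1-\delta$ by Young's inequality. If it does not close, the fallback is to combine the $\varrho$-weight with the BD-type bound of Lemma \ref{near-BD} on $D_\eta\varrho^{\delta-\frac12}$.
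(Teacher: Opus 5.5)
Your exterior bound is correct, and you correctly identify the obstruction $\varrho^{-1/(p-1)}$, but the interior argument does not close. The decisive step is the claim that the left side is at least $\varrho^{\delta-1}(t,r_1)-(\sup\varrho)^{\delta-1}$ and that $\theta<1-\delta$ then lets you absorb. This fails because $\delta-1<0$. Wherever $\varrho(t,r)>\varrho(t,r_1)$ one has $0<\varrho^{\delta-1}(t,r_1)-\varrho^{\delta-1}(t,r)<\varrho^{\delta-1}(t,r_1)$, so this quantity is bounded independently of $\sup\varrho$. An upper bound of the form $C(T)(1+\sup\varrho)^{\theta}$ with $\theta\ge0$ is therefore compatible with arbitrarily large $\sup\varrho$. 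An oscillation estimate for a negative power of $\varrho$ bounds $\varrho$ from above only if it is strictly smaller than $\varrho^{\delta-1}(t,r_1)$, which is exactly the smallness you already noted you cannot get.

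Absorption of the kind you describe works only for a positive power $\varrho^{s}$. There the H\"older remainder is of order $(\sup\varrho)^{s+1-\delta-\frac1p}$, so closing requires $p<\frac1{1-\delta}$, which lies below $p_*$. Your suggested choice of $p$ close to $p_*$ therefore goes the wrong way, and the exponent check, which is the whole content of the lemma, is left as ``bookkeeping''. (Also, Lemma \ref{lemma-far depth} bounds $\eta^2\varrho^\delta$, not $\eta^2\eta_r$, so no lower bound on $\varrho(t,r_1)$ is available; only the upper bound is needed, though.)

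The missing idea is to choose the power so that the bad factor disappears. Take $s:=\delta-1+\frac1p$. Definition \ref{def-v} gives $(\varrho^{s})_r=\frac{s}{2a_1\delta}\eta_r\varrho^{\frac1p}(V-U)$, and H\"older yields, for $r\in[0,\frac12]$,
\begin{equation*}
\int_r^{\frac12}\eta_r\varrho^{\frac1p}|V-U|\,\mathrm{d}\hat r\le \big|(\zeta r^2\rho_0)^{\frac1p}(V,U)\big|_p\Big(\frac{p-1}{p-3}\,\eta\big(t,\tfrac12\big)^{\frac{p-3}{p-1}}\Big)^{\frac{p-1}{p}},
\end{equation*}
with no power of $\varrho$ left over. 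This requires $s>0$ and $p>3$, that is, $3<p<\min\{p_*,\frac{1}{1-\delta}\}$. The interval is nonempty because $\delta>\frac{13}{18}$ gives $\frac1{1-\delta}>\frac{18}{5}$.

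You also need $\eta(t,\frac12)\le C(T)$. This follows from $\eta=r+\int_0^tU\,\mathrm{d}s$, the bound $\rho_0\ge c$ on $[\frac12,\frac58]$, $\eta_r>0$, and Lemma \ref{lemma-basic energy}. Lemma \ref{lp-uv} then gives $\varrho^{s}(t,r)\le\varrho^{s}(t,\frac12)+C(T)\le C(T)$ on $[0,\frac12]$, and your exterior bound finishes the proof.

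For comparison, the paper applies the $W^{1,1}$ embedding to $\zeta\varrho^{\delta}$. H\"older then leaves $|\zeta\eta^{-\frac{2}{p-1}}\eta_r\varrho|_1$, which still contains $\varrho$. The paper handles it by iterated integration by parts until the exponents enter the range of Lemma \ref{lemma-near depth}. The power $s=\delta-1+\frac1p$ avoids that iteration altogether.
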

\begin{proof}
It follows from \eqref{v-expression}, Lemmas \ref{lemma-low jacobi 1}, \ref{lemma-near depth}--\ref{lp-uv}, and \ref{sobolev-embedding}, and the H\"older and Young inequalities that, for fixed $p\in (2,p_{*})$,
\begin{equation}\label{shangjie}
\begin{aligned}
|\zeta \varrho^{\delta}|_\infty&\leq C_0\big|(\zeta_r \varrho^{\delta},\zeta(\varrho^\delta)_r)\big|_1\leq C_0\big(\big|\zeta_r\eta^{-2}\big|_1|\zeta_{\frac{5}{8}}\eta^2\varrho^\delta|_\infty+|\zeta\eta_r\varrho^{\delta-1}D_\eta \varrho|_1\big)\\
&\leq  C(T)+C_0\big|\zeta\eta^{-\frac{2}{p-1}} \eta_r \varrho \big|_1^\frac{p-1}{p} \big|(\zeta r^2\rho_0)^\frac{1}{p}(V,U)\big|_p \\
&\leq C(T)\big(1+\underline{\big|\zeta\eta^{-\frac{2}{p-1}} \eta_r \varrho \big|_1}_{:=I_{15}}\big). 
\end{aligned}     
\end{equation}

To estimate $I_{15}$, it follows from integration by parts, \eqref{eq:eta}, \eqref{v-expression}, Lemmas \ref{lemma-low jacobi 1}, \ref{lemma-near depth}, and \ref{lp-uv}, and the H\"older and Young inequalities that, for all $t\in[0,T]$ and $p\in[2,p_{*})$,
\begin{align}
I_{15}&=\int_0^1 \zeta \eta^{-\frac{2}{p-1}}\eta_r\varrho\,\mathrm{d}r\leq C(p)\Big(\int_0^1 |\zeta_r| \eta^{\frac{p-3}{p-1}}\varrho\,\mathrm{d}r+\int_0^1 \zeta \eta^{\frac{p-3}{p-1}}\eta_r\varrho^{2-\delta} (|V|+|U|)\,\mathrm{d}r\Big)\nonumber\\
&\leq C(p)|\eta^2\varrho^\delta|_\infty^\frac{1}{\delta}\int_\frac{1}{2}^\frac{5}{8}\eta^{\frac{p-3}{p-1}-\frac{2}{\delta}} \,\mathrm{d}r\label{458}\\
&\quad+ C(p)\Big( \int_0^1 \zeta  \eta^{\frac{p}{p-1}(\frac{p-3}{p-1})-\frac{2}{p-1}}\eta_r\varrho^{\frac{p}{p-1}(2-\delta)-\frac{1}{p-1}} \, \mathrm{d}r\Big)^\frac{p-1}{p}\big|(\zeta r^2\rho_0)^\frac{1}{p}(V,U)\big|_p \nonumber\\
&\leq C(p,T)+C(p,T)\int_0^1 \zeta  \eta^{\frac{p}{p-1}(\frac{p-3}{p-1})-\frac{2}{p-1}}\eta_r\varrho^{\frac{p}{p-1}(2-\delta)-\frac{1}{p-1}} \, \mathrm{d}r.\nonumber
\end{align}    
Similarly, we choose 
\begin{equation*}
\begin{aligned}
(\mathfrak{a}_0,\mathfrak{b}_0)&=(-\frac{2}{p-1},1),\ \ 
(\mathfrak{a}_1,\mathfrak{b}_1)=\Big(\frac{p}{p-1}(\mathfrak{a}_0+1)-\frac{2}{p-1},\frac{p}{p-1}(\mathfrak{b}_0+1-\delta)-\frac{1}{p-1}\Big), 
\end{aligned}
\end{equation*}
and denote
$\mathcal{I}_{(k,\ell)}=\int_0^1\zeta \eta^{k}\eta_r\varrho^{\ell}\,\mathrm{d}r$.
Next, define two sequences $(\{\mathfrak{a}_j\}_{j\in\NN},\{\mathfrak{b}_j\}_{j\in\NN})$ as follows:
\begin{equation}\label{lemma4.11 ditui}
\begin{aligned}
\mathfrak{a}_{j+1}=&\frac{p}{p-1}\mathfrak{a}_j+\frac{p-2}{p-1} \qquad \text{with } \mathfrak{a}_0=-\frac{2}{p-1},\\
\mathfrak{b}_{j+1}=&\frac{p}{p-1}\mathfrak{b}_j+\frac{(1-\delta)p-1}{p-1} \qquad  \text{with } \mathfrak{b}_0=1.
\end{aligned}
\end{equation}
Clearly, we can solve for $(\mathfrak{a}_j,\mathfrak{b}_j)$ from \eqref{lemma4.11 ditui} that, for $j\in\mathbb{N}$,
\begin{equation*}
\mathfrak{a}_{j}=(p-2-\frac{2}{p-1})\big(\frac{p}{p-1}\big)^j-(p-2), \ \ \mathfrak{b}_{j}=(1-\delta)p\big(\frac{p}{p-1}\big)^j+1-(1-\delta)p,
\end{equation*}
and check that 
\begin{equation*}
    2\mathfrak{b}_{j+1}\geq \delta(\mathfrak{a}_{j+1}+1),\qquad \forall\ j\in \NN,  \quad \text{ for all }\ p\in \big[2,1+\frac{2\delta}{3\delta-2}\big].
\end{equation*}
Following the same argument as in \eqref{458} thus implies that, for all $j\in \NN$, 
\begin{equation}\label{lemma4.11 00}
\begin{aligned}
        \mathcal{I}_{(\mathfrak{a}_0,\mathfrak{b}_0)}&\leq C(p,T)(1+\mathcal{I}_{(\mathfrak{a}_{j+1},\mathfrak{b}_{j+1})})\\
        &\leq C(p,T)\big(1+|\zeta_{\frac{5}{8}}\eta^{\frac{\mathfrak{a}_{j+1}}{\mathfrak{b}_{j+1}-2\delta+1}}\varrho|_\infty^{\mathfrak{b}_{j+1}-2\delta+1}|\zeta\eta_r\varrho^{2\delta-1}|_1\big).
\end{aligned}
\end{equation}
Here, in order to apply Lemma  \ref{lemma-near depth} in \eqref{lemma4.11 00}, we need to check that for each $\delta\in (\frac{13}{18},1)$, there exists some $j\in \NN$ sufficiently large and $p\in \big[2,1+\frac{2\delta}{3\delta-2}\big]$, such that
\begin{equation}\label{lemma4.11 pxz}
    \frac{1}{2\delta-1}\leq \frac{\mathfrak{a}_j}{\mathfrak{b}_j-2\delta+1}\leq \frac{2}{\delta},
\end{equation}
which is equivalent to existing some $p\in[2,p_*)$, such that 

\begin{equation}\label{lemma4.11 pfanwei}
    3<\frac{7\delta-4}{3\delta-2}< p<\min\big\{p_{*}, \frac{5\delta-2}{3\delta-2}\big\}.
\end{equation}
A direct calculation shows that there exists such $p$ and 
then we obtain from Lemma \ref{lemma-near depth} that
\begin{equation}\label{lemma4.11 pjg}
     I_{15}=C(T)(1+\mathcal{I}_{(\mathfrak{a}_0,\mathfrak{b}_0)})\leq C(T).
\end{equation}
Thus, collecting \eqref{shangjie}--\eqref{lemma4.11 pjg} gives that, for all $t\in[0,T]$,
\begin{equation*} 
|\zeta\varrho^{\delta}|_\infty\leq C(T)(1+I_{15})\leq C(T),
\end{equation*}
which, together with Lemmas \ref{lemma-far depth}--\ref{lemma-low jacobi 1}, yields that, for all $t\in[0,T]$,
\begin{equation*}
|\varrho|_\infty^\delta\leq |\zeta\varrho^\delta|_\infty+|\zeta^\sharp\varrho^\delta|_\infty\leq C(T)+|\zeta^\sharp\eta^{-2}|_\infty|\eta^2\varrho^\delta|_\infty\leq C(T),
\end{equation*}
where the cut-off function $\zeta^\sharp$ is defined in \S \ref{othernotation}.

This completes the proof of Lemma \ref{lemma-bound depth}.
\end{proof}

Now we can establish the global uniform lower bounds for $(\eta_r,\frac{\eta}{r})$.  
\begin{Lemma}\label{lemma-lower bound jacobi}
There exists a constant $C(T)>1$ such that 
\begin{equation*}
\frac{\eta(t,r)}{r}\geq C(T)^{-1},\quad \eta_r(t,r) \geq C(T)^{-1} 
\qquad\,\, \text{for all $(t,r)\in [0,T]\times\bar I$}.
\end{equation*}
\end{Lemma}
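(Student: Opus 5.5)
The natural route is to turn the upper bound on the density into a lower bound on the Jacobian through the mass relation \eqref{eq:eta}, which gives $\eta^2\eta_r = r^2\rho_0/\varrho$. Away from the origin, Lemma \ref{lemma-low jacobi 1} already supplies the lower bounds on $[a,1]$ for each fixed $a\in(0,1)$. So the whole task is the interior region near $r=0$, where $\rho_0$ is bounded below by a positive constant (say on $[0,\frac{3}{4}]$, since $\rho_0^\beta\sim 1-r$).

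First I would bound $\frac{\eta}{r}$ from below. By Lemma \ref{lemma-bound depth}, $|\varrho|_\infty\le C(T)$. Hence on $[0,\frac34]$,
\begin{equation*}
(\eta^3)_r=3\eta^2\eta_r=\frac{3r^2\rho_0}{\varrho}\ge C(T)^{-1}r^2.
\end{equation*}
Integrating from $0$ and using $\eta(t,0)=0$ gives $\eta^3\ge C(T)^{-1}r^3$, that is, $\frac{\eta}{r}\ge C(T)^{-1}$ on $[0,\frac34]$. Combining this with Lemma \ref{lemma-low jacobi 1} at $a=\frac34$ yields the global lower bound for $\frac{\eta}{r}$.

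Second, I would bound $\eta_r$ from below. The same relation gives
\begin{equation*}
\eta_r=\frac{r^2\rho_0}{\eta^2\varrho}=\Big(\frac{r}{\eta}\Big)^2\frac{\rho_0}{\varrho},
\end{equation*}
so a lower bound on $\eta_r$ requires an \emph{upper} bound on $\frac{\eta}{r}$. This is the main obstacle, because upper bounds on $(\eta_r,\frac{\eta}{r})$ are only proved later in the paper.

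I would try to avoid needing them by using the other density bounds available. Lemma \ref{lemma-far depth} gives $\eta^2\varrho^\delta\le C(T)$, but that bounds $\varrho$ from above, not below. Instead I would bound $\frac{\eta}{r}$ from above directly. Since $\eta_t=U$,
\begin{equation*}
\frac{\eta}{r}(t,r)=1+\int_0^t\frac{U}{r}\,\mathrm{d}s,
\end{equation*}
and I would control $\frac{U}{\eta}$ via the dissipation $\int_0^t|(\eta^2\eta_r\varrho^\delta)^{1/2}\frac{U}{\eta}|_2^2\,\mathrm{d}s$ from Lemma \ref{lemma-basic energy}. That is only an $L^2$ bound, though, so pointwise control near the origin is unclear.

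A cleaner alternative is to write the relation as
\begin{equation*}
\eta_r\,\varrho=\frac{r^2\rho_0}{\eta^2},
\end{equation*}
so that $\eta_r\ge \frac{r^2\rho_0}{\eta^2\,|\varrho|_\infty}$, which again needs $\frac{\eta}{r}\le C$. The fallback is therefore a cruder upper bound on $\eta$. For $r\le \frac12$,
\begin{equation*}
\eta(t,r)\le \eta(t,1)=R(t)\le 1+\int_0^t |U(s,1)|\,\mathrm{d}s,
\end{equation*}
with $U$ near the boundary controlled through $V$ and the $L^p$ estimates of Lemma \ref{lp-uv}. This handles $r$ bounded away from $0$, but near $r=0$ the factor $\frac{r^2}{\eta^2}$ still needs $\frac{\eta}{r}$ bounded above.

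The step I would try first is therefore an ODE argument at the centre, using \eqref{eq:v} for $V$ together with the $L^\infty$ bound on $\varrho$. The idea is to bound $D_\eta\varrho^{\delta-1}$ and then relate the evolution of $\frac{\eta}{r}$ near $0$ to $\frac{U}{r}$. If this does not close, I would conclude that the interior lower bound on $\eta_r$ has to be deferred until after the interior upper bound is obtained, or else derived from the monotonicity structure given by the boundary condition \eqref{N111} in the same way as in Lemma \ref{lemma-low jacobi 1}.
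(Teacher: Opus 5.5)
Your lower bound for $\frac{\eta}{r}$ matches the paper's argument: use $(\eta^3)_r=3r^2\rho_0/\varrho\geq C(T)^{-1}r^2$ near the centre, then Lemma \ref{lemma-low jacobi 1} away from it. The gap is in the second half: you never produce a lower bound for $\eta_r$ near $r=0$. None of your fallbacks closes. The dissipation in Lemma \ref{lemma-basic energy} is only an $L^2$ bound. The bound on $R(t)$ gives nothing as $r\to0$. The ``ODE argument at the centre'' is never specified. The boundary condition \eqref{N111} holds at $r=1$ and says nothing about the centre. Deferring the bound until after the interior upper bounds is circular in the paper's ordering, because Lemma \ref{lemma-u infty} and Steps~2--3 of Lemma \ref{lemma-upper jacobi near} (see \eqref{log h-pre}) already use $|\eta_r^{-1}|_\infty$ on $[0,a]$, i.e.\ the present lemma.

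The missing idea is that the route through $\frac{\eta}{r}\leq C$ can be bypassed entirely. The lower bound you already have on $\frac{\eta}{r}$ controls $\eta_r$ \emph{at} the centre: since $\eta(t,0)=0$ and $\eta_r$ is continuous on $[0,T]\times\bar I$,
\begin{equation*}
\eta_r(t,0)=\lim_{r\to0^+}\frac{\eta(t,r)}{r}\geq C(T)^{-1}\qquad\text{for all } t\in[0,T].
\end{equation*}
The paper then runs the compactness argument you alluded to from Lemma \ref{lemma-low jacobi 1}, but anchored at $r=0$ instead of $r=1$. Suppose $\eta_r(t_k,r_k)\to0$ with $(t_k,r_k)\in[0,T]\times[0,\frac12]$. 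Lemma \ref{lemma-low jacobi 1} (lower bounds on every $[a,1]$) forces $r_k\to0$, and a subsequence converges to some $(t_0,0)$. Continuity of $\eta_r$ then gives $\eta_r(t_0,0)=0$, contradicting the display. This last step is a soft argument resting on the continuity of $\eta_r$ of the given classical solution up to $r=0$, not an explicit estimate. So your observation is correct: a quantitative pointwise bound through $\eta_r=(r/\eta)^2\rho_0/\varrho$ would need $\frac{\eta}{r}\leq C$, and the paper avoids that requirement only through this continuity step.
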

\begin{proof}
According to Lemma \ref{lemma-low jacobi 1}, it suffices to show that
\begin{equation*}
\frac{\eta(t,r)}{r}\geq C(T)^{-1},\quad \eta_r(t,r) \geq C(T)^{-1} 
\qquad\,\, \text{for all $(t,r)\in [0,T]\times [0,\frac{1}{2}]$}.    
\end{equation*}

To obtain the lower bound for $\frac{\eta}{r}$, we see from Lemma \ref{lemma-bound depth}, \eqref{eq:eta}, and $\rho_0^\beta\sim 1-r$ that
\begin{equation}\label{1'}
(\eta^{2}\eta_r)(t,r)\geq \frac{r^{2}}{C(T)} \qquad \text{for all $(t,r)\in [0,T]\times [0,\frac{1}{2}]$}.
\end{equation}
Then this, together with $\eta|_{r=0}=0$, gives that, for all $(t,r)\in [0,T]\times [0,\frac{1}{2}]$, 
\begin{equation}\label{lower-eta/r}
\frac{\eta^{3}(t,r)}{r^{3}}=\frac{3}{r^{3}}\int_0^r \eta^2\eta_r(t,\tilde r)\,\mathrm{d}\tilde r\geq C(T)^{-1}\frac{3}{r^{3}}\int_0^r \tilde r^2\,\mathrm{d}\tilde r\geq C(T)^{-1}.
\end{equation}

For the lower bound of $\eta_r$, we first let $r\to 0$ in \eqref{lower-eta/r} to derive
\begin{equation}\label{=0}
\eta_r(t,0)\geq C(T)^{-1} \qquad \text{for all $t\in[0,T]$}.   
\end{equation}
Next, assume contrarily that there exist both $T>0$ and a sequence of $\{(t_k,r_k)\}_{k=1}^\infty\subset[0,T]\times [0,\frac{1}{2}]$ such that $\eta_r(t_k,r_k)\to 0$ as $k\to\infty$. Then Lemma \ref{lemma-low jacobi 1} implies that $r_k\to 0$. Hence, there exists a subsequence $\{(t_{k_\ell},r_{k_\ell})\}_{\ell=1}^\infty$ such that $(t_{k_\ell},r_{k_\ell})\to(t_0,0)$ for some $t_0\in [0,T]$ as $\ell\to\infty$, which leads to $\eta_r(t_0,0)=0$. This contradicts to \eqref{=0}, and thus yields that $\eta_r$ admits a uniform lower bound in $[0,T]\times [0,\frac{1}{2}]$. 
\end{proof}

\section{Global  Uniform Upper Bounds of \texorpdfstring{$(\eta_r,\frac{\eta}{r})$}{}}\label{Section-etarupper}

This section is devoted to establishing the global uniform upper bounds of $(\eta_r,\frac{\eta}{r})$.  

\subsection{Global Uniform Estimates of the Effective Velocity}\label{Section-effectivevelocity}
In the first part, we establish the  uniform estimates for $V$. We denote $\beta=\gamma-1$ and $p^*$ the constant defined  in \eqref{p+}.  

\subsubsection{Boundedness of the effective velocity away from the origin}
\begin{Lemma}\label{lemma-refine u Lp}
For any $p\in (0,p_{*})$, $\iota\in \big(\frac{\beta+1}{\beta}\frac{1}{p_{*}}-\frac{1}{p},\infty\big)$, and $a\in (0,1)$, there exists a positive constant $C(p,\iota,a,T)$ such that
\begin{equation*}
\big|\chi_{a}^\sharp \rho_0^{\iota\beta} U(t)\big|_p\leq C(p,\iota,a,T) \qquad \text{for all $t\in [0,T]$},
\end{equation*}
where  $\chi_{a}^\sharp$ denotes the characteristic function on $(a,1]$ for $a\in (0,1)$
{\rm  (}see {\rm \S \ref{othernotation})}.

\end{Lemma}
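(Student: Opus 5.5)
On $(a,1]$ the weights simplify. By Lemma~\ref{lemma-lower bound jacobi}, $\eta_r$ and $\frac{\eta}{r}$ are bounded below on $[0,T]\times\bar I$. On $[a,1]$ we also have $r\ge a$ and $\rho_0^\beta\sim 1-r$. So the weight $r^2\rho_0$ in Lemma~\ref{lp-uv} is comparable there to $\rho_0=(\rho_0^{\beta})^{1/\beta}$. The plan is to interpolate between this weighted $L^{q}$ bound for $U$ (for $q$ close to $p_*$) and the extra control near the boundary coming from the dissipation.

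\textbf{Step 1 (easy case).} Fix $q\in(p,p_*)$. H\"older's inequality on $(a,1]$ gives
\begin{equation*}
\big|\chi_a^\sharp\rho_0^{\iota\beta}U\big|_p\le \big|\chi_a^\sharp\rho_0^{\frac1q}U\big|_q\,\big|\chi_a^\sharp\rho_0^{\iota\beta-\frac1q}\big|_{\frac{pq}{q-p}}.
\end{equation*}
The first factor is at most $C(a)\,|(r^2\rho_0)^{\frac1q}U|_q\le C(q,a,T)$ by Lemma~\ref{lp-uv}. The second factor is finite as soon as
\begin{equation*}
\Big(\iota\beta-\frac1q\Big)\frac{pq}{q-p}\cdot\frac1\beta>-1,
\end{equation*}
which is equivalent to $\iota>\frac{1}{\beta q}-\frac1p+\frac1q$. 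Letting $q\uparrow p_*$, this threshold tends to $\frac{\beta+1}{\beta}\frac1{p_*}-\frac1p$, which is exactly the condition on $\iota$ in the statement. So for each admissible $\iota$ we can pick $q<p_*$ close enough to $p_*$, and the estimate follows. Since $\rho_0^{\iota\beta}$ is bounded, it suffices to treat $\iota$ near the threshold. For $p<2$ the same argument applies.

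\textbf{Step 2 (if the threshold needs a pointwise gain).} If the H\"older route above turns out to be too lossy, I would add a pointwise control near $r=1$. Writing $U(t,r)=U(t,1)-\int_r^1U_r$, one can bound $U_r$ through $\eta_r D_\eta U$. The $L^2_t$ dissipation $\int_0^t|(\eta^2\eta_r\varrho^\delta)^{1/2}|U|^{\frac{p-2}{2}}\mathscr D_\eta U|_2^2\,\mathrm{d}s$ from Lemma~\ref{lp-uv} gives integrability of $\rho_0^{\delta/2}D_\eta(|U|^{p/2})$ in time. This would be combined with the Hardy inequality in $1-r$, using $\varrho\sim\rho_0\eta_r^{-1}$ on $[a,1]$. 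This route only yields integrated-in-time bounds, though. The pointwise-in-time statement should therefore come from Step~1, which uses only the $L^\infty_t$ part of Lemma~\ref{lp-uv}.

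\textbf{Main obstacle.} The only delicate point is the exponent bookkeeping in Step~1. One must check that the H\"older conjugate exponent gives integrability of $(1-r)^{(\iota\beta-\frac1q)\frac{pq}{(q-p)\beta}}$ precisely when $\iota$ exceeds a quantity that converges to the stated threshold as $q\to p_*$. One must also confirm that the constant $C(q,a,T)$ from Lemma~\ref{lp-uv} is finite for the chosen $q<p_*$. Both checks reduce to the strict inequality in the hypothesis on $\iota$.
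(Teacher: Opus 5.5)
Your Step 1 is correct and is essentially the paper's argument. The paper splits $\rho_0^{\iota\beta p}|U|^p=\rho_0^{-\beta+\varepsilon}\cdot\rho_0^{(\iota p+1)\beta-\varepsilon}|U|^p$ and applies H\"older with the $L^\infty_t$ bound of Lemma~\ref{lp-uv} at $q=\frac{p}{(\iota p+1)\beta-\varepsilon}\in(2,p_*)$; this is your choice of $q$ reparametrized by $\varepsilon$, and the nonemptiness of the admissible $\varepsilon$-range is exactly your threshold $\iota>\frac{\beta+1}{\beta q}-\frac1p$ as $q\uparrow p_*$. Step 2 is unnecessary, and you only need to note that $q\ge 2$ so that Lemma~\ref{lp-uv} applies, which is automatic for $q$ near $p_*>7$.
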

\begin{proof}
Let $p\in (0,p_{*})$, $\iota\in \big(\frac{\beta+1}{\beta}\frac{1}{p_{*}}-\frac{1}{p},\infty\big)$, and  $a\in (0,1)$. Set $\varepsilon$ be a fixed constant satisfying 
\begin{equation*}
\max\Big\{0,(\iota p+1)\beta-1,(\iota p+1)\beta-\frac{p}{2},\frac{(\iota p+1)\beta^2}{\beta+1}\Big\}<\varepsilon<(\iota p+1)\beta-\frac{p}{p_{*}}.
\end{equation*}
Note that $\varepsilon$ is well-defined under the above constraint and satisfies
\begin{equation*}
0<(\iota p+1)\beta-\varepsilon<1,\qquad 2<\frac{p}{(\iota p+1)\beta-\varepsilon}<p_{*}, \qquad \text{and} \ \ \frac{-\beta+\varepsilon}{1+\varepsilon-(\iota p+1)\beta}>-\beta.
\end{equation*}
Hence, it follows from Lemma \ref{lp-uv}, the fact that $\rho_0^\beta\sim 1-r$, and the H\"older inequality that
\begin{align*}
|\chi_{a}^\sharp \rho_0^{\iota\beta} U|_p^p&=\int_a^1 \rho_0^{\iota\beta p} |U|^p\,\mathrm{d}r=\int_a^1 \rho_0^{-\beta+\varepsilon} (\rho_0^{(\iota p+1)\beta-\varepsilon} |U|^p)\,\mathrm{d}r\\
&\leq \Big(\int_a^1 \rho_0^\frac{-\beta+\varepsilon}{1+\varepsilon-(\iota p+1)\beta}\,\mathrm{d}r\Big)^{1+\varepsilon-(\iota p+1)\beta}\Big(\int_a^1 \rho_0 |U|^\frac{p}{(\iota p+1)\beta-\epsilon}\,\mathrm{d}r\Big)^{(\iota p+1)\beta-\varepsilon}
\leq C(p,\iota,a,T).
\end{align*}

\end{proof}

\begin{Lemma}\label{lemma-v Lp ex}
For any $p\in [2,p_{*})$, $\iota\in \big(\frac{p-1}{p}+\frac{1-\delta}{\beta},\infty\big)$, and $a\in (0,1)$, there exist  positive constants $C(p,\iota,a,T)$ and $C(T)$ such that 
\begin{equation*}
\big|\chi_{a}^\sharp\rho_0^{\iota\beta}V(t)\big|_p\leq C(p,\iota,a,T), \quad |D_\eta\Phi|_\infty\leq C(T) \qquad \text{for all }t\in[0,T].
\end{equation*}
\end{Lemma}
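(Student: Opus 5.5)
The bound on $D_\eta\Phi$ comes first, since it feeds into the $V$ estimate. By \eqref{D Phi expression}, $0\le D_\eta\Phi\le 4\pi G\,\eta^{-2}\int_0^r \hat r^2\rho_0\,\mathrm{d}\hat r$. Near the origin I use $\int_0^r\hat r^2\rho_0\le C_0r^3$ together with $\eta\ge C(T)^{-1}r$ from Lemma \ref{lemma-lower bound jacobi}; this gives $D_\eta\Phi\le C(T)r\le C(T)$. Away from the origin, $\eta\ge C(T)^{-1}$ by the same lemma, and the mass is bounded by $C_0$. Together these give $|D_\eta\Phi|_\infty\le C(T)$.

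For $V$ I would use the explicit representation \eqref{V-solution} from Proposition \ref{lemma-V expression}. Since the exponential factors lie in $(0,1]$, it gives, pointwise in $r$,
\begin{equation*}
|V(t,r)|\le |v_0(r)|+\frac{A\gamma}{2a_1\delta}\int_0^t\varrho^{\gamma-\delta}|U|\,\mathrm{d}\tau+C(T)\,t .
\end{equation*}
Multiplying by $\chi_a^\sharp\rho_0^{\iota\beta}$ and taking $L^p$ norms (Minkowski in $\tau$), three terms need to be controlled.

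\textbf{The $v_0$ term.} Here $v_0=u_0+\frac{2a_1\delta}{\delta-1}(\rho_0^{\delta-1})_r$, and $(\rho_0^{\delta-1})_r=\frac{\delta-1}{\beta}\rho_0^{\delta-1-\beta}(\rho_0^\beta)_r$. Since $(\rho_0^\beta)_r$ is bounded near $r=1$ (from \eqref{distance-la} and $H^3$), the integrand is dominated by $\rho_0^{(\iota-1)\beta+\delta-1}\sim(1-r)^{\iota-1+(\delta-1)/\beta}$. This is in $L^p(a,1)$ iff $p(\iota-1+\frac{\delta-1}{\beta})>-1$, i.e. $\iota>\frac{p-1}{p}+\frac{1-\delta}{\beta}$, which is exactly the hypothesis. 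The $u_0$ part is controlled by $\mathcal{E}(0,U)$ and Sobolev embedding, since $\rho_0^{\delta/2}(u_0)_r\in L^2$ and $\delta<1$ give $u_0\in L^\infty(a,1)$.

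\textbf{The $D_\eta\Phi$ term.} It is bounded by $C(T)|\rho_0^{\iota\beta}|_{L^p(a,1)}\le C(T)$.

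\textbf{The $U$ term.} Using Lemma \ref{lemma-bound depth} and $\eta_r,\eta/r\ge C(T)^{-1}$, from \eqref{eq:eta} I get $\varrho\le C(T)\rho_0$ on $(a,1]$. Hence $\varrho^{\gamma-\delta}\le C(T)\rho_0^{\gamma-\delta}$, and it suffices to bound $|\chi_a^\sharp\rho_0^{(\iota+\frac{\gamma-\delta}{\beta})\beta}U|_p$ by Lemma \ref{lemma-refine u Lp}. That lemma requires $\iota+\frac{\gamma-\delta}{\beta}>\frac{\beta+1}{\beta}\frac{1}{p_*}-\frac1p$. This holds: $\frac{\gamma-\delta}{\beta}=1+\frac{1-\delta}{\beta}$, so the left side exceeds $2-\frac1p+\frac{2(1-\delta)}{\beta}$. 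Moreover $\frac{\beta+1}{\beta p_*}<\frac{1}{p_*}+\frac{1}{\beta p_*}<1+\frac{1}{\beta p_*}$, and with $p_*>7$ the last term is dominated by $1+\frac{2(1-\delta)}{\beta}$ only after some care. I expect this index bookkeeping to be the one step needing attention. If it fails, I would fall back on $|U|\le$ the $L^{p}$ bound of Lemma \ref{lp-uv} with weight $\rho_0^{1/p}$, since $\rho_0^{\gamma-\delta}$ is far more than $\rho_0^{1/p}$ for large $p$.

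Integrating in $t\le T$ then yields the claim.
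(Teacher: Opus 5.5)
This is essentially the paper's argument: the representation \eqref{V-solution} with exponentials in $(0,1]$, Minkowski in time, and the Jacobian lower bounds to replace $\varrho^{\gamma-\delta}$ by $C(T)\rho_0^{\gamma-\delta}$. The $U$-term is then handled by Lemma \ref{lemma-refine u Lp}, and $D_\eta\Phi\le C(T)r^{-2}\int_0^r\hat r^2\rho_0\,\mathrm{d}\hat r\le C(T)$.

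The index check you hesitated over closes at once. You only need $2>\frac{\beta+1}{\beta p_*}$, and the right-hand side is below $\frac47$ because $p_*>7$ and $\beta>\frac13$, so no fallback is needed.

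One justification is wrong and should be replaced. $\rho_0^{\frac{\delta}{2}}(u_0)_r\in L^2$ does not give $u_0\in L^\infty(a,1)$, because $\rho_0^\delta\sim(1-r)^{\delta/\beta}$ and $\delta/\beta>1$ is allowed. Indeed it is forced when $\delta\le\frac45$, and then $\log(1-r)$ is a counterexample. Instead, use the top-order exterior part of $\mathcal{E}(0,U)$ together with Lemma \ref{hardy-inequality}. This gives $(1-r)^{\frac12-\varepsilon_0}(u_0)_{rr}\in L^2(\frac12,1)$, hence $(u_0)_{rr}\in L^1$ and $u_0\in C^1[\frac12,1]$. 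Alternatively, take $U\in C([0,T];C(\bar I))$ directly from Definition \ref{definition-lag}.
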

\begin{proof}
Let $p\in [2,p_{*})$, $\iota\in (\frac{p-1}{p}+\frac{1-\delta}{\beta},\infty)$, and $a\in (0,1)$. First, multiplying \eqref{V-solution} by $\chi_{a}^\sharp\rho_0^{\iota\beta}$ and taking the $L^p$-norm of the resulting equality, we obtain from \eqref{eq:eta} and the Minkowski integral inequality that, for all $t\in[0,T]$,
\begin{equation*}
|\chi_{a}^\sharp\rho_0^{\iota\beta}V|_p\leq |\chi_{a}^\sharp\rho_0^{\iota\beta}v_0|_p+\frac{A\gamma}{2a_{1}\delta}\int_0^t \Big|\chi_{a}^\sharp\frac{r^{2(\gamma-\delta)}\rho_0^{\iota\beta+\gamma-\delta}U}{(\eta^2\eta_r)^{\gamma-\delta}}\Big|_p\,\mathrm{d}s+\int_0^t \big| \chi_{a}^\sharp \rho_0^{\iota\beta}D_\eta\Phi \big|_p \,\mathrm{d}s,
\end{equation*}
which, along with Lemmas \ref{lemma-lower bound jacobi} and \ref{lemma-refine u Lp}, 
implies 
\begin{equation}\label{lemma6.2 eq}
\begin{aligned}
    |\chi_{a}^\sharp\rho_0^{\iota\beta}V|_p&\leq |\chi_{a}^\sharp\rho_0^{\iota\beta}v_0|_p+C(p,a,T)\int_0^t \big|\chi_a^\sharp\rho_0^{\iota\beta+\gamma-\delta}U\big|_p+\big| \chi_{a}^\sharp \rho_0^{\iota\beta}D_\eta\Phi \big|_\infty \,\mathrm{d}s:=\sum_{i=1}^{2}J_{i}.
\end{aligned}
\end{equation}
Here, for the initial data $J_{1}$, since $\rho_0^\beta\sim 1-r$ and $\iota-1+\frac{\delta-1}{\beta}>-\frac{1}{p}$, we have
\begin{equation}\label{lemma6.2 I16}
\begin{aligned}
J_{1}&\leq C(p)\big(\big|\chi_{a}^\sharp\rho_0^{\iota\beta}u_0\big|_p+\big|\chi_{a}^\sharp\rho_0^{(\iota-1)\beta+\delta-1}(\rho_0^\beta)_r\big|_p\big)\\
&\leq C(p)\big(|\rho_0^{\beta}|_\infty^\iota |u_0|_\infty+\big|(1-r)^{\iota-1+\frac{\delta-1}{\beta}}\big|_p|(\rho_0^\beta)_r|_\infty\big)\leq C(p,\iota).
\end{aligned}    
\end{equation}
For $J_{2}$, it follows from \eqref{D Phi expression} and Lemmas \ref{lemma-lower bound jacobi} and \ref{lemma-refine u Lp} that
\begin{align*}
|D_\eta\Phi|&=\frac{4\pi G}{\eta^2}\int_0^r\hat{r}^2\rho_0\,\mathrm{d}\hat{r}\leq C(T)\frac{1}{r^2}\int_0^r\hat{r}^2\rho_0\,\mathrm{d}\hat{r}\leq C(T),\\
    J_{2}&\leq  C(p,a,T)+\int_0^t |\rho_0^{\beta}|^\iota_\infty  \big| \chi_{a}^\sharp   D_\eta\Phi \big|_\infty \,\mathrm{d}s\leq C(p,a,T),
    \end{align*}
which, along with 
\eqref{lemma6.2 eq}--\eqref{lemma6.2 I16}, yields the desired estimates. 

\end{proof}

\begin{Lemma}\label{lemma-v Linfty ex}
For any $a\in (0,1)$, there exists a constant $C(a,T)>0$ such that 
\begin{equation*}   
\big|\chi_{a}^\sharp\rho_0^{\beta+1-\delta}V(t)\big|_\infty\leq C(a,T) \qquad \text{for all $t\in [0,T]$}.
\end{equation*}
\end{Lemma}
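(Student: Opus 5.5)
We use the explicit representation \eqref{V-solution} from Proposition \ref{lemma-V expression}, multiply it by $\chi_{a}^\sharp\rho_0^{\beta+1-\delta}$, and estimate each of its three terms in $L^\infty$. Since the exponential factors are bounded by $1$, the first term is bounded by $|\chi_a^\sharp\rho_0^{\beta+1-\delta}v_0|_\infty$. Here $v_0=u_0+\frac{2a_1\delta}{\delta-1}(\rho_0^{\delta-1})_r$ and
\begin{equation*}
(\rho_0^{\delta-1})_r=\frac{\delta-1}{\beta}\rho_0^{\delta-1-\beta}(\rho_0^\beta)_r,
\end{equation*}
so the weight exactly cancels the singular factor. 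Thus $\rho_0^{\beta+1-\delta}|(\rho_0^{\delta-1})_r|\le C_0|(\rho_0^\beta)_r|\,\rho_0^{0}$, which is bounded on $[a,1]$ because $\rho_0^\beta\in H^3$ away from the origin (so $(\rho_0^\beta)_r$ is bounded), and $u_0$ is bounded there. The gravity term is handled with Lemma \ref{lemma-v Lp ex}: since $|D_\eta\Phi|_\infty\le C(T)$, its contribution is at most $tC(T)|\rho_0|_\infty^{\beta+1-\delta}\le C(T)$.

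The main term is the middle one,
\begin{equation*}
\frac{A\gamma}{2a_1\delta}\int_0^t\varrho^{\gamma-\delta}U\exp\Big(-\frac{A\gamma}{2a_1\delta}\int_\tau^t\varrho^{\gamma-\delta}\,\ds\Big)\,\mathrm{d}\tau .
\end{equation*}
Lemma \ref{lemma-lower bound jacobi} gives lower bounds on $\eta,\eta_r$, so on $[a,1]$ we have $\varrho^{\gamma-\delta}\le C(a,T)\rho_0^{\gamma-\delta}$. The term is therefore bounded by $C\rho_0^{\beta+1-\delta+\gamma-\delta}\int_0^t|U|\,\mathrm{d}\tau$, and we need a pointwise bound on $\rho_0^{\kappa}U$ for a suitable positive power $\kappa$. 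We do not have $L^\infty$ control on $U$ directly. We obtain it by writing $U=V-\frac{2a_1\delta}{\delta-1}D_\eta(\varrho^{\delta-1})$. The second piece, $D_\eta(\varrho^{\delta-1})$, expands into $\varrho^{\delta-1}$ times $D_\eta\log\varrho$, where
\begin{equation*}
\log\varrho=\log(r^2\rho_0)-\log(\eta^2\eta_r),
\end{equation*}
and this involves $\eta_{rr}$, for which we have no control. So this route is circular, and we instead close a Gronwall argument for $W(t):=|\chi_a^\sharp\rho_0^{\beta+1-\delta}V(t)|_\infty$.

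To do this, we estimate $|\chi_a^\sharp\rho_0^{\kappa}U|_\infty$ by a 1-D Sobolev inequality,
\begin{equation*}
|f|_\infty\le C\big(|f|_1+|f_r|_1\big)\qquad\text{with } f=\chi_{a'}^\sharp\rho_0^{\kappa}U,\ a'<a .
\end{equation*}
The derivative produces two kinds of terms. The first is $\rho_0^{\kappa}U_r=\rho_0^\kappa\eta_r D_\eta U$. Because $\eta_r$ is only bounded from below at this stage, we must avoid upper bounds on $\eta_r$. We therefore keep the time integral outside and use $\int_0^t|\rho_0^\kappa U_r|_1\,\mathrm{d}s\le\int_0^t|(\eta^2\eta_r\varrho^\delta)^{1/2}D_\eta U|_2\,|\rho_0^{2\kappa-\delta}\eta_r|_1^{1/2}\,\mathrm{d}s$ (up to the power of $\eta$). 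Here $|\eta_r|_1$ on $[a',1]$ equals $\eta(t,1)-\eta(t,a')\le R(t)$, which is bounded by $1+\int_0^t|U(s,1)|\,\mathrm{d}s$. This is itself of the same type, so it feeds back into the Gronwall scheme. The second kind of term, $(\rho_0^\kappa)_rU$, is controlled by Lemma \ref{lemma-refine u Lp} with $\iota$ chosen admissibly, using $\kappa-\beta$ in place of $\iota\beta$. Then Lemma \ref{lemma-basic energy} supplies the $L^2_t$ bound on the dissipation, and Cauchy--Schwarz in time gives $\int_0^t|\chi_a^\sharp\rho_0^{\kappa}U|_\infty\,\mathrm{d}s\le C(a,T)$, provided $\kappa$ is large enough.

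The available weight $\beta+1-\delta+\gamma-\delta=2\beta+2-2\delta$ is comfortably larger than what is needed for $\kappa$, since $\delta<1$. The step I expect to be the main obstacle is the upper control of $\eta_r$ implicit in the $U_r$ term, because the upper bounds for $\eta_r$ are only established later, in \S\ref{subsec-4.2}. If the $R(t)$ trick above fails, the fallback is to work with the weighted quantity $\rho_0^{\beta+1-\delta}V$ and exploit its damping. Explicitly, $V_t+c\varrho^{\gamma-\delta}V=c\varrho^{\gamma-\delta}U-D_\eta\Phi$ with $c=\frac{A\gamma}{2a_1\delta}$. In this equation we bound $\rho_0^{\beta+1-\delta}\varrho^{\gamma-\delta}U$ using the interpolation of $U$ between the weighted $L^p$ bound of Lemma \ref{lemma-refine u Lp} and the $L^p$ bound of $V$ from Lemma \ref{lemma-v Lp ex}, at a high $p<p_*$.
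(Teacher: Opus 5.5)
Your handling of the initial-data and gravity terms in \eqref{V-solution} is fine. The middle term, however, has a genuine gap, created by your first move of replacing $\varrho^{\gamma-\delta}$ by $C(a,T)\rho_0^{\gamma-\delta}$.

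After that replacement you must bound $\int_0^t|\chi_a^\sharp\rho_0^\kappa U|_\infty\,\mathrm{d}s$. The $U_r$-term from the Sobolev inequality can only be paired with the dissipation of Lemma~\ref{lemma-basic energy}, whose weight contains $\varrho^{\delta}$. By \eqref{eq:eta} this gives
\begin{equation*}
\rho_0^\kappa\eta_r|D_\eta U|=(\eta^2\eta_r\varrho^\delta)^{\frac{1}{2}}|D_\eta U|\cdot\rho_0^\kappa\eta^{-1}\eta_r^{\frac{1}{2}}\varrho^{-\frac{\delta}{2}},\qquad \big(\rho_0^\kappa\eta^{-1}\eta_r^{\frac{1}{2}}\varrho^{-\frac{\delta}{2}}\big)^2=r^{-2\delta}\rho_0^{2\kappa-\delta}\eta^{2\delta-2}\eta_r^{1+\delta}.
\end{equation*}
The power of $\eta_r$ is $1+\delta$, not $1$, so the bound $|\eta_r|_{L^1(a',1)}\le R(t)$ does not control it. No choice of $\kappa$ helps either: the obstruction is large $\eta_r$, i.e.\ small $\varrho$, where the dissipation degenerates, and the time-independent weight $\rho_0$ cannot see this.

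The feedback $R(t)\le 1+\int_0^t|U(s,1)|\,\mathrm{d}s$ also cannot close. Every quantity you control carries the weight $\rho_0^\kappa$, which vanishes at $r=1$, so it says nothing about $U(s,1)$.

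The fallback does not repair this. Weighted $L^p$ bounds on $U$ and $V$ contain no derivative and give no $L^\infty$ control in $r$. The damping in \eqref{V-solution} only yields $|\rho_0^{\beta+1-\delta}V(t,r)|\le C+\sup_{\tau\le t}|\rho_0^{\beta+1-\delta}U(\tau,r)|$. That requires an $L^\infty_{t,r}$ bound on weighted $U$, which you do not have.

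The missing idea is to keep $\varrho^{\gamma-\delta}$ and let it absorb $\eta_r$ through the identity $\eta_r\varrho=r^2\rho_0/\eta^2$. The paper applies the $W^{1,1}\hookrightarrow L^\infty$ embedding to $\big(\chi_a^\sharp\rho_0^{\beta+1-\delta}\varrho^{\gamma-\delta}|U|\big)^k$, with an integer $k$ satisfying $\max\{2,\frac{1}{\gamma-\delta}\}<k<p_*-1$. The power $k$ supplies enough $\varrho$; with $k=1$ one would need $2\gamma\ge 3\delta+1$, which can fail in this parameter range. The derivative then produces three terms:
\begin{itemize}
\item The $r$-derivative of $\varrho^{(\gamma-\delta)k}$ is rewritten via \eqref{v-expression} as a multiple of $\varrho^{(\gamma-\delta)k+1-\delta}\eta_r(V-U)$. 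Absorbing $\eta_r\varrho$ leaves $\varrho^{(\gamma-\delta)k-\delta}$ with a positive exponent, and the term is controlled by Lemmas~\ref{lemma-v Lp ex} and \ref{lemma-refine u Lp}.
\item In the $U_r$-term, Cauchy--Schwarz against the dissipation leaves $\eta_r^{1/2}\varrho^{(\gamma-\delta)k-\frac{\delta}{2}}=r\rho_0^{\frac{1}{2}}\eta^{-1}\varrho^{(\gamma-\delta)k-\frac{1+\delta}{2}}$. This is again a nonnegative power of $\varrho$, since $k(\gamma-\delta)>1$.
\item The derivative of the $\rho_0$-weight is handled by Lemma~\ref{lemma-refine u Lp}.
\end{itemize}
Together these give $|\chi_a^\sharp\rho_0^{\beta+1-\delta}\varrho^{\gamma-\delta}U|_\infty\le C(a,T)\big(1+|(\eta^2\eta_r\varrho^\delta)^{\frac{1}{2}}D_\eta U|_2\big)^{\frac{1}{k}}$, which is integrable in time by Lemma~\ref{lemma-basic energy}. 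Only the lower bounds of Lemma~\ref{lemma-lower bound jacobi} and the density bound of Lemma~\ref{lemma-bound depth} are used, and no Gronwall argument in $W(t)$ is needed.
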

\begin{proof}
We divide the proof into the following two steps.

\smallskip
\textbf{1.} Let $k\in \NN^*$ be a fixed constant such that 
  $ \max\big\{2,\frac{1}{\gamma-\delta}\big\}<  k< p_{*}-1$.
It follows from  \eqref{v-expression} and Lemma \ref{sobolev-embedding} that
\begin{equation}\label{pre-g4-g7}
\begin{aligned}
\big|\chi_{a}^\sharp \rho_0^{\beta+1-\delta}\varrho^{\gamma-\delta}U\big|_\infty^k
&\leq C(a)\big|\chi_a^\sharp(\rho_0^{k(\beta+1-\delta)}\varrho^{(\gamma-\delta)k}|U|^k)_r\big|_1\\
&\leq C(a)\big|\chi_a^\sharp\rho_0^{(k-1)\beta+k(1-\delta)}(\rho_0^\beta)_r\varrho^{(\gamma-\delta)k}U^k\big|_1\\
&\quad+C(a)\big|\chi_a^\sharp\rho_0^{k(\beta+1-\delta)}\varrho^{(\gamma-\delta)k+1-\delta}\eta_rU^k(V,U)\big|_1\\
&\quad +C(a)\big|\chi_a^\sharp\rho_0^{k(\beta+1-\delta)}\varrho^{(\gamma-\delta)k}\eta_rU^{k-1}D_\eta U\big|_1:=\sum_{i=3}^{5} J_i.
\end{aligned}    
\end{equation}

For $J_3$--$J_5$, it follows from \eqref{eq:eta}, Lemmas \ref{lemma-bound depth} and \ref{lemma-refine u Lp}--\ref{lemma-v Lp ex}, and the H\"older inequality that
\begin{align}
J_{3}&\leq C_0|\varrho|_\infty^{(\gamma-\delta)k}|(\rho_0^\beta)_r|_\infty\big|\chi_{a}^\sharp\rho_0^{(1-\frac{1}{k}+\frac{1-\delta}{\beta})\beta}U\big|_k^k\leq C(a,T),\nonumber\\
J_4&\leq C_0\big|\frac{r^2}{\eta^2}\big|_\infty|\varrho|_\infty^{(\gamma-\delta)k-\delta}\big(|\chi_{a}^\sharp \rho_0^{\gamma-\delta}V|_2\big|\chi_{a}^\sharp\rho_0^\frac{k(\gamma-\delta)+\delta-\beta}{k}U\big|_{2k}^k+\big|\chi_{a}^\sharp \rho_0^\frac{k\beta+1}{k+1}U\big|_{k+1}^{k+1}\big)\leq C(a,T),\nonumber\\
J_5&\leq C(a) 
\Big|\frac{r^{3-\delta}}{\eta^{3-\delta}\sqrt{\eta_r}^{1-\delta}}\Big|_\infty|\varrho|_\infty^{(\gamma-\delta)k-1}\big|\chi_{a}^\sharp \rho_0^{k(\beta+1-\delta)+1-\frac{\delta}{2}} U^{k-1} \big|_2\big|(\eta^2\eta_r\varrho^\delta)^\frac{1}{2}D_\eta U\big|_2 \label{G4-G7}\\
&\leq C(a,T)\big|\chi_{a}^\sharp\rho_0^{\frac{2k(\beta+1-\delta)+2-\delta}{2k-2}}U \big|_{2(k-1)}^{k-1}\big|(\eta^2\eta_r\varrho^\delta)^\frac{1}{2}D_\eta U\big|_2\leq C(a,T)\big|(\eta^2\eta_r\varrho^\delta)^\frac{1}{2}D_\eta U\big|_2.\nonumber
\end{align}    
Substituting \eqref{G4-G7} into \eqref{pre-g4-g7} leads to
\begin{equation}\label{abb}
\big|\chi_{a}^\sharp \rho_0^{\beta+1-\delta}\varrho^{\gamma-\delta}U\big|_\infty^k\leq C(a,T)\big(\big|(\eta^2\eta_r\varrho^\delta)^\frac{1}{2}D_\eta U\big|_2+1\big).    
\end{equation}

\textbf{2.} Next, multiplying \eqref{V-solution} by $\chi_{a}^\sharp\rho_0^{\beta}$ and taking the $L^\infty$-norm of the resulting equality, we obtain from \eqref{abb}, Lemmas \ref{lemma-basic energy} and \ref{lemma-v Lp ex}, and the Young inequality that
\begin{equation}\label{v-infty-integral}
\begin{aligned}
|\chi_{a}^\sharp \rho_0^{\beta+1-\delta}V|_\infty&\leq |\chi_{a}^\sharp \rho_0^{\beta+1-\delta}v_0|_\infty+\int_0^t \frac{A\gamma}{2a_{1}\delta}\big|\chi_{a}^\sharp \rho_0^{\beta+1-\delta}\varrho^{\gamma-\delta}U\big|_\infty+\big|\chi_{a}^\sharp \rho_0^{\beta+1-\delta}D_\eta\Phi\big|_\infty\,\mathrm{d}s\\
&\leq C(T)+C(a,T)\int_0^t \big(\big|(\eta^2\eta_r\varrho^\delta)^\frac{1}{2}D_\eta U\big|_2^\frac{1}{k}+1\big)\,\mathrm{d}s\\
&\leq C(a,T)+C(a,T)\int_0^t \big|(\eta^2\eta_r\varrho^\delta)^\frac{1}{2}D_\eta U\big|_2^2\,\mathrm{d}s \leq C(a,T),
\end{aligned}
\end{equation}
where the initial data can be controlled by
\begin{equation*}
|\chi_{a}^\sharp \rho_0^{\beta+1-\delta}v_0|_\infty\leq C_0\big|(\rho_0^{\beta+1-\delta}u_0,(\rho_0^\beta)_r)\big|_\infty \leq C_0\big(|\rho_0^{\beta}|_\infty^\frac{\beta-\delta+1}{\beta} |u_0|_\infty+ |(\rho_0^\beta)_r|_\infty\big)\leq C_0.    
\end{equation*}
\end{proof}

\subsubsection{Boundedness of the effective velocity near the symmetric center}

\begin{Lemma}\label{new-u-lp}
For all $p\in [2,p_{*})$, there exists a constant $C(p,T)>0$ such that, for all $t\in [0,T]$,
\begin{equation}\label{gujil44}
\big|(\zeta^2\eta_r\varrho)^\frac{1}{p}U(t)\big|_p^p+\int_0^t \big|(\zeta^2\eta_r\varrho^\delta)^\frac{1}{2}|U|^\frac{p-2}{2}\mathscr{D}_\eta U\big|_2^2\,\mathrm{d}s \leq C(p,T)\big(\sup_{s\in[0,t]}|\zeta V|_\infty^2+1\big).
\end{equation}
\end{Lemma}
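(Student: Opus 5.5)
We propose to repeat the $L^p$-energy computation of Lemma \ref{lemma-u Lp-n3}, but with the cut-off weight $\zeta^2$ in place of $r^2\rho_0$. The new weight removes the $\eta^2$ factor near the origin. The pressure term will then be handled through the effective velocity $V$, which explains why $|\zeta V|_\infty$ appears on the right-hand side of \eqref{gujil44}.

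\textbf{Step 1: the weighted identity.} Fix $p\in[2,p_*)$ and multiply $\eqref{eq:VFBP-La-eta}_1$ by $\zeta^2\eta_r|U|^{p-2}U$. Since $\eta_r\varrho=r^2\rho_0/\eta^2$, the time derivative produces $\frac1p\frac{\mathrm d}{\mathrm dt}\int\zeta^2\eta_r\varrho|U|^p$ plus the commutator $-\frac1p\int\zeta^2(\eta_r\varrho)_t|U|^p$. Using $(\eta_r\varrho)_t=-2\eta_r\varrho\,U/\eta$, this commutator is bounded by $C\int\zeta^2\eta_r\varrho|U|^{p-1}|U/\eta|$, and it can be absorbed into the dissipation, as explained below.

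Integrating the viscous terms by parts gives a quadratic form in $(X,Y)=|U|^{\frac{p-2}{2}}\mathscr D_\eta U$. Without the $\eta^2$ weight the coefficients differ from those of $I_8$. We expect this form to remain positive definite for $p<p_*$; if it fails we would retain a portion of $Y^2$ and compensate using the lower bound of $\eta/r$ from Lemma \ref{lemma-lower bound jacobi}. The terms in which derivatives fall on $\zeta$ are supported in $[\frac12,\frac58]$. There $\eta,\eta_r$ are bounded below, $\rho_0\sim1$, and $\varrho$ is bounded by Lemma \ref{lemma-bound depth}, so these terms are controlled by Lemma \ref{lp-uv} together with Young's inequality.

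\textbf{Step 2: the forcing terms.} For the gravity term, note that $|D_\eta\Phi|_\infty\le C(T)$ by Lemma \ref{lemma-v Lp ex}. Hence
\[
\int\zeta^2\eta_r\varrho|D_\eta\Phi||U|^{p-1}\le C(T)\big(1+\big|(\zeta^2\eta_r\varrho)^{\frac1p}U\big|_p^p\big).
\]
For the pressure term we use the relation $A\gamma\varrho^{\gamma-2}D_\eta\varrho=\frac{A\gamma}{2a_1\delta}\varrho^{\gamma-\delta}(V-U)$, which we do not integrate by parts. This yields the term
\[
\frac{A\gamma}{2a_1\delta}\int\zeta^2\eta_r\varrho^{\gamma-\delta+1}\big(|U|^{p-1}|V|-|U|^p\big).
\]
The piece $-|U|^p$ is a good, nonnegative damping term. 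The cross piece is bounded via Young's inequality by
\[
\tfrac12\int\zeta^2\eta_r\varrho^{\gamma-\delta+1}|U|^p+C\int\zeta^2\eta_r\varrho^{\gamma-\delta+1}|V|^p.
\]
The second integral is at most $C|\varrho|_\infty^{\gamma-\delta}|\zeta V|_\infty^{p}\int\zeta\eta_r\varrho$. By Lemma \ref{lemma-bound depth} and $\int\eta_r\varrho\le C(T)$ (which follows from the lower bound $\eta\ge r/C(T)$), this is at most $C(T)|\zeta V|_\infty^p$.

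This bound gives $|\zeta V|_\infty^p$, not the square appearing in \eqref{gujil44}. To reach the stated form we would instead split $|V|\,|U|^{p-1}\le|V|^2|U|^{p-2}+|U|^p$ and then apply Young again with the $|U|^p$ damping. If the exact power cannot be obtained this way, we would accept $|\zeta V|_\infty^p$, which suffices for a Gr\"onwall-type closure later.

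\textbf{Step 3 and main obstacle.} We expect the main difficulty to be the commutator term $\int\zeta^2\eta_r\varrho|U|^{p-1}|U|/\eta$, since the weight $\varrho^\delta$ in the dissipation is weaker than $\varrho$ when $\varrho$ is large. We would estimate it by Cauchy--Schwarz as
\[
\varepsilon\int\zeta^2\eta_r\varrho^\delta|U|^{p-2}\frac{U^2}{\eta^2}+C\int\zeta^2\eta_r\varrho^{2-\delta}|U|^p,
\]
and then use $\varrho^{2-\delta}\le|\varrho|_\infty^{1-\delta}\varrho$, which holds by Lemma \ref{lemma-bound depth}. With the forcing estimates of Step 2, this closes the differential inequality for $\int\zeta^2\eta_r\varrho|U|^p$, and Gr\"onwall's inequality then yields \eqref{gujil44}.
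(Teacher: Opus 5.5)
\textbf{There is a genuine gap in Step~1:} with your bookkeeping, the viscous quadratic form is \emph{not} positive definite on all of $[2,p_*)$. You estimate the commutator $-\frac{2}{p}\zeta^2\eta_r\varrho\frac{U}{\eta}|U|^p$ by itself (your Step~3 bound for it is fine) and integrate the whole term $-4a_1\zeta^2(\varrho^\delta)_r|U|^p/\eta$ by parts. The principal dissipative part is then $2a_1\zeta^2\eta_r\varrho^\delta\,Q(X,Y)$, with $(X,Y)=|U|^{\frac{p-2}{2}}\mathscr D_\eta U$ and
\[
Q=\delta(p-1)X^2-2\big(p(1-\delta)+\delta\big)XY+2Y^2 .
\]
This $Q$ is positive definite only if $(p(1-\delta)+\delta)^2<2\delta(p-1)$. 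At $p=2$ this requires $\delta>3-\sqrt5\approx0.764$; for instance, at $\delta=\frac34$, $p=2$ one has $(2-\delta)^2=\frac{25}{16}>\frac32=2\delta$. So for $\delta\in(\frac{13}{18},3-\sqrt5]$ and $p$ near $2$ the argument breaks down. Interpolating from larger $p$ would recover the first term of \eqref{gujil44}, but not the dissipation integral. Your fallback does not help either. An indefinite form leaves a term $K\int\zeta^2\eta_r\varrho^\delta|U|^{p}\eta^{-2}$ on the right, of the same order as the dissipation. Near $r=0$ this term is not controlled by Lemma~\ref{lp-uv}, whose dissipation carries the extra weight $\eta^2$. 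The lower bound $\eta\ge r/C(T)$ only gives $\eta^{-2}\le Cr^{-2}$, which does not help.

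\textbf{The missing idea (the paper's)} is to integrate only part of $-4a_1UD_\eta(\varrho^\delta)/\eta$ by parts. Take its fraction $\frac1p$ and combine it with the commutator through $\varrho V=\varrho U+2a_1D_\eta(\varrho^\delta)$; this gives $-\frac2p\zeta^2\eta_r\varrho V|U|^p/\eta$. Integrating the remaining fraction $\frac{p-1}{p}$ by parts yields $2a_1(p-1)\zeta^2\eta_r\varrho^\delta\big(\delta X^2-2(1-\delta)XY+\frac2pY^2\big)$. Its discriminant $\frac4p\big((1-\delta)^2p-2\delta\big)$ is negative for every $p<p_*$. The $V$-term is then bounded by
\[
C|\varrho|_\infty^{1-\delta}|\zeta V|_\infty\big|(\eta_r\varrho^\delta)^{\frac1p}U\big|_p^{\frac p2}\Big|\Big(\frac{\zeta^2\eta_r\varrho^\delta}{\eta^2}\Big)^{\frac1p}U\Big|_p^{\frac p2}.
\]
After Young's inequality, $|\zeta V|_\infty^2$ multiplies $|(\eta_r\varrho^\delta)^{1/p}U|_p^p$, which is integrable in time by Lemma~\ref{lp-uv}. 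This term, not the pressure, is the source of $\sup_s|\zeta V|_\infty^2$; the paper simply integrates the pressure by parts.

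Step~2 also has smaller errors:
\begin{itemize}
\item Since $AD_\eta(\varrho^\gamma)=\frac{A\gamma}{2a_1\delta}\varrho^{\gamma-\delta+1}(V-U)$ sits on the left of the momentum equation, the $|U|^p$ piece is a source, not damping. This is harmless by Gr\"onwall because $\varrho\le C(T)$, but your absorption steps must be redone.
\item The bound $\zeta^2|V|^p\le|\zeta V|_\infty^p$ fails for $p>2$ near the edge of $\operatorname{supp}\zeta$.
\item Settling for $\sup|\zeta V|_\infty^p$ proves a weaker statement, which would not close Lemma~\ref{cru3}; that lemma needs sublinear dependence on $\sup|\zeta V|_\infty$.
\end{itemize}
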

\begin{proof}We divide the proof into two steps.

\textbf{1.} Multiplying  $\eqref{eq:VFBP-La-eta}_1$ by $\zeta^2\eta_r|U|^{p-2}U$ gives
\begin{align*}
&\,\frac{1}{p}(\zeta^2\eta_r \varrho|U|^p)_t+2a_{1}(p-1)\zeta^2\eta_r\varrho^\delta J_6\\
&=A(p-1)\zeta^2\eta_r\varrho^\gamma|U|^{p-2}D_\eta U-\zeta^2\eta_r\varrho D_\eta\Phi|U|^{p-2}U-\frac{2}{p}\frac{\zeta^2\eta_r\varrho V|U|^p}{\eta}\\
&\quad -2\zeta \zeta_r\Big( 2a_{1}\delta \varrho^\delta|U|^{p-2}U D_\eta U-4a_{1}(1-\delta)  \frac{\varrho^\delta|U|^p}{\eta}-A\varrho^\gamma|U|^{p-2}U+\frac{4a_{1}}{p}\frac{\varrho^\delta|U|^p}{\eta}\Big)\\
&\quad+\Big(2a_{1}\delta \zeta^2\varrho^\delta|U|^{p-2}U D_\eta U -4a_{1}(1-\delta) \frac{\zeta^2\varrho^\delta|U|^p}{\eta}- A\zeta^2\varrho^\gamma|U|^{p-2}U+\frac{4a_{1}}{p}\frac{\zeta^2\varrho^\delta|U|^p}{\eta} \Big)_r,
\end{align*}
where  $J_6$ is a binary form:
\begin{equation*}
    J_6:=\delta X^2-2(1-\delta)XY+\frac{2}{p} Y^2 \qquad \text{ with }\ (X,Y)=|U|^{\frac{p-2}{2}}\mathscr{D}_\eta U.
\end{equation*}
A direct calculation shows that the discriminant $\mathfrak{D}=\frac{4}{p}\big((1-\delta)^2p-2\delta\big)$ of $J_{6}$ is strictly negative,
which implies that there exists a constant $c_{\delta,p}^*>0$, depending on $(\delta,p)$, such that
\begin{equation}\label{lemma6.4 I22}
    J_6\geq c^*_{\delta,p}(X^2+Y^2)=c^*_{\delta,p}|U|^{p-2}|\mathscr{D}_\eta U|^2.
\end{equation}
Then integrating the above over $I$ leads to
\begin{equation}\label{G8-G10}
\begin{aligned}
&\,\frac{1}{p}\frac{\mathrm{d}}{\mathrm{d}t}\big|(\zeta^2\eta_r \varrho)^\frac{1}{p} U\big|_p^p  +2a_{1}c_{\delta,p}^* \big|(\zeta^2\eta_r\varrho^{\delta })^\frac{1}{2}|U|^\frac{p-2}{2}\mathscr{D}_\eta U\big|_2^2\\
&\leq A(p-1) \int_0^1\zeta^2\varrho^\gamma|U|^{p-2}U_r\,\mathrm{d}r- \frac{2}{p} \int_0^1 \frac{\zeta^2\eta_r \varrho V|U|^p}{\eta}\,\mathrm{d}r-\int_0^1\zeta^2\eta_r\varrho D_\eta\Phi|U|^{p-2}U\,\mathrm{d}r\\
&\quad- 2 \int_0^1 \zeta\zeta_r \Big( 2a_{1}\delta \varrho^\delta|U|^{p-2}U D_\eta U+\frac{4a_{1} }{p} \frac{\varrho^\delta|U|^p}{\eta}\\
&\qquad \qquad \qquad \ \ -4a_{1}(1-\delta)\frac{\varrho^\delta|U|^p}{\eta}-A\varrho^\gamma|U|^{p-2}U\Big)\,\mathrm{d}r:=\sum_{i=7}^{10} J_i.
\end{aligned}
\end{equation}

\textbf{2. Estimates for $J_{7}$--$J_{10}$.} For $J_{7}$, setting 
$0<\varsigma=\frac{1+\delta}{2}-\frac{2(1-\delta)}{p}<1$.
It follows from Lemmas \ref{lemma-near depth} and \ref{lemma-bound depth}, and the H\"older and Young inequalities that
\begin{equation}\label{G8}
\begin{aligned}
J_{7}&
\leq C(p)|\varrho|_\infty^{\gamma-\varsigma}|\zeta\eta_r\varrho^{2\delta-1}|_1^\frac{1}{p}\big|(\zeta^2\eta_r\varrho)^\frac{1}{p} U\big|_p^\frac{p-2}{2}\big|(\zeta^2\eta_r\varrho^\delta)^\frac{1}{2}|U|^\frac{p-2}{2}D_\eta U\big|_2\\
&\leq C(p,T)+\big|(\zeta^2\eta_r\varrho)^\frac{1}{p} U\big|_{p}^{p}+\frac{a_1c_{\delta,p}^*}{4}\big|(\zeta^2\eta_r\varrho^\delta)^\frac{1}{2}|U|^\frac{p-2}{2}D_\eta U\big|_2^2.
\end{aligned}
\end{equation}

For $J_{8}$, it follows from \eqref{eq:eta}, Lemma \ref{lemma-bound depth}, and the H\"older and Young inequalities that
\begin{equation}\label{4100}
\begin{aligned}
J_{8}& \leq C(p)|\varrho^{1-\delta}|_\infty|\zeta V|_\infty \big|(\eta_r\varrho^\delta)^\frac{1}{p}U\big|_p^\frac{p}{2} \Big|\big(\frac{\zeta^2\eta_r\varrho^\delta}{\eta^2}\big)^\frac{1}{p}U\Big|_p^\frac{p}{2}\\
&\leq C(p,T)|\zeta V|_\infty^2 \big|(\eta_r\varrho^\delta)^\frac{1}{p}U\big|_p^p+\frac{a_{1}c_{\delta,p}^*}{4} \Big|\big(\frac{\zeta^2\eta_r\varrho^\delta}{\eta^2}\big)^\frac{1}{p}U\Big|_p^p.\nonumber
\end{aligned}
\end{equation}

Similarly, for $J_{9}$-$J_{10}$, we obtain from \eqref{eq:eta}, Lemmas  \ref{lemma-bound depth}--\ref{lemma-lower bound jacobi}, and \ref{lemma-v Lp ex} that
\begin{equation}\label{G10}
    \begin{aligned}
        J_{9} & \leq |\varrho|_\infty^{\frac{(p-1)(1-\delta)}{p}}|D_\eta\Phi|_{\infty}|\zeta^2\eta_r\varrho|_1^\frac{1}{p}
        |(\zeta^2\eta_r\varrho^\delta)^\frac{1}{p}U|_p^{p-1}\\
        &\leq C(T)\big|\frac{r^2\rho_0}{\eta^2}\big|_\infty+C(T)\big|(\zeta^2\eta_r\varrho^\delta)^\frac{1}{p}U\big|_p^{p}\leq C(T)(1+\big|(\zeta^2\eta_r\varrho^\delta)^\frac{1}{p}U\big|_p^{p}),\\[-0.5mm]
        J_{10}&\leq C(p)\int_\frac{1}{2}^\frac{5}{8} \Big(\varrho^\delta|U|^{p-1} |D_\eta U|+ \frac{\eta_r\varrho^\delta|U|^{p}}{\eta\eta_r}+\varrho^{\gamma-1}\big(\frac{r^2\rho_0}{\eta^2\eta_r}\big)|U|^{p-1}\Big)\,\mathrm{d}r\\
        &\leq C(p,T)\big(\big|(\eta^2\eta_r\varrho^\delta)^\frac{1}{2}|U|^\frac{p}{2}\big|_2\big|(\eta^2\eta_r\varrho^\delta)^\frac{1}{2}|U|^{\frac{p-2}{2}}D_\eta U\big|_2\\
        &\qquad \qquad \ \ + \big|(\eta_r\varrho^\delta)^\frac{1}{p}U\big|_p^p +|\varrho|_\infty^{\gamma-1}\big|(r^2\rho_0)^\frac{1}{p}U\big|_{p}^{p-1}\big)\\
        &\leq C(p,T)+C(p,T)\big|(\eta^2\eta_r\varrho^\delta)^\frac{1}{2}|U|^{\frac{p-2}{2}}\mathscr{D}_\eta U\big|_2^2.   
        \end{aligned}
        \end{equation}
Consequently, collecting  \eqref{G8-G10}--\eqref{G10}, we have 
\begin{equation*}
\begin{aligned}
&\frac{1}{p}\frac{\mathrm{d}}{\mathrm{d}t}\big|(\zeta^2\eta_r \varrho)^\frac{1}{p} U\big|_p^p  +a_{1}c_{\delta,p}^* \big|(\zeta^2\eta_r\varrho^{\delta })^\frac{1}{2}|U|^\frac{p-2}{2}\mathscr{D}_\eta U\big|_2^2\\
&\leq C(p,T)\Big(1+\big|(\zeta^2\eta_r\varrho)^\frac{1}{p} U\big|_{p}^{p}+(|\zeta V|_\infty^2+1) \big|(\eta_r\varrho^\delta)^\frac{1}{p}U\big|_p^p+\big|(\eta^2\eta_r\varrho^\delta)^\frac{1}{2}|U|^{\frac{p-2}{2}} D_\eta U\big|_2^2\Big).
\end{aligned}
\end{equation*}
which, along   Lemma \ref{lp-uv} and the Gr\"onwall inequality, leads to the desired estimates.
\
\end{proof}

Next, we can derive the $L^1([0,T];L^\infty)$-estimate of $\zeta\varrho^{\gamma-\delta} U$.
\begin{Lemma}\label{cru3}
For any $\varepsilon\in(0,1)$, there exists a constant $C(\varepsilon,T)>0$ such that 
\begin{equation}\label{lemma4.5}
\int_0^t |\zeta\varrho^{\gamma-\delta}U|_{\infty}\, \mathrm{d}s\leq C(\varepsilon,T) +\varepsilon\sup_{s\in[0,t]}|\zeta V|_\infty \qquad \text{for all $t\in [0,T]$}.
\end{equation}
\end{Lemma}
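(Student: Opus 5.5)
We bound $|\zeta\varrho^{\gamma-\delta}U|_\infty$ pointwise in time by a one-dimensional Sobolev inequality. By Lemma \ref{lemma-bound depth}, $|\varrho|_\infty\le C(T)$, and by Lemma \ref{lemma-lower bound jacobi}, $\eta_r\ge C(T)^{-1}$. Choose a large integer $k$ (fixed by the exponents below) and write
\begin{equation*}
|\zeta\varrho^{\gamma-\delta}U|_\infty^k\leq C_0\big|(\zeta^k\varrho^{k(\gamma-\delta)}|U|^k)_r\big|_1 .
\end{equation*}
Differentiating produces three kinds of terms.
\begin{itemize}
\item[(a)] The cut-off derivative $\zeta_r$. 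It is supported in $[\frac12,\frac58]$, where $\eta,\eta_r$ are bounded below, so $\varrho$ is comparable to $\rho_0$. This term is controlled by Lemma \ref{lp-uv} (or Lemma \ref{lemma-refine u Lp}) by $C(T)$.
\item[(b)] The density derivative. We rewrite it through \eqref{v-expression} as $(\varrho^{\gamma-\delta})_r=\frac{\gamma-\delta}{2a_1\delta}\eta_r\varrho^{\gamma-\delta+1-\delta}(V-U)$. This gives terms of the form $\zeta^k\eta_r\varrho^{k(\gamma-\delta)+1-\delta}|U|^k(|V|+|U|)$.
\item[(c)] The velocity derivative $\zeta^k\eta_r\varrho^{k(\gamma-\delta)}|U|^{k-1}|D_\eta U|$.
\end{itemize}

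Consider first the $V$-part of (b). We bound it by $|\zeta V|_\infty\,|\zeta^{k-1}\eta_r\varrho^{k(\gamma-\delta)+1-\delta}|U|^k|_1$. Since $\varrho$ is bounded and $\eta_r\varrho=r^2\rho_0/\eta^2$, we use Lemma \ref{new-u-lp} with $p=k$ (this needs $k<p_*$; Remark \ref{remark-p+} gives room up to $7$). Taking the integrand weight $\zeta^2\eta_r\varrho$ at the cost of powers of $\varrho$, we get
\begin{equation*}
|\zeta^2\eta_r\varrho |U|^k|_1\leq C(T)\big(\sup_s|\zeta V|_\infty^2+1\big).
\end{equation*}
Hence $|\zeta\varrho^{\gamma-\delta}U|_\infty^k\lesssim |\zeta V|_\infty(1+\sup|\zeta V|_\infty^2)+\cdots$. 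This is too crude, so instead we use the dissipation. The $U$-part of (b) and the term (c) are controlled by $\varrho$ bounded, Hölder, and the dissipation integral in Lemma \ref{new-u-lp} together with Lemma \ref{lp-uv}. For (c), Hölder gives
\begin{equation*}
\big|(\zeta^2\eta_r\varrho^\delta)^{\frac12}|U|^{\frac{k-2}{2}}D_\eta U\big|_2\,\big|(\zeta^2\eta_r\varrho)^{\frac1k}U\big|_k^{\frac k2}.
\end{equation*}
After taking the $k$-th root and integrating in time, the Hölder/Young inequality turns the dissipation factor into a bounded integral. Each of these pieces is then bounded by $C(T)\big(1+\sup_s|\zeta V|_\infty^{\theta}\big)$ with some power $\theta<1$ after the $1/k$ root.

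To close, we take the $k$-th root and integrate over $[0,t]$, using $\int_0^t a^{1/k}\le t^{1-1/k}(\int_0^t a)^{1/k}$. Each term then becomes $C(T)(1+\sup_s|\zeta V|_\infty)^{\theta}$ with exponent $\theta=\frac{3}{k}$ or smaller: one factor $|\zeta V|_\infty$ from (b) and two from Lemma \ref{new-u-lp}. Choosing $k>3$, which is admissible since $p_*>7$, Young's inequality gives $C(\varepsilon,T)+\varepsilon\sup_s|\zeta V|_\infty$, which is \eqref{lemma4.5}.

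The main obstacle is the bookkeeping of the $\varrho$-powers, so that every weight is dominated by those available in Lemmas \ref{lp-uv} and \ref{new-u-lp}. This needs $k(\gamma-\delta)\ge1$ (choose $k>1/(\gamma-\delta)$) together with the boundedness of $\varrho$. If the power count gives $\theta\ge1$, the fallback is to interpolate $U$ between the energy bound in Lemma \ref{lp-uv} and Lemma \ref{new-u-lp}, which lowers $\theta$.
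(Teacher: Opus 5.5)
Your proposal is correct and is essentially the paper's proof: Sobolev applied to $\zeta^k\varrho^{k(\gamma-\delta)}|U|^k$, the same three-term split (cut-off, density derivative via \eqref{v-expression}, $D_\eta U$), Lemma \ref{new-u-lp} for the $\zeta^2\eta_r\varrho$-weighted $L^p$ norms and the dissipation, then the $k$-th root, H\"older in time, and Young with total exponent at most $3/k<1$. Three small corrections: the bound $|\zeta V|_\infty\big(1+\sup_s|\zeta V|_\infty^2\big)$ that you call too crude is exactly what closes the argument (after the root it becomes $\sup_s|\zeta V|_\infty^{3/k}$, as your final paragraph uses); the $U|U|^{k}$ term from (b) requires Lemma \ref{new-u-lp} with $p=k+1$, so you need $3<k<p_*-1$ as in the paper (harmless, since $p_*>7$); and on the support of $\zeta_r$ you only have $\varrho\le C(T)$ at this stage rather than comparability with $\rho_0$, which is all that is needed.
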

\begin{proof}
Let $k\in \NN^*$ be a fixed number such that 
$\max\big\{3,\frac{1}{\gamma-\delta}\big\}<k<p_{*}-1$.

First, it follows from Lemma \ref{sobolev-embedding} that
\begin{equation}\label{6.10}
\begin{aligned}
|\zeta \varrho^{\gamma-\delta}U|_\infty^{k}
&\leq C_0\big(\big|\zeta^{k-1}\zeta_r\varrho^{k(\gamma-\delta)} U^{k}\big|_1+ \big|\zeta^k\varrho^{k(\gamma-\delta)+1-\delta}\eta_rU^k(V,U)\big|_1\big)\\
&\quad+ C_0\big|\zeta^k\varrho^{k(\gamma-\delta)}\eta_r U^{k-1}D_\eta U\big|_1:=\sum_{i=11}^{13}J_i.
\end{aligned}    
\end{equation}

Then, using \eqref{v-expression}, Lemmas \ref{lemma-bound depth} and \ref{lemma-lower bound jacobi}, and  the H\"older inequality, we have
\begin{align}
J_{11}&\leq C_0|\eta_r^{-1}|_\infty|\zeta|_\infty^{k-3}|\varrho|_\infty^{k(\gamma-\delta)-1}\big|(\zeta^2\varrho \eta_r)^\frac{1}{k}U\big|_k^{k}\leq C(T)\big|(\zeta^2\varrho \eta_r)^\frac{1}{k}U\big|_k^{k},\nonumber\\
J_{12}&\leq C_0|\varrho|_\infty^{k(\gamma-\delta)-\delta}|\zeta|_\infty^{k-3}\big(|\zeta V|_\infty\big|(\zeta^2\eta_r\varrho)^\frac{1}{k}U\big|_k^k+ |\zeta|_\infty\big|(\zeta^2\eta_r\varrho)^\frac{1}{k+1}U\big|_{k+1}^{k+1}\big)\nonumber\\
&\leq C(T)\big(|\zeta V|_\infty\big|(\zeta^2\eta_r\varrho)^\frac{1}{k}U\big|_k^k+ \big|(\zeta^2\eta_r\varrho)^\frac{1}{k+1}U\big|_{k+1}^{k+1}\big),\label{wuqiong-G12-G14}\\
J_{13}&\leq C_0|\varrho|_\infty^{k(\gamma-\delta)-\frac{1+\delta}{2}}|\zeta|_\infty^{k-2}\big|(\zeta^2\eta_r\varrho)^{\frac{1}{k}}U\big|_{k}^\frac{k}{2}\big|(\zeta^2\eta_r\varrho^\delta   )^\frac{1}{2}|U|^\frac{k-2}{2}D_\eta U\big|_2 \nonumber\\
&\leq C(T)\big|(\zeta^2\eta_r\varrho)^{\frac{1}{k}}U\big|_{k}^\frac{k}{2}\big|(\zeta^2\eta_r\varrho^\delta)^\frac{1}{2}|U|^\frac{k-2}{2}D_\eta U\big|_2,  \nonumber
\end{align}
which, along with  \eqref{6.10}--\eqref{wuqiong-G12-G14}, Lemma \ref{new-u-lp},  and the Young inequality, gives
\begin{equation*}
|\zeta \varrho^{\gamma-\delta}U|_\infty\leq C(T)\Big(1+\sup_{s\in[0,t]}|\zeta V|_\infty^\frac{3}{k} +\big(1+\sup_{s\in[0,t]}|\zeta V|_\infty^\frac{1}{k}\big)\big|(\zeta^2\eta_r\varrho^\delta)^\frac{1}{2}|U|^\frac{k-2}{2}D_\eta U\big|_2^\frac{1}{k}\Big).  
\end{equation*}

Finally, integrating the above over $[0,t]$, then we obtain from the fact that $k>3$, Lemma \ref{new-u-lp}, and the H\"older and Young inequalities that, for all $\varepsilon\in (0,1)$, \eqref{lemma4.5} holds.

\end{proof}

\begin{Lemma}\label{lemma-v Linfty in}
For any $a\in(0,1)$, there exists a constant $C(a,T)>0$ such that
\begin{equation*}
|\zeta_{a} V(t)|_\infty\leq C(a,T) \qquad \text{for all $t\in [0,T]$}.
\end{equation*}
\end{Lemma}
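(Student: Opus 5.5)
The plan is to use the explicit representation \eqref{V-solution} of $V$ from Proposition \ref{lemma-V expression}. Since $\varrho^{\gamma-\delta}\ge 0$, every exponential factor there lies in $(0,1]$. Multiplying \eqref{V-solution} by $\zeta$ and taking the $L^\infty$-norm therefore gives, for all $t\in[0,T]$,
\begin{equation*}
|\zeta V(t)|_\infty\leq |\zeta v_0|_\infty+\frac{A\gamma}{2a_1\delta}\int_0^t|\zeta\varrho^{\gamma-\delta}U|_\infty\,\mathrm{d}s+\int_0^t|D_\eta\Phi|_\infty\,\mathrm{d}s.
\end{equation*}

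The three terms are handled as follows.

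The initial term is finite because $v_0=u_0+\frac{2a_1\delta}{\delta-1}(\rho_0^{\delta-1})_r$. On the support of $\zeta$, $\rho_0$ is bounded below and $(\rho_0^\beta)_r$ is bounded by \eqref{distance-la} together with the $H^3$ regularity. The term $u_0$ is bounded via the interior energy $\cE_{\mathrm{in}}(0,U)<\infty$ and Sobolev embedding in the 3-D Lagrangian variables, using Lemma \ref{lemma-initial}.

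The potential term is at most $C(T)\,T$ by Lemma \ref{lemma-v Lp ex}.

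For the middle term, apply Lemma \ref{cru3} with $\varepsilon$ chosen so that $\frac{A\gamma}{2a_1\delta}\varepsilon\le\frac12$.

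Taking the supremum over $s\in[0,t]$ of the resulting inequality, which is monotone in $t$, gives
\begin{equation*}
\sup_{s\in[0,t]}|\zeta V|_\infty\le C(T)+\tfrac12\sup_{s\in[0,t]}|\zeta V|_\infty .
\end{equation*}
This absorption is legitimate only if $\sup_{s\in[0,t]}|\zeta V|_\infty$ is a priori finite. That holds because $(U,\eta)$ is a classical solution: on $\operatorname{supp}\zeta$ we have $\varrho>0$ bounded below, and $D_\eta\varrho$ is continuous there by the regularity in Definition \ref{definition-lag}. The absorption then yields $|\zeta V(t)|_\infty\le C(T)$.

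The statement asks for a general cut-off $\zeta_a$, whereas Lemmas \ref{new-u-lp} and \ref{cru3} are written with $\zeta=\zeta_{1/2}$. There are two cases.

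\textbf{Case $a\le\frac12$.} Here $\zeta_a\le\zeta$, so the bound above applies directly.

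\textbf{Case $a>\frac12$.} Split $\zeta_a=\zeta_a\zeta+\zeta_a\zeta^\sharp$. The first piece is bounded by $|\zeta V|_\infty$. On $\operatorname{supp}\zeta^\sharp\cap\operatorname{supp}\zeta_a\subset[\frac12,\frac{1+3a}4]$, the density $\rho_0$ is bounded above and below by positive constants depending on $a$. Hence $|\zeta_a\zeta^\sharp V|_\infty\le C(a)\,|\chi^\sharp_{1/4}\rho_0^{\beta+1-\delta}V|_\infty$, which is at most $C(a,T)$ by Lemma \ref{lemma-v Linfty ex}.

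The main obstacle is the smallness structure: Lemma \ref{cru3} only gives smallness in front of $\sup|\zeta V|_\infty$, not a closed bound. The whole argument hinges on the exponential factors being at most $1$, so that the linear-in-$V$ feedback can be absorbed with a constant independent of $t$, and no Gr\"onwall argument with growing constants is needed. I would double-check that Lemma \ref{cru3}'s constant $C(\varepsilon,T)$ does not hide a dependence on $V$ through Lemma \ref{new-u-lp}; it enters only through the factor $\sup|\zeta V|_\infty^{2}$, which was already turned into the $\varepsilon$-term there.
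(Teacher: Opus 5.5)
Your proposal is correct and matches the paper's proof step by step. The paper likewise bounds $\sup_{s\in[0,t]}|\zeta V|_\infty$ via \eqref{V-solution} with the exponential factors at most one, controls $D_\eta\Phi$ by Lemma \ref{lemma-v Lp ex}, and absorbs the $\varepsilon\sup_{s\in[0,t]}|\zeta V|_\infty$ term from Lemma \ref{cru3}; for $a>\frac12$ it uses Lemma \ref{lemma-v Linfty ex} on the region where $\rho_0$ is bounded above and below. Your remark that the absorption needs $\sup_{s\in[0,t]}|\zeta V|_\infty<\infty$ a priori, which the classical-solution regularity provides, is left implicit in the paper, and your interval $[\frac12,\frac{1+3a}{4}]$ is slightly more accurate than the paper's $(\frac12,a)$.
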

\begin{proof}
First, it follows from \eqref{V-solution} and Lemmas \ref{lemma-v Lp ex} and     \ref{cru3} that, for any $\varepsilon\in(0,1)$,
\begin{equation}\label{daiding}
\begin{aligned}
     \sup_{s\in[0,t]}|\zeta V|_\infty &\leq |\zeta v_0|_\infty +\frac{A\gamma}{2a_{1}\delta}\int_0^t \big|\zeta \varrho^{\gamma-\delta}U\big|_\infty\,\mathrm{d}s+\int_0^t |\zeta D_\eta\Phi|_{\infty}\,\mathrm{d}s\\
     &\leq C(\varepsilon,T) +\frac{A\gamma \varepsilon}{2a_{1}\delta}\sup_{s\in[0,t]}|\zeta V|_\infty.
\end{aligned}
\end{equation}
Here, the bound of $|\zeta v_0|_\infty$ follows from \eqref{v-expression} and the fact that $\rho_0^\beta\sim 1-r$, {\it i.e.},
\begin{equation}
|\zeta v_0|_\infty\leq |\zeta u_0|_\infty+\frac{2a_{1}\delta}{\delta-1} |\zeta(\rho_0^{\delta-1})_r|_\infty\leq |\zeta u_0|_\infty+C_0|\zeta\rho_0^{\delta-1-\beta}|_\infty|(\rho_0^\beta)_r|_\infty\leq C_0.
\end{equation}

Then setting $\varepsilon=\min\big\{\frac{1}{2},\frac{a_{1}\delta}{A\gamma}\big\}$ in \eqref{daiding},
we obtain that, for all $t\in [0,T]$,
\begin{equation*}
|\zeta V(t)|_\infty\leq C(T)\implies |\zeta_{a} V(t) |_\infty\leq C(T) \qquad \text{for any } a\in\big(0,\frac{1}{2}\big],
\end{equation*}
  which, along with the fact that $\rho_0^\beta\sim 1-r$ and Lemma \ref{lemma-v Linfty ex}, yields that, for $a\in (\frac{1}{2},1)$,\begin{equation*}
|\zeta_{a} V(t)|_\infty\leq |\zeta V(t)|_\infty+\|V(t)\|_{L^\infty(\frac{1}{2},a)}\leq C(T)+C(a) |\chi^\sharp\rho_0^{\beta+1-\delta} V(t)|_\infty\leq C(a,T).
\end{equation*}

\end{proof}

\subsection{Uniform upper bounds of \texorpdfstring{$(\eta_r,\frac{\eta}{r})$}{} in the exterior domain}\label{subsec-4.2}

\subsubsection{Some auxiliary estimates} We first give some auxiliary estimates that  will be used later.
\begin{Lemma}\label{Uinfty}
For any $L>\frac{\beta+1}{p_{*}}$ and $a\in(0,1)$, there exists a constant $C(a,L,T)>0$ such that 
\begin{equation*}
\big|\zeta_{a}^\sharp \rho_0^L U(t)\big|_\infty \leq C(a,L,T)\big(1+\big|\zeta_{a}^\sharp \rho_0^{L} \sqrt{\eta_r}U\big|_2^\frac{1}{2}
\big|\zeta_{a}^\sharp \rho_0^L\sqrt{\eta_r}\varrho^{\frac{\delta-1}{2}} D_\eta U\big|_2^\frac{1}{2}\big)\qquad \text{for all $t\in [0,T]$},
\end{equation*}
where the cut-off function $\zeta^{\sharp}_a$ is defined in {\rm \S \ref{othernotation}}.
\end{Lemma}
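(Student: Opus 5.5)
The estimate is a one-dimensional Gagliardo--Nirenberg (Agmon) type inequality on $[a,1]$, localized by $\zeta_a^\sharp$. Set $f=\zeta_a^\sharp\rho_0^L U$. Since $f$ vanishes on $[0,a]$, I would write
\begin{equation*}
f^2(r)=\int_a^r (f^2)_r\,\mathrm{d}\hat r .
\end{equation*}
The derivative $(f^2)_r=2ff_r$ produces three terms:
\begin{itemize}
\item[(i)] $2(\zeta_a^\sharp)_r\zeta_a^\sharp\rho_0^{2L}U^2$, supported in $[a,\frac{1+3a}{4}]$;
\item[(ii)] $2L(\zeta_a^\sharp)^2\rho_0^{2L-1}(\rho_0)_rU^2$;
\item[(iii)] $2(\zeta_a^\sharp)^2\rho_0^{2L}\eta_r\, U D_\eta U$.
\end{itemize}
Term (iii) is the main one. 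I insert $\sqrt{\eta_r}$ and $\varrho^{\frac{1-\delta}{2}}\varrho^{\frac{\delta-1}{2}}$ and use that $\varrho$ is bounded (Lemma \ref{lemma-bound depth}). Cauchy--Schwarz then bounds it by
\begin{equation*}
C(T)\,\big|\zeta_a^\sharp\rho_0^L\sqrt{\eta_r}U\big|_2\,\big|\zeta_a^\sharp\rho_0^L\sqrt{\eta_r}\varrho^{\frac{\delta-1}{2}}D_\eta U\big|_2,
\end{equation*}
and its square root is exactly the product term in the statement.

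Term (i) lives away from the vacuum boundary, where $\rho_0$ has positive lower and upper bounds. It is bounded by $C(a)\int_0^1 r^2\rho_0U^2\,\mathrm{d}r\le C_0$ via Lemma \ref{lemma-basic energy}.

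Term (ii) is the delicate one. Writing $(\rho_0)_r=\frac{1}{\beta}\rho_0^{1-\beta}(\rho_0^\beta)_r$ and using that $(\rho_0^\beta)_r$ is bounded (since $\rho_0^\beta\in H^3$ in 3-D, hence $C^1$ radially), it is controlled by $C\int_a^1\rho_0^{2L-\beta}U^2\,\mathrm{d}r=C\,|\chi_a^\sharp\rho_0^{\iota\beta}U|_2^2$ with $\iota=\frac{2L-\beta}{2\beta}$. This is where the threshold $L>\frac{\beta+1}{p_*}$ should enter, through Lemma \ref{lemma-refine u Lp} with $p=2$, which requires
\begin{equation*}
\iota>\frac{\beta+1}{\beta p_*}-\frac12 ,\quad\text{i.e.}\quad L>\frac{\beta+1}{p_*}.
\end{equation*}
Since $p_*>7>2$, this is admissible. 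The condition matches the hypothesis exactly, which suggests this is the intended route.

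Adding the three bounds gives $|f|_\infty^2\le C(a,L,T)(1+\text{product})$. Taking square roots yields the claim.

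The main obstacle is term (ii): one must check that the lost power of $\rho_0$ from differentiating the weight is exactly compensated by Lemma \ref{lemma-refine u Lp} at $p=2$. If $2L-\beta<0$, one should also confirm that the Hölder step inside that lemma still closes; it does, since that lemma allows negative exponents $\iota$ down to the stated threshold. A secondary point is that $|\varrho|_\infty$ and the lower bound on $\eta_r$ (Lemma \ref{lemma-lower bound jacobi}) are used to move freely between $U_r$ and $\sqrt{\eta_r}D_\eta U$ weights.
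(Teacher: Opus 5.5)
Your proposal is correct and follows the paper's proof essentially line by line. Both arguments use the $W^{1,1}\hookrightarrow L^\infty$ bound for $(\zeta_a^\sharp)^2\rho_0^{2L}U^2$ and the same three-term split. The weight-derivative term is handled by Lemma \ref{lemma-refine u Lp} with $p=2$ and $\iota\beta=L-\frac{\beta}{2}$, which is exactly where $L>\frac{\beta+1}{p_*}$ enters. The main term is handled by Cauchy--Schwarz with $|\varrho|_\infty^{\frac{1-\delta}{2}}\le C(T)$.

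The only cosmetic difference is that you bound the cut-off term by the basic energy, whereas the paper absorbs it into the same $|\chi_a^\sharp\rho_0^{L-\frac{\beta}{2}}U|_2^2$ bound. Also, the lower bound on $\eta_r$ you mention is not actually needed, since $U_r=\eta_r D_\eta U$ holds exactly.
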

\begin{proof}
It follows from Lemmas \ref{lemma-bound depth}, \ref{lemma-refine u Lp}, and \ref{sobolev-embedding} that
\begin{equation*}
\begin{aligned}
&\,|\zeta_{a}^\sharp \rho_0^L U|_\infty^2=\big|(\zeta_{a}^\sharp)^2 \rho_0^{2L} U^2\big|_\infty\leq C_0\big|((\zeta_{a}^\sharp)^2 \rho_0^{2L} U^2)_r\big|_1\\
&\leq C(L)\big(\big|\zeta_{a}^\sharp(\zeta_{a}^\sharp)_r \rho_0^{2L} U^2\big|_1+ \big|(\zeta_{a}^\sharp)^2 \rho_0^{2L-\beta}(\rho_0^\beta)_r U^2\big|_1+ \big|(\zeta_{a}^\sharp)^2 \rho_0^{2L}\eta_rUD_\eta U\big|_1\big)\\
&\leq C(a,L,T)\big((1+ |(\rho_0^\beta)_r|_\infty)\big|\zeta_{a}^\sharp\rho_0^{L-\frac{\beta}{2}}U\big|_2^2+|\varrho|_\infty^{\frac{1-\delta}{2}}\big|\zeta_{a}^\sharp \rho_0^{L} \sqrt{\eta_r}U\big|_2\big|\zeta_{a}^\sharp \rho_0^L\sqrt{\eta_r}\varrho^{\frac{\delta-1}{2}} D_\eta U\big|_2\big)\\
&\leq C(a,L,T)\big(1+\big|\zeta_{a}^\sharp \rho_0^{L} \sqrt{\eta_r}U\big|_2\big|\zeta_{a}^\sharp \rho_0^L\sqrt{\eta_r}\varrho^{\frac{\delta-1}{2}} D_\eta U\big|_2\big).
\end{aligned}
\end{equation*}
\end{proof}

\begin{Lemma}\label{etar-L1}
For any $L>\frac{\beta+1}{p_{*}}$, there exists a constant $C(L,T)>0$ such that
\begin{equation*}
|\chi^\sharp \rho_0^L\eta_r(t)|_1\leq C(L,T)\qquad
\text{for all $t\in[0,T]$},
\end{equation*}
where  $\chi^\sharp$ denotes the characteristic function on $(\frac{1}{2},1]$ 
{\rm  (}see {\rm \S \ref{othernotation})}.
\end{Lemma}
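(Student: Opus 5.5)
The plan is to write $\eta_r$ as the initial value plus the time integral of its time derivative, and to control $(\eta_r)_t=\eta_r D_\eta U$ with the weighted $L^p$ bounds on $U$ and the viscous dissipation already in hand. For $r\in(\tfrac12,1]$ we have
\begin{equation*}
\eta_r(t,r)=1+\int_0^t U_r(s,r)\,\mathrm{d}s ,
\end{equation*}
so, by Fubini,
\begin{equation*}
|\chi^\sharp\rho_0^L\eta_r(t)|_1\leq |\rho_0^L|_1+\int_0^t |\chi^\sharp \rho_0^L U_r|_1\,\mathrm{d}s .
\end{equation*}
The whole task is to bound $\int_0^t|\chi^\sharp\rho_0^L U_r|_1\,\mathrm{d}s$ in terms of $\sup_s|\chi^\sharp\rho_0^L\eta_r|_1$ with a sublinear or small coefficient.

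Writing $U_r=\eta_r D_\eta U$, I would use Cauchy--Schwarz against the basic dissipation of Lemma \ref{lemma-basic energy}:
\begin{equation*}
|\chi^\sharp\rho_0^L\eta_r D_\eta U|_1\leq \big|(\eta^2\eta_r\varrho^\delta)^{\frac12}D_\eta U\big|_2\,\big|\chi^\sharp\rho_0^{L}\eta^{-1}\eta_r^{\frac12}\varrho^{-\frac\delta2}\big|_2 .
\end{equation*}
On $(\tfrac12,1]$, Lemma \ref{lemma-lower bound jacobi} gives $\eta\geq C(T)^{-1}$. By \eqref{eq:eta}, $\varrho^{-\delta}=(\eta^2\eta_r)^{\delta}r^{-2\delta}\rho_0^{-\delta}$, so the second factor squared is bounded by
\begin{equation*}
\int_{1/2}^1\rho_0^{2L-\delta}\eta^{2\delta-2}\eta_r^{1+\delta}\,\mathrm{d}r .
\end{equation*}
This still involves $\eta^{2\delta-2}\leq C(T)$ and the superlinear power $\eta_r^{1+\delta}$, which is not directly controlled by $|\rho_0^L\eta_r|_1$.

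The power $\eta_r^{1+\delta}$ is the main obstacle. To avoid it, I would use the $|U|^{p-2}$-weighted dissipation from Lemma \ref{lp-uv} together with the weighted $L^p$ bounds of $U$ (Lemma \ref{lemma-refine u Lp}), since these give better integrability. The first attempt, however, is to exploit the boundary condition structure through the identity used in Lemma \ref{lemma-low jacobi 1}:
\begin{equation*}
\big(\log(\eta^{2(1-\frac1\delta)}\eta_r)\big)_t=D_\eta U-\tfrac{2(1-\delta)}{\delta}\tfrac{U}{\eta}.
\end{equation*}
Using \eqref{v-expression}, we have $D_\eta U - c\frac{U}{\eta}$ equal to a combination of $D_\eta V$-type terms. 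More usefully, since $V=U+\frac{2a_1\delta}{\delta-1}D_\eta\varrho^{\delta-1}$, integrating in $r$ gives
\begin{equation*}
\varrho^{\delta-1}=\rho_0^{\delta-1}\big(\eta^2\eta_r/r^2\big)^{1-\delta},
\end{equation*}
so $\eta_r^{1-\delta}\sim \rho_0^{1-\delta}\varrho^{\delta-1}\eta^{-2(1-\delta)}r^{2(1-\delta)}$.

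Integrating $D_\eta\varrho^{\delta-1}=\frac{\delta-1}{2a_1\delta}(V-U)$ from $r=1$ inward, where $\varrho^{\delta-1}$ is singular, is delicate. Instead I would take the $r$-integral of $\eta_r\,\partial_r$-free quantities: multiply $V-U$ by $\eta_r\rho_0^{L'}$, use $\eta_r D_\eta\varrho^{\delta-1}=(\varrho^{\delta-1})_r$, and integrate by parts against the weight $\chi^\sharp\rho_0^{L'}$. This expresses weighted integrals of $\varrho^{\delta-1}$, which dominate $\eta_r^{1-\delta}$ up to powers of $\rho_0$ and bounded factors of $\eta$, through $|\chi^\sharp\rho_0^{\iota\beta}(U,V)\eta_r|_1$. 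Those are in turn bounded by Lemmas \ref{lemma-refine u Lp}--\ref{lemma-v Linfty ex} via Hölder, with a factor $|\rho_0^{L}\eta_r|_1^{\theta}$ where $\theta<1$. Young's inequality then closes the estimate.

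The admissible range $L>\frac{\beta+1}{p_*}$ is exactly what Lemma \ref{lemma-refine u Lp} needs with $p$ close to $p_*$ and $\iota\beta$ slightly above $\frac{\beta+1}{p_*}-\frac{\beta}{p}$, so I expect the exponent bookkeeping to land there. The main difficulty is choosing exponents so that the Hölder step produces $|\chi^\sharp\rho_0^L\eta_r|_1^{\theta}$ with $\theta<1$ while keeping the $U$-weight within the range of Lemma \ref{lemma-refine u Lp}.
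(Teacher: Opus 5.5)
Your argument has a genuine gap at the decisive step. Once you pass to $\int_0^t|\chi^\sharp\rho_0^LU_r|_1\,\mathrm{d}s$ with absolute values on $U_r$, you need a bound on a derivative of $U$ that is linear in $\eta_r$. Nothing available at this stage gives one:
\begin{itemize}
\item Cauchy--Schwarz against the dissipation of Lemma \ref{lemma-basic energy} leaves $\eta_r^{1+\delta}$, as you noticed.
\item Near $r=1$ the velocity is known only in weighted $L^p$ with $p<p_*$. So any H\"older pairing of $\eta_r$ with $U$ puts $\eta_r$ in $L^{p'}$ with $p'>1$. Your integral $\int\chi^\sharp\rho_0^{L'}\eta_r(V-U)\,\mathrm{d}r$ requires exactly such a pairing.
\item The $\varrho^{\delta-1}$ route, even if it closed, would only control weighted integrals of $\varrho^{\delta-1}=\rho_0^{\delta-1}(\eta^2\eta_r/r^2)^{1-\delta}$. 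These involve the sublinear power $\eta_r^{1-\delta}$, which cannot dominate $\int\rho_0^L\eta_r\,\mathrm{d}r$. That route bounds a weaker quantity by the one you want, which is the wrong direction.
\end{itemize}
The H\"older step producing $|\chi^\sharp\rho_0^L\eta_r|_1^\theta$ with $\theta<1$ is never exhibited, and for these reasons it cannot be obtained along the lines you describe.

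The missing idea is much simpler: keep the sign and move the derivative off $\eta$. Since $\eta_r>0$, $|\chi^\sharp\rho_0^L\eta_r|_1=\int_{1/2}^1\rho_0^L\eta_r\,\mathrm{d}r$. Integrating by parts in $r$ and using $\rho_0(1)=0$ gives
\begin{equation*}
\int_{1/2}^1\rho_0^L\eta_r\,\mathrm{d}r=-\rho_0^L(\tfrac12)\,\eta(t,\tfrac12)-\frac{L}{\beta}\int_{1/2}^1\rho_0^{L-\beta}(\rho_0^\beta)_r\,\eta\,\mathrm{d}r .
\end{equation*}
The boundary term is nonpositive, $(\rho_0^\beta)_r$ is bounded, and $|\eta|\le r+\int_0^t|U|\,\mathrm{d}s$. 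Everything therefore reduces to bounding $\int_0^t|\chi^\sharp\rho_0^{L-\beta}U|_1\,\mathrm{d}s$. This is Lemma \ref{lemma-refine u Lp} with $p=1$ and $\iota=(L-\beta)/\beta$. Its admissibility condition $\iota>\frac{\beta+1}{\beta p_*}-1$ is exactly $L>\frac{\beta+1}{p_*}$, so the threshold comes from $p=1$, not from $p$ close to $p_*$ as you guessed.

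Your first display can also be made to work directly. Write it as an identity, without absolute values, and integrate by parts in $r$ inside the time integral; this gives the same argument.
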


\begin{proof}
It follows from  the fact that $\rho_0^\beta\sim 1-r$, the definition of $\eta$, and Lemma \ref{lemma-refine u Lp} that 
\begin{align*}
|\chi^\sharp \rho_0^L\eta_r|_1&=\int_\frac{1}{2}^1 \rho_0^L\eta_r\,\mathrm{d}r =-\rho_0^L\big(\frac{1}{2}\big) \eta\big(t,\frac{1}{2}\big)-\frac{L}{\beta}\int_\frac{1}{2}^1 \rho_0^{L-\beta}(\rho_0^\beta)_r\eta\,\mathrm{d}r\\
&\leq C(L)|(\rho_0^\beta)_r|_\infty\Big(|\chi^\sharp \rho_0^{L-\beta}r|_1+\int_0^t|\chi^\sharp \rho_0^{L-\beta}U|_1\,\mathrm{d}s\Big)\leq C(L,T).
\end{align*}

\end{proof}

\begin{Lemma}\label{Lemma etar 1/2 D phi L2}
\begin{equation}\label{result eta r 1/2 D phi L2}
|\sqrt{\eta_r}D_\eta\Phi|_2\leq C(T).
\end{equation}
\end{Lemma}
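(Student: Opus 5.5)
We need to bound $|\sqrt{\eta_r}D_\eta\Phi|_2^2=\int_0^1 \eta_r|D_\eta\Phi|^2\,\mathrm{d}r$ on $[0,T]$. By \eqref{D Phi expression}, for $r\in\bar I$,
\[
D_\eta\Phi=\frac{4\pi G}{\eta^2}\int_0^r\hat r^2\rho_0\,\mathrm{d}\hat r .
\]
Lemma \ref{lemma-lower bound jacobi} gives $\eta\ge C(T)^{-1}r$ on $[0,T]\times\bar I$. Since $\rho_0$ is bounded, $\int_0^r\hat r^2\rho_0\,\mathrm{d}\hat r\le C_0\min\{r^3,1\}$. Hence $|D_\eta\Phi|\le C(T)\,r$ pointwise; this is the estimate already recorded in Lemma \ref{lemma-v Lp ex}, refined to keep the factor $r$. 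The difficulty is that only lower bounds of $\eta_r$ are available at this stage, so the weight $\eta_r$ must be handled without an upper bound. I would split the integral with the cut-off $\chi$ into an interior part $[0,\tfrac12]$ and an exterior part $(\tfrac12,1]$, and treat $\eta_r$ differently on each.

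\emph{Interior part.} Avoid bounding $\eta_r$ pointwise and integrate instead:
\[
\int_0^{1/2}\eta_r|D_\eta\Phi|^2\,\mathrm{d}r\le C\int_0^{1/2}\eta_r\,\eta^{-4}\Big(\int_0^r\hat r^2\rho_0\Big)^2\mathrm{d}r\le C(T)\int_0^{1/2}\eta_r\,\eta^{-4}r^6\,\mathrm{d}r .
\]
Using $r\le C(T)\eta$, the integrand is at most $C(T)\eta^2\eta_r$, whose integral equals $\tfrac13\eta(t,\tfrac12)^3$. This reduces the interior part to an upper bound for $\eta(t,\tfrac12)$, and $\eta(t,\tfrac12)\le \tfrac12+\int_0^t|U(s,\tfrac12)|\,\mathrm{d}s$. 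A pointwise bound on $U$ is not directly available, so I would rather use monotonicity: $\eta_r>0$ means $\eta$ is increasing in $r$, hence $\eta(t,\tfrac12)\le\eta(t,r)$ for $r\in[\tfrac12,1]$. Averaging over $r\in[\tfrac12,\tfrac34]$ against $\rho_0^L$, which is bounded below there, gives
\[
\eta(t,\tfrac12)\le C\int_{1/2}^{3/4}\rho_0^L\eta\,\mathrm{d}r\le C\Big(1+\int_0^t|\chi^\sharp\rho_0^LU|_1\,\mathrm{d}s\Big)\le C(T)
\]
by Lemma \ref{lemma-refine u Lp}.

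\emph{Exterior part.} On $(\tfrac12,1]$ we have $|D_\eta\Phi|\le C(T)$, so the part is bounded by $C(T)\int_{1/2}^1\eta_r\,\mathrm{d}r=C(T)\big(\eta(t,1)-\eta(t,\tfrac12)\big)\le C(T)\eta(t,1)$. The step I expect to be the main obstacle is bounding the boundary position $\eta(t,1)=R(t)$: Lemma \ref{etar-L1} controls $\int\rho_0^L\eta_r$ only with the degenerate weight $\rho_0^L$, which vanishes at $r=1$. I would use the identity $\eta^2D_\eta\Phi=4\pi G\int_0^r\hat r^2\rho_0$ to convert the exterior integral into one in $\eta$:
\[
\int_{1/2}^1\eta_r|D_\eta\Phi|^2\,\mathrm{d}r\le C\int_{1/2}^1\eta_r\eta^{-4}\,\mathrm{d}r=\frac{C}{3}\Big(\eta(t,\tfrac12)^{-3}-\eta(t,1)^{-3}\Big)\le C\,\eta(t,\tfrac12)^{-3}.
\]
By Lemma \ref{lemma-low jacobi 1}, $\eta(t,\tfrac12)\ge C(T)^{-1}$, so this is at most $C(T)$. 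This argument needs no upper bound on $\eta(t,1)$, so it removes the obstacle entirely. The same device also handles the interior part, where $\eta_r\eta^{-4}r^6\le C(T)\eta^2\eta_r$ as above. Adding the interior and exterior bounds yields \eqref{result eta r 1/2 D phi L2}.
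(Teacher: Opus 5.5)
Your proof is correct, but it takes a different route from the paper.

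\textbf{The paper's argument.} The paper does not split the interval. Writing $D_\eta\Phi=4\pi G M\eta^{-2}$ with $M(r)=\int_0^r\hat r^2\rho_0\,\mathrm{d}\hat r$, it integrates by parts once over all of $[0,1]$, using $\eta_r\eta^{-4}=-\frac{1}{3}(\eta^{-3})_r$, so that the derivative falls on $M^2$.
\begin{itemize}
\item The boundary term at $r=0$ vanishes because $M^2\eta^{-3}\leq C(T)r^3$.
\item The boundary term at $r=1$ is harmless: the paper bounds it by $C_0|M\eta^{-2}|_\infty^{3/2}$, and it is in fact nonpositive.
\item The remainder $\frac{2}{3}\int_0^1 Mr^2\rho_0\eta^{-3}\,\mathrm{d}r$ is bounded by a constant times $|D_\eta\Phi|_\infty|r\eta^{-1}|_\infty$.
\end{itemize}
So the paper needs only the lower bound $\eta\geq C(T)^{-1}r$ of Lemma \ref{lemma-lower bound jacobi} and the bound on $|D_\eta\Phi|_\infty$ from Lemma \ref{lemma-v Lp ex}. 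It never needs an upper bound on $\eta$.

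\textbf{Your exterior step.} This is the same change of variables to $\eta$, done more crudely: you bound $M\leq M(1)$ instead of moving the derivative onto $M$. In exchange there are no boundary terms to check, and the only cost is the lower bound on $\eta(t,\frac{1}{2})$.

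\textbf{Your interior step.} Here you bound $M^2\eta^{-4}\leq C(T)\eta^2$, which integrates to $\frac{1}{3}\eta(t,\frac{1}{2})^3$. This forces an extra ingredient, namely an upper bound on $\eta(t,\frac{1}{2})$. Your monotonicity-plus-averaging argument via $\eta=r+\int_0^tU\,\mathrm{d}s$ supplies it correctly. Lemma \ref{lemma-refine u Lp} works for this, and so does the basic energy estimate of Lemma \ref{lemma-basic energy}, since $\rho_0$ is bounded below on $[\frac{1}{2},\frac{3}{4}]$.

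\textbf{Comparison.} Both arguments are valid. The paper's is shorter and uses only the lower bound on $\eta/r$; yours additionally brings in velocity bounds to control $\eta$ at an interior point.

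Your closing remark that ``the same device'' handles the interior is slightly misleading. Integrating in $\eta$ alone does not close near the origin without that upper bound on $\eta(t,\frac{1}{2})$, unless you move the derivative onto $M$ as the paper does.
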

\begin{proof}
It follows from the  integration by parts, \eqref{D Phi expression}, and Lemmas \ref{lemma-lower bound jacobi} and \ref{lemma-v Lp ex} that
\begin{equation*}
    \begin{aligned}
        |\sqrt{\eta_r}D_\eta\Phi|_2&\leq\int_0^1\frac{\eta_r}{\eta^4}M^2(r)\,\mathrm{d}r\leq C_0\Big|\frac{M}{\eta^2}\Big|_\infty^\frac{3}{2}+C_0\int_0^1\frac{Mr^2}{\eta^3}\,\mathrm{d}r\\
        &\leq C(T)+C(T)|D_\eta\Phi|_\infty\int_0^1\big|\frac{r}{\eta}\big|_\infty\,\mathrm{d}r
        \leq C(T),
    \end{aligned}
\end{equation*}
where $M(r)$ satisfies
    $M(r):=\int_0^r\hat{r}^2\rho_0\,\mathrm{d}\hat{r}\leq C_0$ and $ D_\eta\Phi=4\pi G\frac{M}{\eta^2}$. 
\end{proof}
\begin{Lemma}\label{lemma zeta sharp rho 1-delta/2 u L 2 L infty}
There exists a constant $C(a,T)>0$ such that
    \begin{equation*}
        \int_0^t |\zeta_a^\sharp\rho_0^{\beta+1}\varrho^\frac{1-\delta}{2}U|_\infty^2\,\mathrm{d}s\leq C(a,T) \qquad \text{for all }\, t\in[0,T].
    \end{equation*}
\end{Lemma}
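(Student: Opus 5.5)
Our plan is to bound the squared sup norm by a one-dimensional Sobolev argument, as in Lemma \ref{Uinfty}, and then reduce everything to the basic energy dissipation of Lemma \ref{lemma-basic energy}. Set $f:=\zeta_a^\sharp\rho_0^{\beta+1}\varrho^{\frac{1-\delta}{2}}U$. Since $\zeta_a^\sharp=0$ on $[0,a]$ and $f$ vanishes at $r=1$ (because $\rho_0|_{r=1}=0$), Lemma \ref{sobolev-embedding} gives
\[
|f|_\infty^2\le C_0\big|(f^2)_r\big|_1 .
\]
On the support of $\zeta_a^\sharp$, Lemmas \ref{lemma-lower bound jacobi} and \ref{lemma-bound depth} show that $\eta$ is bounded below and $\varrho$ is bounded above. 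Also $\varrho^{1-\delta}=(r^2\rho_0)^{1-\delta}(\eta^2\eta_r)^{\delta-1}$, and $\eta_r^{\delta-1}\le C(T)$ since $\delta<1$.

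Differentiating $f^2=(\zeta_a^\sharp)^2\rho_0^{2\beta+2}\varrho^{1-\delta}U^2$ produces three kinds of terms. First, the derivative of the cutoff and of $\rho_0^{2\beta+2}$ gives terms bounded by
\[
C(a,T)\int_a^1\rho_0^{2\beta+1}\varrho^{1-\delta}U^2\,\mathrm{d}r\le C(a,T)\int_0^1 r^2\rho_0U^2\,\mathrm{d}r\le C(a,T),
\]
using $|(\rho_0^\beta)_r|\le C_0$, $\varrho\le C(T)$ and Lemma \ref{lemma-basic energy}. Second, the derivative of $\varrho^{1-\delta}$, written through \eqref{v-expression} as $(\varrho^{1-\delta})_r=\frac{(1-\delta)^2}{2a_1\delta}\eta_r\varrho^{2-2\delta}(V-U)$, gives
\[
\int(\zeta_a^\sharp)^2\rho_0^{2\beta+2}\eta_r\varrho^{2-2\delta}|V-U|U^2\,\mathrm{d}r .
\]
Third, the derivative of $U^2$ gives
\[
2\int(\zeta_a^\sharp)^2\rho_0^{2\beta+2}\varrho^{1-\delta}\eta_r UD_\eta U\,\mathrm{d}r
\le C\big|(\eta^2\eta_r\varrho^\delta)^{\frac12}D_\eta U\big|_2\big|\zeta_a^\sharp\rho_0^{2\beta+2}\sqrt{\eta_r}\varrho^{1-\frac{3\delta}{2}}U\big|_2 .
\]
Here $\rho_0^{2\beta+2}\varrho^{1-3\delta/2}$ is harmless even if the exponent of $\varrho$ is negative, because $\varrho\sim\rho_0$ up to powers of $\eta_r$ near the boundary. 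After squaring, the second factor contains $\eta_r$ to the first power, times $\eta_r^{3\delta/2-1}$ from rewriting $\varrho$. This is absorbed by the $L^\infty$ bound on $\rho_0^{L}U$ (Lemma \ref{Uinfty} or Lemma \ref{lemma-refine u Lp}) together with $|\chi^\sharp\rho_0^L\eta_r|_1\le C(L,T)$ from Lemma \ref{etar-L1}. The high weight $\rho_0^{2\beta+2}$ leaves plenty of room for $L>\frac{\beta+1}{p_*}$.

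Integrating in time, the third term is then bounded by $\int_0^t|(\eta^2\eta_r\varrho^\delta)^{1/2}D_\eta U|_2^2\,\mathrm{d}s+C(a,T)$, which Lemma \ref{lemma-basic energy} controls. For the $V$ part of the second term we use Lemma \ref{lemma-v Linfty ex}, namely $|\chi_a^\sharp\rho_0^{\beta+1-\delta}V|_\infty\le C(a,T)$; the remaining weight $\rho_0^{\beta+1+\delta}\eta_rU^2$ is handled as in the third term. For the $U^3$ part we write $\rho_0^{2\beta+2}|U|^3\eta_r=(\rho_0^{L}|U|)^3\rho_0^{2\beta+2-3L}\eta_r$ and bound it by $|\zeta_a^\sharp\rho_0^LU|_\infty^3|\chi^\sharp\rho_0^{2\beta+2-3L}\eta_r|_1$.

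\textbf{Main obstacle.} The hardest step is closing the $\eta_r$ weight, since no upper bound on $\eta_r$ is available yet. Lemma \ref{Uinfty} bounds the sup norm only in terms of $\sqrt{\eta_r}$-weighted $L^2$ norms of $U$ and $\varrho^{\frac{\delta-1}{2}}D_\eta U$. We would therefore combine Lemma \ref{Uinfty} with Young's inequality so that $|\zeta_a^\sharp\rho_0^LU|_\infty^3$ contributes at most a power of the dissipation $|(\eta^2\eta_r\varrho^\delta)^{1/2}D_\eta U|_2^2$ strictly below one. The $\sqrt{\eta_r}U$ factor would be controlled by Lemma \ref{lemma-refine u Lp} and Lemma \ref{etar-L1} via Hölder, at the price of a larger $L$. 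If the $U^3$ term does not close this way, we would instead rewrite $\rho_0^{2\beta+2}\varrho^{1-\delta}U^2$ through $\varrho^{\gamma-\delta}U$ and use the estimate \eqref{abb} from the proof of Lemma \ref{lemma-v Linfty ex}, whose right-hand side is integrable in time.
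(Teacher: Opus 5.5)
Your argument has a genuine gap in the third term, the one produced by $\partial_r(U^2)$. After Cauchy--Schwarz against the basic dissipation $(\eta^2\eta_r\varrho^\delta)^{\frac12}D_\eta U$, you must bound, uniformly in time and up to the harmless factor $\eta^{-2}$, the quantity $\int(\zeta_a^\sharp)^2\rho_0^{4\beta+4}\eta_r\varrho^{2-3\delta}U^2\,\mathrm{d}r$. Since $2-3\delta<0$, you have to write $\varrho^{2-3\delta}=(r^2\rho_0)^{2-3\delta}(\eta^2\eta_r)^{3\delta-2}$. This leaves $\eta^{6\delta-4}\eta_r^{3\delta-1}$ with $3\delta-1>1$; you forgot to square the $\eta_r$-power coming from $\varrho^{1-\frac{3\delta}{2}}$.

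At this stage the only information on $\eta_r$ is the lower bound of Lemma \ref{lemma-lower bound jacobi} and the \emph{linear} bound $|\chi^\sharp\rho_0^L\eta_r|_1\leq C(L,T)$ of Lemma \ref{etar-L1}. No amount of $\rho_0$-weight lets these control a superlinear power of $\eta_r$, even if you had $\rho_0^LU\in L^\infty$. Your phrase ``$\varrho\sim\rho_0$ up to powers of $\eta_r$'' hides exactly the missing upper bound on $\eta_r$.

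None of your remedies supplies it:
\begin{itemize}
\item Lemma \ref{lemma-refine u Lp} gives no $L^\infty$ bound.
\item The right-hand side of Lemma \ref{Uinfty} contains $|\zeta_a^\sharp\rho_0^L\sqrt{\eta_r}U|_2$ and $|\zeta_a^\sharp\rho_0^L\sqrt{\eta_r}\varrho^{\frac{\delta-1}{2}}D_\eta U|_2$. The first cannot come from Lemmas \ref{etar-L1} and \ref{lemma-refine u Lp} by H\"older, since $\eta_r$ enters to the first power and $U$ is only in weighted $L^p$, $p<p_*$. Both are first controlled in Lemma \ref{lemma-v-vr-u-ur}, whose proof (term $J_{17}$ and the final Gr\"onwall step) uses the very estimate you are proving.
\item Passing from \eqref{abb} to your quantity requires $\rho_0^{\delta}\lesssim\varrho^{\gamma-\frac{1+\delta}{2}}$. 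That is a lower bound of $\varrho$ by a power of $\rho_0$, obtained only in Lemma \ref{lemma-upper jacobi-ex}.
\end{itemize}
Your cubic term also fails as written, since $\int_0^t|\zeta_a^\sharp\rho_0^LU|_\infty^3\,\mathrm{d}s$ is not available. It can be rescued at fixed time: on the support of $\zeta_a^\sharp$ write $\eta_r\varrho^{2-2\delta}\leq C\rho_0^{2-2\delta}\eta_r^{2\delta-1}$ and use H\"older with Lemmas \ref{etar-L1} and \ref{lemma-refine u Lp}, since $2\delta-1<1$ and $\frac{3}{2-2\delta}<p_*$. The $V$-part can be handled the same way.

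The missing idea is to apply the one-dimensional Sobolev bound to a high power $q$ instead of the square, and to use the $L^q$-energy estimates of Lemma \ref{lp-uv} instead of the basic energy. Choose $q\in[3,p_*-1)$ with $\delta\leq\frac{q}{q+2}$, which is possible for $\delta\in(\frac{13}{18},1)$. With your $f$, estimate $|f|_\infty^q\leq C_0|((\zeta_a^\sharp)^q\rho_0^{q(\beta+1)}\varrho^{\frac{q(1-\delta)}{2}}|U|^q)_r|_1$. In the term from $\partial_r|U|^q$, split
\[
\varrho^{\frac{q(1-\delta)}{2}}\eta_r|U|^{q-1}|D_\eta U|=\varrho^{\frac{q-(q+2)\delta}{2}}\,\big((\eta_r\varrho^\delta)^{\frac12}|U|^{\frac q2}\big)\,\big((\eta_r\varrho^\delta)^{\frac12}|U|^{\frac{q-2}{2}}|D_\eta U|\big).
\]
The leftover power of $\varrho$ is now nonnegative, hence bounded by Lemma \ref{lemma-bound depth}, and $\eta_r$ never appears beyond the first power.

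Both brackets are square-integrable in space-time by Lemma \ref{lp-uv}. Indeed $\int_0^t|(\eta^2\eta_r\varrho^\delta)^{\frac12}|U|^{\frac{q-2}{2}}\mathscr{D}_\eta U|_2^2\,\mathrm{d}s\leq C$, and its $\frac{U}{\eta}$-component is exactly $\int_0^t|(\eta_r\varrho^\delta)^{\frac1q}U|_q^q\,\mathrm{d}s$; $\eta$ is bounded below on the support of $\zeta_a^\sharp$. The term with $\partial_r\varrho^{\frac{q(1-\delta)}{2}}$ splits the same way, with leftover exponent $\frac{q+2-(q+4)\delta}{2}\geq0$: use Lemma \ref{lemma-v Linfty ex} for the $V$-part and the dissipation at level $q+1<p_*$ for the $U$-part. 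The cutoff/$\rho_0$-derivative term is bounded by Lemma \ref{lemma-refine u Lp}.

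Raising the result to the power $\frac2q<1$ gives a time-integrable bound for $|\zeta_a^\sharp\rho_0^{\beta+1}\varrho^{\frac{1-\delta}{2}}U|_\infty^2$. With $q=2$ the leftover exponent would be $1-2\delta<0$, which is precisely the obstruction you ran into.
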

\begin{proof}
First, let $k=\beta+1$ and for each $\delta\in \big(\frac{13}{18},1\big)$, there exists some $q\in [3,p_{*}-1)$, such that
$ \delta\leq \frac{q}{q+2}< \frac{q+2}{q+4}$.
It follows from  \eqref{v-expression}, Lemmas \ref{lemma-bound depth},  \ref{lemma-v Linfty ex}, and \ref{sobolev-embedding}, and the H\"older and Young inequalities that
\begin{align*}
&\,|\zeta_a^\sharp\rho_0^k\varrho^\frac{1-\delta}{2}U|_\infty^q\leq \big|((\zeta_a^\sharp)^q\rho_0^{qk}\varrho^\frac{q(1-\delta)}{2}|U|^q)_r \big|_1\\
&\leq C_0\big(|((\zeta_a^\sharp)^q\rho_0^{qk})_r\varrho^\frac{q(1-\delta)}{2}U^q|_1+|(\zeta_a^\sharp)^q\rho_0^{qk}\varrho^{\frac{(q+2)(1-\delta)}{2}}\eta_r(V,U)U^q|_1\\
&\quad+|(\zeta_a^\sharp)^q\rho_0^{qk}\varrho^\frac{q(1-\delta)}{2}\eta_r |U|^{q-2}U D_\eta U|_1  \big)\\
&\leq C_0(1+|(\rho_0^\beta)_r|_\infty)|\varrho|^{\frac{q(1-\delta)}{2}}_\infty|\zeta_a^\sharp\rho_0^{\frac{qk-\beta}{q}}U|_q^q\\
&\quad +C_0|\varrho|_\infty^{\frac{q+2-(q+4)\delta}{2}}\big(|\chi_a^\sharp \rho_0^{\beta+1-\delta}V|_\infty|(\zeta_a^\sharp\eta_r\varrho^{\delta})^\frac{1}{q}U|_q^q+|(\zeta_a^\sharp\eta_r\varrho^{\delta})^{\frac{1}{q+1}}U|_{q+1}^{q+1}\big)\\
&\quad+C_0|\varrho|_\infty^\frac{q-(q+2)\delta}{2}\big|\zeta_a^\sharp\frac{r}{\eta}\big|_\infty
|(\zeta_a^\sharp\eta_r\varrho^{\delta})^\frac{1}{2}U^{\frac{q}{2}}|_2 |(\zeta_a^\sharp\eta^2\eta_r\varrho^{\delta})^\frac{1}{2}U^{\frac{q-2}{2}}D_\eta U|_2\\
&\leq C(a,T)\big(1+|\zeta_a^\sharp\rho_0^{\frac{qk-\beta}{q}}U|_q^q+|(\zeta_a^\sharp\eta_r\varrho^{\delta})^\frac{1}{2}U^{\frac{q}{2}}|_2^2\\
&\quad+|(\zeta_a^\sharp\eta_r\varrho^{\delta})^{\frac{1}{q+1}}U|_{q+1}^{q+1}+ |(\zeta_a^\sharp\eta^2\eta_r\varrho^{\delta})^\frac{1}{2}U^{\frac{q-2}{2}}D_\eta U|_2^2\big).
\end{align*}
Consequently, we obtain from the fact $3\leq q<p_*-1$ and the Young inequality that
\begin{align*}
        |\zeta_a^\sharp\rho_0^k\varrho^\frac{1-\delta}{2}U|_\infty^2&\leq C(a,T)\big(1+|\zeta_a^\sharp\rho_0^{\frac{qk-\beta}{q}}U|_q^2+|(\zeta_a^\sharp\eta_r\varrho^{\delta})^\frac{1}{q}U|_q^q\\
        &\quad+|(\zeta_a^\sharp\eta_r\varrho^{\delta})^{\frac{1}{q+1}}U|_{q+1}^{q+1}+ |(\zeta_a^\sharp\eta^2\eta_r\varrho^{\delta})^\frac{1}{2}U^{\frac{q-2}{2}}D_\eta U|_2^{2}\big).
\end{align*}
Integrating the above over $[0,t]$, along with Lemmas \ref{lp-uv} and \ref{lemma-refine u Lp}, completes the proof.
\end{proof}

\begin{Lemma}\label{lemma ex rho 1-delta/4 u L q L infty}
Let $q\in [3,p_{*})$ with $\frac{8\delta-4}{1-\delta}\leq q$, and 
\begin{equation}\label{lemma 7.4 fuzhu1 k}
k>\max\Big\{\gamma-\delta,\frac{q(\beta+1)}{p_*}\Big\}.
\end{equation}
Then there exists a constant $C(q,T)>0$ such that, for all $t\in[0,T]$,
\begin{align*}
    |\zeta_a^\sharp\rho_0^{\frac{k}{q}}\varrho^\frac{1-\delta}{4}U|_\infty
&\leq C(q,T)\big(1+|\zeta^\sharp \rho_0^{k}\sqrt{\eta_r}U|_2^\frac{1}{2q}\big|\zeta^\sharp\sqrt{\eta_r}\varrho^\frac{\delta-1}{2}\rho_0^{k} D_\eta U\big|_2^\frac{1}{2q}\big)\big|(\eta_r\varrho^{\delta})^\frac{1}{q}U\big|_q\\
&\quad+C(q,T)\big|(\eta_r\varrho^{\delta})^\frac{1}{q}U\big|_q^\frac{q}{q+2} \big|\zeta^\sharp\sqrt{\eta_r}\varrho^\frac{\delta-1}{2}\rho_0^{\frac{q+2}{2q}k} D_\eta U\big|_2^\frac{2}{q+2}+C(q,T). 
\end{align*}
\end{Lemma}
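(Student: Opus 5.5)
We follow the scheme of Lemmas \ref{Uinfty} and \ref{lemma zeta sharp rho 1-delta/2 u L 2 L infty}: apply the one-dimensional embedding $|f|_\infty\leq C_0|f_r|_1$ (Lemma \ref{sobolev-embedding}) to the $q$-th power
\begin{equation*}
f:=(\zeta_a^\sharp)^q\rho_0^{k}\varrho^{\frac{q(1-\delta)}{4}}|U|^q,
\end{equation*}
which vanishes at $r=0$. It then suffices to bound $|f_r|_1$ by the $q$-th power of the right-hand side. Differentiating gives four groups of terms.

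\emph{Terms from $(\zeta_a^\sharp)_r$ and $(\rho_0^k)_r=\frac{k}{\beta}\rho_0^{k-\beta}(\rho_0^\beta)_r$.} These are bounded by $C|\chi^\sharp_{a'}\rho_0^{k-\beta}\varrho^{q(1-\delta)/4}|U|^q|_1$. By Lemma \ref{lemma-bound depth} we can drop $\varrho$. Since $k>\frac{q(\beta+1)}{p_*}$, Lemma \ref{lemma-refine u Lp} applies with exponent $q$ and $\iota=k/(q\beta)-1/q$, so this contribution is $\leq C(q,T)$.

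\emph{Term from differentiating $\varrho^{q(1-\delta)/4}$.} Using \eqref{v-expression}, $D_\eta\varrho=\frac{1}{2a_1\delta}\varrho^{2-\delta}(V-U)$, this becomes $\rho_0^k\eta_r\varrho^{\frac{q(1-\delta)}{4}+1-\delta}|U|^q(|V|+|U|)$. For the $V$-part we take $|\chi^\sharp\rho_0^{\beta+1-\delta}V|_\infty\leq C$ from Lemma \ref{lemma-v Linfty ex} (and Lemma \ref{lemma-v Linfty in} on the compact part). We need $k\geq \beta+1-\delta=\gamma-\delta$, which \eqref{lemma 7.4 fuzhu1 k} gives, leaving $|(\eta_r\varrho^\delta)^{1/q}U|_q^q$ times a bounded power of $\varrho$. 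The exponent of $\varrho$ is nonnegative precisely when $\frac{q(1-\delta)}{4}+1-\delta-\delta\geq 0$, i.e.\ $q\geq\frac{8\delta-4}{1-\delta}$, which is where this hypothesis enters. For the $U$-part we interpolate the $(q+1)$-th power between $|(\eta_r\varrho^\delta)^{1/q}U|_q^q$ and $|f|_\infty^{1/q}$-type quantities, then absorb by Young into the left-hand side.

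\emph{Main term from $D_\eta U$.} This is $\rho_0^k\eta_r\varrho^{q(1-\delta)/4}|U|^{q-1}|D_\eta U|$. Split it as
\begin{equation*}
\big(\sqrt{\eta_r}\varrho^{\frac{\delta-1}{2}}\rho_0^{\theta}D_\eta U\big)\cdot\big(\sqrt{\eta_r}\varrho^{\frac{q(1-\delta)}{4}+\frac{1-\delta}{2}}\rho_0^{k-\theta}|U|^{q-1}\big).
\end{equation*}
We apply H\"older with $L^2\times L^2$, and bound the second factor by $|U|^{q-1}\lesssim |\zeta^\sharp\rho_0^{\cdot}U|_\infty^{\frac{q-2}{2}}\cdot |(\eta_r\varrho^\delta)^{1/q}U|_q^{q/2}$, using that the $\varrho$ exponent exceeds $\delta/2$ (again via the $q$-condition). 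Two choices of $\theta$ yield the two displayed products.
\begin{itemize}
\item With $\theta=k$, we estimate the leftover $L^\infty$ norm via Lemma \ref{Uinfty}. Since $k>\frac{\beta+1}{p_*}$, this produces $|\zeta^\sharp\rho_0^k\sqrt{\eta_r}U|_2^{1/2}|\zeta^\sharp\sqrt{\eta_r}\varrho^{\frac{\delta-1}{2}}\rho_0^kD_\eta U|_2^{1/2}$. Taking $q$-th roots gives the first line, with exponents $\frac{1}{2q}$.
\item With $\theta=\frac{q+2}{2q}k$, we instead absorb $|\zeta_a^\sharp\rho_0^{k/q}\varrho^{(1-\delta)/4}U|_\infty^{q-2}$ into the left side by Young with exponents $\frac{q}{q-2}$ and $\frac{q}{2}$. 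This yields the $\frac{q}{q+2},\frac{2}{q+2}$ powers after rebalancing; $\rho_0^{k/q}$ powers are distributed so the total weight is $\rho_0^k$.
\end{itemize}

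\emph{Obstacle.} The main obstacle is the exponent bookkeeping in the $D_\eta U$ term. The $\rho_0$ and $\varrho$ powers must match exactly the weights in the statement while keeping all $\varrho$ exponents nonnegative, since only the upper bound of Lemma \ref{lemma-bound depth} is available, not a lower bound near the vacuum. Both constraints $q\geq\frac{8\delta-4}{1-\delta}$ and $k>\gamma-\delta$ should appear exactly there. We would first verify the case $\theta=\frac{q+2}{2q}k$ with pure Young absorption, and fall back on Lemma \ref{Uinfty} for the remainder.
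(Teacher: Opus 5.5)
Your outline largely follows the paper's proof. It uses the same function $(\zeta^\sharp)^q\rho_0^k\varrho^{\frac{q(1-\delta)}{4}}|U|^q$ under the embedding of Lemma~\ref{sobolev-embedding}, and the same handling of the cut-off and $(\rho_0^k)_r$ terms via Lemma~\ref{lemma-refine u Lp}. The $V$-part of $D_\eta(\varrho^{\frac{q(1-\delta)}{4}})$ goes through Lemma~\ref{lemma-v Linfty ex} with leftover exponent $\frac{q+4-(q+8)\delta}{4}\geq 0$. The $D_\eta U$ term is split with $\theta=\frac{q+2}{2q}k$ (leftover exponent $\frac{q+6-(q+10)\delta}{8}\geq 0$), and Young with exponents $\frac{2q}{q-2},\frac{2q}{q+2}$ gives the second line.

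The step that fails is your treatment of the $U$-part of the $\varrho$-derivative term, $\int_0^1(\zeta^\sharp)^q\rho_0^k\eta_r\varrho^{\frac{(q+4)(1-\delta)}{4}}|U|^{q+1}\,\mathrm{d}r$. Write $N:=|\zeta^\sharp\rho_0^{k/q}\varrho^{\frac{1-\delta}{4}}U|_\infty$ and $Y:=|(\eta_r\varrho^\delta)^{\frac1q}U|_q$. Pulling out one factor $N$ to absorb leaves the power $\varrho^{\frac{q+3-(q+7)\delta}{4}}$. This is nonnegative only if $\delta\leq\frac{q+3}{q+7}$, which is strictly stronger than the hypothesis (equivalent to $\delta\leq\frac{q+4}{q+8}$). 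At this stage only $\varrho\leq C\rho_0$ and $|\varrho|_\infty\leq C(T)$ are available, so negative powers of $\varrho$ cannot be controlled. Even granting that condition, Young on $N Y^q$ leaves $Y^{\frac{q}{q-1}}$ after the $q$-th root. That term is not dominated by the lemma's right-hand side, since $Y$ is only known to lie in $L^q(0,T)$ (via Lemma~\ref{lp-uv}), not to be bounded in time.

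The missing idea is to treat the $U$-part exactly like the $V$-part. Bound the whole term by $C|\varrho|_\infty^{\frac{q+4-(q+8)\delta}{4}}|\zeta^\sharp\rho_0^k(V,U)|_\infty Y^q$, so that after the $q$-th root the $U$-part contributes $|\zeta^\sharp\rho_0^kU|_\infty^{1/q}\,Y$. Only then apply Lemma~\ref{Uinfty} with $L=k>\frac{\beta+1}{p_*}$. This is exactly where the first line comes from: it is linear in $Y$, with exponents $\frac{1}{2q}$.

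Conversely, your $\theta=k$ branch of the $D_\eta U$ term is unnecessary and does not produce the first line. With $\theta=k$ no $\rho_0$-weight is left for the $L^\infty$ factor, so Lemma~\ref{Uinfty} does not apply directly. In any case, that factor enters with power $\frac{q-2}{2}$ and $Y$ with power $\frac{q}{2}$, so after the $q$-th root you would get $Y^{1/2}$ and exponents $\frac{q-2}{4q}$, not the displayed product. Finally, run the argument with $\zeta^\sharp$ rather than $\zeta^\sharp_a$ on the left, as the paper's proof does, since the right-hand side only controls $D_\eta U$ on the support of $\zeta^\sharp$.
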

\begin{proof}
First, let $k$ be given as in \eqref{lemma 7.4 fuzhu1 k}. For each $\delta\in \big(\frac{13}{18},1\big)$, a direct calculation shows that, there exists  some $q\in [3,p_{*})$, sufficiently closing to $p_*$, such that
$\delta\leq \frac{q+4}{q+8}<\frac{q+6}{q+10}$.
It follows from  \eqref{v-expression}, Lemmas \ref{lemma-bound depth}, \ref{lemma-v Linfty ex}, and \ref{sobolev-embedding}, and the H\"older and Young inequalities that
\begin{align*}
&\,\big|\zeta^\sharp\rho_0^\frac{k}{q}\varrho^\frac{1-\delta}{4}U\big|_\infty^q\leq \big|((\zeta^\sharp)^q\rho_0^k\varrho^\frac{q(1-\delta)}{4}|U|^q)_r \big|_1\\
&\leq C_0\big|((\zeta^\sharp)^q\rho_0^{k})_r\varrho^\frac{q(1-\delta)}{4}U^q\big|_1+C(q)\big|(\zeta^\sharp)^q\rho_0^{k}\varrho^{\frac{(q+4)(1-\delta)}{4}}\eta_r(V-U)|U|^q\big|_1\big)\\
&\quad+C(q)\big|(\zeta^\sharp)^q\rho_0^{k}\varrho^\frac{q(1-\delta)}{4}\eta_r |U|^{q-2}U D_\eta U\big|_1\\
&\leq C(q)\big(1+|(\rho_0^\beta)_r|_\infty\big)|\varrho|^{\frac{q(1-\delta)}{4}}_\infty\big|\chi^\sharp\rho_0^{\frac{k-\beta}{q}}U\big|_q^q\\
&\quad +C(q)|\varrho|_\infty^{\frac{q+4-(q+8)\delta}{4}} \big|\zeta^\sharp \rho_0^{k}(V,U)\big|_\infty\big|(\eta_r\varrho^{\delta})^\frac{1}{q}U\big|_q^q \\
&\quad+C(q)|\varrho|_\infty^\frac{q+6-(q+10)\delta}{8} 
\big|\zeta^\sharp\rho_0^\frac{k}{q}\varrho^\frac{1-\delta}{4}U\big|_\infty^\frac{q-2}{2}\big|(\eta_r\varrho^{\delta})^\frac{1}{q}U\big|_q^\frac{q}{2} \big|\zeta^\sharp\sqrt{\eta_r}\varrho^\frac{\delta-1}{2}\rho_0^{\frac{q+2}{2q}k} D_\eta U\big|_2\\
&\leq C(q,T)+C(q,T)\big(1+|\zeta^\sharp \rho_0^{k}U|_\infty\big)\big|(\eta_r\varrho^{\delta})^\frac{1}{q}U\big|_q^q\\
&\quad+C(q,T)\big|(\eta_r\varrho^{\delta})^\frac{1}{q}U\big|_q^\frac{q^2}{q+2} \big|\zeta^\sharp\sqrt{\eta_r}\varrho^\frac{\delta-1}{2}\rho_0^{\frac{q+2}{2q}k} D_\eta U\big|_2^\frac{2q}{q+2}+\frac{1}{100}\big|\zeta^\sharp\rho_0^\frac{k}{q}\varrho^\frac{1-\delta}{4}U\big|_\infty^q.
\end{align*}
Thus, one has  from the fact that $0<p_*-q\ll 1$, Lemma \ref{Uinfty}, and the Young inequality that
\begin{align*}
\big|\zeta^\sharp\rho_0^\frac{k}{q}\varrho^\frac{1-\delta}{4}U\big|_\infty&\leq C(q,T)\big(1+|\zeta^\sharp \rho_0^{k}U|_\infty^\frac{1}{q}\big)\big|(\eta_r\varrho^{\delta})^\frac{1}{q}U\big|_q\\
&\quad+C(q,T)\big|(\eta_r\varrho^{\delta})^\frac{1}{q}U\big|_q^\frac{q}{q+2} \big|\zeta^\sharp\sqrt{\eta_r}\varrho^\frac{\delta-1}{2}\rho_0^{\frac{q+2}{2q}k} D_\eta U\big|_2^\frac{2}{q+2}+C(q,T)\\
&\leq C(q,T)\big(1+|\zeta^\sharp \rho_0^{k}\sqrt{\eta_r}U|_2^\frac{1}{2q}\big|\zeta^\sharp\sqrt{\eta_r}\varrho^\frac{\delta-1}{2}\rho_0^{k} D_\eta U\big|_2^\frac{1}{2q}\big)\big|(\eta_r\varrho^{\delta})^\frac{1}{q}U\big|_q\\
&\quad+C(q,T)\big|(\eta_r\varrho^{\delta})^\frac{1}{q}U\big|_q^\frac{q}{q+2} \big|\zeta^\sharp\sqrt{\eta_r}\varrho^\frac{\delta-1}{2}\rho_0^{\frac{q+2}{2q}k} D_\eta U\big|_2^\frac{2}{q+2}+C(q,T).
\end{align*}

This completes the proof.
\end{proof}

\begin{Lemma}\label{lemma-V-Vr}
For any $N\geq \gamma-\delta$, there exists a constant $C(N,T)>0$ such that,  for all $t\in [0,T]$,
\begin{equation}\label{dtvvr} 
\begin{aligned}
\frac{\mathrm{d}}{\dt}\big|\zeta^\sharp\rho_0^{N}\sqrt{\eta_r}V\big|_2\leq C(N,T)\big(1+\big|\zeta^\sharp \rho_0^{N}\sqrt{\eta_r} U\big|_2+\big|\zeta^\sharp\rho_0^{N-\beta+\delta-1}\sqrt{\eta_r}\varrho^{\frac{\delta-1}{2}}D_\eta U\big|_2\big), 
\end{aligned}
\end{equation}
where the cut-off function $\zeta^{\sharp}$ is defined in {\rm\S \ref{othernotation}}.
\end{Lemma}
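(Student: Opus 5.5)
I would derive an evolution equation for $\sqrt{\eta_r}V$ and run a weighted $L^2$ estimate. Set $W:=\zeta^\sharp\rho_0^{N}\sqrt{\eta_r}V$. Since $\eta_t=U$, we have $(\eta_r)_t=U_r=\eta_r D_\eta U$. Combining this with \eqref{eq:v} gives
\begin{equation*}
W_t=\tfrac12\zeta^\sharp\rho_0^{N}\sqrt{\eta_r}\,D_\eta U\,V-\frac{A\gamma}{2a_1\delta}\zeta^\sharp\rho_0^{N}\sqrt{\eta_r}\varrho^{\gamma-\delta}(V-U)-\zeta^\sharp\rho_0^{N}\sqrt{\eta_r}D_\eta\Phi .
\end{equation*}
Multiplying by $W$ and integrating, the damping term $-\frac{A\gamma}{2a_1\delta}\int \varrho^{\gamma-\delta}W^2$ is nonpositive and I would simply drop it. After dividing by $|W|_2$, this yields
\begin{equation*}
\frac{\mathrm{d}}{\mathrm{d}t}|W|_2\le \tfrac12\big|\zeta^\sharp\rho_0^{N}\sqrt{\eta_r}\,V D_\eta U\big|_2+C\big|\zeta^\sharp\rho_0^N\sqrt{\eta_r}\varrho^{\gamma-\delta}U\big|_2+\big|\zeta^\sharp\rho_0^N\sqrt{\eta_r}D_\eta\Phi\big|_2 .
\end{equation*}
This is legitimate because $\rho_0^N$ vanishes at $r=1$ and the solution is classical. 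If needed, one can regularize by working with $(|W|_2^2+\epsilon)^{1/2}$.

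The last two terms are handled directly. By Lemma \ref{lemma-bound depth}, $|\varrho|_\infty\le C(T)$, and $\rho_0\le C_0$, so the $U$-term is at most $C(T)|\zeta^\sharp\rho_0^N\sqrt{\eta_r}U|_2$. Lemma \ref{Lemma etar 1/2 D phi L2} bounds the $\Phi$-term by $C(T)$.

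The main obstacle is the quadratic term $V D_\eta U$. For it I would split off a weight that makes $V$ bounded, using Lemma \ref{lemma-v Linfty ex}: $|\chi^\sharp_{1/2}\rho_0^{\beta+1-\delta}V|_\infty\le C(T)$. Writing
\begin{equation*}
\rho_0^N V D_\eta U=\big(\rho_0^{\beta+1-\delta}V\big)\cdot\rho_0^{N-\beta+\delta-1}\varrho^{\frac{1-\delta}{2}}\cdot\varrho^{\frac{\delta-1}{2}}D_\eta U ,
\end{equation*}
and using that $\varrho^{\frac{1-\delta}{2}}\le C(T)$ (Lemma \ref{lemma-bound depth} again, since $\delta<1$), we get
\begin{equation*}
\big|\zeta^\sharp\rho_0^N\sqrt{\eta_r}V D_\eta U\big|_2\le C(T)\big|\zeta^\sharp\rho_0^{N-\beta+\delta-1}\sqrt{\eta_r}\varrho^{\frac{\delta-1}{2}}D_\eta U\big|_2 .
\end{equation*}
This is exactly the dissipative-type quantity on the right of \eqref{dtvvr}.

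The condition $N\ge\gamma-\delta$ guarantees that the exponent $N-\beta+\delta-1=N-(\gamma-\delta)$ is nonnegative. Hence the remaining weight is bounded and the splitting is consistent. The support of $\zeta^\sharp$ lies in $[\frac12,1]$, where $\chi^\sharp$ applies.

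Collecting the three bounds gives \eqref{dtvvr}. The only delicate points are this correct distribution of the $\rho_0$ powers in the quadratic term and the justification of differentiating $|W|_2$ up to the vacuum boundary.
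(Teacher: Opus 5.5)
Your proof is correct and is essentially the paper's argument. The paper multiplies \eqref{eq:v} by $2(\zeta^\sharp)^2\rho_0^{2N}\eta_r V$, which is equivalent to your computation of $W_t$ via $(\eta_r)_t=\eta_r D_\eta U$, and keeps the damping term on the left. It then bounds the $U$-, $VD_\eta U$-, and $\Phi$-terms exactly as you do, using Lemma \ref{lemma-bound depth}, the splitting $\rho_0^N=\rho_0^{\beta+1-\delta}\cdot\rho_0^{N-\beta+\delta-1}$ together with Lemma \ref{lemma-v Linfty ex}, and Lemma \ref{Lemma etar 1/2 D phi L2}, before dividing $\frac{\mathrm{d}}{\mathrm{d}t}|W|_2^2$ by $|W|_2$.
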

\begin{proof}
Multiplying \eqref{eq:v} by $2(\zeta^\sharp)^2\rho_0^{2N}\eta_rV$ 
and integrating the resulting equality over $I$, along with \eqref{eq:eta}, Lemmas \ref{lemma-bound depth}--\ref{lemma-lower bound jacobi}, \ref{lemma-v Linfty ex}, and \ref{Lemma etar 1/2 D phi L2}, and the H\"older and Young inequalities, yields
\begin{align*}
&\,\frac{\mathrm{d}}{\dt}\big|\zeta^\sharp\rho_0^{N}\sqrt{\eta_r}V\big|_2^2+\frac{A\gamma}{a_{1}\delta}\big|\zeta^\sharp \rho_0^{N} \varrho^\frac{\gamma-\delta}{2}\sqrt{\eta_r}V\big|_2^2 \\
&= \int_0^1 (\zeta^\sharp)^2\rho_0^{2N}\eta_rV\big(\frac{A\gamma}{a_{1}\delta}\varrho^{\gamma-\delta} U+VD_\eta U\big)\,\mathrm{d}r\notag-\int_0^12(\zeta^\sharp)^2\rho_0^{2N}\eta_rD_\eta \Phi V \,\mathrm{d}r\\
&\leq  C_0\big(|\varrho|_\infty^{\gamma-\delta}\big|\zeta^\sharp \rho_0^{N}\sqrt{\eta_r} U\big|_2+|\varrho|_\infty^\frac{1-\delta}{2} |\chi^\sharp\rho_0^{\beta+1-\delta} V|_\infty\big|\zeta^\sharp\rho_0^{N-\beta+\delta-1}\sqrt{\eta_r}\varrho^{\frac{\delta-1}{2}}D_\eta U\big|_2 
\\[1mm]
&\quad+|\sqrt{\eta_r}D_\eta\Phi|_2\big)\big|\zeta^\sharp\rho_0^{N}\sqrt{\eta_r}V\big|_2\\
&\leq C(N,T)\big(1+\big|\zeta^\sharp \rho_0^{N}\sqrt{\eta_r} U\big|_2+\big|\zeta^\sharp\rho_0^{N-\beta+\delta-1}\sqrt{\eta_r}\varrho^{\frac{\delta-1}{2}}D_\eta U\big|_2\big)\big|\zeta^\sharp\rho_0^{N}\sqrt{\eta_r}V\big|_2,\notag
\end{align*}
which leads to \eqref{dtvvr}.
\end{proof}
\subsubsection{Uniform upper bounds of \texorpdfstring{$(\eta_r,\frac{\eta}{r})$}{} in the exterior domain}\label{subsub-712}
With the help of Lemmas \ref{Uinfty}--\ref{lemma-V-Vr}, we can first derive 
new $(\rho_0,\eta_r)$-weighted estimates for $U$.
\begin{Lemma}\label{lemma-v-vr-u-ur}
For any $M\geq 3(\beta+1)$, there exists a constant $C(M,T)>0$ such that
\begin{equation*}\label{goal2}
\mathcal{E}_M^\sharp(t)+ \int_0^t\big(\big|\zeta^\sharp\rho_0^{M}\sqrt{\eta_r}\varrho^{\frac{\delta-1}{2}}\mathscr{D}_\eta U\big|_2^2+|\zeta^\sharp \rho_0^M U|_\infty^4\big)\,\mathrm{d}s\leq C(M,T) \qquad\text{for all $t\in [0,T]$},
\end{equation*}
where $\mathcal{E}_M^\sharp(t)$ is defined by
\begin{equation}\label{EsharpM}
\mathcal{E}_M^\sharp(t):=\big|\zeta^\sharp \rho_0^{M+\gamma-\delta}\sqrt{\eta_r}V(t)\big|_2^2+\big|\zeta^\sharp \rho_0^M\sqrt{\eta_r}U(t)\big|_2^2.
\end{equation}
\end{Lemma}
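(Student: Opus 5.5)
We will derive a weighted $L^2$ energy estimate for $U$ in the exterior region and close it jointly with the $V$-estimate of Lemma \ref{lemma-V-Vr}. Multiplying $\eqref{eq:VFBP-La-eta}_1$ by $(\zeta^\sharp)^2\rho_0^{2M}\eta_r\varrho^{-1}U$ is the natural choice: the time-derivative term then becomes $\frac12\frac{\mathrm d}{\mathrm dt}|\zeta^\sharp\rho_0^M\sqrt{\eta_r}U|_2^2$ up to a commutator involving $(\eta_r)_t=U_r$. That commutator is controlled by $|U|_\infty$-type weighted norms.

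To avoid the pressure and the degenerate coefficient, we will use the effective velocity form. Since $\varrho U_t=\varrho V_t-\frac{2a_1\delta}{\delta-1}\varrho(D_\eta\varrho^{\delta-1})_t$, we will instead rewrite $\eqref{eq:VFBP-La-eta}_1$ as
\[
U_t=2a_1\delta\varrho^{-1}D_\eta(\varrho^\delta\mathscr{D}\text{-terms})+\cdots,
\]
and integrate the viscous term by parts. This produces the dissipation $\int(\zeta^\sharp)^2\rho_0^{2M}\eta_r\varrho^{\delta-1}\big(\delta|D_\eta U|^2+\cdots\big)$. Its quadratic form in $(D_\eta U,U/\eta)$ is positive by the same discriminant argument as in Lemma \ref{lemma-basic energy}. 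On $\operatorname{supp}\zeta^\sharp$ the factor $\eta$ is bounded below by Lemma \ref{lemma-lower bound jacobi}, so the $U/\eta$ parts are lower order.

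Integrating by parts produces derivatives of the weight $\rho_0^{2M}\varrho^{-1}$. These generate terms of the form $\rho_0^{2M-\beta}(\rho_0^\beta)_r\varrho^{\delta-1}UD_\eta U$ and $\rho_0^{2M}\varrho^{\delta-2}D_\eta\varrho\,UD_\eta U$. The second type is rewritten through \eqref{v-expression} as $\rho_0^{2M}\varrho^{-1}(V-U)UD_\eta U$. We handle it by Cauchy--Schwarz, absorbing into the dissipation, with the leftover $\rho_0^{2M}\varrho^{-1-\delta+1}|V-U|^2U^2$ bounded using the $L^\infty$ bound of Lemma \ref{lemma-v Linfty ex} on $\rho_0^{\beta+1-\delta}V$ and $\varrho\sim\rho_0/(\eta^2\eta_r)$. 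The loss of powers of $\rho_0$ is what forces $M\ge3(\beta+1)$. The pressure term $A\gamma\varrho^{\gamma-2}D_\eta\varrho$ and the gravity term are treated likewise: the former via $V-U$ and the latter via Lemma \ref{Lemma etar 1/2 D phi L2} and $|D_\eta\Phi|_\infty\le C(T)$.

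The genuinely nonlinear terms are of the form $\rho_0^{2M}U^2D_\eta U$, coming from the $(\eta_r)_t$ commutator. For these we will invoke Lemma \ref{Uinfty}, giving
\[
|\zeta^\sharp\rho_0^MU|_\infty^2\lesssim 1+|\zeta^\sharp\rho_0^M\sqrt{\eta_r}U|_2\,|\zeta^\sharp\rho_0^M\sqrt{\eta_r}\varrho^{\frac{\delta-1}{2}}D_\eta U|_2,
\]
and we will use Lemmas \ref{lemma zeta sharp rho 1-delta/2 u L 2 L infty} and \ref{lemma ex rho 1-delta/4 u L q L infty}, whose time-integrable $L^\infty$ bounds supply a Gr\"onwall coefficient in $L^1(0,T)$. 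The $V$ part of $\mathcal{E}^\sharp_M$ comes from Lemma \ref{lemma-V-Vr} with $N=M+\gamma-\delta$. Its right-hand side involves $|\zeta^\sharp\rho_0^{M+\gamma-\delta-\beta+\delta-1}\sqrt{\eta_r}\varrho^{\frac{\delta-1}{2}}D_\eta U|_2=|\zeta^\sharp\rho_0^{M}\cdots|_2$, which is exactly the dissipation. We therefore add $\varepsilon$ times its square to the $U$-inequality and close by Gr\"onwall, using the cut-off terms supported in $[\frac12,\frac58]$ controlled by the interior bounds (Lemma \ref{lp-uv}, Lemma \ref{lemma-v Linfty in}). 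The $\int|\zeta^\sharp\rho_0^MU|_\infty^4$ bound then follows from Lemma \ref{Uinfty} squared.

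The main obstacle will be the weight bookkeeping in the commutator and cubic terms, in which $\eta_r$ has no upper bound yet. Every occurrence of $\eta_r$ must be absorbed into the $\sqrt{\eta_r}$-weighted quantities or cancelled through $\varrho=r^2\rho_0/(\eta^2\eta_r)$. If a direct absorption fails, the first thing I would try is interpolating the bad term via Lemma \ref{lemma ex rho 1-delta/4 u L q L infty} with $q$ close to $p_*$.
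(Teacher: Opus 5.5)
There is a genuine gap, and it sits exactly at the step you flag as the main obstacle. Your architecture matches the paper's: the multiplier $2(\zeta^\sharp)^2\rho_0^{2M}\eta_r\varrho^{-1}U$, coupling with Lemma \ref{lemma-V-Vr} at $N=M+\gamma-\delta$, an $\varepsilon$-weighted sum and Gr\"onwall, then Lemma \ref{Uinfty} for $\int_0^t|\zeta^\sharp\rho_0^MU|_\infty^4$. However, the mechanisms you name for the two critical terms do not work.

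Write $E=|\zeta^\sharp\rho_0^M\sqrt{\eta_r}U|_2^2$ and $D=|\zeta^\sharp\rho_0^M\sqrt{\eta_r}\varrho^{\frac{\delta-1}{2}}\mathscr{D}_\eta U|_2^2$. The critical terms are $\int(\zeta^\sharp)^2\rho_0^{2M}\eta_rVU\mathscr{D}_\eta U$ and $\int(\zeta^\sharp)^2\rho_0^{2M}\eta_rU^2\mathscr{D}_\eta U$. Both carry a full factor $\eta_r$, since $U_r=\eta_rD_\eta U$ and $\varrho^{\delta-2}D_\eta\varrho=\frac{V-U}{2a_1\delta}$, and no upper bound on $\eta_r$ is available yet.

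\emph{Cubic term.} After spending $D^{1/2}$ and $E^{1/2}$ you are left with the unweighted $|\zeta^\sharp\varrho^{\frac{1-\delta}{2}}U|_\infty$. Lemma \ref{Uinfty} bounds $|\zeta^\sharp\rho_0^LU|_\infty$ only through the energy and dissipation carrying the same weight $\rho_0^L$, so it is expressible in $E,D$ only when $L\ge M$. Even if the term carried $\rho_0^{3M}$, you would get $E^{3/4}D^{3/4}\le\varepsilon D+CE^3$, which is not globally Gr\"onwall-able.

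\emph{$V$-term.} Cauchy--Schwarz against $D$ together with $|\chi^\sharp\rho_0^{\beta+1-\delta}V|_\infty\le C(T)$ leaves $\int(\zeta^\sharp)^2\rho_0^{2M-2(\beta+1-\delta)}\eta_r\varrho^{1-\delta}U^2$. Since only $\varrho\le C(T)\rho_0$ is known, this is $\eta_rU^2$ against $\rho_0^{2M-2\beta-1+\delta}$, a strictly weaker weight than in $E$. Enlarging $M$ does not help, because $E$ scales with $M$ as well.

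\emph{Why your two rescue devices fail.} The identity $\varrho=r^2\rho_0/(\eta^2\eta_r)$ cannot remove a positive power of $\eta_r$. It trades it for a negative power of $\varrho$, i.e.\ it needs the upper bound on $\eta_r$ that this lemma is used to prove. It only helps for terms with $\eta_r^{-1}$, such as those from differentiating $\rho_0^{2M}$ in the viscous flux. For the same reason the $U/\eta$ contributions are not lower order: $\eta\ge c$ does nothing about $\eta_r$.

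\emph{Missing ingredient.} This is Lemma \ref{etar-L1}, the independent bound $|\chi^\sharp\rho_0^L\eta_r|_1\le C(L,T)$, obtained from $\eta=r+\int_0^tU$ by an integration by parts. It lets you put $\sqrt{\eta_r}$ in $L^2$ at the price of $\rho_0^{\beta+1}$, and put the surplus $U$ in a weighted $L^\infty$ norm that is time-integrable.

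\emph{How the paper uses it.} The $V$-term is bounded by $C|\chi^\sharp\rho_0^{\beta+1-\delta}V|_\infty|\zeta^\sharp\rho_0^{\beta+1}\varrho^{\frac{1-\delta}{2}}U|_\infty|\chi^\sharp\rho_0^{\beta+1}\sqrt{\eta_r}|_2D^{1/2}$. Lemma \ref{lemma zeta sharp rho 1-delta/2 u L 2 L infty} then gives the $L^2$-in-time bound of the $L^\infty$ factor. Fitting $\rho_0^{M}$, $\rho_0^{\beta+1-\delta}$, $\rho_0^{\beta+1}$, $\rho_0^{\beta+1}$ inside $\rho_0^{2M}$ is essentially where $M\ge3(\beta+1)$ comes from, not from a loss of powers in your sense.

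For the cubic term, the paper first writes $\rho_0^{2M}U^2U_r=\frac13\rho_0^{2M}(U^3)_r$ and integrates by parts. That part becomes $\int\rho_0^{2M-\beta}(\rho_0^\beta)_rU^3$, which is free of $\eta_r$ and controlled by Lemma \ref{lemma-refine u Lp}. The remaining $\int(\zeta^\sharp)^2\rho_0^{2M}\eta_rU^3/\eta$ is bounded by $D^{1/2}|\chi^\sharp\rho_0^{\beta+1}\sqrt{\eta_r}|_2|\zeta^\sharp\rho_0^{k/q}\varrho^{\frac{1-\delta}{4}}U|_\infty^2$. It is then handled by Lemma \ref{lemma ex rho 1-delta/4 u L q L infty}, whose Gr\"onwall coefficient $|(\eta_r\varrho^\delta)^{1/q}U|_q^q$ lies in $L^1(0,T)$ by Lemma \ref{lp-uv}.

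Your fallback of taking $q$ near $p_*$ is in fact the admissible choice there. That lemma requires $q\ge\frac{8\delta-4}{1-\delta}$, which exceeds $6.4$ for every $\delta>\frac{13}{18}$. With $k=\frac{2q}{q+2}M$ the Young-inequality bookkeeping closes for every admissible $q$; it only needs $q\ge6$.
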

\begin{proof}
We divide the proof into three steps.

\textbf{1.} We first let $M$ satisfy 
 $M\geq 3(\beta+1)$.
Then multiplying $\eqref{eq:VFBP-La-eta}_1$ by $2(\zeta^\sharp)^2\eta_r\varrho^{-1}\rho_0^{2M}U$, along with \eqref{eq:eta} and \eqref{v-expression}, yields
\begin{align}
&\,((\zeta^\sharp)^2\rho_0^{2M}\eta_rU^2)_t +4a_{1}\delta(\zeta^\sharp)^2\rho_0^{2M}\eta_r \varrho^{\delta-1}\Big(|D_\eta U|^2+\frac{U^2}{\eta^2}\Big)\nonumber\\
&=-\frac{A\gamma}{a_{1}\delta}(\zeta^\sharp)^2 \rho_0^{2M}\eta_r\varrho^{\gamma-\delta} (V-U) U-2(\zeta^\sharp)^2 \rho_0^{2M}\eta_rD_\eta\Phi U\nonumber\\
&\quad +\Big(4a_{1}\delta(\zeta^\sharp)^2\rho_0^{2M} \varrho^{\delta-1}UD_\eta U+4a_{1}\delta\frac{(\zeta^\sharp)^2\rho_0^{2M}\varrho^{\delta-1}U^2}{\eta}\Big)_r\label{442}\\
&\quad -\big(\frac{M}{\beta}\frac{(\rho_0^\beta)_r}{\rho_0^\beta}(\zeta^\sharp)^2+\zeta^\sharp(\zeta^\sharp)_r\big)\Big(8a_{1}\delta\rho_0^{2M}\varrho^{\delta-1}U D_\eta U+8a_{1}\delta\frac{\rho_0^{2M}\varrho^{\delta-1}U^2}{\eta}\Big)\nonumber\\
&\quad +(\zeta^\sharp)^2\rho_0^{2M}(2V-U)UU_r-2(1-\delta)(\zeta^\sharp)^2\rho_0^{2M}\eta_r(V-U)\frac{U^2}{\eta}.\nonumber
\end{align}
Integrating the resulting equality over $I$, we can eventually arrive at
\begin{align}
&\,\frac{\mathrm{d}}{\dt}\big|\zeta^\sharp \rho_0^M\sqrt{\eta_r}U\big|_2^2
+4a_{1}\delta\big|\zeta^\sharp \rho_0^{M} \sqrt{\eta_r}\varrho^\frac{\delta-1}{2}\mathscr{D}_\eta U\big|_2^2\nonumber\\
&= -\frac{A\gamma}{a_{1}\delta}\int_0^1(\zeta^\sharp)^2 \rho_0^{2M}\eta_r\varrho^{\gamma-\delta}  (V-U) U\,\mathrm{d}r -2\int_0^1 (\zeta^\sharp)^2 \rho_0^{2M}\eta_rD_\eta\Phi U \,\mathrm{d}r\nonumber\\
&\quad - \int_0^1\Big(\frac{M}{\beta} \zeta^\sharp \frac{(\rho_0^{\beta})_r}{\rho_0^{\beta}}+(\zeta^\sharp)_r\Big) \zeta^\sharp \rho_0^{2M} U \Big(8a_{1}\delta\varrho^{\delta-1} D_\eta U+8a_{1}\delta\frac{\varrho^{\delta-1}U}{\eta}\Big)   \,\mathrm{d}r \label{GG11-GG13}  \\
&\quad +\int_0^1 2(\zeta^\sharp)^2\rho_0^{2M}\eta_r VU(D_\eta U-(1-\delta)\frac{U}{\eta})\nonumber\\
&\quad -\int_0^1 (\zeta^\sharp)^2\rho_0^{2M}\eta_r U^2(D_\eta U+2(1-\delta)\frac{U}{\eta})\,\mathrm{d}r:=\sum_{i=14}^{18 }  J_i.\nonumber
\end{align}

For $J_{14}$--$J_{15}$, it follows from \eqref{eq:eta},  
Lemmas \ref{lemma-bound depth}, \ref{lemma-lower bound jacobi}, and \ref{Lemma etar 1/2 D phi L2}, and the H\"older and Young inequalities that 
\begin{align}
J_{14}& \leq C(M)\Big|\frac{r^{2}}{\eta^{2}\eta_r}\Big|_\infty^{\gamma-\delta}\big|\chi^\sharp\rho_0^{M+\gamma-\delta}\sqrt{\eta_r}V\big|_2\big|\zeta^\sharp\rho_0^{M}\sqrt{\eta_r}U\big|_2+C_0|\varrho|_\infty^{\gamma-\delta}\big|\zeta^\sharp\rho_0^{M}\sqrt{\eta_r}U\big|_2^2\nonumber\\
&\leq C(M,T)\big(\big|\chi^\sharp\rho_0^{M+\gamma-\delta}\sqrt{\eta_r}V\big|_2^2+\big|\zeta^\sharp \rho_0^{M}\sqrt{\eta_r}U\big|_2^2\big),\\[1.5mm]
J_{15}&\leq C(M)|\sqrt{\eta_r}D_\eta \Phi|_2|\zeta^\sharp\rho_0^M\sqrt{\eta_r}U|_2\leq C(M)+|\zeta^\sharp\rho_0^M\sqrt{\eta_r}U|_2^2.\nonumber
\end{align}

For $J_{16}$, we obtain from \eqref{eq:eta}, Lemmas \ref{lemma-basic energy}, \ref{lemma-far depth}, \ref{lemma-refine u Lp}, and \ref{Uinfty},  the fact that $M\geq 3(\beta+1)$, 
and the H\"older and Young inequalities that
\begin{align}
J_{16}&\leq C(M)(|(\rho_0^\beta)_r|_\infty+|\rho_0^\beta|_\infty)
|(\zeta^\sharp)^2\rho_0^{2M-2\beta}\eta_r^{-1}\varrho^{\delta-1}U^2|_1^\frac{1}{2} \big|\zeta^\sharp\rho_0^M\sqrt{\eta_r}\varrho^{\frac{\delta-1}{2}}\mathscr{D}_\eta U\big|_2\nonumber\\
&\leq C(M) |\eta^2\varrho^\delta|_\infty^\frac{1}{2} |\zeta^\sharp \rho_0^{\frac{2M-2\beta-1}{2}}U|_2 \big|\zeta^\sharp\rho_0^M\sqrt{\eta_r}\varrho^{\frac{\delta-1}{2}}\mathscr{D}_\eta U\big|_2  \label{G13}\\
&\leq C(M,T)+a_{1}\delta\big|\zeta^\sharp\rho_0^M\sqrt{\eta_r}\varrho^{\frac{\delta-1}{2}}\mathscr{D}_\eta U\big|_2^2.\nonumber
\end{align}

For $J_{17}$--$J_{18}$, it follows from the fact that  $M\geq 3(\beta+1)$, integration by parts, Lemmas \ref{lemma-refine u Lp}, \ref{lemma-v Linfty ex},  \ref{etar-L1}, and \ref{lemma zeta sharp rho 1-delta/2 u L 2 L infty}, and the H\"older inequality that
\begin{align}
J_{17}&=  C_0\int_0^1(\zeta^\sharp)^2\rho_0^{2M}\eta_r VU\big( D_\eta U-(1-\delta)\frac{U}{\eta}\big)  \,\mathrm{d}r \nonumber\\
&\leq C(M)|\chi^\sharp \rho_0^{\beta+1-\delta} V|_\infty |\zeta^\sharp\rho_0^{\beta+1} \varrho^{\frac{1-\delta}{2}}U|_\infty  
|\chi^\sharp\rho_0^{\beta+1}\sqrt{\eta_r}|_2|\zeta^\sharp\rho_0^M\sqrt{\eta_r}\varrho^{\frac{\delta-1}{2}}\mathscr{D}_\eta U|_2\nonumber\\
&\leq C(M,T)|\zeta^\sharp\rho_0^{\beta+1} \varrho^{\frac{1-\delta}{2}}U|_\infty^2+a_{1}\delta|\zeta^\sharp\rho_0^M\sqrt{\eta_r}\varrho^{\frac{\delta-1}{2}}\mathscr{D}_\eta U|_2^2,\\
J_{18}&= -\frac{1}{3}\int_0^1(\zeta^\sharp)^2\rho_0^{2M} (U^3)_r\,\mathrm{d}r-2(1-\delta)\int_0^1(\zeta^\sharp)^2\rho_0^{2M}\eta_r \frac{U^3}{\eta}\,\mathrm{d}r\nonumber\\
&= \frac{2}{3}\int_0^1 \Big(\frac{M}{\beta}(\zeta^\sharp)^2\frac{(\rho_0^\beta)_r}{\rho_0^{\beta}}+\zeta^\sharp(\zeta^\sharp)_r\Big)\rho_0^{2M} U^3\,\mathrm{d}r-2(1-\delta)\underline{\int_0^1(\zeta^\sharp)^2\rho_0^{2M}\eta_r \frac{U^3}{\eta}\,\mathrm{d}r}_{:=J_{18,1}}\nonumber\\
&\leq C(M)|((\rho_0^\beta)_r,\rho_0^\beta)|_\infty\big|\chi^\sharp\rho_0^\frac{2M-\beta}{3}U\big|_3^3+J_{18,1}\leq C_0J_{18,1}+C(M,T).\nonumber
\end{align}
For $J_{18,1}$, letting $k=\frac{3}{2}M$ and $q=6$ in Lemma \ref{lemma ex rho 1-delta/4 u L q L infty}, we obtain from Lemmas \ref{etar-L1}, \ref{lemma ex rho 1-delta/4 u L q L infty}, and the H\"older and Young inequalities that 
    \begin{align}
        J_{18,1}&\leq C(M)|\zeta^\sharp\rho_0^M\sqrt{\eta_r}\varrho^{\frac{\delta-1}{2}}\mathscr{D}_\eta U|_2|\zeta_{\frac{1}{3}}^\sharp\rho_0^{\beta+1} \sqrt{\eta_r}|_2|\zeta^\sharp\rho_0^{\frac{k}{q}}\varrho^{\frac{1-\delta}{4}}U|_\infty^2\nonumber\\
        &\leq C(M,T)|\zeta^\sharp\rho_0^M\sqrt{\eta_r}\varrho^{\frac{\delta-1}{2}}\mathscr{D}_\eta U|_2\Big( \big|(\eta_r\varrho^{\delta})^\frac{1}{q}U\big|_q^\frac{2q}{q+2} \big|\zeta^\sharp\sqrt{\eta_r}\varrho^\frac{\delta-1}{2}\rho_0^{\frac{q+2}{2q}k} D_\eta U\big|_2^\frac{4}{q+2} \nonumber \\
        &\quad\quad+1+\big(1+|\zeta^\sharp \rho_0^{k}\sqrt{\eta_r}U|_2^\frac{1}{q}\big|\zeta^\sharp\sqrt{\eta_r}\varrho^\frac{\delta-1}{2}\rho_0^{k} D_\eta U\big|_2^\frac{1}{q}\big)\big|(\eta_r\varrho^{\delta})^\frac{1}{q}U\big|_q^2\Big)\label{GG13}\\
        &\leq  C(M,T)\Big(\big|(\eta_r\varrho^{\delta})^\frac{1}{6}U\big|_6^\frac{3}{2} \big|\zeta^\sharp\rho_0^{M}\sqrt{\eta_r}\varrho^\frac{\delta-1}{2}\mathscr{D}_\eta U\big|_2^\frac{3}{2}+|\zeta^\sharp\rho_0^M\sqrt{\eta_r}\varrho^{\frac{\delta-1}{2}}\mathscr{D}_\eta U|_2\nonumber\\
        &\quad+\big(|\zeta^\sharp\rho_0^M\sqrt{\eta_r}\varrho^{\frac{\delta-1}{2}}\mathscr{D}_\eta U|_2+|\zeta^\sharp \rho_0^{M}\sqrt{\eta_r}U|_2^\frac{1}{6}\big|\zeta^\sharp\sqrt{\eta_r}\varrho^\frac{\delta-1}{2}\rho_0^{M} \mathscr{D}_\eta U\big|_2^\frac{7}{6}\big)\big|(\eta_r\varrho^{\delta})^\frac{1}{6}U\big|_6^2\Big) \nonumber\\
        &\leq C(M,T)\Big(\big|(\eta_r\varrho^{\delta})^\frac{1}{6}U\big|_6^6 +|\zeta^\sharp\rho_0^M \sqrt{\eta_r}U|_2^2  \Big)+a_{1}\delta\big|\zeta^\sharp\rho_0^{M}\sqrt{\eta_r}\varrho^\frac{\delta-1}{2}\mathscr{D}_\eta U\big|_2^2.\nonumber
    \end{align}

Collecting \eqref{GG11-GG13}--\eqref{GG13} yields
\begin{equation}\label{610}
\begin{aligned}
&\,\frac{\mathrm{d}}{\dt}\big|\zeta^\sharp \rho_0^M\sqrt{\eta_r}U\big|_2^2+a_{1}\delta\big|\zeta^\sharp \rho_0^{M} \sqrt{\eta_r}\varrho^{\frac{\delta-1}{2}}\mathscr{D}_\eta U\big|_2^2\\
&\leq C(M,T)\big(1+\big|\chi^\sharp\rho_0^{M+\gamma-\delta}\sqrt{\eta_r}V\big|_2^2+\big|\zeta^\sharp \rho_0^{M}\sqrt{\eta_r}U\big|_2^2+\big|(\eta_r\varrho^{\delta})^\frac{1}{6}U\big|_6^6+|\zeta^\sharp\rho_0^{\gamma} \varrho^{\frac{1-\delta}{2}}U|_\infty^2\big).
\end{aligned}
\end{equation}

\textbf{2.} Now, setting 
$
N=M+\gamma-\delta
$
in \eqref{dtvvr} of Lemma \ref{lemma-V-Vr}, we have
\begin{equation*}
\begin{aligned}
\frac{\mathrm{d}}{\dt}\big|\zeta^\sharp \rho_0^{M+\gamma-\delta}\sqrt{\eta_r}V\big|_2&\leq C(M,T)\big(\big|\zeta^\sharp \rho_0^{M}\sqrt{\eta_r} U\big|_2+\big|\zeta^\sharp\rho_0^{M}\sqrt{\eta_r}\varrho^{\frac{\delta-1}{2}}D_\eta U\big|_2\big),
\end{aligned}
\end{equation*}
which, combined with \eqref{610} and the Young inequality, gives that, for all $\varepsilon\in (0,1)$,
\begin{align*}
&\,\frac{\mathrm{d}}{\dt}\big(\varepsilon\big|\zeta^\sharp \rho_0^{M+\gamma-\delta}\sqrt{\eta_r}V\big|_2^2+\big|\zeta^\sharp \rho_0^{M}\sqrt{\eta_r}U\big|_2^2\big)+a_{1}\delta\big|\zeta^\sharp \rho_0^{M} \sqrt{\eta_r}\varrho^{\frac{\delta-1}{2}}\mathscr{D}_\eta U\big|_2^2\\
&\leq \varepsilon\, C(M,T) \big|\zeta^\sharp \rho_0^{M} \sqrt{\eta_r}\varrho^{\frac{\delta-1}{2}}\mathscr{D}_\eta U\big|_2^2+C(M,T)\big(\big|\zeta^\sharp \rho_0^{M+\gamma-\delta}\sqrt{\eta_r}V\big|_2^2 \big)\\
&\quad +C(\varepsilon,M,T)\big(1+\big|(\eta_r\varrho^{\delta})^\frac{1}{6}U\big|_6^6+|\zeta^\sharp\rho_0^{\beta+1} \varrho^{\frac{1-\delta}{2}}U|_\infty^2+\big|\zeta^\sharp \rho_0^{M}\sqrt{\eta_r}U\big|_2^2\big).    
\end{align*}
Hence, setting
$
\varepsilon=\min\big\{\frac{a_{1}\delta}{10C(M,T)},\frac{1}{2}\big\}
$
and then applying the Gr\"onwall inequality to the resulting inequality, along with Lemmas \ref{lp-uv} and \ref{lemma zeta sharp rho 1-delta/2 u L 2 L infty}, yields that, for all  $M\geq 3(\beta+1)$,
\begin{equation}\label{EM}
\mathcal{E}_M^\sharp(t)+\int_0^t\big|\zeta^\sharp \rho_0^{M} \sqrt{\eta_r}\varrho^{\frac{\delta-1}{2}}\mathscr{D}_\eta U\big|_2^2\,\mathrm{d}s \leq C(M,T) \qquad\text{for all $t\in[0,T]$},
\end{equation}
where $\cE^\sharp_M$ is defined in \eqref{EsharpM}. 

\textbf{3.} Moreover, it follows from Lemma \ref{Uinfty} and \eqref{EM} that, for all $t\in[0,T]$,
\begin{equation*}
\begin{aligned}
\int_0^t|\zeta^\sharp \rho_0^M U|_\infty^4\,\mathrm{d}s&\leq C(M,T)\Big(1+ \int_0^t \big|\zeta^\sharp \rho_0^M\sqrt{\eta_r} U\big|_2^2\big|\zeta^\sharp \rho_0^M \sqrt{\eta_r}\varrho^{\frac{\delta-1}{2}} D_\eta U\big|_2^2\,\mathrm{d}s\Big)\\
&\leq C(M,T)\Big(1+ \sup_{t\in[0,T]}\mathcal{E}_M^\sharp(t) \cdot\int_0^t \big|\zeta^\sharp \rho_0^M \sqrt{\eta_r}  \varrho^{\frac{\delta-1}{2}} D_\eta U\big|_2^2\,\mathrm{d}s\Big)\leq C(M,T).
\end{aligned}
\end{equation*}

This completes the proof of Lemma \ref{goal2}.
\end{proof}

Now, we can derive the uniform upper bounds of $(\eta_r,\frac{\eta}{r})$ in $[0,T]\times [\frac{5}{8},1]$. 
\begin{Lemma}\label{lemma-upper jacobi-ex}
There exists a constant $C(T)>0$ such that
\begin{equation*}
\chi_\frac{5}{8}^\sharp\frac{\eta(t,r)}{r}+\chi_\frac{5}{8}^\sharp\eta_r(t,r) \leq C(T) \qquad \text{for all $(t,r)\in [0,T]\times \bar I$}.
\end{equation*}
\end{Lemma}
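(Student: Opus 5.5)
The plan is to use the fact that near the boundary the effective velocity $V$ contains $D_\eta(\varrho^{\delta-1})$, which is essentially a time derivative of $\log$ of a Jacobian-type quantity. We then bound that time derivative by quantities already controlled in $L^1(0,T)$.

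\textbf{Step 1: a transport identity.} Since $\varrho=r^2\rho_0/(\eta^2\eta_r)$ by \eqref{eq:eta}, the continuity equation gives
\begin{equation*}
\partial_t\log(\eta^2\eta_r)=D_\eta U+\frac{2U}{\eta}.
\end{equation*}
This alone is not controlled pointwise. The plan is therefore to exploit the boundary-adapted combination from \eqref{N111} and Lemma \ref{lemma-low jacobi 1}, namely $\partial_t\log(\eta^{2(1-\frac{1}{\delta})}\eta_r)=D_\eta U-\frac{2(1-\delta)}{\delta}\frac{U}{\eta}$. We will bound its time integral on $[\frac58,1]$ using the Lagrangian form $V-U=\frac{2a_1\delta}{\delta-1}D_\eta(\varrho^{\delta-1})$. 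Writing $\varrho^{\delta-1}=(r^2\rho_0)^{\delta-1}(\eta^2\eta_r)^{1-\delta}$ expresses $(\eta^2\eta_r)^{1-\delta}$ through $V$, $U$ and $\rho_0$. Concretely,
\begin{equation*}
\partial_r\big((\eta^2\eta_r)^{1-\delta}(r^2\rho_0)^{\delta-1}\big)=\frac{\delta-1}{2a_1\delta}\,\eta_r(V-U).
\end{equation*}
Integrating this in $r$ from $\frac58$ to $r$ gives
\begin{equation*}
\varrho^{\delta-1}(t,r)=\varrho^{\delta-1}(t,\tfrac58)-\frac{1-\delta}{2a_1\delta}\int_{5/8}^r\eta_r(V-U)\,\mathrm{d}\hat r.
\end{equation*}

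\textbf{Step 2: bound the right-hand side of this identity.}
\begin{itemize}
\item The term $\varrho^{\delta-1}(t,\frac58)$ is bounded by Lemmas \ref{lemma-lower bound jacobi} and \ref{lemma-bound depth} once an interior bound of $\eta_r$ at $r=\frac58$ is available. To avoid that dependence, we average over $r_0\in[\frac12,\frac58]$ using $|\eta\eta_r\varrho^\delta|_1$ from Lemma \ref{lemma-far depth}.
\item For the integral, $|\eta_r V|\le C\rho_0^{-(\beta+1-\delta)}\eta_r$ by Lemma \ref{lemma-v Linfty ex}. The term $|\eta_r U|\le |\zeta^\sharp\rho_0^MU|_\infty\,\rho_0^{-M}\eta_r$ is controlled via Lemma \ref{lemma-v-vr-u-ur}.
\end{itemize}
The weights $\rho_0^{-L}$ blow up at $r=1$, so this yields an upper bound for $\varrho^{\delta-1}$, that is, a lower bound for $\varrho$ in a weighted sense. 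It does not by itself bound $\eta_r$ from above.

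\textbf{Step 3: the reverse direction, an ODE for $\eta_r$.} From $\partial_t\log(\eta^2\eta_r)=D_\eta U+2U/\eta$ we have $\eta_r(t,r)=\frac{r^2}{\eta^2}\exp\int_0^tD_\eta U+\frac{2U}{\eta}$. Using \eqref{N111}, on $[\frac58,1]$ we then relate $D_\eta U$ to $U/\eta$ plus a term expressible through $V$. More precisely, we differentiate $\eqref{reform boundary condition}$-type relations or use Lemma \ref{Uinfty}/\ref{lemma-v-vr-u-ur} bounds:
\begin{itemize}
\item $\int_0^T|\zeta^\sharp\rho_0^MU|_\infty^4\,\mathrm{d}s\le C$ controls $U/\eta$ where $\rho_0$ is bounded below.
\item Near $r=1$ we use the identity $\eta^{2(1-\frac1\delta)}\eta_r=1$ at $r=1$, together with continuity of $D_\eta(\varrho^{\gamma-1})$.
\end{itemize}

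\textbf{Main obstacle.} The main obstacle is the degenerate weight $\rho_0^M$ near $r=1$. The pointwise control of $U$ from Lemma \ref{lemma-v-vr-u-ur} vanishes at the boundary, so boundedness of $\eta_r$ up to $r=1$ must come from the boundary identity $\eta^{2(1-1/\delta)}\eta_r|_{r=1}=1$ combined with $\eta\ge C^{-1}$. To cover the whole region I would first try the following:
\begin{itemize}
\item Integrate the equation for $\varrho^{\delta-1}$ from $r=1$ inward instead, where $\varrho^{\gamma-1}$ vanishes linearly.
\item Show $\eta^2\eta_r\le C$ via $\eta^2\varrho^\delta\ge$ the bound coming from $\rho_0^\delta r^{2\delta}/(\eta^{2\delta-2}\eta_r^\delta)$, together with the exterior upper bound of $\eta$. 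The latter is $\eta(t,1)\le1+\int_0^T|U(\cdot,1)|$, with $U(\cdot,1)$ controlled through $V$ and Lemma \ref{lemma-v Linfty ex}, and $\eta/r\le\eta(t,1)/r$ by monotonicity.
\end{itemize}
The bound on $\eta/r$ then follows from the bound on $\eta_r$ and $\eta(t,\frac58)$.
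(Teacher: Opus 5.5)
Your Steps 1--2 follow the paper's route. The paper first bounds $|\zeta^\sharp\rho_0^{\frac12}\varrho^{\frac{\delta-1}{2}}|_2$ using the continuity equation and the dissipation in Lemma~\ref{lemma-basic energy}. It then applies $W^{1,1}\hookrightarrow L^\infty$ to $(\zeta^\sharp)^4\rho_0^{2K}\varrho^{\delta-1}$ with $K=100(\beta+\gamma)$, controlling $\partial_r\varrho^{\delta-1}=\frac{\delta-1}{2a_1\delta}\eta_r(V-U)$ by Lemmas~\ref{etar-L1} and~\ref{lemma-v-vr-u-ur}. Two details of your version need that replacement. First, averaging with $|\eta\eta_r\varrho^\delta|_1$ bounds $\varrho$ from above, not $\varrho^{\delta-1}$, so it does not control your base-point term. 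Second, $|\zeta^\sharp\rho_0^MU|_\infty$ is only in $L^4(0,T)$, so at a fixed $t$ you need $\mathcal{E}^\sharp_M$ together with Cauchy--Schwarz. More importantly, your remark at the end of Step 2 is correct and applies verbatim to the paper. The bound $\rho_0^{2K}\varrho^{\delta-1}\le C$ only gives $\varrho\ge c\,\rho_0^{2K/(1-\delta)}$, i.e. $\eta^2\eta_r=r^2\rho_0/\varrho\le Cr^2\rho_0^{1-\frac{2K}{1-\delta}}$, which blows up at $r=1$. In the paper's concluding inequality \eqref{exp} the exponent of $\rho_0$ appears with the opposite sign. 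This mechanism therefore gives the bound only on $[\frac58,1-a]$, with a constant depending on $a>0$, and following the paper will not close your gap.

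The genuine gap is that Step~3 and your ``Main obstacle'' list contain no estimate that is non-degenerate at $r=1$, and each suggestion fails.
\begin{itemize}
\item The formula $\eta_r=\frac{r^2}{\eta^2}\exp\int_0^t(D_\eta U+\frac{2U}{\eta})\,\mathrm{d}s$ only restates what must be proved.
\item Integrating the $\varrho^{\delta-1}$-equation inward from $r=1$ is meaningless, since $\varrho^{\delta-1}=+\infty$ there. Integrating $\partial_r\varrho^\beta=\eta_r\Lambda$ from $r=1$ instead would need a quantitative lower bound on $-\Lambda$, i.e. propagation of the physical vacuum condition, which nothing available provides.
\item Relating $\eta^2\varrho^\delta$ to $r^{2\delta}\rho_0^\delta\eta^{2-2\delta}\eta_r^{-\delta}$ is just an identity. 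Lemma~\ref{lemma-far depth} bounds it from above, which yields the \emph{lower} bound of $\eta_r$ (Lemma~\ref{lemma-low jacobi 1}). A lower bound on $\eta^2\varrho^\delta$ is false, because it vanishes at $r=1$.
\item The identity $\eta^{2(1-\frac1\delta)}\eta_r|_{r=1}=1$ controls $\eta_r$ only at the single point $r=1$, and only once $R(t)=\eta(t,1)$ is bounded. Your bound on $R(t)$ through $U(\cdot,1)$ fails. Lemma~\ref{lemma-v Linfty ex} controls $\rho_0^{\beta+1-\delta}V$, which says nothing at $r=1$; there $|V|\to\infty$, since $V-U$ contains $D_\eta(\varrho^{\delta-1})$. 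Lemma~\ref{lemma-v-vr-u-ur} again carries the weight $\rho_0^M$.
\item Even with $R(t)$ bounded, moving from $r=1$ into $[\frac58,1)$ requires a quantitative modulus of continuity of $\log\eta_r$, e.g. $\partial_r\log\eta_r=\int_0^t\eta_rD_\eta^2U\,\mathrm{d}s$ in $L^1(\frac58,1)$. That is a second-order exterior estimate, which the paper obtains only in \S\ref{Section-globalestimates}, after and by means of this lemma. Continuity of $D_\eta(\varrho^{\gamma-1})$ is merely qualitative.
\end{itemize}
Every exterior bound available at this point carries a positive power of $\rho_0$ as a weight. What is missing is an argument giving $\varrho\ge c\rho_0$, equivalently $\eta^2\eta_r\le Cr^2$, uniformly up to $r=1$.
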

\begin{proof}
We divide the proof into three steps.

\smallskip
\textbf{1.} We can first obtain from $\eqref{eq:VFBP-La}_1$, \eqref{eq:eta}, the fact that $\rho_0^\beta\sim 1-r$, Lemma \ref{lemma-basic energy}, and the H\"older and Minkowski integral inequalities that, for all $t\in [0,T]$,
\begin{equation}\label{log-}
\begin{aligned}
\big|\zeta^\sharp\rho_0^{\frac{1}{2}}\varrho^{\frac{\delta-1}{2}}\big|_2&\leq \big|\zeta^\sharp\rho_0^{\frac{\delta}{2}}\big|_2+C_0\int_0^t \big|\zeta^\sharp\rho_0^{\frac{1}{2}}\varrho^\frac{\delta-1}{2}\mathscr{D}_\eta U\big|_2\,\mathrm{d}s\\
&\leq C_0+C_0\sqrt{t}\big(\int_0^t \big|(\eta^2\eta_r\varrho^\delta)^{\frac{1}{2}}\mathscr{D}_\eta U\big|_2^2\,\mathrm{d}s\big)^\frac{1}{2}\leq C(T).
\end{aligned}
\end{equation}

\smallskip
\textbf{2.} Let
$K:= 100(\beta+\gamma)$.
We obtain from \eqref{v-expression}, \eqref{log-}, Lemmas \ref{lemma-v-vr-u-ur} and \ref{sobolev-embedding}, and the H\"older and Young inequalities that, for all $t\in [0,T]$,
\begin{align}
&\,\big|(\zeta^\sharp)^2\rho_0^K\varrho^{\frac{\delta-1}{2}}\big|_\infty^2\leq C_0\big\|(\zeta^\sharp)^4\rho_0^{2K} \varrho^{\delta-1}\big\|_{1,1}\nonumber\\
&\leq C_0\big(\big|\zeta^\sharp\rho_0^{K}\varrho^{\frac{\delta-1}{2}}\big|_2^2+\big|(\zeta^\sharp)^4\rho_0^{2K-\beta}(\rho_0^\beta)_r \varrho^{\delta-1}\big|_1+\big|(\zeta^\sharp)^4\rho_0^{2K}\eta_r (V,U)\big|_1\big) \label{log-pre}\\
&\leq C_0\big(|(\rho_0^\beta,(\rho_0^\beta)_r)|_\infty\big|\zeta^\sharp\rho_0^{K-\frac{\beta}{2}} \varrho^{\frac{\delta-1}{2}}\big|_2^2+\big|(\zeta^\sharp)^2\rho_0^{K}\sqrt{\eta_r}\big|_2\big|\zeta^\sharp\rho_0^{K} \sqrt{\eta_r} (V,U)\big|_2\big)\leq C(T).\nonumber
\end{align}

\smallskip
\textbf{3. Uniform upper bounds of $(\eta_r,\frac{\eta}{r})$ in $[0,T]\times [\frac{5}{8},1]$.} Let $K = 100(\beta+\gamma) $. Then it follows from \eqref{log-pre} and  Lemma \ref{lemma-lower bound jacobi} that, for all $(t,r)\in [0,T]\times [\frac{5}{8},1]$,
\begin{equation}\label{exp}
\zeta^\sharp\eta_r(t,r)\leq  \frac{r^2\rho_0^{\frac{2}{1-\delta}K-1}(r)}{\eta^2(t,r)}\leq C(T),\qquad \zeta^\sharp\frac{\eta^2}{r^2}\leq \frac{\rho_0^{\frac{2}{1-\delta}K-1}}{\eta_r}\leq C(T). 
\end{equation}

\end{proof}

\subsection{Uniform upper bounds of \texorpdfstring{$(\eta_r,\frac{\eta}{r})$}{} near the symmetric center}\label{subsec-4.3}

\begin{Lemma}\label{lemma zeta rho 1-delta u L 2 L infty}
For any $a\in(0,1)$, there exists a constant $C(a,T)>0$ such that 
\begin{equation*}
    \int_0^t|\zeta_a\varrho^{1-\delta}U|_\infty^2\,\mathrm{d}s\leq C(a,T) \qquad \text{ for all }\, t\in [0,T].
\end{equation*}
\end{Lemma}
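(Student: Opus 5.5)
The plan is to mimic the proof of Lemma \ref{lemma zeta sharp rho 1-delta/2 u L 2 L infty}, now working near the center. We use the facts already available: the upper bound $|\varrho|_\infty\leq C(T)$ from Lemma \ref{lemma-bound depth}, the lower bounds of $(\eta_r,\frac{\eta}{r})$ from Lemma \ref{lemma-lower bound jacobi}, and, crucially, the interior bound $|\zeta_a V|_\infty\leq C(a,T)$ from Lemma \ref{lemma-v Linfty in}. Feeding the last bound into Lemma \ref{new-u-lp} turns \eqref{gujil44} into a genuine bound:
\[
\sup_{s\in[0,t]}\big|(\zeta^2\eta_r\varrho)^\frac{1}{q}U\big|_q^q+\int_0^t\big|(\zeta^2\eta_r\varrho^\delta)^\frac{1}{2}|U|^\frac{q-2}{2}\mathscr{D}_\eta U\big|_2^2\,\mathrm{d}s\leq C(q,T),\qquad q\in[2,p_*).
\]
Away from the origin we instead use Lemma \ref{lp-uv}. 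Its weight $r^2\rho_0$ is bounded below on $[\frac12,\frac{1+3a}{4}]$, since this interval stays away from $r=1$, and $\eta$ is bounded below there. So the same two kinds of bounds (sup of an $L^q$ norm and time-integrated weighted dissipation) hold on $\support\zeta_a$ for every $a\in(0,1)$.

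Fix an integer $q\in[3,p_*)$ with $2q(1-\delta)\geq 1+\delta$. This is possible since $\frac{1+\delta}{2(1-\delta)}<p_*$ for all $\delta\in(\frac{13}{18},1)$: at $\delta=\frac{13}{18}$ the left side is $3.1$ while $p_*>7$, and as $\delta\to1$ we have $p_*\sim (1-\delta)^{-2}$. By Lemma \ref{sobolev-embedding},
\[
|\zeta_a\varrho^{1-\delta}U|_\infty^q\leq C_0\big|(\zeta_a^q\varrho^{q(1-\delta)}|U|^q)_r\big|_1 .
\]
We expand the derivative into three terms.

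First, the term with $(\zeta_a^q)_r$ lives on $[a,\frac{1+3a}{4}]$, where $\rho_0$ and $\eta$ are bounded below, so it is bounded by $C(a,T)|(r^2\rho_0)^{1/q}U|_q^q\leq C(a,T)$.

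Second, for the term with $(\varrho^{q(1-\delta)})_r$, write $\varrho_r=\eta_rD_\eta\varrho=\frac{1}{2a_1\delta}\eta_r\varrho^{2-\delta}(V-U)$ by \eqref{v-expression}. This produces $\zeta_a^q\eta_r\varrho^{(q+1)(1-\delta)}|V-U||U|^q$. We bound $|\zeta_aV|_\infty$ by Lemma \ref{lemma-v Linfty in} and absorb the extra powers of $\varrho$ via $|\varrho|_\infty$, using that all exponents are $\geq1$. This gives a bound by the $L^q$ and $L^{q+1}$ quantities above, hence by $C(a,T)$.

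Third, for the term $\zeta_a^q\varrho^{q(1-\delta)}\eta_r|U|^{q-1}|D_\eta U|$, apply Cauchy--Schwarz as
\[
\big|(\zeta_a^2\eta_r\varrho^\delta)^\frac12|U|^\frac{q-2}{2}D_\eta U\big|_2\,\big|\zeta_a^{q-1}\eta_r^\frac12\varrho^{q(1-\delta)-\frac\delta2}|U|^\frac q2\big|_2 .
\]
Because $2q(1-\delta)-\delta\geq1$, the second factor is at most $C(T)\big|(\zeta_a^2\eta_r\varrho)^{1/q}U\big|_q^{q/2}\leq C(a,T)$. Near the origin the factor $\eta^{2}$ in the dissipation weight is not needed here, so we simply use the $\zeta$-localized dissipation from Lemma \ref{new-u-lp}; on the remaining region $\eta$ is bounded below and the dissipation of Lemma \ref{lp-uv} applies.

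Collecting the three terms gives
\[
|\zeta_a\varrho^{1-\delta}U|_\infty^2\leq C(a,T)\Big(1+\big|(\zeta_a^2\eta_r\varrho^\delta)^\frac12|U|^\frac{q-2}{2}D_\eta U\big|_2^{2/q}\Big).
\]
Since $2/q<2$, integrating in time and applying Hölder's inequality together with the dissipation bounds yields the claim.

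The main obstacle is the exponent bookkeeping: every power of $\varrho$ must remain nonnegative, so that it can be dominated by $|\varrho|_\infty$, while staying compatible with the $\varrho$ or $\varrho^\delta$ weights in the available estimates. This is exactly what the choice $2q(1-\delta)\geq1+\delta$ with $q<p_*$ secures. A secondary issue is splicing the near-origin estimate (Lemma \ref{new-u-lp}) with the estimate on $[\frac12,\frac{1+3a}{4}]$ when $a>\frac12$.
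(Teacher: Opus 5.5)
Your proof is correct. It has the same skeleton as the paper's: the $W^{1,1}\hookrightarrow L^\infty$ bound applied to $\zeta_a^q\varrho^{q(1-\delta)}|U|^q$, the substitution $\varrho_r=\frac{1}{2a_1\delta}\eta_r\varrho^{2-\delta}(V-U)$, Lemma~\ref{lemma-v Linfty in} for $|\zeta_aV|_\infty$, and the $L^p$-energy estimates of Lemmas~\ref{lp-uv} and \ref{new-u-lp}.

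The bookkeeping differs, and yours is slightly cleaner.
\begin{itemize}
\item The paper keeps the weight $\varrho^\delta$. It controls $|(\zeta_a^2\eta_r\varrho^\delta)^{1/q}U|_q^q$ only in $L^1(0,T)$, through the $\frac{U}{\eta}$-part of the dissipation in Lemma~\ref{lp-uv}, and it uses the dissipation $|(\zeta_a^2\eta_r\varrho^\delta)^{1/2}D_\eta U|_2$ without any $|U|$-weight. As a result it must absorb powers of $|\zeta_a\varrho^{1-\delta}U|_\infty$ by Young's inequality before integrating in time.
\item You instead trade $\varrho^\delta$ for $\varrho$ through the condition $2q(1-\delta)\ge 1+\delta$. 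You then use the sup-in-time bound of Lemma~\ref{new-u-lp}, which Lemma~\ref{lemma-v Linfty in} makes unconditional, together with the $|U|^{\frac{q-2}{2}}$-weighted dissipation. This gives $|\zeta_a\varrho^{1-\delta}U|_\infty^q\le C(a,T)(1+Y)$ with $Y\in L^2(0,T)$ directly, with no absorption step.
\end{itemize}

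One detail needs fixing. For the $U$-part of the $(V-U)$ term you invoke the $L^{q+1}$ bound. Lemmas~\ref{new-u-lp} and \ref{lp-uv} give that bound only when $q+1<p_*$, so your range $q\in[3,p_*)$ must be narrowed to $q+1<p_*$. This is compatible with your constraint $2q(1-\delta)\ge1+\delta$:
\begin{itemize}
\item Take $q=\lceil\max\{3,\frac{1+\delta}{2(1-\delta)}\}\rceil$.
\item Then $q+1<\max\{5,\frac{5-3\delta}{2(1-\delta)}\}$.
\item Since $p_*>7$ and $\frac{5-3\delta}{2(1-\delta)}<p_*(\delta)$ on $(\frac{13}{18},1)$, this gives $q+1<p_*$. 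The last inequality holds because $(1-\delta)\frac{5-3\delta}{2}$ is decreasing and $\delta(2\delta-1)+\sqrt{\delta(2\delta-1)(3\delta-2)}$ is increasing on this interval, and it already holds at $\delta=\frac{13}{18}$.
\end{itemize}
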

\begin{proof}
It is easy to check that for each $\delta\in (\frac{13}{18},1)$, there exists some $q\in [3,p_*)$, such that
\begin{equation*}
    (q+2)-(q+4)\delta>0, \qquad q-(q+1)\delta>0, \qquad (q+1)-(q+2)\delta>0.
\end{equation*}
It follows from \eqref{v-expression}, Lemmas \ref{lemma-bound depth}, \ref{lemma-v Linfty in}, and \ref{sobolev-embedding}, and the H\"older and Young inequalities that
\begin{align*}
        &\,|\zeta_a\varrho^{1-\delta}U|_\infty^q\leq |(\zeta_a^q\varrho^{q(1-\delta)}|U|^q)_r|_1\\
        &\leq C_0\big(|\zeta_a^q\varrho^{(q+1)(1-\delta)}\eta_r|U|^q(V-U)|_1+|\zeta_a^q\eta_r\varrho^{q(1-\delta)}|U|^{q-2}UD_\eta U|_1+|\zeta_a^{q}(\zeta_a)_r\varrho^{q(1-\delta)}|U|^q|_1 \big)\\
        &\leq C_0|\zeta_aV|_\infty|\varrho|^{(q+1)-(q+2)\delta}_\infty       |(\zeta_a^2\eta_r\varrho^{\delta})^\frac{1}{q}U|_q^q +C_0|\zeta_a\varrho^{1-\delta}U|_\infty |\varrho|_\infty^{q-(q+1)\delta} |(\zeta_a^2\eta_r\varrho^{\delta})^\frac{1}{q}U|_q^q \\
        &\quad +C_0|\zeta_a\varrho^{1-\delta}U|_\infty^\frac{q-2}{2}|\varrho|^\frac{(q+2)-(q+4)\delta}{2}_\infty |(\zeta_a^2\eta_r\varrho^{\delta})^\frac{1}{q}U|_q^\frac{q}{2}  |(\zeta_a^2\eta_r\varrho^\delta)^\frac{1}{2}D_\eta U|_2+|\varrho|_\infty^{q(1-\delta)}|(r^2\rho_0)^\frac{1}{q}U|_q^q\\
        &\leq C(a,T)\big(1+|(\zeta_a^2\eta_r\varrho^{\delta})^\frac{1}{q}U|_q^q+|(\zeta_a^2\eta_r\varrho^{\delta})^\frac{1}{q}U|_q^\frac{q^2}{q-1}+|(\zeta_a^2\eta_r\varrho^\delta)^\frac{1}{2}D_\eta U|_2^\frac{2q}{3}\big)+\frac{1}{20}|\zeta_a\varrho^{1-\delta}U|_\infty^q,
\end{align*}
which,   along with Lemmas  \ref{lp-uv} and \ref{new-u-lp}, and the H\"older and Young inequalities, yields that 
    \begin{align*}
        &\,\int_0^t|\zeta_a\varrho^{1-\delta}U|_\infty^2\,\mathrm{d}s\\
        &\leq C(a,T)\Big(1+\int_0^t |(\zeta_a^2\eta_r\varrho^{\delta})^\frac{1}{q}U|_q^q\,\mathrm{d}s+\big(\int_0^1 |(\zeta_a^2\eta_r\varrho^\delta)^\frac{1}{2}D_\eta U|_2^2\,\mathrm{d}s\big)^\frac{2}{3}\Big)\leq C(a,T).
    \end{align*}

\end{proof}

\begin{Lemma}\label{lemma7.6 fuzhu}
Let $\alpha=\frac{5\delta-3}{2}$. For any $a\in(0,1)$, there exists a constant $C(a,T)$ such that, for all $t\in [0,T]$ and $\ell=2,\cdots,5$,
\begin{equation}\label{lemma416guji}
|(\zeta_a^2\eta_r\varrho^{1-\alpha})^\frac{1}{\ell}U|_\ell^\ell+ \int_0^t\big|\zeta_a\sqrt{\eta_r}\varrho^\frac{\delta-\alpha}{2}|U|^\frac{\ell-2}{2}\mathscr{D}_\eta U\big|_2^2\,\mathrm{d}s\leq  C(a,\ell,T).
 \end{equation}
\end{Lemma}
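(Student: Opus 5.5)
The plan is to repeat the argument of Lemma \ref{new-u-lp}, now with the additional density weight $\varrho^{-\alpha}$, $\alpha=\frac{5\delta-3}{2}$. For fixed $\ell\in\{2,\dots,5\}$ I would multiply $\eqref{eq:VFBP-La-eta}_1$ by $\zeta_a^2\eta_r\varrho^{-\alpha}|U|^{\ell-2}U$ and integrate over $I$.

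\textbf{Time derivative and viscous terms.} By $\eqref{eq:VFBP-La}_1$ one has $(\eta_r\varrho^{1-\alpha})_t=-\alpha\,\eta_r\varrho^{1-\alpha}\big(D_\eta U+\frac{2U}{\eta}\big)+2\eta_r\varrho^{1-\alpha}\frac{U}{\eta}$, using $(\eta_r)_t=\eta_r D_\eta U$ and \eqref{eq:eta}. Hence the inertial term equals $\frac{1}{\ell}\frac{\mathrm{d}}{\mathrm{d}t}|(\zeta_a^2\eta_r\varrho^{1-\alpha})^{\frac1\ell}U|_\ell^\ell$ plus a cubic-type remainder of the form $\eta_r\varrho^{1-\alpha}|U|^\ell\mathscr{D}_\eta U$. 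Integrating the viscous terms by parts produces:
\begin{itemize}
\item a quadratic form $\zeta_a^2\eta_r\varrho^{\delta-\alpha}J$ in $(X,Y)=|U|^{\frac{\ell-2}{2}}\mathscr{D}_\eta U$;
\item terms containing $D_\eta(\varrho^{-\alpha})=-\alpha\varrho^{-\alpha-1}D_\eta\varrho$.
\end{itemize}
By \eqref{v-expression}, $\varrho^{\delta-2}D_\eta\varrho=\frac{V-U}{2a_1\delta}$, so these terms become $\varrho^{1-\alpha}(V-U)|U|^{\ell-2}U\,\mathscr{D}_\eta U$. The coefficients of $J$ depend on $(\delta,\alpha,\ell)$. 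I expect the choice $\alpha=\frac{5\delta-3}{2}$, together with $\ell\le5$ and $\delta>\frac{13}{18}$, to be exactly what makes the discriminant negative. This would give $J\ge c^*|U|^{\ell-2}|\mathscr{D}_\eta U|^2$, as in \eqref{lemma6.4 I22}. Verifying this sign condition, after folding the $\alpha$-dependent cross terms from the time derivative into $J$, is the step I expect to be the main obstacle. If it fails for some $\ell$, I would keep part of the $UD_\eta U$ cross term outside the form and absorb it with Young's inequality instead.

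\textbf{Remaining terms.} I would bound the other contributions as follows.
\begin{itemize}
\item \emph{Effective velocity.} The $V$-terms satisfy
\begin{equation*}
\int\zeta_a^2\eta_r\varrho^{1-\alpha}|V||U|^{\ell-1}|\mathscr{D}_\eta U| \le |\zeta_a V|_\infty\,\big|\zeta_a\sqrt{\eta_r}\varrho^{\frac{\delta-\alpha}{2}}|U|^{\frac{\ell-2}{2}}\mathscr{D}_\eta U\big|_2\,\big|(\zeta_a^2\eta_r\varrho^{2-\delta-\alpha})^{\frac12}|U|^{\frac\ell2}\big|_2,
\end{equation*}
and $|\zeta_a V|_\infty\le C(a,T)$ by Lemma \ref{lemma-v Linfty in}. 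Since $\varrho\le C(T)$ by Lemma \ref{lemma-bound depth} and $2-\delta-\alpha\ge1-\alpha$, Young's inequality reduces this to the energy term.
\item \emph{Cubic $U$-terms.} Terms of the form $\varrho^{1-\alpha}|U|^{\ell}\mathscr{D}_\eta U$ carry one more power of $U$. I would split off $|\zeta_a\varrho^{1-\delta}U|_\infty$, which lies in $L^2(0,T)$ by Lemma \ref{lemma zeta rho 1-delta u L 2 L infty}, and then use Young to leave $C\,|\zeta_a\varrho^{1-\delta}U|_\infty^2\,|(\zeta_a^2\eta_r\varrho^{1-\alpha})^{\frac1\ell}U|_\ell^\ell$, a Gr\"onwall-admissible term. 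The $\frac U\eta$ factors are controlled by the lower bound of $\frac\eta r$ in Lemma \ref{lemma-lower bound jacobi} together with the $Y$-part of the form.
\item \emph{Pressure and gravity.} The pressure term $\varrho^{\gamma-\alpha}|U|^{\ell-2}D_\eta U$ has $\gamma-\alpha>0$ and is handled as $J_7$ in Lemma \ref{new-u-lp}. The gravity term is bounded with $|D_\eta\Phi|_\infty\le C(T)$ from Lemma \ref{lemma-v Lp ex}.
\item \emph{Cut-off derivatives.} The terms containing $(\zeta_a)_r$ are supported in $[a,\frac{1+3a}{4}]$. There $\varrho^{-\alpha}$ is comparable to powers of $\rho_0$, and $\eta_r$ is bounded below. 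They are therefore controlled by Lemma \ref{lp-uv}, which gives $\int_0^T|(\eta^2\eta_r\varrho^\delta)^{\frac12}|U|^{\frac{\ell-2}{2}}\mathscr{D}_\eta U|_2^2\,\mathrm{d}s\le C$, and by Lemma \ref{lemma-refine u Lp}; this uses $\ell\le5<p_*$.
\end{itemize}

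\textbf{Conclusion.} Collecting these bounds gives
\begin{equation*}
\frac{\mathrm{d}}{\mathrm{d}t}\mathcal{Y}+c\,\mathcal{D}\le C(a,T)\big(1+|\zeta_a\varrho^{1-\delta}U|_\infty^2+\cdots\big)(1+\mathcal{Y}),
\end{equation*}
where $\mathcal{Y}=|(\zeta_a^2\eta_r\varrho^{1-\alpha})^{\frac1\ell}U|_\ell^\ell$, $\mathcal{D}=|\zeta_a\sqrt{\eta_r}\varrho^{\frac{\delta-\alpha}{2}}|U|^{\frac{\ell-2}{2}}\mathscr{D}_\eta U|_2^2$, and the omitted terms are integrable in time by the lemmas above. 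Gr\"onwall's inequality then yields \eqref{lemma416guji}. The initial value is finite because $1-\alpha>0$ and the regularity of $u_0$ is assumed.
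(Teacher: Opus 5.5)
The gap is in your bullet on the ``cubic $U$-terms''; as written, your differential inequality does not close. Write $\mathcal{Y}$ and $\mathcal{D}$ as in your conclusion. Bounding such a term by $\epsilon\,\mathcal{D}$ plus a Gr\"onwall term forces the splitting
\begin{equation*}
\varrho^{1-\alpha}|U|^{\ell}|\mathscr{D}_\eta U|\le \epsilon\,\varrho^{\delta-\alpha}|U|^{\ell-2}|\mathscr{D}_\eta U|^2+\frac{1}{4\epsilon}\big(\varrho^{\frac{1-\delta}{2}}|U|\big)^2\varrho^{1-\alpha}|U|^{\ell},
\end{equation*}
since $2-\alpha-\delta=(1-\alpha)+(1-\delta)$. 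So the Gr\"onwall coefficient you need is $|\zeta_{\frac{1+3a}{4}}\varrho^{\frac{1-\delta}{2}}U|_\infty^2$, not $|\zeta_a\varrho^{1-\delta}U|_\infty^2$. Its time integrability is Lemma \ref{lemma zeta rho 1-delta/2 u L 2 L infty}, which is proved \emph{after}, and \emph{by means of}, the present lemma.

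You cannot trade $\varrho^{1-\delta}U$ for $\varrho^{\frac{1-\delta}{2}}U$, because no lower bound on $\varrho$ near the origin is available yet; it first appears in Lemma \ref{lemma-upper jacobi near}. If you pull out $|\zeta_a\varrho^{1-\delta}U|_\infty$ first, the leftover $\int\zeta_a^2\eta_r\varrho^{\delta-\alpha}|U|^{\ell-1}|\mathscr{D}_\eta U|\,\mathrm{d}r$ is at best $\lesssim\mathcal{D}$. The product $|\zeta_a\varrho^{1-\delta}U|_\infty\,\mathcal{D}$, with a prefactor only in $L^2(0,T)$, cannot be absorbed. The $\frac{U}{\eta}$-type cubic term fails for the same reason.

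The missing idea is to avoid estimating these pure-$U$ cubic terms against the dissipation at all.
\begin{itemize}
\item \textbf{The $D_\eta U$-part.} Combine the time-derivative remainder with the $-U$ part of the $D_\eta(\varrho^{-\alpha})$ term; this gives $-\alpha(\ell-1)\int\zeta_a^2\varrho^{1-\alpha}|U|^\ell U_r\,\mathrm{d}r$. Integrate by parts in $r$ and use \eqref{v-expression} as $(\varrho^{1-\alpha})_r=\frac{1-\alpha}{2a_1\delta}\eta_r\varrho^{2-\alpha-\delta}(V-U)$. This leaves $\frac{\alpha(\ell-1)(1-\alpha)}{2(\ell+1)a_1\delta}\int\zeta_a^2\eta_r\varrho^{2-\alpha-\delta}(V-U)U|U|^\ell\,\mathrm{d}r$ plus a cut-off term. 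The $V$-piece is at most $|\zeta_{\frac{1+3a}{4}}V|_\infty|\zeta_{\frac{1+3a}{4}}\varrho^{1-\delta}U|_\infty\,\mathcal{Y}$. No dissipation is consumed, which is exactly why $\varrho^{1-\delta}U$ suffices. The $-U$-piece is $\le 0$ because $0<\alpha<1$, and is dropped.
\item \textbf{The $\frac{U}{\eta}$-part.} Write $U=V-(V-U)$ and $\varrho^{1-\alpha}(V-U)=\frac{2a_1\delta}{\delta-\alpha}D_\eta(\varrho^{\delta-\alpha})$. One more integration by parts folds this into the $\varrho^{\delta-\alpha}$-weighted quadratic form, as happens also for $4a_1(\delta-1)\frac{U}{\eta}D_\eta(\varrho^\delta)$. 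The $V$-piece is handled as in your effective-velocity bullet.
\end{itemize}
The resulting form is $X^2-2\frac{1-\delta}{\delta-\alpha}XY+\frac{2}{\ell}\frac{1-\alpha}{\delta-\alpha}Y^2$. Because $\delta-\alpha=\frac32(1-\delta)$ and $1-\alpha=\frac52(1-\delta)$, it is positive definite iff $\ell<\frac{15}{2}$. So the discriminant step you feared is harmless and does not use $\delta>\frac{13}{18}$.

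Two smaller corrections:
\begin{itemize}
\item $(\eta_r\varrho^{1-\alpha})_t=\eta_r\varrho^{1-\alpha}\big(\alpha D_\eta U-2(1-\alpha)\frac{U}{\eta}\big)$, so your sign is reversed.
\item On the support of $(\zeta_a)_r$ there is no lower bound on $\varrho$ yet. Absorb the cut-off terms into $\mathcal{D}$ itself, using only $|\varrho|_\infty\le C(T)$ and $\delta-\alpha,\,1-\alpha>0$. Do not route them through the $\varrho^\delta$-weighted dissipation of Lemma \ref{lp-uv}.
\end{itemize}
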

\begin{proof}
Let $\alpha=\frac{5\delta-3}{2}$.  
We divide the proof into three steps.

\textbf{1. $L^2$-estimate for $\zeta_a\sqrt{\eta_r}\varrho^{\frac{1-\alpha}{2}}U$.} First, multiplying $\eqref{eq:VFBP-La-eta}_1$ by $2\zeta_{a}^2\eta_r\varrho^{-\alpha} U$ with $a\in(0,1)$, along with \eqref{v-expression}, gives
\begin{align*}
&\,\big(\zeta_a^2\eta_r\varrho^{1-\alpha}|U|^2\big)_t+4a_{1}\delta\zeta_a^2\eta_r\varrho^{\delta-\alpha}\Big(|D_\eta U|^2-2\frac{1-\delta}{\delta-\alpha} D_\eta U\frac{U}{\eta}+\frac{1-\alpha}{\delta-\alpha}\frac{U^2}{\eta^2}\Big)\\
&=\Big(4a_{1}\delta\zeta_a^2\varrho^{\delta-\alpha}UD_\eta U+4a_{1}\delta\frac{2\delta-\alpha-1}{\delta-\alpha}\frac{\zeta_a^2\varrho^{\delta-\alpha} U^2}{\eta}+\frac{-\alpha}{3}\zeta_a^2\varrho^{1-\alpha}U^3 \Big)_r\\
&\quad -2\zeta_a(\zeta_a)_r\Big(4a_{1}\delta\varrho^{\delta-\alpha}UD_\eta U+4a_{1}\delta\frac{2\delta-\alpha-1}{\delta-\alpha}\frac{\varrho^{\delta-\alpha}U^2}{\eta}+\frac{-\alpha}{3}\varrho^{1-\alpha}U^3 \Big)\\
&\quad -\frac{A\gamma}{a_{1}\delta}\zeta_a^2\eta_r\varrho^{\gamma-\delta+1-\alpha}(V-U)U-2\zeta_a^2\eta_r\varrho^{1-\alpha}D_\eta\Phi U\\
&\quad +2\zeta_a^2\eta_r\varrho^{1-\alpha}VU(\alpha D_\eta U-(1-\alpha)\frac{U}{\eta})+\frac{\alpha(1-\alpha)\zeta_a^2\varrho^{2-\alpha-\delta}\eta_r(V-U)U^3}{6a_{1}\delta}.
\end{align*}
Then, integrating the above over $I$, we obtain
\begin{align}
&\,\frac{\mathrm{d}}{\mathrm{d}t}\big|\zeta_a\sqrt{\eta_r}\varrho^\frac{1-\alpha}{2}U\big|_2^2+ 4a_{1}\delta\int_0^1 \zeta_a^2\eta_r\varrho^{\delta-\alpha} \cdot J_{19}\,\mathrm{d}r \nonumber\\
&=-\frac{A\gamma}{a_{1}\delta}\int_0^1\zeta_a^2\eta_r\varrho^{\gamma-\delta+1-\alpha}(V-U)U\,\mathrm{d}r-\int_0^12\zeta_a^2\eta_r\varrho^{1-\alpha}D_\eta\Phi U\,\mathrm{d}r \nonumber\\
&\quad-2\int_0^1 \zeta_a(\zeta_a)_r\Big(4a_{1}\delta\varrho^{\delta-\alpha}UD_\eta U+4a_{1}\delta\frac{2\delta-\alpha-1}{\delta-\alpha}\frac{\varrho^{\delta-\alpha}U^2}{\eta}+\frac{-\alpha}{3}\varrho^{1-\alpha}U^3\Big)\,\mathrm{d}r \label{lemma7.6 fuzhu eq}\\
&\quad +\int_0^1 2\alpha\zeta_a^2\eta_r\varrho^{1-\alpha}VU(\alpha D_\eta U-(1-\alpha)\frac{U}{\eta})\,\mathrm{d}r   \nonumber\\
&\quad+\frac{\alpha(1-\alpha)}{6a_{1}\delta}\int_0^1\zeta_a^2\eta_r\varrho^{2-\delta-\alpha}(V-U)U^3\,\mathrm{d}r:=\sum_{i=20}^{24}J_i.\nonumber
\end{align}
Here, $J_{19}$ is a binary form:
\begin{equation*}
J_{19}=X^2-2\frac{1-\delta}{\delta-\alpha} XY+\frac{1-\alpha}{\delta-\alpha}Y^2   \qquad \text{with } (X,Y)=\mathscr{D}_\eta U.
\end{equation*}
A direct calculation shows that the discriminant $\mathfrak{D}=\frac{4(1-\delta)^2}{(\delta-\alpha)^2}-\frac{4(1-\alpha)}{\delta-\alpha}$ of $J_{19}$ is strictly negative,
which implies that there exists a constant $c^{*}_{\delta,\alpha}>0$, depending only on $\delta$ and $\alpha$, such that
\begin{equation}
J_{19}\geq  c^{*}_{\delta,\alpha}(X^2+Y^2)= c^{*}_{\delta,\alpha}|\mathscr{D}_\eta U\big|^2.
\end{equation}

We continue to estimate $J_{20}$--$J_{24}$. It follows from Lemmas \ref{lp-uv}, \ref{lemma-bound depth}--\ref{lemma-lower bound jacobi}, \ref{lemma-v Linfty in}, \ref{Lemma etar 1/2 D phi L2}  and the H\"older and Young inequalities that
\begin{align}
J_{20}&\leq C_0|\varrho|_\infty^{\gamma-\delta}|\zeta_a\sqrt{\eta_r}\varrho^\frac{1-\alpha}{2}U|_2^2+C_0\big|\zeta_a\sqrt{\eta_r}\varrho^{\gamma-\delta+\frac{1-\alpha}{2}} V\big|_2|\zeta_a\sqrt{\eta_r}\varrho^\frac{1-\alpha}{2}U|_{2}\nonumber\\
&\leq C(T)\big(|\zeta_a\sqrt{\eta_r}\varrho^\frac{1-\alpha}{2}U|_2^2+\big|\zeta_a\sqrt{\eta_r}\varrho^{\gamma-\delta+\frac{1-\alpha}{2}} V\big|_2^2\big),\nonumber\\
J_{21}&\leq C_0|\varrho|^{\frac{1-\alpha}{2}}_\infty|\zeta_a\sqrt{\eta_r}\varrho^{\frac{1-\alpha}{2}}U|_2|\sqrt{\eta_r}D_\eta\Phi|_2\leq C(T)\big(|\zeta_a\sqrt{\eta_r}\varrho^{\frac{1-\alpha}{2}}U|_2^2+1\big), \nonumber\\
J_{22}&\leq C_0|\varrho|_\infty^\frac{\delta-\alpha}{2}|(r^2\rho_0)^\frac{1}{2}U|_2|\zeta_a\sqrt{\eta_r}\varrho^{\frac{\delta-\alpha}{2}}\mathscr{D}_\eta U|_2+C_0|\varrho|_\infty^{1-\alpha}|(r^2\rho_0)^{\frac{1}{3}}U|_3  \nonumber\\
&\leq a_{1}\delta c^{*}_{\delta,\alpha}|\zeta_a\sqrt{\eta_r}\varrho^{\frac{\delta-\alpha}{2}}\mathscr{D}_\eta U|_2^2+C(T),\label{lemma7.6 fuzhu r1}\\
J_{23}&\leq C_0|\varrho|_\infty^\frac{1-\delta}{2}|\zeta_{\frac{1+3a}{4}} V|_\infty|\zeta_a\sqrt{\eta_r}\varrho^\frac{1-\alpha}{2}U|_2|\zeta_a\sqrt{\eta_r}\varrho^\frac{\delta-\alpha}{2}\mathscr{D}_\eta U|_2\nonumber\\
&\leq C(a,T)|\zeta_a\sqrt{\eta_r}\varrho^\frac{1-\alpha}{2}U|_2^2 +a_{1}  \delta   c^{*}_{\delta,\alpha}|\zeta_a\sqrt{\eta_r}\varrho^\frac{\delta-\alpha}{2}\mathscr{D}_\eta U|_2^2,\nonumber\\
J_{24}&\leq C_0\int_0^1\zeta_a^2\eta_r\varrho^{2-\delta-\alpha}VU^3\,\mathrm{d}r\leq C_0|\zeta_{\frac{1+3a}{4}} V|_\infty|\zeta_{\frac{1+3a}{4}}\varrho^{1-\delta}U|_\infty |\zeta_a\sqrt{\eta_r}\varrho^\frac{1-\alpha}{2}U|_2^2\nonumber\\
&\leq C(a,T)(1+|\zeta_{\frac{1+3a}{4}}\varrho^{1-\delta}U|_\infty^2)|\zeta_a\sqrt{\eta_r}\varrho^\frac{1-\alpha}{2}U|_2^2,\nonumber
\end{align}
where in $J_{24}$, we have used the facts that $0<\alpha<1$ and $\eta_r>0$.

Therefore, collecting \eqref{lemma7.6 fuzhu eq}--\eqref{lemma7.6 fuzhu r1} yields
\begin{equation}\label{lemma7.6 fuzhu eq1}
\begin{aligned}
&\,\frac{\mathrm{d}}{\mathrm{d}t}|\zeta_a\sqrt{\eta_r}\varrho^\frac{1-\alpha}{2}U|_2^2+2a_{1}\delta c^{*}_{\delta,\alpha}\big|\zeta_a\sqrt{\eta_r}\varrho^\frac{\delta-\alpha}{2}\mathscr{D}_\eta U\big|_2^2\\
&\leq C(a,T)(1+|\zeta_{\frac{1+3a}{4}}\varrho^{1-\delta}U|_\infty^2) |\zeta_a\sqrt{\eta_r}\varrho^\frac{1-\alpha}{2}U|_2^2+C(T)(\big|\zeta_a\sqrt{\eta_r}\varrho^{\gamma-\delta+\frac{1-\alpha}{2}}V\big|_{2}^{2}+1).
\end{aligned}
\end{equation}

\textbf{2. Estimate for $V$.} Multiplying \eqref{eq:v} by $\zeta_a^2\eta_r\varrho^{2(\gamma-\delta)+(1-\alpha)}V$, along with  $\eqref{eq:VFBP-La}_1$, yields
\begin{align*}
&\,\frac{1}{2}\big(\zeta_a^2\eta_r\varrho^{2(\gamma-\delta)+(1-\alpha)}V^{2}\big)_t+\frac{A\gamma}{2a_{1}\delta}\zeta_a^2\eta_r\varrho^{3(\gamma-\delta)+(1-\alpha)} V^2\\
&=\big(\frac{\alpha}{2}-(\gamma-\delta)\big)\zeta_a^2\eta_r\varrho^{2(\gamma-\delta)+(1-\alpha)} V^2 D_\eta U-\big(2(\gamma-\delta)+(1-\alpha)\big)\zeta_a^2\eta_r\varrho^{2(\gamma-\delta)+(1-\alpha)} V^2\frac{U}{\eta}\\
&\quad +\frac{A\gamma}{2a_{1}\delta}\zeta_a^2\eta_r\varrho^{3(\gamma-\delta)+(1-\alpha)}  VU-\zeta_a^2\eta_r\varrho^{2(\gamma-\delta)+(1-\alpha)}D_\eta\Phi V  .
\end{align*}
Integrating the above over $I$, we obtain from Lemmas \ref{lemma-bound depth}, \ref{Lemma etar 1/2 D phi L2}, and \ref{lemma-v Linfty in} that
\begin{align}
&\,\frac{1}{2}\frac{\mathrm{d}}{\mathrm{d}t}\big|\zeta_a\sqrt{\eta_r}\varrho^{\gamma-\delta+\frac{1}{2}(1-\alpha)}V\big|_2^2+ \frac{A\gamma}{2a_{1}\delta} \big|\zeta_a\sqrt{\eta_r}\varrho^{\frac{3}{2}(\gamma-\delta)+\frac{1}{2}(1-\alpha)}V\big|_2^2\nonumber\\
&\leq  C_0\big|\zeta_a\sqrt{\eta_r}\varrho^{\gamma-\delta+\frac{1}{2}(1-\alpha)}V\big|_2|\varrho|_\infty^\frac{2\gamma+1-3\delta}{2}|\zeta_{\frac{1+3a}{4}}V|_\infty\big|\zeta_a\sqrt{\eta_r}\varrho^\frac{\delta-\alpha}{2}\mathscr{D}_\eta U\big|_2 \label{lemma7.6 fuzhu eq2}\\
&\quad +C_0|\zeta_a\sqrt{\eta_r}\varrho^{\gamma-\delta+\frac{1}{2}(1-\alpha)}V|_2\big(|\varrho|_\infty^{(\gamma-\delta)+\frac{(1-\alpha)}{2}}|\sqrt{\eta_r}D_\eta\Phi|_2+ |\varrho|_\infty^{2\gamma-2\delta}|\zeta_a\sqrt{\eta_r}\varrho^\frac{1-\alpha}{2}U|_2\big) \nonumber\\
&\leq  C(a,T)\big(\big|\zeta_a\sqrt{\eta_r}\varrho^{\gamma-\delta+\frac{1}{2}(1-\alpha)}V\big|_2^2+|\zeta_a\sqrt{\eta_r}\varrho^\frac{1-\alpha}{2}U|_2^2\big) +a_{1}\delta c^{*}_{\delta,\alpha}\big|\zeta_a\sqrt{\eta_r}\varrho^\frac{\delta-\alpha}{2}\mathscr{D}_\eta U\big|_2^2. \nonumber
\end{align}

Combining \eqref{lemma7.6 fuzhu eq2} with \eqref{lemma7.6 fuzhu eq1} gives
\begin{align*}
&\,\frac{\mathrm{d}}{\mathrm{d}t}\big(|\zeta_a\sqrt{\eta_r}\varrho^\frac{1-\alpha}{2}U|_2^2+\big|\zeta_a\sqrt{\eta_r}\varrho^{\gamma-\delta+\frac{1}{2}(1-\alpha)}V\big|_2^2\big)+a_{1}\delta c^{*}_{\delta,\alpha}\big|\zeta_a\sqrt{\eta_r}\varrho^\frac{\delta-\alpha}{2}\mathscr{D}_\eta U\big|_2^2\\
&\leq C(a,T)\big(1+(1+|\zeta_{\frac{1+3a}{4}}\varrho^{1-\delta}U|_\infty^2) |\zeta_a\sqrt{\eta_r}\varrho^\frac{1-\alpha}{2}U|_2^2+\big|\zeta_a\sqrt{\eta_r}\varrho^{\gamma-\delta+\frac{1-\alpha}{2}}V\big|_{2}^{2}\big),
\end{align*}
which, along with the Gr\"onwall inequality and Lemma \ref{lemma zeta rho 1-delta u L 2 L infty}, yields that, for all $t\in [0,T]$,
\begin{equation}\label{lamme7.6 fuzhu r2}
|\zeta_a\sqrt{\eta_r}\varrho^\frac{1-\alpha}{2}U|_2^2+\big|\zeta_a\sqrt{\eta_r}\varrho^{\gamma-\delta+\frac{1}{2}(1-\alpha)}V\big|_2^2+ \int_0^t\big|\zeta_a\sqrt{\eta_r}\varrho^\frac{\delta-\alpha}{2}\mathscr{D}_\eta U\big|_2^2\,\mathrm{d}s\leq  C(a,T).
\end{equation}

\textbf{3. $L^5$-estimates for $(\zeta_a^2\eta_r\varrho^{1-\alpha})^\frac{1}{5}U$.} Multiplying $\eqref{eq:VFBP-La-eta}_1$ by $\ell\zeta_a^2\eta_r\varrho^{-\alpha}|U|^{\ell-2}U$ with $\ell=3,4,5$, together with \eqref{v-expression}, gives
\begin{align*}
&\,\big(\zeta_a^2\eta_r\varrho^{1-\alpha}|U|^\ell\big)_t+2\ell(\ell-1)a_{1}\delta\zeta_a^2\eta_r\varrho^{\delta-\alpha}\underline{|U|^{\ell-2}\Big(|D_\eta U|^2-2\frac{1-\delta}{\delta-\alpha}D_\eta U\frac{U}{\eta}+\frac{2}{\ell}\frac{1-\alpha}{\delta-\alpha}\frac{U^2}{\eta^2}\Big)}_{:=J_{25}}\\
&=\Big(2a_{1}\delta\ell\zeta_a^2\eta_r\varrho^{\delta-\alpha}|U|^{\ell-2}UD_\eta U+4a_{1}\delta\frac{\ell\delta-\ell+1-\alpha}{\delta-\alpha}\zeta_a^2\varrho^{\delta-\alpha}\frac{|U|^{\ell}}{\eta}-\frac{(\ell-1)\alpha}{\ell+1}\zeta_a^2\varrho^{1-\alpha}U|U|^\ell\Big)_r\\
&\quad -2\zeta_a(\zeta_a)_r \Big(2a_{1}\delta\ell\varrho^{\delta-\alpha}|U|^{\ell-2}UD_\eta U+4a_{1}\delta\frac{\ell\delta-\ell+1-\alpha}{\delta-\alpha}\varrho^{\delta-\alpha}\frac{|U|^{\ell}}{\eta}-\frac{(\ell-1)\alpha}{\ell+1}\varrho^{1-\alpha}U|U|^\ell\Big)      \\
&\quad -\frac{A\gamma\ell}{2a_{1}\delta}\zeta_a^2\eta_r\varrho^{\gamma+1-\delta-\alpha}(V-U)|U|^{\ell-2}U-\ell\zeta_a^2\eta_r\varrho^{1-\alpha}D_\eta\Phi|U|^{\ell-2}U\\
&\quad +\zeta_a^2\eta_r\varrho^{1-\alpha}VU|U|^{\ell-2}\big(\alpha\ell D_\eta U+2(\alpha-1)\frac{U}{\eta}\big) +\frac{(\ell-1)\alpha(1-\alpha)}{2(\ell+1)a_{1}\delta}\zeta_a^2\eta_r\varrho^{2-\delta-\alpha}(V-U)U|U|^{\ell}.
\end{align*}
Here, $J_{25}$ is a binary form:
\begin{equation*}
J_{25}=X^2-2\frac{1-\delta}{\delta-\alpha}XY+\frac{2}{\ell}\frac{1-\alpha}{\delta-\alpha}Y^2   \qquad \text{with } (X,Y)=|U|^\frac{\ell-2}{2}\mathscr{D}_\eta U.
\end{equation*}
A  direct calculation shows that   the discriminant $\mathfrak{D}=\frac{4(1-\delta)^2}{(\delta-\alpha)^2}-\frac{8}{\ell}\frac{1-\alpha}{\delta-\alpha}$ of $J_{25}$ is strictly negative, 
which implies that there exists a constant $c^{*}_{\delta,\ell}>0$, depending only on $(\delta,\ell)$, such that
\begin{equation}
J_{25}\geq  c^{*}_{\delta,\ell}(X^2+Y^2)= c^{*}_{\delta,\ell}|U|^{\ell-2}|\mathscr{D}_\eta U\big|^2.
\end{equation}

Then, integrating the above over $I$, we obtain
\begin{align}
&\,\frac{\mathrm{d}}{\mathrm{d}t}|(\zeta_a^2\eta_r\varrho^{1-\alpha})^\frac{1}{\ell}U|_\ell^\ell+2\ell(\ell-1)a_{1}\delta\int_0^1\zeta_a^2\eta_r\varrho^{\delta-\alpha}J_{25}\,\mathrm{d}r\nonumber\\
&=-\frac{A\gamma\ell}{2a_{1}\delta}\int_0^1\zeta_a^2\eta_r\varrho^{\gamma-\delta+1-\alpha}(V-U)|U|^{\ell-2}U\,\mathrm{d}r-\ell\int_0^1 \zeta_a^2\eta_r\varrho^{1-\alpha}D_\eta\Phi|U|^{\ell-2}U\,\mathrm{d}r \label{6.3}\\
&\quad -2\zeta_a(\zeta_a)_r\int_0^1\Big(2a_{1}\delta\ell\varrho^{\delta-\alpha}|U|^{\ell-2}UD_\eta U+4a_{1}\delta\frac{\ell\delta-\ell+1-\alpha}{\delta-\alpha}\varrho^{\delta-\alpha}\frac{|U|^{\ell}}{\eta}-\frac{(\ell-1)\alpha}{\ell+1}\varrho^{1-\alpha}U|U|^\ell\Big)\,\mathrm{d}r  \nonumber\\
&\quad +\int_0^1\zeta_a^2\eta_r\varrho^{1-\alpha}VU|U|^{\ell-2}\big(\alpha\ell D_\eta U+2(\alpha-1)\frac{U}{\eta}\big)\,\mathrm{d}r\nonumber\\
&\quad+\frac{(\ell-1)\alpha(1-\alpha)}{2\ell(\ell+1)a_{1}\delta}\int_0^1\zeta_a^2\eta_r\varrho^{2-\delta-\alpha}(V-U)U|U|^{\ell}\,\mathrm{d}r:=\sum_{i=26}^{30}J_i.\nonumber
\end{align}
To estimate $J_{26}$--$J_{30}$, it follows from the fact that $0\leq \alpha<1$, Lemmas \ref{lp-uv}, \ref{lemma-bound depth},  \ref{lemma-v Lp ex}, and  \ref{lemma-v Linfty in}, and  the H\"older and Young inequalities that
\begin{align}
J_{26}&\leq C_0|\varrho|_\infty^{\gamma-\delta}|(\zeta_a^2\eta_r\varrho^{1-\alpha})^\frac{1}{\ell}U|_\ell^\ell+C_0|\varrho|_\infty^{\gamma-\delta}|\zeta_{\frac{1+3a}{4}}V|_\infty\big|(\zeta_a^2\eta_r\varrho^{1-\alpha})^\frac{1}{\ell-1}U\big|_{\ell-1}^{\ell-1}\nonumber\\
&\leq C(T)\big(|(\zeta_a^2\eta_r\varrho^{1-\alpha})^\frac{1}{\ell}U|_\ell^\ell+|(\zeta_a^2\eta_r\varrho^{1-\alpha})^\frac{1}{\ell-1}U|_{\ell-1}^{\ell-1}\big),\nonumber\\
J_{27}&\leq C_0|D_\eta\Phi|_\infty|(\zeta_a^2\eta_r\varrho^{1-\alpha})^\frac{1}{\ell-1}U|_{\ell-1}^{\ell-1}\leq C(T) |(\zeta_a^2\eta_r\varrho^{1-\alpha})^\frac{1}{\ell-1}U|_{\ell-1}^{\ell-1},\nonumber\\
J_{28}&\leq C(\ell)|\zeta_a\sqrt{\eta_r}\varrho^{\frac{\delta-\alpha}{2}}|U|^\frac{\ell-2}{2}\mathscr{D}_\eta U|_2|(r^2\rho_0)^\frac{1}{\ell}U|_\ell^\frac{\ell}{2}+C_0|(r^2\rho_0)^\frac{1}{\ell+1}U|_{\ell+1}^{\ell+1} \nonumber \\
&\leq C(\ell,T)+\frac{a_{1} \delta c^{*}_{\delta,\ell}}{8}\big|\zeta_a\sqrt{\eta_r}\varrho^\frac{\delta-\alpha}{2}|U|^\frac{\ell-2}{2}\mathscr{D}_\eta U\big|_2^2,\label{6.8}\\
J_{29}&\leq C(\ell)|\varrho|_\infty^\frac{1-\delta}{2}|\zeta_{\frac{1+3a}{4}} V|_\infty|(\zeta_a^2\eta_r\varrho^{1-\alpha})^\frac{1}{\ell}U|_\ell^\frac{\ell}{2}\big|\zeta_a\sqrt{\eta_r}\varrho^\frac{\delta-\alpha}{2}|U|^\frac{\ell-2}{2}\mathscr{D}_\eta U\big|_2\nonumber\\
&\leq C(\ell,T)|(\zeta_a^2\eta_r\varrho^{1-\alpha})^\frac{1}{\ell}U|_\ell^\ell+\frac{a_{1} \delta c^{*}_{\delta,\ell}}{8}\big|\zeta_a\sqrt{\eta_r}\varrho^\frac{\delta-\alpha}{2}|U|^\frac{\ell-2}{2}\mathscr{D}_\eta U\big|_2^2,\nonumber\\
J_{30}&\leq C(\ell)|\zeta_a^2\eta_r\varrho^{2-\delta-\alpha} V|U|^{\ell+1}|_1\leq C(\ell)|\zeta_{\frac{1+3a}{4}} V|_\infty|\zeta_{\frac{1+3a}{4}}\varrho^{1-\delta}U|_\infty |(\zeta_a^2\eta_r\varrho^{1-\alpha})^\frac{1}{\ell}U|_\ell^\ell \nonumber\\
&\leq C(\ell,T)(1+|\zeta_{\frac{1+3a}{4}}\varrho^{1-\delta}U|_\infty^2) |(\zeta_a^2\eta_r\varrho^{1-\alpha})^\frac{1}{\ell}U|_\ell^\ell.\nonumber
\end{align}

Therefore, collecting \eqref{6.3}--\eqref{6.8} yields
\begin{equation*}
\begin{aligned}
&\,\frac{\mathrm{d}}{\mathrm{d}t}|(\zeta_a^2\eta_r\varrho^{1-\alpha})^\frac{1}{\ell}U|_\ell^\ell+a_{1}\delta c^{*}_{\delta,\ell}\big|\zeta_a\sqrt{\eta_r}\varrho^\frac{\delta-\alpha}{2}|U|^\frac{\ell-2}{2}\mathscr{D}_\eta U\big|_2^2\\
&\leq C(\ell,T)\big(1+(1+|\zeta_{\frac{1+3a}{4}}\varrho^{1-\delta}U|_\infty^2) |(\zeta_a^2\eta_r\varrho^{1-\alpha})^\frac{1}{\ell}U|_\ell^\ell+|(\zeta_a^2\eta_r\varrho^{1-\alpha})^\frac{1}{\ell-1}U|_{\ell-1}^{\ell-1}\big).
\end{aligned}
\end{equation*}
Applying the Gr\"onwall inequality, we can iteratively obtain from \eqref{lamme7.6 fuzhu r2} and Lemma \ref{lemma zeta rho 1-delta u L 2 L infty} that, for $\ell=2,3,4,5$ and any $t\in [0,T]$, \eqref{lemma416guji} holds.


\end{proof}

Based on Lemma \ref{lemma7.6 fuzhu}, we have the following $L^2([0,T];L^\infty(I))$-estimate for $\zeta_a\varrho^\frac{1-\delta}{2}U$.

\begin{Lemma}\label{lemma zeta rho 1-delta/2 u L 2 L infty}
For any $a\in(0,1)$, there exists a constant $C(a,T)>0$ such that
\begin{equation*}
    \int_0^t |\zeta_a\varrho^\frac{1-\delta}{2}U|_\infty^2\,\mathrm{d}s\leq C(a,T) \qquad \text{for all }\, t\in[0,T].
\end{equation*}
\end{Lemma}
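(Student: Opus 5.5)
We follow the pattern of Lemma \ref{lemma zeta rho 1-delta u L 2 L infty}: a one-dimensional Sobolev bound (Lemma \ref{sobolev-embedding}) applied to a power of $\zeta_a\varrho^{\frac{1-\delta}{2}}U$, with the resulting integrals controlled by the weighted $L^\ell$ energies of Lemma \ref{lemma7.6 fuzhu} instead of those of Lemmas \ref{lp-uv} and \ref{new-u-lp}. Fix $q\in\{2,\dots,5\}$, most likely $q=4$ or $5$, and write
\begin{equation*}
|\zeta_a\varrho^{\frac{1-\delta}{2}}U|_\infty^q\leq C_0\big|(\zeta_a^q\varrho^{\frac{q(1-\delta)}{2}}|U|^q)_r\big|_1 .
\end{equation*}
Differentiating produces three terms. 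The first carries $(\zeta_a)_r$; it is supported in $[a,\frac{1+3a}{4}]$, where $\eta_r,\eta$ are bounded below by Lemma \ref{lemma-lower bound jacobi} and $\varrho$ is bounded above by Lemma \ref{lemma-bound depth}, so Lemma \ref{lp-uv} controls it by a constant. The second carries $\varrho_r=\eta_r\varrho\cdot\varrho^{1-\delta}(V-U)/(2a_1\delta)$, by \eqref{v-expression}. The third is $\zeta_a^q\eta_r\varrho^{\frac{q(1-\delta)}{2}}|U|^{q-1}|D_\eta U|$.

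For the second term we bound $|\zeta_{\frac{1+3a}{4}}V|_\infty$ using Lemma \ref{lemma-v Linfty in}. What remains has the form $\zeta_a^2\eta_r\varrho^{\frac{q(1-\delta)}{2}+1-\delta}|U|^q$ together with a $|U|^{q+1}$ analogue. Since $\frac{q(1-\delta)}{2}+1-\delta\geq 1-\alpha$ when $\alpha=\frac{5\delta-3}{2}$ (this holds because $q\ge 2$ and $\delta<1$), the surplus power of $\varrho$ is absorbed by $|\varrho|_\infty$. Both pieces are then bounded by $|(\zeta_a^2\eta_r\varrho^{1-\alpha})^{\frac1\ell}U|_\ell^\ell$ with $\ell=q$ or $q+1\le5$, which Lemma \ref{lemma7.6 fuzhu} makes uniformly bounded. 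For $\ell=q+1$ we need $\frac{q(1-\delta)}{2}+1-\delta\ge 1-\alpha$, and this is again fine.

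For the third term we apply H\"older to split it as
\begin{equation*}
\big|\zeta_a\sqrt{\eta_r}\varrho^{\frac{\delta-\alpha}{2}}|U|^{\frac{\ell-2}{2}}\mathscr{D}_\eta U\big|_2\cdot\big|\zeta_a\sqrt{\eta_r}\varrho^{\frac{q(1-\delta)-\delta+\alpha}{2}}|U|^{q-\frac{\ell}{2}}\big|_2 .
\end{equation*}
We take $\ell=q$, which leaves $|U|^{q/2}$ in the second factor, with $\varrho$-power $q(1-\delta)-\delta+\alpha$ to compare with $1-\alpha$. This exponent inequality is the \emph{main obstacle}: we need $q(1-\delta)+2\alpha-\delta-1\ge0$, i.e. $q(1-\delta)\ge 4-4\delta$, i.e. $q\ge4$. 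Taking $q=4$ therefore works, provided also that the $|U|^{q+1}=|U|^5$ term above stays within Lemma \ref{lemma7.6 fuzhu}'s range $\ell\le5$. It does, and its $\varrho$-exponent is $3(1-\delta)\geq 1-\alpha$. This explains the choice $\alpha=\frac{5\delta-3}{2}$ and the range $\ell\le 5$ in the preceding lemma.

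With $q=4$, the estimate becomes
\begin{equation*}
|\zeta_a\varrho^{\frac{1-\delta}{2}}U|_\infty^4\leq C(a,T)\Big(1+\big|\zeta_a\sqrt{\eta_r}\varrho^{\frac{\delta-\alpha}{2}}|U|\mathscr{D}_\eta U\big|_2\Big).
\end{equation*}
Taking the square root gives $|\zeta_a\varrho^{\frac{1-\delta}{2}}U|_\infty^2\le C(a,T)(1+|\cdots|_2^{1/2})$. Integrating in time and applying Young's inequality, the dissipation integral of Lemma \ref{lemma7.6 fuzhu} with $\ell=4$ then yields the claimed bound.
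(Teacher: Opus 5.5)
Your argument is correct. It follows the paper's structure: a Sobolev bound on a power of $\zeta_a\varrho^{\frac{1-\delta}{2}}U$, the effective velocity to rewrite $\varrho_r$, and closure with Lemma \ref{lemma7.6 fuzhu}. The one real difference is the exponent, and it changes which auxiliary bound is needed.

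\emph{The paper's route ($q=5$).} The $-U$ part of $\varrho_r\propto\eta_r\varrho^{2-\delta}(V-U)$ produces a sextic term $\zeta_a^5\eta_r\varrho^{\frac{7(1-\delta)}{2}}|U|^6$. Lemma \ref{lemma7.6 fuzhu} stops at $\ell=5$, so this term cannot be absorbed directly. The paper therefore pulls out $|\zeta_a\varrho^{1-\delta}U|_\infty$ and uses the identity $1-\alpha=\frac52(1-\delta)$ to land exactly on the $\ell=5$ energy. It then needs the $L^2_tL^\infty$ bound of Lemma \ref{lemma zeta rho 1-delta u L 2 L infty} to integrate in time. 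The viscous term is paired with the $\ell=5$ dissipation, with slack $\frac{1-\delta}{2}$ in the $\varrho$-exponent.

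\emph{Your route ($q=4$).}
\begin{itemize}
\item The top-order term is only quintic. It is closed directly by the $\ell=5$ energy, since $3(1-\delta)\ge\frac52(1-\delta)$.
\item The $D_\eta U$ term sits exactly at the borderline $q\ge4$ and lands on the $\ell=4$ energy and dissipation.
\item So you never need Lemma \ref{lemma zeta rho 1-delta u L 2 L infty} at this step, although it is still used inside the proof of Lemma \ref{lemma7.6 fuzhu}.
\item You get the slightly stronger pointwise-in-time bound $|\zeta_a\varrho^{\frac{1-\delta}{2}}U|_\infty^2\le C(a,T)(1+Y^{1/2})$ with $Y\in L^2(0,T)$.
\end{itemize}

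One small slip: the inequality $\frac{q(1-\delta)}{2}+1-\delta\ge 1-\alpha$ requires $q\ge 3$, not $q\ge 2$. For $q=2$ the left side is $2(1-\delta)<\frac52(1-\delta)$. This is harmless here because you take $q=4$.
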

\begin{proof}
First, let $\alpha=\frac{5\delta-3}{2}$. 
Next, it follows from the fact that $2\alpha-5\delta+3=0$, \eqref{v-expression}, Lemmas \ref{lemma-bound depth}, \ref{lemma-v Linfty in}, \ref{lemma7.6 fuzhu}, and \ref{sobolev-embedding}, and the H\"older and Young inequalities that
\begin{align*}
&\,|\zeta_a\varrho^\frac{1-\delta}{2}U|_\infty^5\leq \big|(\zeta_a^5\varrho^\frac{5(1-\delta)}{2}|U|^5)_r \big|_1\\
&\leq C_0\big(|(\zeta_a^5)_r\varrho^\frac{5(1-\delta)}{2}U^5|_1+|\zeta_a^5\varrho^{\frac{7(1-\delta)}{2}}\eta_r(V-U)U^5|_1+|\zeta_a^5\varrho^\frac{5(1-\delta)}{2}\eta_r |U|^{3}U D_\eta U|_1  \big)\\
&\leq C_0|\varrho|^{\frac{5(1-\delta)}{2}}_\infty|(r^2\rho_0)^\frac{1}{5}U|_5+C_0(|\varrho|_\infty^{1-\delta}|\zeta_aV|_\infty+|\zeta_a\varrho^{1-\delta}U|_\infty)|(\zeta_a^2\eta_r\varrho^{1-\alpha})^{\frac{1}{5}}U|_5^5\\
&\quad+C_0|\varrho|_\infty^\frac{1-\delta}{2}|(\zeta_a^2\eta_r\varrho^{1-\alpha})^\frac{1}{5}U|_5^\frac{5}{2}|\zeta_a\sqrt{\eta_r}\varrho^\frac{\delta-\alpha}{2}|U|^{\frac{3}{2}}D_\eta U|_2\\
&\leq C(a,T)\big(1+|\zeta_a\sqrt{\eta_r}\varrho^\frac{\delta-\alpha}{2}|U|^{\frac{3}{2}}D_\eta U|_2+|\zeta_a\varrho^{1-\delta}U|_\infty\big).
\end{align*}
Consequently, we obtain from the Young inequality that
\begin{equation*}
    |\zeta_a\varrho^\frac{1-\delta}{2}U|_\infty^2\leq C(a,T)\big(1+|\zeta_a\sqrt{\eta_r}\varrho^\frac{\delta-\alpha}{2}|U|^{\frac{3}{2}}D_\eta U|_2^2+|\zeta_a\varrho^{1-\delta}U|_\infty^2\big).
\end{equation*}
Integrating the above over $[0,t]$, along with Lemmas \ref{lemma zeta rho 1-delta u L 2 L infty} and \ref{lemma7.6 fuzhu}, completes the proof.
\end{proof}

\begin{Lemma}\label{lemma-u infty}
For any $a\in(0,1)$, there exists a constant $C(a,T)$ such that, for all $t\in [0,T]$,
\begin{equation*}
|\zeta_{a}\sqrt{\eta_r}U(t)|_2^2+\int_0^t\big(\big|\zeta_{a}\sqrt{\eta_r}\varrho^\frac{\delta-1}{2}\mathscr{D}_\eta U\big|_2^2+|\zeta_{a} U|_\infty^2\big)\,\mathrm{d}s\leq C(a,T).
 \end{equation*}
\end{Lemma}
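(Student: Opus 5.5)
The plan is to repeat Step 1 of Lemma \ref{lemma7.6 fuzhu} with the weight exponent $\alpha$ replaced by $1$. That is, multiply $\eqref{eq:VFBP-La-eta}_1$ by $2\zeta_a^2\eta_r\varrho^{-1}U$, which is the interior analogue of the computation \eqref{442} in Lemma \ref{lemma-v-vr-u-ur}. Using \eqref{eq:eta} and \eqref{v-expression}, this gives
\begin{equation*}
\frac{\mathrm{d}}{\mathrm{d}t}|\zeta_a\sqrt{\eta_r}U|_2^2+4a_1\delta|\zeta_a\sqrt{\eta_r}\varrho^{\frac{\delta-1}{2}}\mathscr{D}_\eta U|_2^2=\sum J,
\end{equation*}
where the remaining terms are of four kinds. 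First, the pressure/gravity terms, $-\frac{A\gamma}{a_1\delta}\zeta_a^2\eta_r\varrho^{\gamma-\delta}(V-U)U$ and $-2\zeta_a^2\eta_rD_\eta\Phi\,U$. Second, the cut-off terms carrying $\zeta_a(\zeta_a)_r$. Third, the convective term $\zeta_a^2\eta_r VU(D_\eta U-(1-\delta)\frac{U}{\eta})$. Fourth, the cubic term $\zeta_a^2\eta_rU^2(D_\eta U+2(1-\delta)\frac U\eta)$. In the viscous part the quadratic form is simply $|D_\eta U|^2+\frac{U^2}{\eta^2}$ (coercive as in \eqref{442}). This is precisely why the weight $\varrho^{-1}$, i.e.\ $\alpha=1$, is admissible.

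The easy terms go as follows. The pressure term is bounded by $|\varrho|_\infty^{\gamma-\delta}$ from Lemma \ref{lemma-bound depth} together with $|\zeta_{\frac{1+3a}{4}}V|_\infty$ from Lemma \ref{lemma-v Linfty in}; the result is $C(a,T)(1+|\zeta_a\sqrt{\eta_r}U|_2^2)$. The gravity term is handled by Lemma \ref{Lemma etar 1/2 D phi L2}. The cut-off terms are supported in $[a,\frac{1+3a}{4}]$, where $\eta_r,\eta,\varrho$ are bounded above and below. The lower bounds come from Lemmas \ref{lemma-lower bound jacobi} and \ref{lemma-low jacobi 1}. The upper bounds on $\eta_r$ in that annulus follow from the already-obtained $\zeta_{a'}\sqrt{\eta_r}\varrho^{\frac{1-\alpha}{2}}U$ control, or, more simply, from Lemma \ref{lemma-upper jacobi-ex} when $a\ge\frac58$. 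Otherwise one absorbs via Young against the dissipation plus Lemma \ref{lp-uv}. For the convective term I would write it as $|\zeta_{\frac{1+3a}{4}}V|_\infty\,|\varrho|_\infty^{\frac{1-\delta}{2}}\,|\zeta_a\sqrt{\eta_r}U|_2\,|\zeta_a\sqrt{\eta_r}\varrho^{\frac{\delta-1}{2}}\mathscr{D}_\eta U|_2$ and absorb it by Young.

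The main obstacle is the cubic term, since it has no favourable density weight. I would split $U^2D_\eta U=(\varrho^{\frac{1-\delta}{2}}U)\cdot U\cdot(\varrho^{\frac{\delta-1}{2}}D_\eta U)$. This gives the bound
\begin{equation*}
|\zeta_a\varrho^{\frac{1-\delta}{2}}U|_\infty\,|\zeta_a\sqrt{\eta_r}U|_2\,|\zeta_a\sqrt{\eta_r}\varrho^{\frac{\delta-1}{2}}\mathscr{D}_\eta U|_2\le \epsilon|\cdots\mathscr{D}_\eta U|_2^2+C|\zeta_a\varrho^{\frac{1-\delta}{2}}U|_\infty^2|\zeta_a\sqrt{\eta_r}U|_2^2 .
\end{equation*}
By Lemma \ref{lemma zeta rho 1-delta/2 u L 2 L infty}, the coefficient $|\zeta_{a'}\varrho^{\frac{1-\delta}{2}}U|_\infty^2$ lies in $L^1(0,T)$, so Gronwall closes the estimate. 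The same split handles the $\frac{U^3}{\eta}$ piece. The initial datum is $|\zeta_au_0|_2<\infty$. This yields the first two terms of the statement.

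Finally, for $\int_0^t|\zeta_aU|_\infty^2$ I would use Lemma \ref{sobolev-embedding}:
\begin{equation*}
|\zeta_aU|_\infty^2\le C|(\zeta_a^2U^2)_r|_1\le C\big(|\zeta_a(\zeta_a)_rU^2|_1+|\zeta_a\sqrt{\eta_r}U|_2\,|\varrho|_\infty^{\frac{1-\delta}{2}}|\zeta_a\sqrt{\eta_r}\varrho^{\frac{\delta-1}{2}}D_\eta U|_2\big).
\end{equation*}
The cut-off piece is controlled by Lemma \ref{lemma-basic energy}, since $\rho_0$ and $r$ are bounded below on the annulus. Integrating in time and using the bounds just obtained completes the proof.
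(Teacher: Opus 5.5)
Your skeleton is the paper's. You use the multiplier $2\zeta_a^2\eta_r\varrho^{-1}U$ with the coercive bookkeeping of \eqref{442}, and you control the convective term via $|\zeta_{\frac{1+3a}{4}}V|_\infty$ from Lemma \ref{lemma-v Linfty in}. You absorb the cubic terms with the interior $L^2(0,T;L^\infty)$ bound on $\zeta_{a}\varrho^{\frac{1-\delta}{2}}U$ proved just before this lemma, close with Gr\"onwall, and finish with the one-dimensional Sobolev inequality for $|\zeta_aU|_\infty$.

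\textbf{The gap: the cut-off terms for $a<\frac58$.} The step that fails as written is the treatment of $8a_1\delta\int_0^1\zeta_a(\zeta_a)_r\varrho^{\delta-1}\big(UD_\eta U+\frac{U^2}{\eta}\big)\,\mathrm{d}r$. You justify it by asserting that $\eta_r$ is bounded above and $\varrho$ below on $[a,\frac{1+3a}{4}]$, with the upper bound on $\eta_r$ coming from the integral controls of Lemma \ref{lemma7.6 fuzhu}. None of this is available at this stage.
\begin{itemize}
\item The controls of Lemma \ref{lemma7.6 fuzhu} are integral bounds and give no pointwise information on $\eta_r$.
\item Lemma \ref{lemma-upper jacobi-ex} only covers $r\geq\frac58$.
\item The interior upper bound on $\eta_r$, equivalently the interior lower bound on $\varrho$, is exactly what Lemma \ref{lemma-upper jacobi near} deduces \emph{from} the present lemma, via \eqref{log h-Linfty}.
\end{itemize}
Since $\delta<1$, the factor $\varrho^{\delta-1}$ is not controlled by Lemma \ref{lemma-bound depth}, so this is not cosmetic. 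Your fallback (Young against the dissipation plus Lemma \ref{lp-uv}) leaves the residual $\int_a^{\frac{1+3a}{4}}\eta_r^{-1}\varrho^{\delta-1}U^2\,\mathrm{d}r$ unexplained.

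\textbf{The fix.} This residual needs no upper bound on $\eta_r$. By \eqref{eq:eta}, $\eta_r^{-1}\varrho^{\delta-1}=\eta^2\varrho^{\delta}/(r^2\rho_0)$. Lemma \ref{lemma-far depth} gives $|\eta^2\varrho^\delta|_\infty\leq C(T)$, and $r^2\rho_0\geq c(a)>0$ on the annulus. Hence, after Young, the residual is at most $C(a,T)|(r^2\rho_0)^{\frac12}U|_2^2\leq C(a,T)$ by Lemma \ref{lemma-basic energy}. The $\frac{\varrho^{\delta-1}U^2}{\eta}$ piece is handled identically. This is exactly the paper's bound for $J_{35}$, and with it your argument closes.

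\textbf{Smaller remarks.}
\begin{itemize}
\item Your direct bound of the pressure term silently uses $\int_0^1\zeta_a^2\eta_r\,\mathrm{d}r\leq\eta(t,\tfrac{1+3a}{4})\leq\eta(t,1)\leq C(T)$, which follows from Lemma \ref{lemma-upper jacobi-ex} and $\eta_r>0$. State it, or do as the paper does and add a coupled energy estimate for $\zeta_a\sqrt{\eta_r}\varrho^{\gamma-\delta}V$, cf.\ \eqref{dtv}.
\item Handling $\zeta_a^2\eta_rU^2D_\eta U$ directly through the $L^2_tL^\infty$ bound is a valid alternative to the paper's route. The paper instead integrates $\zeta_a^2U^2U_r$ by parts into a cut-off $U^3$ term controlled by Lemma \ref{lp-uv}.
\item The coercive form $|D_\eta U|^2+\frac{U^2}{\eta^2}$ in the bookkeeping of \eqref{442} holds for every weight $\varrho^{-\alpha}$, so it is not what singles out $\alpha=1$. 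Substituting $\alpha=1$ literally into $J_{19}$ of Lemma \ref{lemma7.6 fuzhu} would give the non-coercive form $X^2+2XY$.
\end{itemize}
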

\begin{proof}
We divide the proof into four steps.

\textbf{1.} First, multiplying $\eqref{eq:VFBP-La-eta}_1$ by $2\zeta_{a}^2\eta_r\varrho^{-1} U$ with $a\in(0,1)$ and integrating over $I$,  gives
\begin{align}
&\,\frac{\mathrm{d}}{\dt}|\zeta_{a}\sqrt{\eta_r} U|_2^2 + 4a_{1}\delta\big|\zeta_{a} \sqrt{\eta_r}\varrho^\frac{\delta-1}{2} \mathscr{D}_\eta U\big|_2^2 \nonumber\\
&\leq -\frac{A\gamma}{a_{1}\delta}\int_0^1\zeta_{a}^2\eta_r\varrho^{\gamma-\delta}(V-U)U\,\mathrm{d}r-2\int_0^1\zeta_a^2\eta_rD_\eta \Phi U\,\mathrm{d}r \label{dt-g17-g19}  \\
&\quad+2\int_0^1\zeta_{a}^2\eta_rVU D_\eta U\,\mathrm{d}r-2(1-\delta)\int_0^1\zeta_{a}^2\eta_r(V-U) \frac{U^2}{\eta}\,\mathrm{d}r\nonumber\\
&\quad- 2\int_0^1\zeta_{a}(\zeta_{a})_r\Big(4a_{1}\delta\varrho^{\delta-1} UD_\eta U+4a_{1}\delta \frac{\varrho^{\delta-1}U^2}{\eta}-\frac{1}{3}U^3\Big)\,\mathrm{d}r:=\sum_{i=31}^{35}J_i. \nonumber
\end{align}

\textbf{2.} Now, we estimate $J_{31}$--$J_{35}$. It follows from \eqref{eq:eta}, Lemmas \ref{lemma-far depth}, \ref{lp-uv}, \ref{lemma-bound depth}, \ref{lemma-lower bound jacobi}, \ref{lemma-v Linfty in}, and \ref{Lemma etar 1/2 D phi L2}, the fact that $\support(\zeta_a)_r\subset[a,\frac{1+3a}{4}]$, and the H\"older and Young inequalities that
\begin{align}
J_{31}&\leq C_0|\zeta_{a}\sqrt{\eta_r} U|_2\big(\big|\zeta_{a}\sqrt{\eta_r}\varrho^{\gamma-\delta} V\big|_2+|\varrho|_\infty^{\gamma-\delta} |\zeta_{a}\sqrt{\eta_r} U|_2\big)\leq C(T)\big|\zeta_{a}\sqrt{\eta_r} (\varrho^{\gamma-\delta} V,U)\big|_2^2,\nonumber\\
J_{32}&\leq C_0|\zeta_a\sqrt{\eta_r}U|_2|\sqrt{\eta_r}D_\eta\Phi|_2\leq C(T)+ |\zeta_a\sqrt{\eta_r}U|_2^2, \nonumber\\
J_{33}&\leq C_0|\varrho|_\infty^{\frac{1-\delta}{2}} \big|\zeta_{\frac{1+3a}{4}} V\big|_\infty  |\zeta_{a}\sqrt{\eta_r} U|_2|\zeta_{a}\sqrt{\eta_r}\varrho^\frac{\delta-1}{2}D_\eta U|_2\nonumber\\
&\leq C(a,T)|\zeta_{a}\sqrt{\eta_r} U|_2^2+a_{1}\delta|\zeta_{a} \sqrt{\eta_r}\varrho^\frac{\delta-1}{2}D_\eta U|_2^2,\label{g17-g19}\\
J_{34}&\leq C_0 \big(|\varrho|_\infty^\frac{1-\delta}{2}|\zeta_{\frac{1+3a}{4}} V\big|_\infty+|\zeta_{\frac{1+3a}{4}}\varrho^{\frac{1-\delta}{2}}U|_\infty\big)|\zeta_{a}\sqrt{\eta_r} U|_2\big|\zeta_{a}\sqrt{\eta_r}\varrho^\frac{\delta-1}{2}\frac{U}{\eta}\big|_2\nonumber\\
&\leq  C(a,T)\big(1+|\zeta_{\frac{1+3a}{4}}\varrho^{\frac{1-\delta}{2}}U|_\infty^2  \big)|\zeta_{a}\sqrt{\eta_r} U|_2^2+a_{1}\delta\big|\zeta_{a}\sqrt{\eta_r}\varrho^\frac{\delta-1}{2}\frac{U}{\eta}\big|_2^2,  \nonumber\\
J_{35}&\leq C(a,T)|(r^2\rho_0)^\frac{1}{2}U|_2 \big(|\eta^2\varrho^\delta|_\infty^{\frac{1}{2}}\big|\zeta_{a}\sqrt{\eta_r}\varrho^\frac{\delta-1}{2}\mathscr{D}_\eta U\big|_2+\big|(r^2\rho_0)^\frac{1}{4}U\big|_4^2\big)\nonumber\\
&\leq C(a,T)+a_{1}\delta \big|\zeta_{a}\sqrt{\eta_r}\varrho^\frac{\delta-1}{2}\mathscr{D}_\eta U\big|_2^2.\nonumber
\end{align}    
Therefore, we obtain from \eqref{dt-g17-g19}--\eqref{g17-g19} that
\begin{equation}\label{dtu}
\begin{aligned}
&\frac{\mathrm{d}}{\dt}\big|\zeta_{a}\sqrt{\eta_r} U\big|_2^2 + a_{1}\delta\big|\zeta_{a}\sqrt{\eta_r}\varrho^{\frac{\delta-1}{2}}\mathscr{D}_\eta U\big|_2^2 \\
&\leq C(a,T)\big((1+|\zeta_{\frac{1+3a}{4}}\varrho^{\frac{1-\delta}{2}}U|_\infty^2)|\zeta_a\sqrt{\eta_r}U|_2^2+|\zeta_{a}\sqrt{\eta_r} \varrho^{\gamma-\delta} V|_2^2+1\big).  
\end{aligned}
\end{equation}

\textbf{3.} Multiplying \eqref{eq:v} by $2\zeta_{a}^2\eta_r\varrho^{2\gamma-2\delta} V$,  and integrating  over $I$,
along  with $\eqref{eq:VFBP-La}_1$, and Lemmas \ref{lemma-bound depth} and \ref{lemma-v Linfty in}, yields
\begin{align}
\frac{\mathrm{d}}{\dt}\big|\zeta_{a}\sqrt{\eta_r}\varrho^{\gamma-\delta} V\big|_2^2&\leq C_0|\varrho|_\infty^{\gamma-\delta+\frac{1-\delta}{2}} \big|\zeta_{\frac{1+3a}{4}}V\big|_\infty \big|\zeta_{a}\sqrt{\eta_r}\varrho^{\gamma-\delta} V\big|_2\big|\zeta_{a}\sqrt{\eta_r}\varrho^\frac{\delta-1}{2}\mathscr{D}_\eta U\big|_2\nonumber\\
&\quad + C_0\big(\big|\zeta_{a}\sqrt{\eta_r}\varrho^{\gamma-\delta} V\big|_2+ |\varrho|_\infty^{\gamma-\delta} \big|\zeta_{a}\sqrt{\eta_r}U\big|_2\big)|\varrho|_\infty^{\gamma-\delta}\big|\zeta_{a}\sqrt{\eta_r}\varrho^{\gamma-\delta} V\big|_2\label{dtv}\\
&\quad +C_0|\varrho|^{\gamma-\delta}_\infty |\sqrt{\eta_r}D_\eta\Phi|_2 \big|\zeta_{a}\sqrt{\eta_r}\varrho^{\gamma-\delta} V\big|_2    \nonumber \\
&\leq  C(a,T)\big|\zeta_{a}\sqrt{\eta_r} (U,\varrho^{\gamma-\delta}V)\big|_2^2+\frac{a_{1}\delta}{8}\big|\zeta_{a}\sqrt{\eta_r}\varrho^\frac{\delta-1}{2}\mathscr{D}_\eta U\big|_2^2.\nonumber
\end{align}

\textbf{4.} Therefore, combining \eqref{dtu}--\eqref{dtv} leads to
\begin{equation*}
\begin{aligned}
    &\,\frac{\mathrm{d}}{\mathrm{d}t}\big|\zeta_{a}\sqrt{\eta_r}\big(U,\varrho^{\gamma-\delta} V\big)\big|_2^2+ \frac{a_{1}\delta}{8}\big|\zeta_{a}\sqrt{\eta_r}\varrho^\frac{\delta-1}{2}\mathscr{D}_\eta U\big|_2^2\\
    &\leq C(a,T)\big((1+|\zeta_{\frac{1+3a}{4}}\varrho^{\frac{1-\delta}{2}}U|_\infty^2)|\zeta_a\sqrt{\eta_r}U|_2^2+|\zeta_{a}\sqrt{\eta_r} \varrho^{\gamma-\delta} V|_2^2+1\big),
\end{aligned}
\end{equation*}
which, along with the Gr\"onwall inequality, yields that, for all $a\in(0,1)$ and $t\in [0,T]$,
\begin{equation*}
\begin{aligned}
\big|\zeta_{a}\sqrt{\eta_r}(U,\varrho^{\gamma-\delta} V)(t)\big|_2^2+ \int_0^t\big|\zeta_{a}\sqrt{\eta_r}\varrho^\frac{\delta-1}{2}\mathscr{D}_\eta U\big|_2^2\,\mathrm{d}s
&\leq C(a,T)\big(\big|\zeta_{a}(u_0,\rho_0^{\gamma-\delta}v_0)\big|_2^2+1\big) \leq C(a,T).
\end{aligned}    
\end{equation*}
Here, to derive the $L^2$-bound of $\zeta_{a}(u_0,\rho_0^{\gamma-\delta}v_0)$, it suffices to check that
\begin{equation*}
|\zeta_{a}\rho_0^{\gamma-\delta}(\rho_0^{\delta-1})_r|_2\leq |\zeta_{a}\rho_0^{\gamma-1-\beta}(\rho_0^\beta)_r|_2\leq C(a)|(\rho_0^\beta)_r|_\infty\leq C(a).
\end{equation*}

Finally, it follows from the above, Lemmas  \ref{lemma-bound depth}--\ref{lemma-lower bound jacobi}, and \ref{sobolev-embedding}, and the H\"older and Young inequalities that, for all $a\in(0,1)$ and  $t\in[0,T]$,
\begin{align*}
\int_0^t |\zeta_{a} U|_\infty^2\,\mathrm{d}s 
&\leq C(a)\int_0^t \big(\big|\zeta_{\frac{1+3a}{4}}\sqrt{\eta_r}U\big|_2^2+ |\varrho|_\infty^\frac{1-\delta}{2}|\zeta_{a}\sqrt{\eta_r}U|_2|\zeta_{a}\sqrt{\eta_r}\varrho^{\frac{\delta-1}{2}}D_\eta U|_2\big)\,\mathrm{d}\leq C(a,T).
\end{align*}

This completes the proof of Lemma \ref{lemma-u infty}.
\end{proof}

We now establish the global uniform upper bounds of $(\eta_r,\frac{\eta}{r})$ in $[0,T]\times [0,a]$ for $a\in (0,1)$.
\begin{Lemma}\label{lemma-upper jacobi near}
For any $a\in (0,1)$, there exists a constant $C(a,T)>0$ such that
\begin{equation*}
\chi_a\frac{\eta(t,r)}{r}+\chi_a\eta_r(t,r)\leq C(a,T) \qquad \text{for all $(t,r)\in [0,T]\times \bar I$},
\end{equation*}
where  $\chi_a$ denotes the characteristic function on $[0,a]$ 
{\rm  (}see {\rm \S \ref{othernotation})}.

\end{Lemma}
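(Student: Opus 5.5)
The plan is to avoid integrating $D_\eta U$ in time and instead get a pointwise-in-time \emph{lower} bound for $\varrho$ on $[0,\frac34]$. By \eqref{eq:eta}, such a lower bound is equivalent to the upper bound $\eta^2\eta_r\le C(T)r^2$. The ingredients are the effective velocity relation \eqref{v-expression}, the uniform bound on $\zeta_{a}V$ from Lemma \ref{lemma-v Linfty in}, the pointwise-in-time $L^2$ bound on $\zeta_a\sqrt{\eta_r}U$ from Lemma \ref{lemma-u infty}, and the exterior bounds of Lemma \ref{lemma-upper jacobi-ex}. For $a\ge \frac34$ the statement then follows by combining the case $a=\frac34$ with Lemma \ref{lemma-upper jacobi-ex}, since $[0,a]\subset[0,\frac34]\cup[\frac58,1]$. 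So it suffices to treat $a=\frac34$.

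\textbf{Step 1 (representation of $\varrho^{\delta-1}$).} From \eqref{v-expression},
\[
(\varrho^{\delta-1})_r=\eta_r D_\eta(\varrho^{\delta-1})=\frac{\delta-1}{2a_1\delta}\,\eta_r (V-U).
\]
Integrating over $[r,\frac34]$ gives, for $r\in[0,\frac34]$,
\[
\varrho^{\delta-1}(t,r)\le \varrho^{\delta-1}(t,\tfrac34)+C_0\int_0^{3/4}\eta_r\big(|V|+|U|\big)\,\mathrm{d}r .
\]

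\textbf{Step 2 (boundary term at $r=\frac34$).} Since $\frac34\in[\frac58,1]$, Lemma \ref{lemma-upper jacobi-ex} gives $\eta(t,\frac34)\le C(T)$ and $\eta_r(t,\frac34)\le C(T)$. With \eqref{eq:eta} and $\rho_0(\frac34)>0$, this yields $\varrho(t,\frac34)\ge C(T)^{-1}$, hence $\varrho^{\delta-1}(t,\frac34)\le C(T)$. Since $\eta_r>0$, $\eta$ is increasing in $r$, so $\int_0^{3/4}\eta_r\,\mathrm{d}r=\eta(t,\tfrac34)\le C(T)$.

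\textbf{Step 3 (the integral term).} Take $\zeta_{a'}$ with $a'=\frac34$, so that $\zeta_{a'}\equiv1$ on $[0,\frac34]$. The $V$-part is bounded by
\[
|\zeta_{a'}V|_\infty\,\eta(t,\tfrac34)\le C(T)
\]
using Lemma \ref{lemma-v Linfty in}. By Cauchy--Schwarz, the $U$-part is bounded by
\[
|\zeta_{a'}\sqrt{\eta_r}U|_2\,\eta(t,\tfrac34)^{\frac12}\le C(T)
\]
using the \emph{pointwise-in-time} estimate of Lemma \ref{lemma-u infty}. This step is the crux: it only works because both bounds hold uniformly in $t$, not merely in $L^2$ in time, so I would check that Lemma \ref{lemma-u infty} indeed applies with $a=\frac34$. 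Since $\delta<1$, Steps 1--3 give $\varrho^{\delta-1}\le C(T)$, i.e. $\varrho\ge C(T)^{-1}$ on $[0,T]\times[0,\frac34]$.

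\textbf{Step 4 (conclusion).} By \eqref{eq:eta} and $\rho_0\le C_0$, we get $\eta^2\eta_r\le C(T)r^2$ on $[0,\frac34]$. Integrating from $0$ with $\eta(t,0)=0$ gives $\eta^3\le C(T)r^3$, i.e. $\frac{\eta}{r}\le C(T)$. Finally, the lower bound $\frac{\eta}{r}\ge C(T)^{-1}$ from Lemma \ref{lemma-lower bound jacobi} gives
\[
\eta_r\le C(T)\frac{r^2}{\eta^2}\le C(T),
\]
which is the claim on $[0,\frac34]$.
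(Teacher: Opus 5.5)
Your proof is correct, and it takes a shorter route than the paper's.

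\textbf{The paper's route.} The paper does not use Lemma \ref{lemma-upper jacobi-ex} here at all. It proceeds in three stages:
\begin{enumerate}
\item It proves $|\chi_a\sqrt{\eta_r}V|_2\le C(a,T)$ by an energy estimate on \eqref{eq:v}, closed with Gr\"onwall.
\item It proves an $L^2$ bound on $\chi_a\sqrt{\eta_r}\varrho^{\delta-1}$ by multiplying the continuity equation by $\chi_a\eta_r\varrho^{2\delta-3}$. This is closed by a second Gr\"onwall argument, fed by the time-integrated dissipation from Lemma \ref{lemma-u infty}.
\item Only then does it obtain $|\chi_a\varrho^{\delta-1}|_\infty\le C(a,T)$ from the one-dimensional Sobolev embedding, with $D_\eta(\varrho^{\delta-1})$ expressed through $V-U$ exactly as in your Step 1.
\end{enumerate}
In that argument, the $L^2$ norm of $\chi_a\varrho^{\delta-1}$ plays the role of your anchor value $\varrho^{\delta-1}(t,\frac34)$.

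\textbf{Your route.} You replace both energy estimates by the fundamental theorem of calculus from the anchor point $r=\frac34$. There the exterior lemma bounds $\varrho$ from below and also gives $\int_0^{3/4}\eta_r\,\mathrm{d}r=\eta(t,\frac34)\le C(T)$. The sup bound of Lemma \ref{lemma-v Linfty in} and the pointwise-in-time bound of Lemma \ref{lemma-u infty} then close the estimate at once, with no Gr\"onwall argument and no Sobolev embedding.

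\textbf{Trade-off.} The cost is logical independence. The paper derives the interior and exterior upper bounds separately (as in Figure \ref{fig:1}) and combines them only at the end, whereas your interior bound depends on the exterior one. This dependence is mild. You invoke Lemma \ref{lemma-upper jacobi-ex} only at points of $[\frac34,a]$ with $a<1$, where $\rho_0$ is bounded below. There the exterior estimate converts into a pointwise bound on $\eta^2\eta_r$ without any degeneracy from the vacuum boundary. Anchoring at $b=\max\{\frac34,\frac{1+a}{2}\}$ instead of $\frac34$ would also remove your case split.
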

\begin{proof}
We divide the proof  into three steps.

\textbf{1.} First, multiplying \eqref{eq:v} by $2\chi_{a} \eta_r V$ ($a\in(0,1)$) and integrating the resulting equality over $I$, together with Lemmas \ref{lemma-bound depth}, \ref{lemma-v Linfty in}, and \ref{lemma-u infty}, and the H\"older inequality, gives
\begin{equation*}
\begin{aligned}
&\,\frac{\mathrm{d}}{\dt}|\chi_{a}\sqrt{\eta_r}V|_2^2+\frac{A\gamma}{a_{1}\delta}\big|\chi_{a}\sqrt{\eta_r}\varrho^\frac{\gamma-\delta}{2}V\big|_2^2=\int_0^a V^2U_r-2\eta_rVD_\eta\Phi\,\mathrm{d}r+\frac{A\gamma}{a_{1}\delta}\int_0^a \eta_r \varrho^{\gamma-\delta} UV\,\mathrm{d}r\\
&\leq C_0|\chi_a\sqrt{\eta_r}V|_2\big(|\zeta_{a} V|_\infty|\varrho|_\infty^\frac{1-\delta}{2} |\zeta_{a} \sqrt{\eta_r}\varrho^\frac{\delta-1}{2} D_\eta U|_2+|\sqrt{\eta_r}D_\eta \Phi|_2+\frac{A\gamma}{a_{1}\delta}|\varrho|_\infty^{\gamma-\delta}|\zeta_{a}\sqrt{\eta_r}U|_2\big),
\end{aligned}
\end{equation*}
which, along with the Young inequality, leads to
\begin{equation}\label{fangzhe1}
\frac{\mathrm{d}}{\dt}|\chi_{a}\sqrt{\eta_r}V|_2^2 \leq C(a,T)\big(|\chi_{a}\sqrt{\eta_r}V|_2^2+|\zeta_{a} \sqrt{\eta_r}\varrho^\frac{\delta-1}{2} D_\eta U|_2^2+1\big).
\end{equation}
Then applying the Gr\"onwall inequality to the above and using Lemma \ref{lemma-u infty}, we obtain that, for any $t\in [0,T]$ and $a\in (0,1)$,
\begin{equation}\label{435}
|\chi_{a}\sqrt{\eta_r}V(t)|_2\leq C(a,T)(|\chi_{a}v_0|_2+1)\leq C(a,T).
\end{equation}
Here, to derive the $L^2$-bound of $\chi_{a}v_0$, it suffices to note that $\rho_0^\beta\sim 1-r$ and
\begin{equation*}
|\chi_{a}(\rho_0^{\delta-1})_r|_2\leq C(a)|(\rho_0^\beta)_r|_2\leq C(a).
\end{equation*}

\textbf{2. Uniform boundedness of $\varrho^{\delta-1}$ near the origin.} First, it follows from \eqref{v-expression}, \eqref{435}, Lemmas \ref{lemma-lower bound jacobi}, \ref{lemma-u infty}, and \ref{sobolev-embedding}, and the H\"older and Young inequalities that
\begin{align}
|\chi_{a}\varrho^{\delta-1}|_\infty^2&\leq C(a)|\chi_{a}\varrho^{\delta-1}|_2^2+ C(a)|\chi_{a}\varrho^{\delta-1}(\varrho^{\delta-1})_r|_1 \nonumber\\
&\leq C(a)|\chi_{a}\varrho^{\delta-1}|_2^2+C(a)|\chi_{a}\sqrt{\eta_r}\varrho^{\delta-1}|_2|\chi_{a}\sqrt{\eta_r}D_\eta(\varrho^{\delta-1})|_2\label{log h-pre}\\
&\leq C(a)(|\eta_r^{-1}|_\infty+1) |\chi_{a}\sqrt{\eta_r}\varrho^{\delta-1}|_2^2+\big|\sqrt{\eta_r}(\chi_{a}V,\zeta_{a}U)\big|_2^2\nonumber\\
&\leq C(a,T)(|\chi_{a}\sqrt{\eta_r}\varrho^{\delta-1}|_2^2+1).\nonumber
\end{align}

Next, multiplying $\eqref{eq:VFBP-La}_1$ by $2(\delta-1)\chi_{a}\eta_r\varrho^{2\delta-3}$ and integrating the resulting equality over $I$, we obtain from the above, and the H\"older and Young inequalities that
\begin{align*}
\frac{\mathrm{d}}{\dt}|\chi_{a}\sqrt{\eta_r}\varrho^{\delta-1}|_2^2&=(3-2\delta)\int_0^a \eta_r (\varrho^{\delta-1})^2 D_\eta U\,\mathrm{d}r+(4-4\delta)\int_0^a \eta_r(\varrho^{\delta-1})^2\frac{U}{\eta}\,\mathrm{d}r\\
&\leq C_0|\varrho|_\infty^\frac{1-\delta}{2} |\chi_{a} \varrho^{\delta-1}|_\infty |\chi_{a}\sqrt{\eta_r}\varrho^{\delta-1}|_2\big|\zeta_{a}\sqrt{\eta_r}\varrho^\frac{\delta-1}{2}\mathscr{D}_\eta U\big|_2 \\
&\leq C(a,T)(|\chi_{a}\sqrt{\eta_r}\varrho^{\delta-1}|_2^2+1)\big|\zeta_{a}\sqrt{\eta_r}\varrho^\frac{\delta-1}{2}\mathscr{D}_\eta U\big|_2,    
\end{align*}
which, along with Lemma \ref{lemma-u infty}, the fact that $\rho_0^\beta\sim 1-r$, and the Gr\"onwall and H\"older inequalities, yields that, for all $t\in [0,T]$ and $a\in (0,1)$,
\begin{equation}\label{log h-L2}
|\chi_{a}\sqrt{\eta_r} \varrho^{\delta-1} (t)|_2\leq C(a,T)(|\chi_{a} \rho_0^{\delta-1}|_2+1)\leq C(a,T). 
\end{equation}
This, together with \eqref{log h-pre}, also yields that, for all $t\in [0,T]$ and $a\in (0,1)$,
\begin{equation}\label{log h-Linfty}
|\chi_{a} \varrho^{\delta-1}(t)|_\infty\leq C(a,T).
\end{equation}

\textbf{3. Upper bounds for $(\eta_r,\frac{\eta}{r})$ in $[0,T]\times [0,a]$.} Note that \eqref{log h-Linfty}, together with \eqref{eq:eta}, implies that, for all $(t,r)\in [0,T]\times [0,a]$ and $a\in (0,1)$,
\begin{equation}\label{439}
\frac{r^2\rho_0(r)}{(\eta^2\eta_r)(t,r)}\geq C(a,T)^{-1} \implies (\eta^2\eta_r)(t,r) \leq C(a,T)r^2.
\end{equation}
Therefore, it follows from \eqref{439} and Lemma \ref{lemma-lower bound jacobi} that, for all $(t,r)\in [0,T]\times [0,a]$,
\begin{equation*}
\frac{\eta(t,r)}{r} \leq \Big(\frac{C(a,T)}{\eta_r(t,r)}\Big)^\frac{1}{2} \leq C(a,T),\qquad \eta_r(t,r)\leq  \frac{C(a,T)r^2}{\eta^2(t,r)}\leq C(a,T).
\end{equation*}

This completes the proof of Lemma \ref{lemma-upper jacobi near}.
\end{proof}

Consequently, Lemma \ref{lemma-upper jacobi near}, combined with Lemma \ref{lemma-upper jacobi-ex}, yields that, for all $(t,r)\in[0,T]\times \bar I$,
\begin{equation*}
\frac{\eta(t,r)}{r}+\eta_r(t,r)=\chi_\frac{5}{8}\big(\frac{\eta(t,r)}{r}+ \eta_r(t,r)\big)+\chi_\frac{5}{8}^\sharp\big(\frac{\eta(t,r)}{r}+ \eta_r(t,r)\big)\leq C(T),
\end{equation*} 
which thus provides the global uniform upper bounds of $(\eta_r,\frac{\eta}{r})$ in $[0,T]\times \bar I$:
\begin{Lemma}\label{lemma-upper jacobi}
There exists a constant $C(T)>0$ such that
\begin{equation*}
\frac{\eta(t,r)}{r}+\eta_r(t,r)\leq C(T) \qquad \text{for all $(t,r)\in[0,T]\times \bar I$}.
\end{equation*}
\end{Lemma}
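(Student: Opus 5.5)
The statement of Lemma \ref{lemma-upper jacobi} is exactly the combination already carried out in the paragraph preceding it, so the proof is a direct splicing of the two localized upper bounds. I would write $\bar I=[0,\tfrac58]\cup(\tfrac58,1]$ and use the decomposition $1=\chi_{\frac58}+\chi_{\frac58}^\sharp$ with the characteristic functions from \S\ref{othernotation}. On the exterior piece I invoke Lemma \ref{lemma-upper jacobi-ex}, which gives $\chi^\sharp_{\frac58}(\frac{\eta}{r}+\eta_r)\leq C(T)$ on $[0,T]\times\bar I$. On the interior piece I invoke Lemma \ref{lemma-upper jacobi near} with the fixed choice $a=\frac58\in(0,1)$, which yields $\chi_{\frac58}(\frac{\eta}{r}+\eta_r)\leq C(\tfrac58,T)$. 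Since $a$ is now a fixed number, $C(\frac58,T)$ depends only on $T$ (and $C_0$, $G$), so it may be absorbed into a generic $C(T)$.

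Adding the two pointwise bounds for every $(t,r)\in[0,T]\times\bar I$ gives $\frac{\eta}{r}+\eta_r\leq C(T)$, as claimed. No further estimate is required.

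The only point I would check, and the only possible obstacle, is that the constants in the two lemmas are genuinely uniform on their respective regions up to $r=0$ and $r=1$. Near the origin this uniformity comes from \eqref{439}, namely $\eta^2\eta_r\le C r^2$, combined with the lower bound on $\eta_r$ from Lemma \ref{lemma-lower bound jacobi}. Near the boundary it comes from \eqref{exp}, which uses $\rho_0^\beta\sim1-r$. Both lemmas are stated on the whole of $[0,T]\times\bar I$ with the cut-offs built in, so no gap remains.
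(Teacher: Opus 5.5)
Your argument is the paper's own proof: write $1=\chi_{\frac58}+\chi^\sharp_{\frac58}$, bound the interior piece by Lemma \ref{lemma-upper jacobi near} with the fixed choice $a=\frac58$ and the exterior piece by Lemma \ref{lemma-upper jacobi-ex}, then add. One caution about your closing check: \eqref{exp} does not by itself give uniformity up to $r=1$. On $[\frac58,1]$, \eqref{log-pre} gives $\rho_0^K\varrho^{\frac{\delta-1}{2}}\le C(T)$, which is equivalent to $\eta^2\eta_r\le C(T)\,r^2\rho_0^{1-\frac{2K}{1-\delta}}$; this exponent is negative, whereas the printed \eqref{exp} has its sign reversed. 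So the splicing itself is fine, but the exterior bound it imports from Lemma \ref{lemma-upper jacobi-ex} is not actually secured by that display.
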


Finally, it follows from \eqref{eq:eta} and Lemmas \ref{lemma-lower bound jacobi} and \ref{lemma-upper jacobi} that
 there is no vacuum formation inside the fluids in finite time: 
\begin{Lemma}\label{non-vac}
There exists a constant $C(T)>1$ such that
\begin{equation*}
C(T)^{-1}\leq \frac{\varrho(t,r)}{\rho_0(r)}=\frac{r^2}{(\eta^2\eta_r)(t,r)}\leq C(T) \qquad\,\, \text{for all $(t,r)\in[0,T]\times \bar I$}.
\end{equation*}
\end{Lemma}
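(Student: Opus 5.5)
The statement is essentially a corollary of the two-sided bounds on the flow map that have already been established. I would proceed directly from the mass-conservation identity \eqref{eq:eta},
\begin{equation*}
\frac{\varrho(t,r)}{\rho_0(r)}=\frac{r^2}{(\eta^2\eta_r)(t,r)},
\end{equation*}
which holds pointwise in $[0,T]\times \bar I$ with $\eta(t,r)>0$ for $r>0$ by Definition \ref{definition-lag}. The equality in the statement is therefore immediate. Writing $\frac{r^2}{\eta^2\eta_r}=\big(\frac{\eta}{r}\big)^{-2}\eta_r^{-1}$ reduces the claim to two-sided bounds on $\frac{\eta}{r}$ and $\eta_r$.

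For the upper bound, Lemma \ref{lemma-lower bound jacobi} gives $\frac{\eta}{r}\geq C(T)^{-1}$ and $\eta_r\geq C(T)^{-1}$ on all of $[0,T]\times\bar I$. Hence $\big(\frac{\eta}{r}\big)^{-2}\eta_r^{-1}\leq C(T)^3$.

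For the lower bound, Lemma \ref{lemma-upper jacobi} gives $\frac{\eta}{r}+\eta_r\leq C(T)$. Hence $\big(\frac{\eta}{r}\big)^{-2}\eta_r^{-1}\geq C(T)^{-3}$.

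After renaming constants, both bounds hold with a single $C(T)>1$.

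The only subtle point is the symmetry center $r=0$, where the quotient is formally $0/0$. There I would interpret $\frac{r^2}{\eta^2\eta_r}$ through $\frac{\eta}{r}$, which is continuous up to $r=0$ by Definition \ref{definition-lag} (iii) with limit $\eta_r(t,0)$. The bounds then extend by continuity. At $r=1$ both $\varrho$ and $\rho_0$ vanish, so the statement should be read as a bound on the quotient $\frac{r^2}{\eta^2\eta_r}$, which is continuous on $\bar I$.

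I expect no genuine obstacle here. All the difficulty lies in the preceding Lemmas \ref{lemma-lower bound jacobi} and \ref{lemma-upper jacobi}.
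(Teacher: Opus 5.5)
Your proposal is correct and matches the paper's argument: the paper also obtains this lemma directly from the identity \eqref{eq:eta} together with the lower bounds of Lemma~\ref{lemma-lower bound jacobi} and the upper bounds of Lemma~\ref{lemma-upper jacobi} on $(\eta_r,\frac{\eta}{r})$. Your remarks on reading the quotient at $r=0$ and $r=1$ are a reasonable clarification that the paper leaves implicit.
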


\section{Global  Uniform Weighted Energy Estimates on the Velocity}\label{Section-globalestimates}
The purpose of this section is to establish the global uniform estimates on the velocity. 
We first give the following lemma, which will be frequently used in the later analysis.
\begin{Lemma}\label{im-1}
For any $a\in (0,1)$, the following estimates hold{\rm :}
\begin{align*}
&\big|\zeta_a r\mathscr{D}_\eta f\big|_2\sim\Big|\zeta_a r\big(D_\eta f+ \frac{2f}{\eta}\big)\Big|_2,\\
&\big|\zeta_a r\mathscr{D}_\eta^k f\big|_2\sim \Big|\zeta_a r 
\mathscr{D}_\eta^{k-2} \Big(D_\eta\big(D_\eta f+ \frac{2f}{\eta}\big)\Big) \Big|_2 \qquad\text{for $k=2,3,4$},
\end{align*}
where $F_1\sim F_2$ denotes that there exists a constant $C(T)\geq 1$ depending only on $(C_0,T)$ such that $C(T)^{-1}F_1\leq F_2\leq C(T)F_1$.
\end{Lemma}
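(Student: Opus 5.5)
The plan is to view these as radial versions of the elliptic estimate $\|\nabla \boldsymbol f\|_{L^2}\sim\|\dive \boldsymbol f\|_{L^2}$ for radial vector fields $\boldsymbol f=f\frac{\boldsymbol x}{|\boldsymbol x|}$. In Eulerian coordinates $x=\eta(t,r)$ we have $D_\eta f=f_x$ and $D_\eta f+\frac{2f}{\eta}=\dive\boldsymbol f$. By Lemmas \ref{lemma-lower bound jacobi} and \ref{lemma-upper jacobi}, $\eta_r$ and $\frac{\eta}{r}$ lie in $[C(T)^{-1},C(T)]$, so the weight $r$ is comparable to $\eta$ and $\mathrm{d}r$ is comparable to $\mathrm{d}x$. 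Every norm $|\zeta_a r\,\cdot\,|_2$ is therefore equivalent, up to $C(T)$, to the Eulerian weighted norm $\|\tilde\zeta_a x\,\cdot\,\|_{L^2(0,R(t))}$, with $\tilde\zeta_a=\zeta_a\circ\eta_*$. So I would prove everything in Eulerian radial variables and only use the cut-off through its support. The direction "$\gtrsim$" is trivial, since $D_\eta f+\frac{2f}{\eta}$ is a combination of components of $\mathscr{D}_\eta f$. The work is the reverse inequality.

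\textbf{Case $k=1$.} Set $g=f_x+\frac{2f}{x}=x^{-2}(x^2f)_x$. For radial functions with $f(0)=0$,
\[
\int_0^X \tilde\zeta_a^2\, x^2\Big(f_x^2+\frac{2f^2}{x^2}\Big)\,\mathrm{d}x=\int_0^X \tilde\zeta_a^2\, x^2 g^2\,\mathrm{d}x+\text{cut-off terms}.
\]
This follows by expanding $g^2$ and integrating by parts the cross term $4\tilde\zeta_a^2 x f f_x=2\tilde\zeta_a^2 x (f^2)_x$. The cut-off terms have the form $\int \tilde\zeta_a(\tilde\zeta_a)_x x f^2$. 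They are supported in $[\eta(a),\eta(\tfrac{1+3a}{4})]$, where $x\sim1$. Since $\tilde\zeta_a$ is decreasing, this boundary term has a favourable sign. Concretely, $(\tilde\zeta_a^2)_x\le0$ makes $-\int(\tilde\zeta_a^2)_x x f^2\ge0$ appear on the correct side, which is what I expect. If the sign works out otherwise, I would absorb the term instead, using a Hardy-type bound $\int_{\mathrm{supp}} f^2\lesssim \int \tilde\zeta_{\tilde a}^2 x^2 g^2$. That bound follows from $x^2f=\int_0^x y^2 g\,\mathrm{d}y$ together with Cauchy--Schwarz, for a slightly larger cut-off. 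Either way this yields $\|\tilde\zeta_a x\mathscr{D}_\eta f\|_2\lesssim\|\tilde\zeta_a x g\|_2$. The formula $x^2 f=\int_0^x y^2 g$ also directly gives $|f/x|\le \frac{1}{x^3}\int_0^x y^2|g|$. Hardy's inequality in $L^2(x^2\,\mathrm{d}x)$ then controls $\|x\cdot f/x\|_2$, and $f_x=g-2f/x$ gives the rest. This is probably the cleanest route, with the cut-off handled through monotonicity of $\zeta_a$.

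\textbf{Cases $k=2,3,4$.} The key identity is $D_\eta g=D_\eta^2 f+2D_\eta(\frac f\eta)$, together with the commutation $D_\eta(\frac f\eta)=\frac1\eta\big(D_\eta f-\frac f\eta\big)$. With this, $h:=D_\eta f-\frac{f}{\eta}$ satisfies $D_\eta h+\frac{3h}{\eta}=D_\eta g$. This is the same divergence structure in "dimension 4", with $h=O(x)$ at the origin. The Hardy argument above (weights $x^3$) then bounds $\frac{h}{\eta}$ and $D_\eta h$, and hence all of $\mathscr{D}_\eta^2 f$, by $D_\eta g$ in the weighted $L^2$. For $k=3,4$ I apply $\mathscr{D}_\eta^{k-2}$ and iterate. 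Each component of $\mathscr{D}_\eta^3f,\mathscr{D}_\eta^4 f$ (such as $\frac{D_\eta^2 f}{\eta}$, $D_\eta^2(\frac f\eta)$, $\frac1\eta D_\eta(\frac f\eta)$) is rewritten as $\mathscr{D}_\eta$-type expressions of $D_\eta g$ plus combinations of $h/\eta^j$ and derivatives. These are again controlled by the $(k-1)$ case applied to the function $D_\eta g$, or by its Hardy inequality with higher-dimensional weight, using Taylor vanishing at $x=0$ ($f$ odd, so $h/\eta$ is even and smooth).

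\textbf{Main obstacle.} The hard step is the bookkeeping in the iterated step, showing that singular terms like $\frac{1}{\eta}D_\eta(\frac f\eta)$ are controlled without losing the weight $r$. I would handle this by working always with the "divergence in dimension $n$" operators $D_\eta+\frac{n-1}{\eta}$ and applying the corresponding Hardy inequality for $n=3,4,5,6$. The cut-off commutators only produce lower-order terms supported where $r\sim1$. There, by the $(k-1)$ estimate on a larger cut-off $\zeta_{\frac{1+3a}{4}}$, they are bounded by the right-hand side, using nestedness $\zeta_a\le\zeta_{\tilde a}$.
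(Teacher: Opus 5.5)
Your $k=1$ computation is correct, and so is the identity $D_\eta h+\frac{3h}{\eta}=D_\eta g$ for $k=2$. Both use the paper's mechanism: expand the square, integrate the cross term by parts, and note that the cut-off term has a good sign because $\zeta_a$ is decreasing. The paper writes this as $D_\eta\zeta\le0$, keeps the weight $\eta^2\eta_r$ throughout, and only at the end replaces it by $r^2$ via the two-sided bounds on $(\eta_r,\frac{\eta}{r})$.

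The gap is in $k=3,4$. There the plan stays at the level of bookkeeping, and the devices you name cannot give the stated inequality, because the right-hand side contains only specific top-order combinations. Take $f=\eta^3$. Then $G:=D_\eta(D_\eta f+\frac{2f}{\eta})=10\eta$, so $D_\eta^2G=D_\eta(\frac{G}{\eta})=0$ and $\mathscr{D}_\eta^4f=0$. But $D_\eta^3f=6$, $\frac{G}{\eta^2}=\frac{10}{\eta}$ and $\frac{h}{\eta^3}=\frac{2}{\eta}$ do not vanish. Hence no lower-order remainder can be absorbed into $|\zeta_a r\mathscr{D}_\eta^{2}G|_2$. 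This rules out splitting components of $\mathscr{D}_\eta^kf$ into ``$\mathscr{D}_\eta$-type expressions of $D_\eta g$ plus combinations of $h/\eta^j$''. It also rules out bounding cut-off commutators through the $(k-1)$ case on the larger cut-off $\zeta_{\frac{1+3a}{4}}$; there, moreover, $\zeta_a\le\zeta_{\frac{1+3a}{4}}$ goes the wrong way for returning to $\zeta_a$. Finally, the ``$(k-1)$ case applied to $D_\eta g$'' is not the relevant statement, since it concerns $D_\eta(D_\eta G+\frac{2G}{\eta})$, which is not on the right-hand side.

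What is missing are the exact identities writing each right-hand component as $\eta D_\eta q+cq$, with $q$ one of the quantities to be controlled. Set $v=\frac{f}{\eta}$.

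For $k=3$: $D_\eta G=\eta D_\eta p+5p$ and $G=\eta D_\eta w+4w$, with $p=D_\eta^2v$ and $w=D_\eta v$. Then $D_\eta^3f=\eta D_\eta p+3p$ and $\frac{D_\eta^2f}{\eta}=p+\frac{2w}{\eta}$.

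For $k=4$: $D_\eta^2G=\eta D_\eta q+6q$ and $D_\eta(\frac{G}{\eta})=\eta D_\eta y+6y$, with $q=D_\eta^3v$ and $y=D_\eta(\frac{1}{\eta}D_\eta v)$. Then $D_\eta^4f=\eta D_\eta q+4q$ and $D_\eta(\frac{D_\eta^2f}{\eta})=q+2y$. For $k=4$ these are precisely the identities in the paper's proof.

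Each relation is closed by
\[
\int\tilde\zeta_a^2x^m(xq_x+cq)^2\,\mathrm{d}x=\int\tilde\zeta_a^2x^{m+2}q_x^2\,\mathrm{d}x+c(c-m-1)\int\tilde\zeta_a^2x^mq^2\,\mathrm{d}x-c\int(\tilde\zeta_a^2)_x\,x^{m+1}q^2\,\mathrm{d}x .
\]
The pairs are $(m,c)=(2,5)$ for $D_\eta G$, $(0,4)$ for $\frac{G}{\eta}$ (whose weighted norm is $\int\tilde\zeta_a^2G^2\,\mathrm{d}x$), and $(2,6)$ for both $D_\eta^2G$ and $D_\eta(\frac{G}{\eta})$. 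Since $c>m+1$ and $(\tilde\zeta_a^2)_x\le0$, every term on the right is nonnegative. So no lower-order term and no larger cut-off ever appears.

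Your ``divergence in dimension $n$'' heuristic is the right one. It must, however, be applied to these particular functions $p,w,q,y$, with constants up to $c=6$, and with the sign of the cut-off term doing all the work.
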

\begin{proof}
For simplicity, we only give the proof for $a=\frac{1}{2}$ and show that 
\begin{equation*}
\big|\zeta r\mathscr{D}_\eta^4 f\big|_2\leq C(T) \Big|\zeta r 
\mathscr{D}_\eta^2 \Big(D_\eta\big(D_\eta f+ \frac{2f}{\eta}\big)\Big) \Big|_2.
\end{equation*}
Besides, the following fact will be used frequently later:
\begin{equation*}
D_\eta\zeta\leq 0 \qquad \text{for all $(t,r)\in [0,T]\times\bar I$}. 
\end{equation*}

First, a direct calculation gives
\begin{equation}\label{L6}
\begin{aligned}
&\,\Big|\zeta (\eta^2\eta_r)^\frac{1}{2} D_\eta^3\big(D_\eta f+ \frac{2f}{\eta}\big)\Big|_2^2\\
&=\int_0^1 \zeta^2 \eta^2 \eta_r \big(|D_\eta^4 f|^2 +4 \big|D_\eta^3\big(\frac{f}{\eta}\big)\big|^2\big) \,\mathrm{d}r+\underline{4\int_0^1 \zeta^2\eta^2 \eta_r D_\eta^4f D_\eta^3\big(\frac{f}{\eta}\big)\,\mathrm{d}r}_{:=L_{1}}.
\end{aligned}
\end{equation}
Then, for $L_{1}$, note that the following identity holds:
\begin{equation*}
D_\eta^4 f=\eta D_\eta^4\big(\frac{f}{\eta}\big)+ 4 D_\eta^3\big(\frac{f}{\eta}\big).
\end{equation*}
Hence, this, together with integration by parts, yields  
\begin{equation*}
\begin{aligned}
L_{1}&=16\int_0^1 \zeta^2 \eta^2 \eta_r \big|D_\eta^3\big(\frac{f}{\eta}\big)\big|^2 \,\mathrm{d}r +4\int_0^1 \zeta^2 \eta^{3} \eta_r D_\eta^4\big(\frac{f}{\eta}\big) D_\eta^3\big(\frac{f}{\eta}\big)\,\mathrm{d}r\\
&=10\int_0^1 \zeta^2 \eta^2 \eta_r \big|D_\eta^3\big(\frac{f}{\eta}\big)\big|^2 \,\mathrm{d}r -4\int_0^1 \zeta D_\eta \zeta \eta^{3} \eta_r  \big|D_\eta^3 \big(\frac{f}{\eta}\big)\big|^2\, \mathrm{d}r\geq 0,    
\end{aligned}
\end{equation*}
which, along with \eqref{L6}, gives
\begin{equation}\label{L6'}
\Big|\zeta (\eta^2\eta_r)^\frac{1}{2} D_\eta^3\big(D_\eta f+ \frac{2f}{\eta}\big)\Big|_2^2\geq  \int_0^1 \zeta^2 \eta^2 \eta_r |D_\eta^2(\mathscr{D}_r^2 f)|^2  \,\mathrm{d}r.
\end{equation}

Next, notice that
\begin{equation}\label{idenen}  
D_\eta\big(\frac{D_\eta^2 f}{\eta}\big)=D_\eta^3\big(\frac{f}{\eta}\big)+2D_\eta\Big(\frac{1}{\eta}D_\eta\big(\frac{f}{\eta}\big)\Big)=\eta D_\eta^2\Big(\frac{1}{\eta}D_\eta\big(\frac{f}{\eta}\big)\Big)+4D_\eta\Big(\frac{1}{\eta}D_\eta\big(\frac{f}{\eta}\big)\Big).
\end{equation}
Hence, this, together with a direct calculations, implies
\begin{align}
&\,\Big|\zeta (\eta^2\eta_r)^\frac{1}{2} D_\eta\Big(\frac{1}{\eta}D_\eta\big(D_\eta f+ \frac{2 f}{\eta}\big)\Big)\Big|_2^2\nonumber\\
&=\Big|\zeta(\eta^2\eta_r)^\frac{1}{2}\Big(\eta D_\eta^2\big(\frac{1}{\eta}D_\eta\big(\frac{f}{\eta}\big)\big)+6D_\eta\big(\frac{1}{\eta}D_\eta\big(\frac{f}{\eta}\big)\big)\Big)\Big|_2^2\label{9.6}\\
&=\int_0^1 \zeta^2 \eta^{2} \eta_r \Big(\Big|\eta D_\eta^2\Big(\frac{1}{\eta}D_\eta\big(\frac{f}{\eta}\big)\Big)\Big|^2 +36 \Big|D_\eta\Big(\frac{1}{\eta}D_\eta\big(\frac{f}{\eta}\big)\Big)\Big|^2\Big)\,\mathrm{d}r\nonumber\\
&\quad +\underline{12\int_0^1 \zeta^2\eta^{3}\eta_r D_\eta^2\Big(\frac{1}{\eta}D_\eta\big(\frac{f}{\eta}\big)\Big)D_\eta\Big(\frac{1}{\eta}D_\eta\big(\frac{f}{\eta}\big)\Big)\,\mathrm{d}r}_{:=L_{2}},\nonumber
\end{align}
where $L_{2}$ can be handled by integration by parts:
\begin{equation*}
\begin{aligned}
L_{2}&=-18\int_0^1 \zeta^2\eta^2\eta_r\Big|D_\eta\Big(\frac{1}{\eta}D_\eta\big(\frac{f}{\eta}\big)\Big)\Big|^2\,\mathrm{d}r-12\int_0^1 \zeta D_\eta\zeta \eta^{3} \eta_r \Big|D_\eta\Big(\frac{1}{\eta}D_\eta\big(\frac{f}{\eta}\big)\Big)\Big|^2 \,\mathrm{d}r\\
&\geq -18 \int_0^1 \zeta^2\eta^2\eta_r\Big|D_\eta\Big(\frac{1}{\eta}D_\eta\big(\frac{f}{\eta}\big)\Big)\Big|^2\,\mathrm{d}r.    
\end{aligned}
\end{equation*}
Therefore, \eqref{9.6}, combined with the above, implies 
\begin{equation}\label{l6"}
\Big|\zeta (\eta^2\eta_r)^\frac{1}{2} D_\eta\Big(\frac{1}{\eta}D_\eta\big(D_\eta f+ \frac{2 f}{\eta}\big)\Big)\Big|_2^2\geq 18\int_0^1 \zeta^2 \eta^{2} \eta_r   \Big|D_\eta\Big(\frac{1}{\eta}D_\eta\big(\frac{f}{\eta}\big)\Big)\Big|^2 \,\mathrm{d}r.
\end{equation}

Finally, it follows from the above and \eqref{L6'}--\eqref{idenen} that
\begin{equation}\label{l6""}
\int_0^1 \zeta^2 \eta^{2} \eta_r \Big|D_\eta\big(\frac{D_\eta^2 f}{\eta}\big)\Big|^2 \,\mathrm{d}r\leq C_0\Big|\zeta (\eta^2\eta_r)^\frac{1}{2} \sD_\eta^2 \Big(D_\eta \big(D_\eta f+ \frac{2f}{\eta}\big)\Big)\Big|_2^2.
\end{equation}
Hence, by \eqref{L6'}, \eqref{l6"}--\eqref{l6""}, and Lemmas \ref{lemma-lower bound jacobi} and \ref{lemma-upper jacobi}, we obtain the desired estimates.
\end{proof}

\subsection{Tangential estimates of the velocity}
We first give time-spatial estimates for the velocity. For simplicity, denote 
$L_*=D_\eta U-\frac{2(1-\delta)}{\delta}\frac{ U}{\eta}$.

\begin{Lemma}\label{lemma-time-space}
 For any $a\in (0,1)$ and  any $\iota$ satisfying
\begin{equation}\label{iota-}
\iota\in\Big(\frac{\beta+1}{p_{*}}-\frac{\beta}{2},\delta+\frac{\beta}{2}\Big),
\end{equation}
there exists a constant $C(\iota,a,T)>0$ such that, for all $t\in[0,T]$,
\begin{equation*}
\big|\chi^\sharp_{a}\rho_0^{\iota-\frac{\beta}{2}}L_*\big|_\infty \leq C(\iota,a,T)\big(1+\big|\chi^\sharp_{a}\rho_0^\iota(D_\eta U,\rho_0^{1-\delta}U_t)\big|_2\big).
\end{equation*}
Besides, for any such $\iota$, $a\in (0,1)$, and $\sigma>0$, there exists  $C(\sigma,\iota,a,T)>0$ such that 
\begin{equation*}
\begin{aligned}
\big|\chi^\sharp_{a}\rho_0^{\iota-\beta+\sigma}L_*\big|_2\leq C(\sigma,\iota,a,T)\big(1+\big|\chi^\sharp_{a}\rho_0^\iota(D_\eta U,\rho_0^{1-\delta}U_t)\big|_2\big).
\end{aligned}
\end{equation*}
\end{Lemma}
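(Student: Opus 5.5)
The plan is to regard the momentum equation near the boundary as a first-order ODE in $r$ for $L_*$. This mirrors Remark \ref{rmk1.4}, where $L_*$ multiplies $D_\eta(\varrho^{\gamma-1})$. Multiply $\eqref{eq:VFBP-La-eta}_1$ by $\varrho^{-\delta}$ and write the viscous terms as
\begin{equation*}
2a_1\delta\varrho^{-\delta}D_\eta\big(\varrho^\delta(D_\eta U+\tfrac{2U}{\eta})\big)-4a_1\tfrac{U}{\eta}\varrho^{-\delta}D_\eta\varrho^\delta
=2a_1\delta\, D_\eta L_*+2a_1\delta\, \tfrac{\delta}{\beta}\,\tfrac{D_\eta(\varrho^{\beta})}{\varrho^{\beta}}\,L_*+(\text{terms in }D_\eta(\tfrac{U}{\eta})).
\end{equation*}
We use $D_\eta U+\frac{2U}{\eta}=L_*+\frac{2}{\delta}\frac{U}{\eta}$ and $D_\eta(\frac{U}{\eta})=\frac1\eta(D_\eta U-\frac U\eta)$, which is algebraic in $(D_\eta U,\frac U\eta)$ once the lower bound of $\eta$ on $[a,1]$ is available. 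Multiplying this by $\varrho^{\delta/\beta\cdot\beta}=\varrho^\delta$ puts it in integrating-factor form:
\begin{equation*}
D_\eta(\varrho^\delta L_*)=\tfrac{1}{2a_1\delta}\varrho^{\delta}\Big(\varrho^{1-\delta}U_t+\tfrac{A\gamma}{\beta}\varrho^{1-\delta}D_\eta\varrho^\beta+\varrho^{1-\delta}D_\eta\Phi\Big)+\varrho^\delta\,\mathcal{R},
\end{equation*}
where $\mathcal{R}$ is a combination of $D_\eta U/\eta$, $U/\eta^2$ and $\frac{U}{\eta}\frac{D_\eta\varrho^\beta}{\varrho^\beta}$-type terms. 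The last of these is also controlled after multiplication by $\varrho^\delta$.

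The next step integrates from $r$ to $1$ in the Lagrangian variable ($\mathrm{d}\eta=\eta_r\mathrm{d}r$). This uses $\varrho^\delta L_*|_{r=1}=0$, which follows from \eqref{N111}, or simply from $\varrho=0$ together with the continuity of $L_*$. Lemmas \ref{lemma-lower bound jacobi}, \ref{lemma-upper jacobi} and \ref{non-vac} give $\varrho\sim\rho_0$, $\eta_r\sim1$ and $\eta\sim1$ on $[a,1]$, and $\rho_0^\beta\sim1-r$. From these we get
\begin{equation*}
\rho_0^{\delta}|L_*|(r)\le C\int_r^1 \rho_0^{\delta}\big(\rho_0^{1-\delta}|U_t|+|D_\eta U|+|U|+1\big)\,\mathrm{d}\tilde r .
\end{equation*}
Here $|D_\eta\Phi|_\infty$ is bounded by Lemma \ref{lemma-v Lp ex}, and the $\frac{D_\eta\varrho^\beta}{\varrho^\beta}\frac U\eta$ piece contributes $\rho_0^{\delta-\beta}|U|$. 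The $\frac{U}{\eta}$-type terms in the integrand are estimated with Lemma \ref{lemma-refine u Lp}, i.e. a weighted $L^p$ bound of $U$, and this is where the lower restriction on $\iota$ involving $\frac{\beta+1}{p_*}$ comes in.

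Cauchy--Schwarz then gives $\int_r^1\rho_0^\delta g\le |\rho_0^\iota g|_2\big(\int_r^1\rho_0^{2(\delta-\iota)}\big)^{1/2}\sim |\rho_0^\iota g|_2(1-r)^{\frac{\delta-\iota}{\beta}+\frac12}$. The exponent is positive precisely when $\iota<\delta+\frac\beta2$, which is the upper end of \eqref{iota-}. Dividing by $\rho_0^\delta\sim(1-r)^{\delta/\beta}$ yields $|L_*|\lesssim (1-r)^{\frac12-\frac{\iota}{\beta}}(\cdots)$, i.e. $\rho_0^{\iota-\beta/2}|L_*|\le C(\cdots)$, which is the $L^\infty$ claim. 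The $L^2$ claim follows by squaring the pointwise bound multiplied by $\rho_0^{\iota-\beta+\sigma}$: one gets $\rho_0^{\iota-\beta+\sigma}|L_*|\lesssim(1-r)^{\sigma/\beta-1/2}$, which is square-integrable for any $\sigma>0$. The constant and pressure terms are integrated directly.

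I expect the main obstacle to be the lower-order $U/\eta$ contributions, in particular the term $\frac{U}{\eta}\frac{D_\eta\varrho^\beta}{\varrho^\beta}$ carrying the singular weight $\rho_0^{-\beta}$. I would absorb it using $|\chi_a^\sharp\rho_0^{\iota'\beta}U|_p$ from Lemma \ref{lemma-refine u Lp}, with $p<p_*$ chosen close to $p_*$ and Hölder in $r$. This is exactly what forces $\iota>\frac{\beta+1}{p_*}-\frac\beta2$. The algebraic bookkeeping that rewrites the viscous term exactly as $D_\eta(\varrho^\delta L_*)$ plus harmless terms also needs to be done carefully, with the coefficients matched to \eqref{reform boundary condition}.
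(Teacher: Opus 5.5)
Your argument follows the paper's. You rewrite the viscous terms as an exact $r$-derivative of $\varrho^\delta L_*$, integrate $\eqref{eq:VFBP-La-eta}_1\times\eta_r$ over $[r,1]$ using $\varrho|_{r=1}=0$, and apply Cauchy--Schwarz against $\rho_0^{2(\delta-\iota)}$, which gives the factor $\rho_0^{\frac{\beta}{2}-\iota}$ and the upper restriction $\iota<\delta+\frac{\beta}{2}$. Your displayed bound for $\rho_0^\delta|L_*|$ and your derivation of the $L^2$ statement are correct.

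What is wrong is the ``main obstacle''. Since $D_\eta U+\frac{2U}{\eta}=L_*+\frac{2}{\delta}\frac{U}{\eta}$, one has exactly
\begin{equation*}
2a_1\delta D_\eta\Big(\varrho^\delta\big(D_\eta U+\frac{2U}{\eta}\big)\Big)-4a_1\frac{U}{\eta}D_\eta(\varrho^\delta)=2a_1\delta D_\eta(\varrho^\delta L_*)+4a_1\varrho^\delta D_\eta\Big(\frac{U}{\eta}\Big).
\end{equation*}
So the $\frac{U}{\eta}D_\eta(\varrho^\delta)$ terms cancel, and there is no $\frac{U}{\eta}\frac{D_\eta(\varrho^\beta)}{\varrho^\beta}$ remainder. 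Compare \eqref{reform boundary condition}, where the only singular coefficient multiplies $L_*$ itself.

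This cancellation is essential, because your proposed treatment of that term would fail. An integrand $\rho_0^{\delta-\beta}|U|$ leads to $\rho_0^{\iota-\frac{\beta}{2}-\delta}\int_r^1\rho_0^{\delta-\beta}|U|\,\mathrm{d}\hat r$. Even for bounded $U$ this is finite only if $\iota\geq\frac{\beta}{2}$. With H\"older and Lemma \ref{lemma-refine u Lp} it is finite only if $\iota>\frac{\beta+1}{p_*}+\frac{\beta}{2}$, whatever $p<p_*$ you choose. That would exclude later applications such as $\iota=(\frac12-\varepsilon_0)\beta-\sigma$ in Lemma \ref{lemma-u-ell-D2-refine}.

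The lower restriction $\iota>\frac{\beta+1}{p_*}-\frac{\beta}{2}$ actually comes from the harmless part $\varrho^\delta\frac{U}{\eta^2}$ of $4a_1\varrho^\delta D_\eta(\frac{U}{\eta})$. After Cauchy--Schwarz it produces $|\chi_a^\sharp\rho_0^\iota U|_2$. Lemma \ref{lemma-refine u Lp} with $p=2$ bounds this exactly in that range, which is why no $U$-term appears on the right-hand side of the lemma. With this correction your proof coincides with the paper's.
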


\begin{proof}
Integrating $\eqref{eq:VFBP-La-eta}_1\times\eta_r$ over $[r,1]$ ($r\in(0,1)$) gives
\begin{equation}\label{zhongyao}
\begin{aligned}
L_*&=\frac{A}{2 a_{1}\delta}\varrho^{\gamma-\delta}-\frac{1}{2 a_{1}\delta \varrho^{\delta}}\int_r^1 \frac{\hat r^2\rho_0}{\eta^2}D_\eta \Phi\,\mathrm{d}\hat r\\
&\quad+\frac{2}{\delta\varrho^\delta}\int_r^1 \eta_r\varrho^\delta \Big(\frac{D_\eta U}{\eta}-\frac{U}{\eta^2}\Big)\mathrm{d}\hat r-\frac{1}{2 a_{1}\delta \varrho^{\delta}}\int_r^1\frac{\hat r^2}{\eta^2}\rho_0U_t\,\mathrm{d}\hat r.
\end{aligned}
\end{equation}
Then due to  $\rho_0^\beta\sim 1-r$ and Lemmas \ref{lemma-lower bound jacobi}, \ref{lemma-v Lp ex}, and \ref{lemma-upper jacobi}--\ref{non-vac}, we have
\begin{equation}\label{floww}
\begin{aligned}
|L_{*}|&\leq C(T)\big(1+|D_\eta \Phi|_\infty\big)\rho_0^{\gamma-\delta}
+C(a,T)\rho_0^{-\delta}\int_r^1\big(\rho_0^\delta |\sD_\eta U|+\rho_0|U_t|\big)\mathrm{d}\hat r\\
&\leq C(T)\rho_0^{\gamma-\delta} +C(a,T)\rho_0^{-\delta}\Big(\int_r^1 \rho_0^{2\delta-2\iota}\,\mathrm{d}\hat r\Big)^\frac{1}{2}\Big(\int_a^1 \rho_0^{2\iota}\big(|\sD_\eta U|^2+\rho_0^{2-2\delta}|U_t|^2\big)\mathrm{d}\hat r\Big)^\frac{1}{2}\\
&\leq C(T)\rho_0^{\gamma-\delta}+C(\iota,a,T)\rho_0^{\frac{\beta}{2}-\iota}\big|\chi^\sharp_{a}\rho_0^\iota(D_\eta U,\rho_0^{1-\delta}U_t)\big|_2,
\end{aligned}
\end{equation}
 for all $r\in [a,1)$ and $\tau$ satisfying \eqref{iota-}, which, along with Lemma \ref{lemma-refine u Lp}, yields  
\begin{equation*}
\big|\chi^\sharp_{a}\rho_0^{\iota-\frac{\beta}{2}}L_*\big|_\infty \leq C(\iota,a,T)\big(1+\big|\chi^\sharp_{a}\rho_0^\iota(D_\eta U,\rho_0^{1-\delta}U_t)\big|_2\big).
\end{equation*}

The $L^2$-estimate can be treated similarly, we omit the proof for brevity.
\end{proof}

\subsubsection{Zeroth-order and first-order  tangential estimates}

First, the zeroth-order tangential estimate for 
$U$ directly follows from  Lemmas \ref{lemma-basic energy},   \ref{lemma-lower bound jacobi}, \ref{lemma-refine u Lp}, and \ref{lemma-u infty}.
\begin{Lemma}\label{lemma-u-0order}
There exists a constant $C(T)>0$ such that
\begin{equation*}
\big|(r^2\rho_0)^\frac{1}{2}U(t)\big|_2^2+ \int_0^t\big|(r^2\rho_0^\delta)^\frac{1}{2}\mathscr{D}_\eta U\big|_2^2\leq C(T) \qquad \text{for all $t\in [0,T]$}.
\end{equation*}
Moreover, for any $p\in (0,p_{*})$, $\iota\in (\frac{\beta+1}{\beta}\frac{1}{p_{*}}-\frac{1}{p},\infty)$, and $a\in (0,1)$, there exist two positive constants $C(p,\iota,a,T)$ and $C(a,T)$ such that, for all $t\in [0,T]$,
\begin{equation*}
\big|\chi_{a}^\sharp \rho_0^{\iota\beta} U(t)\big|_p\leq C(p,\iota,a,T),\qquad
|\zeta_{a}U(t)|_2^2+\int_0^t\big(\big|\zeta_{a} \mathscr{D}_\eta U\big|_2^2+|\zeta_{a} U|_\infty^2\big)\,\mathrm{d}s\leq C(a,T).
\end{equation*}
\end{Lemma}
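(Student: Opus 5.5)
The plan is to read each claim off earlier lemmas, now that $(\eta_r,\frac{\eta}{r})$ and $\varrho/\rho_0$ are known to be bounded above and below on $[0,T]\times\bar I$ by Lemmas \ref{lemma-lower bound jacobi}, \ref{lemma-upper jacobi} and \ref{non-vac}. No new energy identity should be needed: the work consists of replacing the Lagrangian weights $\eta^2\eta_r\varrho^\delta$ and $\eta_r\varrho^{\delta-1}$ by explicit powers of $r$ and $\rho_0$.

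\textbf{First estimate.} Take Lemma \ref{lemma-basic energy}. The first term is already $|(r^2\rho_0)^{\frac12}U|_2^2$. For the dissipation, Lemma \ref{non-vac} gives $\varrho^\delta\ge C(T)^{-1}\rho_0^\delta$, and Lemmas \ref{lemma-lower bound jacobi} and \ref{lemma-upper jacobi} give $\eta^2\eta_r\ge C(T)^{-1}r^2$. Hence
\[
\eta^2\eta_r\varrho^\delta\ge C(T)^{-1}r^2\rho_0^\delta ,
\]
so $\int_0^t|(r^2\rho_0^\delta)^{\frac12}\mathscr{D}_\eta U|_2^2\,\mathrm{d}s\le C(T)\int_0^t|(\eta^2\eta_r\varrho^\delta)^{\frac12}\mathscr{D}_\eta U|_2^2\,\mathrm{d}s\le C(T)$.

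\textbf{Weighted $L^p$ bound.} The bound on $|\chi_a^\sharp\rho_0^{\iota\beta}U|_p$ is exactly Lemma \ref{lemma-refine u Lp}, with the same range of $(p,\iota,a)$.

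\textbf{Interior bounds.} Use Lemma \ref{lemma-u infty}. Since $\eta_r\ge C(T)^{-1}$, we get $|\zeta_aU|_2^2\le C(T)|\zeta_a\sqrt{\eta_r}U|_2^2\le C(a,T)$, and the time integral of $|\zeta_aU|_\infty^2$ appears directly in that lemma. For the gradient term, Lemma \ref{lemma-bound depth} gives $\varrho\le C(T)$, hence $\varrho^{\frac{\delta-1}{2}}\ge C(T)^{-1}$ because $\delta<1$. Together with $\eta_r\ge C(T)^{-1}$ this yields
\[
|\zeta_a\mathscr{D}_\eta U|_2\le C(T)\,\big|\zeta_a\sqrt{\eta_r}\varrho^{\frac{\delta-1}{2}}\mathscr{D}_\eta U\big|_2 ,
\]
whose square integrates in time to at most $C(a,T)$ by Lemma \ref{lemma-u infty}.

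The only step needing care is the direction of the inequalities. The exponent $\frac{\delta-1}{2}$ is negative, so what is needed is an upper bound on $\varrho$ rather than a lower bound, and that is Lemma \ref{lemma-bound depth}. Otherwise the proof is routine bookkeeping.
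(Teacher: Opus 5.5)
Your proposal is correct and is essentially the paper's proof, which simply reads the three estimates off Lemmas \ref{lemma-basic energy}, \ref{lemma-lower bound jacobi}, \ref{lemma-refine u Lp}, and \ref{lemma-u infty}. The only difference is economy. Since $\eta^2\eta_r\varrho^\delta=r^2\rho_0^\delta\,(\eta^2\eta_r/r^2)^{1-\delta}$ with $1-\delta>0$, and $\varrho\le C(T)$ already follows from $\varrho=r^2\rho_0/(\eta^2\eta_r)$, the lower bounds of Lemma \ref{lemma-lower bound jacobi} suffice on their own. Your appeals to Lemmas \ref{lemma-upper jacobi}, \ref{non-vac}, and \ref{lemma-bound depth} are therefore valid but not needed.
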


Next we give the desired first-order tangential estimates for the velocity.
\begin{Lemma}\label{lemma-u-D1}
There exists a constant $C(T)>0$ such that 
\begin{equation*}
\big|(r^2\rho_0^\delta)^\frac{1}{2}\mathscr{D}_\eta U(t)\big|_2^2+ \int_0^t\big|(r^2\rho_0)^\frac{1}{2}U_t\big|_2^2\,\mathrm{d}s\leq C(T) \qquad \text{for all $t\in [0,T]$}.
\end{equation*}
\end{Lemma}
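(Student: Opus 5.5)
The plan is the standard $H^1$-type energy estimate: multiply $\eqref{eq:VFBP-La-eta}_1$ by $\eta^2\eta_r U_t$ and integrate over $I$. Since $\eta^2\eta_r\varrho=r^2\rho_0$ by \eqref{eq:eta}, the time-derivative term gives exactly $|(r^2\rho_0)^{\frac12}U_t|_2^2$. For the viscous terms we integrate by parts in $r$. The boundary term at $r=1$ vanishes because $\varrho^\delta|_{r=1}=0$, and the one at $r=0$ vanishes because $\eta(t,0)=0$. The leading contribution is then
\[
-\frac{\mathrm{d}}{\mathrm{d}t}\int_0^1 a_1\eta^2\eta_r\varrho^\delta\, I_2(D_\eta U,\tfrac{U}{\eta})\,\mathrm{d}r ,
\]
with the same positive definite form $I_2$ as in Lemma \ref{lemma-basic energy}. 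It comes with commutator terms in which the time derivative falls on the weights $\eta^2\eta_r\varrho^\delta$, $\eta_r$ and $\eta$. Each of these is cubic, schematically $\eta^2\eta_r\varrho^\delta|\mathscr{D}_\eta U|^2\,\mathscr{D}_\eta U$. The pressure term is handled by integrating by parts and writing $(\varrho^\gamma)_t=-\gamma\varrho^\gamma(D_\eta U+2U/\eta)$, after which it becomes a time derivative plus lower-order terms. For the gravity term we use $|D_\eta\Phi|_\infty\le C(T)$ from Lemma \ref{lemma-v Lp ex} and Young's inequality:
\[
\int_0^1 r^2\rho_0 D_\eta\Phi\, U_t \,\mathrm{d}r \le C(T)+\tfrac14\big|(r^2\rho_0)^{\frac12}U_t\big|_2^2 .
\]

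Next, by Lemmas \ref{lemma-lower bound jacobi}, \ref{lemma-upper jacobi} and \ref{non-vac}, the weights satisfy $\eta^2\eta_r\varrho^\delta\sim r^2\rho_0^\delta$ and $\varrho\sim\rho_0$. The functional under the time derivative is therefore equivalent to $|(r^2\rho_0^\delta)^{\frac12}\mathscr{D}_\eta U|_2^2$, up to the lower-order pressure part. That part is bounded by $C(T)$, or absorbed by Young's inequality using $|\varrho|_\infty\le C(T)$.

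It remains to control the cubic commutator terms, which are bounded by $C\,|\mathscr{D}_\eta U|_\infty\,|(r^2\rho_0^\delta)^{\frac12}\mathscr{D}_\eta U|_2^2$. We write $\mathscr{D}_\eta U=(L_*+\frac{2(1-\delta)}{\delta}\frac{U}{\eta},\frac{U}{\eta})$ and split the domain with $\zeta,\zeta^\sharp$.

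In the exterior we use Lemma \ref{lemma-time-space} with $\iota=\delta/2$. This choice is admissible because $\frac{\beta+1}{p_*}-\frac{\beta}{2}<\frac{\delta}{2}<\delta+\frac\beta2$, which is where $p_*>7$ from Remark \ref{remark-p+} is used. It gives $\rho_0^{\frac{\delta-\beta}{2}}|L_*|$ bounded by $C(T)\big(1+|\rho_0^{\delta/2}D_\eta U|_2+|\rho_0^{1-\delta/2}U_t|_2\big)$. Moreover $\rho_0^{1-\delta/2}\lesssim\rho_0^{1/2}$, and $U/\eta$ is controlled by $|\zeta^\sharp_a\rho_0^M U|_\infty$ and by $U$-bounds away from the boundary. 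The extra weight $\rho_0^{\frac{\beta-\delta}{2}}$ must be reabsorbed against the factor $\rho_0^\delta$ in the energy. This step is the main obstacle, and I expect to need the pointwise representation \eqref{zhongyao} of $L_*$ directly rather than only its sup bound.

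In the interior, $\varrho$ is bounded above and below, so $|\zeta\mathscr{D}_\eta U|_\infty$ is controlled by $|\zeta\mathscr{D}_\eta^2U|_2$. That quantity is in turn controlled elliptically from the equation by $U_t$, $D_\eta\Phi$ and lower-order terms, using the $L^2L^\infty$ bound on $\zeta_aU$ from Lemma \ref{lemma-u-0order}.

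Combining everything, we expect a differential inequality
\[
E'(t)+\tfrac12\big|(r^2\rho_0)^{\frac12}U_t\big|_2^2\le C(T)\big(1+g(t)\big)E(t)+C(T),
\]
with $E(t)\sim|(r^2\rho_0^\delta)^{\frac12}\mathscr{D}_\eta U|_2^2$. The coefficient $g(t)$ collects $|\zeta_aU|_\infty^2$, $|\zeta^\sharp\rho_0^MU|_\infty^4$ and $|(r^2\rho_0^\delta)^{\frac12}\mathscr{D}_\eta U|_2^2$. It lies in $L^1(0,T)$ by Lemmas \ref{lemma-u-0order} and \ref{lemma-v-vr-u-ur}. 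The $U_t$ contributions from the $L_*$ bound are absorbed into the left-hand side, and Gr\"onwall's inequality then gives the claimed estimate.
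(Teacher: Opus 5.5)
Your skeleton matches the paper: test $\eqref{eq:VFBP-La-eta}_1$ with $\eta^2\eta_rU_t$, extract $\frac{\mathrm{d}}{\mathrm{d}t}$ of the coercive form ($L_3$ in the paper), bound gravity via $|D_\eta\Phi|_\infty\le C(T)$, and close by Gr\"onwall using $\int_0^T|(r^2\rho_0^\delta)^{\frac12}\mathscr{D}_\eta U|_2^2\,\mathrm{d}t\le C(T)$. Your integration by parts in time for the pressure is a workable alternative to the paper's route, which rewrites $AD_\eta(\varrho^\gamma)=\frac{A\gamma}{2a_1\delta}\varrho^{\gamma-\delta+1}(V-U)$ and uses the $L^\infty$ bounds on $V$.

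The genuine gap is in the cubic terms, which are the whole difficulty. The bound $C|\mathscr{D}_\eta U|_\infty|(r^2\rho_0^\delta)^{\frac12}\mathscr{D}_\eta U|_2^2$ cannot be closed in either region.

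\emph{Interior.} Equation \eqref{ell-0} only controls the $r$-weighted quantity $|\zeta r\mathscr{D}_\eta^2U|_2$, in terms of $|\zeta rU_t|_2$ and lower-order terms; this is \eqref{in-ell-2}. That quantity is $\|\nabla^2\boldsymbol{U}\|_{L^2}$, and it does not bound $|\zeta\mathscr{D}_\eta U|_\infty$: take $U=r^{1-a}$, $0<a<\frac12$, $\eta=r$. The unweighted $|\zeta\mathscr{D}_\eta^2U|_2$ you invoke would need unweighted $U_t$, which the dissipation $|(r^2\rho_0)^{\frac12}U_t|_2$ does not give. The paper instead uses H\"older, $L_5^\flat\le C(T)|\zeta\mathscr{D}_\eta U|_2|\chi_{\frac58}r\mathscr{D}_\eta U|_4^2$, and then the weighted interpolation \eqref{DU-L44} together with \eqref{in-ell-2}. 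The resulting Gr\"onwall coefficient $1+|\zeta_{\frac58}U|_\infty^2+|\zeta\mathscr{D}_\eta U|_2^2$ is integrable in time by Lemma \ref{lemma-u-0order}, i.e.\ by the unweighted interior dissipation from Lemma \ref{lemma-u-infty}.

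\emph{Exterior.} Here neither $D_\eta U$ nor $U$ is bounded near $r=1$.
\begin{itemize}
\item Lemma \ref{lemma-time-space} with $\iota=\frac\delta2$ gives only $|L_*|\le C(T)\rho_0^{\frac{\beta-\delta}{2}}X$, where $X=1+|\chi^\sharp\rho_0^{\frac\delta2}D_\eta U|_2+|\chi^\sharp\rho_0^{\frac12}U_t|_2$. This turns $\int_{\frac12}^1\rho_0^\delta|D_\eta U|^2|L_*|\,\mathrm{d}r$ into $X\int_{\frac12}^1\rho_0^{\frac{\delta+\beta}{2}}|D_\eta U|^2\,\mathrm{d}r$. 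That is not controlled by your $E$ when $\beta<\delta$, which is allowed (e.g.\ $\gamma$ near $\frac43$).
\item The bound on $|\zeta^\sharp\rho_0^MU|_\infty$ with $M\ge 3(\beta+1)$ only gives $|U|\lesssim\rho_0^{-M}$, which is useless against the weight $\rho_0^\delta$.
\end{itemize}
You flag this as the main obstacle but leave it open, and the paper does not resolve it by a finer use of \eqref{zhongyao}. What is missing:
\begin{itemize}
\item[(a)] The exterior elliptic estimate \eqref{ex-ell-2}, $|\chi^\sharp\rho_0^{\frac{\delta+\varepsilon}{2}}D_\eta^2U|_2\le C(\varepsilon,T)\big(1+|\chi^\sharp(\rho_0^{\frac\delta2}D_\eta U,\rho_0^{\frac12}U_t)|_2\big)$, obtained from \eqref{ell-0} and Lemma \ref{lemma-time-space}.
\item[(b)] An integration by parts in the worst term $L_5^\sharp=-a_1\delta(1+\delta)\int_0^1\zeta^\sharp\eta^2\eta_r\varrho^\delta(D_\eta U)^3\,\mathrm{d}r$. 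One writes $\eta_r(D_\eta U)^3=|D_\eta U|^2U_r$, so that after integration by parts one factor is the undifferentiated $U$. Since $D_\eta(\varrho^\delta)=\frac{1}{2a_1}\varrho(V-U)$, this produces three kinds of terms:
  \begin{itemize}
  \item the nonpositive term $-\frac{\delta(1+\delta)}{2}\int_0^1\zeta^\sharp r^2\rho_0U^2|D_\eta U|^2\,\mathrm{d}r$;
  \item a $V$-term controlled by Lemma \ref{lemma-v Linfty ex};
  \item terms of type $\rho_0^\delta|U||L_*||D_\eta^2U|$, $\rho_0^\delta|U|^2|D_\eta^2U|$ and $(\rho_0^\delta+\rho_0|V|)|U||L_*|^2$.
  \end{itemize}
\item[(c)] The last group is closed using the low-weight $L^q$ bounds on $U$ from Lemma \ref{lemma-refine u Lp} (this is where the size of $p_*$ enters), the sup bound on $\rho_0^{\frac{\delta-\beta}{2}}L_*$, estimate \eqref{ex-ell-2}, and the interpolation Lemma \ref{GNinequality}. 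This yields $C(T)\big(1+|(r^2\rho_0^\delta)^{\frac12}\mathscr{D}_\eta U|_2^4\big)+\frac14|(r^2\rho_0)^{\frac12}U_t|_2^2$.
\end{itemize}
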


\begin{proof}
We divide the proof into three steps. For simplicity, denote 
$
L_*=D_\eta U-\frac{2(1-\delta)}{\delta}\frac{ U}{\eta}
$.

\textbf{1.} We give some auxiliary estimates. First, $\eqref{eq:VFBP-La-eta}_1$, combined with \eqref{v-expression}, yields 
\begin{equation}\label{ell-0}
\begin{aligned}
D_\eta\big(D_\eta U+ \frac{2U}{\eta}\big)&=\frac{1}{2a_{1}\delta}\varrho^{1-\delta} U_t +\frac{A\gamma}{4a_{1}^2\delta^2} \varrho^{\gamma-2\delta+1}(V-U)+\frac{1}{2a_{1}\delta}\varrho^{1-\delta}D_\eta\Phi\\
&\quad-\frac{1}{2a_{1}}\varrho^{1-\delta}(V-U)L_*.
\end{aligned}
\end{equation}
Then it follows from \eqref{ell-0}, and Lemmas \ref{lemma-bound depth}, \ref{lemma-v Lp ex}, \ref{lemma-v Linfty in}, \ref{im-1}, and \ref{lemma-u-0order} that  
\begin{equation}\label{in-ell-2}
\begin{aligned}
\big|\zeta_{a} r\mathscr{D}_\eta^2 U \big|_2
&\leq C(T)|\varrho|_\infty^{1-\delta}\big(|\zeta_{a} r U_t|_2 +|r|_\infty|\varrho|_\infty^{\gamma-\delta}|\zeta_{a} (V,U)|_2+|D_\eta\Phi|_\infty\big)\\
&\quad+C(T)|\zeta_{a}(V,U)|_\infty\big|\zeta_{\frac{1+3a}{4}} r \mathscr{D}_\eta U\big|_2\\
&\leq C(a,T)\big(|\zeta_{a} r  U_t|_2+ (1+|\zeta_{a} U|_\infty)\big|\zeta_{\frac{1+3a}{4}} r  \mathscr{D}_\eta U\big|_2 +1\big).
\end{aligned}
\end{equation}

Next, multiplying \eqref{ell-0} by $\chi^\sharp \rho_0^\frac{\delta+\varepsilon}{2}$ ($\varepsilon>0$), we obtain from Lemmas \ref{lemma-lower bound jacobi}, \ref{lemma-v Lp ex}, \ref{lemma-upper jacobi}, \ref{lemma-time-space} (with $\iota=\frac{\delta}{2}$), and \ref{lemma-u-0order} that
\begin{equation}\label{ex-ell-2}
\begin{aligned}
\big|\chi^\sharp \rho_0^\frac{\delta+\varepsilon}{2} D_\eta^2 U\big|_2&\leq C(T)\big(\big|\chi^\sharp \rho_0^\frac{\delta+\varepsilon}{2}(U,D_\eta U,\rho_0^{1-\delta}U_t)\big|_2+|\rho_0^\beta|_\infty^{\frac{2-\delta+\varepsilon}{2\beta}}|D_\eta\Phi|_\infty\big)\\[2pt]
&\quad+C(T)\big|\chi^\sharp \rho_0^{\frac{\beta+\varepsilon}{2}+1-\delta}(V,U)\big|_2\big|\chi^\sharp \rho_0^{\frac{\delta-\beta}{2}}L_* \big|_\infty \\[-2pt]
&\leq C(\varepsilon,T)\big(\big|\chi^\sharp(\rho_0^\frac{\delta}{2}D_\eta U,\rho_0^{\frac{1}{2}}U_t)\big|_2 +1\big).
\end{aligned}
\end{equation}

\textbf{2.} Multiplying $\eqref{eq:VFBP-La-eta}_1$ by $\eta^2\eta_rU_t$ and integrating over I, along with \eqref{eq:eta} and \eqref{v-expression}, gives 
\begin{equation}\label{dt-j1-j4}
a_{1} \frac{\mathrm{d}}{\dt}L_{3} + \big|(r^2\rho_0)^\frac{1}{2}U_t\big|_2^2 = L_4+L_5^\flat+L_5^\sharp+L_6^\sharp,  
\end{equation}
where
\begin{align*}
L_3&:=\int_0^1 \eta^2\eta_r\varrho^\delta\Big(\delta|D_\eta U|^2- 4(1-\delta) D_\eta U\frac{U}{\eta}+ (4\delta-2)\frac{U^2}{\eta^2}\Big)\,\mathrm{d}r,\\
L_4&:= -\int_0^1\Big(r^2\rho_0D_\eta\Phi+\frac{A\gamma}{2a_{1}\delta} \frac{(r^2\rho_0)^{\gamma-\delta+1}}{(\eta^2\eta_r)^{\gamma-\delta}}  (V-U) \Big)U_t\,\mathrm{d}r,\\
L_5^\flat&:=-4a_{1}\int_0^1 \zeta\eta^2\eta_r\varrho^\delta \Big(\frac{\delta(1+\delta)}{4}(D_\eta U)^3-\frac{3\delta(1-\delta)}{2}(D_\eta U)^2\frac{U}{\eta}\\
&\qquad\qquad\qquad\qquad\quad-\frac{(6\delta-3)(1-\delta)}{2}D_\eta U\frac{U^2}{\eta^2}+\delta(2\delta-1) \frac{U^3}{\eta^3}\Big)\,\mathrm{d}r,\\[-2mm]
L_5^\sharp&:=- a_{1}\delta(1+\delta)\int_0^1 \zeta^\sharp\eta^2\eta_r\varrho^\delta  (D_\eta U)^3 \,\mathrm{d}r,\\
L_6^\sharp&:=6a_{1}\delta(1-\delta)\int_0^1 \zeta^\sharp\eta^2\eta_r\varrho^\delta \Big((D_\eta U)^2\frac{U}{\eta}+ \frac{2\delta-1}{\delta}D_\eta U\frac{U^2}{\eta^2}-\frac{2(2\delta-1)}{3(1-\delta)} \frac{U^3}{\eta^3}\Big)\,\mathrm{d}r.
\end{align*}

\textbf{2.1. Estimates for $L_3$ and $L_{4}$.} First,  $L_3$ can be handled by Lemmas \ref{lemma-lower bound jacobi} and \ref{non-vac} and a calculation similar to $I_2$ in Step 3 of the proof of Lemma \ref{lemma-basic energy}:
\begin{equation}
L_3\geq c_\delta^*\big|(r^2\rho_0^\delta)^\frac{1}{2}\sD_\eta U\big|_2^2.
\end{equation}
Second,  we can obtain from Lemmas \ref{lemma-lower bound jacobi}, \ref{lemma-v Lp ex}, and  \ref{lemma-v Linfty in} that
\begin{equation}\label{rm-J1}
\begin{aligned}
L_{4} &\leq C_0\Big(|D_\eta\Phi|_\infty + \big(|\zeta(V,U)|_2 + |\zeta^\sharp \rho_0^{\beta+1-\delta} (V,U)|_2 \big)\Big|\frac{r^{2}}{\eta^2\eta_r}\Big|_\infty^{\gamma-\delta}\Big)\big|(r^2\rho_0)^\frac{1}{2}U_t\big|_2 \\[-2mm]
&\leq C(T)(|\zeta V|_\infty+|\chi^\sharp \rho_0^{\beta+1-\delta} V|_\infty+1)\big|(r^2\rho_0)^\frac{1}{2}U_t\big|_2 \leq C(T)+\frac{1}{8}\big|(r^2\rho_0)^\frac{1}{2}U_t\big|_2^2.
\end{aligned}    
\end{equation}

\textbf{2.2. Estimate for $L_{5}^\flat$.} 
It follows from the following inequality, due to Lemmas \ref{lemma-upper jacobi} and \ref{GNinequality}: for any $a\in (0,1)$,
\begin{equation}\label{DU-L44}
\big|\chi_a r \mathscr{D}_\eta U\big|_4^2\leq C(a,T)\big|\zeta_a r \mathscr{D}_\eta U\big|_2\big|\zeta_a r  (\mathscr{D}_\eta U,\mathscr{D}_\eta^2 U)\big|_2,   
\end{equation}
the fact that $\rho_0^\beta\sim 1-r$, \eqref{in-ell-2}, Lemma \ref{lemma-bound depth}, and the H\"older and Young inequalities that
\begin{equation}\label{jj1}
\begin{aligned}
L_5^\flat&\leq C(T)\big|\zeta \mathscr{D}_\eta U\big|_2\big|\chi_\frac{5}{8} r \mathscr{D}_\eta U\big|_4^2\leq C(T)\big|\zeta \mathscr{D}_\eta U\big|_2\big|\zeta_\frac{5}{8} r \mathscr{D}_\eta U\big|_2 \big|\zeta_\frac{5}{8} r(\mathscr{D}_\eta U,\mathscr{D}_\eta^2 U)\big|_2 \\
&\leq C(T)\big|\zeta \mathscr{D}_\eta U\big|_2\big|(r^2\rho_0^\delta)^\frac{1}{2} \mathscr{D}_\eta U\big|_2^2\\[2pt]
&\quad + C(T)\big|\zeta \mathscr{D}_\eta U\big|_2\big|(r^2\rho_0^\delta)^\frac{1}{2} \mathscr{D}_\eta U\big|_2\big(|\zeta_\frac{5}{8} r  U_t|_2+ (1+|\zeta_\frac{5}{8} U|_\infty)\big|\zeta_{\frac{3}{4}} r  \mathscr{D}_\eta U\big|_2 +1\big) \\[-2pt]
&\leq C(T)\big(1 + |\zeta_\frac{5}{8} U|_\infty^2 + \big|\zeta\mathscr{D}_\eta U\big|_2^2\big)\big(1 + \big|(r^2\rho_0^\delta)^\frac{1}{2}\mathscr{D}_\eta U\big|_2^2\big) + \frac{1}{8}\big|(r^2\rho_0)^\frac{1}{2}U_t\big|_2^2.    
\end{aligned}
\end{equation}

\textbf{2.3. Estimate for $L_{5}^\sharp$.} We first obtain from integration by parts and \eqref{v-expression} that
\begin{equation*} 
\begin{aligned}
L_5^\sharp&=a_{1}\delta(1+\delta) \int_0^1 D_\eta(\zeta^\sharp  \eta^2\varrho^\delta|D_\eta U|^2)\eta_r U \,\mathrm{d}r \\
&=\underline{-\frac{\delta(1+\delta)}{2}\int_0^1 \zeta^\sharp r^2\rho_0 U^2 |D_\eta U|^2 \,\mathrm{d}r}_{\,\leq 0}+L_{5,1}^\sharp +L_{5,2}^\sharp, 
\end{aligned}
\end{equation*}
where
\begin{align*}
L_{5,1}^\sharp&:= 2a_{1}\delta(1+\delta) \int_0^1 \zeta^\sharp \eta^2\eta_r\varrho^\delta  U (D_\eta U) (D_\eta^2 U) \,\mathrm{d}r,\\
L_{5,2}^\sharp&:=a_{1}\delta(1+\delta)\int_0^1 \eta^2\eta_r\Big(\big(D_\eta \zeta^\sharp + \frac{2\zeta^\sharp}{\eta}\big)\varrho^\delta +\frac{1}{2a_{1}} \zeta^\sharp \varrho V\Big) U|D_\eta U|^2 \,\mathrm{d}r.
\end{align*}

For $L_{5,1}^\sharp$, we first see from Lemmas \ref{lemma-lower bound jacobi} and \ref{lemma-upper jacobi}--\ref{non-vac} that
\begin{equation}\label{L51}
\begin{aligned}
L_{5,1}^\sharp &\leq C(T)\int_\frac{1}{2}^1 \rho_0^\delta  |U| |L_*| |D_\eta^2 U| \,\mathrm{d}r  + C(T)\int_\frac{1}{2}^1 \rho_0^\delta  |U|^2 |D_\eta^2 U| \,\mathrm{d}r:=L_{5,11}^\sharp +L_{5,12}^\sharp. 
\end{aligned}
\end{equation}
Recall from $p_* >\frac{16}{3}>\frac{4(\beta+1)}{3\beta}$ that we can let $(q,\varepsilon)$ be two fixed parameters satisfying
\begin{equation*}
q\in(2,p_*),\qquad 0<\varepsilon<\frac{3\beta p_*-4(\beta+1)}{3p_*}\iff\frac{(3q-4)\beta}{4q}-\frac{3\varepsilon}{4}>\frac{\beta+1}{p_*}-\frac{\beta}{q}.
\end{equation*}
Then, for $L^\sharp_{5,11}$, it follows from \eqref{ex-ell-2}, Lemmas \ref{lemma-time-space} (with $\iota=\frac{\delta}{2}$), \ref{lemma-u-0order}, and \ref{GNinequality}  that
\begin{align}
L_{5,11}^\sharp&\leq C(T)\big|\chi^\sharp\rho_0^{\frac{(3q-4)\beta}{4q}-\frac{3\varepsilon}{4}} U\big|_q\big|\chi^\sharp\rho_0^{\frac{\delta+\varepsilon}{2}-\frac{(q-2)\beta}{q}}\sD_\eta U\big|_\frac{q}{q-2}^\frac{1}{2}\big|\chi^\sharp \rho_0^\frac{\delta-\beta}{2}L_*\big|_\infty^\frac{1}{2} \big|\chi^\sharp\rho_0^\frac{\delta+\varepsilon}{2}D_\eta^2 U\big|_2\nonumber\\
&\leq C(T)  \big|\chi^\sharp\rho_0^\frac{\delta+\varepsilon}{2}\sD_\eta U\big|_2^\frac{1}{4}\big|\chi^\sharp\rho_0^\frac{\delta+\varepsilon}{2}(\sD_\eta U, \sD_\eta^2 U)\big|_2^\frac{1}{4} \big(1+\big|\chi^\sharp (\rho_0^\frac{\delta}{2}D_\eta U,\rho_0^\frac{1}{2}U_t)\big|_2^\frac{3}{2}\big) \label{guji519}\\
&\leq C(T)\big(1+\big|(r^2\rho_0^\delta)^\frac{1}{2}\sD_\eta U\big|_2^2\big)+\frac{1}{16}\big|(r^2\rho_0)^\frac{1}{2}U_t\big|_2^2.  \nonumber  
\end{align}

For $L^\sharp_{5,12}$, letting $\varepsilon\in (0,\delta+\beta-\frac{4(\beta+1)}{p_*})$, then $\frac{\delta-\varepsilon}{4}>\frac{\beta+1}{p_*}-\frac{\beta}{4}$.  Similarly, we have
\begin{equation*}
\begin{aligned}
L_{5,12}^\sharp&\leq C(T)\big|\chi^\sharp\rho_0^\frac{\delta-\varepsilon}{4}U\big|_4^2\big|\chi^\sharp\rho_0^\frac{\delta+\varepsilon}{2}D_\eta^2 U\big|_2
\leq  C(T)\big(1+\big|(r^2\rho_0^\delta)^\frac{1}{2}\sD_\eta U\big|_2^2\big)+\frac{1}{16}\big|(r^2\rho_0)^\frac{1}{2}U_t\big|_2^2,
\end{aligned}
\end{equation*}
which, along with  \eqref{L51}--\eqref{guji519}, yields
\begin{equation}\label{L5,1}
L_{5,1}^\sharp\leq C(T)\big(1+\big|(r^2\rho_0^\delta)^\frac{1}{2}\sD_\eta U\big|_2^2\big)+\frac{1}{8}\big|(r^2\rho_0)^\frac{1}{2}U_t\big|_2^2.
\end{equation}

To estimate $L_{5,2}^\sharp$, we first make a similar decomposition:
\begin{equation*}
\begin{aligned}
L_{5,2}^\sharp \leq C(T)\int_\frac{1}{2}^1 (\rho_0^\delta+\rho_0|V|) \big( |U| |L_*|^2+|U|^3\big) \,\mathrm{d}r.
\end{aligned}
\end{equation*}
Then letting $(q,\varepsilon)$ be fixed constants such that
\begin{equation*}
1<q<\frac{\beta p_*}{\beta+1},\qquad 0<\varepsilon<\frac{\beta q}{q-1}\Big(\frac{1}{q}-\frac{\beta+1}{\beta p_*}\Big)\iff -\frac{\varepsilon(q-1)}{q}>\frac{\beta+1}{p_*}-\frac{\beta}{q},
\end{equation*}
we obtain from \eqref{ex-ell-2}, Lemmas \ref{lemma-lower bound jacobi}, \ref{lemma-v Linfty ex}, \ref{lemma-time-space}--\ref{lemma-u-0order}, and \ref{hardy-inequality}--\ref{GNinequality},
and  $\frac{\delta}{3}+\frac{5\beta}{6}>\frac{1}{2}$ that
\begin{align}
L_{5,2}^\sharp&\leq C(T) |\chi^\sharp (\rho_0^\beta,\rho_0^{\beta+1-\delta}V)|_\infty \big|\chi^\sharp \rho_0^{-\frac{\varepsilon(q-1)}{q}} U\big|_q\big|\chi^\sharp \rho_0^\frac{\delta-\beta}{2}L_*\big|_\infty^\frac{2}{q}\big|\chi^\sharp \rho_0^\frac{\delta+\varepsilon-\beta}{2}\sD_\eta U\big|_2^\frac{2(q-1)}{q}\nonumber\\
&\quad + C(T) |\chi^\sharp (\rho_0^\beta,\rho_0^{\beta+1-\delta}V)|_\infty \big|\chi^\sharp \rho_0^{\frac{\delta-\beta}{3}} U\big|_3^3\label{L5,2}\\
& \leq C(T)\big(1+\big|\chi^\sharp \rho_0^\frac{\delta}{2}(D_\eta U,\rho_0^{1-\delta}U_t)\big|_2^\frac{2}{q}\big) \big|\chi^\sharp\rho_0^\frac{\delta+\varepsilon}{2}\sD_\eta U\big|_2^\frac{q-1}{q} \big|\chi^\sharp\rho_0^\frac{\delta+\varepsilon}{2}(\sD_\eta U,\sD_\eta^2 U)\big|_2^\frac{q-1}{q} \nonumber\\
&\quad +C(T)\big|\chi^\sharp \rho_0^{\frac{\delta}{3}+\frac{5\beta}{6}}(U,D_\eta U)\big|_2^\frac{3}{2}\big|\chi^\sharp \rho_0^{\frac{\delta}{3}+\frac{5\beta}{6}} (U,D_\eta U,D_\eta^2 U)\big|_2^\frac{3}{2}+C(T)\nonumber\\
&\leq C(T)\big(1+\big|(r^2\rho_0^\delta)^\frac{1}{2}\sD_\eta U\big|_2^4\big)+\frac{1}{8}\big|(r^2\rho_0)^\frac{1}{2}U_t\big|_2^2.\nonumber
\end{align}

Hence, combining the estimates of \eqref{L5,1}--\eqref{L5,2}, we arrive at
\begin{equation}\label{L5s}
L_5^\sharp\leq C(T)\big(1+\big|(r^2\rho_0^\delta)^\frac{1}{2}\sD_\eta U\big|_2^4\big)+\frac{1}{4}\big|(r^2\rho_0)^\frac{1}{2}U_t\big|_2^2.
\end{equation}

\textbf{2.4. Estimate for $L_6^\sharp$.} Due to Lemmas \ref{lemma-lower bound jacobi} and \ref{lemma-upper jacobi}--\ref{non-vac}, $L_6^\sharp$ can be controlled by
\begin{equation*} 
L_6^\sharp\leq C(T)\int_\frac{1}{2}^1 \rho_0^\delta \big(|D_{\eta}U|^2|U|+|D_{\eta}U| |U|^2+|U|^3\big)\,\mathrm{d}r.
\end{equation*}
Since the sum of the powers of all $(|U|,|D_\eta U|)$ in the above integrand is equal to $3$, and the degree of $|D_\eta U|$ is smaller than that of $|D_\eta U|$ in $L_5^\sharp$, we can handle term $L_6^\sharp$ in a simpler way analogous to the treatment of $L_{5,2}^\sharp$ in Step 2.2. In conclusion, we can similarly obtain 
\begin{equation}\label{L6s}
L_6^\sharp\leq C(T)\big(1+\big|(r^2\rho_0^\delta)^\frac{1}{2}\sD_\eta U\big|_2^4\big)+\frac{1}{8}\big|(r^2\rho_0)^\frac{1}{2}U_t\big|_2^2.
\end{equation}

\textbf{3. Close the energy estimate.} Collecting \eqref{dt-j1-j4}--\eqref{rm-J1}, \eqref{jj1}, and \eqref{L5s}--\eqref{L6s}, and then applying the Gr\"onwall inequality, we can get  the desired estimate  from Lemma \ref{lemma-u-0order}.
\end{proof}

\subsubsection{Second-order tangential estimates}For simplicity, denote 
$
L_*=D_\eta U-\frac{2(1-\delta)}{\delta}\frac{ U}{\eta}
$.
\begin{Lemma}\label{lemma-u-D2}
There exists a constant $C(T)>0$ such that 
\begin{equation}\label{tan1}
\big|(r^2\rho_0)^\frac{1}{2}U_t(t)\big|_2^2+ \int_0^t\big|(r^2\rho_0^\delta)^\frac{1}{2}\sD_\eta U_t\big|_2^2\,\mathrm{d}s\leq C(T) \qquad \text{for all $t\in [0,T]$}.
\end{equation}
Moreover, for all $t\in [0,T]$,
\begin{equation}\label{tan2}
\begin{aligned}
\mathrm{(i)}&  \ \ \big|\chi^\sharp \rho_0^\frac{\delta-\beta}{2}L_*(t)\big|_\infty \leq C(T),\quad  \big|\chi^\sharp \rho_0^\frac{\delta+\varepsilon}{2}D_\eta^2 U(t)\big|_2 \leq C(\varepsilon,T) \ \ \text{for any $\varepsilon>0$},\\ 
& \ \ \big|\chi^\sharp \rho_0^{\iota\beta} U(t)\big|_p\leq C(p,\iota,T)\  \  \text{for any $p\in [2,\infty]$  and $\iota>-\frac{1}{p}$};\\[4pt]
\mathrm{(ii)}&  \ \ \big|\zeta_{a}U(t)\big|_\infty+\big|\zeta_{a} r\sD_\eta^2 U (t)\big|_2 \leq C(a,T) \ \   \text{for any $a\in (0,1)$}. 
\end{aligned}
\end{equation}
\end{Lemma}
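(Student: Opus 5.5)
The plan is to time-differentiate the momentum equation and run an energy estimate for $U_t$, exactly parallel to Lemma \ref{lemma-u-D1} but one tangential order higher. Specifically, apply $\partial_t$ to $\eqref{eq:VFBP-La-eta}_1$ written in the form $r^2\rho_0U_t+\eta^2\eta_r(\ldots)=\ldots$, multiply by $U_t$ and integrate over $I$. The principal viscous part is the same binary form as $I_2$ in Lemma \ref{lemma-basic energy}, now with $(X,Y)=\sD_\eta U_t$. By the lower and upper bounds of $(\eta_r,\frac{\eta}{r})$ in Lemmas \ref{lemma-lower bound jacobi}, \ref{lemma-upper jacobi} and \ref{non-vac}, it produces
\[
\frac{1}{2}\frac{\mathrm{d}}{\mathrm{d}t}\big|(r^2\rho_0)^{\frac12}U_t\big|_2^2+c\,\big|(r^2\rho_0^\delta)^{\frac12}\sD_\eta U_t\big|_2^2 .
\]
The remaining terms come from $\partial_t$ hitting the coefficients $\eta$, $\eta_r$ and $\varrho^\delta$ (each time derivative gives a factor $\sD_\eta U$), and from the time derivatives of the pressure and gravity terms. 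Typical commutators are
\[
\int \eta^2\eta_r\varrho^\delta\,|\sD_\eta U|\,|\sD_\eta U_t|^2\,\mathrm{d}r,\qquad \int \eta^2\eta_r\varrho^\delta\,|\sD_\eta U|^2\,|\sD_\eta U_t|\,\mathrm{d}r .
\]
The pressure and gravity terms are handled by integrating by parts onto $\sD_\eta U_t$ and using $|D_\eta\Phi|_\infty\le C(T)$ (Lemma \ref{lemma-v Lp ex}), $(D_\eta\Phi)_t=-2D_\eta\Phi\,\frac{U}{\eta}$, and the bound on $\varrho$ from Lemma \ref{lemma-bound depth}.

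Before closing the estimate I will prove the pointwise consequences (i)--(ii) conditionally, in terms of $X(t):=1+|(r^2\rho_0)^{\frac12}U_t|_2$. For (i), Lemma \ref{lemma-time-space} with $\iota=\delta/2$ combined with Lemma \ref{lemma-u-D1} gives $|\chi^\sharp\rho_0^{\frac{\delta-\beta}{2}}L_*|_\infty\le C(T)X(t)$. Then \eqref{ex-ell-2} gives $|\chi^\sharp\rho_0^{\frac{\delta+\varepsilon}{2}}D_\eta^2U|_2\le C(\varepsilon,T)X(t)$. The weighted $L^p$ bounds on $U$ follow by Hardy-type inequalities (Lemma \ref{hardy-inequality}) applied to $\rho_0^{\iota\beta}U$, using the $D_\eta U$ and $D_\eta^2U$ weighted bounds. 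For (ii), \eqref{in-ell-2} with Lemma \ref{lemma-u-D1} gives $|\zeta_a r\sD_\eta^2U|_2\le C(a,T)(X(t)+|\zeta_aU|_\infty)$. The $L^\infty$ bound on $\zeta_aU$ then follows from the 3-D embedding $H^2\hookrightarrow L^\infty$ applied to $\boldsymbol U=U\frac{\boldsymbol y}{r}$ (Lemma \ref{lemma-initial}), with Young absorbing the $|\zeta_aU|_\infty$ term through a lower-order interpolation. Once \eqref{tan1} holds, $X(t)\le C(T)$ and (i)--(ii) become unconditional.

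The main obstacle is the cubic commutator terms near the vacuum boundary, where only the weights $\rho_0^\delta$ and $\rho_0$ are available. In the interior the approach is straightforward: bound $|\zeta\sD_\eta U|_\infty\lesssim|\zeta r\sD_\eta^2U|_2+\ldots\le C(X(t)+1)$ and absorb. Near $r=1$, I would split $D_\eta U=L_*+\frac{2(1-\delta)}{\delta}\frac{U}{\eta}$ and use the conditional bound $|\chi^\sharp\rho_0^{\frac{\delta-\beta}{2}}L_*|_\infty\le C X(t)$. The weight loss $\rho_0^{-\beta/2}$ is compensated by extracting $\rho_0^{\beta/2}$ from $\rho_0^\delta|\sD_\eta U_t|^2$. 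My plan is to interpolate $|\chi^\sharp\rho_0^{\frac{\delta+\beta}{2}}\sD_\eta U_t|_2$ between the dissipation $|\rho_0^{\delta/2}\sD_\eta U_t|_2$ and the energy $|\rho_0^{1/2}U_t|_2$ via a weighted Gagliardo--Nirenberg/Hardy inequality (Lemma \ref{GNinequality}), which is possible because $\frac{\delta+\beta}{2}>\frac{\delta}{2}$. Young's inequality should then give a bound of the form $\varepsilon\,\text{dissipation}+C(T)(1+X^2)^{k}\,g(t)$ with $g\in L^1(0,T)$, where $g$ is supplied by $\int_0^T|(r^2\rho_0)^{\frac12}U_t|_2^2\,\mathrm{d}s\le C(T)$ from Lemma \ref{lemma-u-D1}. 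The target is a Gronwall inequality $\frac{\mathrm{d}}{\mathrm{d}t}X^2\le C g(t)(1+X^2)$. If superlinear growth $X^4$ appears, I would write it as $X^2\cdot X^2$ and treat one factor $X^2$ as integrable in time, which keeps the Gronwall argument linear.

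The initial value $|(r^2\rho_0)^{\frac12}U_t(0)|_2$ is finite by \eqref{a2}.
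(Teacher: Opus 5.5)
Your framework is the paper's: differentiate the momentum equation in time, test with $U_t$, get coercivity from the same binary form, derive the conditional bounds (i)--(ii) in terms of $X(t)$, and close with Gr\"onwall on $\frac{\mathrm{d}}{\mathrm{d}t}X^2\le C(X^4+1)$ using $\int_0^TX^2\,\mathrm{d}s\le C(T)$. (Your integration-by-parts treatment of the pressure and gravity terms is also fine.) The gap is in the cubic viscous commutators, which carry the real content of the lemma.

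\textbf{The commutator you worry about is not there.} Testing with $U_t$ only produces terms of the form $\eta^2\eta_r\varrho^\delta|\sD_\eta U|^2|\sD_\eta U_t|$, namely the paper's $L_{10}$. The term $\varrho^\delta|\sD_\eta U||\sD_\eta U_t|^2$ appears one level higher, when testing with $U_{tt}$ (cf.\ $L_{12}$ in Lemma \ref{lemma-u-D3}). Your treatment of it would fail in any case. In the interior it gives $C(X+1)$ times the dissipation, which cannot be absorbed. Near $r=1$, Lemma \ref{GNinequality} bounds a function by itself and its derivative, so no interpolation with $|\rho_0^{1/2}U_t|_2$ can control $\sD_\eta U_t$.

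\textbf{Your interior bound is false.} The claim $|\zeta\sD_\eta U|_\infty\lesssim|\zeta r\sD_\eta^2U|_2+\cdots$ amounts to $H^1\hookrightarrow L^\infty$ in three dimensions for $\nabla\boldsymbol U$. For instance, $U=r\log\log(1/r)$ near $0$ has $r\sD_\eta^2U\in L^2$ but $U/r\notin L^\infty$. The paper instead uses Lemma \ref{hardy-inequality} to get $|\chi_a r^{1/2}\sD_\eta U|_4\le C(a,T)(|\zeta_a r\sD_\eta^2U|_2+1)\le C(X+1)$. The interior commutator is then at most $C|\chi r^{1/2}\sD_\eta U|_4^2\,|(r^2\rho_0^\delta)^{1/2}\sD_\eta U_t|_2$.

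\textbf{The exterior bound is missing its key ingredient.} Near $r=1$, the factor $\sD_\eta U_t$ must carry exactly the dissipation weight, and the compensating $\rho_0^{\beta/2}$ must go on the other velocity gradient:
\[
\int_0^1\chi^\sharp\rho_0^\delta|L_*||D_\eta U||\sD_\eta U_t|\,\mathrm{d}r\le\big|\chi^\sharp\rho_0^{\frac{\delta-\beta}{2}}L_*\big|_\infty\,\big|\chi^\sharp\rho_0^{\frac{\beta}{2}}D_\eta U\big|_2\,\big|\chi^\sharp\rho_0^{\frac{\delta}{2}}\sD_\eta U_t\big|_2 .
\]
You need the bound $|\chi^\sharp\rho_0^{\beta/2}D_\eta U|_2\le C(T)(X+1)$, which your plan does not supply. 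Lemma \ref{lemma-u-D1} only controls $\rho_0^{\delta/2}D_\eta U$, which is weaker when $\beta<\delta$. That case is admissible, since $\gamma$ may lie below $1+\delta$. The paper gets the bound from Lemma \ref{hardy-inequality},
\[
|\chi^\sharp\rho_0^{\beta/2}D_\eta U|_2\le C\,|\chi^\sharp\rho_0^{3\beta/2}(D_\eta U,D_\eta^2U)|_2,
\]
combined with the elliptic estimate \eqref{ex-ell-2}, which applies because $3\beta>1>\delta$. The remaining pieces $\rho_0^\delta(|L_*|+|U|)|U||\sD_\eta U_t|$ are handled the same way, using the weighted $L^p$ bounds on $U$ from (i). With these bounds every commutator is at most $C(T)(X^4+1)+\varepsilon|(r^2\rho_0^\delta)^{1/2}\sD_\eta U_t|_2^2$, and your Gr\"onwall step closes.
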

\begin{proof}
We divide the proof into two steps.

\smallskip
\textbf{1.} First,  since $\rho_0^\beta\sim 1-r$, it follows from Lemmas \ref{lemma-lower bound jacobi}, \ref{lemma-upper jacobi}, \ref{lemma-u-0order}--\ref{lemma-u-D1}, and \ref{sobolev-embedding}--\ref{hardy-inequality} that, for all $a\in (0,1)$,
\begin{equation}\label{U-infty-new}
\begin{aligned}
|\zeta_{a}U|_\infty^2&\leq C_0|\zeta_{a}U|_2 |((\zeta_{a})_rU,\zeta_{a} U_r)|_2\leq C(a,T)(1+ |\zeta_{a}D_\eta U|_2)\\
&\leq C(a,T)\big(1 + \big|r(\zeta_{a}D_\eta U,(\zeta_{a})_rD_\eta U,\zeta_{a}D_\eta^2 U)\big|_2\big) \leq C(a,T)\big(1 + \big|\zeta_{a}r D_\eta^2 U\big|_2\big).
\end{aligned}    
\end{equation}

Next, it follows from  Lemmas \ref{lemma-u-D1} and \ref{hardy-inequality} that
\begin{equation*}
\big|\chi_{a} r^\frac{1}{2}\mathscr{D}_\eta U\big|_4\leq C(a,T)\big|\chi_{a} r^\frac{5}{4}(\mathscr{D}_\eta U,\mathscr{D}_\eta^2 U)\big|_2 \leq C(a,T)\big(\big|\zeta_{a} r\mathscr{D}_\eta^2 U\big|_2+1\big).
\end{equation*}
Consequently, \eqref{in-ell-2}, together with the above, \eqref{U-infty-new}, Lemma \ref{lemma-u-D1}, and the Young inequality, gives that, for all $a\in (0,1)$,
\begin{equation}\label{in-ell-2-new}
|\zeta_{a}U|_\infty^2+\big|\chi_{a} r^\frac{1}{2}\mathscr{D}_\eta U\big|_4+\big|\zeta_{a} r\mathscr{D}_\eta^2 U\big|_2 \leq C(a,T)\big(\big|(r^2\rho_0)^\frac{1}{2}U_t\big|_2+1\big).
\end{equation}

Besides, it follows from from  \eqref{ex-ell-2} and Lemmas \ref{lemma-upper jacobi} and \ref{lemma-time-space}--\ref{lemma-u-D1} that 
\begin{equation}\label{ex-ell-2-new}
\begin{aligned}
&\big|\chi^\sharp \rho_0^\frac{\delta-\beta}{2}L_*(t)\big|_\infty \leq C(T)\big(\big|(r^2\rho_0)^\frac{1}{2}U_t\big|_2 +1\big),\\
& \big|\chi^\sharp \rho_0^\frac{\delta+\varepsilon}{2}D_\eta^2 U(t)\big|_2 \leq C(\varepsilon,T)\big(\big|(r^2\rho_0)^\frac{1}{2}U_t\big|_2 +1\big)\quad \text{for any $\varepsilon>0$},\\ 
& \big|\chi^\sharp \rho_0^{\iota\beta} U(t)\big|_p\leq C(p,\iota,T)\big(\big|(r^2\rho_0)^\frac{1}{2}U_t\big|_2 +1\big)\quad \text{for any $p\in[2,\infty]$ and $\iota>-\frac{1}{p}$},    
\end{aligned}
\end{equation}
where we have also used the following estimate due to Lemma \ref{hardy-inequality}:
\begin{equation*}
\big|\chi^\sharp \rho_0^{\iota\beta} U(t)\big|_p\leq C(p,T)\big|\chi^\sharp \rho_0^{(\iota+\frac{1}{p})\beta+\frac{3\beta}{2}}(U,D_\eta U,D_\eta^2U)(t)\big|_2.     
\end{equation*}

\smallskip
\textbf{2.}
First, multiplying $\eqref{eq:VFBP-La-eta}_1$ by $\eta^2\eta_r$, and then applying $\partial_t$ to the resulting equality, along with $\eqref{eq:VFBP-La}_1$ and \eqref{v-expression}, yield
\begin{align}
& \ r^2\rho_0U_{tt} - \Big(\underline{A\gamma\varrho^\gamma\big(D_\eta U+\frac{2U}{\eta}\big)}_{:=\mathfrak{L}_1}\Big)_r\eta^2 +\underline{\frac{A\gamma}{a_1\delta}\eta^2\eta_r\varrho^{\gamma-\delta+1}(V-U)\frac{U}{\eta}}_{:=\mathfrak{L}_2}-2r^2\rho_0D_\eta \Phi \frac{U}{\eta} \nonumber\\
&=2a_1\delta\Big(\eta^2 \varrho^\delta \big(D_\eta U_t-\frac{2(1-\delta)}{\delta} \frac{U_t}{\eta}\big)\Big)_r+4a_1\eta^2\eta_r\varrho^\delta\Big((1-\delta)D_\eta U_t- (2\delta-1) \frac{U_t}{\eta}\Big)\frac{1}{\eta}  \label{eq-2order-pre} \\
&\quad -\Big(\underline{2a_1\eta^2\varrho^\delta\big(\delta(1+\delta) |D_\eta U|^2-4\delta (1-\delta) \frac{U}{\eta}D_\eta U-2(1-\delta)(2\delta-1)\frac{U^2}{\eta^2}\big)}_{:=\mathfrak{L}_3}\Big)_r \nonumber\\
&\quad -\underline{ 4a_1\eta^2\eta_r\varrho^\delta\Big(\delta(1-\delta)  |D_\eta U|^2+2(1-\delta)(2\delta-1)\frac{U}{\eta} D_\eta U-2\delta(2\delta-1)  \frac{U^2}{\eta^2}\Big)\frac{1}{\eta}}_{:=\mathfrak{L}_4}.\nonumber
\end{align}

Next, multiplying \eqref{eq-2order-pre} by $U_t$ and integrating over $I$, we have
\begin{equation}\label{dt-J3-J5}
\frac{1}{2}\frac{\mathrm{d}}{\dt}\big|(r^2\rho_0)^\frac{1}{2}U_t\big|_2^2+L_7 = L_8+L_9+L_{10},
\end{equation}
where
\begin{align*}
L_7&:= 2a_1\int_0^1  \eta^2\eta_r \varrho^\delta \Big(\delta|D_\eta U_t|^2-4(1-\delta) \frac{U_t}{\eta} D_\eta U_t+ (4\delta-2) \frac{|U_t|^2}{\eta^2}\Big) \,\mathrm{d}r,\\
L_8&:=-\int_0^1 \eta^{2}\eta_r\mathfrak{L}_1\Big(D_\eta U_t+\frac{2U_t}{\eta}\Big)\,\mathrm{d}r,\qquad L_9:=-\int_0^1\mathfrak{L}_2 U_t\,\mathrm{d}r+2\int_0^1 r^2\rho_0 D_\eta\Phi \frac{U}{\eta}U_t\,\mathrm{d}r,\\
L_{10}&:= \int_0^1 \eta_r \mathfrak{L}_3 D_\eta U_t\,\mathrm{d}r  -  \int_0^1 \mathfrak{L}_4 U_t\,\mathrm{d}r.
\end{align*}

\smallskip
\textbf{2.1. Estimates for $L_7$--$L_9$.} First,  $L_7$ can be handled in the same way as $L_3$ in Step 2.1 of the proof of Lemma \ref{lemma-u-D1}:
\begin{equation}\label{rm-J3}
L_7\geq 2a_1 c_\delta^*\big|(r^2\rho_0^\delta)^\frac{1}{2}\sD_\eta U_t\big|_2^2.
\end{equation}

Next, for $L_8$--$L_9$, it follows from  \eqref{in-ell-2-new}, Lemmas \ref{lemma-bound depth}--\ref{lemma-lower bound jacobi}, \ref{lemma-v Linfty ex}, \ref{lemma-v Linfty in}, and \ref{lemma-u-0order}--\ref{lemma-u-D1}, and the H\"older and Young inequalities that
\begin{equation}\label{rm-J4}
\begin{aligned}
L_8&\leq C(T)\big|(r^2\rho_0^\delta)^\frac{1}{2}\sD_\eta U\big|_2\big|(r^2\rho_0^\delta)^\frac{1}{2}\sD_\eta U_t\big|_2 \leq C(T)+\frac{a_1c_\delta^*}{8} \big|(r^2\rho_0^\delta)^\frac{1}{2}\sD_\eta U_t\big|_2^2,\\
L_9&\leq C(T)\big( |\chi^\sharp \rho_0^{\beta+1-\delta} V|_\infty\big|(r^2\rho_0)^\frac{1}{2}U\big|_2+\big|\chi^\sharp\rho_0^\frac{2\gamma-2\delta+1}{4}U\big|_4^2\big)\big|(r^2\rho_0)^\frac{1}{2}U_t\big|_2 \\
&\quad + C(T)\big(|\zeta(V,U)|_\infty+|D_\eta\Phi|_\infty\big)\big|(r^2\rho_0^\delta)^\frac{1}{2}\sD_\eta U\big|_2\big|(r^2\rho_0)^\frac{1}{2}U_t\big|_2\\
&\leq C(T)\big(\big|(r^2\rho_0)^\frac{1}{2}U_t\big|_2^4+1\big).    
\end{aligned}
\end{equation}

\smallskip
\textbf{2.2. Estimate for $L_{10}$.} First, notice that $L_{10}$ can be controlled by the integrals of the following forms, due to Lemmas \ref{lemma-lower bound jacobi} and \ref{lemma-upper jacobi}--\ref{non-vac}: 
\begin{equation}\label{rm-J5}
\begin{aligned}
L_{10}&=C(T)\int_0^1 \chi^\sharp r^2\rho_0^\delta |L_*||D_\eta U| |\sD_\eta U_t|\,\mathrm{d}r +C(T)\int_0^1 \chi^\sharp r^2\rho_0^\delta(|L_*|+|U|)|U| |\sD_\eta U_t|\,\mathrm{d}r\\
&\quad +C(T)\int_0^1 \chi r^2\rho_0^\delta |\sD_\eta U|^2|\sD_\eta U_t|\,\mathrm{d}r :=L_{10,1}^\sharp+L_{10,2}^\sharp +L_{10}^\flat.
\end{aligned}
\end{equation}
For simplicity, we only sketch the estimates for $L_{10,1}^\sharp$ and $L_{10}^\flat$, and $L_{10,2}^\sharp$ can be treated in an easier way similar to $L_{10,1}^\sharp$.

Then, for $L_{10,1}^\sharp$--$L_{10}^\flat$, we obtain from the fact that $\frac{3\beta}{2}>\frac{1}{2}>\frac{\delta}{2}$, \eqref{in-ell-2-new}--\eqref{ex-ell-2-new}, Lemmas \ref{lemma-lower bound jacobi}, \ref{lemma-u-0order}--\ref{lemma-u-D1}, and \ref{hardy-inequality}, and the H\"older and Young inequalities that 
\begin{align*}
L_{10,1}^\sharp&\leq C(T) \big|\chi^\sharp\rho_0^\frac{\delta-\beta}{2}L_*\big|_\infty \big|\chi^\sharp\rho_0^\frac{\beta}{2}D_\eta U\big|_2 \big|(r^2\rho_0^\delta)^\frac{1}{2}\sD_\eta U_t \big|_2\\[-2pt]
&\leq C(T) \big(\big|(r^2\rho_0)^\frac{1}{2}U_t\big|_2 +1\big) \big|\chi^\sharp\rho_0^\frac{3\beta}{2}(D_\eta U,D_\eta^2 U)\big|_2 \big|(r^2\rho_0^\delta)^\frac{1}{2}\sD_\eta U_t \big|_2\\[2pt]
&\leq C(T)\big(\big|(r^2\rho_0)^\frac{1}{2}U_t\big|_2^4+1\big)+\frac{a_1c_\delta^*}{8} \big|(r^2\rho_0^\delta)^\frac{1}{2}\sD_\eta U_t \big|_2^2,\\
L_{10}^\flat&\leq C(T)\big|\chi r^\frac{1}{2}\sD_\eta U\big|_4^2\big|(r^2\rho_0^\delta)^\frac{1}{2}\sD_\eta U_t \big|_2 \leq C(T)\big(\big|(r^2\rho_0)^\frac{1}{2}U_t\big|_2^4+1\big) +\frac{a_1c_\delta^*}{8}\big|(r^2\rho_0^\delta)^\frac{1}{2}\sD_\eta U_t \big|_2^2.
\end{align*}

Combining \eqref{rm-J5} with the above thus leads to
\begin{equation}\label{rm-J5'}
L_{10}\leq C(T)\big(\big|(r^2\rho_0)^\frac{1}{2}U_t\big|_2^4+1\big) +\frac{a_1c_\delta^*}{4}\big|(r^2\rho_0^\delta)^\frac{1}{2}\sD_\eta U_t \big|_2^2.   
\end{equation}

 Collecting \eqref{dt-J3-J5}--\eqref{rm-J4} and \eqref{rm-J5'}, and applying the Gr\"onwall inequality, we obtain from Lemma \ref{lemma-u-D1} that \eqref{tan1} holds. Clearly, \eqref{tan1}, together with \eqref{in-ell-2-new}--\eqref{ex-ell-2-new}, yields \eqref{tan2}. 
\end{proof}

\subsubsection{Third-order tangential estimates}\label{subsub914}
We first give the interior $L^2$-estimates for $(D_\eta V,\frac{V}{\eta})$.
\begin{Lemma}\label{lemma-Vr-L2}
For any $a\in(0,1)$, there exists a constant $C(a,T)>0$ such that
\begin{equation*}
\big|\zeta_a r\sD_\eta V(t)\big|_2 \leq C(a,T) \qquad \text{for all $t\in[0,T]$}.
\end{equation*}    
\end{Lemma}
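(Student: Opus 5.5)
The plan is to derive an evolution equation for $\mathscr{D}_\eta V=(D_\eta V,\frac{V}{\eta})$ from the damped transport equation \eqref{eq:v}, and close it with an $L^2$ energy estimate weighted by $\zeta_a^2 r^2$. This is preferable to differentiating the Duhamel formula \eqref{V-solution}. Applying $D_\eta$ to \eqref{eq:v} and using $(D_\eta f)_t=D_\eta f_t-D_\eta U\,D_\eta f$ gives
\begin{equation*}
(D_\eta V)_t+D_\eta U\,D_\eta V+\frac{A\gamma}{2a_1\delta}\varrho^{\gamma-\delta}D_\eta V
=-\frac{A\gamma}{2a_1\delta}D_\eta(\varrho^{\gamma-\delta})(V-U)+\frac{A\gamma}{2a_1\delta}\varrho^{\gamma-\delta}D_\eta U-D_\eta^2\Phi .
\end{equation*}
In the same way, $(\frac{V}{\eta})_t=-\frac{A\gamma}{2a_1\delta}\varrho^{\gamma-\delta}\frac{V-U}{\eta}-\frac{D_\eta\Phi}{\eta}-\frac{U}{\eta}\frac{V}{\eta}$.

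The source terms are controlled with bounds already established.
\begin{itemize}
\item Since $D_\eta\varrho=\frac{1-\delta}{2a_1\delta}\varrho^{2-\delta}(V-U)$, we have $D_\eta(\varrho^{\gamma-\delta})(V-U)=C\varrho^{\gamma-2\delta+1}(V-U)^2$. This is bounded in $L^\infty$ on $\operatorname{supp}\zeta_a$ by Lemma \ref{lemma-bound depth}, Lemma \ref{lemma-v Linfty in}, and Lemma \ref{lemma-u-D2}(ii).
\item The term $\varrho^{\gamma-\delta}D_\eta U$ is controlled in $L^2(\zeta_a r)$ by Lemma \ref{lemma-u-D1}, after using Lemma \ref{non-vac} to replace $\varrho$ by $\rho_0$, which is bounded below on $[0,\frac{1+3a}{4}]$.
\item For the potential we use $D_\eta(\eta^2D_\eta\Phi)=4\pi G\eta^2\varrho$, which gives $D_\eta^2\Phi=4\pi G\varrho-\frac{2D_\eta\Phi}{\eta}$ and $\frac{D_\eta\Phi}{\eta}=\frac{4\pi G}{\eta^3}\int_0^r\hat r^2\rho_0\,\mathrm{d}\hat r$. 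Both are bounded by $C(T)$ thanks to Lemmas \ref{lemma-lower bound jacobi}, \ref{lemma-bound depth} and \ref{lemma-upper jacobi}.
\item The remaining terms are $\frac{U}{\eta}\cdot\frac{V}{\eta}$ and $\varrho^{\gamma-\delta}\frac{U}{\eta}$. Both are handled through $|\zeta_a r\,\frac{U}{\eta}|_2\le |\zeta_a r\mathscr{D}_\eta U|_2$.
\end{itemize}

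Next, multiply both equations by $\zeta_a^2 r^2\,(D_\eta V,\frac{V}{\eta})$ and integrate over $I$. The damping term has a good sign. The nonlinear transport terms contribute $\int\zeta_a^2r^2(D_\eta U)|D_\eta V|^2$ and $\int\zeta_a^2r^2\frac{U}{\eta}|\frac{V}{\eta}|^2$. This yields
\begin{equation*}
\frac{\mathrm{d}}{\mathrm{d}t}|\zeta_a r\mathscr{D}_\eta V|_2^2\le C(a,T)\big(1+|\zeta_a\mathscr{D}_\eta U|_\infty\big)|\zeta_a r\mathscr{D}_\eta V|_2^2+C(a,T).
\end{equation*}
The initial datum is finite because $v_0=u_0+\frac{2a_1\delta}{\delta-1}(\rho_0^{\delta-1})_r$. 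On $\operatorname{supp}\zeta_a$ we have $\rho_0\ge C(a)^{-1}$, and \eqref{distance-la} together with $\mathcal E(0,U)<\infty$ places $r(\rho_0^\beta)_{rr}$ and $r\mathscr D_r u_0$ in $L^2$.

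The main obstacle is closing Gr\"onwall, which requires $\int_0^T|\zeta_a\mathscr{D}_\eta U|_\infty\,\mathrm{d}t\le C(a,T)$. Lemma \ref{lemma-u-D2} only provides $|\zeta_a r\mathscr{D}_\eta^2U|_2$, and near the origin the radial weight $r$ prevents a direct one-dimensional embedding into $L^\infty$. To get around this, we pass to three-dimensional Lagrangian coordinates as in \eqref{regularity-La-M} and Lemma \ref{lemma-initial}. There $|\zeta_a r\mathscr{D}_\eta^k U|_2$ corresponds to $\|\zeta_a\nabla^k\boldsymbol{U}\|_{L^2}$, and the 3-D quantities are recovered through Lemma \ref{im-1}. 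The needed $\nabla^3$ control comes from the elliptic identity \eqref{ell-0}, in which $D_\eta(D_\eta U+\frac{2U}{\eta})$ is expressed through $\varrho^{1-\delta}U_t$, $V$, $U$ and $D_\eta\Phi$. Applying $D_\eta$ once more, and using the time-integrability of $|\zeta_a r\mathscr{D}_\eta U_t|_2$ from \eqref{tan1}, bounds $\int_0^T|\zeta_a r\mathscr{D}_\eta^3U|_2^2\,\mathrm{d}t$ modulo a term involving $|\zeta_a r\mathscr{D}_\eta V|_2$ itself. This circularity is absorbed into the Gr\"onwall argument, using Young's inequality to write $|\zeta_a\mathscr D_\eta U|_\infty$ in terms of $\|\zeta_a \boldsymbol U\|_{H^{5/2+}}$ and interpolating between $H^2$ and $H^3$. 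Gr\"onwall then gives the claimed bound.
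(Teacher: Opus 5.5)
Your treatment of the source terms and of the initial datum is fine. The gap is the step you flag as the main obstacle: it does not close. Because you differentiate with $D_\eta$, the commutator $(D_\eta f)_t=D_\eta f_t-D_\eta U\,D_\eta f$ forces you to control $\int_0^T|\zeta_a\sD_\eta U|_\infty\,\mathrm{d}t$. At this stage the only route to that quantity is $D_\eta$ applied to \eqref{ell-0}. Differentiating its last term $\varrho^{1-\delta}(V-U)L_*$ produces $\varrho^{1-\delta}D_\eta V\,L_*$. Its $L^2(\zeta_a r)$-norm is a \emph{product} $|\zeta_a\sD_\eta U|_\infty\,|\zeta_a r\sD_\eta V|_2$, since $D_\eta V$ is only known in the 3-D $L^2$ and so $L_*$ has to go in $L^\infty$.

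Write $W=|\zeta_a\sD_\eta U|_\infty$, $Y=|\zeta_a r\sD_\eta V|_2^2$ and $X=|\zeta_a r\sD_\eta U_t|_2$. The embedding $W^2\lesssim 1+|\zeta_a r\sD_\eta^3U|_2$ together with the elliptic bound gives $W^2\lesssim 1+X+(1+W)(1+Y^{1/2})$, hence $W\lesssim 1+X^{1/2}+Y^{1/2}$. Your energy inequality then becomes
\[
\frac{\mathrm{d}}{\mathrm{d}t}Y\lesssim 1+(1+X^{1/2})Y+Y^{3/2}.
\]
This is superlinear. For large data it yields a bound only on a short time interval, not on an arbitrary $[0,T]$. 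Interpolating through $H^{5/2+}$ only worsens the exponent. Note that in the paper the bound \eqref{in-ell-3} on $|\zeta_a\sD_\eta U|_\infty$ is proved afterwards, using this very lemma.

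The transport terms are an artifact of using the Eulerian derivative. The paper instead applies the Lagrangian operator $\sD_r=(\partial_r,\frac{1}{r})$, which commutes with $\partial_t$. From \eqref{eq:v},
\[
(V_r)_t+\frac{A\gamma}{2a_1\delta}\varrho^{\gamma-\delta}V_r=\frac{A\gamma}{2a_1\delta}\big(\varrho^{\gamma-\delta}\eta_rD_\eta U-(\varrho^{\gamma-\delta})_r(V-U)\big)-(D_\eta\Phi)_r,
\]
while $\frac{V}{r}$ satisfies \eqref{eq:v} divided by $r$. No term quadratic in $\sD V$ appears, and the sources are exactly the ones you already bound:
\begin{itemize}
\item $(\varrho^{\gamma-\delta})_r(V-U)\sim\eta_r\varrho^{\gamma-2\delta+1}(V-U)^2$, controlled via $|\zeta_{5/8}(V,U)|_\infty$;
\item $\varrho^{\gamma-\delta}\eta_rD_\eta U$, controlled via Lemma \ref{lemma-u-D1};
\item $|\zeta_{5/8}r\sD_rD_\eta\Phi|_\infty\le C(T)$.
\end{itemize}
Testing with $\zeta_a^2r^2\sD_rV$ gives $\frac{\mathrm{d}}{\mathrm{d}t}|\zeta_a r\sD_rV|_2^2\le C(a,T)|\zeta_a r\sD_rV|_2$, which closes linearly. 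The two-sided bounds on $(\eta_r,\frac{\eta}{r})$ then convert $\sD_rV$ into $\sD_\eta V$.
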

\begin{proof}
It suffices to give the proof for $a=\frac{1}{2}$. First, we obtain from the fact that $\rho_0^\beta\sim 1-r$ and Lemmas \ref{lemma-upper jacobi}, \ref{lemma-u-D1}--\ref{lemma-u-D2}, and \ref{hardy-inequality} that 
\begin{equation}\label{DU-L2}
\begin{aligned}
|\zeta \sD_\eta U|_2&\leq C(T)\big|\chi_{\frac{5}{8}}r (\sD_\eta U ,\sD_\eta^2 U)\big|_2\leq C(T)\big(\big|(r^2\rho_0^\delta)^\frac{1}{2}\sD_\eta U \big|_2+1\big) \leq C(T),
\end{aligned}
\end{equation}
and obtain from \eqref{D Phi expression} and Lemma \ref{lemma-lower bound jacobi} that
\begin{equation}\label{DD-phi}
|\zeta_\frac{5}{8} r\sD_rD_\eta \Phi|_\infty\leq C(T)\Big(\Big|\zeta_\frac{5}{8}\frac{1}{r^3}\int_0^r\hat r^2\rho_0\,\mathrm{d}\hat r\Big|_\infty+|\rho_0|_\infty\Big)\leq C(T).
\end{equation}

Next, applying $\zeta^2 r^2 \sD_r V\sD_r$ to \eqref{eq:v} and integrating  over $I$, then we obtain from \eqref{DU-L2}--\eqref{DD-phi}, Lemmas \ref{lemma-bound depth}, \ref{lemma-v Linfty in}, \ref{lemma-upper jacobi}, and \ref{lemma-u-D1}--\ref{lemma-u-D2}, and the H\"older and Young inequalities that 
\begin{equation*}
\frac{\mathrm{d}}{\dt} |\zeta r \sD_r V|_2^2 \leq C(T)\big(|\zeta r \sD_\eta U|_2 + \big|\zeta_{\frac{5}{8}}(V,U)\big|_\infty^2+ |\zeta r\sD_rD_\eta \Phi|_\infty\big)|\zeta r\sD_rV|_2 \leq C(T) |\zeta r \sD_r V|_2,
\end{equation*}
which, along with the Gr\"onwall inequality and  Lemma \ref{lemma-lower bound jacobi}, leads to the desired estimate. The $L^2$-boundedness of the initial value $\zeta r \sD_r v_0$ is due to \eqref{distance-la} and Lemma \ref{lemma-initial}.
\end{proof}

\begin{Lemma}\label{lemma-u-D3}
There exists a constant $C(T)>0$ such that
\begin{equation}\label{tan3}
\big|(r^2\rho_0^\delta)^\frac{1}{2} \sD_\eta U_t (t)\big|_2^2+\int_0^t\big|(r^2\rho_0)^\frac{1}{2}U_{tt}\big|_2^2\,\mathrm{d}s\leq C(T) \qquad\text{for all $t\in[0,T]$}.
\end{equation}
In addition, for any $t\in[0,T]$, $a\in (0,1)$, the following estimates also hold{\rm:}
\begin{equation}\label{tan4}
|(U,\sD_\eta U)(t)|_\infty\leq C(T),\qquad \big|\zeta_{a}r\sD_\eta^3 U(t)\big|_2\leq C(a,T).
\end{equation}
\end{Lemma}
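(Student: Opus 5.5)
The plan is to run the third-order tangential estimate by testing the time-differentiated momentum equation \eqref{eq-2order-pre} against $U_{tt}$, in the same way that Lemma \ref{lemma-u-D1} tested $\eqref{eq:VFBP-La-eta}_1\times\eta^2\eta_r$ against $U_t$. Multiplying \eqref{eq-2order-pre} by $U_{tt}$ and integrating over $I$ (the boundary terms at $r=1$ vanish thanks to the weight $\varrho^\delta$, and those at $r=0$ vanish because $\eta|_{r=0}=0$), the principal viscous term becomes
\[
a_1\frac{\mathrm{d}}{\dt}L_7^{\prime},\qquad L_7^{\prime}:=\int_0^1\eta^2\eta_r\varrho^\delta\Big(\delta|D_\eta U_t|^2-4(1-\delta)D_\eta U_t\frac{U_t}{\eta}+(4\delta-2)\frac{U_t^2}{\eta^2}\Big)\,\mathrm{d}r,
\]
plus commutator terms coming from $\partial_t$ hitting $\eta^2\eta_r\varrho^\delta$ and the $\eta$'s. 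These commutators are cubic expressions of the form $\varrho^\delta r^2(\sD_\eta U)(\sD_\eta U_t)^2$. By the discriminant argument of Lemma \ref{lemma-basic energy} together with Lemma \ref{non-vac}, $L_7^{\prime}\ge c_\delta^*|(r^2\rho_0^\delta)^{1/2}\sD_\eta U_t|_2^2$. The resulting identity reads $a_1\frac{\mathrm{d}}{\dt}L_7^{\prime}+|(r^2\rho_0)^{1/2}U_{tt}|_2^2=$ (remaining terms).

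The remaining terms are handled in groups, arranged so that every power of $U_{tt}$ can be absorbed by Young's inequality.

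\textbf{Pressure and gravity.} For $\mathfrak{L}_1$ and $\mathfrak{L}_2$ and the $D_\eta\Phi$ term, I integrate by parts in time where $D_\eta U_t$ appears. This produces $\frac{\mathrm{d}}{\dt}\int\eta^2\eta_r\mathfrak{L}_1(D_\eta U_t+2U_t/\eta)$, which is absorbed into the energy using Lemma \ref{lemma-u-D2}, plus lower-order terms. These are bounded via Lemmas \ref{lemma-bound depth}, \ref{lemma-v Linfty ex}, \ref{lemma-v Linfty in} and \ref{lemma-Vr-L2}, using $V_t=-\frac{A\gamma}{2a_1\delta}\varrho^{\gamma-\delta}(V-U)-D_\eta\Phi$ from \eqref{eq:v} for the time derivative of $V$.

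\textbf{Nonlinear viscous terms.} $\mathfrak{L}_3$ and $\mathfrak{L}_4$ are quadratic in $\sD_\eta U$. After testing they give $\int\eta_r\mathfrak{L}_3D_\eta U_{tt}$. I move the time derivative off $U_{tt}$ again, which leaves $\frac{\mathrm{d}}{\dt}\int\eta_r\mathfrak{L}_3D_\eta U_t$ plus cubic terms $\int r^2\rho_0^\delta|\sD_\eta U||\sD_\eta U_t|^2$. Both are controlled once $|\sD_\eta U|_\infty$ is bounded.

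The key intermediate step, which I carry out before closing the estimate, is therefore the second part \eqref{tan4}. It follows from elliptic estimates in terms of $U_t$. In the interior, I differentiate \eqref{ell-0} once more, use Lemma \ref{im-1}, and invoke Lemmas \ref{lemma-Vr-L2} and \ref{lemma-u-D2}(ii), giving
\[
|\zeta_a r\sD_\eta^3U|_2\le C(a,T)\big(|\zeta_a r\sD_\eta U_t|_2+1\big).
\]
Embedding then gives $|\zeta_a\sD_\eta U|_\infty\le C(a,T)(|(r^2\rho_0^\delta)^{1/2}\sD_\eta U_t|_2+1)$. Near the boundary, I use the representation \eqref{zhongyao} of $L_*$ and estimate $|\chi^\sharp L_*|_\infty$ by the argument of Lemma \ref{lemma-time-space}, now with $U_t$ in a weighted $H^1$ norm. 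Concretely, $\rho_0^{-\delta}\int_r^1\rho_0|U_t|\lesssim\rho_0^{1-\delta+\beta}|U_t|_\infty$, and a Hardy inequality gives $|\chi^\sharp\rho_0^{\epsilon}U_t|_\infty\lesssim|\chi^\sharp\rho_0^{\delta/2}(U_t,D_\eta U_t)|_2$. Combined with $|U|_\infty\le C$ from Lemma \ref{lemma-u-D2}(i), this yields
\[
|(U,\sD_\eta U)|_\infty\le C(T)\big(1+|(r^2\rho_0^\delta)^{1/2}\sD_\eta U_t|_2\big).
\]

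With this bound, the cubic terms are at most $C(T)(1+E)^{1/2}E$, where $E:=|(r^2\rho_0^\delta)^{1/2}\sD_\eta U_t|_2^2$. This superlinear growth is the main obstacle. To avoid it, I plan to use the dissipation $\int_0^t E\,\mathrm{d}s\le C(T)$ from \eqref{tan1}: the differential inequality becomes $\frac{\mathrm{d}}{\dt}E\lesssim(1+E)E+\cdots$, and since $E\in L^1_t$ Grönwall closes. If the interior cubic term is too singular, I would instead split it with a Gagliardo--Nirenberg estimate as in \eqref{DU-L44}, absorbing $\sD_\eta^2U_t$ into $|U_{tt}|$ through the time-differentiated version of \eqref{ell-0}.

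Finally, the initial data are finite by \eqref{a2}, which gives \eqref{tan3}, and \eqref{tan4} then follows from the elliptic bounds above.
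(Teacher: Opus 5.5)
Your overall scheme is the paper's: test \eqref{eq-2order-pre} against $U_{tt}$, bound $|(U,\sD_\eta U)|_\infty\le C(T)(1+E^{1/2})$ with $E:=|(r^2\rho_0^\delta)^{1/2}\sD_\eta U_t|_2^2$ by elliptic estimates, move the time derivative off $D_\eta U_{tt}$ in the $\mathfrak{L}_3,\mathfrak{L}_4$ terms, and close by Gr\"onwall using $\int_0^tE\,\mathrm{d}s\le C(T)$. However, one step fails as written.

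After the time integration by parts you must control, pointwise in $t$, the boundary term $L_{13,2}(t)=\int_0^1\big(\eta_r\mathfrak{L}_3D_\eta U_t-\mathfrak{L}_4U_t\big)\,\mathrm{d}r$. You say this is fine ``once $|\sD_\eta U|_\infty$ is bounded''. But at this stage $|\sD_\eta U|_\infty$ is only bounded by $C(T)(1+E^{1/2})$. Your route therefore gives at best
$|L_{13,2}(t)|\le C(T)|\sD_\eta U|_\infty|(r^2\rho_0^\delta)^{1/2}\sD_\eta U|_2E^{1/2}\le C(T)\big(E^{1/2}+E\big)(t)$.
This term is not integrated in time, so $E\in L^1(0,T)$ does not help. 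The piece $C(T)E(t)$, whose constant is not small, cannot be absorbed by the coercive part $a_1c_\delta^*E(t)$ coming from $L_7$, and the energy inequality does not close.

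What you need is a bound on $L_{13,2}$ that is sublinear in $E$, with a constant independent of $E$. The paper gets it from the uniform estimate $|(r^2\rho_0^\delta)^{1/4}\sD_\eta U|_4\le C(T)$. This follows from the second-order bounds \eqref{tan2} and Lemma \ref{lemma-u-D1} via Hardy's inequality. In the interior it goes through $|\chi r^{5/4}(\sD_\eta U,\sD_\eta^2U)|_2$. In the exterior it goes through $|\chi^\sharp\rho_0^{(\delta+3\beta)/4}(U,D_\eta U,D_\eta^2U)|_2$, using $\frac{\delta+3\beta}{4}>\frac{\delta}{2}$. The result is $|L_{13,2}|\le C(T)E^{1/2}\le\frac{a_1c_\delta^*}{4}E+C(T)$. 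The non-uniform $L^\infty$ bound is then used only in the time-integrated bulk terms (the cubic commutators), where your Gr\"onwall argument is correct.

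A smaller point concerns the interior elliptic step. The term $\sD_\eta V\,\sD_\eta U$ has $\sD_\eta V$ only in $L^2$ (Lemma \ref{lemma-Vr-L2}), so $|\zeta_a\sD_\eta U|_\infty$ necessarily appears on the right-hand side. You therefore cannot first prove $|\zeta_ar\sD_\eta^3U|_2\le C(|\zeta_ar\sD_\eta U_t|_2+1)$ and then embed. Instead, absorb that term through $|\zeta_a\sD_\eta U|_\infty^2\le C(a,T)(1+|\zeta_aD_\eta(\sD_\eta^2U)|_2)$ and Young, as the paper does.

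Finally, your time integration by parts in the pressure and gravity terms is unnecessary, since the paper bounds them directly against $|(r^2\rho_0)^{1/2}U_{tt}|_2^2$. It is harmless, though, because that boundary term is linear in $\sD_\eta U$.
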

\begin{proof}
We divide the proof into two steps.

\smallskip
\textbf{1.} We first give some auxiliary estimates associated with the third-order derivatives of $U$.

\smallskip
\textbf{1.1. Interior estimates.} First, it follows from \eqref{DU-L2} (clearly, \eqref{DU-L2} also holds for $\zeta\mapsto\zeta_a$)  and Lemmas \ref{lemma-upper jacobi}, \ref{lemma-u-D2}, and \ref{sobolev-embedding}--\ref{hardy-inequality} that, for all $a\in (0,1)$,
\begin{equation}\label{in-Ur-infty}
\begin{aligned}
|\zeta_{a}\sD_\eta U|_\infty^2&\leq C(a,T)|\zeta_{a}\sD_\eta U|_2\big|\big((\zeta_{a})_r \sD_\eta U,\zeta_{a}\sD_\eta^2 U\big)\big|_2 \\
&\leq C(a,T)\big(1+|\zeta_{a}\sD_\eta^2 U|_2\big)\leq C(a,T)\big(1+|\zeta_{a}D_\eta(\sD_\eta^2 U)|_2\big).
\end{aligned}
\end{equation}

Next, applying $\sD_\eta$ to \eqref{ell-0}, along with \eqref{v-expression}, yields
\begin{equation*}
\sD_\eta\Big(D_\eta \big(D_\eta U+ \frac{2U}{\eta}\big)\Big)=L_{11},
\end{equation*}
where $L_{11}$ has the control of the following form in view of Lemma \ref{non-vac}:
\begin{equation*} 
\begin{aligned}
L_{11}&\leq C(T)\rho_0^{1-\delta}\big(|\sD_\eta U_{t}|+ \rho_0^{1-\delta}|(V,U)||U_t|+|(V,U)| |\sD_\eta^2 U|\big)+C(T)|(\sD_\eta V,\sD_\eta U)||\sD_\eta U| \\
&\quad + C(T)\rho_0^{1-\delta}|(V,U)|^2|\sD_\eta U|+C(T)\rho_0^{1-\delta}\big(\rho_0^{1-\delta}|(V,U)| |D_\eta\Phi| + |\sD_\eta (D_\eta \Phi)|\big) \\
&\quad +C(T)\rho_0^{\gamma-2\delta+1}\big(|(\sD_\eta V,\sD_\eta U)|+ \rho_0^{1-\delta}|(V,U)|^2\big).
\end{aligned}
\end{equation*}

Then  we can obtain the following inequality from \eqref{in-Ur-infty}, Lemmas \ref{lemma-bound depth}--\ref{lemma-lower bound jacobi}, \ref{lemma-v Lp ex}, \ref{lemma-v Linfty in}, \ref{lemma-upper jacobi}, and \ref{lemma-u-D1}--\ref{lemma-Vr-L2}, and the H\"older inequality:
\begin{equation}\label{in-ell-3-pre1}
\Big|\zeta_{a} r\sD_\eta\Big(D_\eta\big(D_\eta U+ \frac{2U}{\eta}\big)\Big)\Big|_2\leq  C(a,T)\big(|\zeta_{a}r\sD_\eta U_t|_2+|\zeta_{a}\sD_\eta U|_\infty +1\big),
\end{equation}
where the following estimate on $\sD_\eta (D_\eta \Phi)$ has also been used in view of \eqref{D Phi expression}: 
\begin{equation}\label{DDphi}
|\sD_\eta (D_\eta \Phi)|_\infty \leq C(T)\Big(\Big|\frac{1}{r^3}\int_0^r \hat r^2\rho_0\,\mathrm{d}\hat r\Big|_\infty+|\rho_0|_\infty\Big)\leq C(T).
\end{equation}
Thus, \eqref{in-ell-3-pre1}, combined with \eqref{in-Ur-infty}, the Young inequality, and Lemma \ref{im-1}, yields that 
\begin{equation}\label{in-ell-3}
|\zeta_{a}\sD_\eta U|_\infty^2+\big|\zeta_{a}r\sD_\eta^3 U \big|_2 \leq C(a,T)\big(\big|(r^2\rho_0^\delta)^\frac{1}{2} \sD_\eta U_t\big|_2+1\big)\qquad\text{for all } a\in (0,1) . 
\end{equation}

\textbf{1.2. Exterior estimates.} First, to show that $\chi^\sharp(U,\sD_\eta U)$ is bounded, it suffices to control the $L^\infty$-norm of $\chi^\sharp(U,D_\eta U)$ due to Lemma \ref{lemma-lower bound jacobi}. Then, since $\frac{3\beta}{2}>\frac{1}{2}>\frac{\delta}{2}$, it follows from Lemmas \ref{lemma-upper jacobi}, \ref{lemma-time-space} (with $\iota=\frac{\beta}{2}$), \ref{lemma-u-0order}--\ref{lemma-u-D2}, and \ref{sobolev-embedding}--\ref{hardy-inequality} and the H\"older inequality that
\begin{equation}\label{Ur-infty-high}
\begin{aligned}
&|\chi^\sharp (U,D_\eta U)|_\infty  \leq C(T)\Big(|\chi^\sharp (U,D_\eta U)|_2+|\chi^\sharp L_*|_\infty\Big) \\ 
&\qquad\leq C(T)\big(1+\big|\chi^\sharp\rho_0^\frac{3\beta}{2}(D_\eta U,D_\eta^2 U,U_t,D_\eta U_t)\big|_2\big) \leq C(T)\big(1+\big|(r^2\rho_0^\delta)^\frac{1}{2}\sD_\eta U_t\big|_2\big).
\end{aligned}    
\end{equation}

\smallskip
\textbf{2. Third-order tangential estimates.} Now, multiplying \eqref{eq-2order-pre} by $U_{tt}$ and integrating over $I$, we obtain 
\begin{equation}\label{dt-J6-J11}
\frac{1}{2}\frac{\mathrm{d}}{\mathrm{d}t}L_7+\big|(r^2\rho_0)^\frac{1}{2}U_{tt}\big|_2^2=L_{12}+L_{13}+L_{14},
\end{equation}
where $L_7$ is defined as in \eqref{dt-J3-J5} and
\begin{align*}
&\begin{aligned}
L_{12}&:= -4a_1(1-\delta)\int_0^1 \eta^2 \eta_r \varrho^\delta\Big(-\delta D_\eta U+(1-2\delta)\frac{U}{\eta}\Big) D_\eta U_t\frac{U_t}{\eta}\,\mathrm{d}r\\
&\quad \ +2a_1(2\delta-1)\int_0^1 \eta^2\eta_r\varrho^\delta\Big((1-\delta)D_\eta U-2\delta\frac{U}{\eta}\Big)\frac{|U_t|^2}{\eta^2} \,\mathrm{d}r\\
&\quad \ -a_1\delta\int_0^1  \eta^2\eta_r \varrho^\delta\Big( (1+\delta)D_\eta U+2 (1-\delta)\frac{U}{\eta}\Big)  |D_\eta U_t|^2 \,\mathrm{d}r,    
\end{aligned}\\
&\begin{aligned}
L_{13}&:= \int_0^1 \eta_r \mathfrak{L}_3 D_\eta U_{tt}\,\mathrm{d}r  - \int_0^1 \mathfrak{L}_4 U_{tt}\,\mathrm{d}r,
\end{aligned}\\
&\begin{aligned}
L_{14}&:=\int_0^1 \big(\mathfrak{L}_1\big)_r\eta^2U_{tt}\,\mathrm{d}r-\int_0^1 \mathfrak{L}_2 U_{tt}\,\mathrm{d}r +2\int_0^1 r^2\rho_0D_\eta \Phi \frac{U}{\eta}U_{tt}\,\mathrm{d}r.
\end{aligned}
\end{align*}

\smallskip
\textbf{2.1. Estimate for $L_{12}$.} First, it follows from Lemmas \ref{lemma-upper jacobi}--\ref{non-vac}, and  \eqref{in-ell-3}--\eqref{Ur-infty-high} that
\begin{equation}\label{L12}
L_{12}\leq C(T) \int_0^1 r^2\rho_0^\delta|\sD_\eta U| |\sD_\eta U_t|^2\,\mathrm{d}r\leq C(T)\big(\big|(r^2\rho_0^\delta)^\frac{1}{2}\sD_\eta U_t\big|_2^4+1\big).
\end{equation}

\smallskip
\textbf{2.2. Estimate for $L_{13}$.} First, $L_{13}$ can be rewritten as follows in view of the chain rules with respect to $\partial_t$ (in $D_\eta U_{tt}$ and $\frac{U_{tt}}{\eta}$) and \eqref{eq:eta}:
\begin{equation}\label{L13-0}
L_{13}=\underline{\int_0^1 (\mathfrak{L}_4)_tU_t-\eta_r (\mathfrak{L}_3)_t D_\eta U_{t} \,\mathrm{d}r}_{:=L_{13,1}} +\frac{\mathrm{d}}{\mathrm{d}t} \underline{\int_0^1 \eta_r \mathfrak{L}_3 D_\eta U_{t}- \mathfrak{L}_4U_t \,\mathrm{d}r}_{:=L_{13,2}},
\end{equation}
where, by Lemmas \ref{lemma-upper jacobi}--\ref{non-vac}, $L_{13,1}$ and $L_{13,2}$ have controls of the following form:
\begin{align*}
L_{13,1}&\leq C(T)\int_0^1 r^2\rho_0^\delta\big(|\sD_\eta U|^3+|\sD_\eta U| |\sD_\eta U_t|\big)|\sD_\eta U_t|\,\mathrm{d}r \\
L_{13,2}&\leq C(T)\int_0^1 r^2\rho_0^\delta |\sD_\eta U|^2 |\sD_\eta U_t|\,\mathrm{d}r.  
\end{align*}

Next, by \eqref{in-ell-3}--\eqref{Ur-infty-high}, $L_{13,1}$ can be treated in a similar way to that of $L_{12}$:
\begin{equation}\label{L131}
L_{13,1}\leq C(T)\big(\big|(r^2\rho_0^\delta)^\frac{1}{2}\sD_\eta U_t\big|_2^4+1\big).
\end{equation}
While for $L_{13,2}$, we can first deduce from Lemmas \ref{lemma-lower bound jacobi}, \ref{lemma-upper jacobi}, \ref{lemma-u-0order}--\ref{lemma-u-D2}, and \ref{hardy-inequality} that
\begin{equation*}
\big|(r^2\rho_0^\delta)^\frac{1}{4}\sD_\eta U\big|_4\leq C(T)\Big(\big|\chi r^\frac{5}{4} (\sD_\eta U,\sD_\eta^2 U)\big|_2 + \big|\chi^\sharp \rho_0^\frac{\delta+3\beta}{4}(U,D_\eta U,D_\eta^2 U)\big|_2\Big) \leq C(T),    
\end{equation*}
where we have also used \eqref{tan2} and the fact that $\frac{\delta+3\beta}{4}>\frac{\delta}{2}$. Hence, it follows that
\begin{equation}\label{l4-fuzhu}
L_{13,2}\leq C(T)\big|(r^2\rho_0^\delta)^\frac{1}{4}\sD_\eta U\big|_4^2\big|(r^2\rho_0^\delta)^\frac{1}{2}\sD_\eta U_t\big|_2\leq C(T)\big|(r^2\rho_0^\delta)^\frac{1}{2}\sD_\eta U_t\big|_2.
\end{equation}

\smallskip
\textbf{2.3. Estimate for $L_{14}$.} For $L_{14}$, we first obtain from \eqref{v-expression} and Lemmas \ref{lemma-upper jacobi}--\ref{non-vac} that
\begin{equation}\label{rm-J60}
\begin{aligned}
L_{14}& \leq C(T)\int_0^1  r^2 \big(\rho_0^{\gamma-\delta+1}|V-U||\sD_\eta U|+\rho_0^\gamma|\sD_\eta^2 U|\big) |U_{tt}|\,\mathrm{d}r\\
&\quad +C(T)\int_0^1 r^2 \rho_0 |D_\eta\Phi| |\sD_\eta U||U_{tt}|\,\mathrm{d}r:=L_{14,1}+L_{14,2}.
\end{aligned}
\end{equation}

Clearly, $L_{14,2}$ can be controlled by Lemma \ref{lemma-v Lp ex} and  \eqref{in-ell-3}--\eqref{Ur-infty-high}:
\begin{equation}
L_{14,2}\leq C(T)\big(\big|(r^2\rho_0^\delta)^\frac{1}{2}\sD_\eta U_t\big|_2^2+1\big)+\frac{1}{8}\big|(r^2\rho_0)^\frac{1}{2} U_{tt}\big|_2^2.
\end{equation}
For $L_{14,1}$, it follows from \eqref{Ur-infty-high}, Lemmas \ref{lemma-lower bound jacobi}, \ref{lemma-v Linfty in}--\ref{lemma-v Linfty ex}, and \ref{lemma-u-D1}--\ref{lemma-u-D2}  that 
\begin{align}
L_{14,1}&\leq C(T)\big(|\zeta(V,U)|_\infty \big|(r^2\rho_0^\delta)^\frac{1}{2}\sD_\eta U\big|_2\big|(r^2\rho_0)^\frac{1}{2}U_{tt}\big|_2+ \big|\zeta r \sD_\eta^2 U\big|_2\big)\big|(r^2\rho_0)^\frac{1}{2}U_{tt}\big|_2\nonumber\\
&\quad + C(T) |\chi^\sharp (\rho_0^{\beta+1-\delta} V,U)|_\infty \big|(r^2\rho_0^\delta)^\frac{1}{2}\sD_\eta U\big|_2\big|(r^2\rho_0)^\frac{1}{2}U_{tt}\big|_2\label{jj61}\\
&\quad +C(T)\big|\chi^\sharp\rho_0^{\gamma-\frac{1}{2}}(U,D_\eta U,D_\eta^2 U)\big|_2 \big|(r^2\rho_0)^\frac{1}{2}U_{tt}\big|_2 \nonumber\\
&\leq C(T)\big(\big|(r^2\rho_0^\delta)^\frac{1}{2}\sD_\eta U_t\big|_2^4+1\big)+\frac{1}{8}\big|(r^2\rho_0)^\frac{1}{2}U_{tt}\big|_2^2.\nonumber
\end{align}

\smallskip
\textbf{2.4. Close the energy estimate.} Collecting \eqref{dt-J6-J11}--\eqref{jj61} gives 
\begin{equation*}
\frac{1}{2}\frac{\mathrm{d}}{\dt}L_7+\frac{1}{2}\big|(r^2\rho_0)^\frac{1}{2}U_{tt}\big|_2^2 \leq \frac{\mathrm{d}}{\dt}L_{13,2}+C(T)\big(\big|(r^2\rho_0^\delta)^\frac{1}{2}\sD_\eta U_t\big|_2^4+1\big).
\end{equation*}
Applying the Gr\"onwall inequality to the above, then \eqref{tan3} follows from \eqref{rm-J3}, \eqref{l4-fuzhu}, Lemma \ref{lemma-u-D2}, and the Young inequality.  Finally, \eqref{tan3}, together with \eqref{in-ell-3}--\eqref{Ur-infty-high}, yields \eqref{tan4}.
\end{proof}

\subsection{Interior estimates of the velocity}\label{sub92}
\begin{Lemma}\label{ell-inner}
There exists a constant $C(T)>0$ such that
\begin{equation*}
\cE_{\mathrm{in}}(t,U)+\int_0^t \cD_{\mathrm{in}}(s,U)\,\mathrm{d}s \leq C(T) \qquad\text{for all $t\in [0,T]$}.
\end{equation*}
\end{Lemma}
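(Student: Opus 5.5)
Most of $\cE_{\mathrm{in}}$ and $\int_0^t\cD_{\mathrm{in}}$ is already controlled by the tangential estimates; only the top-order elliptic pieces remain. Concretely, Lemmas \ref{lemma-u-0order}--\ref{lemma-u-D3} give
\begin{itemize}
\item $\zeta r(U,\sD_\eta U,U_t,\sD_\eta U_t)\in L^\infty(0,T;L^2)$ (since $\rho_0\sim 1$ on $\operatorname{supp}\zeta$);
\item $\zeta r\sD_\eta^2U,\ \zeta r\sD_\eta^3 U\in L^\infty(0,T;L^2)$ by \eqref{tan2}(ii) and \eqref{tan4};
\item $\zeta rU_{tt}\in L^2(0,T;L^2)$ from \eqref{tan3}.
\end{itemize}
What remains is $\int_0^t|\zeta r(\sD_\eta^2U_t,\sD_\eta^4U)|_2^2\,\mathrm{d}s\le C(T)$. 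I obtain both from the elliptic identity \eqref{ell-0} together with the norm equivalence of Lemma \ref{im-1}.

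\textbf{Step 1: $\zeta r\sD_\eta^2U_t$.} Differentiate \eqref{ell-0} in $t$. The left side becomes $D_\eta(D_\eta U_t+\frac{2U_t}{\eta})$ plus commutator terms. The commutators come from $\partial_t D_\eta=D_\eta-D_\eta U\,D_\eta$ and $\partial_t(1/\eta)=-U/\eta^2$, and are of the form $\sD_\eta U\cdot\sD_\eta^2U$ and $\sD_\eta U\cdot\sD_\eta U/\eta$. The right side consists of:
\begin{itemize}
\item $\varrho^{1-\delta}U_{tt}$;
\item $\varrho^{1-\delta}\sD_\eta U\,U_t$;
\item terms involving $V_t$ and $U_t$;
\item $(\varrho^{1-\delta}D_\eta\Phi)_t$, where $(D_\eta\Phi)_t=-2D_\eta\Phi\,U/\eta$;
\item $\varrho^{1-\delta}(V-U)(L_*)_t$ and related products.
\end{itemize}
These are handled as follows:
\begin{itemize}
\item $V_t$ is replaced using \eqref{eq:v} and is bounded near the center by Lemmas \ref{lemma-v Linfty in} and \ref{lemma-v Lp ex}.
\item $\varrho$ is bounded above and below by Lemma \ref{non-vac}.
\item $|\zeta(U,\sD_\eta U)|_\infty\le C(T)$ by \eqref{tan4}, and $\zeta r\sD_\eta^2U\in L^\infty L^2$.
\end{itemize}
The commutator terms are therefore bounded in $L^2$ with weight $\zeta r$. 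The term $(V-U)\sD_\eta U_t$ is bounded by $|\zeta(V,U)|_\infty|\zeta r\sD_\eta U_t|_2$. Lemma \ref{im-1} with $k=2$, applied to $f=U_t$, then gives
$$|\zeta r\sD_\eta^2U_t|_2\le C(T)\big(|\zeta rU_{tt}|_2+1\big),$$
which is square-integrable in time by \eqref{tan3}. A minor point: the cutoff $\zeta_a$ for $a$ slightly larger than $\frac12$ is needed on the right side. This is harmless because all cited bounds hold for every $a\in(0,1)$.

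\textbf{Step 2: $\zeta r\sD_\eta^4U$.} Apply $\sD_\eta^2$ (more precisely, the operator structure in Lemma \ref{im-1} with $k=4$) to \eqref{ell-0}, in the same way $\sD_\eta$ was applied to obtain \eqref{in-ell-3-pre1}. The top-order terms on the right are:
\begin{itemize}
\item $\varrho^{1-\delta}\sD_\eta^2U_t$, controlled by Step 1;
\item $(V-U)\sD_\eta^3U$ and $\sD_\eta^2V\cdot\sD_\eta U$;
\item $\sD_\eta^2(D_\eta\Phi)$.
\end{itemize}

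\textbf{Main obstacle: $\sD_\eta^2V$.} Lemma \ref{lemma-Vr-L2} only controls $\zeta r\sD_\eta V$. I would first prove $|\zeta r\sD_\eta^2V|_2\le C(T)(1+\int_0^t|\zeta r\sD_\eta^3U|_2\,\mathrm{d}s)$, by applying $\sD_\eta^2$ to the transport equation \eqref{eq:v} as in the proof of Lemma \ref{lemma-Vr-L2} and using Gr\"onwall. This is bounded, since $\zeta r\sD_\eta^3U\in L^\infty L^2$ by \eqref{tan4}. Inside \eqref{eq:v}, the second derivatives of $\varrho$ are expressed through $V-U$, and $\sD_\eta^2D_\eta\Phi$ involves $\partial_r\rho_0$, which is bounded thanks to \eqref{distance-la} on the support of $\zeta$.

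\textbf{Closing.} In the product $\sD_\eta^2V\cdot\sD_\eta U$, the factor $\sD_\eta U$ is in $L^\infty$, so the product lies in $L^2$. Lemma \ref{im-1} then gives
$$|\zeta r\sD_\eta^4U|_2\le C(T)\big(1+|\zeta r\sD_\eta^2U_t|_2\big),$$
and integrating in time via Step 1 completes the estimate for $\int_0^t\cD_{\mathrm{in}}$. The $\frac1\eta$ singular terms at the origin are absorbed through the structure of $\sD_\eta^k$ and the Hardy-type equivalences of Lemma \ref{im-1}.
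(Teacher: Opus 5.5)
Your proposal is correct and follows essentially the same route as the paper: $\cE_{\mathrm{in}}$ comes from the tangential lemmas, and $\zeta r\sD_\eta^2U_t$ from the time-differentiated identity \eqref{ell-0} together with Lemma \ref{im-1}. Then a Gr\"onwall bound for $\zeta r\sD_\eta^2V$ follows from the transport equation, and $\zeta r\sD_\eta^4U$ follows by applying $\sD_\eta^2$ to \eqref{ell-0}. One estimate you leave implicit: mixed products such as $\sD_\eta V\cdot\sD_\eta^2U$ and $D_\eta V\cdot U_t$ have no factor in $L^\infty$, and the paper handles them with the weighted $L^4$ (Hardy-type) bounds \eqref{vr-urr-l4} and \eqref{Url4}.
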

\begin{proof}
We divide the proof into three steps.

\smallskip
\textbf{1.} It follows from  Lemmas \ref{lemma-u-0order}--\ref{lemma-u-D2}, and \ref{lemma-u-D3} that 
\begin{equation}\label{total-Ein}
\cE_{\mathrm{in}}(t,U)\leq C(T) \qquad\text{for all $t\in [0,T]$}.
\end{equation}
Then,  according to  Lemma \ref{lemma-u-D3}, it remains to show that
\begin{equation}\label{D-txx/D-xxxx}
\int_0^t\big|\zeta r\sD_\eta^2 U_{t}\big|_2^2\,\mathrm{d}s\leq C(T),\quad \int_0^t\big|\zeta r\sD_\eta^4 U\big|_2^2\,\mathrm{d}s\leq C(T) \qquad\text{for all $t\in [0,T]$}.
\end{equation}

\smallskip
\textbf{2. Proof of $\eqref{D-txx/D-xxxx}_1$.} First, it follows from \eqref{eq:v}, Lemmas \ref{lemma-bound depth}, \ref{lemma-v Linfty in}, and \ref{lemma-u-D3} that
\begin{equation}\label{est-vt-0}
|\zeta r V_t|_2\leq |\zeta V_t|_\infty \leq C_0|\varrho|_\infty^{\gamma-\delta} |\zeta (V,U)|_\infty\leq C(T) \qquad\text{for all $t\in [0,T]$}.
\end{equation}

Next, applying $\partial_t$ to \eqref{ell-0}, along with $\eqref{eq:VFBP-La}_1$ and \eqref{D Phi expression}, we have
\begin{equation}\label{ell-0-t-0}
D_\eta\big(D_\eta U_t+ \frac{2U_t}{\eta}\big)=L_{15},
\end{equation}
where $L_{15}$ has the control of the following form in view of Lemmas \ref{lemma-upper jacobi}--\ref{non-vac}:
\begin{equation*}
\begin{aligned}
L_{15}&\leq  C(T)\rho_0^{1-\delta}\big(|U_{tt}|+ |\sD_\eta U| |U_t| \big)+ C(T) \rho_0^{\gamma-2\delta+1} \big(|(V_t,U_t)|+|\sD_\eta U| |(V,U)|\big)  \\
&\quad +C(T)\rho_0^{1-\delta}\big(|\sD_\eta U||(D_\eta\Phi,V_t,U_t)|+|(V,U)| (|\sD_\eta U|^2+ |\sD_\eta U_t|)\big)+|\sD_\eta U| |\sD_\eta^2 U|.
\end{aligned}
\end{equation*}
Then we obtain from \eqref{total-Ein}, \eqref{est-vt-0}, and Lemmas \ref{lemma-bound depth}, \ref{lemma-v Lp ex}, \ref{lemma-v Linfty in}, \ref{lemma-u-D2}, and \ref{lemma-u-D3} that
\begin{align}
|\zeta r L_{15}|_2&\leq C(T)\big(|\zeta r U_{tt}|_2+ |\sD_\eta U|_\infty\big|\zeta rU_t\big|_2 
+ |\zeta r (V_t,U_t)|_2+|\zeta(V,U)|_\infty|\sD_\eta U|_\infty\big) \nonumber\\
&\quad + C(T)|\chi_\frac{5}{8}(\sD_\eta U,V,U)|_\infty\big(|\zeta r (V_t,U_t,\sD_\eta U_t)|_2+ |\sD_\eta U|_\infty^2+|D_\eta\Phi|_\infty\big)\label{proof0}\\
&\quad+|\sD_\eta U|_\infty\big|\zeta r\sD_\eta^2 U\big|_2 
\leq C(T)(|\zeta r U_{tt}|_2+1),\nonumber
\end{align}
which, along with \eqref{ell-0-t-0} and Lemmas \ref{im-1} and \ref{lemma-u-D3}, leads to $\eqref{D-txx/D-xxxx}_1$.

\smallskip
\textbf{3. Proof of $\eqref{D-txx/D-xxxx}_2$.} We first show that, for any $t\in[0,T]$,
\begin{equation}\label{high-vr-vrr}
\big|\zeta r^\frac{1}{2}\sD_\eta V(t)\big|_4+\big|\zeta r\sD_\eta^2 V(t)\big|_2\leq C(T).
\end{equation}

Indeed, due to Lemmas \ref{lemma-upper jacobi}, \ref{lemma-Vr-L2}--\ref{lemma-u-D3}, and \ref{hardy-inequality}, we have
\begin{equation}\label{vr-urr-l4}
\begin{aligned}
\big|\zeta r^\frac{1}{2}\sD_\eta V\big|_4&\leq C(T) \big|r^\frac{5}{4}(\zeta \sD_\eta V,\zeta_r \sD_\eta V,\zeta \sD_\eta^2 V)\big|_2 \leq C(T)\big(\big|\zeta r\sD_\eta^2 V \big|_2+1\big),\\
\big|\chi_\frac{5}{8} r^\frac{1}{2}\sD_\eta^2 U\big|_4&\leq C(T)
\big|\chi_\frac{5}{8} r^\frac{5}{4}(\sD_\eta^2 U, D_\eta(\sD_\eta^2 U))\big|_2\leq C(T)\big(
\big|\chi_\frac{5}{8} r  \sD_\eta^3 U\big|_2+1\big)\leq C(T),
\end{aligned}    
\end{equation}
and, by \eqref{D Phi expression}, \eqref{v-expression}, and Lemma \ref{lemma-lower bound jacobi}, we obtain that, for \textit{a.e.} $(t,r)\in (0,T)\times (0,1)$,
\begin{equation}\label{vr-urr-l5}
|\sD_\eta^2(D_\eta\Phi)|\leq C(T)\Big(\Big|\frac{1}{r^4}\int_0^r \hat r^2\rho_0\,\mathrm{d}\hat r\Big|+\frac{\rho_0}{r}+\rho_0^{2-\delta}|(V,U)|\Big).
\end{equation}

Next, by \eqref{D Phi expression} and \eqref{v-expression}--\eqref{eq:v}, we have 
\begin{equation}\label{DDV0}
\Big(D_\eta\big(D_\eta V+\frac{2V}{\eta}\big)\Big)_t+\frac{A\gamma}{2a_1\delta}\varrho^{\gamma-\delta}D_\eta\big(D_\eta V+\frac{2V}{\eta}\big)=L_{16}, 
\end{equation}
where $L_{16}$ has the control of the following form in view of Lemma \ref{non-vac}:
\begin{equation*}
\begin{aligned}
L_{16}&\leq C(T) \big(|\sD_\eta U| |\sD_\eta^2 V|+ |\sD_\eta^2 U| |\sD_\eta V|+ |\sD_\eta^2(D_\eta\Phi)| +\rho_0^{\gamma-3\delta+2}|(V,U)|^3\big)\\
&\quad +C(T)\big(\rho_0^{\gamma-2\delta+1}|(V,U)| |(\sD_\eta U,\sD_\eta V)| + \rho_0^{\gamma-\delta} |\sD_\eta^2 U|\big).
\end{aligned}
\end{equation*}
Hence, solving ODE \eqref{DDV0} with respect to $D_\eta (D_\eta V+\frac{2V}{\eta})$, along with Lemma \ref{im-1}, yields
\begin{equation}\label{DDV}
\big|\zeta r\sD_\eta^2 V\big|_2\leq C(T)\Big(\big|\zeta r\sD_r^2 v_0\big|_2 +\int_0^t|\zeta rL_{16}|_2\,\mathrm{d}\tau\Big)\leq C(T)\Big(1 +\int_0^t|\zeta rL_{16}|_2\,\mathrm{d}\tau\Big),
\end{equation}
where the $L^2$-boundedness of $\zeta r\sD_r^2 v_0$ is due to \eqref{distance-la} and the fact that $\rho_0^\beta\sim 1-r$.

Hence, for $L_{16}$, it follows from \eqref{total-Ein}, \eqref{vr-urr-l4}--\eqref{vr-urr-l5}, Lemmas \ref{lemma-bound depth}, \ref{lemma-v Linfty in}, and \ref{lemma-Vr-L2}--\ref{lemma-u-D3}, and the H\"older inequality, we have
\begin{equation*}
\begin{aligned}
L_{16}&\leq C(T)\big(|\sD_\eta U|_\infty\big|\zeta r\sD_\eta^2 V\big|_2 +\big|\zeta r^\frac{1}{2}\sD_\eta V\big|_4\big|\zeta r^\frac{1}{2}\sD_\eta^2 U\big|_4\big) \\
&\quad +C(T)\big(\big|\zeta r\sD_\eta^2(D_\eta\Phi)\big|_2+|\chi_\frac{5}{8}(V,U)|_\infty^3+|\chi_\frac{5}{8}(V,U)|_\infty\big|\zeta r(\sD_\eta U,\sD_\eta V)\big|_2+\big|\zeta r\sD_\eta^2 U\big|_2\big)\\
&\leq  C(T)\big(\big|\zeta r\sD_\eta^2 V\big|_2+1\big),
\end{aligned}
\end{equation*}
which, together with \eqref{DDV}, the Gr\"onwall inequality and \eqref{vr-urr-l4}, implies claim \eqref{high-vr-vrr}.

Now, applying $\sD_\eta^2$ to \eqref{ell-0}, along with \eqref{v-expression}, we have
\begin{align}
\sD_\eta^2\Big(D_\eta \big(D_\eta U+ \frac{2U}{\eta}\big)\Big)&=\underline{\frac{1}{2a_{1}\delta}\sD_\eta^2(\varrho^{1-\delta} U_t)}_{:=L_{17}}-\underline{\sD_\eta^2\big(\frac{1}{2a_{1}}\varrho^{1-\delta}(V-U)L_*\big)}_{:=L_{18}}\nonumber\\
&\quad +\underline{\sD_\eta^2\Big(\frac{A\gamma}{4a_{1}^2\delta^2} \varrho^{\gamma-2\delta+1}(V-U)+\frac{1}{2a_{1}\delta}\varrho^{1-\delta}D_\eta\Phi\Big)}_{:=L_{19}}.\label{DDDU}
\end{align}
Then $L_{17}$--$L_{19}$ have controls of the following form in view of Lemma \ref{non-vac}:
\begin{align*}
L_{17}&\leq C(T)\rho_0^{1-\delta}\big(|\sD_\eta^2 U_{t}|+ \rho_0^{1-\delta} |(V,U)||\sD_\eta U_{t}|+\rho_0^{1-\delta}|(D_\eta V, D_\eta U)| |U_t|+\rho_0^{2-2\delta}|(V,U)|^2 |U_t|\big),\\
L_{18}&\leq C(T)\rho_0^{1-\delta} \big(\rho_0^{1-\delta} |(V,U)|^2 |\sD_\eta^2 U|+|(\sD_\eta V,\sD_\eta U)| |\sD_\eta^2 U| + |(V,U)| |\sD_\eta^3 U|\big)\\
&\quad +C(T)\rho_0^{1-\delta}\big(\rho_0^{2-2\delta}|(V,U)|^3+\rho_0^{1-\delta}|(V,U)| |(\sD_\eta V,\sD_\eta U)| + |(\sD_\eta^2 V,\sD_\eta^2 U)| \big) |\sD_\eta U|,\\
L_{19}&\leq C(T)\rho_0^{2-2\delta} \big(\rho_0^{1-\delta}|(V,U)|^2 + |(D_\eta V,D_\eta U)|\big)|D_\eta\Phi| \\
&\quad +  C(T)\rho_0^{1-\delta}\big(\rho_0^{1- \delta}|(V,U)| |\sD_\eta(D_\eta\Phi)| +  |\sD_\eta^2(D_\eta\Phi)|\big)\\
&\quad +C(T)\rho_0^{\gamma-2\delta+1}\big(\rho_0^{2-2\delta}|(V,U)|^3 +\rho_0^{1-\delta}|(V,U)||(\sD_\eta V,\sD_\eta U)|+ |(\sD_\eta^2 V,\sD_\eta^2 U)|\big). 
\end{align*}

For $L_{17}$, we can first obtain from a calculation similar to \eqref{vr-urr-l4} that
\begin{equation}\label{Url4}
|\chi_\frac{5}{8}r^\frac{1}{2}\sD_\eta U|_4+|\chi_\frac{5}{8}r^\frac{1}{2}U_t|_4\leq C(T) \big(\big|\chi_\frac{5}{8} r  \sD_\eta^2 U\big|_2+\big|\chi_\frac{5}{8} r \sD_\eta U_t\big|_2+1\big)\leq C(T).
\end{equation}
Then, based on \eqref{total-Ein}, \eqref{high-vr-vrr}, \eqref{Url4}, and Lemmas \ref{lemma-v Linfty in} and \ref{lemma-u-D3}, we obtain
\begin{equation}\label{4jie-00}
|\zeta r L_{17}|_2\leq  C(T)\big(\big|\zeta r \sD_\eta^2 U_t\big|_2+1\big).
\end{equation}

For $L_{18}$,  it follows from \eqref{total-Ein}, \eqref{high-vr-vrr}, $\eqref{vr-urr-l4}_2$, \eqref{Url4}, and Lemmas \ref{lemma-v Linfty in} and \ref{lemma-u-D3} that
\begin{equation}\label{4jie-000}
\begin{aligned}
|\zeta rL_{18}|_2&\leq C(T)\Big(\sum_{j=1}^3|\chi_\frac{5}{8}(V,U)|_\infty^{4-j} |\zeta r\sD_\eta^j U|_2+\big|\zeta r^\frac{1}{2} (\sD_\eta V,\sD_\eta U)\big|_4\big|\chi_\frac{5}{8} r^\frac{1}{2}\sD_\eta^2 U \big|_4\Big)\\
&\ \  +C(T)\big(|\chi_\frac{5}{8}(V,U)|_\infty\big|\zeta r (\sD_\eta V,\sD_\eta U)\big|_2 +\big|\zeta r (\sD_\eta^2 V,\sD_\eta^2 U)\big|_2\big)|\sD_\eta U|_\infty\leq C(T).
\end{aligned}
\end{equation}

Finally, since the weight function $\rho_0$ in $L_{19}$ has a higher degree and the derivative order of $U$ is lower, we can obtain the estimate $|\zeta rL_{19}|_2 \leq C(T)$ from Lemma \ref{lemma-v Lp ex}, \eqref{DDphi}, \eqref{vr-urr-l5}, and a method similar to that used for controlling $L_{18}$. 
Thus, the estimate of $L_{19}$, together with \eqref{DDDU}, \eqref{4jie-00}--\eqref{4jie-000}, and Lemma \ref{im-1}, implies
\begin{equation}\label{proof1}
\big|\zeta r\sD_\eta^4 U\big|_2 \leq  C(T)\big(\big|\zeta r \sD_\eta^2 U_t\big|_2+1\big)\qquad \text{for all $t\in[0,T]$},
\end{equation}
which, along with $\eqref{D-txx/D-xxxx}_1$, leads to $\eqref{D-txx/D-xxxx}_2$.
\end{proof}

\subsection{Exterior estimates of the velocity}\label{sub93}
We aim to derive the following exterior estimates:
\begin{Lemma}\label{ell-ex}
There exists a constant $C(T)>0$ such that
\begin{equation*}
\cE_{\mathrm{ex}}(t,U)+\int_0^t \cD_{\mathrm{ex}}(s,U)\,\mathrm{d}s\leq C(T) \qquad\text{for all $t\in[0,T]$}.
\end{equation*}
\end{Lemma}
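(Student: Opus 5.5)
The first three pieces of $\cE_{\mathrm{ex}}$ and $\cD_{\mathrm{ex}}$, namely $\|\rho_0^{\frac12}(U,U_t)\|_{L^2(\frac12,1)}$, $\|\rho_0^{\frac{\delta}{2}}(D_\eta U,D_\eta U_t)\|_{L^2(\frac12,1)}$ and $\int_0^t\|\rho_0^{\frac12}U_{tt}\|_{L^2(\frac12,1)}^2\,\mathrm{d}s$, already follow from Lemmas \ref{lemma-u-0order}--\ref{lemma-u-D2} and \ref{lemma-u-D3}, since $r\sim 1$ on $[\frac12,1]$. So the plan only needs the weighted higher-order part. Set $\alpha:=(\frac32-\varepsilon_0)\beta$. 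We must bound $\|\rho_0^{\alpha}(D_\eta^2U,D_\eta^3U)\|_{L^2(\frac12,1)}$ uniformly in time, and $\int_0^t\|\rho_0^{\alpha}(D_\eta^2U_t,D_\eta^4U)\|^2_{L^2(\frac12,1)}\,\mathrm{d}s$.

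The tool is the exterior elliptic structure. For second order, I multiply \eqref{ell-0} by $\chi^\sharp\rho_0^{\alpha}$. The right-hand side has two kinds of terms.
\begin{itemize}
\item The terms $\rho_0^{1-\delta}U_t$, $\rho_0^{\gamma-2\delta+1}(V-U)$ and $\rho_0^{1-\delta}D_\eta\Phi$ are controlled by Lemmas \ref{lemma-v Lp ex}, \ref{lemma-v Linfty ex}, \ref{non-vac} and \ref{lemma-u-D3}, using $|(U,\sD_\eta U)|_\infty\le C(T)$.
\item The dangerous product $\rho_0^{1-\delta}(V-U)L_*$ is handled with $|\chi^\sharp\rho_0^{\beta+1-\delta}V|_\infty\le C$ and $|\chi^\sharp\rho_0^{\frac{\delta-\beta}{2}}L_*|_\infty\le C$ from \eqref{tan2}. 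These give $|\rho_0^{1-\delta}VL_*|\le C\rho_0^{\frac{\beta-\delta}{2}-\beta}$, which is square-integrable against $\rho_0^{2\alpha}$ because $2\alpha-\beta-\delta>-\beta$.
\end{itemize}
This bounds $\|\rho_0^\alpha D_\eta^2U\|$. Repeating the argument with \eqref{ell-0-t-0} gives $\|\rho_0^\alpha D_\eta^2 U_t\|\lesssim \|\rho_0^{\frac12}U_{tt}\|+1$, which is integrable in time.

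For third and fourth order I apply $D_\eta$, respectively $D_\eta^2$, to \eqref{ell-0}, as in \eqref{DDDU}. The key difficulty is that each extra derivative falling on $\varrho^{1-\delta}$ or on $\varrho^{\gamma-2\delta+1}$ produces a $D_\eta\varrho$. By \eqref{v-expression}, $D_\eta\varrho\sim\varrho^{2-\delta}(V-U)$, so each such factor costs $\rho_0^{-\beta}$ in weight through $V$. I will therefore derive exterior weighted estimates for $D_\eta V$ and $D_\eta^2V$ from the transport ODE \eqref{eq:v}, and from its differentiated forms such as \eqref{DDV0}, with weights $\rho_0^{\alpha+\beta}$, $\rho_0^{\alpha+2\beta}$ in the spirit of Lemmas \ref{lemma-V-Vr} and \ref{lemma-v Linfty ex}. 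The initial data are controlled by \eqref{distance-la}. Closing these uses $|\chi^\sharp \sD_\eta U|_\infty\le C$ and Gr\"onwall.

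The main obstacle is the top-order term $\rho_0^{1-\delta}(V-U)D_\eta^2L_*$ (and its analogue at third order), where no weight can be spared. To absorb it I would exploit the boundary condition \eqref{N111}, namely $L_*|_{r=1}=0$. Applying Hardy's inequality (Lemma \ref{hardy-inequality}) to $L_*$, $D_\eta L_*$ with $\rho_0^\beta\sim 1-r$ converts the singular factor $\rho_0^{-\beta}$ into one more derivative with the good weight. This is exactly why the choice $\alpha<\frac32\beta$ was made: the resulting Hardy exponents stay in the admissible range. This yields the top-order bounds
$$
\|\rho_0^\alpha D_\eta^3U\|\lesssim 1+\|\rho_0^{\delta/2}D_\eta U_t\|,\qquad
\|\rho_0^\alpha D_\eta^4U\|\lesssim 1+\|\rho_0^\alpha D_\eta^2U_t\|,
\]
and integrating in time with the earlier estimates concludes the proof.
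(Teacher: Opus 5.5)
Your treatment of the low-order pieces and of $\|\rho_0^\alpha D_\eta^2U\|$ is fine; the latter is immediate from \eqref{tan2}, since $\alpha>\delta/2$. The gap is in the third- and fourth-order steps.

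\textbf{Why the Hardy absorption fails.} We have $\varrho^{1-\delta}(V-U)=\frac{2a_1\delta}{\beta}\frac{\Lambda}{\varrho^\beta}$, where $\Lambda=D_\eta(\varrho^\beta)$ and $\frac{\Lambda}{\varrho^\beta}\approx\frac{(\rho_0^\beta)_r}{\rho_0^\beta}\sim-\frac{1}{1-r}$. Hence applying $D_\eta$ to \eqref{ell-0} (i.e. to \eqref{Urr-J12-J14}) produces the terms $\frac{\Lambda}{\varrho^\beta}D_\eta L_*$ and $\frac{\Lambda^2}{\varrho^{2\beta}}L_*$. With the weight $\rho_0^\alpha$ you therefore need both $\rho_0^{(\frac12-\varepsilon_0)\beta}D_\eta^2U$ and $\rho_0^{-(\frac12+\varepsilon_0)\beta}L_*$ in $L^2$. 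The bounds you quote from \eqref{tan2} give these only if $(1-2\varepsilon_0)\beta>\delta$, which fails in the admitted range (e.g. $\delta=0.8$, $\gamma=1.4$). Your remedy, Hardy's inequality plus $L_*|_{r=1}=0$, does not close. It trades $\rho_0^{-\beta}$ for one more derivative at the same top order, so you arrive at $\|\rho_0^\alpha D_\eta^3U\|\le C+K\|\rho_0^\alpha D_\eta^3U\|$. Here $K$ is of size $\frac{\delta}{\beta}$ times a Hardy constant, about $1/\varepsilon_0$ for the step that uses $L_*(1)=0$, and nothing makes $K<1$. At fourth order the coefficient of $\frac{\Lambda}{\varrho^\beta}D_\eta^3U$ is $c=\frac{\delta}{\beta}+2>2$ (see \eqref{979'}), against a Hardy constant $\approx 1$, so absorption fails again. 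Lemma \ref{hardy-inequality} itself involves no boundary condition.

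\textbf{The missing idea.} These terms are controlled by their sign, not their size: the operator is $f_r+c\frac{\bar d_r}{\bar d}f=\bar d^{-c}(\bar d^{c}f)_r$ with $\bar d=\rho_0^\beta$ and $c>0$. The paper uses this in two places.
\begin{enumerate}
\item[(a)] For the refined second-order bounds of Lemma \ref{lemma-u-ell-D2-refine}, it uses the integrated momentum equation \eqref{zhongyao}, i.e. the integrating factor $\varrho^\delta$, which vanishes at $r=1$. Lemma \ref{lemma-time-space} with $\iota+\sigma=(\frac12-\varepsilon_0)\beta$ gives $\rho_0^{-(\frac12+\varepsilon_0)\beta}L_*\in L^2$. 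Then \eqref{Urr-J12-J14} yields $\rho_0^{(\frac12-\varepsilon_0)\beta}D_\eta^2U\in L^2$ and $\rho_0^\beta D_\eta^2U\in L^\infty$. The $t$-derivative of \eqref{zhongyao} likewise gives $\int_0^t|\chi^\sharp\rho_0^{(\frac12-\varepsilon_0)\beta}D_\eta U_t|_2^2\,\mathrm{d}s\le C$. Your ``repeat the argument'' step for $D_\eta^2U_t$ silently needs this, because of the term $\frac{\Lambda}{\varrho^\beta}D_\eta U_t$.
\item[(b)] At fourth order it uses Lemma \ref{prop2.1}. Expanding $|\bar d^{b}\mathcal{F}_{\mathrm{cross}}|_2^2$ and integrating the cross term by parts gives $|\bar d^bf_r|_2^2+c(c-2b+1)|\bar d^{b-1}\bar d_rf|_2^2\le|\bar d^b\mathcal{F}_{\mathrm{cross}}|_2^2+\text{lower order}$. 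This is coercive precisely when $2b-1\le c$, and it is applied with $f=D_\eta^3U$, $b=\frac32-\varepsilon_0$, $c=\frac{\delta}{\beta}+2$.
\end{enumerate}

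\textbf{A related difficulty with tracking $D_\eta V$.} Working with $D_\eta V$ instead of $D_\eta\Lambda$ adds trouble. Since $D_\eta V\sim\rho_0^{\delta-1-2\beta}$, the terms $\varrho^{1-\delta}D_\eta V$ and $D_\eta(\varrho^{1-\delta})V$ are each of size $\rho_0^{-2\beta}$. Pairing your $L^2$ bound on $\rho_0^{\alpha+\beta}D_\eta V$ with $L_*$ would require $\rho_0^{1-\delta-\beta}L_*\in L^\infty$, again a decay bound you do not have. The quantity $\Lambda$, with $\rho_0^{(\frac12-\varepsilon_0)\beta}D_\eta\Lambda\in L^2$, builds in the needed cancellation.
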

The proof of Lemma \ref{ell-ex} will be fulfilled by \S \ref{931}--\S\ref{934}.

\subsubsection{Some preliminaries}\label{931}
In what follows, we choose the parameter $\varepsilon_0$ 
satisfying \eqref{varepsilon0}:
\begin{equation*}
\displaystyle 0<\varepsilon_0< \min\Big\{\frac{3\gamma-4}{2(\gamma-1)} ,\frac{1-\delta}{\gamma-1},\frac{1}{100}\Big\}.
\end{equation*}
Then we can obtain the following relations immediately, which will be used frequently later:
\begin{equation}
\big(\frac{3}{2}-\varepsilon_0\big)\beta>\max\Big\{\beta,\frac{1}{2}\Big\}>\frac{\delta}{2}.
\end{equation}

To further simplify our calculations, we define the quantity $\Lambda$ as 
\begin{equation*} 
\Lambda:=D_\eta (\varrho^\beta). 
\end{equation*}
Clearly, this, together with \eqref{eq:eta} and \eqref{v-expression}, also yields
\begin{equation}\label{v-express2}
\Lambda_t+\Lambda \Big((\beta+1) D_\eta U+2\beta\frac{U}{\eta}\Big)+\beta\varrho^\beta D_\eta\Big(D_\eta U+\frac{2U}{\eta}\Big)=0,
\quad\Lambda=\frac{\beta}{2a_1\delta}\varrho^{\gamma-\delta}(V-U).
\end{equation}

Then, by \eqref{v-express2}, Lemmas \ref{lemma-v Linfty ex}, 
\ref{lemma-v Linfty in}, \ref{non-vac}, and \ref{lemma-u-D3}, we can derive the following 
lemma:
\begin{Lemma}\label{lemma-bound-lambda}
There exists a constant $C(T)>0$ such that
\begin{equation*}
|\Lambda(t)|_\infty\leq C(T) \qquad\,\, \text{for all $t\in[0,T]$}.
\end{equation*}
Moreover, for all $(t,r)\in [0,T]\times \bar I$,
\begin{align*}
\chi^\sharp|\Lambda_t| &\leq C(T)\big(1+ \chi^\sharp\rho_0^\beta |D_\eta^2 U|\big),\\
\chi^\sharp|D_\eta\Lambda| &\leq C(T)\Big(1+\int_0^t \chi^\sharp\big(|D_\eta^2 U|+\rho_0^\beta |D_\eta^3 U|\big)\,\mathrm{d}s\Big),\\
\chi^\sharp|D_\eta^2\Lambda|& \leq  C(T)\Big(1+\int_0^t \chi^\sharp\big(|D_\eta^2 U|^2+|D_\eta^3 U|+\rho_0^\beta |D_\eta^4 U|\big)\,\mathrm{d}s\Big)\\
&\quad +C(T)\Big(\int_0^t \chi^\sharp \rho_0^\beta |D_\eta^3 U| \,\mathrm{d}s\Big)\Big(\int_0^t \chi^\sharp |D_\eta^2 U|\,\mathrm{d}s\Big).
\end{align*}
\end{Lemma}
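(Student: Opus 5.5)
The $L^\infty$ bound on $\Lambda$ comes directly from the second identity in \eqref{v-express2}, $\Lambda=\frac{\beta}{2a_1\delta}\varrho^{\gamma-\delta}(V-U)$. Near the origin we use Lemma \ref{lemma-v Linfty in} ($|\zeta V|_\infty\le C(T)$), $|U|_\infty\le C(T)$ from Lemma \ref{lemma-u-D3}, and $|\varrho|_\infty\le C(T)$. Near the boundary we write $\varrho^{\gamma-\delta}=\rho_0^{\beta+1-\delta}(\varrho/\rho_0)^{\gamma-\delta}$. By Lemma \ref{non-vac} the ratio $\varrho/\rho_0$ is bounded above and below, so $|\chi^\sharp\varrho^{\gamma-\delta}V|\le C(T)|\chi^\sharp\rho_0^{\beta+1-\delta}V|_\infty$, which is bounded by Lemma \ref{lemma-v Linfty ex} with $a=\frac12$. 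Combining the two regions gives $|\Lambda|_\infty\le C(T)$.

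For $\Lambda_t$ we simply read off the first identity in \eqref{v-express2}:
\[
|\Lambda_t|\le |\Lambda|\,|(\beta+1)D_\eta U+2\beta U/\eta|+\beta\varrho^\beta|D_\eta(D_\eta U+2U/\eta)|.
\]
The first term is bounded by $C(T)$ using $|\Lambda|_\infty$ and $|\sD_\eta U|_\infty\le C(T)$ (Lemma \ref{lemma-u-D3}). In the second term we expand $D_\eta(2U/\eta)=2D_\eta U/\eta-2U/\eta^2$. On $\chi^\sharp$ we have $\eta\ge C(T)^{-1}$ by Lemma \ref{lemma-lower bound jacobi}, so these pieces are bounded. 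Finally $\varrho^\beta\le C(T)\rho_0^\beta$ by Lemma \ref{non-vac}, which yields $\chi^\sharp|\Lambda_t|\le C(T)(1+\chi^\sharp\rho_0^\beta|D_\eta^2U|)$.

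For $D_\eta\Lambda$ and $D_\eta^2\Lambda$ we treat \eqref{v-express2} as a linear ODE in $t$ and commute $D_\eta$ with $\partial_t$ using $[\partial_t,D_\eta]f=-D_\eta U\,D_\eta f$.

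\textbf{First derivative.} Set $W=D_\eta\Lambda$. Applying $D_\eta$ to the $\Lambda$-equation gives
\[
W_t+W\big((\beta+2)D_\eta U+2\beta U/\eta\big)=-\Lambda\,D_\eta\big((\beta+1)D_\eta U+2\beta U/\eta\big)-\beta\Lambda D_\eta(D_\eta U+2U/\eta)-\beta\varrho^\beta D_\eta^2(D_\eta U+2U/\eta),
\]
where we used $D_\eta(\varrho^\beta)=\Lambda$.
\begin{itemize}
\item The coefficient of $W$ lies in $L^\infty_tL^\infty$ by Lemma \ref{lemma-u-D3}, so the integrating factor is bounded above and below by $C(T)$.
\item On $\chi^\sharp$, with $\eta\ge C(T)^{-1}$, the right-hand side is at most $C(T)(1+|D_\eta^2U|+\rho_0^\beta|D_\eta^3U|)$. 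The terms $U/\eta^k$ and $D_\eta U/\eta^k$ are bounded and are absorbed into the constant.
\item The initial value $W(0)=(\rho_0^\beta)_{rr}$ has to be bounded in $L^\infty(\frac12,1)$. This should follow from \eqref{distance-la}, since $(\rho_0^\beta)_{rrr}\in L^2$ gives $(\rho_0^\beta)_{rr}\in L^\infty$ away from the origin.
\end{itemize}
Duhamel's formula then gives the stated bound on $D_\eta\Lambda$.

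\textbf{Second derivative.} Applying $D_\eta$ once more gives the same type of equation for $D_\eta^2\Lambda$, with integrating factor coefficient $\sim(\beta+3)D_\eta U$ plus bounded terms. The new forcing terms are:
\begin{itemize}
\item $D_\eta^2U\cdot D_\eta\Lambda$ and $D_\eta\Lambda\cdot D_\eta^2U$;
\item $\Lambda\,D_\eta^3U$ and $\Lambda D_\eta^2U$;
\item $\varrho^\beta D_\eta^4U$, and $\Lambda D_\eta^3U$ coming from $D_\eta(\varrho^\beta)=\Lambda$;
\item $|D_\eta^2U|^2$.
\end{itemize}
The key term is $D_\eta^2U\cdot D_\eta\Lambda$. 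We substitute the pointwise bound on $D_\eta\Lambda$ already obtained, $|D_\eta\Lambda|\le C(1+\int_0^t(|D_\eta^2U|+\rho_0^\beta|D_\eta^3U|))$. Integrating against $|D_\eta^2U|$ in time and using $\int_0^t f(s)\int_0^s g\le(\int_0^t f)(\int_0^t g)$ produces exactly the product of time integrals in the statement. The piece involving $\int|D_\eta^2U|\cdot\int|D_\eta^2U|$ is dominated by $\int|D_\eta^2U|^2$ up to a constant via Cauchy--Schwarz in time.

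The initial datum $D_\eta^2\Lambda(0)=(\rho_0^\beta)_{rrr}$ is only in $L^2$, not $L^\infty$, and this is the step I expect to be the main obstacle. I would handle it by stating the inequality pointwise and accepting a $\chi^\sharp|(\rho_0^\beta)_{rrr}|$ contribution absorbed into the constant in the sense used later, i.e. in weighted $L^2$. Alternatively one can carry the term along, since the estimate will only be used after multiplying by weights and taking $L^2$ norms.

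Keeping track of the $U/\eta$ commutator terms is routine, because on $\chi^\sharp$ the quantity $\eta$ is bounded below and all lower-order quantities are bounded by Lemma \ref{lemma-u-D3}.
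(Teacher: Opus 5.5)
Your argument follows the paper's own proof. The bound on $|\Lambda|_\infty$ comes from the identity $\Lambda=\frac{\beta}{2a_1\delta}\varrho^{\gamma-\delta}(V-U)$ together with Lemmas \ref{lemma-v Linfty ex}, \ref{lemma-v Linfty in}, \ref{non-vac} and \ref{lemma-u-D3}. The bound on $\Lambda_t$ is read off directly from \eqref{v-express2}. The derivative bounds come from the once- and twice-$D_\eta$-differentiated transport ODEs: the integrating factors are bounded, Duhamel's formula applies, and the $D_\eta\Lambda$ bound is substituted into the $D_\eta^2\Lambda$ equation.

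Your caveat about the initial datum is correct, and the paper's proof passes over it. Under \eqref{distance-la}, $D_\eta^2\Lambda(0,\cdot)=(\rho_0^\beta)_{rrr}$ is only in $L^2(\frac12,1)$, so the stated pointwise bound already fails at $t=0$ for general data. It holds once an extra term $C(T)\chi^\sharp|(\rho_0^\beta)_{rrr}|$ is added on the right. That extra term is harmless in the only place the estimate is used, the weighted $L^2$ bound \eqref{d2-lambda}.

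One small correction to your list of forcing terms: $|D_\eta^2U|^2$ is not itself a forcing term in the $D_\eta^2\Lambda$ equation. It only enters through $\big(\int_0^t|D_\eta^2U|\big)^2\le t\int_0^t|D_\eta^2U|^2$, which you also use.
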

\begin{proof}
The estimate of $\Lambda_t$ follows directly from \eqref{v-express2}. Based on Lemma \ref{lemma-u-D3} and the $L^\infty$-estimate of $\Lambda$, the estimate of $D_\eta\Lambda$ follows from the following ODE:
\begin{equation*}
\begin{aligned}
&(D_\eta\Lambda)_t + D_\eta\Lambda \Big((\beta+2) D_\eta U+2\beta\frac{U}{\eta}\Big) \\
&\qquad+\Lambda D_\eta\Big((2\beta+1) D_\eta U+4\beta\frac{U}{\eta}\Big) +\beta\varrho^\beta D_\eta^2\Big(D_\eta U+\frac{2U}{\eta}\Big)=0,
\end{aligned}
\end{equation*}
and the estimate of $D_\eta^2\Lambda$ follows from that of $D_\eta\Lambda$,   and the following ODE:
\begin{equation*}
\begin{aligned}
&(D_\eta^2\Lambda)_t + D_\eta^2\Lambda \Big((\beta+3) D_\eta U+2\beta\frac{U}{\eta}\Big)+ 3D_\eta\Lambda D_\eta \Big((\beta+1) D_\eta U+2\beta\frac{U}{\eta}\Big)\\
&\qquad+\Lambda D_\eta^2\Big((3\beta+1) D_\eta U+6\beta\frac{U}{\eta}\Big) +\beta\varrho^\beta D_\eta^3\Big(D_\eta U+\frac{2U}{\eta}\Big)=0.
\end{aligned}
\end{equation*}
\end{proof}

\subsubsection{Elliptic estimates}
The first lemma concerns the second-order elliptic estimate.
\begin{Lemma}\label{lemma-u-ell-D2}
There exists a constant $C(T)>0$ such that 
\begin{equation*}
\big|\chi^\sharp\rho_0^{(\frac{3}{2}-\varepsilon_0)\beta}D_\eta^2 U(t)\big|_2+
\big|\chi^\sharp \rho_0^{(\frac{1}{2}-\varepsilon_0)\beta} U_t(t)\big|_2+\big|\chi^\sharp \rho_0^{\beta}U_t(t)\big|_\infty\leq C(T) \qquad\text{for all $t\in [0,T]$}.
\end{equation*}
\end{Lemma}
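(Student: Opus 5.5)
Since $\rho_0^\beta\sim 1-r$ is bounded below away from the boundary, all three quantities are harmless on compact subsets of $[\tfrac12,1)$; the work is near $r=1$. The plan is to obtain the bound on $U_t$ first, directly from the tangential estimates, and then read off $D_\eta^2U$ from the momentum equation, using the already controlled quantities $L_*$, $V$, $D_\eta\Phi$ for the remaining terms.

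\textbf{Step 1: $U_t$.} Lemmas \ref{lemma-u-D2}--\ref{lemma-u-D3} give, uniformly in $t\in[0,T]$,
\[
|\chi^\sharp\rho_0^{1/2}U_t|_2+|\chi^\sharp\rho_0^{\delta/2}\sD_\eta U_t|_2\le C(T).
\]
The weighted Hardy inequality (Lemma \ref{hardy-inequality}) lowers the weight: it bounds $|\chi^\sharp\rho_0^{(\frac12-\varepsilon_0)\beta}U_t|_2$ by $|\chi^\sharp\rho_0^{(\frac12-\varepsilon_0)\beta+\beta}(U_t,\partial_rU_t)|_2$. This is admissible because $(\frac32-\varepsilon_0)\beta>\max\{\beta,\frac12\}>\frac{\delta}{2}$, and because $\eta_r\sim1$ by Lemmas \ref{lemma-lower bound jacobi} and \ref{lemma-upper jacobi}. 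For the $L^\infty$ bound, I would write
\[
|\chi^\sharp\rho_0^\beta U_t|_\infty\le C\,|(\chi^\sharp\rho_0^\beta U_t)_r|_1\le C\big(|\chi^\sharp\rho_0^{\beta-\beta}U_t|_1+|\chi^\sharp\rho_0^\beta\partial_rU_t|_1\big).
\]
Then Cauchy--Schwarz against $\rho_0^{1/2}$, respectively $\rho_0^{\delta/2}$, controls both pieces, since $\rho_0^{-1/2}$ and $\rho_0^{\beta-\delta/2}$ are integrable: $\beta>\frac13>\delta/2-\tfrac{\beta}{2}$, so the exponents are $>-\beta$.

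\textbf{Step 2: $D_\eta^2U$.} I would multiply the identity \eqref{ell-0} by $\chi^\sharp\rho_0^{(\frac32-\varepsilon_0)\beta}$. On the left, $D_\eta(D_\eta U+\frac{2U}{\eta})=D_\eta^2U+2D_\eta(U/\eta)$, and the second term is bounded by Lemma \ref{lemma-u-D3} ($|\sD_\eta U|_\infty\le C$, $\eta\gtrsim1$). On the right, Lemma \ref{non-vac} converts powers of $\varrho$ into powers of $\rho_0$. This gives four terms:
\begin{itemize}
\item $\rho_0^{1-\delta}U_t$: with weight $\rho_0^{(\frac32-\varepsilon_0)\beta+1-\delta}$, it is controlled by Step 1, since $1-\delta>\varepsilon_0\beta$ by \eqref{varepsilon0}, so the exponent exceeds $(\frac12-\varepsilon_0)\beta+\dots$ in fact exceeds $\beta$.
\item $\rho_0^{\gamma-2\delta+1}(V-U)$: controlled by Lemma \ref{lemma-v Linfty ex} together with $|U|_\infty\le C$.
\item $\rho_0^{1-\delta}D_\eta\Phi$: controlled by Lemma \ref{lemma-v Lp ex}.
\item $\rho_0^{1-\delta}(V-U)L_*$: controlled by $|\chi^\sharp\rho_0^{\beta+1-\delta}V|_\infty$ and $|L_*|\le|\sD_\eta U|\le C$.
\end{itemize}

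I expect the main obstacle to be the weight bookkeeping in the $U_t$ term: one must check that $(\frac32-\varepsilon_0)\beta+1-\delta$ dominates the weight available for $U_t$. This comparison is exactly where the constraint $\varepsilon_0<\frac{1-\delta}{\gamma-1}$ enters. If it falls short, the fallback is the $\rho_0^{1/2}$ bound from Lemma \ref{lemma-u-D2}, since $(\frac32-\varepsilon_0)\beta+1-\delta>\frac12$ by $\varepsilon_0<\frac{3\gamma-4}{2(\gamma-1)}$.
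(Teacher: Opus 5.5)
Your $L^2$ bound for $\chi^\sharp\rho_0^{(\frac12-\varepsilon_0)\beta}U_t$ is exactly the paper's argument: Lemma \ref{hardy-inequality} with $p=2$, then Lemmas \ref{lemma-u-D2}--\ref{lemma-u-D3}.

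\textbf{The gap.} The $L^\infty$ bound for $\chi^\sharp\rho_0^\beta U_t$ does not go through as written. After differentiating, the piece $(\rho_0^\beta)_rU_t$ requires $|\chi^\sharp U_t|_1$. Cauchy--Schwarz against $|\chi^\sharp\rho_0^{1/2}U_t|_2$ then needs $\rho_0^{-1/2}\in L^2(\frac12,1)$, i.e.
\[
\int_{1/2}^1\rho_0^{-1}\,\mathrm{d}r\sim\int_{1/2}^1(1-r)^{-1/\beta}\,\mathrm{d}r<\infty ,
\]
which holds only if $\beta>1$, i.e. $\gamma>2$. This fails for $\gamma\in(\frac43,2]$. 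It fails on the whole admissible range when $\delta\le\frac56$, since then $6\delta-3\le 2$.

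Your check $\beta>\frac13>\frac{\delta}{2}-\frac{\beta}{2}$ is the right condition only for the other piece $\rho_0^\beta\partial_rU_t$. There $\rho_0^{2\beta-\delta}\in L^1$ iff $3\beta>\delta$, which does hold.

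\textbf{The fix is easy.} Either of the following works:
\begin{itemize}
\item Pair $|\chi^\sharp U_t|_1$ with the lowered-weight $L^2$ bound you just proved. This works because $\rho_0^{-(1-2\varepsilon_0)\beta}\sim(1-r)^{-(1-2\varepsilon_0)}\in L^1$.
\item Do as the paper does: apply Lemma \ref{hardy-inequality} with $p=\infty$ and $\vartheta=1$. This gives $|\chi^\sharp\rho_0^\beta U_t|_\infty\le C|\chi^\sharp\rho_0^{\frac32\beta}(U_t,\partial_rU_t)|_2$, and you conclude from $\frac32\beta>\frac12>\frac{\delta}{2}$.
\end{itemize}

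\textbf{Step 2.} This is correct: the four right-hand terms of \eqref{ell-0} are controlled as you say, via Lemmas \ref{lemma-v Lp ex}, \ref{lemma-v Linfty ex}, \ref{non-vac} and $\eqref{tan4}_1$. It is more work than needed, though. The paper reads the $D_\eta^2U$ bound directly off \eqref{tan2} in Lemma \ref{lemma-u-D2}, which gives $|\chi^\sharp\rho_0^{\frac{\delta+\varepsilon}{2}}D_\eta^2U|_2\le C(\varepsilon,T)$ for every $\varepsilon>0$. Take $\delta+\varepsilon=(3-2\varepsilon_0)\beta$, which is allowed since $(\frac32-\varepsilon_0)\beta>\frac{\delta}{2}$.

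Your elliptic route is essentially what the paper uses one lemma later, in Lemma \ref{lemma-u-ell-D2-refine}, to obtain sharper weights. The ``main obstacle'' you anticipate in the $U_t$ term is not one: the exponent $(\frac32-\varepsilon_0)\beta+1-\delta$ exceeds $\frac32\beta>\frac12$, so the plain $\rho_0^{1/2}$ bound already suffices.
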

\begin{proof}
The boundedness of the first term follows  from the fact that $(\frac{3}{2}-\varepsilon_0)\beta>\frac{\delta}{2}$ and Lemma \ref{lemma-u-D2}, and the boundedness of last two terms follows from the fact that $(\frac{3}{2}-\varepsilon_0)\beta>\frac{1}{2}$, and Lemmas \ref{lemma-upper jacobi}, \ref{lemma-u-D2}, \ref{lemma-u-D3}, and \ref{hardy-inequality}:
\begin{equation*}
\big|\chi^\sharp \rho_0^{(\frac{1}{2}-\varepsilon_0)\beta} U_t \big|_2+\big|\chi^\sharp \rho_0^{\beta}U_t\big|_\infty\leq C(T)\big(\big|\chi^\sharp \rho_0^{(\frac{3}{2}-\varepsilon_0)\beta} (U_t,D_\eta U_{t})\big|_2+ \big|\chi^\sharp \rho_0^{\frac{3}{2}\beta}(U_t,D_\eta U_{t})\big|_2\big)\leq C(T).
\end{equation*}
\end{proof}

Now we denote 
$
L_*=D_\eta U-\frac{2(1-\delta)}{\delta}\frac{ U}{\eta}
$, and  give  some refined weighted estimates for $U$. 
\begin{Lemma}\label{lemma-u-ell-D2-refine}
There exists a constant $C(T)>0$ such that, for all $t\in [0,T]$,
\begin{equation*}
\big|\chi^\sharp\rho_0^{-(\frac{1}{2}+\varepsilon_0)\beta}L_*\big|_2+\big|\chi^\sharp\rho_0^{(\frac{1}{2}-\varepsilon_0)\beta}D_\eta^2 U\big|_2+|\chi^\sharp\rho_0^\beta D_\eta^2 U|_\infty \leq C(T).
\end{equation*}
\end{Lemma}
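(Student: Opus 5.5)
The plan is to use the integrated form \eqref{zhongyao} of the momentum equation to bound $L_*$ pointwise near the boundary, now with the better information available (Lemmas \ref{lemma-u-D3} and \ref{lemma-u-ell-D2}), and then to read off $D_\eta^2 U$ from \eqref{ell-0}.

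\textbf{Step 1 (pointwise bound on $L_*$).} By \eqref{zhongyao}, Lemma \ref{non-vac}, the bounds on $(\eta_r,\frac{\eta}{r})$, $|D_\eta\Phi|_\infty\le C(T)$, and the bound $|(U,\sD_\eta U)|_\infty\le C(T)$ of Lemma \ref{lemma-u-D3}, we have for $r\in[\frac12,1)$
\begin{equation*}
|L_*|\le C(T)\rho_0^{\gamma-\delta}+C(T)\rho_0^{-\delta}\int_r^1\big(\rho_0^\delta+\rho_0|U_t|\big)\,\mathrm{d}\hat r .
\end{equation*}
By Lemma \ref{lemma-u-ell-D2}, $\rho_0|U_t|\le C(T)\rho_0^{1-\beta}=C(T)\rho_0^{2-\gamma}$. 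Since $\rho_0^\beta\sim 1-r$, the integral is $O(\rho_0^{\delta+\beta})+O(\rho_0^{2-\gamma+\beta})=O(\rho_0^{\delta+\beta})+O(\rho_0)$. Hence
\begin{equation*}
|L_*|\le C(T)\big(\rho_0^{\gamma-\delta}+\rho_0^{\beta}+\rho_0^{1-\delta}\big)\le C(T)\rho_0^{\min\{\beta,1-\delta\}} .
\end{equation*}
Therefore $\chi^\sharp\rho_0^{-(\frac12+\varepsilon_0)\beta}L_*$ is bounded by $C(T)\rho_0^{\min\{\beta,1-\delta\}-(\frac12+\varepsilon_0)\beta}$. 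This function lies in $L^2$ as soon as $2\min\{\beta,1-\delta\}-(1+2\varepsilon_0)\beta>-\beta$, i.e.\ as soon as the exponent exceeds $-\beta/2$ (recall that $\rho_0^{s}\in L^2(\frac12,1)$ iff $s>-\beta/2$). For the $\beta$ branch this reads $\beta>\varepsilon_0\beta$, which is trivial. For the $1-\delta$ branch it reads $1-\delta>\varepsilon_0\beta$, which holds by the choice of $\varepsilon_0$ in \eqref{varepsilon0}. This gives the first term of the lemma.

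\textbf{Step 2 (second derivatives).} Next rewrite \eqref{ell-0} as
\begin{equation*}
D_\eta\Big(D_\eta U+\frac{2U}{\eta}\Big)=\frac{1}{2a_1\delta}\varrho^{1-\delta}U_t+\frac{\beta^{-1}A\gamma}{2a_1\delta}\varrho^{1-\delta}\Lambda+\frac{1}{2a_1\delta}\varrho^{1-\delta}D_\eta\Phi-\frac{\delta}{\beta}\varrho^{1+\beta-\gamma}\Lambda L_*,
\end{equation*}
using $V-U=\frac{2a_1\delta}{\beta}\varrho^{\delta-\gamma}\Lambda$ from \eqref{v-express2}. Since $\varrho^{1+\beta-\gamma}=1$, the last term is simply $-\frac{\delta}{\beta}\Lambda L_*$, and $|\Lambda|_\infty\le C(T)$ by Lemma \ref{lemma-bound-lambda}. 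Moreover $D_\eta^2U$ differs from the left-hand side only by $D_\eta(\frac{2U}{\eta})$, which is bounded near the boundary because $\eta$ is bounded below there and $U,\sD_\eta U$ are bounded. Hence
\begin{equation*}
\chi^\sharp|D_\eta^2U|\le C(T)\big(1+\rho_0^{1-\delta}|U_t|+|L_*|\big) .
\end{equation*}
We bound the two surviving terms as follows.
\begin{itemize}
\item The $L_*$ term is controlled pointwise by Step 1, and $\rho_0^{(\frac12-\varepsilon_0)\beta}\rho_0^{\min\{\beta,1-\delta\}}\in L^2$ trivially.
\item For the $U_t$ term, $\rho_0^{(\frac12-\varepsilon_0)\beta+1-\delta}U_t$ is controlled in $L^2$ by Lemma \ref{lemma-u-ell-D2}, since $1-\delta>0$.
\end{itemize}
For the $L^\infty$ bound, multiply by $\rho_0^\beta$. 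Then $\rho_0^{\beta+1-\delta}|U_t|\le C(T)\rho_0^{1-\delta}$ by Lemma \ref{lemma-u-ell-D2}, and $\rho_0^\beta|L_*|\le C(T)$. This yields $|\chi^\sharp\rho_0^\beta D_\eta^2U|_\infty\le C(T)$.

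The main obstacle I expect is Step 1. The exponent bookkeeping in the integral $\int_r^1\rho_0|U_t|$ must produce a decay of $L_*$ strong enough to absorb the negative weight $\rho_0^{-(\frac12+\varepsilon_0)\beta}$, and this is exactly where the constraint $\varepsilon_0<\frac{1-\delta}{\gamma-1}$ enters. If the pointwise bound on $U_t$ is too weak there, I would instead use the $L^2$ bound $|\chi^\sharp\rho_0^{(\frac12-\varepsilon_0)\beta}U_t|_2$ with Cauchy--Schwarz, together with a Hardy-type inequality (Lemma \ref{hardy-inequality}) applied to the tail integral.
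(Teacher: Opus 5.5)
Your Step 2 contains an algebra error, and correcting it shows what the first estimate is for. Since $V-U=\frac{2a_1\delta}{\beta}\varrho^{\delta-\gamma}\Lambda$ by \eqref{v-express2}, the last term of \eqref{ell-0} is
\[
\frac{1}{2a_1}\varrho^{1-\delta}(V-U)L_*=\frac{\delta}{\beta}\varrho^{1-\gamma}\Lambda L_*=\frac{\delta}{\beta}\frac{\Lambda}{\varrho^{\beta}}L_*,
\]
not $\frac{\delta}{\beta}\varrho^{1+\beta-\gamma}\Lambda L_*$ (compare \eqref{Urr-J12-J14}). This term carries the singular factor $\varrho^{-\beta}\sim(1-r)^{-1}$. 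Your bound $\chi^\sharp|D_\eta^2U|\le C(T)(1+\rho_0^{1-\delta}|U_t|+|L_*|)$ is therefore false as written: $|L_*|$ must be replaced by $\rho_0^{-\beta}|L_*|$.

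The conclusion still holds, but for a different reason than the one you give.
\begin{itemize}
\item After multiplying by $\rho_0^{(\frac12-\varepsilon_0)\beta}$, the term becomes $|\Lambda|\,\rho_0^{-(\frac12+\varepsilon_0)\beta}|L_*|$. This is in $L^2$ exactly by your Step 1. That is why the lemma pairs this weighted bound on $L_*$ with the bound on $D_\eta^2U$. With your pointwise rate the step again needs $\varepsilon_0\beta<1-\delta$, so it is not ``trivial''.
\item After multiplying by $\rho_0^{\beta}$, the term becomes $|\Lambda||L_*|$. This is bounded because $|\chi^\sharp L_*|_\infty\le C(T)$ by Lemma \ref{lemma-u-D3} and $|\Lambda|_\infty\le C(T)$ by Lemma \ref{lemma-bound-lambda}, not because $\rho_0^\beta|L_*|$ is bounded.
\end{itemize}
With this correction your Step 2 coincides with Steps 2--3 of the paper.

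Your Step 1 is correct and takes a different, more elementary route than the paper.
\begin{itemize}
\item The paper uses the $L^2$ version of Lemma \ref{lemma-time-space} with $\iota+\sigma=(\frac12-\varepsilon_0)\beta$ and a carefully chosen small $\sigma>0$. That is, it applies Cauchy--Schwarz to the tail integrals in \eqref{zhongyao}. It then uses the Hardy inequality to bound $|\chi^\sharp\rho_0^{(\frac12-\varepsilon_0)\beta-\sigma}(D_\eta U,U_t)|_2$ by the energies of Lemmas \ref{lemma-u-D2} and \ref{lemma-u-D3}.
\item You instead insert two pointwise bounds into \eqref{zhongyao}: $|(U,\mathscr{D}_\eta U)|_\infty\le C(T)$ from Lemma \ref{lemma-u-D3} and $|\chi^\sharp\rho_0^\beta U_t|_\infty\le C(T)$ from Lemma \ref{lemma-u-ell-D2}. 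This gives the explicit decay $|L_*|\le C(T)\rho_0^{\min\{\beta,1-\delta\}}$ on $[\frac12,1)$. That is a stronger, pointwise statement, consistent with \eqref{N111}.
\end{itemize}
The price is that you need $\varepsilon_0<\frac{1-\delta}{\gamma-1}$ from \eqref{varepsilon0} already at this step. In the paper's route the tail contributes only a factor $\rho_0^{\varepsilon_0\beta+\sigma}$ times an energy norm, so only $\sigma>0$ is needed here.
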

\begin{proof}
We divide the proof into three steps.

\textbf{1.}  First, since $\varepsilon_0<\frac{1}{100}$ and $(\frac{3}{2}-\varepsilon_0)\beta>\frac{1}{2}$, we choose fixed $(\iota,\sigma)$ in Lemma \ref{lemma-time-space} such that
\begin{align*}
&\iota+\sigma=\big(\frac{1}{2}-\varepsilon_0\big)\beta,\qquad \iota\in \Big(\frac{\beta+1}{p_*}-\frac{\beta}{2},\delta+\frac{\beta}{2}\Big),\\
&0<\sigma<\min\Big\{(1-\varepsilon_0)\beta-\frac{\beta+1}{p_*},\big(\frac{3}{2}-\varepsilon_0\big)\beta-\frac{1}{2}\Big\}\implies \big(\frac{3}{2}-\varepsilon_0\big)\beta-\sigma>\frac{1}{2}.
\end{align*}
Then it follows from the above, Lemmas \ref{lemma-upper jacobi}, \ref{lemma-time-space}, \ref{lemma-u-D1}--\ref{lemma-u-D2}, \ref{lemma-u-D3}, and \ref{hardy-inequality} that
\begin{equation}
 \label{Ur-refine}
 \begin{aligned}
&\big|\chi^\sharp\rho_0^{-(\frac{1}{2}+\varepsilon_0)\beta}L_*\big|_2\leq C(T)\big(1+\big|\chi^\sharp\rho_0^{(\frac{1}{2}-\varepsilon_0)\beta-\sigma}(D_\eta U,U_t)\big|_2\big)\\
&\leq C(T)\big(1+\big|\chi^\sharp\rho_0^{(\frac{3}{2}-\varepsilon_0)\beta-\sigma}(D_\eta U,D_\eta^2 U,U_t,D_\eta U_{t})\big|_2\big) \leq C(T).
\end{aligned}    
\end{equation}

\textbf{2.} In view of \eqref{v-express2}, \eqref{ell-0} can be  written as
\begin{equation}\label{Urr-J12-J14}
\begin{aligned}
D_\eta^2 U& =\frac{1}{2a_{1}\delta}\varrho^{1-\delta} U_t-\frac{2}{\eta}\Big(D_\eta U-\frac{U}{\eta}\Big)-\frac{\delta}{\beta}\frac{\Lambda}{\varrho^\beta} L_* +\frac{A\gamma}{2a_{1} \delta \beta} \varrho^{1- \delta}\Lambda+\frac{1}{2a_{1}\delta}\varrho^{1-\delta}D_\eta\Phi.
\end{aligned}
\end{equation}

By \eqref{Ur-refine}, $(\frac{1}{2}-\varepsilon_0)\beta>0$, and Lemmas \ref{lemma-v Lp ex}, \ref{non-vac}, \ref{lemma-u-D3}, and \ref{lemma-bound-lambda}--\ref{lemma-u-ell-D2}, we have
\begin{align*}
\big|\chi^\sharp \rho_0^{(\frac{1}{2}-\varepsilon_0)\beta}D_\eta^2 U\big|_2&\leq C(T)\big(\big|\chi^\sharp  (U,D_\eta U)\big|_\infty+\big|\chi^\sharp \rho_0^{(\frac{1}{2}-\varepsilon_0)\beta} U_t\big|_2+ \big|(\Lambda,D_\eta \Phi)\big|_\infty\big)\\
&\quad +C(T)\big|\chi^\sharp\rho_0^{-(\frac{1}{2}+\varepsilon_0)\beta}L_*\big|_2|\Lambda|_\infty \leq C(T).
\end{align*}

\textbf{3.} It follows from Lemmas \ref{lemma-lower bound jacobi}, \ref{lemma-v Lp ex}, \ref{non-vac}, \ref{lemma-u-D3}, and \ref{lemma-bound-lambda}--\ref{lemma-u-ell-D2} that, for all $t\in [0,T]$,
\begin{equation*}
\begin{aligned}
|\chi^\sharp \rho_0^{\beta} D_\eta^2 U|_\infty&\leq C(T)\big(|\chi^\sharp \rho_0^\beta U_t|_\infty+ |(U,D_\eta U)|_\infty+ \big|(\Lambda,D_\eta \Phi)\big|_\infty\big)\leq C(T).
\end{aligned}    
\end{equation*}
\end{proof}

Now, we can establish the third-order elliptic estimate for $U$.
\begin{Lemma}\label{lemma-u-D3-ell}
There exists a constant $C(T)>0$ such that 
\begin{equation*}
\big|\chi^\sharp\rho_0^{(\frac{3}{2}-\varepsilon_0)\beta}(\rho_0^{-\beta}D_\eta\Lambda,D_\eta^3 U)(t)\big|_2\leq C(T)\qquad\text{for all $t\in [0,T]$}.
\end{equation*}
\end{Lemma}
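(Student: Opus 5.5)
The bound on $D_\eta\Lambda$ will come from a transport ODE along particle paths, and the bound on $D_\eta^3 U$ from differentiating the elliptic identity \eqref{Urr-J12-J14} once in $D_\eta$. The key point is that $\rho_0^{-\beta}D_\eta\Lambda$ is what appears when one differentiates the singular term $\frac{\delta}{\beta}\frac{\Lambda}{\varrho^\beta}L_*$. So the two quantities must be estimated together, and the structure is an ODE-plus-elliptic loop closed by Gr\"onwall.

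\textbf{Step 1 (elliptic identity).} Apply $D_\eta$ to \eqref{Urr-J12-J14}. This gives $D_\eta^3U$ as a sum of the following pieces:
\begin{itemize}
\item $\varrho^{1-\delta}D_\eta U_t$ and $\Lambda\varrho^{1-\delta-\beta}U_t$;
\item lower-order terms $\eta^{-1}D_\eta^2U$, $\eta^{-2}D_\eta U$, $\eta^{-3}U$, which are harmless on $\chi^\sharp$ by Lemmas \ref{lemma-lower bound jacobi} and \ref{lemma-upper jacobi};
\item $\varrho^{-\beta}D_\eta\Lambda\,L_*$, $\varrho^{-2\beta}\Lambda^2L_*$ and $\varrho^{-\beta}\Lambda D_\eta L_*$;
\item potential terms $\varrho^{1-\delta}D_\eta\Lambda$ and $\varrho^{1-\delta}D_\eta^2\Phi$.
\end{itemize}
Multiply by $\chi^\sharp\rho_0^{(\frac32-\varepsilon_0)\beta}$ and take $L^2$. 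The estimates for each piece are:
\begin{itemize}
\item $D_\eta U_t$: controlled by Lemma \ref{lemma-u-D3}, since $(\frac32-\varepsilon_0)\beta+1-\delta\ge\frac\delta2$.
\item $\Lambda\varrho^{1-\delta-\beta}U_t$: controlled by Lemma \ref{lemma-u-ell-D2}, using $(\frac12-\varepsilon_0)\beta+1-\delta$ as the weight on $U_t$.
\item $\varrho^{-2\beta}\Lambda^2L_*$: controlled by the refined bound $|\chi^\sharp\rho_0^{-(\frac12+\varepsilon_0)\beta}L_*|_2$ of Lemma \ref{lemma-u-ell-D2-refine}, since the weight collapses exactly to $\rho_0^{-(\frac12+\varepsilon_0)\beta}$, together with $|\Lambda|_\infty\le C(T)$.
\item $\varrho^{-\beta}\Lambda D_\eta L_*$: controlled by $|\chi^\sharp\rho_0^{(\frac12-\varepsilon_0)\beta}D_\eta^2U|_2$ from Lemma \ref{lemma-u-ell-D2-refine}, plus bounded lower-order terms.
\item $D_\eta^2\Phi$: bounded pointwise by \eqref{D Phi expression}, as in \eqref{DDphi}.
\item $\varrho^{-\beta}D_\eta\Lambda\,L_*$: since $|\chi^\sharp L_*|_\infty\le C(T)$ by \eqref{tan4}, this is bounded by $C(T)\,|\chi^\sharp\rho_0^{(\frac12-\varepsilon_0)\beta}D_\eta\Lambda|_2$, which is the first quantity in the statement.
\end{itemize}
Summing up,
\[
\big|\chi^\sharp\rho_0^{(\frac32-\varepsilon_0)\beta}D_\eta^3U\big|_2\le C(T)\Big(1+\big|\chi^\sharp\rho_0^{(\frac12-\varepsilon_0)\beta}D_\eta\Lambda\big|_2\Big).
\]

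\textbf{Step 2 (transport ODE for $D_\eta\Lambda$).} Use the ODE for $D_\eta\Lambda$ displayed in the proof of Lemma \ref{lemma-bound-lambda}. Multiply it by $\chi^\sharp\rho_0^{2(\frac12-\varepsilon_0)\beta}\eta_r D_\eta\Lambda$ and integrate in $r$; the weight is time-independent and $\eta_r$ commutes suitably. The coefficient terms $D_\eta U$ and $U/\eta$ are in $L^\infty$ by \eqref{tan4}. The source terms are handled as follows:
\begin{itemize}
\item $\Lambda D_\eta^2U$ and $\Lambda D_\eta(U/\eta)$ are bounded via Lemma \ref{lemma-u-ell-D2-refine}.
\item The last term $\beta\varrho^\beta D_\eta^2(D_\eta U+2U/\eta)$ carries total weight $\rho_0^{(\frac32-\varepsilon_0)\beta}$ on $D_\eta^3U$. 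By Step 1 it is bounded by $C(T)\big(1+|\chi^\sharp\rho_0^{(\frac12-\varepsilon_0)\beta}D_\eta\Lambda|_2\big)$.
\end{itemize}
This gives
\[
\frac{\mathrm d}{\mathrm dt}Y\le C(T)(1+Y),\qquad Y:=\big|\chi^\sharp\rho_0^{(\frac12-\varepsilon_0)\beta}\sqrt{\eta_r}\,D_\eta\Lambda\big|_2 .
\]
The initial datum is finite by \eqref{distance-la} (with $\Lambda_0=(\rho_0^\beta)_r$ and $(\rho_0^\beta)_{rr}\in L^2$ near the boundary). Gr\"onwall then closes the estimate for $Y$, and Step 1 returns the bound for $D_\eta^3U$.

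\textbf{Main obstacle.} The delicate point is the weight bookkeeping in the singular term $\varrho^{-\beta}\Lambda\,D_\eta L_*$ and in the $\varrho^{-2\beta}$ term, where $\rho_0^{(\frac32-\varepsilon_0)\beta}$ drops to the borderline weights $\rho_0^{(\frac12-\varepsilon_0)\beta}$ and $\rho_0^{-(\frac12+\varepsilon_0)\beta}$. This is precisely why Lemma \ref{lemma-u-ell-D2-refine} was proved first. If $D_\eta L_*$ is not exactly $D_\eta^2U$ plus harmless terms, I would fall back on the boundary condition \eqref{N111} and a Hardy inequality (Lemma \ref{hardy-inequality}) to absorb the $\rho_0^{-\beta}$ loss.
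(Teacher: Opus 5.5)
Your proposal is correct and follows the paper's argument. You differentiate \eqref{Urr-J12-J14} once to get $|\chi^\sharp\rho_0^{(\frac32-\varepsilon_0)\beta}D_\eta^3U|_2\le C(T)\big(1+|\chi^\sharp\rho_0^{(\frac12-\varepsilon_0)\beta}D_\eta\Lambda|_2\big)$, using $|\chi^\sharp L_*|_\infty\le C(T)$ and Lemma \ref{lemma-u-ell-D2-refine}, and then close with the transport ODE for $D_\eta\Lambda$, whose source is $\varrho^\beta D_\eta^3U$. The only differences are cosmetic: the paper integrates that ODE pointwise (Lemma \ref{lemma-bound-lambda}) and runs Gr\"onwall on the $D_\eta^3U$ norm rather than on your weighted quantity $Y$, and your fallback is unnecessary since $D_\eta L_*=D_\eta^2U-\frac{2(1-\delta)}{\delta}\big(\frac{D_\eta U}{\eta}-\frac{U}{\eta^2}\big)$.
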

\begin{proof}
We divide the proof into two steps.

\textbf{1.} According to Lemmas \ref{lemma-upper jacobi}, \ref{lemma-bound-lambda}, \ref{lemma-u-ell-D2-refine}, and \ref{hardy-inequality}, the H\"older inequality, and the Minkowski integral inequality, we have
\begin{equation}\label{p1p}
\begin{aligned}
\big|\chi^\sharp\rho_0^{(\frac{1}{2}-\varepsilon_0)\beta} D_\eta\Lambda\big|_2&\leq  C(T)\Big(\int_0^t \big|\chi^\sharp\rho_0^{(\frac{1}{2}-\varepsilon_0)\beta}(D_\eta^2 U,\rho_0^{\beta} D_\eta^3 U)\big|_2 \,\mathrm{d}s+1\Big)\\
&\leq C(T)\Big(\int_0^t \big|\chi^\sharp\rho_0^{(\frac{3}{2}-\varepsilon_0)\beta} D_\eta^3 U\big|_2\,\mathrm{d}s+1\Big).
\end{aligned}
\end{equation}

\textbf{2.}  Applying $\chi^\sharp D_\eta$ to \eqref{Urr-J12-J14}, along with $\Lambda=D_\eta(\varrho^\beta)$, gives
\begin{equation}\label{eq:3'}
\chi^\sharp D_\eta^3 U=L_{20},
\end{equation}
where $L_{20}$ has the control of the following form in view of Lemmas \ref{lemma-lower bound jacobi} and \ref{non-vac}:
\begin{equation*}
\begin{aligned}
L_{20}&\leq C(T)\Big(\chi^\sharp\rho_0^{1-\delta}\Big(|D_\eta U_t|+ \frac{|\Lambda|}{\rho_0^\beta}|U_t|\Big)+ \chi^\sharp|(U,D_\eta U,D_\eta^2 U)|\Big)
+C(T)\chi^\sharp\Big(\frac{|D_\eta\Lambda|}{\rho_0^\beta}+\frac{|\Lambda|^2}{\rho_0^{2\beta}}\Big)|L_*|\\[-2mm]
&\quad + C(T)\chi^\sharp\frac{|\Lambda|}{\rho_0^\beta}|(U,D_\eta U,D_\eta^2 U)|+C(T)\chi^\sharp\rho_0^{1-\delta}\Big(\frac{|\Lambda|}{\rho_0^\beta} |(\Lambda,D_\eta\Phi)| 
+|(D_\eta\Lambda,D_\eta^2\Phi)|\Big).
\end{aligned}
\end{equation*}
Then, it follows from $(\frac{3}{2}-\varepsilon_0)\beta>\frac{1}{2}$, \eqref{DDphi},  \eqref{p1p}, Lemmas \ref{lemma-lower bound jacobi}, \ref{lemma-v Lp ex}, \ref{lemma-u-D3}, and \ref{lemma-bound-lambda}--\ref{lemma-u-ell-D2-refine}  that
\begin{align*}
&\big|\rho_0^{(\frac{3}{2}-\varepsilon_0)\beta}L_{20}\big|_2\\
&\leq C(T)\Big(\big|\chi^\sharp \rho_0^{(\frac{3}{2}-\varepsilon_0)\beta}(D_\eta U_{t},\rho_0^{-\beta} U_t,U,D_\eta U,D_\eta^2 U)\big|_2+\big|\chi^\sharp\rho_0^{(\frac{1}{2}-\varepsilon_0)\beta} D_\eta\Lambda \big|_2 + \big|\chi^\sharp\rho_0^{-(\frac{1}{2}+\varepsilon_0)\beta}L_*\big|_2\Big)\\
&\quad + C(T)\big|\chi^\sharp\rho_0^{(\frac{1}{2}-\varepsilon_0)\beta}(U,D_\eta U,D_\eta^2 U)\big|_2+C(T)\big(|D_\eta\Phi|_\infty+\big|\chi^\sharp\rho_0^{(\frac{1}{2}-\varepsilon_0)\beta} D_\eta\Lambda \big|_2|\sD_\eta D_\eta\Phi|_\infty\big)\\
&\leq C(T) \Big(\int_0^t \big|\chi^\sharp\rho_0^{(\frac{3}{2}-\varepsilon_0)\beta} D_\eta^3 U\big|_2\,\mathrm{d}s+1\Big).
\end{align*}

Therefore, by \eqref{eq:3'}, the above estimates, and the Gr\"onwall inequality, we obtain the third-order elliptic estimate of $U$, which, along with \eqref{p1p}, yields the desired estimate of $D_\eta\Lambda$.
\end{proof}

\subsubsection{Dissipation estimates}\label{934}

We establish the  weighted $L^2([0,T];L^2)$-estimate for $D_{\eta}^2U_t$.
\begin{Lemma}\label{lemma-ell-D4-ex1}
There exists a constant $C(T)>0$ such that
\begin{equation*}
\int_0^t \big(\big|\chi^\sharp \rho_0^{(\frac{1}{2}-\varepsilon_0)\beta}D_{\eta}U_t\big|_2^2+\big|\chi^\sharp \rho_0^{(\frac{3}{2}-\varepsilon_0)\beta}D_{\eta}^2U_t\big|_2^2\big)\,\mathrm{d}s\leq C(T) \qquad\text{for all $t\in[0,T]$}.
\end{equation*}
\end{Lemma}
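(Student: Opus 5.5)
Differentiate the momentum equation in time and redo the second-order elliptic step at the level of $U_t$. Applying $\partial_t$ to \eqref{Urr-J12-J14}, with $\eqref{eq:VFBP-La}_1$ and \eqref{v-express2}, gives an identity of the form
\begin{equation*}
\chi^\sharp D_\eta^2 U_t=\frac{1}{2a_1\delta}\chi^\sharp\varrho^{1-\delta}U_{tt}-\frac{\delta}{\beta}\chi^\sharp\frac{\Lambda}{\varrho^\beta}\Big(D_\eta U_t-\frac{2(1-\delta)}{\delta}\frac{U_t}{\eta}\Big)+L_{21}.
\end{equation*}
Here $L_{21}$ collects the lower-order terms. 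These include $\chi^\sharp\frac{1}{\eta}(D_\eta U_t,\frac{U_t}{\eta})$ and $\chi^\sharp\frac{\Lambda_t}{\varrho^\beta}L_*$. They also include $\chi^\sharp\frac{\Lambda}{\varrho^\beta}|\sD_\eta U|(|D_\eta U|+|U|)$, products of $\varrho^{1-\delta}\sD_\eta U$ with $(U_t,\Lambda,D_\eta\Phi)$, the term $\varrho^{1-\delta}\Lambda_t$, and $\varrho^{1-\delta}\partial_tD_\eta\Phi$. The last is bounded by $C(T)|U|$ by \eqref{D Phi expression} and Lemma \ref{lemma-lower bound jacobi}.

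Multiply the identity by $\rho_0^{(\frac32-\varepsilon_0)\beta}$ and take the $L^2$ norm.
\begin{itemize}
\item The $U_{tt}$ term is controlled by $|\chi^\sharp\rho_0^{\frac12}U_{tt}|_2$, because $(\frac32-\varepsilon_0)\beta+1-\delta\ge\frac12$ and Lemma \ref{non-vac} applies. This quantity is time-integrable in $L^2$ by Lemma \ref{lemma-u-D3}.
\item The main term is $\frac{\Lambda}{\varrho^\beta}(D_\eta U_t-\cdots)$, which loses a full power $\rho_0^{-\beta}$. By Lemma \ref{lemma-bound-lambda} it is bounded by $C(T)|\chi^\sharp\rho_0^{(\frac12-\varepsilon_0)\beta}(D_\eta U_t,U_t)|_2$. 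This is exactly why the first term in the statement is included.
\item The $\Lambda_t$ terms use $\chi^\sharp|\Lambda_t|\le C(T)(1+\rho_0^\beta|D_\eta^2U|)$. They are then handled with Lemmas \ref{lemma-u-ell-D2-refine} and \ref{lemma-u-D3-ell} and the refined $L^2$ bound on $\rho_0^{-(\frac12+\varepsilon_0)\beta}L_*$.
\item The remaining terms are bounded in $L^\infty_t$ using Lemmas \ref{lemma-u-D3}, \ref{lemma-u-ell-D2} and \ref{lemma-u-ell-D2-refine}.
\end{itemize}
Together these give
\begin{equation*}
|\chi^\sharp\rho_0^{(\frac32-\varepsilon_0)\beta}D_\eta^2U_t|_2\le C(T)\big(1+|(r^2\rho_0)^{\frac12}U_{tt}|_2+|\chi^\sharp\rho_0^{(\frac12-\varepsilon_0)\beta}D_\eta U_t|_2\big).
\end{equation*}

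The main obstacle is therefore the time-integrability of $|\chi^\sharp\rho_0^{(\frac12-\varepsilon_0)\beta}D_\eta U_t|_2$. The tangential estimate \eqref{tan3} only gives the weight $\rho_0^{\delta/2}$, and $\frac\delta2>(\frac12-\varepsilon_0)\beta$ may fail. I plan two routes.

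\textbf{Route 1 (Hardy).} Use Lemma \ref{hardy-inequality}:
\begin{equation*}
|\chi^\sharp\rho_0^{(\frac12-\varepsilon_0)\beta}D_\eta U_t|_2\le C\big(|\chi^\sharp\rho_0^{(\frac32-\varepsilon_0)\beta}(D_\eta U_t,D_\eta^2U_t)|_2+\text{lower-order terms}\big).
\end{equation*}
The first term on the right is already controlled by $\rho_0^{\delta/2}$. The $D_\eta^2U_t$ part, however, comes back with constant $C$, so it can only be absorbed if that constant is small. That is not available directly.

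\textbf{Route 2 (boundary structure).} Differentiate \eqref{zhongyao} in time to get a representation of $L_{*,t}=D_\eta U_t-\frac{2(1-\delta)}{\delta}\frac{U_t}{\eta}$ through $\rho_0^{-\delta}\int_r^1\hat r^2\rho_0U_{tt}\,\mathrm d\hat r$ plus lower-order terms. This is the analogue of Lemma \ref{lemma-time-space} at the $U_t$ level. By Cauchy--Schwarz it yields
\begin{equation*}
|\chi^\sharp\rho_0^{(\frac12-\varepsilon_0)\beta-\beta/2}L_{*,t}|_2\lesssim 1+|\rho_0^{\frac12}U_{tt}|_2+|\rho_0^{\delta/2}\sD_\eta U_t|_2.
\end{equation*}
Since $U_t/\eta$ is bounded via Lemma \ref{lemma-u-ell-D2}, this controls the $\frac12-\varepsilon_0$ weighted $D_\eta U_t$ in $L^2_t$.

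Squaring, integrating over $[0,t]$, and using \eqref{tan3} together with Lemma \ref{lemma-u-D2} then gives both integrals in the statement. I expect the main work to be checking the exponents in Route 2, namely that $\delta$ versus $(\frac12-\varepsilon_0)\beta$ leaves room for the Cauchy--Schwarz step. Route 2 is the one I will pursue first.
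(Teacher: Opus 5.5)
Your proposal is essentially the paper's proof. Route~2 (differentiate \eqref{zhongyao} in time and use the Cauchy--Schwarz argument of Lemma~\ref{lemma-time-space}) is exactly the paper's Step~1, which the paper carries out first. Your time derivative of \eqref{Urr-J12-J14}, with the term $\frac{\Lambda}{\varrho^\beta}\big(D_\eta U_t-\frac{2(1-\delta)}{\delta}\frac{U_t}{\eta}\big)$ reduced to the $\rho_0^{(\frac12-\varepsilon_0)\beta}$-weighted norm, is its Step~2. You are also right that Route~1 does not close.

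One correction is needed. The weight in your Route~2 display should be $\rho_0^{(\frac12-\varepsilon_0)\beta}$ itself, not $\rho_0^{(\frac12-\varepsilon_0)\beta-\frac{\beta}{2}}=\rho_0^{-\varepsilon_0\beta}$; you apparently carried over the $L^\infty$ shift $-\frac{\beta}{2}$ of Lemma~\ref{lemma-time-space} into an $L^2$ bound.

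Cauchy--Schwarz gives the pointwise bound
$|\chi^\sharp L_{*,t}|\le C(T)\big(1+\rho_0^{\frac{\beta-1}{2}}|\chi^\sharp\rho_0^{\frac12}(U_t,D_\eta U_t,U_{tt})|_2\big)$.
After multiplying by $\rho_0^{(\frac12-\varepsilon_0)\beta}$, the remaining factor is $\sim(1-r)^{1-\varepsilon_0-\frac{1}{2\beta}}$. This lies in $L^2(\frac12,1)$ exactly because $\varepsilon_0<\frac{3\gamma-4}{2(\gamma-1)}$, which is the exponent check you anticipated.

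With $\rho_0^{-\varepsilon_0\beta}$ instead, the $U_{tt}$ contribution alone would require $(1-\varepsilon_0)\beta\ge\delta-\frac12$, even with the sharpest pairing. That fails, for example, for $\gamma$ near $\frac43$ and $\delta$ near $1$. That stronger bound is never needed, though.

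Finally, $U_t/\eta$ is not pointwise bounded at this stage. What you actually use, and what Lemma~\ref{lemma-u-ell-D2} supplies, is $|\chi^\sharp\rho_0^{(\frac12-\varepsilon_0)\beta}U_t|_2\le C(T)$.
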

\begin{proof}
We divide the proof into two steps.

\textbf{1.} We first claim 
\begin{equation}\label{refine-Utx}
\int_0^t  \big|\chi^\sharp \rho_0^{(\frac{1}{2}-\varepsilon_0)\beta}D_{\eta}U_t\big|_2^2 \,\mathrm{d}s\leq C(T) \qquad\text{for all $t\in[0,T]$}.
\end{equation}

Indeed, applying $\partial_t$ to \eqref{zhongyao} yields
\begin{equation*}
D_\eta U_t-\frac{2(1-\delta)}{\delta}\frac{U_t}{\eta}=L_{21},
\end{equation*}
where $L_{21}$ has control of the following form due to the fact that $\rho_0^\beta\sim 1-r$, \eqref{DDphi}, Lemmas \ref{lemma-lower bound jacobi}, \ref{lemma-v Lp ex}, and \ref{lemma-upper jacobi}--\ref{non-vac}, and $\eqref{tan4}_1$ in Lemma \ref{lemma-u-D3}:
\begin{equation*} 
|\chi^\sharp L_{21}| \leq C(T)\Big(1+ \frac{1}{\rho_0^\delta}\int_r^1  \rho_0^\delta |(U_t,D_\eta U_t,U_{tt})| \mathrm{d}\hat r\Big).
\end{equation*}
Then it follows from  the calculation similar to \eqref{floww} in Lemma \ref{lemma-time-space} and  $\rho_0^\beta\sim 1-r$  that 
\begin{equation*} 
|\chi^\sharp L_{21}|\leq C(T) +C(T)\rho_0^{\frac{\beta-1}{2}}\big|\chi^\sharp\rho_0^\frac{1}{2}(U_t,D_\eta U_t,U_{tt})\big|_2.
\end{equation*}
Hence, multiplying the above by $\rho_0^{(\frac{1}{2}-\varepsilon_0)\beta}$, along with Lemmas \ref{lemma-u-D2} and \ref{lemma-u-D3}, implies that claim \eqref{refine-Utx} holds. Here, we have also used the facts that
\begin{equation*}
(1-\varepsilon_0) -\frac{1}{2\beta}>-\frac{1}{2},\qquad \chi^\sharp\rho_0^{(\frac{1}{2}-\varepsilon_0)\beta+\frac{\beta-1}{2}} \sim \chi^\sharp(1-r)^{(1-\varepsilon_0)-\frac{1}{2\beta}} \in L^2.
\end{equation*}

\textbf{2.} Applying $\partial_t$ to \eqref{Urr-J12-J14}, along with $\eqref{eq:VFBP-La}_1$, we arrive at
\begin{equation}\label{L26}
D_\eta^2 U_t=L_{22},
\end{equation}
where $L_{22}$ has control of the following form due to $\rho_0^\beta\sim 1-r$, \eqref{D Phi expression}, Lemmas \ref{lemma-lower bound jacobi} and \ref{non-vac}, and $\eqref{tan4}_1$ in Lemma \ref{lemma-u-D3}:
\begin{equation*}
\begin{aligned}
|\chi^\sharp L_{22}|&\leq C(T)\chi^\sharp\Big(1+|(D_\eta^2 U,U_t,D_\eta U_t,U_{tt})|+ \frac{|(\Lambda,\Lambda_t)|}{\rho_0^\beta}+\frac{|\Lambda|}{\rho_0^\beta}|(U_t,D_\eta U_t)|+\rho_0^{1-\delta} |D_\eta\Phi|\Big).
\end{aligned}    
\end{equation*}

Then, due to Lemmas \ref{lemma-v Lp ex}, \ref{lemma-u-D3}, and \ref{lemma-bound-lambda}--\ref{lemma-u-ell-D2}, and the facts that $(\frac{1}{2}-\varepsilon_0)\beta>0$ and $(\frac{3}{2}-\varepsilon_0)\beta>\frac{1}{2}$, we obtain
\begin{align*}
\big|\chi^\sharp\rho_0^{(\frac{3}{2}-\varepsilon_0)\beta} L_{22}\big|_2& \leq C(T)\big(1+\big|\chi^\sharp\rho_0^{(\frac{3}{2}-\varepsilon_0)\beta}(D_\eta^2 U,U_t,D_\eta U_t,U_{tt})\big|_2+\big|\chi^\sharp\rho_0^{(\frac{1}{2}-\varepsilon_0)\beta}(U_t,D_\eta U_t)\big|_2\big)\\
&\leq C(T)\big(\big|(r^2\rho_0)^\frac{1}{2} U_{tt}\big|_2+\big|\chi^\sharp\rho_0^{(\frac{1}{2}-\varepsilon_0)\beta}D_\eta U_t\big|_2+1\big),
\end{align*}
which, along with \eqref{refine-Utx}--\eqref{L26}, yields the desired estimate.
\end{proof}

Finally, we establish the $L^2([0,T];L^2)$-estimate for $\chi^\sharp \rho_0^{(\frac{3}{2}-\varepsilon_0)\beta}D_{\eta}^4U$.
\begin{Lemma}\label{lemma-ell-D4-ex2}
There exists a constant $C(T)>0$ such that
\begin{equation}\label{lemma5.15}
\int_0^t\big|\chi^\sharp\rho_0^{(\frac{3}{2}-\varepsilon_0)\beta}D_\eta^4 U \big|_2^2\,\mathrm{d}s \leq C(T) \qquad\text{for all $t\in [0,T]$}.
\end{equation}
\end{Lemma}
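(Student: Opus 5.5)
The proof will follow the pattern of Lemma \ref{lemma-u-D3-ell}, now one derivative higher. We apply $\chi^\sharp D_\eta^2$ to the elliptic identity \eqref{Urr-J12-J14}, or equivalently $\chi^\sharp D_\eta$ to \eqref{eq:3'}. This expresses $\chi^\sharp D_\eta^4 U$ as a sum $L_{23}$. Using Lemmas \ref{lemma-lower bound jacobi} and \ref{non-vac}, the terms of $L_{23}$ are bounded in the following forms:
\begin{itemize}
\item $\rho_0^{1-\delta}\bigl(|D_\eta^2 U_t|+\rho_0^{-\beta}|\Lambda||D_\eta U_t|+\rho_0^{-\beta}|D_\eta\Lambda||U_t|+\rho_0^{-2\beta}|\Lambda|^2|U_t|\bigr)$;
\item lower-order terms $|(U,D_\eta U,D_\eta^2U,D_\eta^3U)|$ multiplied by powers of $\rho_0^{-\beta}\Lambda$;
\item the singular products $\rho_0^{-\beta}|D_\eta^2\Lambda||L_*|$, $\rho_0^{-2\beta}|\Lambda||D_\eta\Lambda||L_*|$, $\rho_0^{-3\beta}|\Lambda|^3|L_*|$ and $\rho_0^{-\beta}|D_\eta\Lambda||D_\eta L_*|$;
\item $\rho_0^{1-\delta}$ times $D_\eta^2\Lambda$, $D_\eta^3\Phi$ and lower-order products.
\end{itemize}
We then multiply by $\rho_0^{(\frac32-\varepsilon_0)\beta}$ and take $L^2$ norms. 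The terms involving $D_\eta^2U_t$ and $D_\eta U_t$ are controlled in $L^2_t$ by Lemma \ref{lemma-ell-D4-ex1}. Lemmas \ref{lemma-bound-lambda}--\ref{lemma-u-D3-ell} give $|\Lambda|_\infty$, $|\chi^\sharp\rho_0^{\beta}D_\eta^2U|_\infty$, $|\chi^\sharp\rho_0^{-(\frac12+\varepsilon_0)\beta}L_*|_2$ and $|\chi^\sharp\rho_0^{(\frac12-\varepsilon_0)\beta}D_\eta\Lambda|_2$. For $|\chi^\sharp\rho_0^{\beta}U_t|_\infty$ we use Lemma \ref{lemma-u-ell-D2}. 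The Hardy inequality (Lemma \ref{hardy-inequality}) is used whenever a weight must be lowered by $\rho_0^{\beta}$ at the cost of one derivative. The $\Phi$ terms are harmless by \eqref{D Phi expression}, since the $\rho_0^{1-\delta}$ factor absorbs $D_\eta\varrho$.

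\textbf{Main obstacle.} The key term is $\rho_0^{(\frac12-\varepsilon_0)\beta}D_\eta^2\Lambda\, L_*$, together with the direct term $\rho_0^{(\frac32-\varepsilon_0)\beta}\rho_0^{1-\delta-\beta}D_\eta^2\Lambda$. Here $D_\eta^2\Lambda$ is not yet controlled. We handle it with the ODE for $D_\eta^2\Lambda$ from the proof of Lemma \ref{lemma-bound-lambda}, run in the weighted norm $|\chi^\sharp\rho_0^{(\frac12-\varepsilon_0)\beta}D_\eta^2\Lambda|_2$. Its forcing is $\beta\varrho^\beta D_\eta^3(D_\eta U+2U/\eta)\sim\rho_0^\beta D_\eta^4 U$, plus the terms $D_\eta\Lambda\,D_\eta^2U$ and $\Lambda D_\eta^3 U$. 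Integrating in time gives
\[
\bigl|\chi^\sharp\rho_0^{(\frac12-\varepsilon_0)\beta}D_\eta^2\Lambda(t)\bigr|_2\le C(T)\Bigl(1+\int_0^t\bigl|\chi^\sharp\rho_0^{(\frac32-\varepsilon_0)\beta}D_\eta^4U\bigr|_2\,\mathrm{d}s\Bigr).
\]
To obtain this we bound $D_\eta\Lambda\, D_\eta^2U$ by $|\chi^\sharp\rho_0^\beta D_\eta^2U|_\infty$ times $|\chi^\sharp\rho_0^{(\frac12-\varepsilon_0)\beta-\beta}D_\eta\Lambda|_2$, which Lemma \ref{lemma-u-D3-ell} provides. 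The factor $L_*$ multiplying $D_\eta^2\Lambda$ has to be placed in $L^\infty$, so we need $\chi^\sharp|L_*|\le C(T)$. This follows from $\eqref{tan4}_1$ in Lemma \ref{lemma-u-D3}, since $L_*$ involves only $D_\eta U$ and $U/\eta$.

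The remaining delicate term is $\rho_0^{-\beta}D_\eta\Lambda\, D_\eta L_*$. We estimate it with $D_\eta\Lambda$ weighted as in Lemma \ref{lemma-u-D3-ell} and $D_\eta L_*$ in $L^\infty$ with weight $\rho_0^{\beta}$. The latter comes from $D_\eta L_*\sim D_\eta^2U$ plus lower order, together with $|\chi^\sharp\rho_0^\beta D_\eta^2U|_\infty\le C(T)$. If this weight bookkeeping fails by a factor $\rho_0^{-\varepsilon}$, we fall back on the Hardy-type estimate of Lemma \ref{lemma-time-space} to gain extra decay of $L_*$ near $r=1$.

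Combining these bounds, we arrive at
\[
\bigl|\chi^\sharp\rho_0^{(\frac32-\varepsilon_0)\beta}D_\eta^4U\bigr|_2\le C(T)\Bigl(1+F(t)+\int_0^t\bigl|\chi^\sharp\rho_0^{(\frac32-\varepsilon_0)\beta}D_\eta^4U\bigr|_2\,\mathrm{d}s\Bigr),
\]
where $F\in L^2(0,T)$ collects the $U_t$, $D_\eta U_t$, $D_\eta^2U_t$ and $U_{tt}$ terms, bounded via Lemmas \ref{lemma-u-D3} and \ref{lemma-ell-D4-ex1}. Squaring this inequality and applying Gr\"onwall's inequality in $L^2_t$ yields \eqref{lemma5.15}.
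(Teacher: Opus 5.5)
The step that fails is treating $\frac{\Lambda}{\varrho^\beta}D_\eta^3U$ as one of your ``lower-order terms''.

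When you apply $D_\eta^2$ to \eqref{Urr-J12-J14}, the piece $-\frac{\delta}{\beta}\frac{\Lambda}{\varrho^\beta}D_\eta^2L_*$ contains $-\frac{\delta}{\beta}\frac{\Lambda}{\varrho^\beta}D_\eta^3U$. After multiplying by $\rho_0^{(\frac32-\varepsilon_0)\beta}$ this becomes $\rho_0^{(\frac12-\varepsilon_0)\beta}\Lambda D_\eta^3U$. Lemma \ref{lemma-u-D3-ell} only controls $\rho_0^{(\frac32-\varepsilon_0)\beta}D_\eta^3U$. Since $|\Lambda|/\varrho^\beta$ is of size $(1-r)^{-1}$ near $r=1$, this term has exactly the scaling of $\partial_rD_\eta^3U$ in the weighted norm.

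Lowering the weight with Lemma \ref{hardy-inequality} therefore returns $C\,|\chi^\sharp\rho_0^{(\frac32-\varepsilon_0)\beta}D_\eta^4U(t)|_2$ at the same time $t$. The constant cannot be made small: the coefficient $\frac{\delta}{\beta}$ alone exceeds $1$ whenever $\gamma<1+\delta$, and the Hardy constant is at least $1$. So your final inequality, in which $D_\eta^4U$ appears only under $\int_0^t$, does not follow, and Gr\"onwall cannot close.

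Expanding $D_\eta^2$ of \eqref{Urr-J12-J14} term by term also creates $\rho_0^{-2\beta}\Lambda^2D_\eta L_*$ and $\rho_0^{-3\beta}\Lambda^3L_*$. After weighting, these need $\rho_0^{-(\frac12+\varepsilon_0)\beta}D_\eta^2U$ and $\rho_0^{-(\frac32+\varepsilon_0)\beta}L_*$ in $L^2$. That is false in general, because $D_\eta^2U$ and $L_*/\rho_0^\beta$ have nonzero values at $r=1$. These pieces are harmless only in combination.

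The missing idea is the paper's cross-derivative structure. It works in four steps.
\begin{enumerate}
\item Differentiate $\varrho^\beta$ times \eqref{Urr-J12-J14} (the form \eqref{reform boundary condition}), divide by $\varrho^\beta$, and keep the critical term on the left: $\cU^\star=D_\eta^4U+(\frac{\delta}{\beta}+2)\frac{\Lambda}{\varrho^\beta}D_\eta^3U=L_{23}+L_{24}$, as in \eqref{979'}.
\item The right-hand side now has no $\Lambda^2/\varrho^{2\beta}$-type products with $L_*$ or $D_\eta L_*$. It is estimated using \eqref{control1}, \eqref{d2-lambda} and \eqref{Utwuqiong}; your weighted $D_\eta^2\Lambda$ argument matches \eqref{d2-lambda}.
\item Replace $\varrho^\beta$ by $\rho_0^\beta$ via \eqref{guanxi}. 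The error $(\delta+2\beta)(\log\frac{\eta^2\eta_r}{r^2})_rD_\eta^3U$ is controlled by $\int_0^t|\chi^\sharp D_\eta^2U|_\infty\,\mathrm{d}s$.
\item Recover $\rho_0^{(\frac32-\varepsilon_0)\beta}(D_\eta^3U)_r$ from $\cU_{\mathrm{cross}}$ by Lemma \ref{prop2.1}, with $\bar d=\rho_0^\beta$ and $c=\frac{\delta}{\beta}+2>0$.
\end{enumerate}
Lemma \ref{prop2.1} needs no smallness. Because $c>0$ and $2b-1\le c$, the singular cross terms in $|\zeta^\sharp\bar d^{b}\cF_{\mathrm{cross}}|_2^2$ combine, after integration by parts, into a term of favourable sign. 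Nothing in your proposal plays this role.

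There are also two smaller weight slips.
\begin{itemize}
\item Your bounds for $D_\eta\Lambda\,D_\eta^2U$ and $\rho_0^{-\beta}D_\eta\Lambda\,D_\eta L_*$ implicitly use $\rho_0^{-(\frac12+\varepsilon_0)\beta}D_\eta\Lambda\in L^2$. Lemma \ref{lemma-u-D3-ell} only gives the weight $\rho_0^{(\frac12-\varepsilon_0)\beta}$. The paper instead pairs the $L^\infty$ bound \eqref{control1} with Lemma \ref{lemma-u-ell-D2-refine}.
\item $|\chi^\sharp\rho_0^\beta U_t|_\infty$ is too weak for the term $\rho_0^{1-\delta-2\beta}\Lambda^2U_t$. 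That term needs \eqref{Utwuqiong} together with $\varepsilon_0<\frac{1-\delta}{\gamma-1}$.
\end{itemize}
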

\begin{proof}
We divide the proof into three steps.

\smallskip
\textbf{1.} First, we can show that
\begin{equation}\label{control1}
|\chi^\sharp D_\eta \Lambda|_\infty\leq C(T)\Big(\int_0^t\big|\chi^\sharp \rho_0^{(\frac{3}{2}-\varepsilon_0)\beta} D_\eta^4 U\big|_2\,\mathrm{d}s+1\Big).
\end{equation}
Indeed, recall from Lemma \ref{lemma-bound-lambda} that
\begin{equation*}
|\chi^\sharp D_\eta \Lambda|_\infty \leq  C(T)\Big(1+\int_0^t\big|\chi^\sharp(D_\eta^2 U,\rho_0^\beta D_\eta^3 U)\big|_\infty\,\mathrm{d}s\Big).
\end{equation*}
Then \eqref{control1} follows directly from Lemma \ref{lemma-u-ell-D2-refine}, and the following controls due to Lemmas \ref{lemma-upper jacobi}, \ref{lemma-u-ell-D2}, \ref{lemma-u-D3-ell}, and \ref{sobolev-embedding}--\ref{hardy-inequality}:
\begin{equation}\label{control2}
\begin{aligned}
&|\chi^\sharp(D_\eta^2 U,\rho_0^\beta D_\eta^3 U)|_\infty \leq C(T)\big(|\chi^\sharp (D_\eta^2 U,D_\eta^3 U)|_{1}+|\chi^\sharp \rho_0^\beta D_\eta^3 U|_\infty\big)\\
&\leq C(T) \big|\chi^\sharp \rho_0^{(\frac{3}{2}-\varepsilon_0)\beta}(D_\eta^2 U,D_\eta^3 U,D_\eta^4 U)\big|_2\leq C(T) \big(\big|\chi^\sharp \rho_0^{(\frac{3}{2}-\varepsilon_0)\beta} D_\eta^4 U\big|_2+1\big),
\end{aligned}
\end{equation}
where we have used  Lemma \ref{hardy-inequality} by taking $\vartheta=-\varepsilon_0>-\frac{1}{2}$ and $\varepsilon=\varepsilon_0$.

Next, we claim:
\begin{equation}\label{d2-lambda}
\big|\chi^\sharp\rho_0^{(\frac{1}{2}-\varepsilon_0)\beta}D_\eta^2\Lambda\big|_2\leq C(T)\Big(\int_0^t \big|\chi^\sharp\rho_0^{(\frac{3}{2}-\varepsilon_0)\beta}D_\eta^4 U\big|_2\,\mathrm{d}s+1\Big).  
\end{equation}
Indeed, it follows from \eqref{lemma-bound-lambda}, the fact that $\varepsilon_0<\frac{1}{2}$, \eqref{control2}, Lemmas \ref{lemma-u-ell-D2-refine}--\ref{lemma-u-D3-ell}, and the Minkowski integral inequality that 
\begin{align*}
\big|\chi^\sharp\rho_0^{(\frac{1}{2}-\varepsilon_0)\beta}D_\eta^2\Lambda\big|_2&\leq  C(T)\Big(1+\int_0^t \big(\big|\chi^\sharp\rho_0^{(\frac{1}{2}-\varepsilon_0)\beta}D_\eta^3 U\big|_2 + \big|\chi^\sharp\rho_0^{(\frac{3}{2}-\varepsilon_0)\beta}D_\eta^4 U\big|_2\big)\,\mathrm{d}s\Big)\\
&\quad +C(T) \big|\chi^\sharp\rho_0^{(\frac{1}{2}-\varepsilon_0)\beta}(D_\eta^2 U,\rho_0^\beta D_\eta^3 U)\big|_2 \int_0^t |\chi^\sharp D_\eta^2 U|_\infty\,\mathrm{d}s\\
&\leq C(T)\Big(\int_0^t \big|\chi^\sharp\rho_0^{(\frac{3}{2}-\varepsilon_0)\beta}D_\eta^4 U\big|_2\,\mathrm{d}s+1\Big).
\end{align*}
This completes the proof of \eqref{d2-lambda}.

Finally, we can obtain similarly from \eqref{control2} that
\begin{equation}\label{Utwuqiong}
|\chi^\sharp U_t|_\infty \leq C(T) \big|\chi^\sharp \rho_0^{(\frac{3}{2}-\varepsilon_0)\beta}(U_t,D_\eta U_t,D_\eta^2U_t)\big|_2\leq C(T) \big(\big|\chi^\sharp \rho_0^{(\frac{3}{2}-\varepsilon_0)\beta} D_\eta^2 U_t\big|_2+1\big).
\end{equation}

\smallskip
\textbf{2.} Applying $D_\eta^2$ to  $\eqref{eq:VFBP-La-eta}_1$  and dividing the resulting equality by $\varrho^\beta$, we arrive at
\begin{align}
&\,\cU^\star:=D_\eta^4 U +\big(\frac{\delta}{\beta}+2\big) \frac{D_\eta(\varrho^\beta)}{\varrho^\beta} D_\eta^3 U\label{979'}\\
&=\underline{\frac{D_\eta\Lambda D_\eta^2 U}{\varrho^\beta}+ \frac{\delta(D_\eta^2 \Lambda+2D_\eta\Lambda)D_\eta U}{\beta \varrho^\beta}
-\frac{2}{\varrho^\beta}D_\eta^2\Big(\varrho^\beta  D_\eta\big(\frac{U}{\eta}\big)-\frac{(1-\delta)}{\beta} \Lambda \frac{U}{\eta}-\frac{\varrho^{\gamma-\delta} U_t}{4a_1\delta}   \Big)}_{:=L_{23}} \nonumber\\
&\quad+\underline{\frac{1}{2a_1\delta\varrho^\beta}D_\eta^2 \Big(\frac{A\gamma}{\beta}\varrho^{\gamma-\delta} \Lambda+\varrho^{\gamma-\delta} D_\eta\Phi\Big)}_{:=L_{24}},\nonumber
\end{align}
where $L_{23}$--$L_{24}$ have controls of the following form due to Lemmas \ref{lemma-lower bound jacobi}, \ref{non-vac}, \ref{lemma-u-D3}, and \ref{lemma-u-ell-D2-refine}:
\begin{align*}
\chi^\sharp L_{23}&\leq C(T)\chi^\sharp\frac{1}{\rho_0^\beta}\Big(|(\Lambda,D_\eta \Lambda)| |D_\eta^2 U| + |(\Lambda,D_\eta \Lambda,D_\eta^2 \Lambda)| + \rho_0^\beta |D_\eta^3 U|+1\Big)\\
&\quad +C(T)\chi^\sharp\rho_0^{1-\delta } \Big(\frac{|D_\eta \Lambda|}{\rho_0^{2\beta}} |U_t|+  \frac{|\Lambda|}{\rho_0^\beta} |D_\eta U_t|+ |D_\eta^2 U_t|\Big),\\
\chi^\sharp L_{24}&\leq C(T) \rho_0^{1-\delta }\Big( |(D_\eta^2 \Lambda,D_\eta^3 \Phi)|+\frac{1}{\rho_0^\beta}\big( |\Lambda| |(D_\eta \Lambda ,D_\eta^2\Phi)|+  |D_\eta \Lambda| |D_\eta \Phi|\big) + \frac{|\Lambda|^2}{\rho_0^{2\beta}}|(\Lambda,D_\eta \Phi)|\Big).
\end{align*}

Then, for $L_{23}$, it follows from the facts that $\rho_0^\beta\sim 1-r$, $(\frac{3}{2}-\varepsilon_0)\beta>\frac{1}{2}>\varepsilon_0$, and
\begin{equation*}
\varepsilon_0<\frac{1-\delta}{\gamma-1}\iff -\big(\frac{1}{2}+\varepsilon_0\big)\beta+1-\delta>-\frac{\beta}{2},\qquad\chi^\sharp\rho_0^{-(\frac{1}{2}+\varepsilon_0)\beta+1-\delta}\in L^2,
\end{equation*}
\eqref{Utwuqiong}, and Lemmas \ref{lemma-bound-lambda}--\ref{lemma-u-D3-ell} that
\begin{equation*}
\begin{aligned}
\big|\chi^\sharp\rho_0^{(\frac{3}{2}-\varepsilon_0)\beta} L_{23}\big|_2&\leq C(T)\big(\big|\chi^\sharp\rho_0^{(\frac{1}{2}-\varepsilon_0)\beta}(1,D_\eta^2 U)\big|_2 (1+|\chi^\sharp D_\eta\Lambda|_\infty )+\big|\chi^\sharp\rho_0^{(\frac{1}{2}-\varepsilon_0)\beta}D_\eta^2 \Lambda\big|_2\big)\\
&\quad +C(T)\big(\big|\chi^\sharp\rho_0^{(\frac{3}{2}-\varepsilon_0)\beta}D_\eta^3 U\big|_2+\big|\chi^\sharp\rho_0^{-(\frac{1}{2}+\varepsilon_0)\beta+1-\delta}\big|_2|\chi^\sharp D_\eta\Lambda|_\infty|\chi^\sharp U_t|_\infty\big)\\
&\quad +C(T)\big|\chi^\sharp\rho_0^{(\frac{1}{2}-\varepsilon_0)\beta}(D_\eta U_t,\rho_0^\beta D_\eta^2 U_t\big)\big|_2\big)\\
&\leq C(T)\big(\big|\chi^\sharp\rho_0^{(\frac{1}{2}-\varepsilon_0)\beta}(D_\eta^2\Lambda,D_\eta U_t,\rho_0^\beta D_\eta^2 U_t\big)\big|_2+|\chi^\sharp D_\eta\Lambda|_\infty+1\big).
\end{aligned}
\end{equation*}

For $L_{24}$, recall from \eqref{vr-urr-l5}, $\rho_0^\beta\sim 1-r$, and Lemmas \ref{lemma-v Linfty ex} and \ref{lemma-u-D3} that
\begin{equation*}
\chi^\sharp|D_\eta^3 \Phi|\leq C(T)\big(1+ \rho_0^{1-\beta}\big).
\end{equation*}
Then, based on a calculation similar to that of $L_{23}$, we can obtain from the above, \eqref{DDphi}, and Lemmas \ref{lemma-v Lp ex} that 
\begin{equation*}
\big|\chi^\sharp\rho_0^{(\frac{3}{2}-\varepsilon_0)\beta} L_{24}\big|_2\leq C(T)\big(\big|\chi^\sharp\rho_0^{(\frac{1}{2}-\varepsilon_0)\beta}D_\eta^2\Lambda\big|_2+|\chi^\sharp D_\eta\Lambda|_\infty+1\big).
\end{equation*}

Therefore, it follows from the estimates of $L_{23}$--$L_{24}$, \eqref{control1}, \eqref{d2-lambda}, and \eqref{979'} that
\begin{equation}\label{Ustar}
\big|\chi^\sharp\rho_0^{(\frac{3}{2}-\varepsilon_0)\beta} \cU^\star\big|_2\leq C(T)\Big(\cA_1(t)+ \int_0^t \big|\chi^\sharp\rho_0^{(\frac{3}{2}-\varepsilon_0)\beta}D_\eta^4 U\big|_2\,\mathrm{d}s\Big)\quad\text{with $\cA_1(t)\in L^2(0,T)$}.
\end{equation}

\smallskip
\textbf{3.} Now, define
\begin{equation}\label{guanxi}
\cU_{\mathrm{cross}}:=(D_\eta^3 U)_r +\big(\frac{\delta}{\beta}+2\big) \frac{(\rho_0^\beta)_r}{\rho_0^\beta} D_\eta^3 U=\eta_r\cU^\star+(\delta+2\beta)\big(\log\frac{\eta^2\eta_r}{r^2}\big)_r D_\eta^3 U.
\end{equation}

Next, thanks to $\eta_t= U$, we have
\begin{equation*}
\big(\log\frac{\eta}{r}\big)_{tr}=\eta_rD_\eta\big(\frac{U}{\eta}\big),\qquad (\log\eta_r)_{tr}=\eta_r D_\eta^2 U.
\end{equation*}
Hence, it follows from Lemmas  \ref{lemma-upper jacobi} and \ref{lemma-u-D3} and \eqref{control2} that
\begin{equation*}
\chi^\sharp\Big|\big(\log\frac{\eta^2\eta_r}{r^2}\big)_r\Big|\leq C(T)\Big(1+\int_0^t |\chi^\sharp D_\eta^2 U|_\infty\,\mathrm{d}s\Big) \leq C(T)\Big(1+ \int_0^t \big|\chi^\sharp\rho_0^{(\frac{3}{2}-\varepsilon_0)\beta}D_\eta^4 U\big|_2\,\mathrm{d}s\Big). 
\end{equation*}
This, together with Lemma \ref{lemma-u-D3-ell} and \eqref{Ustar}--\eqref{guanxi}, gives
\begin{equation}\label{Ustar2}
\big|\chi^\sharp\rho_0^{(\frac{3}{2}-\varepsilon_0)\beta} \cU_{\mathrm{cross}}\big|_2\leq C(T)\Big(\cB_1(t)+ \int_0^t \big|\chi^\sharp\rho_0^{(\frac{3}{2}-\varepsilon_0)\beta}D_\eta^4 U\big|_2\,\mathrm{d}s\Big)\quad\text{with $\cB_1(t)\in L^2(0,T)$}.
\end{equation}

Finally, since $\chi^\sharp=\zeta-\chi+\zeta^\sharp$, $\rho_0^\beta\sim 1-r$, and $\frac{1}{2}<(\frac{3}{2}-\varepsilon_0)\beta<\frac{3}{2}+\frac{\delta}{2\beta}$, we  obtain from Lemmas \ref{lemma-upper jacobi}, \ref{ell-inner}, \ref{lemma-u-D3-ell}, and \ref{prop2.1} that
\begin{align*}
\big|\chi^\sharp \rho_0^{(\frac{3}{2}-\varepsilon_0)\beta} D_\eta^4 U\big|_2&\leq C_0\cD_{\mathrm{in}}(t,U)+\big|\zeta^\sharp \rho_0^{(\frac{3}{2}-\varepsilon_0)\beta} D_\eta^4 U\big|_2\\
&\leq C(T)\big(\cD_{\mathrm{in}}(t,U) + \big|\zeta^\sharp \rho_0^{(\frac{3}{2}-\varepsilon_0)\beta}\mathcal{U}_{\mathrm{cross}}\big|_2+ \big|\chi^\sharp \rho_0^{(\frac{3}{2}-\varepsilon_0)\beta} D_\eta^3 U\big|_2\big)\\
&\leq C(T)\Big(\cC_1(t)+\int_0^t\big|\chi^\sharp \rho_0^{(\frac{3}{2}-\varepsilon_0)\beta} D_\eta^4 U\big|_2\,\mathrm{d}s\Big)\quad\text{with $\cC_1(t)\in L^2(0,T)$},
\end{align*}
which, along with  the Gr\"onwall inequality, leads to \eqref{lemma5.15}.

\end{proof}

\subsection{Time-weighted estimates of the velocity}\label{sub94}

\begin{Lemma}\label{lemma-qiexiang-time weight}
There exists a constant $C(T)>0$ such that
\begin{equation*}
t\big|(r^2\rho_0)^\frac{1}{2}U_{tt}(t)\big|_2^2+\int_0^t s\big|(r^2\rho_0^\delta)^\frac{1}{2}\sD_\eta U_{tt}\big|_2^2\,\ds\leq C(T) \qquad\text{for all $t\in [0,T]$}.
\end{equation*}
\end{Lemma}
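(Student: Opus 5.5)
We differentiate the momentum equation once more in time and run the same tangential energy argument as in Lemma \ref{lemma-u-D2}, now for $U_{tt}$ and with the weight $t$ to handle the lack of compatibility at $t=0$. Concretely, we apply $\partial_t$ to \eqref{eq-2order-pre} and multiply by $U_{tt}$. Using $\eqref{eq:VFBP-La}_1$ and \eqref{eq:eta}, the principal viscous part is the same divergence form as for $U_t$, now acting on $U_{tt}$. After integrating over $I$, this yields
\begin{equation*}
\frac{1}{2}\frac{\mathrm{d}}{\mathrm{d}t}\big|(r^2\rho_0)^\frac{1}{2}U_{tt}\big|_2^2+2a_1\int_0^1\eta^2\eta_r\varrho^\delta\Big(\delta|D_\eta U_{tt}|^2-4(1-\delta)\frac{U_{tt}}{\eta}D_\eta U_{tt}+(4\delta-2)\frac{|U_{tt}|^2}{\eta^2}\Big)\,\mathrm{d}r=\mathcal{R}.
\end{equation*}
As for $L_7$ and $I_2$, the quadratic form has negative discriminant since $\delta>\frac{2}{3}$. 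Lemmas \ref{lemma-lower bound jacobi} and \ref{non-vac} then bound it below by $c^*_\delta|(r^2\rho_0^\delta)^{\frac12}\sD_\eta U_{tt}|_2^2$. We multiply by $t$, so the extra term $\frac12|(r^2\rho_0)^{\frac12}U_{tt}|_2^2$ appears on the right. That term is integrable in time by Lemma \ref{lemma-u-D3}.

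The remainder $\mathcal{R}$ consists of time derivatives of $\mathfrak{L}_1$–$\mathfrak{L}_4$ and of the coefficients of the $U_t$-viscous terms. These are all trilinear or higher in $(\sD_\eta U,\sD_\eta U_t)$ with one factor $\sD_\eta U_{tt}$ or $U_{tt}$, plus pressure and gravity terms. The structural model term is
\[
\int_0^1 r^2\rho_0^\delta\big(|\sD_\eta U||\sD_\eta U_t|+|\sD_\eta U|^3\big)|\sD_\eta U_{tt}|\,\mathrm{d}r.
\]
We bound $|\sD_\eta U|_\infty$ by $\eqref{tan4}_1$. We control $|(r^2\rho_0^\delta)^{1/2}\sD_\eta U_t|_2$ by \eqref{tan3}, and then absorb via Young. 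The genuinely new term is
\[
\int_0^1 r^2\rho_0^\delta|\sD_\eta U_t|^2|\sD_\eta U_{tt}|\,\mathrm{d}r,
\]
which needs $(r^2\rho_0^\delta)^{1/4}\sD_\eta U_t\in L^4$.

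\textbf{Main obstacle: the $L^4$ bound on $\sD_\eta U_t$.} We expect this to be the key difficulty. Mimicking \eqref{in-Ur-infty} and the $L^4$ bound preceding \eqref{l4-fuzhu}, we split into interior and exterior pieces. In the interior we use the Hardy/GN inequalities of the Appendix and the dissipation $\zeta r\sD_\eta^2U_t$ from $\cD_{\mathrm{in}}$ (Lemma \ref{ell-inner}). In the exterior we use $\chi^\sharp\rho_0^{(\frac32-\varepsilon_0)\beta}D^2_\eta U_t$ (Lemma \ref{lemma-ell-D4-ex1}) together with $\chi^\sharp\rho_0^{(\frac12-\varepsilon_0)\beta}D_\eta U_t$ and Lemma \ref{hardy-inequality}. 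This gives
\begin{equation*}
\big|(r^2\rho_0^\delta)^{\frac14}\sD_\eta U_t\big|_4^2\le C(T)\big(1+\cD(t,U)\big)^{\frac12},
\end{equation*}
where the right-hand side lies in $L^2(0,T)$ after squaring. It then multiplies $|(r^2\rho_0^\delta)^{1/2}\sD_\eta U_{tt}|_2$ and is absorbed by Young, leaving a $C(T)\cD$ term that is integrable in time.

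\textbf{Lower-order remainder terms.} The pressure terms carry weight $\rho_0^{\gamma}$, and the gravity terms involve $D_\eta\Phi$, $\sD_\eta D_\eta\Phi$, bounded by Lemma \ref{lemma-v Lp ex} and \eqref{DDphi}. Terms involving $V,V_t$ are handled with Lemmas \ref{lemma-v Linfty ex}, \ref{lemma-v Linfty in} and \eqref{est-vt-0}, as in $L_{14}$. The resulting inequality is
\begin{equation*}
\frac{\mathrm{d}}{\mathrm{d}t}\Big(t\big|(r^2\rho_0)^\frac12U_{tt}\big|_2^2\Big)+c\,t\big|(r^2\rho_0^\delta)^\frac12\sD_\eta U_{tt}\big|_2^2\le C(T)(1+\cD(t,U))\Big(1+t\big|(r^2\rho_0)^\frac12U_{tt}\big|_2^2\Big).
\end{equation*}
Since $\int_0^T\cD\,\mathrm{d}s\le C(T)$ by Lemmas \ref{ell-inner} and \ref{ell-ex}, Grönwall gives the claim. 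The $t$-weight kills the boundary term at $t=0$; a time mollification of the local solution justifies it if needed.
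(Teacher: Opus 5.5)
Your scheme is the paper's: differentiate \eqref{eq-2order-pre} in $t$, test with $U_{tt}$, use the negative-discriminant quadratic form (the paper's $L_{28}$) for coercivity, multiply by $t$, and use $|(r^2\rho_0)^{\frac12}U_{tt}|_2\in L^2(0,T)$ from Lemma \ref{lemma-u-D3}. Your treatment of the model term $r^2\rho_0^\delta(|\sD_\eta U||\sD_\eta U_t|+|\sD_\eta U|^3)|\sD_\eta U_{tt}|$ via $\eqref{tan4}_1$ and \eqref{tan3} is also what the paper does.

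However, the term you call the main obstacle, $r^2\rho_0^\delta|\sD_\eta U_t|^2|\sD_\eta U_{tt}|$, does not occur. For fixed $\eta$ the momentum equation is linear in $U$. All nonlinearity enters through coefficients depending on $(\eta,\eta_r)$: $\varrho=r^2\rho_0/(\eta^2\eta_r)$, $D_\eta=\eta_r^{-1}\partial_r$, and $D_\eta\Phi$. Since $\eta_t=U$, one time derivative of a coefficient produces a factor $\sD_\eta U$, and two produce $\sD_\eta U_t$ or $(\sD_\eta U)^2$. After the second time derivative, $\sD_\eta U_t$ therefore appears at most linearly, multiplied by $\sD_\eta U$ or by a pressure weight $\varrho^\gamma$. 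In particular, $\partial_t\mathfrak{L}_3$ and $\partial_t\mathfrak{L}_4$ are of type $\sD_\eta U\,\sD_\eta U_t$ and $(\sD_\eta U)^3$. Consequently the whole remainder (the paper's $L_{25}$--$L_{27}$) is bounded in $L^2$ uniformly in $t$ by bounds already available: \eqref{tan2}--\eqref{tan4}, $|\Lambda|_\infty$, and the bounds on $D_\eta\Phi$. This is \eqref{29-31}, and it gives \eqref{hk-1} with right-hand side $C(T)(1+|(r^2\rho_0)^{\frac12}U_{tt}|_2^2)$, with no dissipation $\cD(t,U)$ needed.

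Had that term been present, your treatment would not have closed. Near the center, $|r^{\frac12}\sD_\eta U_t|_4$ is comparable to the three-dimensional norm $\|\nabla\boldsymbol{U}_t\|_{L^4}$. Three-dimensional Gagliardo--Nirenberg (equivalently, scaling) only gives
$|r^{\frac12}\sD_\eta U_t|_4^2\lesssim\|\nabla\boldsymbol{U}_t\|_{L^2}^{1/2}\|\nabla\boldsymbol{U}_t\|_{H^1}^{3/2}\lesssim(1+\cD)^{3/4}$, not $(1+\cD)^{1/2}$. After Young this leaves $t(1+\cD)^{3/2}$, which is not known to be integrable at this stage: $t\cD\in L^\infty$ is Lemma \ref{lemma-ell-time weight-in1}, which is proved using the present lemma.

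Finally, the weight $t$ alone does not dispose of the boundary term at $t=0$, because $t|(r^2\rho_0)^{\frac12}U_{tt}(t)|_2^2\to0$ is not known a priori. The paper integrates over $[\tau,t]$ and applies Lemma \ref{bjr} to $|(r^2\rho_0)^{\frac12}U_{tt}|_2\in L^2(0,T)$. This gives $\tau_k\to0$ with $\tau_k|(r^2\rho_0)^{\frac12}U_{tt}(\tau_k)|_2^2\to0$, so no mollification is needed.
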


\begin{proof}

First, replacing $V$ in \eqref{eq-2order-pre} by $\Lambda$ in view of \eqref{v-express2} and then applying $\partial_{t}$ to the resulting equality, we arrive at
\begin{equation}\label{ssr}
\begin{aligned}
&\,r^2\rho_0U_{ttt}+\big(\frac{\eta}{\sqrt{\eta_r}} \varrho^\frac{\delta}{2} L_{25}\big)_r-   \sqrt{\eta_r}\varrho^\frac{\delta}{2}  L_{26}-\eta\sqrt{\eta_r}\varrho^\frac{1}{2} L_{27}\\
&=2a_1\Big(\eta^2 \varrho^\delta \big(\delta D_\eta U_{tt}- 2(1-\delta) \frac{U_{tt}}{\eta}\big)\Big)_r +4a_1\eta^2\eta_r\varrho^\delta\Big((1-\delta)D_\eta U_{tt}- (2\delta-1) \frac{U_{tt}}{\eta}\Big)\frac{1}{\eta},
\end{aligned}
\end{equation}
where  
\begin{align*}
\frac{\eta}{\sqrt{\eta_r}} \varrho^\frac{\delta}{2} L_{25}&= \big(-\eta^2\mathfrak{L}_1+\mathfrak{L}_3 \big)_t+2a_1\delta \eta^2\varrho^\delta(\delta L_* L_{21}+ D_\eta U D_\eta U_t)-4a_1(1-\delta)\varrho^\delta U U_t,\\
\sqrt{\eta_r}\varrho^\frac{\delta}{2}  L_{26}&=4a_1\eta\eta_r\varrho^\delta((1-\delta)D_\eta U-(2\delta-1)\frac{U}{\eta})((1-\delta)D_\eta U_t-(2\delta-1)\frac{U_t}{\eta})\\
&\qquad-\eta\eta_r\varrho^\delta((1-\delta)D_\eta U D_\eta U_t+(2\delta-1)\frac{U}{\eta}\frac{U_t}{\eta})-\big(2\eta\eta_r\mathfrak{L}_1+\mathfrak{L}_4  \big)_t,\\
\eta\sqrt{\eta_r}\varrho^\frac{1}{2} L_{27}&= 2r^2\rho_0\big(D_\eta \Phi \frac{U}{\eta}\big)_t-\frac{2A\gamma}{\beta}\big(r^2\rho_0\Lambda\frac{U}{\eta}\big)_t.      
\end{align*}
For $L_{25}$--$L_{27}$, we have controls of the following form:
\begin{align*}
|(L_{25},L_{26})|&\leq C_0 \eta\sqrt{\eta_r}\varrho^{\gamma-\frac{\delta}{2}} \big(|\sD_\eta U|^2+|\sD_\eta U_t|\big)+C_0\eta \sqrt{\eta_r}\varrho^\frac{\delta}{2}\big(|\sD_\eta U||\sD_\eta U_t|+|\sD_\eta U|^3\big),\\
|L_{27}|&\leq C_0 \eta\sqrt{\eta_r}\varrho^\frac{1}{2} \big(|\sD_\eta U|^2+|\sD_\eta U_t|\big) |(D_\eta \Phi,\Lambda)| + C_0\eta\sqrt{\eta_r}\varrho^{\gamma-\frac{1}{2}} |\sD_\eta U| |\sD_\eta^2 U|.  
\end{align*}
Clearly, by Lemmas \ref{lemma-v Lp ex}, \ref{lemma-upper jacobi}--\ref{non-vac}, and \ref{ell-inner}--\ref{lemma-bound-lambda}, we can obtain
\begin{equation}\label{29-31}
|(L_{25},L_{26},L_{27})|_2 \leq C(T).      
\end{equation}

Next, multiplying \eqref{ssr} by $U_{tt}$ and integrating over $I$, we have
\begin{equation}\label{dt-j15-17}
\frac{1}{2}\frac{\mathrm{d}}{\dt}\big|(r^2\rho_0)^\frac{1}{2}U_{tt}\big|_2^2+L_{28}=\int_0^1 \Big(\eta \sqrt{\eta_r}\varrho^\frac{\delta}{2}\big(L_{25}D_\eta U_{tt}+ L_{26}\frac{U_{tt}}{\eta}\big)+\eta\sqrt{\eta_r}\varrho^\frac{1}{2} L_{27}U_{tt}\Big)\,\mathrm{d}r,
\end{equation}
where
\begin{equation*}
L_{28}:=2a_1\int_0^1  \eta^2\eta_r \varrho^\delta \Big(\delta|D_\eta U_{tt}|^2-4(1-\delta) \frac{U_{tt}}{\eta} D_\eta U_{tt}+ (4\delta-2) \frac{|U_{tt}|^2}{\eta^2}\Big) \,\mathrm{d}r.
\end{equation*}

Since $L_{28}$ can be handled in the same way as $L_3$ in Step 2.1 of the proof of Lemma \ref{lemma-u-D1}:
\begin{equation*} 
L_{28}\geq 2a_1 c_\delta^*\big|(r^2\rho_0^\delta)^\frac{1}{2}\sD_\eta U_{tt}\big|_2^2,
\end{equation*}
we can obtain from \eqref{29-31}--\eqref{dt-j15-17}, Lemmas \ref{lemma-upper jacobi}--\ref{non-vac}, and H\"older and Young inequalities that
\begin{equation}\label{hk-1}
\frac{\mathrm{d}}{\dt} \big|(r^2\rho_0)^\frac{1}{2}U_{tt}\big|_2^2 +\big|(r^2\rho_0^\delta)^\frac{1}{2}\sD_\eta U_{tt}\big|_2^2\leq C(T)\big(1+\big|(r^2\rho_0)^\frac{1}{2}U_{tt}\big|_2^2\big).
\end{equation}
Multiplying \eqref{hk-1} by $t$ and  integrating over $[\tau,t]$ with $\tau\in (0,t)$, along with Lemma \ref{lemma-u-D3}, gives
\begin{equation}\label{tauk}
t\big|(r^2\rho_0)^\frac{1}{2}U_{tt}(t)\big|_2^2+\int_\tau^t s\Big|(r^2\rho_0)^\frac{1}{2}\big(D_\eta U_{tt},\frac{U_{tt}}{\eta}\big)\Big|_2^2\,\ds\leq \tau\big|(r^2\rho_0)^\frac{1}{2}U_{tt}(\tau)\big|_2^2+C(T).    
\end{equation}
Thanks to Lemmas \ref{lemma-u-D3} and \ref{bjr}, we can find a sequence $\{\tau_k\}_{k=1}^\infty$ such that $\tau_k\to 0$ and $\tau_k|(r^2\rho_0)^\frac{1}{2}U_{tt}(\tau_k)|_2\to 0$ as $k\to\infty$. Hence, taking $\tau=\tau_k$ in \eqref{tauk} and then letting $k\to\infty$, we finally obtain the desired estimate.
\end{proof}

\begin{Lemma}\label{lemma-ell-time weight-in1}
There exists a constant $C(T)>0$ such that
\begin{equation*}
t\cD_{\mathrm{in}}(t,U)+t\cD_{\mathrm{ex}}(t,U) \leq C(T) \qquad\,\, \text{for all $t\in [0,T]$}.
\end{equation*}
\end{Lemma}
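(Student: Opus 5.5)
The plan is to convert the pointwise-in-time bound of Lemma \ref{lemma-qiexiang-time weight}, namely $t|(r^2\rho_0)^{\frac12}U_{tt}(t)|_2^2\le C(T)$, into pointwise-in-time control of every component of $\cD_{\mathrm{in}}$ and $\cD_{\mathrm{ex}}$. I will reuse the elliptic relations already derived, but this time evaluated at a fixed $t$ rather than integrated in time. The term $t|\zeta r U_{tt}|_2^2+t|\chi^\sharp\rho_0^{\frac12}U_{tt}|_2^2$ is controlled directly by Lemma \ref{lemma-qiexiang-time weight}, because $\zeta r\le C r\rho_0^{1/2}$ on $\operatorname{supp}\zeta$ and $\rho_0^{1/2}\le C r\rho_0^{1/2}$ on $(\frac12,1)$. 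It therefore remains to bound $t|\zeta r(\sD_\eta^2U_t,\sD_\eta^4U)|_2^2$ and $t|\chi^\sharp\rho_0^{(\frac32-\varepsilon_0)\beta}(D_\eta^2U_t,D_\eta^4U)|_2^2$.

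\textbf{Interior.} In the proof of Lemma \ref{ell-inner}, \eqref{proof0} together with \eqref{ell-0-t-0} and Lemma \ref{im-1} gives, for each fixed $t$,
\[
|\zeta r\sD_\eta^2U_t|_2\le C(T)\big(|\zeta rU_{tt}|_2+1\big).
\]
That estimate only used quantities already bounded uniformly in $t$ by \eqref{total-Ein}, Lemma \ref{lemma-u-D3} and \eqref{est-vt-0}. Multiplying by $t$ and using Lemma \ref{lemma-qiexiang-time weight} yields $t|\zeta r\sD_\eta^2U_t|_2^2\le C(T)$. Next, \eqref{proof1}, which is again a pointwise-in-$t$ bound relying on \eqref{high-vr-vrr} (itself uniform in $t$), gives $|\zeta r\sD_\eta^4U|_2\le C(T)(|\zeta r\sD_\eta^2U_t|_2+1)$. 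Multiplying by $t$ then closes $t\cD_{\mathrm{in}}\le C(T)$.

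\textbf{Exterior.} From the proof of Lemma \ref{lemma-ell-D4-ex1}, \eqref{L26} and the bound on $L_{22}$ give
\[
|\chi^\sharp\rho_0^{(\frac32-\varepsilon_0)\beta}D_\eta^2U_t|_2\le C(T)\big(|(r^2\rho_0)^{\frac12}U_{tt}|_2+|\chi^\sharp\rho_0^{(\frac12-\varepsilon_0)\beta}D_\eta U_t|_2+1\big).
\]
The middle term is not bounded pointwise in $t$ by what we have so far; we only know it is $L^2$ in time via \eqref{refine-Utx}. I will instead use the pointwise bound on $L_{21}$ from Step 1 of that proof. It gives $|\chi^\sharp\rho_0^{(\frac12-\varepsilon_0)\beta}D_\eta U_t|_2\le C(T)(1+|\chi^\sharp\rho_0^{1/2}(U_t,D_\eta U_t,U_{tt})|_2)$, and the right-hand side is controlled by Lemmas \ref{lemma-u-D2}, \ref{lemma-u-D3} and the $U_{tt}$ term. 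Hence $t|\chi^\sharp\rho_0^{(\frac32-\varepsilon_0)\beta}D_\eta^2U_t|_2^2\le C(T)$.

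For $D_\eta^4U$ I follow Steps 1–3 of Lemma \ref{lemma-ell-D4-ex2}. By that lemma, the time integrals $\int_0^t|\chi^\sharp\rho_0^{(\frac32-\varepsilon_0)\beta}D_\eta^4U|_2\,\mathrm{d}s$ appearing in \eqref{control1}, \eqref{d2-lambda}, \eqref{Ustar} and \eqref{Ustar2} are now bounded by $C(T)$. The functions $\cA_1,\cB_1,\cC_1$ are then bounded pointwise by $C(T)(1+|\chi^\sharp\rho_0^{(\frac32-\varepsilon_0)\beta}D_\eta^2U_t|_2+|\chi^\sharp\rho_0^{(\frac12-\varepsilon_0)\beta}D_\eta U_t|_2)+C\,\cD_{\mathrm{in}}^{1/2}$. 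Multiplying by $t$ and using the bounds just obtained gives $t|\chi^\sharp\rho_0^{(\frac32-\varepsilon_0)\beta}D_\eta^4U|_2^2\le C(T)$, which completes $t\cD_{\mathrm{ex}}\le C(T)$.

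The main obstacle is bookkeeping: one must check that every auxiliary function ($\cA_1$, $\cB_1$, $\cC_1$, and the $U_t$-dependent terms in $L_{22}$ and $L_{23}$) is bounded at each fixed $t$ by $C(T)(1+|(r^2\rho_0)^{\frac12}U_{tt}|_2+\cD_{\mathrm{in}}^{1/2})$, with no hidden $L^2$-in-time-only quantities. The delicate piece is $|\chi^\sharp U_t|_\infty$ in \eqref{Utwuqiong}. It depends on $D_\eta^2U_t$, and I handle it by the same pointwise argument above, ordering the estimates so that $D_\eta^2U_t$ is closed before $D_\eta^4U$.
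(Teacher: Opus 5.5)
Your proposal is correct and follows essentially the same route as the paper. The interior bound comes from \eqref{proof0} and \eqref{proof1} multiplied by $t$, combined with Lemma~\ref{lemma-qiexiang-time weight}. The exterior bound comes from a pointwise-in-time version of the argument for \eqref{refine-Utx} (the $L_{21}$ relation), followed by the $L_{22}$ estimate and Step~2 of Lemma~\ref{lemma-ell-D4-ex2}, with the time integrals now controlled by \eqref{lemma5.15}. Your bookkeeping is slightly more careful than the paper's terse write-up: you close $D_\eta^2U_t$ before $D_\eta^4U$ because of \eqref{Utwuqiong}, and you write $\cD_{\mathrm{in}}^{1/2}$ rather than $\cD_{\mathrm{in}}$ in the last step.
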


\begin{proof}
First,  by \eqref{ell-0-t-0}--\eqref{proof0}, \eqref{proof1}, and Lemma \ref{lemma-qiexiang-time weight}, we have 
\begin{equation*} 
\begin{aligned}
&\sqrt{t}\big|\zeta r(\sD_\eta^2 U_t,\sD_\eta^4 U)\big|_2\leq  C(T).
\end{aligned}
\end{equation*}

Second,  it follows from  Lemma \ref{lemma-qiexiang-time weight} and the same argument as in the proof of \eqref{refine-Utx} that 
\begin{equation*}
\sqrt{t}\big|\chi^\sharp \rho_0^{(\frac{1}{2}-\varepsilon_0)\beta}D_\eta U_t\big|_2\leq C(T),
\end{equation*}
which, along  with \eqref{lemma5.15} 
and the same proof as in Step 2 of Lemma \ref{lemma-ell-D4-ex2}, leads to
\begin{equation*}
\sqrt{t}\big|\chi^\sharp \rho_0^{(\frac{3}{2}-\varepsilon_0)\beta}(D_\eta^2 U_t, D_\eta^4 U)\big|_2
\leq C(T)\big(\sqrt{t}\big|(r^2\rho_0)^{\frac{1}{2}}U_{tt}\big|_2+1\big)\leq C(T).
\end{equation*}
\end{proof}

\section{Global  Well-Posedness of the Classical Solutions}\label{Section-global}
Now we are ready to give the proof for Theorem \ref{Theorem1.1}. 

 According to Theorem \ref{local-Theorem1.1}, there exists a unique classical solution $(U,\eta)(t,r)$ of \textbf{IBVP} \eqref{eq:VFBP-La-eta} in $[0,T_*]\times \bar I$ for some $T_*>0$, satisfying \eqref{b1-lo} and \eqref{N111}.  Now, suppose that $\overline{T}_*>0$ is the life span of $(U,\eta)(t,r)$, and $T$ is any fixed time satisfying $T\in (0,\overline{T}_*)$. We claim that  $\overline{T}_*=\infty$.

Otherwise, if $\overline{T}_*<\infty$, collecting the uniform {\it a priori} bounds obtained in  Lemmas \ref{lemma-lower bound jacobi}, \ref{lemma-upper jacobi}, \ref{ell-inner}--\ref{ell-ex}, and \ref{lemma-qiexiang-time weight}--\ref{lemma-ell-time weight-in1}, we arrive at all the desired global uniform estimates:
\begin{equation}\label{811}
\begin{aligned}
&\sup_{t\in[0,\overline{T}_*)}(\cE(t,U)+t\cD(t,U))+\int_0^{\overline{T}_*}\cD (s,U)\,\mathrm{d}s\leq C(\overline{T}_*),\\
&\,\,(\eta_r,\frac{\eta}{r})(t,r)\in [C^{-1}(\overline{T}_*),C(\overline{T}_*)] \qquad \text{for all $(t,r)\in [0,\overline{T}_*)\times \bar I$},
\end{aligned}
\end{equation}
where $C(\overline{T}_*)\in (1,\infty)$ is a constant depending only on $(a_1,\gamma,A,\delta,\varepsilon_0,\rho_0,u_0,\cK_1,\cK_2,G,\overline{T}_*)$. Moreover, based on \eqref{811} and equation $\eta_t=U$ with $\eta|_{t=0}=r$, we can also obtain
\begin{equation}\label{811'}
\sup_{t\in[0,\overline{T}_*)}(\mathring\cE(t,U)+t\mathring\cD(t,U))
+\int_0^{\overline{T}_*}\mathring\cD (s,U)\,\mathrm{d}s\leq C(\overline{T}_*),
\end{equation}
where $(\mathring\cE,\mathring\cD)(t,U)$ are defined in the same way as \eqref{E-1}--\eqref{D-2} with $\eta(t,r)$ replaced by $r$.

Thus, for any sequence $\{t_k\}_{k=1}^\infty\subset [0,\overline{T}_*)$ with $t_k\to \overline{T}_*$,  there exist
both a subsequence $\{t_{k_\ell}\}_{\ell=1}^\infty$ and a limit vector $(U,\eta)(\overline{T}_*,r)$ such that, for $j=0,1$ and $k=0,1,2,3$, as $\ell \to\infty$,
\begin{align}
\zeta r\sD_r^k U (t_{k_\ell},r) \to \zeta r \sD_r^k U (\overline{T}_*,r) \qquad &\text{weakly in $L^2$},\nonumber\\
\zeta r\sD_r^j(\partial_t U) (t_{k_\ell},r) \to \zeta r \sD_r^j(\partial_t U) (\overline{T}_*,r) \qquad &\text{weakly in $L^2$},\nonumber\\
\chi^\sharp (\rho_0^{\frac{1}{2}}\partial_t^jU, \rho_0^{\frac{\delta}{2}}\partial_t^jU_r)(t_{k_\ell},r)\to \chi^\sharp (\rho_0^{\frac{1}{2}}\partial_t^jU, \rho_0^{\frac{\delta}{2}}\partial_t^jU_r)(\overline{T}_*,r)  \qquad &\text{weakly in $L^2$},\label{ququ}\\
\chi^\sharp \rho_0^{(\frac{3}{2}-\varepsilon_0)\beta}\partial_r^{j+2} U (t_{k_\ell},r)\to \chi^\sharp \rho_0^{(\frac{3}{2}-\varepsilon_0)\beta}\partial_r^{j+2} U (\overline{T}_*,r)  \qquad &\text{weakly in $L^2$},\nonumber\\
(\eta_r,\frac{\eta}{r}) (t_{k_\ell},r)\to (\eta_r,\frac{\eta}{r})(\overline{T}_*,r)\qquad &\text{weakly* in $L^\infty$}.\nonumber
\end{align}
Here, since $(r,\rho_0(r))$ only vanish at the boundaries $\{r=0\}$ and $\{r=1\}$, respectively, we can obtain the uniqueness of limits in the above by initially applying the weak convergence argument on each interval $[a,1-a]$ with $a\in (0,1)$. For example, on one hand, 
\begin{equation*}
\zeta r\sD_rU(t_{k_\ell},r)\to \widehat F(\overline{T}_*,r)\qquad \text{weakly in $L^2$ as $\ell \to\infty$},
\end{equation*}
for some limit $\widehat F(\overline{T}_*,r)\in L^2$. 
On the other hand, \eqref{811'} implies that, 
for each $a\in (0,1)$,
\begin{equation*}
U(t_{k_\ell},r)\to U(\overline{T}_*,r)\qquad \text{weakly in $H^3(a,1-a)$  as $\ell \to\infty$}.
\end{equation*}
Thus, for any $\varphi\in C_\mathrm{c}^\infty(0,1)$, as $\ell \to\infty$,
\begin{equation*}
\big<\zeta r\sD_rU(t_{k_\ell}), \varphi\big>=\big<U (t_{k_\ell}), \zeta \frac{\varphi}{r}- (\zeta r \varphi)_r\big> \to \big<U (\overline{T}_*), \zeta \frac{\varphi}{r}- (\zeta r \varphi)_r\big> =\big<\zeta r\sD_rU(\overline{T}_*), \varphi\big>,
\end{equation*}
which implies $\widehat F(\overline{T}_*,r)= \zeta r\sD_rU(\overline{T}_*,r)$ for \textit{a.e.} $r\in (0,1)$.

Now, \eqref{ququ}, together with \eqref{811}--\eqref{811'}, the lower semi-continuity of the weak convergence, and equation $\eta_t=U$, leads to
\begin{equation*} 
(\eta_r,\frac{\eta}{r})(\overline{T}_*,r)\in [C^{-1}(\overline{T}_*),C(\overline{T}_*)],\quad \mathring\cE(\overline{T}_*,U)\leq C(\overline{T}_*)\implies \cE(\overline{T}_*,U)\leq C(\overline{T}_*).
\end{equation*}
Hence, by Theorem \ref{local-Theorem1.1}, we obtain from the above that there exists $T_0>0$ such that $(U,\eta)$ is the classical solution of \textbf{IBVP} \eqref{eq:VFBP-La-eta} on the time interval $[0,\overline{T}_*+T_0]$, which contradicts to the definition  of $\overline{T}_*$. Then $ \overline{T}_*=\infty$. Therefore, for any $T>0$, \textbf{IBVP} \eqref{eq:VFBP-La-eta} admits a unique solution $(U,\eta)(t,r)$ in $[0,T]\times \bar I$ such that \eqref{b1} holds.

\appendix
\section{Some Basic Lemmas}\label{appendix A}

For the convenience of readers, we list some basic results that have been used in this paper. 

The first lemma concerns the separability and density of the weighted Sobolev spaces. 
\begin{Lemma}[\cite{kufner}]\label{W-space}
Let $\vartheta_1,\vartheta_2\in (-1,\infty)$, and let $\mathrm{w}(r)\sim r^{\vartheta_1}(1-r)^{\vartheta_2}$ be a function defined on $I$. Then, for $k\in \ZZ$, $H^k_{\mathrm{w}}$ is a reflexive separable Banach space. Moreover, if $k\in \mathbb{N}$, $C^\infty(\bar I)$ is dense in $H^k_{\mathrm{w}}$ with respect to its norm. 
\end{Lemma}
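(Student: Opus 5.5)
This lemma is cited from \cite{kufner}, so the plan is to reduce it to classical facts about power-type weights on an interval rather than to build a proof from scratch.

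\textbf{Banach structure and separability.} The first step is the completeness of $H^k_{\mathrm{w}}$ with the norm $\sum_{j\le k}|\sqrt{\mathrm{w}}\,\partial_r^j f|_2$. Since $\vartheta_1,\vartheta_2>-1$, the weight satisfies $\mathrm{w}^{-1}\in L^1_{\mathrm{loc}}(I)$: it is bounded above and below on each compact subinterval $[a,1-a]$. Hence weighted $L^2$ convergence implies $L^1_{\mathrm{loc}}$ convergence. Take a Cauchy sequence $(f_n)$. For each $j$, the derivatives $\partial_r^j f_n$ converge in $L^2_{\mathrm{w}}$ to some $g_j$. Distributional derivatives pass to the limit in $L^1_{\mathrm{loc}}$, which gives $g_j=\partial_r^j g_0$, and so completeness follows.

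For reflexivity and separability, embed the space isometrically via $f\mapsto(\sqrt{\mathrm{w}}\,\partial_r^j f)_{j\le k}$ into $(L^2(I))^{k+1}$. The image is closed by the completeness just shown. A closed subspace of a separable Hilbert space is a separable Hilbert space, hence reflexive. If the lemma is also meant for negative $k$, those spaces would be handled as duals, which again gives reflexive separable spaces.

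\textbf{Density of $C^\infty(\bar I)$.} This is the main obstacle. The plan is as follows.
\begin{enumerate}
\item Split $f\in H^k_{\mathrm{w}}$ with a partition of unity into a piece supported near $r=0$, a piece supported near $r=1$, and an interior piece. The interior piece is handled by standard mollification, since $\mathrm{w}\sim 1$ there.
\item For the piece near $r=0$, where $\mathrm{w}\sim r^{\vartheta_1}$, use a dilation-then-mollify argument. Set $f_\lambda(r)=f(\lambda r)$ with $\lambda<1$, so that $f_\lambda$ is defined and has the required regularity up to $r=0$. Then mollify with a radius small compared with $1-\lambda$.
\item Show $f_\lambda\to f$ in $H^k_{\mathrm{w}}$. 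The weighted norm is controlled uniformly under dilation because $\mathrm{w}(r/\lambda)\lesssim \mathrm{w}(r)$ for power weights. Convergence then follows from continuity of dilations in $L^2$, combined with a density argument through compactly supported functions.
\item Treat the piece near $r=1$ symmetrically, dilating about $r=1$.
\end{enumerate}

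The delicate point is the uniform bound on the dilated norms. The condition $\vartheta_i>-1$ is exactly what makes this bound hold; it also makes the weighted spaces contain the traces needed for an extension. Here we defer to Kufner's treatment of power-type weights.
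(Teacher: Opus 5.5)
Your completeness, separability and reflexivity argument is fine, with one correction. $\mathrm{w}$ and $\mathrm{w}^{-1}$ are bounded on every $[a,1-a]$ for \emph{any} power weight, so this is not where $\vartheta_i>-1$ is used. That hypothesis is what gives $\mathrm{w}\in L^1(I)$, i.e.\ $C^\infty(\bar I)\subset H^k_{\mathrm{w}}$ in the first place.

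The paper gives no proof of this lemma (it only cites Kufner), and your density sketch has a genuine gap. The map $r\mapsto\lambda r$ fixes $r=0$, so $f_\lambda(r)=f(\lambda r)$ gains room only beyond $r=1$. Near $r=0$ it is no more defined to the left, and no less singular, than $f$. For example, $f=r^{-1/4}\in H^1_{r^2}$ gives $f_\lambda=\lambda^{-1/4}r^{-1/4}$. So mollifying near $r=0$ is still impossible, and your step (4), dilating about $r=1$, fails in the same way.

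To gain room at an endpoint you must move the support away from it: use $f(r+\epsilon)$ or $f(1-\lambda(1-r))$ for the piece near $0$, and $f(\lambda r)$ for the piece near $1$. For these maps, however, the uniform bound is not automatic. Indeed,
\[
\int_0^a r^{\vartheta}|g(r+\epsilon)|^2\,dr=\int_\epsilon^{a+\epsilon}(s-\epsilon)^{\vartheta}|g(s)|^2\,ds
\]
is controlled by $\int s^{\vartheta}|g|^2$ only when $\vartheta\ge0$. For $\vartheta\in(-1,0)$, testing with $g=\mathbf{1}_{[\epsilon,\epsilon+\epsilon^2]}$ shows the shifts have norm at least of order $\epsilon^{\vartheta/2}\to\infty$ on $L^2_{r^{\vartheta}}$. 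Since $\partial_r^kf$ can be an arbitrary element of $L^2_{\mathrm{w}}$, the ``uniform bound plus dense class'' argument breaks down in this range. For the dilation you actually wrote, the bound holds for every real exponent, so $\vartheta_i>-1$ is not ``exactly what makes this bound hold'' either.

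The missing ingredient for $-1<\vartheta<1$ is as follows. Near such an endpoint both $\mathrm{w}$ and $\mathrm{w}^{-1}$ are integrable, so by Cauchy--Schwarz $\partial_r^kf\in L^1(0,a)$ and Taylor's formula
\[
f(r)=\sum_{j<k}\partial_r^jf(0)\frac{r^j}{j!}+\int_0^r\frac{(r-s)^{k-1}}{(k-1)!}\,\partial_r^kf(s)\,ds
\]
holds. Replace $\partial_r^kf$ by $h_n\in C_c^\infty(0,a)$ with $h_n\to\partial_r^kf$ in $L^2_{\mathrm{w}}$, hence also in $L^1$. This gives functions smooth on $[0,a]$ whose $k$-th derivatives converge in $L^2_{\mathrm{w}}$. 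Their lower derivatives converge uniformly, hence in $L^2_{\mathrm{w}}$ because $\mathrm{w}\in L^1$; this is where $\vartheta>-1$ genuinely enters. Alternatively, for $|\vartheta|<1$ the weight $|r|^{\vartheta}$ is a Muckenhoupt $A_2$ weight, so mollifying a higher-order reflection of $f$ converges. Combined with the corrected shift-and-mollify argument for $\vartheta\ge0$, this covers all $\vartheta_i>-1$.
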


The second lemma is on the classical Sobolev embedding theorems.
\begin{Lemma}[\cite{leoni}]\label{sobolev-embedding}
We state the Sobolev embedding theorem in $\mathbb{R}$ and $\mathbb{R}^3$ separately. 
\begin{enumerate}
\item[{\rm (i)}]
Let $J\subset \RR$ be some open interval and $f=f(r)$ be some function on $J$. Then there exist two positive constants $(s_0,C)$, depending only on  $J$, such that
\begin{align*}
&\|f\|_{L^\infty(J)} \leq s_0\|f\|_{L^1(J)}+C\|f_r\|_{L^1(J)} &&\quad \text{for all }f\in W^{1,1}(J),\\[4pt]
&\|f\|_{L^\infty(J)} \leq s_0\|f\|_{L^2(J)}+C\|f_r\|_{L^2(J)} &&\quad \text{for all }f\in H^1(J),\\
&\|f\|_{L^\infty(J)} \leq s_0\|f\|_{L^2(J)}+ C\|f\|^\frac{1}{2}_{L^2(J)}\|f_r\|^\frac{1}{2}_{L^2(J)} &&\quad \text{for all }f\in H^1(J).
\end{align*}
In particular, $W^{1,1}(J)\into C(\bar J)$ and $H^1(J)\into C(\bar J)$ continuously{\rm ;} moreover, if $f$ vanishes at some point $r_0\in \bar J$, then $s_0=0$ can be chosen in the above.

\item[{\rm (ii)}]Let $E \subset \mathbb{R}^{3}$ be some bounded open set with smooth boundary. Assume that $f\in H^1(E)$, then there exists a positive constant $C$ depending only on $E$, such that
\begin{equation*}
\|f\|_{L^{6}(E)} \leq C \|f\|_{H^{1}(E)}.
\end{equation*}
In particular, if $E=\RR^3$, then for any $f\in H^{1}(\RR^3)$, there exists a positive universal constant $C$ such that
\begin{equation*}
    \|f\|_{L^6(\RR^3)} \leq C \|\nabla f\|_{L^2(\RR^3)}.
\end{equation*}
\end{enumerate}
\end{Lemma}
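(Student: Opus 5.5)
This is a standard one-dimensional Sobolev estimate, and I would prove it from the fundamental theorem of calculus on $J$, treating the three inequalities of part (i) first and part (ii) separately.

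\textbf{Part (i).} I take $J=(a,b)$ bounded, as it is in every application in the paper.

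\emph{Step 1: reduction to $C^1$.} For $f\in W^{1,1}(J)$ I would use that $f$ has an absolutely continuous representative, or equivalently that $C^\infty(\bar J)$ is dense in $W^{1,1}(J)$. Both sides of each inequality are continuous in the relevant norm, so it suffices to argue for $f\in C^1(\bar J)$.

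\emph{Step 2: the $W^{1,1}$ bound.} For $r,s\in\bar J$ we have $f(r)=f(s)+\int_s^r f_r$. Hence
\begin{equation*}
|f(r)|\le |f(s)|+\|f_r\|_{L^1(J)} .
\end{equation*}
Averaging in $s$ over $J$ gives the first inequality with $s_0=|J|^{-1}$ and $C=1$.

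\emph{Step 3: the first $H^1$ bound.} This follows from the first inequality by H\"older, $\|g\|_{L^1(J)}\le |J|^{1/2}\|g\|_{L^2(J)}$, applied to both $f$ and $f_r$.

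\emph{Step 4: the interpolation bound.} Apply the $W^{1,1}$ estimate to $f^2$, using $(f^2)_r=2ff_r$ and Cauchy--Schwarz:
\begin{equation*}
\|f\|_\infty^2\le s_0\|f\|_2^2+2\|f\|_2\|f_r\|_2 .
\end{equation*}
Taking square roots and using $\sqrt{x+y}\le\sqrt x+\sqrt y$ yields the third inequality.

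\emph{Step 5: the vanishing case.} If $f(r_0)=0$, take $s=r_0$ in Step 2 instead of averaging. Then no $L^1$ term appears, so $s_0=0$ in all three inequalities.

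\emph{Step 6: the embeddings.} The continuous embeddings into $C(\bar J)$ follow from the same identity: it gives uniform continuity of $f$, and the bounds above give continuity of the embedding maps.

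\textbf{Part (ii).} In the whole-space case I would first prove the Gagliardo--Nirenberg--Sobolev inequality $\|g\|_{L^{3/2}(\RR^3)}\le C\|\nabla g\|_{L^1}$ for $g\in C_{\mathrm c}^1$. The proof writes $|g(x)|\le\int|\partial_i g|\,\mathrm{d}x_i$ in each coordinate direction and then applies the generalized H\"older inequality over the three variables in succession. Next I would set $g=|f|^4$. This gives
\begin{equation*}
\|f\|_6^4\le C\||f|^3\nabla f\|_1\le C\|f\|_6^3\|\nabla f\|_2 ,
\end{equation*}
so $\|f\|_6\le C\|\nabla f\|_2$ for $f\in C_{\mathrm c}^\infty$. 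The inequality then extends by density to the homogeneous space, which contains $H^1$. For bounded smooth $E$, I would use a bounded extension operator $H^1(E)\to H^1(\RR^3)$, built by flattening the boundary, local reflection and a partition of unity. Composing it with the whole-space estimate gives $\|f\|_{L^6(E)}\le C\|f\|_{H^1(E)}$.

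\textbf{Main obstacle.} The main difficulty is in part (ii): the successive H\"older inequalities in the Gagliardo--Nirenberg step, and the construction of the extension operator. Since both are textbook results (Leoni, as cited), I would simply invoke them rather than reprove them.
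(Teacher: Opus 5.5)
Your proof is correct and is the standard textbook argument behind this lemma, which the paper does not prove but simply cites from Leoni: the fundamental theorem of calculus with averaging, applied to $f$ and to $f^2$, in one dimension, and Gagliardo--Nirenberg--Sobolev with $g=|f|^4$ plus an extension operator in three dimensions. Your restriction to bounded $J$ is in fact needed, since the second inequality with $s_0=0$ fails on a half-line; also, your density extension to the homogeneous space is the form the paper actually uses when it applies part (ii) to the potential $\Psi$, which decays only like $|\boldsymbol{x}|^{-1}$ and hence is not in $H^1(\RR^3)$.
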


The next two lemmas are on the Hardy inequality and some weighted interpolation inequality. In Lemmas \ref{hardy-inequality}--\ref{GNinequality}, we let $0\leq a<b<\infty$ and $J=(a,b)$, and let $d=d(r)$ be some function on $J$, taking one of the following two forms{\rm:}
\begin{equation*}
d(r)=r-a,\quad \text{or} \quad  d(r)=b-r.
\end{equation*}
\begin{Lemma}[\cites{opic1,opic2}]\label{hardy-inequality}
Let $p\in [1,\infty]$ and $\vartheta>-\frac{1}{p}$ $(\vartheta>0\text{ if }p=\infty)$. Then, for any $f$ such that $d^{\vartheta+\frac{1}{2}+\frac{1}{p}} (f,f_r)\in L^2(J)$,
\begin{enumerate}
\item[{\rm (i)}] If $p\in [1,2)$, for any $\varepsilon>0$, there exists a constant $C_1>0$, depending only on $(\varepsilon, p,a,b,\vartheta)$, such that
\begin{equation*}
\|d^{\vartheta+\varepsilon} f\|_{L^p(J)} \leq C_1 \big\|d^{\vartheta+\frac{1}{2}+\frac{1}{p}} (f,f_r)\big\|_{L^2(J)};
\end{equation*}
\item[{\rm (ii)}] If $p\in [2,\infty]$, there exists a constant $C_2>0$, which depends only on $(p,a,b,\vartheta)$ if $p\ne \infty$ and depends only on $(a,b,\vartheta)$ if $p=\infty$, such that
\begin{equation*}
\|d^\vartheta f\|_{L^p(J)} \leq C_2 \big\|d^{\vartheta+\frac{1}{2}+\frac{1}{p}} (f,f_r)\big\|_{L^2(J)}. 
\end{equation*}
\end{enumerate}
\end{Lemma}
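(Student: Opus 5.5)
We treat the Hardy inequality of Lemma \ref{hardy-inequality} by reducing it to one model case and then splitting according to $p$. By the symmetry $r\mapsto a+b-r$ it suffices to take $d(r)=r-a$, and after translation $J=(0,L)$ with $L=b-a$. Set $\alpha:=\vartheta+\frac12+\frac1p$ and $N:=\|d^{\alpha}(f,f_r)\|_{L^2(J)}$. The plan is to first control $f$ pointwise near the degenerate endpoint by integrating $f_r$ inward from the nondegenerate part of the interval, and then integrate that pointwise bound. The pointwise estimate for $p=\infty$ is the natural starting point, since the finite-$p$ cases follow by integrating it (or a refined version of it).

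\textbf{Step 1 (away from the degenerate end).} On $(L/2,L)$ the weight $d^\alpha$ is bounded above and below, so $f\in H^1(L/2,L)$ with $\|f\|_{H^1(L/2,L)}\le CN$. By Lemma \ref{sobolev-embedding}(i) this gives $\|f\|_{L^\infty(L/2,L)}\le CN$. In particular $f$ is bounded on this region by $CN$, which both handles the contribution of $(L/2,L)$ to every norm and supplies the boundary value used in Step 2.

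\textbf{Step 2 (pointwise bound near $0$).} For $r\in(0,L/2)$ we write
\begin{equation*}
f(r)=f(L/2)-\int_r^{L/2}f_s\,\mathrm{d}s,
\end{equation*}
and apply Cauchy--Schwarz with the weight $s^{\alpha}$:
\begin{equation*}
\Big|\int_r^{L/2}f_s\,\mathrm{d}s\Big|\le N\Big(\int_r^{L/2}s^{-2\alpha}\,\mathrm{d}s\Big)^{1/2}.
\end{equation*}
The integral on the right behaves as follows:
\begin{equation*}
\int_r^{L/2}s^{-2\alpha}\,\mathrm{d}s\lesssim
\begin{cases}
r^{1-2\alpha}, & \alpha>\tfrac12,\\
1+|\log r|, & \alpha=\tfrac12,\\
1, & \alpha<\tfrac12.
\end{cases}
\end{equation*}
Combined with Step 1, this gives $|f(r)|\lesssim N\big(1+r^{\frac12-\alpha}\big)$ up to a logarithm in the borderline case $\alpha=\frac12$.

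\textbf{Step 3 (case $p=\infty$).} Here $\alpha=\vartheta+\frac12$ with $\vartheta>0$, so $\alpha>\frac12$. Multiplying the pointwise bound by $r^\vartheta$ gives $r^\vartheta|f(r)|\lesssim N(r^\vartheta+1)$, which is bounded on $(0,L/2)$. This proves (ii) for $p=\infty$.

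\textbf{Step 4 (case $p\in[2,\infty)$).} We need
\begin{equation*}
\int_0^{L/2} r^{\vartheta p}|f|^p\,\mathrm{d}r\lesssim N^p.
\end{equation*}
Raising the crude bound of Step 2 to the power $p$ produces $r^{\vartheta p+p(\frac12-\alpha)}=r^{-1}$, which is not integrable, so a cruder argument fails here. Instead we use a dyadic decomposition. On each $J_k=(2^{-k-1},2^{-k})$ we apply the one-dimensional Gagliardo--Nirenberg inequality $\|g\|_{L^p}\lesssim\|g\|_{L^2}^{\theta}\|g\|_{W^{1,2}}^{1-\theta}$ to $g=r^\vartheta f$, with the inequality rescaled to unit length. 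Each dyadic piece is then bounded by $N_k:=\|d^\alpha(f,f_r)\|_{L^2(J_k)}$; after rescaling, the $r^{\frac1p}$ factor in $\alpha$ exactly absorbs the Jacobian from the change of length. We then sum over $k$, using $\ell^2\subset\ell^p$ for $p\ge2$, together with the Step 1 bound for the terms $f$ near the outer end of each interval.

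\textbf{Step 5 (case $p\in[1,2)$).} Here the $\ell^2\to\ell^p$ summation of Step 4 fails. We therefore sacrifice $\varepsilon$ and apply Hölder to each dyadic piece; the factors $2^{-k\varepsilon}$ then make the series summable.

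\textbf{Main obstacle.} We expect the hard part to be the endpoint exponent, namely obtaining (ii) without an $\varepsilon$-loss for $2\le p<\infty$. This is where the scale-invariant dyadic Sobolev estimate combined with $\ell^2\subset\ell^p$ is essential. Alternatively one could simply invoke the weighted Hardy inequality of \cites{opic1,opic2}, which is how the paper cites it.
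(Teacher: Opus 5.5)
Steps 1--3 are sound and settle the case $p=\infty$. Note that $\vartheta>-\frac1p$ forces $\alpha>\frac12$, so only the first case in Step 2 ever occurs. The gap is in Step 4, and it carries over to Step 5: it is not true that each dyadic piece is bounded by $N_k$. Already for $f\equiv1$ one has $\|d^\vartheta f\|_{L^p(J_k)}\approx 2^{-k(\vartheta+\frac1p)}$, while $N_k\approx 2^{-k(\vartheta+\frac1p+1)}$.

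After rescaling $J_k$ to unit length, the zeroth-order term in Gagliardo--Nirenberg (or in any inhomogeneous Sobolev bound) needs the weight $d^{\alpha-1}$. The hypothesis only controls $d^{\alpha}f$, so you lose a factor $2^{k}$. Only the oscillation is scale-invariant: $\|f-f(2^{-k})\|_{L^\infty(J_k)}\le|J_k|^{1/2}\|f_r\|_{L^2(J_k)}\lesssim 2^{k(\vartheta+\frac1p)}N_k$, which contributes $\lesssim N_k$ to piece $k$.

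The constant part $f(2^{-k})$ is not handled by ``the Step 1 bound'', since Step 1 controls $f$ only on $(L/2,L)$. Inserting the Step 2 bound $|f(2^{-k})|\lesssim N2^{k(\vartheta+\frac1p)}$ instead produces a contribution $\approx N$ from every piece. That is exactly the non-summable $r^{-1}$ you identified yourself. Step 5 fails the same way: H\"older on $J_k$ with $\|f\|_{L^2(J_k)}\lesssim 2^{k\alpha}N_k$ gives $2^{k(1-\varepsilon)}N_k$, which the factor $2^{-k\varepsilon}$ cannot make summable.

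The missing idea is a discrete Hardy summation of the endpoint values. One has $|f(2^{-k})|\le|f(L/2)|+\sum_{j<k}\|f_r\|_{L^1(J_j)}\lesssim N+\sum_{j<k}2^{j(\vartheta+\frac1p)}N_j$. Hence piece $k$ receives $2^{-k(\vartheta+\frac1p)}N+\sum_{j<k}2^{-(k-j)(\vartheta+\frac1p)}N_j$ from the constant part. This is a convolution with an $\ell^1$ kernel exactly because $\vartheta+\frac1p>0$, and this is where the hypothesis genuinely enters. Young's inequality plus $\ell^2\subset\ell^p$ then closes (ii). For $p<2$, the same bound with the extra $2^{-k\varepsilon}$, summed in $\ell^1$, is $\lesssim\sum_j 2^{-j\varepsilon}N_j\lesssim N$, which gives (i).

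A shorter route avoids dyadics entirely. The case $p=2$ is the classical Hardy inequality applied to $f(r)=f(L/2)-\int_r^{L/2}f_s\,\mathrm{d}s$. For $2<p<\infty$, split
\[
\int_J d^{\vartheta p}|f|^p\,\mathrm{d}r\le \big\|d^{\vartheta+\frac1p}f\big\|_{L^\infty(J)}^{p-2}\,\big\|d^{\vartheta-\frac12+\frac1p}f\big\|_{L^2(J)}^{2}.
\]
Then apply your $p=\infty$ estimate with exponent $\vartheta+\frac1p>0$ and the $p=2$ estimate with exponent $\vartheta-\frac12+\frac1p>-\frac12$; both require precisely the weight $d^{\alpha}$. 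For $1\le p<2$, H\"older against the same $L^2$ bound works, since $d^{\varepsilon+\frac12-\frac1p}\in L^{\frac{2p}{2-p}}(J)$ if and only if $\varepsilon>0$. For comparison, the paper gives no proof of this lemma and simply quotes Brown--Opic and Opic--Gurka.
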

 
\begin{Lemma}\label{GNinequality}
Let $p\in [2,\infty]$ and $\vartheta>-\frac{1}{p}$ $(\vartheta>0\text{ if }p=\infty)$. Then, for any $f$ satisfying $d^{\vartheta+\frac{1}{p}} (f,f_r)\in L^2(J)$, there exists a constant $C>0$, which depends only on $(p,a,b,\vartheta)$ if $p\ne \infty$ and depends only on $(a,b,\vartheta)$ if $p=\infty$, such that
\begin{equation*} 
\|d^\vartheta f\|_{L^p(J)} \leq C\big(\big\|d^{\vartheta+\frac{1}{p}} f\big\|_{L^2(J)}+\big\|d^{\vartheta+\frac{1}{p}} f\big\|_{L^2(J)}^\frac{1}{2}\big\|d^{\vartheta+\frac{1}{p}} f_r\big\|_{L^2(J)}^\frac{1}{2}\big).
\end{equation*}
\end{Lemma}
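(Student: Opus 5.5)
The statement to prove is the weighted interpolation inequality of Lemma \ref{GNinequality}. It is a one-dimensional weighted Gagliardo--Nirenberg estimate, and I would reduce it to the unweighted $L^\infty$ case on dyadic pieces, summing the pieces in $\ell^p$.

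By symmetry I treat only $d(r)=r-a$ and translate so that $a=0$, with $J=(0,b)$. The weight then degenerates only at $r=0$.

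\textbf{Case $p=\infty$} ($\vartheta>0$). Set $g=d^{\vartheta}f$. Then $g_r=\vartheta d^{\vartheta-1}f+d^\vartheta f_r$, and since $g(r)=g(r_1)-\int_r^{r_1}g_r$, it is enough to control $g$ pointwise. On each dyadic interval $J_k=(2^{-k-1}b,2^{-k}b)$ I apply Lemma \ref{sobolev-embedding}(i) in scaled form:
\[
\|h\|_{L^\infty(J_k)}\le C|J_k|^{-1/2}\|h\|_{L^2(J_k)}+C\|h\|_{L^2(J_k)}^{1/2}\|h_r\|_{L^2(J_k)}^{1/2}.
\]
I take $h=f$ and multiply by $d^\vartheta\sim 2^{-k\vartheta}$, so that $d^\vartheta=d^{\vartheta+\frac1p}$ when $p=\infty$. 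This gives
\[
\|d^\vartheta f\|_{L^\infty(J_k)}\le C\,2^{k/2}\|d^{\vartheta}f\|_{L^2(J_k)}+C\|d^\vartheta f\|_{L^2(J_k)}^{1/2}\|d^\vartheta f_r\|_{L^2(J_k)}^{1/2}.
\]
The factor $2^{k/2}\sim d^{-1/2}$ in the first term is the obstacle: it is not bounded by the right-hand side as stated.

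To remove it I would not use the additive form blindly. I would instead apply the local estimate on the interval $(r/2,r)$ to $f$ times a cutoff, or equivalently use the fundamental theorem of calculus from a point where $|f|$ is at most its average. For fixed $r$, choose $s\in(r/2,r)$ with $|f(s)|^2\le \frac{2}{r}\int_{r/2}^r|f|^2$. Then
\[
|f(r)|^2\le |f(s)|^2+2\int_{r/2}^r|f||f_r|,
\]
so
\[
r^{2\vartheta}|f(r)|^2\le C r^{-1}\|d^\vartheta f\|_{L^2}^2+C\|d^\vartheta f\|_{L^2}\|d^\vartheta f_r\|_{L^2}.
\]
The $r^{-1}$ factor survives, so this pointwise route still fails near $r=0$.

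This shows the real mechanism must be the global identity rather than local averaging. Write $F=d^{2\vartheta}f^2$, so that $F_r=2\vartheta d^{2\vartheta-1}f^2+2d^{2\vartheta}ff_r$. Integrating from a point $r_1$ near $b$, where $F(r_1)\le C\|d^\vartheta f\|_2^2$ by averaging on $(b/2,b)$, leads to
\[
F(r)\le C\|d^\vartheta f\|_2^2+2\vartheta\int d^{2\vartheta-1}f^2+2\|d^\vartheta f\|_2\|d^\vartheta f_r\|_2 .
\]
The middle term is exactly where the statement is delicate. If it can be absorbed, the $p=\infty$ case follows. I would first try to absorb it by splitting the integral at a scale $\lambda$: on $(\lambda,b)$ it is bounded by $\lambda^{-1}\|d^\vartheta f\|_2^2$, and on $(0,\lambda)$ by $\lambda^{2\vartheta}\sup F\cdot\log$-type bounds, then optimize in $\lambda$. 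I expect this step to be the main obstacle. If the inequality holds only with an extra $\|d^{\vartheta+1/p}f\|_{L^2}$-type term of lower weight, I would check the way it is used in the paper (always with $\rho_0$-weights and $\chi^\sharp$), where additional $\|\cdot\|_{L^2}$ terms are harmless.

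\textbf{Case $2\le p<\infty$.} I would interpolate pointwise:
\[
\int d^{\vartheta p}|f|^p\le \sup\big(d^{(\vartheta+\frac1p)(p-2)-\kappa}|f|^{p-2}\big)\int d^{2\vartheta+\frac2p}f^2 ,
\]
choosing exponents so that the sup is given by the $p=\infty$ case with the exponent $\vartheta'=\vartheta+\frac1p-\frac12\cdot$(correction), where $\vartheta'>0$ follows from $\vartheta>-\frac1p$. Combining the two factors with the Young/Hölder scaling $\frac{p-2}{2p}$ and $\frac1p$ yields the exponents $\frac12,\frac12$ in the statement.
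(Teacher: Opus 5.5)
There is a genuine gap, and it is exactly the step you flag as the main obstacle. You never prove the weight-lowering estimate on which the whole lemma rests, namely the case $p=2$:
\[
\big\|d^{\vartheta}f\big\|_{L^2(J)}^2\le C\Big(\big\|d^{\vartheta+\frac12}f\big\|_{L^2(J)}^2+\big\|d^{\vartheta+\frac12}f\big\|_{L^2(J)}\big\|d^{\vartheta+\frac12}f_r\big\|_{L^2(J)}\Big),\qquad \vartheta>-\tfrac12 .
\]

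For $p=\infty$ you need precisely this, with $\vartheta-\frac12$ in place of $\vartheta$, to control $\int_J d^{2\vartheta-1}f^2\,\mathrm{d}r$. Your $\lambda$-splitting cannot supply it. On $(0,\lambda)$ the integrand is $d^{-1}F$, so knowing $F\le\sup F$ only gives $\sup F\int_0^\lambda r^{-1}\,\mathrm{d}r=\infty$. No power $\lambda^{2\vartheta}$ can be extracted without an unweighted bound on $f$, and the hypotheses do not give one.

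For $2\le p<\infty$, pulling out a supremum \emph{before} integrating by parts compares $\int d^{p\vartheta}|f|^p$ with $\int d^{p\vartheta+1}|f|^p$, so a factor $d^{-1}$ must go into the supremum. The exponent then needed is $\vartheta'=\vartheta-\frac{2}{p(p-2)}$. This can be negative (e.g.\ $p=3$, $\vartheta=0$). Even when it is positive, the $p=\infty$ bound at $\vartheta'$ carries the weight $d^{\vartheta'}$, not $d^{\vartheta+\frac1p}$. For $p=2$ the scheme gives nothing at all. Falling back on extra lower-weight terms, because of how the lemma is used later, does not prove the statement as written.

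The missing idea is a Hardy-type integration by parts in which the boundary term at the nondegenerate endpoint cancels. Take $J=(0,1)$, $d=r$ and $f\in C^\infty(\bar I)$; general $f$ then follows by density via Lemma \ref{W-space}. The paper subtracts $(2\vartheta+2)|r^{\vartheta+\frac12}f|_2^2=f^2(1)-2\int_0^1r^{2\vartheta+2}ff_r\,\mathrm{d}r$ from $(2\vartheta+1)|r^{\vartheta}f|_2^2=f^2(1)-2\int_0^1r^{2\vartheta+1}ff_r\,\mathrm{d}r$, obtaining
\[
|r^\vartheta f|_2^2=\frac{2\vartheta+2}{2\vartheta+1}\big|r^{\vartheta+\frac12}f\big|_2^2+\frac{2}{2\vartheta+1}\int_0^1(r-1)r^{2\vartheta+1}ff_r\,\mathrm{d}r .
\]
With the Cauchy--Schwarz inequality this is the $p=2$ case; a cutoff equal to $1$ near $r=0$ and vanishing near $r=1$ would serve equally well.

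The case $p=\infty$ then follows from $|r^\vartheta f|_\infty^2\le C\int_0^1|(r^{2\vartheta}f^2)_r|\,\mathrm{d}r$ together with the $p=2$ case at $\vartheta-\frac12$. That step absorbs exactly your middle term.

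For $2<p<\infty$, the same subtraction with $|f|^p$ gives
\[
|r^\vartheta f|_p^p=\frac{p\vartheta+2}{p\vartheta+1}\big|r^{\vartheta+\frac1p}f\big|_p^p+\frac{p}{p\vartheta+1}\int_0^1(r-1)r^{p\vartheta+1}|f|^{p-2}ff_r\,\mathrm{d}r .
\]
Only now do you pull out $|r^{\vartheta+\frac1p}f|_\infty^{p-2}$ and apply the $p=\infty$ case at $\vartheta+\frac1p>0$. This yields the weight $d^{\vartheta+\frac1p}$ and the exponents $\frac12,\frac12$.
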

\begin{proof}
It suffices to  prove  the case when  $J=I$ and $d(r)=r$. The other cases can be dealt with  similarly. 

Let  $f\in C^\infty(\bar I)$. First, if $p=2$ and $\vartheta>-\frac{1}{2}$, a direct calculation yields
\begin{equation}\label{A..5}
|r^{\vartheta+j} f|_2^2= \frac{1}{2\vartheta+2j+1}f^2(1)- \frac{2}{2\vartheta+2j+1} \int_0^1 r^{2\vartheta+2j+1} ff_r\,\mathrm{d}r\quad\text{for $j=0$ or $\frac{1}{2}$}, 
\end{equation}
which, along with the H\"older inequality, implies
\begin{equation}\label{AA4}
\begin{aligned}
|r^\vartheta f|_2^2
&= \frac{2\vartheta+2}{2\vartheta+1} \big|r^{\vartheta+\frac{1}{2}} f\big|_2^2+ \frac{2}{2\vartheta+1} \int_0^1 (r-1)r^{2\vartheta+1} ff_r\,\mathrm{d}r\\
&\leq C\big(\big|r^{\vartheta+\frac{1}{2}} f\big|_2^2+ \big|r^{\vartheta+\frac{1}{2}} f\big|_2\big|r^{\vartheta+\frac{1}{2}} f_r\big|_2\big).
\end{aligned}
\end{equation}

Next, if $p=\infty$ and $\vartheta>0$, it follows from the above and  Lemma \ref{sobolev-embedding}  that
\begin{equation}\label{A7}
\begin{aligned}
|r^\vartheta f|_\infty^2&\leq  C\int_0^1 |(r^{2\vartheta} f^2)_r|\,\mathrm{d}r=C\int_0^1 r^{2\vartheta-1}f^2\,\mathrm{d}r+C\int_0^1 r^{2\vartheta}|f||f_r|\,\mathrm{d}r\\
&\leq C\big|r^{\vartheta-\frac{1}{2}} f\big|_2^2+C|r^\vartheta f|_2|r^\vartheta f_r|_2\leq C\big(\big|r^\vartheta f\big|_2^2+|r^\vartheta f|_2|r^\vartheta f_r|_2\big).
\end{aligned}
\end{equation}

Finally, if $p\in (2,\infty)$ and $\vartheta>-p^{-1}$, repeating the similar calculations  in \eqref{A..5}-\eqref{AA4}, combined with \eqref{A7}, leads to
\begin{align*}
|r^\vartheta f|_p^p
&= \frac{p\vartheta+2}{p\vartheta+1} \big|r^{\vartheta+\frac{1}{p}} f\big|_p^p+ \frac{p}{p\vartheta+1} \int_0^1 (r-1)r^{p\vartheta+1} |f|^{p-2}ff_r\,\mathrm{d}r\\
&\leq C\big|r^{\vartheta+\frac{1}{p}} f\big|_\infty^{p-2}\big(\big|r^{\vartheta+\frac{1}{p}} f\big|_2^2+ \big|r^{\vartheta+\frac{1}{p}} f\big|_2\big|r^{\vartheta+\frac{1}{p}} f_r\big|_2\big)\leq C \big(\big|r^{\vartheta+\frac{1}{p}} f\big|_2^p+ \big|r^{\vartheta+\frac{1}{p}} f\big|_2^\frac{p}{2}\big|r^{\vartheta+\frac{1}{p}} f_r\big|_2^\frac{p}{2}\big).
\end{align*}
Therefore, we complete the proof for the case when $f\in C^\infty(\bar I)$.

For $f\in H^1_{d^{2\vartheta+\frac{2}{p}}}$, there exists a sequence $\{f^\varepsilon\}_{\varepsilon>0}\subset C^\infty(\bar I)$ due to Lemma \ref{W-space} 
such that
\begin{equation*} 
\big|r^{\vartheta+\frac{1}{p}} (f^\varepsilon- f)\big|_2+\big|r^{\vartheta+\frac{1}{p}} (f^\varepsilon_r- f_r)\big|_2 \to 0 \qquad \text{as }\varepsilon\to 0.
\end{equation*}
Then, via the density argument, the desired inequality holds for all $f\in H^1_{d^{2\vartheta+\frac{2}{p}}}$. 
\end{proof}

The fifth lemma is used to obtain the time-weighted estimates of the velocity.
\begin{Lemma}[\cite{bjr}]\label{bjr}
Let $f\in L^2([0,T]; L^2)$. Then there exists a sequence $\{t_k\}_{k=1}^\infty$ such that
\begin{equation*}
t_k\rightarrow 0, \quad\,\, t_k |f(t_k)|^2_{2}\rightarrow 0 \qquad\,\, \text{as $k\rightarrow\infty$}.
\end{equation*}
\end{Lemma}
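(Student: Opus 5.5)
The plan is to prove Lemma \ref{bjr} by contradiction, exploiting only the non-integrability of $1/t$ near the origin. Set $g(t):=|f(t)|_2^2$. Since $f\in L^2([0,T];L^2)$, $g$ is a nonnegative measurable function on $[0,T]$ with
\begin{equation*}
\int_0^T g(t)\,\mathrm{d}t<\infty,
\end{equation*}
and the claim is that $\liminf_{t\to 0^+} t\,g(t)=0$ along a sequence of admissible times, i.e. times where $f(t)$ is defined. This is done up to a null set, since $f$ is only an equivalence class in time; the application in Lemma \ref{lemma-qiexiang-time weight} uses $f=(r^2\rho_0)^{\frac{1}{2}}U_{tt}$, which is defined pointwise for $t>0$ by the regularity of the classical solution.

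Suppose the conclusion fails. Then there are $c>0$ and $t_0\in(0,T]$ with $t\,g(t)\geq c$ for almost every $t\in(0,t_0)$. Otherwise, for each $k$ the set $\{t\in(0,1/k): t\,g(t)<1/k\}$ would have positive measure, so we could pick $t_k$ in it, and then $t_k\to 0$ and $t_k g(t_k)\to 0$.

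Under this assumption we get $g(t)\geq c/t$ almost everywhere on $(0,t_0)$. Integrating gives
\begin{equation*}
\int_0^{t_0} g\,\mathrm{d}t\;\geq\; c\int_0^{t_0}\frac{\mathrm{d}t}{t}=\infty,
\end{equation*}
which contradicts $g\in L^1(0,T)$. This contradiction produces the sequence $\{t_k\}$.

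The only delicate point is the measure-theoretic choice of $t_k$: each $t_k$ must be chosen from a set of positive measure, so that it can also be taken to avoid any prescribed null set. That is what makes the lemma usable in \eqref{tauk}, where it is combined with the continuity in time from Lemma \ref{lemma-u-D3}.
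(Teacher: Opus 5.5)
Your argument is correct. The paper gives no proof of its own and only cites \cite{bjr}; your contradiction argument (if $t|f(t)|_2^2\geq c$ a.e.\ near $0$, then $|f(t)|_2^2\geq c/t$ is not integrable) is the standard proof behind that result.

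One small correction to your closing remark: in \eqref{tauk}, Lemma \ref{lemma-u-D3} does not supply continuity in time. It supplies the bound $(r^2\rho_0)^{\frac{1}{2}}U_{tt}\in L^2([0,T];L^2)$, which is exactly the hypothesis of this lemma.
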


The next lemma is on the Hardy--Littlewood--Sobolev inequality.
\begin{Lemma}[\cite{sobo}]\label{HLS}
Let $p_1,p_2\in [1,\infty]$, $l\in (1,\infty)$, and $1\leq k\leq j<\infty$ satisfy 
\begin{equation*}
\frac{1}{p_1}+\frac{1}{p_2}+\frac{1}{l}=\frac{1}{k}+\frac{1}{j}.
\end{equation*}
Then, for any $f\in L^{p_1}(\mathbb{R}^3)$,  $g\in L^{p_2}(\mathbb{R}^3)$, there exists a constant $C>0$, depending only on $(p,q,l,k,j)$ such that 
\begin{equation*}
\Big(\int_{\mathbb{R}^3}\Big(\int_{\mathbb{R}^3}\Big|\frac{f(\boldsymbol{x})g(\hat{\boldsymbol{x}})}{|\boldsymbol{x}-\hat{\boldsymbol{x}}|^{\frac{3}{l}}}\Big|^k\,\mathrm{d} \boldsymbol{x}  \Big)^{\frac{j}{k}}\,\mathrm{d}\hat{\boldsymbol{x}}\Big)^{\frac{1}{j}}\leq C\|f\|_{L^{p_1}(\mathbb{R}^3)}\|g\|_{L^{p_2}(\mathbb{R}^3)}.
\end{equation*}
\end{Lemma}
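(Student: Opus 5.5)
This is the Hardy--Littlewood--Sobolev inequality (Lemma \ref{HLS}), a classical result from \cite{sobo}. I would reduce it to the standard HLS and Young inequalities rather than reprove it from scratch.

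\textbf{Step 1: identify the operator.} The quantity on the left is a mixed Lebesgue norm of the two-variable function
\begin{equation*}
F(\boldsymbol{x},\hat{\boldsymbol{x}})=f(\boldsymbol{x})g(\hat{\boldsymbol{x}})|\boldsymbol{x}-\hat{\boldsymbol{x}}|^{-3/l}.
\end{equation*}
The kernel $|\cdot|^{-3/l}$ lies in weak $L^{l}(\mathbb{R}^3)$, that is, $L^{l,\infty}$. The exponent condition $\frac{1}{p_1}+\frac{1}{p_2}+\frac{1}{l}=\frac{1}{k}+\frac{1}{j}$ is exactly the scaling relation forced by the dilation $\boldsymbol{x}\mapsto\lambda\boldsymbol{x}$. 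I therefore plan to use it only through Hölder and Young-type exponent bookkeeping.

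\textbf{Step 2: inner integral.} For fixed $\hat{\boldsymbol{x}}$, the inner $L^k_{\boldsymbol{x}}$ norm is $|g(\hat{\boldsymbol{x}})|$ times
\begin{equation*}
\big\| f\,|\cdot-\hat{\boldsymbol{x}}|^{-3/l}\big\|_{L^k}.
\end{equation*}
Hölder in Lorentz spaces (O'Neil) bounds $\|f\,K\|_{L^{k,\cdot}}\lesssim\|f\|_{L^{p_1}}\|K\|_{L^{l,\infty}}$, where $\frac{1}{k}=\frac{1}{p_1}+\frac{1}{l}$. This direct route applies when $k$ matches, but in general $k$ is not related to $p_1$ and $l$ alone. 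So I would instead write the whole expression as an $L^j_{\hat{\boldsymbol{x}}}$ norm of $|g|\cdot\big(|f|^k*|\cdot|^{-3k/l}\big)^{1/k}$ and then do two things:
\begin{itemize}
\item apply Hölder in $\hat{\boldsymbol{x}}$ to split off $\|g\|_{L^{p_2}}$;
\item apply the weak Young (Hardy--Littlewood--Sobolev) inequality $\big\| |f|^k * |\cdot|^{-3k/l}\big\|_{L^{s}}\lesssim\||f|^k\|_{L^{p_1/k}}$, with $1+\frac{1}{s}=\frac{k}{p_1}+\frac{k}{l}$.
\end{itemize}
The remaining exponent $\frac{1}{sk}=\frac{1}{j}-\frac{1}{p_2}$ is precisely the given relation.

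\textbf{Main obstacle: endpoint ranges.} Weak Young requires $1<p_1/k$, $s<\infty$, and $3k/l<3$. The main difficulty is checking these constraints, together with handling the boundary cases $p_i\in\{1,\infty\}$ allowed in the statement. Where they fail I would treat the case separately, or note that the applications in the paper only need $(p_1,p_2,l,k,j)$ strictly inside the admissible range. The uses at \eqref{nabla Psi inte-expression} and \eqref{lemma4.7 I9,1} take $l=3/2$, $k=1$, and finite $p_i$.

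Since the lemma is quoted from \cite{sobo}, the honest plan is to cite it for the general case and verify only the instances actually used.
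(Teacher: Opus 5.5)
Your reduction is the right one, and your exponent bookkeeping is correct. The left side equals $\big\|\,|g|\,\big(|f|^k*|\cdot|^{-3k/l}\big)^{1/k}\big\|_{L^j}$. H\"older in $\hat{\boldsymbol{x}}$ followed by Hardy--Littlewood--Sobolev for $|f|^k\in L^{p_1/k}$ proves the inequality whenever $k<l$, $k<p_1$ and $j<p_2$. The paper itself offers nothing beyond the citation of \cite{sobo}.

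These restrictions cannot be removed by ``treating the boundary cases separately''. Multiplication by a fixed nonzero $h\geq0$ maps $L^{p_2}$ boundedly into $L^j$ only if $p_2\geq j$ and $h\in L^r$ with $\frac1r=\frac1j-\frac1{p_2}$. So the lemma is \emph{equivalent} to the HLS bound you invoke, and each of $k<l$, $k<p_1$, $j<p_2$ is necessary. The lemma as printed is therefore false on part of its stated range. For example, $k=j=2$, $l=\frac32$, $p_1=p_2=6$ meets every hypothesis, yet for $f=g=\mathbf{1}_{B_1}$ the inner integral $\int_{B_1}|\boldsymbol{x}-\hat{\boldsymbol{x}}|^{-4}\,\mathrm{d}\boldsymbol{x}$ diverges for every $\hat{\boldsymbol{x}}\in B_1$. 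The right repair is to add these strict inequalities to the hypotheses, not to hunt for proofs of the remaining cases.

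The concrete gap is your assertion that the paper's applications lie strictly inside the admissible range. The bound for $I_4$ after \eqref{nabla Psi inte-expression} is fine ($k=j=1$, $p_1=p_2=l=\frac32$, $s=3$). In \eqref{lemma4.7 I9,1}, however, the inner integral runs over the variable of $\rho$. So in the lemma's notation $f=\rho\in L^3$, $g=\rho^{1/p}\in L^p$, $k=1$, $l=\frac32$ and $j=p=p_2$. Your own relation then gives $\frac{1}{sk}=\frac1j-\frac1{p_2}=0$, which is the excluded endpoint $s=\infty$.

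At that endpoint the inequality amounts to $\big\||f|*|\cdot|^{-2}\big\|_{L^\infty}\leq C\|f\|_{L^3}$, which is false. Take $f(\boldsymbol{y})=|\boldsymbol{y}|^{-1}\big(\log\frac{1}{|\boldsymbol{y}|}\big)^{-1/2}\mathbf{1}_{\{|\boldsymbol{y}|<1/2\}}$, which belongs to $L^3(\mathbb{R}^3)$. Then $(|f|*|\cdot|^{-2})(\hat{\boldsymbol{x}})\gtrsim\big(\log\frac{1}{|\hat{\boldsymbol{x}}|}\big)^{1/2}$ as $\hat{\boldsymbol{x}}\to\boldsymbol{0}$. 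Choosing $g=\mathbf{1}_{B_\epsilon}$ with $\epsilon\to0$ makes the ratio of the two sides unbounded. Thus ``verify only the instances actually used'' fails precisely at this instance.

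What does hold in that step comes from radial symmetry and the cancellation in the vector kernel $\frac{\boldsymbol{x}-\hat{\boldsymbol{x}}}{|\boldsymbol{x}-\hat{\boldsymbol{x}}|^3}$, not from HLS with absolute values inside. By the shell formula (cf.\ \eqref{psi_x express int}), $|\nabla\Psi(\boldsymbol{x})|=G\,m(|\boldsymbol{x}|)\,|\boldsymbol{x}|^{-2}$ with $m(R)=\int_{B_R}\rho\,\mathrm{d}\boldsymbol{x}\leq(\frac{4\pi}{3})^{2/3}\|\rho\|_{L^3}R^2$. This gives $\|\nabla\Psi\|_{L^\infty}\leq CG\|\rho\|_{L^3}$, and hence $\|\rho^{1/p}\nabla\Psi\|_{L^p}\leq CG\|\rho^{1/p}\|_{L^p}\|\rho\|_{L^3}$ directly.
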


Finally, the following lemma is used to obtain the higher-order elliptic estimates.
\begin{Lemma}\label{prop2.1}
Assume that $\bar d=\bar d(r)$ is a function defined on $I$ and  satisfies
\begin{equation}\label{con2.9-pre}
\bar d\in C^1(\bar I)\cap C^2((0,1]),\qquad \frac{1}{K}(1-r)\leq \bar d \leq K(1-r) \ \ \text{for some $K>1$}.
\end{equation}
Let $(b,c)$ be two parameters such that
$
\frac{1}{2}<b\leq \frac{c+1}{2}
$,
and let $f=f(r)\in L^1_{\mathrm{loc}}$ satisfy both $f\in H^1_{\bar d^{2q}}(\frac{1}{2},1)$ for some $q\in [b, \frac{c+1}{2}]$ and 
\begin{equation}\label{con2.10}
\zeta^\sharp \bar d^{b}\cF_{\mathrm{cross}},\ \chi^\sharp\bar d^{b} f\in L^2,\qquad\text{where }\cF_{\mathrm{cross}}:=f_r+c\frac{\bar d_r}{\bar d}f.
\end{equation}
Then there exists a constant $C>0$ depending  only on $(\bar d,b,c)$ such that, 
\begin{equation}\label{con2.11}
|\zeta^\sharp\bar d^{b}f_r|_2\leq C\big(\big|\zeta^\sharp \bar d^{b}\cF_{\mathrm{cross}}\big|_2+|\chi^\sharp\bar d^{b} f|_2\big).
\end{equation}
\end{Lemma}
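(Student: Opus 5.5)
The plan is to reduce the weighted estimate to a first-order ODE in $r$ for $f$ near $r=1$, and to solve that ODE with the integrating factor $\bar d^{c}$. Since $\bar d_r/\bar d\sim -1/(1-r)$ near $r=1$, the identity $\cF_{\mathrm{cross}}=\bar d^{-c}(\bar d^{c}f)_r$ holds on $(\frac12,1)$. Set $g:=\bar d^{c}f$, so that $g_r=\bar d^{c}\cF_{\mathrm{cross}}$. Because $f_r=\cF_{\mathrm{cross}}-c\frac{\bar d_r}{\bar d}f$, it suffices to bound $|\zeta^\sharp\bar d^{b-1}f|_2$ by the right-hand side of \eqref{con2.11}, using $|\bar d_r|\le C$, which follows from \eqref{con2.9-pre}.

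First I determine the boundary behaviour of $g$ at $r=1$. The hypothesis $f\in H^1_{\bar d^{2q}}(\frac12,1)$ with $q\le\frac{c+1}{2}$ gives, by the one-dimensional Sobolev estimate of Lemma \ref{sobolev-embedding} applied to $(\bar d^{c}f^2)$-type quantities (or by Lemma \ref{hardy-inequality}), that $\bar d^{q-\frac12}f$ stays bounded as $r\to 1$. Then $g=\bar d^{c-q+\frac12}\,(\bar d^{q-\frac12}f)$, and since $c-q+\frac12\ge 0$ is strict except at the endpoint case $q=\frac{c+1}{2}$, we get $g(1)=0$. I must treat that endpoint separately, likely by a density argument via Lemma \ref{W-space} combined with the integrability of $\bar d^{b}\cF_{\mathrm{cross}}$. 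With $g(1)=0$ in hand, the representation
\begin{equation*}
f(r)=-\bar d^{-c}(r)\int_r^1 \bar d^{c}\,\cF_{\mathrm{cross}}\,\mathrm{d}\hat r
\end{equation*}
holds near $r=1$. Away from $r=1$, on $\mathrm{supp}\,\zeta^\sharp\cap[\frac12,\frac34]$, the weight is harmless and $\chi^\sharp\bar d^{b}f\in L^2$ controls $f$ there.

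Next comes the main estimate, which is a weighted Hardy inequality for the averaging operator $Tf(r)=\bar d^{b-1-c}\int_r^1\bar d^{c}h$ with $h=\cF_{\mathrm{cross}}$. Passing to the variable $s=1-r\sim\bar d$, this becomes $s^{b-1-c}\int_0^s \sigma^{c}h(\sigma)\,\mathrm{d}\sigma$. The standard Hardy inequality $\|s^{\alpha-1}\int_0^s\phi\|_{L^2}\le C\|s^{\alpha}\phi\|_{L^2}$ is valid when $\alpha<\frac12$. Taking $\phi=\sigma^{c}h$ and $\alpha=b-c$, the condition $b-c<\frac12$ holds because $b\le\frac{c+1}{2}$ and $b>\frac12$ together force $c\ge 2b-1>b-\frac12$. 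This yields $|\zeta^\sharp\bar d^{b-1}f|_2\le C|\zeta^\sharp\bar d^{b}\cF_{\mathrm{cross}}|_2$, up to boundary terms coming from the cutoff. Those boundary terms live where $\zeta^\sharp$ is non-constant, and there they are bounded by $|\chi^\sharp\bar d^{b}f|_2$.

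The main obstacle is justifying $g(1)=0$ rigorously, i.e.\ ruling out a homogeneous component $C\bar d^{-c}$. Such a component would satisfy $\cF_{\mathrm{cross}}=0$ while $\bar d^{b}\bar d^{-c}_r\notin L^2$, so it cannot be excluded by the estimate itself. This is exactly where the assumption $f\in H^1_{\bar d^{2q}}$ with $q\le\frac{c+1}{2}$ enters. In the endpoint case I would use this integrability directly: $\bar d^{q}\cdot\bar d^{-c-1}\in L^2$ near $1$ fails when $q\le\frac{c+1}{2}$, so a nonzero homogeneous part contradicts $f_r\in L^2_{\bar d^{2q}}$.
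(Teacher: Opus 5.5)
Your argument is correct, but it takes a genuinely different route from the paper. The paper never solves for $f$. It expands $|\zeta^\sharp\bar d^{b}\cF_{\mathrm{cross}}|_2^2$ and integrates the cross term $-2c\int(\zeta^\sharp)^2\bar d^{2b-1}\bar d_r ff_r\,\mathrm{d}r$ by parts. It then uses $2b-1\le c$ to see that the resulting coefficient $c(2b-1-c)$ in front of $\int(\zeta^\sharp)^2\bar d^{2b-2}\bar d_r^2f^2\,\mathrm{d}r$ is nonpositive. The leftover $\bar d^{2b-1}f^2$ terms are absorbed via Lemma \ref{GNinequality} and Young's inequality.

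Because that integration by parts is not justified for general $f$, the paper needs a density argument. When $f$ lies only in $H^1_{\bar d^{c+1}}$ with $b<\frac{c+1}{2}$, it also needs a regularized weight $\bar d^{\frac{c+1}{2}}(\bar d+\frac1j)^{-\vartheta}$, with $\vartheta=\frac{c+1}{2}-b$, and a weak-limit passage $j\to\infty$.

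Your integrating-factor representation $f=-\bar d^{-c}\int_r^1\bar d^{c}\cF_{\mathrm{cross}}\,\mathrm{d}\hat r$, combined with the classical Hardy inequality, avoids all of this approximation machinery. It also makes clear that the hypothesis $f\in H^1_{\bar d^{2q}}$ serves only to exclude the kernel element $\bar d^{-c}$. Moreover, your Hardy step only needs $c>b-\frac12$, which is weaker than $c\ge 2b-1$. The paper's proof, in exchange, is a pure integration-by-parts identity of the same kind as its other energy estimates and never needs an explicit solution formula.

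One correction: the ``endpoint'' obstacle you single out does not exist. At $q=\frac{c+1}{2}$ the exponent is $c-q+\frac12=\frac c2$, and $c\ge 2b-1>0$. So $c-q+\frac12\ge\frac c2>0$ for every admissible $q$. Hence $\bar d^{c}f=\bar d^{c-q+\frac12}\cdot\bar d^{q-\frac12}f\to0$ as $r\to1$ follows at once from the $L^\infty$ bound on $\bar d^{q-\frac12}f$ in Lemma \ref{hardy-inequality} (with $p=\infty$, $\vartheta=q-\frac12>0$). No separate density argument is required. Your fallback kernel argument, using $\bar d_r(1)\neq0$ and $q\le\frac{c+1}{2}<c+\frac12$, is valid but redundant.

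Likewise, no genuine cutoff boundary terms appear. The representation is exact on $[\frac12,1)$, and on $[\frac34,1)$ it involves $\cF_{\mathrm{cross}}$ only where $\zeta^\sharp=1$. On $[\frac12,\frac34]$ you have $\bar d^{b-1}\le C\bar d^{b}$, so that piece is controlled by $|\chi^\sharp\bar d^bf|_2$.
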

\begin{proof}
We divide the proof into three steps.

\textbf{1. Case $f\in C^\infty([\frac{1}{2},1])$.}
It follows from integration by parts, the facts that $\frac{1}{2}<b\leq \frac{c+1}{2}$ and $c>0$, \eqref{con2.9-pre}--\eqref{con2.10}, Lemma \ref{GNinequality}, and the Young inequality that
\begin{align}
|\zeta^\sharp\bar{d}^b f_r|_2^2&=\big|\zeta^\sharp \bar d^{b}\cF_{\mathrm{cross}}\big|_2^2-c^2|\zeta^\sharp\bar{d}^{b-1}\bar{d}_rf|_2^2 - 2c\int_0^1 (\zeta^\sharp)^2\bar{d}^{2b-1}\bar{d}_r  f f_r\,\mathrm{d}r\nonumber\\
&\leq \big|\zeta^\sharp \bar d^{b}\cF_{\mathrm{cross}}\big|_2^2+ c\int_0^1 \big(2\zeta^\sharp(\zeta^\sharp)_r\bar{d}_{r}+(\zeta^\sharp)^2\bar{d}_{rr}\big) \bar{d}^{2b-1} f^2\,\mathrm{d}r  \label{2004} \\
&\leq \big|\zeta^\sharp \bar d^{b}\cF_{\mathrm{cross}}\big|_2^2+C\big(|(\zeta^\sharp)_r\bar{d}^{-1} \bar{d}_{r}|_\infty|\chi^\sharp\bar{d}^{b}f|_2^2+ |\chi^\sharp\bar{d}_{rr}|_\infty\big|\zeta^\sharp\bar{d}^{b-\frac{1}{2}}f\big|_2^2\big)\nonumber\\
&\leq  \big|\zeta^\sharp \bar d^{b}\cF_{\mathrm{cross}}\big|_2^2+C|\chi^\sharp\bar{d}^{b}f|_2^2+\frac{1}{2}|\zeta^\sharp\bar{d}^b f_r|_2^2,\nonumber
\end{align}
which yields \eqref{con2.11}.

\textbf{2. Case  $f\in H^1_{\bar{d}^{2b}}(\frac{1}{2},1)$.} In this case, we can repeat the calculation in \eqref{2004} to derive \eqref{con2.11}, except for justifying the following integral equality: 
\begin{equation}\label{fenbujifen}
\begin{aligned}
-2\int_0^1 (\zeta^\sharp)^2\bar{d}^{2b-1}\bar{d}_r  f f_r\,\mathrm{d}r
&=  \int_0^1 \big(2\zeta^\sharp(\zeta^\sharp)_r\bar{d}_{r}+(\zeta^\sharp)^2\bar{d}_{rr}\big) \bar{d}^{2b-1} f^2\,\mathrm{d}r\\
&\quad\,\, +(2b-1)\int_0^1 (\zeta^\sharp)^2\bar{d}^{2b-2}(\bar{d}_r)^2 f^2\,\mathrm{d}r.
\end{aligned}
\end{equation}
Indeed, thanks to Lemma \ref{W-space}, there exists a sequence $\{f^\varepsilon\}_{\varepsilon>0}\subset C^\infty([\frac{1}{2},1])$ such that
\begin{equation}\label{b6b}
|\chi^\sharp\bar{d}^b (f^\varepsilon- f)|_2+|\chi^\sharp\bar{d}^b(f^\varepsilon_r- f_r)|_2\to 0 \qquad \text{as $\varepsilon\to 0$},
\end{equation}
which, along with Lemma \ref{hardy-inequality}, yields
\begin{equation}\label{b7b}
|\chi^\sharp\bar{d}^{b-1}(f^\varepsilon-f)|_2+\big|\chi^\sharp\bar{d}^{b-\frac{1}{2}}(f^\varepsilon-f)\big|_\infty \to 0 \qquad \text{as $\varepsilon\to 0$}.
\end{equation}
Hence, according to \eqref{b6b}--\eqref{b7b}  and integration by parts for $f^\varepsilon$, we have
\begin{equation*}
\begin{aligned}
-2\int_0^1 (\zeta^\sharp)^2\bar{d}^{2b-1}\bar{d}_r f^\varepsilon f^\varepsilon_r\,\mathrm{d}r
&=  \int_0^1 \big(2\zeta^\sharp(\zeta^\sharp)_r\bar{d}_{r}+(\zeta^\sharp)^2\bar{d}_{rr}\big) \bar{d}^{2b-1} (f^\varepsilon)^2\,\mathrm{d}r\\
&\quad +(2b-1)\int_0^1 (\zeta^\sharp)^2\bar{d}^{2b-2}(\bar{d}_r)^2 (f^\varepsilon)^2\,\mathrm{d}r.
\end{aligned}
\end{equation*}
Letting $\varepsilon\to 0$ implies that \eqref{fenbujifen} holds for all $f\in H^1_{\bar{d}^{2b}}(\frac{1}{2},1)$.

\textbf{3. General case.}
It suffices to establish \eqref{con2.11} when \eqref{con2.10} holds and
\begin{equation*}
b<\frac{c+1}{2}, \qquad  q=\frac{c+1}{2},\qquad \ f\in H^1_{\bar{d}^{c+1}}\big(\frac{1}{2},1\big),
\end{equation*}
due to the fact that $H^1_{\bar{d}^{2q}}(\frac{1}{2},1)\subset H^1_{\bar{d}^{c+1}}(\frac{1}{2},1)$ if $q\leq \frac{c+1}{2}$. Note that, in this case, integration by parts in \eqref{fenbujifen} fails owing to $(\zeta^\sharp)^2\bar{d}^{2b-1}\bar{d}_r ff_r\notin L^1$. 
To overcome this difficulty, set
\begin{equation*}
\vartheta:= \frac{c+1}{2}-b,\qquad \bar{d}_j:=\bar{d}+\frac{1}{j} \quad \text{for $j\in \NN^*$}.
\end{equation*}
We first show a variant of  Lemma \ref{GNinequality}, that is, for all $f\in H^1_{\bar{d}^{c+1}}(\frac{1}{2},1)$,
\begin{equation}\label{2..8}
\big|\zeta^\sharp\bar{d}^\frac{c}{2}\bar{d}_j^{-\vartheta}f\big|_2^2\leq C\big(\big|\chi^\sharp\bar{d}^\frac{c+1}{2}\bar{d}_j^{-\vartheta}f\big|_2^2+\big|\zeta^\sharp\bar{d}^\frac{c+1}{2}\bar{d}_j^{-\vartheta}f\big|_2\big|\zeta^\sharp\bar{d}^\frac{c+1}{2}\bar{d}_j^{-\vartheta}f_r\big|_2\big).
\end{equation}

By  Lemma \ref{W-space} and the proof in Lemma \ref{GNinequality}, it suffices to give  \eqref{2..8}  for $f\in C^\infty[\frac{1}{2},1]$. We can further let $(\bar{d},\bar{d}_j)=(d,d_j)$ with $d=d(r):=1-r$ and $d_j:=d+\frac{1}{j}$, due to
\begin{equation*}
\frac{d}{K}\leq \bar{d} \leq K d,\qquad \frac{d_j}{K}\leq \bar{d}_j \leq K d_j.
\end{equation*} 
It follows from the above reductions, integration by parts, and $\mathrm{d}(d^{c+1})_r=(c+1)d^c\mathrm{d}r$ that
\begin{align*}
&\int_0^1 (\zeta^\sharp)^2 d^c d_j^{-2\vartheta} f^2\,\mathrm{d}r\\
&= \frac{2}{c+1} \Big( \int_0^1 \zeta^\sharp(\zeta^\sharp)_r d^{c+1} d_j^{-2\vartheta} f^2\,\mathrm{d}r+\int_0^1 (\zeta^\sharp)^2 d^{c+1} d_j^{-2\vartheta} f f_r\,\mathrm{d}r + \vartheta \int_0^1 (\zeta^\sharp)^2 d^{c+1} d_j^{-2\vartheta-1} f^2 \,\mathrm{d}r\Big)\\
&\leq  \frac{2}{c+1} \Big( \int_0^1 \zeta^\sharp(\zeta^\sharp)_r d^{c+1} d_j^{-2\vartheta} f^2\,\mathrm{d}r+\int_0^1 (\zeta^\sharp)^2 d^{c+1} d_j^{-2\vartheta} f f_r\,\mathrm{d}r+\vartheta \int_0^1 (\zeta^\sharp)^2 d^{c} d_j^{-2\vartheta} f^2 \,\mathrm{d}r\Big),
\end{align*}
which implies \eqref{2..8}:
\begin{equation}\label{2..9}
\begin{aligned}
\int_0^1 (\zeta^\sharp)^2 d^c d_j^{-2\vartheta} f^2\,\mathrm{d}r& \leq  \frac{1}{b} \Big( \int_0^1 \zeta^\sharp(\zeta^\sharp)_r d^{c+1} d_j^{-2\vartheta} f^2\,\mathrm{d}r+\int_0^1 (\zeta^\sharp)^2 d^{c+1} d_j^{-2\vartheta} f f_r\,\mathrm{d}r\Big)\\
&\leq C\big(\big|\chi^\sharp d^\frac{c+1}{2} d_j^{-\vartheta}f\big|_2^2+\big|\zeta^\sharp d^\frac{c+1}{2} d_j^{-\vartheta}f\big|_2\big|\zeta^\sharp d^\frac{c+1}{2} d_j^{-\vartheta}f_r\big|_2\big).
\end{aligned}
\end{equation}

Now, we continue to prove \eqref{con2.11}. It follows from \eqref{con2.10} and $0\leq \bar d/\bar d_j\leq 1$ that
\begin{equation}\label{con2.15}
Q_j:=\zeta^\sharp\bar{d}_j^{-\vartheta}\bar{d}^\frac{c+1}{2} \cF_{\mathrm{cross}} \in L^2 \qquad \text{for any $j\in \NN^*$}.
\end{equation}
Using a density argument similar to that in Step 2, we can show that the following integral equality still holds, {\it i.e.}, for all $f\in H^1_{\bar{d}^{c+1}}(\frac{1}{2},1)$ and $j\in \NN^*$,
\begin{equation}\label{fenbujifen1}
\begin{aligned}
-2\int_0^1 (\zeta^\sharp)^2\bar{d}^{c}\bar{d}_j^{-2\vartheta}\bar{d}_r f f_r\,\mathrm{d}r&=  \int_0^1 \big(2\zeta^\sharp(\zeta^\sharp)_r\bar{d}_{r}+(\zeta^\sharp)^2\bar{d}_{rr}\big) \bar{d}^{c}\bar{d}_j^{-2\vartheta} f^2\,\mathrm{d}r\\
&\quad +\int_0^1 (\zeta^\sharp)^2\big(c\bar{d}^{c-1}\bar{d}_j^{-2\vartheta} -2\vartheta \bar{d}^{c}\bar{d}_j^{-2\vartheta-1}\big)(\bar{d}_r)^2 f^2\,\mathrm{d}r.
\end{aligned}
\end{equation}
Hence, based on \eqref{fenbujifen1} and the calculation similar to \eqref{2004}, we deduce from \eqref{con2.15}  that
\begin{align*}
\big|\zeta^\sharp\bar{d}^\frac{c+1}{2}\bar{d}_j^{-\vartheta} f_r\big|_2^2&=|Q_j|_2^2-c^2\big|\zeta^\sharp\bar{d}^{\frac{c-1}{2}}\bar{d}_j^{-\vartheta}\bar{d}_rf\big|_2^2 - 2c\int_0^1 (\zeta^\sharp)^2\bar{d}^{c}\bar{d}_j^{-2\vartheta}\bar{d}_r  f f_r\,\mathrm{d}r\\
&\leq |Q_j|_2^2+ c\int_0^1 \big(2\zeta^\sharp(\zeta^\sharp)_r\bar{d}_{r}+(\zeta^\sharp)^2\bar{d}_{rr}\big) \bar{d}^{c}\bar{d}_j^{-2\vartheta} f^2\,\mathrm{d}r\notag\\
&\leq |Q_j|_2^2+C\big(|(\zeta^\sharp)_r\bar{d}^{-1} \bar{d}_{r}|_\infty\big|\chi^\sharp\bar{d}^{\frac{c+1}{2}}\bar{d}_j^{-\vartheta}f\big|_2^2+ |\chi^\sharp\bar{d}_{rr}|_\infty\big|\zeta^\sharp\bar{d}^{\frac{c}{2}}\bar{d}_j^{-\vartheta}f\big|_2^2\big),
\end{align*} 
which, along with \eqref{con2.9-pre} and \eqref{2..8}, gives
\begin{equation*}
\big|\zeta^\sharp\bar{d}^{\frac{c+1}{2}}\bar{d}_j^{-\vartheta}f\big|_2^2\leq C\big(|Q_j|_2^2+\big|\chi^\sharp\bar{d}^{\frac{c+1}{2}}\bar{d}_j^{-\vartheta}f\big|_2^2\big) \leq C\big(\big|\zeta^\sharp \bar d^{b}\cF_{\mathrm{cross}}\big|_2+|\chi^\sharp\bar{d}^{b} f|_2\big).
\end{equation*}
Since $C$ is independent of $j$, we can extract a subsequence (still denoted by $j$)  such that
\begin{equation}\label{equ213}
\zeta^\sharp\bar{d}^{\frac{c+1}{2}}\bar{d}_j^{-\vartheta}f\to g\qquad \text{weakly in }L^2 \quad \text{as }j\to \infty,
\end{equation}
for some limit function $g\in L^2$, and 
\begin{equation*}
|g|_2\leq \liminf_{j\to \infty}\big|\zeta^\sharp\bar{d}^{\frac{c+1}{2}}\bar{d}_j^{-\vartheta}f\big|_2\leq C\big(\big|\zeta^\sharp \bar d^{b}\cF_{\mathrm{cross}}\big|_2+|\chi^\sharp\bar{d}^{b} f|_2\big). 
\end{equation*}

Note that $f_r\in L^1_{\mathrm{loc}}$, the Lebesgue dominated convergence theorem also gives
\begin{equation}\label{equ214}
\zeta^\sharp\bar{d}^{\frac{c+1}{2}}\bar{d}_j^{-\vartheta}f\to \zeta^\sharp\bar{d}^b f_r\qquad \text{in }L^1_{\mathrm{loc}} \ \ \text{as }j\to\infty.
\end{equation}
Hence, by \eqref{equ213}--\eqref{equ214} and the uniqueness of the limits, we have $g=\zeta^\sharp\bar{d}^b f_r$.

\end{proof}

\section{Coordinate Transformations}\label{appb}

Generally, it is desirable to consider the Lagrangian formulation, so that we can pullback  \eqref{eq:1.1-vfbp} on the moving domain $\Omega(t)$ to a problem on a fixed domain $\Omega$. To this end, denote by $\boldsymbol{x}=\boldsymbol{\eta}(t,\boldsymbol{y})$ the position of the fluid particle $\boldsymbol{x} \in \Omega(t)$ at time $t\geq 0$ so that
\begin{equation}\label{flow-map-md}
\boldsymbol{\eta}_t(t,\boldsymbol{y})=\boldsymbol{u}(t,\boldsymbol{\eta}(t,\boldsymbol{y})) \ \ \text{for $t>0$},\qquad \text{with $\boldsymbol{\eta}(0,\boldsymbol{y})=\boldsymbol{y}$},
\end{equation}
and $(t,\boldsymbol{y})$ are the 3-D Lagrangian coordinates.
This appendix is devoted to showing the conversion of some Sobolev spaces between the 3-D Lagrangian coordinates $\boldsymbol{y}$ and the corresponding spherical coordinate $r=|\boldsymbol{y}|$ for spherically symmetric functions.
This appendix is devoted to showing the conversion of some Sobolev spaces between the M-D coordinates $\boldsymbol{y}$ and the spherical coordinate $r=|\boldsymbol{y}|$ for spherically symmetric functions.

Let $0\leq a<b$, $\cJ:=\{\boldsymbol{y}\in \mathbb{R}^3: \, a\leq |\boldsymbol{y}|< b\}$, and $r\in J:=[a,b)$ with $r=|\boldsymbol{y}|$. Consider a coordinate transformation $\boldsymbol{\xi}=\boldsymbol{\xi}(\boldsymbol{y})\in C^\infty(\bar\cJ)$ such that
\begin{equation*}
\boldsymbol{\xi}(\boldsymbol{y})=\xi(r)\frac{\boldsymbol{y}}{r}\quad\text{with $\xi(r)\geq 0$},\qquad\boldsymbol{\xi}:\cJ\to \cG:=\boldsymbol{\xi}(\cJ),\qquad \boldsymbol{y}\mapsto \boldsymbol{x}=\boldsymbol{\xi}(\boldsymbol{y}). 
\end{equation*} 
Assume that $\nabla_{\boldsymbol{y}}\boldsymbol{\xi}$ is a non-singular matrix, and define
\begin{align*}
D_\xi f=\frac{f_r}{\xi_r},\qquad\cB=(\cB_{ij})_{1\leq i,j\leq 3} \qquad \text{with $\cB_{ij}:=((\nabla_{\boldsymbol{y}}\boldsymbol{\xi})^{-1})_{ij}$},\\[-2pt]
\nabla_\cB f=((\nabla_\cB f)_1,(\nabla_\cB f)_2,(\nabla_\cB f)_3)^\top \qquad \text{with $(\nabla_\cB f)_i=\sum_{k=1}^3\cB_{ki}\partial_{y_k}f$},\\[-3pt]
\boldsymbol{f}=(f_1,f_2, f_3)^\top,\qquad\nabla_\cB \boldsymbol{f}=((\nabla_\cB \boldsymbol{f})_{ij})_{1\leq i,j\leq 3} \qquad \text{with $(\nabla_\cB \boldsymbol{f})_{ij}=\sum_{k=1}^3 \cB_{kj}\partial_{y_k}f_i$},    
\end{align*}
where $f(\boldsymbol{y})=f(r)$ and $\boldsymbol{f}(\boldsymbol{y})=f(r)\frac{\boldsymbol{y}}{r}$ are sufficiently smooth functions. 

Then we have the following coordinate transformations.
\begin{Lemma}\label{lemma-initial}
Let $(g,\boldsymbol{g})(\boldsymbol{x})$ be spherically symmetric functions defined on $\cG$  satisfying 
\begin{equation*}
f(\boldsymbol{y})=g(\boldsymbol{\xi}(\boldsymbol{y}))=g(\boldsymbol{x}),\qquad\boldsymbol{f}(\boldsymbol{y})=\boldsymbol{g}(\boldsymbol{\xi}(\boldsymbol{y}))=\boldsymbol{g}(\boldsymbol{x}).
\end{equation*}
Then, for any $q\in [1,\infty]$, the following statements hold{\rm :}
\begin{enumerate}
\item[{\rm(i)}] Transformations for $(g,f)${\rm:} for $k=1,2,3$, 
\begin{equation*}
\begin{aligned}
\|g\|_{L^q(\cG)}&\sim\|(\det \nabla_{\boldsymbol{y}}\boldsymbol{\xi})^\frac{1}{q} f\|_{L^q(\cJ)}\sim 
\|(\xi^2\xi_r)^\frac{1}{q} f\|_{L^q(J)},\\[2pt]
\|\nabla^kg\|_{L^q(\cG)}&\sim\|(\det \nabla_{\boldsymbol{y}}\boldsymbol{\xi})^\frac{1}{q}\nabla_\cB^k f\|_{L^q(\cJ)}\sim  \|(\xi^2\xi_r)^\frac{1}{q}\mathscr{D}_\xi^{k-1} (D_\xi f)\|_{L^q(J)};
\end{aligned}
\end{equation*}
\item[{\rm(ii)}] Transformations for $(\boldsymbol{g},\boldsymbol{f})${\rm:} for $k=0,1,2,3,4$, 
\begin{equation*}
\|\nabla^{k}\boldsymbol{g}\|_{L^q(\cG)}\sim\|(\det \nabla_{\boldsymbol{y}}\boldsymbol{\xi})^\frac{1}{q}\nabla_\cB^{k} \boldsymbol{f}\|_{L^q(\cJ)}\sim \|(\xi^2\xi_r)^\frac{1}{q}\mathscr{D}_\xi^k f\|_{L^q(J)},
\end{equation*}
\end{enumerate}
Here, $E\sim F$ denotes $C^{-1}E\leq F\leq CE$ for some constant $C\geq 1$ depending only on $n$.
\end{Lemma}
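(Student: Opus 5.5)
The plan is to compute everything explicitly in the radial variable and use the chain rule for the radial map $\boldsymbol{\xi}(\boldsymbol{y})=\xi(r)\boldsymbol{y}/r$. First, the integral identities. Change variables $\boldsymbol{x}=\boldsymbol{\xi}(\boldsymbol{y})$, so $\mathrm{d}\boldsymbol{x}=\det\nabla_{\boldsymbol{y}}\boldsymbol{\xi}\,\mathrm{d}\boldsymbol{y}$. Then pass to polar coordinates in $\boldsymbol{y}$, where $\mathrm{d}\boldsymbol{y}=4\pi r^2\,\mathrm{d}r$. Since $\nabla_{\boldsymbol{y}}\boldsymbol{\xi}=\xi_r\frac{\boldsymbol{y}\otimes\boldsymbol{y}}{r^2}+\frac{\xi}{r}\big(\mathbb{I}_3-\frac{\boldsymbol{y}\otimes\boldsymbol{y}}{r^2}\big)$, we get $\det\nabla_{\boldsymbol{y}}\boldsymbol{\xi}=\xi_r\xi^2/r^2$. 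Hence $\det\nabla_{\boldsymbol{y}}\boldsymbol{\xi}\,r^2\,\mathrm{d}r=\xi^2\xi_r\,\mathrm{d}r$, which gives the zeroth-order equivalences (with constant $4\pi$) once the integrands are known to be radial. The same matrix formula shows $\cB=\frac{1}{\xi_r}\frac{\boldsymbol{y}\otimes\boldsymbol{y}}{r^2}+\frac{r}{\xi}\big(\mathbb{I}_3-\frac{\boldsymbol{y}\otimes\boldsymbol{y}}{r^2}\big)$. Consequently $\nabla_\cB$ applied to a pullback equals the pullback of $\nabla_{\boldsymbol{x}}$, so $\nabla^k g\circ\boldsymbol{\xi}=\nabla^k_\cB f$ exactly. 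This settles the first ``$\sim$'' in each line.

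For the second equivalence, we express $\nabla^k_\cB f$ pointwise through radial quantities. With $\boldsymbol{e}=\boldsymbol{y}/r$ (equivalently $\boldsymbol{x}/|\boldsymbol{x}|$) and $\rho=\xi$, the tensors involved are the Eulerian derivatives of the radial function $g(|\boldsymbol{x}|)$, since $D_\xi=\partial_{|\boldsymbol{x}|}$. By induction on $k$, $\nabla^k g$ is a linear combination of tensors built from $\boldsymbol{e}$ and $\mathbb{I}_3$ with coefficients equal to components of $\mathscr{D}_\xi^{k-1}(D_\xi f)$; the scalars $\frac{1}{\xi}$ arise from $\nabla\boldsymbol{e}=\frac{1}{\xi}(\mathbb{I}_3-\boldsymbol{e}\otimes\boldsymbol{e})$. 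For example:
\begin{itemize}
\item $\nabla g=D_\xi f\,\boldsymbol{e}$;
\item $\nabla^2 g=D_\xi^2f\,\boldsymbol{e}\otimes\boldsymbol{e}+\frac{D_\xi f}{\xi}(\mathbb{I}_3-\boldsymbol{e}\otimes\boldsymbol{e})$, whose coefficients are exactly $\mathscr{D}_\xi(D_\xi f)$;
\item at the third order the coefficients are $D_\xi^3 f$, $D_\xi(\frac{D_\xi f}{\xi})$ and $\frac{1}{\xi}(D_\xi^2 f-\frac{D_\xi f}{\xi})$, which matches $\mathscr{D}_\xi^2(D_\xi f)$ up to harmless linear recombination.
\end{itemize}
The vector case $\boldsymbol{g}=g\boldsymbol{e}$ is identical, starting from $\nabla\boldsymbol{g}=D_\xi g\,\boldsymbol{e}\otimes\boldsymbol{e}+\frac{g}{\xi}(\mathbb{I}_3-\boldsymbol{e}\otimes\boldsymbol{e})$, whose coefficients are $\mathscr{D}_\xi f$. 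We iterate up to $k=4$, checking at each stage that the coefficient list is $\mathscr{D}^k_\xi f$ as defined in \eqref{huad1} and $\eqref{D-2}_2$, modulo invertible constant-coefficient linear maps.

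Because the tensors $\boldsymbol{e}^{\otimes m}$, symmetrized $\boldsymbol{e}^{\otimes m}\otimes\mathbb{I}_3$, etc., are linearly independent with Gram matrices that are constant (independent of the point), $|\nabla^k g|$ is pointwise comparable to the Euclidean norm of its coefficient vector. Raising to the power $q$ and integrating against $\xi^2\xi_r\,\mathrm{d}r$ then gives the claimed equivalences, including $q=\infty$.

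The main obstacle is the bookkeeping at orders three and four: showing that the coefficient list produced by differentiation is related to the precise tuple in $\mathscr{D}^3_\xi,\mathscr{D}^4_\xi$ by a fixed invertible linear map. I would first use identities such as \eqref{idenen}, for instance $D_\xi(\frac{D^2_\xi f}{\xi})=D^3_\xi(\frac f\xi)+2D_\xi(\frac1\xi D_\xi(\frac f\xi))$, to rewrite the raw coefficients in terms of the paper's basis. Then I would verify invertibility by a triangular structure in the powers of $\frac1\xi$.
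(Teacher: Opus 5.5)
Your proposal is correct and follows essentially the same route as the paper: you identify $\nabla^k\boldsymbol g\circ\boldsymbol\xi$ with $\nabla^k_\cB\boldsymbol f$, express the Eulerian derivatives of a radial field through radial quantities, match the resulting coefficients with $\mathscr D^k_\xi f$ via the identity $(\frac{f_{rr}}{r})_r=(\frac fr)_{rrr}+2(\frac1r(\frac fr)_r)_r$, and conclude by $\mathrm{d}\boldsymbol x=\det\nabla_{\boldsymbol y}\boldsymbol\xi\,\mathrm d\boldsymbol y$ together with polar coordinates. The paper instead writes out Cartesian components for $\xi=r$ to obtain $|\nabla^4_{\boldsymbol y}\boldsymbol f|^2=|f_{rrrr}|^2+20|(\frac fr)_{rrr}|^2+120|(\frac1r(\frac fr)_r)_r|^2$ and transfers this to the variable $\boldsymbol x$; your constant-Gram-matrix tensor-basis argument is a cleaner packaging of that computation, with the minor precision that the grouped coefficients and $\mathscr D^k_\xi f$ are each constant-coefficient linear functions of the other rather than related by a square invertible matrix (e.g.\ $\frac{D_\xi^2 f}{\xi}=D_\xi^2(\frac f\xi)+\frac2\xi D_\xi(\frac f\xi)$ is redundant in $\mathscr D^3_\xi f$), which still gives the norm equivalence.
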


\begin{proof}
It suffices to prove the transformations for $(\boldsymbol{g},\boldsymbol{f})$, since $\nabla_{\boldsymbol{y}} f=f_r\frac{\boldsymbol{y}}{r}$ can be regarded as a vector  function $\boldsymbol{h}=h\frac{\boldsymbol{y}}{r}$ with $h=f_r$. Moreover, for simplicity, we only sketch the proof for the highest-order estimates. We divide the proof into two steps.

\textbf{1.} We first prove the case when $\xi(r)=r$. In this case, $\boldsymbol{\xi}(\boldsymbol{y})=\boldsymbol{y}$ and $\cB$ is the $n\times n$ identity matrix. It follows from direct calculations that
\begin{align*}
&\begin{aligned}
(f_k)_{y_iy_jy_\ell y_p}
&=\frac{y_i y_j y_ky_\ell y_p}{r^5}f_{rrrr}\\
&\quad +\Big(\frac{\delta_{ip} y_j y_ky_\ell+\delta_{jp} y_i y_ky_\ell+\delta_{kp} y_i y_jy_\ell+\delta_{\ell p} y_i y_jy_k}{r^3}\\
&\quad\quad \ \ + \frac{\delta_{i\ell}y_jy_ky_p+\delta_{j\ell}y_iy_ky_p+\delta_{k\ell}y_iy_jy_p}{r^3}\\
&\quad\quad \ \ +\frac{\delta_{ij}y_ky_\ell y_p+\delta_{ik}y_jy_\ell y_p+\delta_{jk}y_iy_\ell y_p}{r^3}-\frac{10y_i y_j y_ky_\ell y_p}{r^5}\Big)\big(\frac{f}{r}\big)_{rrr}\\
&\quad +\Big(\frac{\delta_{i\ell}\delta_{jp}y_k+\delta_{i\ell}\delta_{kp}y_j+\delta_{j\ell}\delta_{ip}y_k+\delta_{j\ell}\delta_{kp}y_i+\delta_{k\ell}\delta_{ip}y_j+\delta_{k\ell}\delta_{jp}y_i}{r}\\
&\quad\quad \ \ +\frac{\delta_{ij}\delta_{kp}y_\ell+\delta_{ij}\delta_{\ell p}y_k+\delta_{ik}\delta_{jp}y_\ell+\delta_{ik}\delta_{\ell p} y_j+\delta_{jk}\delta_{ip}y_\ell+\delta_{jk}\delta_{\ell p}y_i}{r}\\
\end{aligned}\\
&\qquad\quad\quad \ \ \begin{aligned}
&\quad\quad \ \ +\frac{\delta_{ij}\delta_{k\ell}y_p+\delta_{ik}\delta_{j\ell}y_p+\delta_{jk}\delta_{i\ell}y_p}{r}\\
&\quad\quad \ \ -\frac{3(\delta_{ip} y_j y_ky_\ell+\delta_{jp} y_i y_ky_\ell+\delta_{kp} y_i y_jy_\ell+\delta_{\ell p}y_i y_j y_k)}{r^3}\\
&\quad\quad \ \ -\frac{3(\delta_{i\ell}y_jy_ky_p+\delta_{j\ell}y_iy_ky_p+\delta_{k\ell}y_iy_jy_p)}{r^3}\\
&\quad\quad \ \ -\frac{3(\delta_{ij}y_ky_\ell y_p+\delta_{ik}y_jy_\ell y_p+\delta_{jk}y_iy_\ell y_p)}{r^3}+\frac{15y_i y_j y_ky_\ell y_p}{r^5}\Big)\Big(\frac{1}{r}\big(\frac{f}{r}\big)_{r}\Big)_r,   
\end{aligned}
\end{align*}
where $\delta_{ij}$ denotes the Kronecker symbol with indices $(i,j)$: $\delta_{ij}= 1$ if $i=j$, $\delta_{ij}=0$ if $i\neq j$. Then the above expressions yield
\begin{equation*}
|\nabla_{\boldsymbol{y}}^4 \boldsymbol{f}|^2=|f_{rrrr}|^2+20\Big|\big(\frac{f}{r}\big)_{rrr}\Big|^2+ 120 \Big|\Big(\frac{1}{r}\big(\frac{f}{r}\big)_r\Big)_r\Big|^2.
\end{equation*}
Moreover, since $(\frac{f_{rr}}{r})_r=(\frac{f}{r})_{rrr}+2(\frac{1}{r}(\frac{f}{r})_r)_r$, we obtain from the above that
\begin{equation}\label{BB}
|\nabla^4_{\boldsymbol{y}} \boldsymbol{f}|\sim |\sD_r^4 f|.
\end{equation}

Finally, denote $\omega_3$ as the surface area of the $3$-sphere,
due  to the integral identity:  
\begin{equation*}
\int_\cJ f(\boldsymbol{y})\,\mathrm{d}\boldsymbol{y}= \omega_3\int_J f(r)r^2\,\mathrm{d}r,
\end{equation*}
 we thus obtain the desired conclusions  of this lemma when $\boldsymbol{\xi}(\boldsymbol{y})=\boldsymbol{y}$ from \eqref{BB}.

\textbf{2.} For general $\boldsymbol{\xi}(\boldsymbol{y})$, we can first repeat the calculations in Step 1 with the coordinate $\boldsymbol{x}=\boldsymbol{\xi}(\boldsymbol{y})$ and the function $\boldsymbol{g}(\boldsymbol{x})$. Specifically, if we let $x:=|\boldsymbol{x}|$, then $x=\xi(r)\in W$ and $W:=[\xi(a),\xi(b))$, and we can obtain from \eqref{BB} that
\begin{equation*} 
\|\nabla^4 \boldsymbol{g}\|_{L^q(\cG)}\sim \big\|x^\frac{2}{q}\sD_x^4 g\big\|_{L^q(W)}.
\end{equation*}

Next, using the coordinate transformations $\boldsymbol{x}=\boldsymbol{\xi}(\boldsymbol{y})$ and $x=\xi(r)$, we have $\nabla^k\boldsymbol{g}=\nabla_{\cB}^k\boldsymbol{f}$ and $\partial_x^k g=D_\xi^k f$ for $k=0,1,2,3,4$. Therefore, the following integral identities
\begin{equation*}
\int_\cG g(\boldsymbol{x})\,\mathrm{d}\boldsymbol{x}=\int_\cJ f(\boldsymbol{y})(\det \nabla_{\boldsymbol{y}}\boldsymbol{\xi}) \,\mathrm{d}\boldsymbol{y},\qquad \int_W g(x)\,\mathrm{d}x=  \int_J f(r) \xi_r\,\mathrm{d}r,
\end{equation*}
lead to the desired results of this lemma.
\end{proof}

\smallskip
\noindent{\bf Acknowledgments:}   This research is partially supported by National Key R$\&$D Program of China (No. 2022YFA1007300),  National Natural Science Foundation of China under the Grant 12471212, and The Royal Society (UK)-Newton International Fellowships NF170015.

\smallskip

\begin{thebibliography}{99}



\bibitem{bjr} J. L.  Boldrini,  M. A.  Rojas-Medar, and  E. Fern\'andez-Cara, Semi-Galerkin approximation and regular solutions to the equations of the nonhomogeneous asymmetric fluids, \textit{J. Math. Pures  Appl.} \textbf{82} (2003), 1499--1525.


\bibitem{bd6} D. Bresch and B. Desjardins,  Existence of global weak solutions for a 2D viscous Shallow water equations and convergence to the quasi-geostrophic model, \textit{Commun. Math. Phys.} \textbf{238} (2003), 211--223.





\bibitem{opic1} R. C. Brown and B. Opic, Embeddings of weighted Sobolev spaces into spaces of continuous functions. \textit{Proc. Roy. Soc. London Ser. A} \textbf{439} (1992), 279--296.



\bibitem{Chan} S. Chandrasekhar, {\sl An introduction to the study of stellar structures},  University of Chicago Press, Chicago,  1938.


 



\bibitem{CZZ2} G.-Q. Chen, J. Zhang, and S. Zhu, 
Global well-posedness of the vacuum free boundary problem for the degenerate compressible  Navier-Stokes equations with large  data of spherical symmetry, submitted, 2026, arXiv:2601.06620.





\bibitem{coutand2} D. Coutand, H. Lindblad, and S. Shkoller, A priori estimates for the free-boundary 3D compressible Euler equations in physical vacuum, \textit{Commun. Math. Phys.} \textbf{296} (2010), 559--587.





\bibitem{coutand3} D. Coutand and S. Shkoller, Well-posedness in smooth function spaces for moving-boundary three-dimensional compressible Euler equations in physical vacuum, \textit{Arch. Ration. Mech. Anal.} \textbf{206} (2012), 515--616.

\bibitem{cox}J. Cox and  R. Giuli, {\sl Principles of Stellar Structure}, I, II, Gordon and Breach, New York, 1968.










\bibitem{fu3} E. Feireisl,  {\sl Dynamics of Viscous Compressible Fluids}, Oxford: Oxford University Press, 2004.
	





\bibitem{GL1} X. Gu and Z. Lei, Local well-posedness of the three dimensional compressible Euler-Poisson equations with physical vacuum, \textit{J. Math. Pures Appl.} \textbf{105} (2016), 662--723.

\bibitem{Yan} Y. Guo and B. Pausader, Global smooth ion dynamics in the Euler–Poisson system, \textit{Commun. Math. Phys.} \textbf{303} (2011), 89--125.






\bibitem{Jang-Hadzic} M. Hadžić and J. Jang, Expanding large global solutions of the equations of compressible fluid mechanics, \textit{Invent. Math.} \textbf{214} (2018), 1205--1266.

\bibitem{Jang-Hadzic3} M. Hadžić and J. Jang, Nonlinear stability of expanding star solutions of the radially symmetric mass-critical Euler-Poisson system, \textit{ Comm. Pure Appl. Math.} \textbf{71} (2018), 827--891.


\bibitem{Jang-Hadzic2} M. Hadžić and J. Jang, A class of global solutions to the Euler-Poisson system,  \textit{Comm. Math. Phys.} \textbf{370} (2019),  475--505. 







\bibitem{Jang} J. Jang, Local well-posedness of dynamics of viscous gaseous stars, \textit{Arch. Ration. Mech. Anal.} \textbf{195} (2010),  797--863. 
 


 
\bibitem{Jang-M2} J. Jang and N. Masmoudi, Well-posedness of compressible Euler equations in a physical vacuum, \textit{Comm. Pure Appl. Math.} \textbf{68} (2015), 61--111.

	



\bibitem{kufner} A. Kufner, {\sl Weighted Sobolev Spaces}, John Wiley $\&$ Sons, New York, 1985.
	

\bibitem{leoni} G. Leoni, {\sl A First Course in Sobolev Spaces, 2nd Ed.}, Graduate Studies in Mathematics, vol. 181, American Mathematical Society, Providence, RI, 2017.





\bibitem{LWX2} H.-L. Li, Y. Wang, and Z. Xin, On the vacuum free boundary problem of the viscous Saint-Venant system for shallow water in two dimensions, \textit{Math.  Ann.} \textbf{391} (2025),  3555--3639.



	

\bibitem{Lin} S.-S. Lin,  Stability of gaseous stars in spherically symmetric motions,  \textit{SIAM J. Math. Anal.}  \textbf{28} (1997), 539--569.

\bibitem{LinZeng} Z. Lin and C. Zeng, Separable Hamiltonian PDEs and turning point principle for stability of gaseous stars, \textit{Comm. Pure Appl. Math.}  \textbf{75} (2022), 2511--2572.


\bibitem {lions} P. L. Lions, {\sl Mathematical Topics in Fluid Mechanics: Compressible Models}, Oxford University Press, USA, 1998.


\bibitem{LiuTP} T.-P. Liu, Compressible flow with damping and vacuum, \textit{Japan J. Ind. Appl. Math.} \textbf{13} (1996), 25--32.
	

\bibitem{taiping1} T.-P. Liu and J.A. Smoller, On the vacuum state for the isentropic gas dynamics equations, \textit{Adv. in Appl. Math.} \textbf{1} (1980), 345--359.


\bibitem{taiping2} T.-P. Liu and T. Yang,   Compressible Euler equations with vacuum, \textit{J. Differential  Equations}  \textbf{140} (1997), 223–237.




\bibitem{LXZ3} T. Luo, Z. Xin, and H. Zeng, Nonlinear asymptotic stability of the Lane-Emden solutions for the viscous gaseous star problem with degenerate density dependent viscosities, \textit{Commun. Math. Phys.} \textbf{347} (2016), 657--702.







 
\bibitem{Makino} T. Makino,  On a local existence theorem for the evolution equation of gaseous stars, \textit{Patterns and Waves. Stud. Math. Appl.}  \textbf{18} (1986), 459--479, North-Holland, Amsterdam.



\bibitem{MPI} F.	Merle, P. Rapha$\ddot{\text{e}}$l,  I. Rodnianski, and  J. Szeftel,  On the implosion of a compressible fluid II: Singularity formation,  \textit{ Ann. of Math.}  \textbf{196} (2022),  779--889.
		

\bibitem{nishida} Y. Nishida, Equations of fluid dynamics—free surface problems,  Frontiers of the mathematical sciences: 1985 (New York, 1985). \textit{Comm. Pure Appl. Math.}  \textbf{39} (1986), no. S, suppl.\, S221--S238.


\bibitem{Makino2} M. Okada and T.  Makino,  Free boundary problem for the equation of spherically symmetric motion of viscous gas,  \textit{Japan J. Indust. Appl. Math.} \textbf{10} (1993), 219--235.

\bibitem{Oli1} T. Oliynyk, On the existence of solutions to the relativistic Euler equations in two spacetime dimensions with a vacuum boundary, \textit{Classical Quantum Gravity} \textbf{29} (2012), 28 pp.





\bibitem{opic2} B. Opic and P. Gurka, Continuous and compact imbeddings of weighted Sobolev spaces. II. \textit{Czechoslovak Math. J.} \textbf{39} (1989), 78--94.




\bibitem{OZ} Y. Ou and H. Zeng, Global strong solutions to the vacuum free boundary problem for compressible Navier-Stokes equations with degenerate viscosity and gravity force, \textit{J. Differential  Equations}\, \textbf{259} (2015), 6803--6829.


\bibitem{Rei} G. Rein,  Non-linear stability of gaseous stars, \textit{Arch. Ration. Mech. Anal.} \textbf{168} (2003), 
115--130.


\bibitem{sideris2} S. Shkoller and T. C. Sideris, Global existence of near-affine solutions to the compressible  Euler equations, \textit{ Arch. Ration. Mech. Anal.} \textbf{234} (2019), 115--180.


\bibitem{sideris} T. C. Sideris,  Global existence and asymptotic behavior of affine motion of 3D ideal fluids surrounded by vacuum, \textit{Arch. Ration. Mech. Anal.} \textbf{225} (2017), 141--176. 


\bibitem{sobo} S. L. Sobolev, On a theorem of functional analysis, \textit{Am. Math. Soc. Transl.} \textbf{34} (1963), 39--68.

\bibitem{Strauss}
W. A. Strauss,  Continua of steadily rotating stars, \textit{Quart. Appl. Math.} \textbf{81} (2023),  413--427.


\bibitem{jiawenlocal} D. Wang,  J. Zhang, and S. Zhu,  On a local existence theorem for the evolution equation of viscous gaseous stars in a physical vacuum, 2026,  arXiv:2606.22822.








\end{thebibliography}
\end{document}